\setlist[enumerate,1]{label=(\arabic*)}
\newtheorem*{Theorem*}{Theorem}
\newtheorem*{Conjecture*}{Conjecture}
\newtheorem*{Corollary*}{Corollary}
\newtheorem{IntroTheorem}{Theorem}
\newtheorem*{IntroTheorem*}{Theorem}
\newtheorem{IntroConjecture}[IntroTheorem]{Conjecture}
\newtheorem*{IntroConjecture*}{Conjecture}
\newtheorem{Theorem}{Theorem}[chapter]
\newtheorem{Proposition}[Theorem]{Proposition}
\newtheorem{Corollary}[Theorem]{Corollary}
\newtheorem{Lemma}[Theorem]{Lemma}
\newtheorem{Conjecture}[Theorem]{Conjecture}
\theoremstyle{definition}
\newtheorem{Definition}[Theorem]{Definition}
\theoremstyle{remark}
\newtheorem{Remark}[Theorem]{Remark}
\newtheorem{Example}[Theorem]{Example}
\newtheorem{Examples}[Theorem]{Examples}
\numberwithin{section}{chapter}
\numberwithin{equation}{chapter}
\newif\ifdgcal
\newcommand\basecat{{\cat V}}       
\newcommand\refined{refined }
\let\simeq\cong
\newcommand\dg{DG\xspace}
\newcommand\Dg{DG\xspace}
\newcommand\isoto{\xrightarrow{\sim}}
\mathchardef\mhyphen="2D    
\newcommand\kk{{\mathbb{k}}}            
\newcommand\NN{{\mathbb{N}}}
\newcommand\ZZ{{\mathbb{Z}}}
\newcommand\CC{{\mathbb{C}}}
\newcommand\cat\mathcal             
\newcommand\bicat\mathbf            
\DeclareMathOperator{\obj}{Ob}
\newcommand\Hom{\operatorname{Hom}}
\newcommand\End{\operatorname{End}}
\newcommand\id{\operatorname{id}}
\let\Id\id
\newcommand\Tr{\operatorname{Tr}}                   
\newcommand\Cat{{\bicat{Cat}}}                      
\newcommand\catfVect{\mathrm{Vect^f_{\kk}}}         
\newcommand\catgrVect{\mathrm{gr\mhyphen Vect_{\kk}}} 
\DeclareMathOperator\Kar{Kar}                       
\newcommand\addsym{\sym}                            
\newcommand\Ind{\operatorname{Ind}}     
\newcommand\Res{\operatorname{Res}}     
\newcommand\dual{\vee}                  
\DeclareMathOperator\Sym{Sym}           
\newcommand\SymGrp[1]{S_{#1}}           
\newcommand\sgn{\operatorname{sgn}}     
\newcommand\triv{{\mathrm{triv}}}       
\newcommand\sign{{\mathrm{sign}}}       
\newcommand\SL{\mathrm{SL}}             
\newcommand\stimes{\vee}                
\newcommand{\lfrp}{\mathrm{lfrp}}                   
\newcommand{\kc}{\mathrm{kc}}                       
\newcommand{\mdg}{\mathrm{dg}}                       
\newcommand{\Dr}{\mathrm{Dr}}                       
\newcommand{\old}{\mathrm{old}}                       
\newcommand\DGCat{{\bicat{dgCat}}}                  
\newcommand\DGCatone{{\bicat{dgCat}^1}}             
\newcommand\DGCatdg{{\bicat{dgCat}^{\mdg}}}           
\newcommand\DGModCat{{\bicat{dgModCat}}}            
\newcommand\DGBiMod{{\bicat{dgMor}}}                
\newcommand\HoDGCat{{\bicat{Ho}(\DGCat)}}           
\newcommand\HoDGCatone{{\bicat{Ho}(\DGCatone)}}     
\newcommand\MoDGCat{{\bicat{Mor}(\DGCat)}}          
\newcommand\MoDGCatone{{\bicat{Mor}(\DGCatone)}}    
\def\EnhCat{{\bicat{EnhCat}}}                       
\def\EnhCatone{{\bicat{EnhCat}^1}}                  
\def\EnhCatKC{{\bicat{EnhCat}_{\kc}}}                
\def\EnhCatKCdg{{\bicat{EnhCat}_{\kc}^{\mdg}}}         
\newcommand{\PDGCat}{\mathcal{P}dg\mathcal{C}at}    
  \DeclareMathOperator{\modd}{\mathcal{M}\mkern-1.5mu\mathit{o\mkern-1mud}}                    
  \DeclareMathOperator{\hperf}{\mathcal{H}\mathit{perf}}                 
  \DeclareMathOperator{\perf}{{\mathcal{P}\mkern-1.5mu\mathit{erf}}}
  \DeclareMathOperator{\hproj}{\mathcal{P}}
  \DeclareMathOperator{\pretriag}{{\mathcal{P}\mkern-1.5mu\mathit{re\mhyphen}\mkern-2mu\mathcal{T}\mkern-3mu\mathit{r}}}
  \DeclareMathOperator{\DGFun}{{\mathcal{DGF}\mkern-1.5mu\mathit{un}}}
  \DeclareMathOperator{\acyc}{\mathcal{A}\mathit{c}}
  \DeclareMathOperator{\hperf}{Hperf}                 
  \DeclareMathOperator{\modd}{Mod}                    
  \DeclareMathOperator{\perf}{Perf}
  \DeclareMathOperator{\hproj}{P}
  \DeclareMathOperator{\pretriag}{Pre\mhyphen Tr}
  \DeclareMathOperator{\DGFun}{DGFun}
  \DeclareMathOperator{\acyc}{Ac}
\DeclareMathOperator{\bihperf}{\bicat{Hperf}}               
\DeclareMathOperator{\bipretriag}{\bicat{Pre\mhyphen Tr}}   
\newcommand\catdgfVect{\mathrm{dg\mhyphen Vect^f_{\kk}}} 
\newcommand\sym{{\cat{S}}}                              
\newcommand\numGgp[1]{\mathrm{K}_0^{\mathrm{num}}\ifthenelse{\equal{#1}{}}{}{(#1)}}      
\newcommand\hh{\mathrm{HH}_{\bullet}}      %
\newcommand\modbar{{\overline{\modd}}}              
\newcommand\perfbar{{\overline{\perf}}}              
\newcommand\rightmod[1]{{\modd\mkern-2mu\mhyphen #1}}   
\newcommand\leftmod[1]{{#1\mhyphen\mkern-2.5mu\modd}}   
\newcommand\bimod[2]{{#1\mhyphen\mkern-2.5mu\modd\mkern-2mu\mhyphen #2}}   
\newcommand\bimodbar[2]{{#1\mhyphen\modbar\mhyphen #2}} 
\newcommand\halg[1]{H_{#1}}              
\newcommand\chalg[1]{\underline{H}_{#1}} 
\newcommand\falg[1]{F_{#1}}              
\newcommand\cfalg[1]{\underline{F}_{#1}} 
\newcommand\hcat[1]{\bicat{H}_{#1}}                     
\newcommand\hcat*[1]{\bicat{H}'_{#1}}        
\newcommand\hcatadd[1]{\bicat{H}^{\mathrm{add}}_{#1}}                       
\newcommand\hcatadd*[1]{\bicat{H}^{\mathrm{add}\prime}_{#1}}     
\newcommand\monhcat[1]{\underline{\bicat{H}}_{#1}}                          
\newcommand\monfcat[1]{\underline{\bicat{F}}_{#1}}
\newcommand\monhcat*[1]{\underline{\bicat{H}}'_{#1}}             
\newcommand\PP{{\mathsf{P}}}            
\newcommand\QQ{{\mathsf{Q}}}            
\newcommand\RR{{\mathsf{R}}}            
\newcommand\hunit{{\mathbb{1}}}         
\newcommand\ho{N}                       
\newcommand\ho*{N'}          
\newcommand\fcat[1]{\bicat{F}_{#1}}                   
\newcommand\fcat*[1]{\bicat{F}'_{#1}}      
\newcommand\fcatadd[1]{\bicat{F}^{\mathrm{add}}_{#1}}                         
\newcommand\fcatadd*[1]{\bicat{F}^{\mathrm{add}\prime}_{#1}}       
\newcommand\starmap[1]{\star_{#1}}                      
\tikzset{dot/.style={circle, fill, inner sep=0, minimum size=4pt}}
\tikzset{serre/.style={star, fill, star points=10, star point ratio = 2, scale=0.4}}
\tikzset{up/.style={color=blue}}
\tikzset{down/.style={color=red}}
\tikzset{cc/.style={color=green!50!black}}
\tikzset{desc/.style={fill=white}}
\tikzset{sym/.style={rectangle, draw, minimum width=1cm}}
\tikzset{many/.style={very thick}}
\newcommand\dcross[5]{\phi^{#1,#2}_{#3,#4,#5}}      
\newcommand\sdcross[3]{\psi^{#1}_{#2,#3}}           
\newcommand\Spec{\operatorname{Spec}}   
\newcommand\as[1]{\mathbb{A}^{#1}}      
\newcommand\ps[1]{\mathbb{P}^{#1}}      
\newcommand\sheaf\mathcal               
\newcommand\sO{{\sheaf O}}              
\newcommand\catCoh[1]{\mathrm{Coh}(#1)} 
\newcommand\catDQCoh[1]{\mathrm{D}_{\mathrm{qc}}(#1)}               
\newcommand\catDbCoh[1]{\mathrm{D}^{\mathrm{b}}_{\mathrm{coh}}(#1)} 
\newcommand\catDGCoh[1]{\cat{I}(#1)}                                
\newcommand\catD{\mathrm{D}}                                 
\newcommand\catDc{\mathrm{D}_{\mathrm{c}}}                   
\newcommand\Hzero{\mathrm{H}^{\mathrm{0}}}                   
\DeclareMathOperator{\cok}{Coker}
\DeclareMathOperator{\homm}{Hom}
\DeclareMathOperator{\cone}{Cone}
\DeclareMathOperator{\opp}{{opp}}
\DeclareMathOperator{\lder}{\bf L}
\DeclareMathOperator{\ldertimes}{\overset{\lder}{\otimes}}
\DeclareMathOperator{\action}{act}
\DeclareMathOperator{\unit}{unit}
\DeclareMathOperator{\counit}{counit}
\DeclareMathOperator{\bimodapx}{{\underline{Apx}}}
\DeclareMathOperator{\tensorfn}{{\underline{\otimes\vphantom{p}}}}  
\def\F{{\mathcal{F}}}
\def\N{{\mathcal{N}}}
\def\T{{\mathcal{T}}}
\def\aM{\leftidx{_{a}}{M}{}}
\def\cN{\leftidx{_{c}}{N}{}}
\def\aMb{\leftidx{_{a}}{M}{_{b}}}
\def\A{{\cat{A}}}
\def\B{{\cat{B}}}
\def\C{{\cat{C}}}
\def\D{{\cat{D}}}
\def\I{{\cat{I}}}
\def\Aopp{{\A^{\opp}}}
\def\Bopp{{\B^{\opp}}}
\def\biA{{\bicat{A}}}
\def\biB{{\bicat{B}}}
\def\biC{{\bicat{C}}}
\def\biD{{\bicat{D}}}
\def\biI{{\bicat{I}}}
\def\hperfA{{\hperf(\A)}}
\def\hperfB{{\hperf(\B)}}
\def\modk{{\rightmod{\kk}}}
\def\modA{{\rightmod \A}}
\def\modB{{\rightmod \B}}
\def\modC{{\rightmod \C}}
\def\Amod{{\leftmod \A}}
\def\AmodA{{\bimod\A\A}}
\def\AmodB{{\bimod\A\B}}
\def\BmodA{{\bimod\B\A}}
\def\AmodC{{\bimod\A\C}}
\def\BmodC{{\bimod\B\C}}
\def\CmodA{{\bimod\C\A}}
\def\CmodB{{\bimod\C\B}}
\def\AbimA{{\A\mhyphen \A}}
\def\AbimB{{\A\mhyphen \B}}
\def\AmodbarA{{\bimodbar\A\A}}
\def\AmodbarB{{\bimodbar\A\B}}
\def\AmodbarC{{\bimodbar\A\C}}
\def\BmodbarC{{\bimodbar\B\C}}
\def\Bperf{{\B\mhyphen\perf}}
\def\hprojA{{\hproj(\A)}}
\def\perfA{{\perf(\A)}}
\def\perfB{{\perf(\B)}}
\def\bartimes{\mathrel{\overline{\otimes}}}
\def\Ainfty{{A_{\infty}}}
\def\degzero{{\mathrm{deg.\,0}}}
\newcommand\symbc[1]{{\sym^{#1}\basecat}}           
\newcommand\addsymbc[1]{\addsym^{#1}\basecat}       
\def\symbcn{{\symbc{\ho}}}
\def\symbcnmone{{\symbc{\ho-1}}}
\def\symbcnmtwo{{\symbc{\ho-2}}}
\def\symbcnpone{{\sym^{\ho+1}\basecat}}
\begin{document}

\frontmatter

\title{The Heisenberg category of a category}

\author{Ádám Gyenge}
\address{Department of Algebra and Geometry, Institute of Mathematics,  
	Budapest University of Technology and Economics, 
	M\H{u}egyetem rakpart 3, 1111, 
	Budapest, Hungary}
\email{Gyenge.Adam@ttk.bme.hu}

\author{Clemens Koppensteiner}
\address{Mathematical Institute, University of Oxford, Andrew Wiles Building, OX2 6GG, Oxford UK}

\email{Clemens.Koppensteiner@maths.ox.ac.uk}

\author{Timothy Logvinenko}
\address{School of Mathematics, Cardiff University, Senghennydd Road, CF24 4AG, Cardiff UK}
\email{LogvinenkoT@cardiff.ac.uk}

\date{}

\subjclass[2020]{Primary 18N25, Secondary 18N25, 18G80}

\keywords{Heisenberg algebra, Fock space, categorification, diagrammatic calculus}


\begin{abstract}
	Starting with a $\kk$-linear or \dg category 
	admitting a (homotopy) Serre functor, we construct a $\kk$-linear or \dg
	$2$-category categorifying the Heisenberg algebra of the numerical 
	$K$-group of the original category. 
	We also define a $2$-categorical analogue of the Fock space
	representation of the Heisenberg algebra.  Our construction
	generalises and unifies various categorical Heisenberg algebra actions
	appearing in the literature. In particular, we give a full categorical
	enhancement of the action on derived categories of symmetric quotient
	stacks introduced by Krug, which itself categorifies a Heisenberg
	algebra action proposed by Grojnowski.
\end{abstract}

\maketitle

\tableofcontents

\mainmatter

\chapter{Introduction}

The Heisenberg algebra of a lattice is a much investigated object
originating in quantum theory. It appears in many areas of
mathematics, including the representation theory of affine Lie algebras.
For a smooth projective surface,
Grojnowksi and Nakajima \cite{grojnowski1995instantons,
	nakajima1997heisenberg} identified the total cohomology of 
its Hilbert schemes of points with the Fock space representation 
of the Heisenberg algebra associated to the cohomology of the surface. 
As proposed by Grojnowksi \cite[footnote~3]{grojnowski1995instantons} 
and proved by Krug \cite{krug2018symmetric}, this occurs more
generally for the symmetric quotient stacks\index{symmetric quotient
stack} of any smooth projective
variety on the level of K-theory, and, more fundamentally, on the
level of derived categories of coherent sheaves.

On the other hand, Khovanov \cite{khovanov2014heisenberg}
introduced a categorification of the infinite Heisenberg algebra
associated to the free boson or, equivalently, a rank 1 lattice.
It used a graphical construction involving planar diagrams. A related
graphically defined category was constructed by Cautis and Licata
\cite{cautis2012heisenberg} for ADE type root lattices. Both of these
Heisenberg categories admit categorical representations on
categorifications of the corresponding Fock spaces. 
They were much studied since
\cite{gal2016hopf,rosso2017general,Brundan:DefinitionOfHeisenbergCategory,brundan2018degenerate,Savage:FrobeniusHeisenberg}.

In this paper we unify and generalise many of these constructions. We start with a $\kk$-linear and $\Hom$-finite category $\basecat$ equipped with 
a Serre functor $S$. That is, $S$ is a $\kk$-linear autoequivalence 
equipped with natural isomorphisms \begin{equation}\label{eq:intro:serre}
	\eta_{a,b}\colon\Hom_\basecat(b, Sa)^* \cong \Hom_\basecat(a,b)
	\quad \quad \forall\; a,\, b \in \basecat.
\end{equation}
A typical example is the derived category $\catDbCoh{X}$ of a smooth and proper variety $X$ with $S = (-) \otimes \omega_X[\dim X]$.
We further allow $\basecat$ to be graded or a \dg category. In the
latter case, $S$ only needs to be a homotopy Serre functor.
The following summarises our main results:

\begin{Theorem*}[Summary of the main results]
	There exists a \emph{Heisenberg $2$-category} $\hcat\basecat$ of $\basecat$
	defined using a graphical calculus, together with 
	a \emph{Fock space representation} on the categories of 
	$\SymGrp \ho$-equivariant objects in $\basecat^{\otimes \ho}$.
\end{Theorem*}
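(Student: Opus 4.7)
The plan is to follow the graphical template of Khovanov and Cautis--Licata, but now enriched by the categorical data carried by $\basecat$. I would take the objects of $\hcat\basecat$ to be the integers, visualised as labels on regions of the plane, and define the 1-morphism generators to be two formal symbols $\PP$ and $\QQ$ depicted as upward- and downward-oriented strands; these categorify the creation and annihilation operators on Fock space. Each strand may be decorated by an object of $\basecat$, and the 1-morphisms are then formal direct sums of words in $\PP$ and $\QQ$ so decorated, together with shifts when $\basecat$ is graded or \dg.

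Next I would introduce the 2-morphism generators --- cups, caps and crossings, together with ``dots'' labelled by morphisms in $\basecat$ --- and impose the relations. Planar isotopy will be enforced from the outset, naturality will tie dots to crossings, and biadjointness will be encoded by cup/cap relations. The Serre functor enters here because $\QQ$ is biadjoint to $\PP$ only after a twist by $S$: the isomorphism $\eta_{a,b}$ will appear as the data making the counits and units of the two adjunctions into natural transformations compatible with planar isotopy. One further local relation will categorify the Heisenberg commutator, producing a 2-isomorphism $\QQ\PP \cong \PP\QQ \oplus \hunit$ (or its dotted analogue) expressed diagrammatically as a sum over two resolutions of a crossing.

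For the Fock space representation I would construct a 2-functor sending the object $\ho$ to a suitable category of $\SymGrp{\ho}$-equivariant objects in $\basecat^{\otimes \ho}$; the action of $\PP$ is an induction-type functor built from tensoring with $\basecat$ and symmetrising, while $\QQ$ is its Serre-twisted restriction. The generating 2-morphisms are sent to the natural transformations provided by symmetric-group combinatorics (crossings to simple transpositions, cups and caps to units and counits of induction and restriction), and the Heisenberg sum decomposition then reduces to the familiar Mackey formula for $\Res \circ \Ind$ on $\SymGrp{\ho+1}/\SymGrp{\ho}$.

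The main obstacle will be checking coherence in two directions: first, that every imposed relation in $\hcat\basecat$ is genuinely realised by a natural isomorphism in the Fock space action and does not force the category to collapse; second, in the graded and \dg settings, that all of these relations lift to coherent (quasi-)isomorphisms between mapping complexes. The Serre datum $\eta_{a,b}$ is what reconciles the twisted adjunctions with the commutator relation, so a careful book-keeping of signs, shifts and the homotopy Serre structure --- carried out in the pretriangulated/\dg-enhanced framework set up earlier in the paper --- will be the technical heart of the argument.
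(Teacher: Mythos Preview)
Your outline matches the paper's strategy at the level of the additive construction: integer objects, $\PP_a/\QQ_a$ generators decorated by $\basecat$, dots carrying morphisms, cups/caps/crossings, Serre twist governing the second adjunction, and the Fock space built from induction/restriction with the commutator relation coming from a Mackey decomposition.  Two corrections at this level: the paper does \emph{not} impose planar isotopy or biadjointness from the outset --- it imposes only the adjunctions $\PP_a \dashv \QQ_a \dashv \PP_{Sa}$ and a minimal set of local relations, then derives the pitchfork, curl and triple moves; and the commutator relation is $\QQ_a\PP_b \cong \PP_b\QQ_a \oplus \bigl(\Hom_\basecat(a,b)\otimes_\kk \hunit\bigr)$, with the full $\Hom$-space appearing rather than a copy of $\hunit$.  (Your parenthetical ``or its dotted analogue'' may have anticipated this.)  Also, in the Fock representation $\QQ_a$ is sent to the plain restriction $\phi_{a*}$, not a Serre-twisted restriction; the Serre twist enters only when one asks for the \emph{right} adjoint of $\QQ_a$.

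The substantive gap is in the \dg setting.  You correctly identify that the obstacle is lifting the relations to coherent quasi-isomorphisms when $S$ is only a homotopy Serre functor, but you do not propose a mechanism, and the paper's mechanism is not the one your outline suggests.  Rather than trying to make $\PP_{Sa}$ directly a homotopy right adjoint of $\QQ_a$ within a two-generator calculus, the paper introduces a \emph{third} generating $1$-morphism $\RR_a$ with a strict adjunction $\QQ_a \dashv \RR_a$, together with a closed degree-zero $2$-morphism $\starmap a\colon \PP_{Sa}\to\RR_a$.  One then passes to the $h$-perfect hull and takes a \emph{monoidal Drinfeld quotient} (a refinement of Drinfeld's construction, due to Shoikhet, compatible with $1$-composition) by the cones of the $\starmap a$ and of the Heisenberg map.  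This is what forces the homotopy relations without hand-specifying higher coherences, at the cost of the resulting bicategory being only $\HoDGCat$-enriched.  Your sketch of ``careful book-keeping in the pretriangulated framework'' underestimates this: the interchange law genuinely obstructs defining $1$-composition on naive Drinfeld quotients, and the three-generator-plus-quotient approach is the paper's way around it.
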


We now make this statement more precise.

\section{Heisenberg algebras of categories}

The numerical Grothendieck group\index{numerical Grothendieck group} $\numGgp{\basecat}$ 
has a bilinear pairing $\chi$ given by the dimension of 
$\Hom_{\basecat}(a,b)$ or its Euler characteristic in the graded or \dg case, 
cf.~Section~\ref{subsec:prelim_grothendieck_group}. 
If $\chi$ is symmetric, we can define a Heisenberg algebra 
$\halg\basecat$ with generators
$$ \left\{ a_{b}(n) \right\}_{b \in \numGgp{\basecat},\; n \in \ZZ \setminus \{0\}}$$
and relations
\[
[a_{b}(m),\, a_{c}(n)] = m\delta_{m,-n}\langle b,\, c\rangle_\chi.
\]
However, in practice $\chi$ is rarely symmetric,
cf.~Example~\ref{ex:nonsymmetric_pairing}\footnote{In particular, in \cite[Corollary~1.5]{krug2018symmetric} the algebra $\mathsf{H}_{\mathrm{K}(X)}$ is a priori not well-defined for a general smooth and projective variety $X$.  We are thankful to Pieter Belmans for this remark.}.

As observed in \cite{khovanov2014heisenberg, cautis2012heisenberg}, it can be more convenient to choose a different set of generators
\[ \left\{  p_{b}^{(n)}, q_{b}^{(n)} \right\}_{b \in
	\numGgp{\basecat},\; n \in \ZZ_{{ \geq} 0}} \]
and a different set of relations 
\begin{equation}\label{eq:heisrelintro00}
	p_{b}^{(0)} = q_{b}^{(0)} = 1
\end{equation}
\begin{equation}\label{eq:heisrelintro0}
	p_{a+b}^{(n)} = \textstyle\sum_{k=0}^{n} p_{a}^{(k)}p_{b}^{(n-k)}
	\quad\text{and}\quad 
	q_{a+b}^{(n)} = \textstyle\sum_{k=0}^{n}  q_{a}^{(k)}q_{b}^{(n-k)},
\end{equation}
\begin{equation}\label{eq:heisrelintro1}
	p_{a}^{(n)}p_{b}^{(m)} = p_{b}^{(m)}p_{a}^{(n)}
	\quad\text{and}\quad
	q_{a}^{(n)}q_{b}^{(m)} = q_{b}^{(m)}q_{a}^{(n)},
\end{equation}
\begin{equation}\label{eq:heisrelintro2}
	q_{a}^{(n)}p_{b}^{(m)} = 
	\textstyle\sum_{k = 0}^{\min(m,n)} s^k \left( \langle a, b
	\rangle_{\chi}\right)\, p_{b}^{(m-k)}q_{a}^{(n-k)},
\end{equation}
and $s^k(n) = \dim \Sym^k
\kk^n$. 
These relations are consistent even when $\chi$ is non-symmetric. 
Thus the above defines the Heisenberg algebra $\halg\basecat$ of any $\basecat$.
We prove in Corollary~\ref{cor:halg_iso_to_symmetric} that it 
is always isomorphic to one induced by a symmetric pairing.

\section{Categorification}
The goal is to define a monoidal category\index{monoidal category}
$\hcat\basecat$ with objects generated by symbols $\PP_a$ and
$\QQ_a$ for each $a \in \basecat$ and the morphisms set up
so that we can define $\PP^{(n)}_a$
and $\QQ^{(n)}_a$ in terms $\PP_a$'s and $\QQ_a$'s
and so that the relations above become isomorphisms of objects. 
For example, relation~\eqref{eq:heisrelintro2} should become an isomorphism
\begin{equation}\label{eq:intro:heisenberg_iso}
	\QQ_a^{(m)}\PP_{b}^{(n)} \cong\ \bigoplus_{i=0}^{\mathclap{\min(m,n)}}\ \Sym^i \Hom_{\basecat}(a,b) \otimes_\kk \PP_{b}^{(n-i)}\QQ_a^{(m-i)}.
\end{equation}
We construct $\hcat\basecat$ as a
\emph{$2$-category}\index{$2$-category} with objects $\mathbb{Z}$, $1$-morphisms generated by 
$\PP_a\colon \ho \to \ho+1$ and $\QQ_b\colon \ho \to \ho-1$, and 
appropriate $2$-morphisms.  A representation of $\hcat\basecat$ is a $2$-functor into 
the $2$-category of categories, sending each integer to a \enquote{weight space category}. This \emph{idempotent modification}\index{idempotent modification} is done for convenience, and our 
construction can be easily repackaged into a monoidal category, cf.~Section~\ref{subsec:fock_cat_2}.

The crux of the categorification is to define \enquote{useable}
$2$-morphism spaces which imply only the necessary isomorphisms such as
\eqref{eq:intro:heisenberg_iso}. 
We define these by planar string diagrams such as
\begin{equation}
	\label{eq:diagram}
	\begin{tikzpicture}[baseline={(0,1)}, scale=0.9]
		\draw[->] (-1,0) node[below] {$\PP_{a}$} -- (-1,0.2) to[out=90, in=270] (0,2) -- (0,2.5) node[label=right:{$\alpha$}, dot, pos=0.25]{} node[above] {$\PP_{e}$} ;
		
		\draw[->] (0,0) node[below] {$\PP_{b}$} -- (0,0.2) to[out=90, in=180] (0.6,1.6) to[out=0, in=90] 
		(1.3,1.3) to[out=-90, in=0] (1,1) to[out=180, in=180] (1,2) to[out=0,in=90] (1.6,1.6) to[out=270,in=90](1,0.2) -- (1,0) 
		node[below] {$\QQ_{Sb}$} ;
		
		\draw[<-] (2,0) node[below] {$\QQ_{c}$} -- (2,0.6) to[out=90, in=270] (-1,2) -- (-1,2.5) node[label=left:{$\beta$}, dot, pos=0.25]{} node[above] {$\QQ_{d}$} ;
		
		\draw[decoration={markings, mark=at position 0.32 with {\arrow{>}}, mark=at position 0.82 with {\arrow{>}}}, postaction={decorate}] (2.8,0.8)
		--
		node[label=right:{$\gamma$}, dot, pos=0.5] {}
		(2.8,1.3)
		arc[start angle=0, end angle=180, radius=.5]
		--
		(1.8,0.8)
		arc[start angle=180, end angle=360, radius=.5];
		
	\end{tikzpicture}
\end{equation}
read from bottom to top. 
These are built out of a handful of generators, subject to relations.

\section{Main results} 
\label{section-main-results}

Our approach differs depending on whether our input datum $\basecat$ is 
a graded additive category with a genuine Serre functor or a \dg
category with only a homotopy Serre functor. We call these two setups 
the \emph{additive} and \emph{\dg} settings, respectively. We construct
the Heisenberg $2$-category $\hcat\basecat$ in the additive setting
in Chapter~\ref{sec:additive-Heisenberg-2cat} and in the \dg setting
in Chapter~\ref{sec:dg-Heisenberg-2-cat}. We then prove 
Theorems~\ref{thm:main1},~\ref{thm:main2} and ~\ref{thm:main3} stated
below for the \dg setting and Theorem~\ref{thm:main2} in the additive setting. 
Theorems~\ref{thm:main1} and ~\ref{thm:main3} are also expected to
hold in the additive setting if the numerical
Grothendieck group $\numGgp{\basecat}$ is a finitely generated abelian
group. In such case, our \dg proofs can be adapted and even simplified 
for the additive setting. Let us therefore state our main results in 
the language of the \dg setting.  

Let $\basecat$ be a smooth and proper \dg category. We view it as
a Morita enhanced triangulated category,
cf.~Chapter~\ref{section-enhanced-categories}. It is the noncommutative 
analogue of a smooth and proper algebraic variety $X$: the enhanced derived 
category of $X$ is an example of such $\basecat$. 
The graphical calculus described in Chapter~\ref{sec:dg-Heisenberg-2-cat} 
yields a \dg bicategory $\hcat\basecat$ together with maps of $\kk$-algebras
\begin{equation}\label{eq:decat_morphism}
	\pi\colon \halg\basecat \to \numGgp{\hcat{\basecat},\, \kk},\\
\end{equation}
where $\halg\basecat$ is the Heisenberg algebra of
$\numGgp{\basecat}$.  
Here, a \em bicategory \rm is a certain kind of weak $2$-category.
To be precise, we actually mean a bicategory 
enriched over the homotopy $2$-category $\HoDGCat$ of \dg
categories, see Chapter~\ref{section-enriched-bicategories}. 
We treat these subtle differences carefully in the main text of the paper, 
but here refer to these merely as \dg bicategories.  

As in the literature of Heisenberg categorification (numerical) Grothendieck groups appear more frequently, let us first state our results towards this direction. 
Our first main result shows that a $2$-full subcategory of
$\hcat\basecat$ categorifies the Heisenberg algebra $\halg\basecat$:
\begin{IntroTheorem}[{Theorem~\ref{thm:injective}}]
	\label{thm:main1}
	The map $\pi\colon \halg\basecat \to \numGgp{\hcat{\basecat},\, \kk}$ is injective.
\end{IntroTheorem}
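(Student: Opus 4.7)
The plan is to reduce the injectivity of $\pi$ to faithfulness of the Fock space representation of Theorem~\ref{thm:main2}, via a PBW-type normal form argument. First, I would establish a normal form on the algebra side: using relations \eqref{eq:heisrelintro1}--\eqref{eq:heisrelintro2} together with the reduction to a symmetric pairing from Corollary~\ref{cor:halg_iso_to_symmetric}, every element of $\halg\basecat$ is a $\kk$-linear combination of ordered monomials $p_{b_{i_1}}^{(n_1)}\cdots p_{b_{i_r}}^{(n_r)}\, q_{a_{j_1}}^{(m_1)}\cdots q_{a_{j_s}}^{(m_s)}$ with indices drawn from a fixed ordered basis of $\numGgp\basecat$. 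Once $\chi$ is symmetric the algebra is a tensor product of two polynomial rings modulo the cross-relation \eqref{eq:heisrelintro2}, and standard arguments show these monomials form a $\kk$-basis.

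Next, I would apply the categorified commutation isomorphism \eqref{eq:intro:heisenberg_iso} (together with its $\PP\PP$ and $\QQ\QQ$ analogues that follow from the graphical calculus) inductively to rewrite any composite 1-morphism in $\hcat\basecat$, up to isomorphism, as a direct sum of normal-form 1-morphisms $\prod_i \PP_{b_i}^{(n_i)} \prod_j \QQ_{a_j}^{(m_j)}$ tensored with finite-dimensional $\kk$-vector spaces. Passing to $\numGgp{\hcat\basecat,\kk}$ this shows that $\pi$ sends each algebraic PBW monomial to the class of the corresponding normal-form 1-morphism, and reduces injectivity of $\pi$ to linear independence of these normal-form classes in $\numGgp{\hcat\basecat,\kk}$.

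To prove that linear independence I would evaluate on the categorical Fock space representation of Theorem~\ref{thm:main2}. Its decategorification is a $\kk$-algebra homomorphism $\numGgp{\hcat\basecat,\kk} \to \End_\kk(\mathrm{Fock}_\kk)$ whose composition with $\pi$ is, by design, the standard algebraic Fock space action of $\halg\basecat$. After symmetrisation, $\mathrm{Fock}_\kk$ is a polynomial ring with the $p$'s acting by multiplication by the distinct polynomial generators and the $q$'s by the corresponding constant-coefficient derivations. Since the Euler pairing on $\numGgp\basecat$ is non-degenerate by definition of the numerical Grothendieck group, this action is faithful, so the composite is injective and hence so is $\pi$.

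The principal difficulty lies in Step~2 and in the compatibility step of Step~3: one must verify that the iterated application of \eqref{eq:intro:heisenberg_iso} really produces the correct multiplicities and that the decategorified Fock space action agrees with the algebraic one on normal-form classes. Concretely this requires computing the dimensions of the graphical $2$-morphism spaces in $\hcat\basecat$ between composites of $\PP$'s and $\QQ$'s carefully enough to recover the Euler characteristics $s^k(\langle a,b\rangle_\chi)$ appearing in \eqref{eq:heisrelintro2} from the summands of \eqref{eq:intro:heisenberg_iso}, i.e.\ extracting the decategorified Heisenberg commutation relation directly from the planar diagrammatics on PBW representatives.
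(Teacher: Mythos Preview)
Your core idea in Step~3 is exactly the paper's proof: the composite
\[
  \halg\basecat \xrightarrow{\pi} \numGgp{\hcat\basecat,\,\kk} \to \End\Bigl(\bigoplus_{\ho\ge 0}\numGgp{\symbc\ho,\,\kk}\Bigr)
\]
is injective because the right-hand representation contains the classical Fock space as a subrepresentation (Lemma~\ref{lem:fock_embeds}), and the Fock space is faithful since $\chi$ is non-degenerate on $\numGgp\basecat$. Injectivity of the composite forces injectivity of $\pi$. That is the entire argument.

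Your Steps~1 and~2 are therefore unnecessary detours. You do not need a PBW basis of $\halg\basecat$, nor a normal form for 1-morphisms in $\hcat\basecat$, nor any computation of 2-morphism spaces: once the composite is injective, $\pi$ is injective on \emph{all} of $\halg\basecat$, not just on PBW monomials. The ``principal difficulty'' you flag --- matching multiplicities in iterated applications of \eqref{eq:intro:heisenberg_iso} and extracting $s^k(\langle a,b\rangle_\chi)$ from the diagrammatics --- simply does not arise. All that is needed is that $\pi$ is a well-defined algebra map (this is Theorem~\ref{thm:cat_heisenberg_relations-dg}) and that the decategorified Fock space action restricts to something containing the classical Fock space.

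One small imprecision: you write that the decategorified action ``is, by design, the standard algebraic Fock space action''. The paper is more careful here. It is not known in general that $\bigoplus_\ho \numGgp{\symbc\ho,\,\kk}$ \emph{equals} the Fock space $\falg\basecat$ (this is the content of Corollary~\ref{cor:fockcateg} and requires a K\"unneth-type hypothesis). What is always true, and sufficient, is that the vector $1\in\numGgp{\symbc0,\,\kk}\cong\kk$ is a highest-weight vector, so by Lemma~\ref{lem:fock_embeds} the Fock space embeds as a subrepresentation. Faithfulness on a subrepresentation is enough.
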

Indeed, Theorem \ref{thm:main1} implies that 
the $2$-full subcategory of $\hcat{\basecat}$
comprising the objects whose class in $\numGgp{\hcat{\basecat},\, \kk}$ 
lies in the image of $\pi$ is a categorification of $\halg\basecat$.
{Since this subcategory is $2$-full and contains the objects $\PP_a$
	and $\QQ_a$ for $a \in \basecat$, which generate $\hcat\basecat$
	under taking $1$-compositions and perfect hulls, any 2-representation 
	of this subcategory extends uniquely to one of $\hcat\basecat$.}
Thus we work with $\hcat\basecat$ instead. 

Let $\EnhCatKCdg$ be the \dg bicategory of enhanced triangulated
categories, cf.~Chapter~\ref{section-enhanced-categories}. Here
and throughout the paper the subscript $\kc$ means ``Karoubi-complete''. 
Let $\fcat\basecat$ be its $2$-full subcategory comprising 
the symmetric powers $\symbcn$.
If $\basecat$ is the derived category of a variety $X$, then $\symbcn$ is the derived category of 
the symmetric quotient stack $[X^{\ho}/\SymGrp{\ho}]$.

Our second main result constructs a $2$-action of $\hcat\basecat$ on 
$\fcat\basecat$ which implies that a $2$-full subcategory of
$\fcat\basecat$ categorifies the classical Fock space representation 
$\falg\basecat$ of $\halg\basecat$:

\begin{IntroTheorem}[{Theorem~\ref{thm:cat_Fock_representation}}]\label{thm:main2}
	There is a 2-representation of $\hcat\basecat$ on
	$\fcat\basecat$. More precisely, 
	there is a homotopy strong \dg $2$-functor $\Phi_\basecat\colon\hcat\basecat \to \fcat\basecat$.
\end{IntroTheorem}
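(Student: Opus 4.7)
The strategy is to build $\Phi_\basecat$ from its values on the generators of $\hcat\basecat$. On objects, set $\Phi_\basecat(\ho) := \symbcn$. On the generating $1$-morphisms, take $\Phi_\basecat(\PP_a)\colon \symbcn\to \symbcnpone$ to be the induction of $(-)\otimes_\kk a$ along $\SymGrp{\ho}\times \SymGrp{1} \hookrightarrow \SymGrp{\ho+1}$, and $\Phi_\basecat(\QQ_b)\colon \symbcn\to \symbcnmone$ to be the restriction along the same inclusion followed by pairing the isolated tensor factor against $b$ via $\Hom_\basecat(b,-)$. The divided powers $\PP_a^{(n)}$ and $\QQ_b^{(n)}$ are then obtained by applying the trivial $\SymGrp{n}$-idempotent to the $n$-fold composites. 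In the \dg setting these functors are represented by the natural bimodule kernels on products of symmetric quotient stacks associated to $\basecat$.

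For the generating $2$-morphisms of the graphical calculus we proceed one generator at a time. A dot labelled by $f\in \Hom_\basecat(a,b)$ is sent to the natural transformation induced by $f$ on the corresponding induction or restriction functor. Upward and downward same-strand crossings go to the symmetric-group transpositions acting on the relevant tensor factors of $\basecat^{\otimes \ho}$. Cups and caps between $\PP_a$ and $\QQ_{Sa}$ are supplied by the (homotopy) Serre adjunction data of \eqref{eq:intro:serre}: the evaluation $\Hom_\basecat(a,Sa)^{*}\otimes a\to Sa$ and its dual furnish the unit and counit. The mixed upward--downward crossings and the bubble and curl generators are built from the Mackey-type decomposition of $\Res\Ind$ along $\SymGrp{\ho}\times\SymGrp{1}\hookrightarrow \SymGrp{\ho+1}\hookleftarrow\SymGrp{\ho}\times\SymGrp{1}$ combined with the Serre pairing on the two isolated factors; this is exactly what produces the summand decomposition \eqref{eq:intro:heisenberg_iso} at the level of $1$-morphisms.

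With the generators in hand, the remaining work is to verify that the defining relations of $\hcat\basecat$ hold after applying $\Phi_\basecat$. The snake identities reduce to the adjunction axioms for the Serre pair; the Reidemeister-like and dot-slide identities reduce to standard relations in $\kk[\SymGrp{\bullet}]$ together with the symmetric monoidal structure on $\basecat^{\otimes \bullet}$; and the bubble and curl identities reduce to the non-degeneracy of the Serre pairing. Since composition of $1$-morphisms on the target side is a derived tensor product of bimodule kernels, $\Phi_\basecat$ cannot be made strict; instead one produces canonical comparison $2$-cells $\Phi_\basecat(\PP)\circ \Phi_\basecat(\QQ)\to \Phi_\basecat(\PP\QQ)$, which are quasi-isomorphisms by smoothness and properness of $\basecat$ and constitute the homotopy lax structure in the sense of Section~\ref{section-enhanced-categories}.

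The main obstacle is the coherence of the whole package: one must verify that every defining relation on $\hcat\basecat$ remains valid in $\fcat\basecat$ \emph{after} inserting the lax comparison cells. By naturality it suffices to check this on the generating $2$-morphisms and their compositions through the generating $1$-morphisms, which reduces to a finite but delicate list of planar diagram checks; the associativity and unit axioms for the laxator then propagate the result through all of $\hcat\basecat$ via the coherence theorem for (homotopy) lax $2$-functors in the \dg bicategory setting. In particular, the fact that $\hcat\basecat$ is itself defined by the graphical calculus ensures that once the generators are consistent, no extra relations can obstruct the existence of $\Phi_\basecat$.
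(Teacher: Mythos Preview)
Your overall strategy---assign images to generators and verify the defining relations---matches the first stage of the paper's argument, where a strict \dg $2$-functor $\Phi'_\basecat\colon \hcat*\basecat \to \fcat*\basecat$ is built, and your assignments for $\PP_a$, $\QQ_a$, dots, and crossings are essentially correct. However, there are two substantive gaps.

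First, you have omitted the third family of generating $1$-morphisms $\RR_a$ and the star $2$-morphism $\starmap{a}\colon \PP_{Sa}\to \RR_a$. In the \dg Heisenberg category the cups and caps do \emph{not} come from a Serre adjunction between $\PP_a$ and $\QQ_{Sa}$; they are the genuine adjunction data for $\PP_a\dashv\QQ_a$ and $\QQ_a\dashv\RR_a$, sent under $\Phi'_\basecat$ to the Tensor-Hom adjunction units and counits for $\phi_a^*\dashv\phi_{a*}\dashv\phi_a^!$. The Serre functor enters only through the image of the star, namely the quasi-isomorphism $\phi_{Sa}^*\to\phi_a^!$ of Proposition~\ref{prop-phi-a-shriek-isomorphic-to-phi-Sa-upper-star}. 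Your description of the cups and caps via Serre duality is the additive picture of Section~\ref{sec:additive-Heisenberg-2cat}, not the \dg one.

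Second, and more seriously, $\hcat\basecat$ is not presented purely by the graphical calculus: it is the monoidal Drinfeld quotient of $\bihperf(\hcat*\basecat)$ by the ideal $\mathcal{I}_\basecat$ generated by the cones of $\starmap{a}$ and of the map~\eqref{eq:dg-baby-Heisenberg-morphism}. Verifying the diagrammatic relations alone does not produce a functor out of $\hcat\basecat$. The paper's route is: construct $\Phi'_\basecat$ strictly on $\hcat*\basecat$; compose with bimodule approximation $\bimodapx$ and the localisation $L$ into $\EnhCatKCdg$ (this composition, not derived tensor products per se, is the source of the homotopy laxness, via Proposition~\ref{prop-bimod-approximation-is-quasi-iso-lax-on-tensor-and-hom}); take the perfect hull; check that the resulting functor sends the generators of $\mathcal{I}_\basecat$ to null-homotopic objects; and finally invoke the universal property of the monoidal Drinfeld quotient (Theorem~\ref{theorem-the-universal-properties-of-monoidal-drinfeld-quotient}) to descend to $\hcat\basecat$. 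Your appeal to ``coherence once the generators are consistent'' does not supply this descent step.
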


Indeed, this $2$-action induces a representation of 
$\numGgp{\hcat{\basecat},\, \kk}$ and hence of $\halg\basecat$ on
$\numGgp{\fcat{\basecat},\, \kk}$. We analyze it in  
Section~\ref{subsec:grothfock} and show that it induces an embedding 
of $\phi\colon \falg{\basecat} \hookrightarrow \numGgp{\fcat{\basecat},\, \kk}$ as the subrepresentation generated by $1 \in 
\numGgp{\sym^0 \basecat,\, \kk} \cong \kk$. Thus
the $2$-full subcategory of $\fcat{\basecat}$
comprising the objects whose class in 
$\numGgp{\fcat{\basecat},\, \kk}$ lies in the image of $\phi$
gives a categorification of $\falg\basecat$.

In many cases, for example if $\numGgp{\basecat}$ satisfies a Künneth-type formula for symmetric
powers, the embedding $\phi$ above is an isomorphism. 
Then the whole of $\fcat\basecat$
is a categorification of $\falg\basecat$. 
In any case, we call $\fcat\basecat$ the \emph{categorical Fock space}\index{categorical Fock space} of $\basecat$.

Our third main result gives another sufficient condition for
$\fcat\basecat$ to exactly categorify $\falg\basecat$, while at the same time 
exhibiting an obstruction for $\pi$ to be an isomorphism.
\begin{IntroTheorem}[{Theorem~\ref{thm:piisothenphi}}]
	\label{thm:main3} 
	If~$\hcat\basecat$ categorifies $\halg\basecat$, 
	that is, if $\pi$ is an isomorphism, then $\fcat\basecat$ categorifies $\falg\basecat$. In particular, in such case for all $N \geq 0$
	{
		\[\numGgp{\symbcn} \cong \bigoplus_{1^{\lambda_1}2^{\lambda_2} \dots \dashv \ho} \Sym^{\lambda_1}\numGgp{\basecat} \otimes \Sym^{\lambda_2}\numGgp{\basecat} \otimes \cdots\]
		where the direct sum is taken over all integer partitions {of} $N$.}
\end{IntroTheorem}

We conjecture that the converse of this statement holds as well.
\begin{IntroConjecture}
	If $\fcat\basecat$ categorifies $\falg\basecat$, then $\pi\colon \halg\basecat \to \numGgp{\hcat{\basecat},\, \kk}$ is an isomorphism.
\end{IntroConjecture}

{ We provide examples in
	Section~\ref{subsec:genuine_categorification} where $\phi$ is an
	isomorphism. We also give an example in
	Section~\ref{section:counterexample} where it fails to be an
	isomorphism. In the latter case $\pi$ also can not be an
	isomorphism by Theorem~\ref{thm:main3}. In fact, the numerical 
	Grothendieck group decategorifications of $\hcat\basecat$
	and $\fcat\basecat$ are generally larger than the classical 
	Heisenberg algebra $\halg\basecat$ and its Fock space $\falg\basecat$. 
	However, our decategorifications always \emph{contain} $\halg\basecat$ 
	and $\falg\basecat$. It becomes an interesting new problem to compute 
	the surplus and find ways to interpret it.
	
	In the sequel paper
	\cite{GyengeLogvinenko-TheHeisenbergAlgebraOfAVectorSpaceAndHochschildHomology},
	we show that our $2$-category $\hcat\basecat$ can also be decategorified
	using the Hochschild homology $\hh$. Specifically, we settle some
	foundational issues to define the Heisenberg algebra 
	$\halg\basecat^H$ of the $\mathbb{Z}_2$-graded vector space
	$\hh(\basecat)$. We then prove the following:
	\begin{IntroTheorem*}
		[\cite{GyengeLogvinenko-TheHeisenbergAlgebraOfAVectorSpaceAndHochschildHomology}]
		For any smooth and proper \dg category $\basecat$:
		\begin{enumerate}
			\item There exists an injective map $\pi^H\colon \halg\basecat^H
			\longrightarrow \hh(\hcat\basecat)$. 
			\item The map $\pi^H$ and the $2$-representation $\Phi_\basecat$ induce
			an action of $\halg\basecat^H$ on $\hh(\fcat{\basecat})$.
			There is an injective map 
			$\phi^H\colon \falg\basecat^H \hookrightarrow \hh(\fcat{\basecat})$
			which embeds the Fock space $\falg\basecat^H$ of $\halg\basecat^H$
			as the subrepresentation generated by $1 \in \hh(\fcat{\basecat})$. 
			\item The map $\phi^H$  is always an isomorphism 
			and therefore $\fcat\basecat$ always categorifies $\falg\basecat^H$. 
		\end{enumerate}
	\end{IntroTheorem*}
	
	This leads us to conjecture the following:
	\begin{IntroConjecture*}
		The map $\pi^H$ is always an isomorphism, so $\hcat\basecat$ always 
		categorifies $\halg\basecat^H$. 
\end{IntroConjecture*}}

\section{Relation to earlier results}
Our results recover as special cases the earlier
Heisenberg categorification and Fock space action results mentioned
above. We bring forward these specialisations
throughout the paper as sequences of examples; here we just preview them briefly.
For $\basecat=\kk$, the field $\kk$ considered as a single object \dg
category concentrated in degree $0$, our category $\hcat\basecat$ is a \dg
enhancement of Khovanov's original category
\cite{khovanov2014heisenberg}; see Examples~\ref{ex:Khovanov-add},~\ref{ex:Khovanov-dg} and ~\ref{ex:khovanovnumgrp}.  When $X$ is a smooth and projective
variety and $\basecat$ its \dg enhanced coherent derived category, 
a subcategory of $\hcat{\basecat}$ categorifies the
Heisenberg algebra modeled on the numerical K-theory of $X$. Its
action on $\fcat\basecat$ constructed in Theorem \ref{thm:main2}
coincides, after taking homotopy categories, with that of Krug
\cite{krug2018symmetric}; see Examples~\ref{ex:Xsmoothpropenh},~\ref{ex:symcomp}, ~\ref{ex:Krug_part1} and ~\ref{ex:Krug_part2}. This answers the questions raised in
\cite[Section~3.5]{krug2018symmetric}. When $X$ is Calabi-Yau, the
direct sum of the Hochschild (co)homologies of $X$ carries the structure
of a Frobenius algebra. In this case our categories essentially
coincide with those of \cite{rosso2017general}, although we do not
consider super-Frobenius algebras. Let $\Gamma \subset \SL(2,\CC)$ be 
a finite subgroup and let $\basecat$ be the \dg enhanced derived category 
of coherent sheaves supported on the exceptional divisor $E$ of the 
minimal resolution $X$ of the quotient singularity
$\mathbb{C}^2/\Gamma$. Then our construction yields the Heisenberg
category constructed by Cautis and Licata \cite{cautis2012heisenberg}, 
see Examples~\ref{ex:CautisLicata_part1},
~\ref{ex:CautisLicata_part3} and ~\ref{ex:CautisLicata_part2}. 

There are several advantages to our approach compared to the earlier
ones. Our definition allows any \dg category $\basecat$ as
the input of the machinery. This fits well into the
framework of noncommutative motives \cite{tabuada2015noncommutative}.
We do not need the form $\chi$ on the Grothendieck
group to be symmetric. In particular, if $\basecat$ comes from a
variety, the latter does not have to be a Calabi-Yau. In fact, our
construction works with $\basecat$ being a \dg enhancement of any smooth
and proper scheme $X$, as opposed to the construction in 
\cite{cautis2012heisenberg} which is specific to the case where $X$
is (a local model of) the minimal resolution of a Kleinian surface 
singularity. Finally, working with \dg categories, we obtain a
natural framework for working with complexes of operators, as is
necessary when categorifying alternating sums which appear, for
example, in the Frenkel--Kac construction \cite[Chapter~7]{frenkel1989vertex}.

\section{The additive construction} We now describe our construction of 
$\hcat\basecat$ in more detail. We begin with the simpler additive
construction. 

In categorification, one often encounters the following diagram of
categories and functors: 
\begin{equation}\label{eq:E_F_adj}
	\begin{tikzcd}
		\cat C \arrow[r, bend left, "\mathsf{E}"] &
		\cat D \arrow[l, bend left, "\mathsf{F}"]
	\end{tikzcd}
\end{equation}
Frequently, these functors are required to be biadjoint.
For example, in Khovanov's Heisenberg category
\cite{khovanov2014heisenberg} the generating objects $Q_{+}$ and
$Q_{-}$ are biadjoint, while in the Cautis--Licata categorification
\cite{cautis2012heisenberg} the $1$-morphisms $\PP_i$ and $\QQ_i$ are
biadjoint up to a shift.

The biadjointness assumption can be a powerful tool, 
but it can also be very restrictive.
For example, in Krug's action of a Heisenberg algebra on derived categories of symmetric quotient stacks \cite{krug2018symmetric} the functors $Q_\beta^{(n)}$ are only \emph{right} adjoint to $P_\beta^{(n)}$.

Inspired by \cite{bondal1989representable}, we use Serre
functors to overcome this. In \eqref{eq:E_F_adj}, if $\mathsf{E}$ is the left adjoint of $\mathsf{F}$ and $S_{\cat C}$ and $S_{\cat D}$ are Serre functors on
$\cat C$ and $\cat D$, then 
$S_{\cat D}^{\vphantom{-1}} \mathsf E S_{\cat C}^{-1}$
is the right adjoint of $\mathsf F$. We use this to relax Khovanov's 
biadjunction condition for our categorification. 

Thus, let $\basecat$ be a $\Hom$-finite graded 
$\kk$-linear category endowed with a Serre functor $S$.
To construct the \emph{additive Heisenberg category} $\hcatadd\basecat$
we first construct a simpler $2$-category $\hcatadd*\basecat$ 
whose objects are the integers $\ho \in \mathbb{Z}$
and whose $1$-morphisms are freely generated by  
$$ \PP_a\colon {\ho} \rightarrow {\ho + 1} \quad \text{ and } 
\QQ_a\colon {\ho} \rightarrow {\ho -  1} $$ 
for each $a \in \basecat$ and $\ho \in \mathbb{Z}$. The identity 
$1$-morphism of each $\ho$ is denoted by $\hunit$.

The $2$-morphisms of $\hcatadd*\basecat$ we define below 
ensure that $\PP_a$ is the left adjoint of $\QQ_a$. Motivated by the above, 
we also ensure that $\PP_{Sa}$ is the right adjoint of $\QQ_a$.
Thus, we have 
\begin{equation}
	\label{eqn-intro-P-Q-adjunctions}
	\PP_a \dashv \QQ_{a} \dashv \PP_{Sa}.
\end{equation}

We define the $2$-morphisms by planar string diagrams 
similar to those of Khovanov \cite{khovanov2014heisenberg}; 
an example is \eqref{eq:diagram} above. Similarly to the work of 
Cautis and Licata \cite{cautis2012heisenberg} our strings 
are decorated by morphisms of $\basecat$. For
every $\alpha \in \Hom_\basecat(a,b)$ we have vertical oriented strings
\[
\begin{tikzpicture}[baseline={(0,0.4)}]
	\draw[->] (0,0) node[below] {$\PP_a$} -- node[label=right:{$\alpha$}, dot, pos=0.5] {} (0,1) node[above] {$\PP_{b}$};
\end{tikzpicture}
\qquad\text{and}\qquad
\begin{tikzpicture}[baseline={(0,0.4)}]
	\draw[<-] (0,0) node[below] {$\QQ_a$} -- node[label=right:{$\alpha$}, dot, pos=0.5] {} (0,1) node[above] {$\QQ_{b}$};
\end{tikzpicture}
\]
As a shorthand, the strings decorated by the identity morphism are 
drawn unadorned. Strings are also allowed to cross and bend. 
Thus, for any $a,b \in \basecat$ we have the crossings
\[
\begin{tikzpicture}[baseline={(0,0.4)}]
	\draw[<-] (0,0) node[below] {$\QQ_{a}$}-- (1,1) node[above] {$\QQ_{a}$};
	\draw[<-] (1,0) node[below] {$\QQ_{b}$}-- (0,1) node[above] {$\QQ_{b}$};
\end{tikzpicture}, 
\begin{tikzpicture}[baseline={(0,0.4)}]
	\draw[->] (0,0) node[below] {$\PP_{a}$} -- (1,1) node[above] {$\PP_{a}$};
	\draw[<-] (0,1) node[above] {$\PP_{b}$} -- (1,0) node[below] {$\PP_{b}$};
\end{tikzpicture}, 
\begin{tikzpicture}[baseline={(0,0.4)}]
	\draw[->] (0,0) node[below] {$\PP_{a}$} -- (1,1) node[above] {$\PP_{a}$};
	\draw[->] (0,1) node[above] {$\QQ_{b}$} -- (1,0) node[below] {$\QQ_{b}$};
\end{tikzpicture}, 
\begin{tikzpicture}[baseline={(0,0.4)}]
	\draw[->] (1,0) node[below] {$\PP_{b}$} -- (0,1) node[above] {$\PP_{b}$};
	\draw[<-] (0,0) node[below] {$\QQ_{a}$} -- (1,1) node[above] {$\QQ_{a}$};
\end{tikzpicture}. 
\]
The cups and caps that appear at the bends need to take into account the Serre
functor. For any $a \in \basecat$ we have the following cups and caps 
\begin{equation}
	\label{eqn-intro-cups-and-caps}
	\begin{tikzpicture}[baseline={(0,0.25)}]
		\draw[->] (0,0) node[below] {$\PP_a$} arc[start angle=180, end angle=0, radius=.5] node[label=above:{$\hunit$},pos=0.5]{} node[below] {$\QQ_a$};
	\end{tikzpicture},\quad
	\begin{tikzpicture}[baseline={(0,0.25)}]
		\draw[->] (0,0) node[below] {$\PP_{Sa}$} arc[start angle=0, end angle=180, radius=.5] node[label=above:{$\hunit$},pos=0.5]{} node[below] {$\QQ_{a}$};
	\end{tikzpicture},\quad
	\begin{tikzpicture}[baseline={(0,-0.25)}]
		\draw[->] (0,0) node[above] {$\QQ_a$} arc[start angle=0, end angle=-180, radius=.5] node[label=below:{$\hunit$},pos=0.5]{} node[above] {$\PP_{Sa}$};
	\end{tikzpicture},\quad
	\begin{tikzpicture}[baseline={(0,-0.25)}]
		\draw[->] (0,0) node[above] {$\QQ_{a}$} arc[start angle=-180, end angle=0, radius=.5] node[label=below:{$\hunit$},pos=0.5]{} node[above] {$\PP_{a}$};
	\end{tikzpicture}.
\end{equation}

As in \cite{khovanov2014heisenberg}, the planar diagrams generated by the above
are subject to a number of relations. The full list is in 
Chapter~\ref{sec:additive-Heisenberg-2cat}. For example, for any $a
\in \basecat$ we have the straightening relations 
\begin{equation*}
	\begin{tikzpicture}[baseline={(0,0.9)}, scale=0.75]
		\draw[->] (0,0)  node[below] {$\PP_a$} -- (0,1)  arc[start angle=180,
		end angle=0, radius=.5] arc[start angle=-180, end angle=0, radius=.5]
		-- (2,2) node[above] {$\PP_a$};
		\draw (2.5,1) node {$=$};
		\draw[->] (3,0) node[below]  {$\PP_a$}  -- (3,2) node[above] {$\PP_a$};
		\draw (3.5,1) node {$=$};
		\draw[->] (6,0) node[below] {$\PP_a$}  -- (6,1)  arc[start angle=0, end angle=180, radius=.5] arc[start angle=0, end angle=-180, radius=.5] -- (4,2) node[above] {$\PP_a$};
	\end{tikzpicture},
	\quad
	\begin{tikzpicture}[baseline={(0,0.9)}, scale=0.75]
		\draw[<-] (0,0)  node[below] {$\QQ_a$} -- (0,1)  arc[start angle=180,
		end angle=0, radius=.5] arc[start angle=-180, end angle=0, radius=.5]
		-- (2,2) node[above] {$\QQ_a$};
		\draw (2.5,1) node {$=$};
		\draw[<-] (3,0) node[below]  {$\QQ_a$}  -- (3,2) node[above] {$\QQ_a$};
		\draw (3.5,1) node {$=$};
		\draw[<-] (6,0) node[below] {$\QQ_a$}  -- (6,1)  arc[start angle=0, end angle=180, radius=.5] arc[start angle=0, end angle=-180, radius=.5] -- (4,2) node[above] {$\QQ_a$};
	\end{tikzpicture},
\end{equation*}
ensuring the $2$-categorical adjunctions
\eqref{eqn-intro-P-Q-adjunctions} with units and counits given by the caps
and cups \eqref{eqn-intro-cups-and-caps}. 

The relations on the planar string diagrams take into account the Serre functor. The details are
in Chapter~\ref{sec:additive-Heisenberg-2cat}, while here we give 
one representative example. In Khovanov's category one has the 
\enquote{biadjunction}\index{biadjunction} or
\enquote{bubble}\index{bubble relation} relation specifying 
that the diagram composition
\[
\hunit \xrightarrow{\unit} \QQ \PP \xrightarrow{\counit} \hunit,
\quad \quad\textrm{pictorially}\quad \quad\
\begin{tikzpicture}[baseline={(0,-0.5ex)}]
	\draw[decoration={markings, mark=at position 0.27 with {\arrow{>}}, mark=at position 0.76 with {\arrow{>}}}, postaction={decorate}] (0.5,0)
	arc[start angle=0, end angle=180, radius=.5] node[label=above:{$\hunit$}, pos=0.5] {}
	--
	(-0.5,0)
	arc[start angle=180, end angle=360, radius=.5] node[label=below:{$\hunit$}, pos=0.5] {}; \end{tikzpicture} 
\]
is the identity. Here we set $\QQ = Q_-$ and $\PP = Q_+$ in the notation of \cite{khovanov2014heisenberg}, and the first map is the unit of $(\PP,\QQ)$-adjunction, while the second map is the counit of $(\QQ,\PP)$-adjunction.

In the absence of biadjunction, the above cannot possibly hold.
Instead, we demand that for any 
$\alpha \in \Hom_{\basecat}(a,\, Sa)$ the composition
\[
\hunit \xrightarrow{\unit} \QQ_a\PP_a \xrightarrow{\left(\id_{\QQ_a}\right)\alpha} \QQ_a\PP_{Sa} \xrightarrow{\counit} \hunit,
\quad \quad\textrm{pictorially}\quad \quad\
\begin{tikzpicture}[baseline={(0,0.0)}]
	\draw[decoration={markings, mark=at position 0.29 with {\arrow{>}}, mark=at position 0.79 with {\arrow{>}}}, postaction={decorate}] (1,0)
	--
	node[label=right:{$\alpha$}, dot, pos=0.5] {}
	(1,0.25)
	arc[start angle=0, end angle=180, radius=.5] node[label=above:{$\hunit$}, pos=0.5] {}
	--
	(0,0)
	arc[start angle=180, end angle=360, radius=.5] node[label=below:{$\hunit$}, pos=0.5] {};
\end{tikzpicture}
\]
is the multiplication by the \emph{Serre trace}\index{Serre trace} $\Tr(\alpha) \in \kk$, 
defined in Section~\ref{subsec:serre}. 

Finally, as in some previous works on the categorification of Heisenberg 
algebras, having constructed the smaller $2$-category $\hcatadd*\basecat$ 
$1$-generated only by $\PP_a = \PP_a^{(1)}$ and $\QQ_a = \QQ_a^{(1)}$ for 
$a \in \basecat$, we define $\hcatadd\basecat$ to be its idempotent
completion. The remaining elements $\PP_a^{(n)}$ and $\QQ_a^{(n)}$ 
are then the direct summands of $1$-compositions
$\PP_a^{n}$ and $\QQ_a^{n}$ defined by the symmetrising idempotents
of the action of the permutation group $\SymGrp n$ by braid
diagrams. Thus, for constructing a $2$-representation of $\hcat\basecat$
one only needs to specify the actions of $\PP_a$ and $\QQ_a$.

In Section~\ref{subset:cat_fock_add} we give such an action on 
the categorical version of the Fock space, consisting of the categories 
of  $\SymGrp\ho$-equivariant objects in $\basecat^{\otimes\ho}$.

\section{The \texorpdfstring{\dg}{DG} construction}

From the viewpoint of algebraic geometry, we want to work   
with a \dg category $\basecat$ which Morita enhances\index{Morita
enhancement} the derived category
of an algebraic variety $X$. This means that the compact derived 
category $\catDc(\basecat)$ of \dg modules over $\basecat$ is equivalent to 
the bounded derived category $\catDbCoh{X}$ of coherent sheaves on $X$. 
{ This is different from the older notion of a (non-Morita) DG
	enhancement, which required $\basecat$ to have special properties
	(being pre-triangulated) and the triangulated category it enhanced
	was $\Hzero(\basecat)$. The two notions are connected: if $\basecat$
	Morita enhances $\catDbCoh{X}$, then the \em perfect hull \rm $\hperf
	\basecat$ enhances it in the usual sense. On triangulated level, 
	the perfect hull corresponds to taking the Karoubi-completed 
	triangulated hull. Thus, with Morita enhancements we can work with 
	smaller \dg categories which explicitly enhance only a small part of 
	the triangulated category from which the rest can be generated by 
	taking cones, shifts, and idempotent completions\index{idempotent completion}. 
	
	A nice example is provided by the symmetric quotient stacks.  
	A naive symmetric power of a triangulated category is not triangulated.  
	In \cite{SymCat} Kapranov and Gantner took a pretriangulated category
	$\A$ and defined its completed $n$-th symmetrical power 
	$\widehat{\sym}^n\A$ which ensured that $\Hzero(\widehat{\sym}^n\A)$
	is the correct symmetric power of $\Hzero(\A)$. In 
	\S\ref{subsec:equivariant_cats} we give for any \dg category $\A$ 
	a simpler construction $\mathcal{S}^n \A$ which ensures that 
	$\catDc(\mathcal{S}^n \A)$ is the correct symmetric power of $\catDc(\A)$. 
	It is a categorification of the skew group algebra construction and
	its perfect hull coincides 
	with the Kapranov-Gantner's $\widehat{\sym}^n\A$ on 
	the \dg level (see Lemma
	\ref{lemma-A-rtimes-G-modules-are-G-equiv-A-modules}). 
	It is, in a sense, the smallest natural \dg category which does this job.
	In particular, when $\basecat$ Morita enhances $\catDbCoh{X}$, 
	$\symbcn$ Morita enhances the symmetric quotient stack $[X^N/S_N]$. 
}

Let $\basecat$ be a smooth and proper \dg category 
(see Chapter~\ref{sec:dg-prelim} for a review on \dg categories).
Then $\hperf \basecat$ always possesses a \emph{homotopy Serre functor}\index{homotopy Serre functor}, 
i.e.~a quasi-autoequivalence $S$ together with quasi-isomorphisms
\[
\eta_{a,b}\colon \homm_\basecat(a,b) \to \homm_\basecat(b,Sa)^*,
\]
natural in $a,b \in \basecat$ (see Section~\ref{subsec:dg-homotopy-serre}).
In other words, $S$ is only a Serre functor \emph{up to homotopy}.

Thus the adjunction relation $\QQ_a^{(n)} \dashv \PP_{Sa}^{(n)}$
in the \dg Heisenberg category $\hcat\basecat$ needs to be
homotopically weakened. One option would be to upgrade $\hcat\basecat$ to 
an $(\infty,2)$-category and have the additional homotopical 
information come from the topology of string diagrams. 
However, at the moment the authors still find it difficult 
to construct $(\infty, 2)$-categories by means of generators and relations. 
In this paper we take a different approach which stays entirely within 
the realm of \dg categories.

Our main idea is to introduce three sets of generating objects $\PP_a$, $\QQ_a$ and $\RR_a$, related by strict adjunctions 
$\PP_a \dashv \QQ_a$ and $\QQ_a \dashv \RR_a$.
To relate the left and right adjoints of $\QQ_a$, we add for each $a
\in \basecat$ the starred string $2$-morphism
\begin{equation*}
	\starmap a\colon  
	\begin{tikzpicture}[baseline={(0,0.4)}]
		\draw[->] (0,0) node[below] {$\PP_{Sa}$} -- node[serre, pos=0.5] {} (0,1) node[above] {$\RR_{a}$};
	\end{tikzpicture}. 
\end{equation*}
By the considerations above, all these $\starmap a$ should be homotopy 
equivalences. To impose this in a consistent way, without having to
specify the higher homotopies by hand, we take 
the Drinfeld quotient by the cone of $\starmap{a}$.
This makes $\starmap{a}$ a homotopy equivalence, and thus makes 
each $\PP_{Sa}$ a homotopy right adjoint of $\QQ_a$. 

Thus, we first define a strict \dg $2$-category $\hcat*\basecat$ 
with objects $\ho \in \mathbb{Z}$, $1$-morphisms freely generated by 
$\PP_a$, $\QQ_a$ and $\RR_a$, and $2$-morphisms given by planar string 
diagrams similar to those in $\hcatadd*\basecat$ with the addition of 
the star-morphisms $\starmap a\colon \PP_{Sa} \to \RR_a$.
We then take the h-perfect hull $\bihperf(\hcat*\basecat)$ 
to obtain a \dg bicategory whose $1$-morphism \dg categories are 
pretriangulated and homotopy Karoubi complete\index{homotopy Karoubi
completion}. Finally, we define 
$\hcat\basecat$ to be the Drinfeld quotient of $\bihperf(\hcat*\basecat)$ 
by the two-sided ideal $I_{\basecat}$ generated by
the cones of $\starmap{a}$ and of another $2$-relation 
we only want to hold up to homotopy. This is one of the subtler
points of our construction: the original Drinfeld quotient
construction \cite{Drinfeld-DGQuotientsOfDGCategories} is very much
incompatible with monoidal structures such as that of
a $1$-composition in a $2$-category. However, this was already
considered by Shoikhet \cite{Shoikhet-DifferentialGradedCategoriesAndDeligneConjecture} who refined Drinfeld's construction to obtain on it the structure of a 
\emph{weak Leinster monoid}. We use this to define the notion of
a \em monoidal Drinfeld quotient \rm\index{monoidal Drinfeld
quotient} of a \dg bicategory by 
a two-sided ideal of $1$-morphisms. It has all the expected universal
properties. The price is that $\hcat\basecat$ becomes a 
$\HoDGCat$-enriched bicategory. In other words, its $1$-composition is
now given by quasi-functors: compositions of 
genuine \dg functors with formal inverses of quasi-equivalences.
However, the homotopy category of $\hcat\basecat$ is a genuine 
$2$-category whose $1$-morphism categories are triangulated and
Karoubi-complete. In particular, it recovers all the combinatorics 
of the additive setting.

In Chapter~\ref{sec:cat_fock} we construct a categorical version of
the Fock space for the \dg setting. As noted in
\cite{bondal2004grothendieck}, the naive tensor product of 
categories does not behave well with respect to
triangulated structures. In the \dg enhanced setting this is solved 
by taking the h-perfect hull of the naive tensor product
(often called the completed tensor product). This was one of
our reasons to develop the machinery of Heisenberg categories on the level 
of \dg categories. 

We thus proceed in two steps again: first, we define a strict
$2$-functor $\Phi'_{\basecat}$ from $\hcat*{\basecat}$ to the strict
\dg $2$-category $\DGModCat$ of \dg categories, \dg functors between
their module categories and natural transformations.  The image of
$\Phi_\basecat'$ is contained in the $1$-full subcategory
$\fcat*\basecat$ whose objects are the symmetric powers $\symbcn$.
This concrete definition is at the heart of our categorical Fock
space representation. 

We next apply some abstract \dg wizardry. We use the bimodule
approximation $2$-functor $\bimodapx$  to approximate the
$1$-morphisms of $\fcat*{\basecat}$ by \dg bimodules. This yields a
homotopy strong $2$-functor from $\hcat*{\basecat}$ into the
bicategory $\EnhCatKCdg$ of enhanced triangulated
categories. We next take perfect hulls and verify that on the homotopy
level the resulting $2$-functor $\bihperf(\hcat*{\basecat})
\rightarrow \EnhCatKCdg$ kills all $1$-morphisms of $I_{\basecat}$ and
thus descends to a homotopy strong $2$-functor $\Phi_{\basecat}\colon
\hcat{\basecat} \rightarrow \EnhCatKCdg$. Its image 
is our categorical Fock space $\fcat{\basecat}$. 

\section{Results on \texorpdfstring{\dg}{DG} categories}
\label{subsec:intro:dg_results}

To construct the \dg Heisenberg algebra and its Fock space representation, 
we needed to develop several new results on \dg categories.
Most of these are $2$-categorical analogues of common 
\dg-categorical constructions. We hope that these results and
techniques may have applications outside of our work. We thus summarise
them here in the order in which we perceive them to be potentially useful
to others. For the technical details, see the indicated sections.

In Section~\ref{subsec:monoidal-drinfeld-quotients}, we use Shoikhet's
construction \cite{Shoikhet-DifferentialGradedCategoriesAndDeligneConjecture}
to define a \emph{monoidal Drinfeld quotient} $\bicat C/\bicat I$ 
of a \dg bicategory $\bicat C$ by a two-sided 
$1$-morphism ideal 
$\bicat I$. We want this to be a $2$-category with the same objects
as $\bicat C$ whose $1$-morphism categories are Drinfeld quotients of 
those of $\bicat C$ by $\bicat I$. The problem is to define 
the $1$-composition, as the interchange law would force relations to 
exist between the contracting homotopies, which were freely
introduced. Following Shoikhet
\cite{Shoikhet-DifferentialGradedCategoriesAndDeligneConjecture}, we
define $1$-composition by resolving tensor products of
Drinfeld quotients of $1$-morphism categories of $\bicat C$
by a refined construction which admits a natural $1$-composition 
functor. The resulting $1$-composition is then a quasi-functor in 
the homotopy category $\HoDGCat$ of \dg categories. In  
Theorem \ref{theorem-the-universal-properties-of-monoidal-drinfeld-quotient}
we prove that the resulting $\HoDGCat$-enriched bicategory $\bicat C/\bicat I$
has the expected universal property with respect to the $2$-functors
out of $\bicat C$ which are null-homotopic on the $1$-morphisms of 
$\bicat I$. 

In Chapter~\ref{section-enhanced-categories}, we define the \dg bicategory 
$\EnhCatKCdg$ of enhanced triangulated categories. It is where the 
main action of this paper takes place. Its homotopy category, the strict 
$1$-triangulated $2$-category $\EnhCatKC$ has been 
understood for a while
\cite{Toen-TheHomotopyTheoryOfDGCategoriesAndDerivedMoritaTheory}{,}\cite{lunts2010uniqueness}. However, there are well-known technical difficulties in 
constructing a \dg bicategory enhancing it. We propose two 
constructions which are both almost a \dg bicategory. One uses
the technology of bar-categories of modules 
\cite{AnnoLogvinenko-BarCategoryOfModulesAndHomotopyAdjunctionForTensorFunctors}. The result is a homotopy unital \dg bicategory. Its unitor morphisms are 
homotopy equivalences with canonical homotopy inverses which are 
genuine inverses on one side. This approach is more elegant and 
its structures are explicitly defined and thus easily computable.  
Alternatively, we use our new notion of the
monoidal Drinfeld quotient\index{monoidal Drinfeld quotient} to construct $\EnhCatKCdg$ as the quotient
of the Morita $2$-category of \dg bimodules by acyclics. The result is
a bicategory, but enriched over $\HoDGCat$ and not
$\DGCat$. This definition is simpler, not requiring familiarity with
\cite{AnnoLogvinenko-BarCategoryOfModulesAndHomotopyAdjunctionForTensorFunctors}, but less explicit and less practical to compute with. Either
construction works well for the purposes of this paper. 

In Section~\ref{subsec:hperf}, we define the
\emph{h-perfect hull} of a \dg bicategory $\bicat C$. It is
a \dg bicategory with the same objects as $\bicat C$ whose 
$1$-morphism categories are h-perfect hulls of those of 
$\bicat C$. 

In Section~\ref{section-bimodule-approximation}, 
we define the \em bimodule approximation \rm 
$2$-functor $\bimodapx$ which approximates 
\dg functors by \dg bimodules. Some of these formalities are
well-known to experts \cite[Section~6.4]{Keller-DerivingDGCategories}, 
but it may be useful to have them written down. 

In Section~\ref{subsec:dg-homotopy-serre} we define
the notion of a \emph{homotopy Serre functor} and show 
that every smooth and proper \dg category $\basecat$ admits one
on $\hperf \basecat$. Again, this is well-known to experts, but 
the point is that the genuine Serre functor constructed on $\Hzero(\hperf
\basecat)$ in 
\cite{Shklyarov-OnSerreDualityForCompactHomologicallySmoothDGAlgebras}
lifts together with all its natural morphisms to $\hperf \basecat$
itself.

\section{Further questions and remarks}
Next, we outline some further questions and related results that we believe
to be interesting for future investigations.  

Gal \cite{gal2016hopf} showed that the structure of a Hopf category on a semisimple symmetric monoidal abelian category implies the existence of a categorical Heisenberg action in the sense of Khovanov.
It would be interesting to see whether this construction can be generalised to obtain a category isomorphic to $\hcat\basecat$ for any $\basecat$.
Several examples of categorifications of algebraic structures seemingly related to ours carry actions of braid groups. It would also be interesting to see if there is a deeper relationship between our categorification, Hopf categories and braid group actions.

Extending the work of Grojnowski and Nakajima, Lehn \cite{lehn1999chern} constructed Virasoro operators on the cohomology of Hilbert schemes of points of smooth projective surfaces. The present article is motivated in part by a desire to generalise this construction to the Heisenberg algebra action on derived categories of symmetric quotient stacks. Such operators should arise as convolutions of certain complexes of $2$-morphisms on $\hcat\basecat$.
The desire to obtain a good framework for working with such complexes is one of the reasons we work with \dg categories in this paper.
We intend to return to this question in future work.

In a different direction, the BGG category $\mathcal O$ of prominence in representation theory has a Serre functor (see Example~\ref{ex:bgg} and \cite{Koppensteiner:Traces}).
It would be enlightening to understand the associated Heisenberg category and its Fock space in detail.

Theorem~\ref{thm:main3} shows that it is interesting to consider 
when the morphism \eqref{eq:decat_morphism} is an isomorphism.
Following \cite{brundan2018degenerate}, one way to understand
surjectivity of this morphism seems to be via a suitable
generalisation of degenerate affine Hecke algebras and their
categorifications. This may also lead to the answers for 
the questions raised in \cite[Section~10.3]{cautis2012heisenberg}.

\section{Structure of the paper}

The structure of the paper is as follows. In
Chapter~\ref{sec:general-prelim} we give preliminaries 
relevant to both the additive and the \dg settings.
We recall the concept of Serre functors and introduce the idempotent modification of Heisenberg
algebras which we categorify. 
In Chapter~\ref{sec:additive-Heisenberg-2cat} we construct the additive Heisenberg $2$-category $\hcatadd\basecat$ and investigate its properties.

In Chapter~\ref{sec:dg-prelim} we give preliminaries required  for the
\dg setting. We encourage the reader uninterested in \dg
technicalities to skip this section and refer back to it when needed.

In Chapter~\ref{sec:dg-Heisenberg-2-cat} we construct the Heisenberg
$2$-category $\hcat\basecat$ in the \dg setting. In
Chapter~\ref{sec:structure}, 
we investigate the structure of $\hcat\basecat$ and, in
particular, deduce the categorical version of the Heisenberg
commutation relations and prove Theorem~\ref{thm:main1}.
In Chapter~\ref{sec:cat_fock} we construct the categorical Fock space
representation $\fcat\basecat$ and the $2$-functor $\hcat\basecat \to
\fcat\basecat$, and prove Theorem~\ref{thm:main2}.
We note that the proof of Theorem~\ref{thm:main1} depends 
on Theorem~\ref{thm:main2}.
Finally, in Chapter~\ref{sec:strfock} we investigate the properties of
$\fcat\basecat$ and prove Theorem~\ref{thm:main3}.

\section{Acknowledgements}  The authors are thankful to Pieter
Belmans, Alexander Efimov, Adam Gal, Elena Gal, Eugene Gorsky, Dmitri Kaledin, Andreas Krug, Alexander Kuznetsov, Boris Shoikhet, Bal\'azs Szendr\H{o}i, Tam\'as Szamuely and Gon{\c{c}}alo Tabuada for helpful comments and discussions. The authors are especially grateful to the anonymous referees for their valuable comments, suggestions and questions that improved the manuscript.
This project received funding from EPSRC grant EP/R045038/1 and from
the European Union's Horizon 2020 research and innovation programme
under the Marie Sk\l odowska-Curie grant agreement No 891437. {
	Á.Gy.~was} also supported by the János Bolyai Research Scholarship of the Hungarian Academy of Sciences and by the National Research, Development and Innovation Fund of Hungary, within the Program of Excellence TKP2021-NVA-02 at the Budapest University of Technology and Economics.

\section{Notation}

Throughout the paper, $\kk$ is an algebraically closed field of 
characteristic $0$. All categories and functors are assumed to be $\kk$-linear.
By a variety we mean an integral, separated scheme of finite type over $\kk$. All of our tensor products are over $\kk$, unless indicated otherwise. The tensor product of two complexes over $\kk$ is understood as the total complex of the double complex containing the tensor products of the terms.

We always denote $2$-categories in bold (such as $\hcat\basecat$ or $\DGCat$) and $1$-categories in calligraphic letters (such as $\basecat$).
Objects in a $1$-category are denoted by lowercase Latin letters, while morphisms are denoted in lowercase Greek letters.

\chapter{Preliminaries}\label{sec:general-prelim}

\section{Serre functors}\label{subsec:serre}

Let $\A$ be a graded $\kk$-linear category with finite-dimensional
$\Hom$-spaces. A \enquote{graded $\kk$-linear} category means a category enriched in graded vector spaces.

A \emph{Serre functor}\index{Serre functor} on $\A$ is a degree zero autoequivalence $S$ of $\A$ equipped with isomorphisms 
\[
\eta_{a,b}\colon \Hom_\A(a,b) \isoto \Hom_\A(b,Sa)^*,
\]
natural in $a,b \in \A$ \cite{bondal1989representable}.
If a Serre functor exists, then it is unique up to an isomorphism \cite[Proposition 1.5]{BondalOrlov:2001:ReconstructionOfAVariety}.

\begin{Example}\label{ex:dbcoh-serre}
	If $X$ is a smooth and proper variety over $\kk$, then $\catDbCoh{X}$ admits a Serre functor $S = (-) \otimes_X \omega_X[\dim X]$, where $\omega_X$ is the canonical line bundle of $X$. 
\end{Example}

\begin{Example}\label{ex:bgg}
	Let $G$ be a reductive algebraic group over $\kk$, with Borel subgroup $B$.
	Then the category of Schubert-constructible sheaves on the flag variety $G/B$ has a Serre functor given by the square of the intertwining operator\index{intertwining operator} associated to the longest element of the Weyl group \cite{TiltingExercises}.
	We note that by Beilinson--Bernstein
localisation\index{Beilinson--Bernstein localisation} and the
Riemann--Hilbert equivalence\index{Riemann--Hilbert equivalence} this category is the same as the principal block of the Beilinson--Gelfand--Gelfand category $\mathcal O$ associated to the Lie algebra of $G$.
	The Serre functors for similar categories of importance to representation theory are further explored in \cite{GaitsgoryYomDin:2018:AnalogOfDelignLusztigDuality}.
\end{Example}

\begin{Remark}
	Serre functors are particularly useful for producing adjoint functors.
	If $F\colon \cat C \to \cat D$ is a functor between $\kk$-linear categories with Serre functors $S_{\cat C}$ and $S_{\cat D}$ respectively, then
	\[
	F^L \cong S_{\cat C}^{-1}F^R S_{\cat D},
	\]
	where $F^R$ and $F^L$ are the right and left adjoint of $F$.
	Indeed, for $x \in \cat C$ and $y \in \cat D$ one has
	\[
	\Hom_{\cat D}(y,\, Fx) \cong
	\Hom_{\cat D}(Fx,\, S_{\cat D}y)^* \cong
	\Hom_{\cat C}(x,\, F^RS_{\cat D}y)^* \cong
	\Hom_{\cat C}(S_{\cat C}^{-1}F^RS_{\cat D}y,\, x).
	\]
	Our usage of the Serre functor in the definition of the Heisenberg category is closely related to this observation.
\end{Remark}

The Serre functor $S$ induces a \emph{Serre trace\index{Serre trace}} map 
\begin{equation}\label{eq:Serre-trace-add}
	\Tr \colon \homm_{\A}(a,\, Sa) \rightarrow \kk, 
	\quad \quad
	\alpha \mapsto \eta_{a,a}(\id_{a})(\alpha).
\end{equation}

\begin{Proposition}
	\label{prop:serre_trace_cyclic_additive}
	Let $\C$ be a Hom-finite $\kk$-linear category which admits a Serre functor $S$.
	For any $a,b \in \C$ and any $\alpha \in \Hom_\C(a,\,b)$, $\beta \in \Hom_\C(b,\,Sa)$ we have 
	\begin{equation*}
		\Tr(\beta \circ \alpha) = (-1)^{\deg\alpha\deg\beta}\Tr(S\alpha \circ \beta).
	\end{equation*}
\end{Proposition}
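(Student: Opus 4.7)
The strategy is to evaluate the scalar $\eta_{a,b}(\alpha)(\beta) \in \kk$ in two different ways, by exploiting naturality of the Serre pairing $\eta$ in each of its two slots. Both $\Tr(\beta \circ \alpha)$ and $\Tr(S\alpha \circ \beta)$ will turn out to be essentially this same scalar, up to a Koszul sign.

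First I would invoke naturality of $\eta_{a,-}$ with respect to $\alpha\colon a \to b$, evaluated at the identity $\id_a \in \Hom_\C(a,a)$. The relevant naturality square expresses that applying $\alpha$ in the $b$-slot of $\eta$ on one side corresponds to postcomposition by $\alpha$ on $\beta$ in the other slot. This gives the identity
\[
  \eta_{a,b}(\alpha \circ \id_a)(\beta) = \eta_{a,a}(\id_a)(\beta \circ \alpha),
\]
i.e., $\eta_{a,b}(\alpha)(\beta) = \Tr(\beta \circ \alpha)$. No sign appears here since $\id_a$ is of degree zero.

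Next I would apply naturality of $\eta_{-,b}$ with respect to $\alpha\colon a \to b$, evaluated at $\id_b \in \Hom_\C(b,b)$. This time the naturality is contravariant in the first slot (precomposition by $\alpha$) but, under Serre duality, corresponds to postcomposition by $S\alpha$ in the dualised second slot. In the graded setting, one must commute $\alpha$ of degree $\deg\alpha$ past the pairing evaluation at $\beta$ of degree $\deg\beta$, and the Koszul sign rule produces the factor $(-1)^{\deg\alpha \deg\beta}$. Concretely,
\[
  \eta_{a,b}(\id_b \circ \alpha)(\beta) = (-1)^{\deg\alpha\,\deg\beta}\,\eta_{b,b}(\id_b)(S\alpha \circ \beta),
\]
i.e., $\eta_{a,b}(\alpha)(\beta) = (-1)^{\deg\alpha\,\deg\beta}\,\Tr(S\alpha \circ \beta)$. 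Combining with the first identity yields the claimed cyclicity.

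The main obstacle, and essentially the only real content of the argument, is careful bookkeeping of the Koszul sign in the second step. The cleanest way to handle this is to write out the commutative square defining naturality of $\eta$ in its first slot in the symmetric monoidal category of graded vector spaces, using the standard Koszul convention for the dualisation functor $(-)^*$ and noting that the two evaluation pairings $V \otimes V^* \to \kk$ and $V^* \otimes V \to \kk$ differ by a braiding sign. As a sanity check, in the ungraded case (everything in degree zero) the sign disappears and one recovers the familiar unsigned cyclicity $\Tr(\beta \circ \alpha) = \Tr(S\alpha \circ \beta)$ of the Serre trace, which in the prototypical example $\catDbCoh{X}$ is simply Serre duality on a smooth proper variety.
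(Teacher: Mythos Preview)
Your proposal is correct and follows essentially the same approach as the paper: both evaluate $\eta_{a,b}(\alpha)(\beta)$ in two ways by chasing $\id_a$ and $\id_b$ through the naturality squares for $\eta$ in each of its two slots, then compare. The only cosmetic difference is that in the paper's sign conventions the Koszul sign emerges from the covariant (second-slot) naturality square rather than the contravariant one, but this merely reflects a choice of convention for $(-)^*$ and $\C^{\opp}$ and yields the same identity.
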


\begin{proof}
	We note that if $\cat C$ is a graded category, then composition of two morphisms $\alpha$ and $\beta$ in $\cat C^{\opp}$ is twisted by $(-1)^{\deg\alpha\deg\beta}$.
	Thus acting on the first argument of the bifunctor\index{bifunctor} $\Hom_{\C}(-,-)\colon \C^{\opp} \times \C \to \catgrVect$ involves a sign twist.
	Naturality of $\eta$ therefore implies that the diagram
	\begin{equation*}
		\begin{tikzcd}
			\homm_\A(b,b)
			\ar{d}[']{(-1)^{\deg(-)\deg(\alpha)} (-) \circ \alpha}
			\ar{r}{\eta}[']{\sim}
			& 
			\homm_\A(b,Sb)^*
			\ar{d}{f(-) \mapsto 
				(-1)^{\deg(f)\deg(\alpha)}f\left(S\alpha \circ (-)\right)}
			\\
			\homm_\A(a,b)
			\ar{r}{\eta}[']{\sim}
			&
			\homm_\A(b,Sa)^*
			\\
			\homm_\A(a,a)
			\ar{r}{\eta}[']{\sim}
			\ar{u}{\alpha \circ (-)}
			&
			\homm_\A(a,Sa)^*.
			\ar{u}[']{f(-) \mapsto (-1)^{(\deg(f) +
					\deg(-))\deg(\alpha)}f\left((-) \circ \alpha\right)}
		\end{tikzcd}
	\end{equation*}
	commutes.
	
	Chasing $\id_b$ through the upper square and $\id_a$ through the lower square yields
	\[
	\Tr(S\alpha \circ -) = \eta(\alpha)(-) = (-1)^{\deg \alpha\deg(-)} \Tr(- \circ \alpha),
	\]
	whence the desired assertion follows. 
\end{proof}

\section{Heisenberg algebra\index{Heisenberg algebra}s}\label{subsec:heisenberg_algebra}

Recall that a lattice\index{lattice} is a free $\ZZ$-module $M$ of finite rank equipped with a bilinear form
\[ 
\chi\colon M \times M \to \ZZ, \quad v,w \mapsto \langle v,w \rangle_{\chi}.
\]
We do not require the form $\chi$ to be symmetric or antisymmetric; to the knowledge of the authors no treatment of Heisenberg algebras has been this general.
If the bilinear form $\chi$ on $M$ is degenerate, then the Heisenberg algebra defined as below has a non-trivial centre.
Thus it is common to assume that $\chi$ is non-degenerate and we do so from now on.

Let $(M,\chi)$ be a lattice.
As a preliminary definition of the Heisenberg algebra we let $\chalg{M} \coloneqq \chalg{(M,\chi)}$ to be the unital $\kk$-algebra with generators $p_{a}^{(n)}$, $q_{a}^{(n)}$ for $a \in M$ and integers $n\geq 0$ modulo the following relations for all $a,b\in M$ and $n,m\geq 0$:
\begin{equation}\label{eq:heisrel0}
	p_a^{(0)} = 1 = q_a^{(0)},
\end{equation}
\begin{equation}\label{eq:heisrel1}
	p_{a+b}^{(n)} = \sum_{k=0}^{n} p_{a}^{(k)}p_{b}^{(n-k)}
	\quad\text{and}\quad 
	q_{a+b}^{(n)} = \sum_{k=0}^{n}  q_{a}^{(k)}q_{b}^{(n-k)},
\end{equation}
\begin{equation}\label{eq:heisrel2}
	p_{a}^{(n)}p_{b}^{(m)} = p_{b}^{(m)}p_{a}^{(n)}
	\quad\text{and}\quad
	q_{a}^{(n)}q_{b}^{(m)} = q_{b}^{(m)}q_{a}^{(n)},
\end{equation}
\begin{equation}\label{eq:heisrel3}
	q_{a}^{(n)}p_{b}^{(m)} = 
	\sum_{k = 0}^{\mathclap{\min(m,n)}} s^k \langle a, b \rangle_{\chi}\, p_{b}^{(m-k)}q_{a}^{(n-k)}.
\end{equation}
Here for any pair of integers $k\ge 0$ and $r$ we set
\[ 
s^k r \coloneqq \binom{r+k-1}{k} = \frac{1}{k!}(r+k-1)(r+k-2)\dotsm(r+1)r,
\]
which for positive $r$ coincides with the dimension of the $k$-th symmetric power\index{symmetric power} of a vector space of dimension $r$, that is,
\[ s^k r = \dim (S^k(\CC^r)),\]
and for negative $r$ analogously
\[ s^k r = (-1)^k\dim (\Lambda^k(\CC^{-r})).\]
We use the convention that $p^{(n)}_a = q^{(n)}_b = 0$ for $n <0$.

Let $r = \operatorname{rank} M$ and fix an identification $M \cong \ZZ^r$.
Let $S$ and $T$ be integral $r \times r$ matrices which are invertible over $\ZZ$. In particular, both $S$ and $T$ are unimodular.
Moreover, the form 
\begin{equation}
	\label{eqn-heis-change-of-form}
	\langle a , b \rangle_{S\chi T} \coloneqq \langle S^{ t} a ,\, T b\rangle_{\chi}
\end{equation}
gives again a new pairing on $M$. It is non-degenerate if and only if $\chi$ is non-degenerate.
If $X$ denotes the matrix of $\chi$ in the chosen basis of $M$, then the matrix of $S \chi T$ 
is $SXT$. To the knowledge of the authors, the following observation has not yet appeared 
in the literature.

\begin{Lemma}
	Let $S$ and $T$ be as above.
	The algebras $\chalg{(M,\,\chi)}$ and $\chalg{(M,\, S \chi T)}$ are isomorphic.
\end{Lemma}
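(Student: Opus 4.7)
The plan is to construct the isomorphism as a relabelling of generators dictated by the matrices $T$ and $S^T$. Concretely, I would define a $\kk$-algebra homomorphism
\[
  \phi \colon \chalg{(M,\, S\chi T)} \longrightarrow \chalg{(M,\,\chi)}, \qquad p^{(n)}_a \mapsto p^{(n)}_{Ta}, \quad q^{(n)}_a \mapsto q^{(n)}_{S^T a},
\]
and verify that this is well-defined and bijective.

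For well-definedness, I would go through the defining relations \eqref{eq:heisrel0}--\eqref{eq:heisrel3} in turn. Relations \eqref{eq:heisrel0} and \eqref{eq:heisrel2} are preserved essentially trivially, since $Ta, Tb, S^Ta, S^Tb$ still lie in $M$. Relation \eqref{eq:heisrel1} uses only the $\ZZ$-linearity of $T$ and $S^T$, since $T(a+b) = Ta+Tb$ and similarly for $S^T$. The crux is the mixed commutator \eqref{eq:heisrel3}: applying $\phi$ to the left-hand side gives $q^{(n)}_{S^Ta} p^{(m)}_{Tb}$, which by the relation in $\chalg{(M,\chi)}$ equals
\[
  \sum_{k=0}^{\min(m,n)} s^k\!\langle S^T a,\, T b\rangle_\chi \cdot p^{(m-k)}_{Tb} q^{(n-k)}_{S^Ta},
\]
and by the definition \eqref{eqn-heis-change-of-form} of the twisted form we have $\langle S^T a, T b\rangle_\chi = \langle a, b\rangle_{S\chi T}$, so this is exactly $\phi$ applied to the right-hand side.

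For bijectivity, since $S$ and $T$ are unimodular, the matrices $T^{-1}$ and $(S^T)^{-1} = (S^{-1})^T$ are again integer matrices, so $T$ and $S^T$ define automorphisms of the abelian group $M$. One then defines the evident two-sided inverse of $\phi$ using $T^{-1}$ and $(S^{-1})^T$, which is well-defined by the same verification applied to the pairing $(S\chi T)$ twisted by $S^{-1}$ and $T^{-1}$; alternatively, one observes directly that $\phi$ is a bijection on the generating set and that its inverse on generators respects the relations.

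I do not foresee a genuine obstacle here: the statement is a change-of-basis assertion and all the work is in lining up the indices in \eqref{eq:heisrel3} with the twisted pairing. The one subtlety to keep an eye on is the transpose in $\langle S^T a, Tb\rangle_\chi$, which is exactly what forces the $q$-generators to be relabelled by $S^T$ (rather than by $S$) if the $p$-generators are relabelled by $T$; getting this asymmetry right is the only thing that distinguishes the argument from a one-line renaming.
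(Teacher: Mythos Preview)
Your proposal is correct and matches the paper's proof essentially verbatim: the paper defines the same map $p_a^{(n)} \mapsto p_{Ta}^{(n)}$, $q_a^{(n)} \mapsto q_{S^Ta}^{(n)}$, notes it is a bijection on generators since $S$ and $T$ are invertible, and checks the relations with \eqref{eq:heisrel3} reducing to \eqref{eqn-heis-change-of-form}. If anything, your write-up is slightly more explicit about the inverse and the linearity check for \eqref{eq:heisrel1}.
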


\begin{proof}
	Define a map $\chalg{(M,\, S \chi T)} \rightarrow \chalg{(M,\,\chi)}$
	on generators by 
	\begin{equation}\label{eq:heisiso}
		\begin{aligned}
			q_{a}^{(n)} & \mapsto q_{S^{ t} a}^{(n)} \quad \quad \text{ and } \quad \quad 
			p_{a}^{(n)} & \mapsto p_{Ta}^{(n)}.
		\end{aligned}
	\end{equation}
	As $S$ and $T$ are invertible, it is a bijection on the sets of
	generators. It remains to show that it respects the relations. This 
	is immediate for relations \eqref{eq:heisrel0}--\eqref{eq:heisrel2},
	while for relation \eqref{eq:heisrel3} it follows from
	\eqref{eqn-heis-change-of-form}. 
\end{proof}

\begin{Corollary}\label{cor:halg_iso_to_symmetric}
	The Heisenberg algebra on every lattice is isomorphic to one which is induced by a symmetric (in fact, a diagonal) form.
\end{Corollary}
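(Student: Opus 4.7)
The plan is to use Smith normal form. Given the lattice $(M,\chi)$ with $M \cong \mathbb{Z}^r$, pick any basis and let $X$ be the (non-degenerate) integer matrix of $\chi$. By Smith normal form, there exist unimodular integer matrices $S,T \in \mathrm{GL}_r(\mathbb{Z})$ such that $SXT = D$ is diagonal (with invariant factors on the diagonal). By the preceding lemma, $\chalg{(M,\chi)} \cong \chalg{(M, S\chi T)}$, and the matrix of the pairing $S\chi T$ is precisely $SXT = D$, which is diagonal, hence symmetric.

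The only technical point to verify is that the resulting diagonal form is non-degenerate, so that the target Heisenberg algebra is well defined on equal footing. This is immediate since Smith normal form is obtained by multiplying $X$ by invertible integer matrices, so $\det(D) = \det(S)\det(X)\det(T) = \pm \det(X) \neq 0$, confirming $D$ is non-degenerate.

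I do not anticipate any real obstacle: the argument is a one-line application of Smith normal form together with the substitution lemma proved just above. The only thing to keep in mind is that the lemma requires $S$ and $T$ to be invertible over $\mathbb{Z}$ (not merely over $\mathbb{Q}$), and this is exactly the content of Smith normal form, which produces unimodular $S$ and $T$.
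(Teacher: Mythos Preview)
Your proposal is correct and matches the paper's proof essentially verbatim: apply Smith normal form to the matrix $X$ of $\chi$ to obtain unimodular $S,T$ with $SXT$ diagonal, then invoke the preceding lemma. The extra remarks you make about non-degeneracy and unimodularity are fine but not strictly needed, since the lemma's hypotheses already require $S,T$ invertible over $\mathbb{Z}$ and non-degeneracy is preserved automatically.
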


\begin{proof}
	The Smith normal form of $\chi$ (or more precisely of its matrix $X$) provides matrices $S$ and $T$, such that $S\chi T$ (in fact, $SXT$) is diagonal.
\end{proof}

\begin{Remark}
	The result above says that every Heisenberg algebra arises as the
	Heisenberg algebra of a lattice with a symmetric pairing. 
	In the geometrical context, our lattice is the numerical Grothendieck 
	group\index{numerical Grothendieck group} 
	of an algebraic variety and our pairing is the Euler pairing\index{Euler pairing}. 
	Drawing loose parallels, it is 
	tempting to interpret the result above as saying that Heisenberg
	algebra is an intrinsically Calabi–Yau construction. It is certainly 
	the case in the original constructions by Khovanov
	\cite{khovanov2014heisenberg} who works on a point, by Cautis and Licata 
	\cite{cautis2012heisenberg} who work on a minimal
resolution\index{minimal resolution} of an $ADE$ singularity\index{ADE
singularity}, and by Grojnowski and Nakajima 
	\cite{grojnowski1995instantons, nakajima1997heisenberg} who make use
	of the Poincar{\'e} duality on cohomology. 
	
	The authors hope to revisit this issue in a future work which would
	extend our categorification\index{categorification} from Heisenberg agebras to the associated 
	vertex algebras.
\end{Remark}

\begin{Remark}
	When $\chi$ is symmetric, the matrices $S$ and $T$ can be chosen to be equal. Hence, they represent a base change on the underlying lattice $M$. Moreover, in this case there is another common set of generators of the Heisenberg algebra.
	It is given by polynomials (possibly with constant term) on the symbols $a_b(n)$ for $n\in \ZZ \setminus \{0\}$, $b \in M$.
	The set of relations between these is given by
	\[
	\bigl[a_{b}(m),a_{c}(n)\bigr] = \delta_{m,-n}  m\langle b,c\rangle_{\chi}.
	\]
	The proof that these (in the symmetric case) define the same algebra is given for example in \cite[Lemma 1.2]{krug2018symmetric}. The advantage of using the presentation \eqref{eq:heisrel0}--\eqref{eq:heisrel3} is that it also makes sense when $\chi$ is not symmetric. Hence, it is more natural in our context.
\end{Remark}

\subsection{Idempotent modification}
\label{subsubsec:idempotent_modification}

In this paper, we do not work with the Heisenberg algebra $\chalg{M}$ itself,  
but with its idempotent modification\index{idempotent modification} $\halg M$. We define it as
follows. 

Recall that a unital $\kk$-algebra $R$ is the same as a $\kk$-linear category 
$\C$ with a single object whose endomorphism space is $R$. Similarly, 
a unital algebra $R$ with a choice of a decomposition 
$1_R = \sum_1^n 1_i$ of its unit into a finite sum of orthogonal 
idempotents can be viewed as a $\kk$-linear category $\C$ whose objects 
are $\{ 1, \dots, n \}$ and whose $\homm$-spaces are given by 
$\homm_{\C}(i,j) = 1_j R 1_i$. Conversely, we can recover $R$ from
$\C$ as a direct sum of its $\homm$-spaces. 

We would like to decompose the unit of $\chalg{M}$ into an infinite sum 
of idempotents $\sum_{i \in \ZZ} 1_i$. This is not possible directly, 
as infinite sums of elements are not well-defined. However, the categorical 
analogy above suggests the following construction. 

Introduce a $\mathbb{Z}$-grading on $\chalg{M}$ by setting 
$\deg p^{(m)}_a = m$ and $\deg q^{(n)}_a = -n$ for all $n,m \in
\mathbb{Z}$ and $a \in M$. 
Let $\C_{M}$ be a category whose object set is $\mathbb{Z}$ and whose
$\homm$-space $\homm_{\C_M}(i,j)$ is the degree $j-i$ part of
$\chalg{M}$. The identity element $1_i$ in each $\homm_{\C_M}(i,i)$ is
the corresponding copy of the unit $1$ of $\chalg{M}$. The composition 
is given by multiplication in $\chalg{M}$. For any
element $x \in \chalg{M}$ of degree $j - i$ we write $1_j x$, $1_j x 1_i$ or $x 1_i$
to differentiate the copy of $x$ in $\homm_{\C_M}(i,j)$ from 
its counterparts in any other $\homm_{\C_M}(l, l+j-i)$.

Now let $\halg M$ be the direct sum of $\homm$-spaces of $\C_{M}$:
\[ \halg M \coloneqq \bigoplus_{i,j \in \ZZ} \homm_{\C_M}(i,j). \]
This is a non-unital algebra as it does not contain the infinite sum 
$\sum_{i \in \ZZ} 1_i$. Instead, it has a collection of orthogonal 
idempotents $\{ 1_i \}_{i \in \mathbb{Z}}$ and each defining relation 
\eqref{eq:heisrel0}--\eqref{eq:heisrel3} of 
the unital algebra $\chalg{M}$ gives rise, for each $i \in \ZZ$, 
to a relation in $\halg M$. Namely, take the original 
relation and add the idempotent $1_k$ at the end of each expression. 
For example,
\[ p_{a}^{(n)}p_{b}^{(m)}1_i = p_{b}^{(m)}p_{a}^{(n)}1_i, \quad a,b \in M, \, n,m \in \NN,\, i \in \ZZ. \]
Note that elements $p^{(m)}_a$ and $q^{(n)}_a$ themselves do not 
exist in $\halg M$ anymore, as they should correspond 
to infinite sums $\sum_{i \in \ZZ}p^{(m)}_a 1_i$ and 
$\sum_{i \in \ZZ}q^{(n)}_a 1_i$. 

We have a canonical projection $\halg M \rightarrow \chalg{M}$ given 
by sending each idempotent $1_i$ to the unit $1_{\chalg{M}}$. 
A representation of the category $\C_M$ into the category of vector
spaces is the same as a graded module over $\halg M$. Moreover, 
any graded module over $\chalg{M}$
induces a representation of $\halg M$ via restriction of scalars.

\subsection{The transposed generators}
\label{subsubsec:transposedgenalg}

Fix $a\in M$ and let $z$ be a formal variable. Let 
\[ \sum_{n \geq 0}p_{a}^{(n)}z^n \quad \textrm{and} \quad \sum_{n \geq 0} q_{a}^{(n)}z^n\]
be the generating series of the $p$, resp. $q$ elements associated with $a$.
Define a new set of elements $p^{(1^n)}_a$ and $q^{(1^n)}_a$, $n \in \ZZ_{> 0}$ so that the generating series
\[ \sum_{n \geq 0}(-1)^np_{a}^{(1^n)}z^n \quad \textrm{and} \quad \sum_{n \geq 0} (-1)^nq_{a}^{(1^n)}z^n\]
are the inverses of those of $p^{(n)}$ and $q^{(n)}$ respectively:
\[  \left(\sum_{n \geq 0}p_{a}^{(n)}z^n\right) \left(\sum_{n \geq 0}(-1)^np_{a}^{(1^n)}z^n\right)=1 \]
and
\[  \left(\sum_{n \geq 0}q_{a}^{(n)}z^n\right) \left(\sum_{n \geq 0}(-1)^nq_{a}^{(1^n)}z^n\right)=1.\]
Compare \cite[Section~2.2.2]{cautis2012heisenberg} and \cite[Section~3.2]{krug2018symmetric}. One can show that the relations among these generators are exactly the same as
those between the $p_a^{(n)}$ and $q_a^{(n)}$, just replace $(n)$ by $(1^n)$ everywhere. In particular, they also give a set of generators of $\chalg{M}$.
Additionally, for all $a, b \in M$ one has the following relations:
\begin{equation*}
	\begin{gathered}
		p_{a}^{(n)}p_{b}^{(1^n)}=p_{b}^{(1^n)}p_{a}^{(n)},\quad q_{a}^{(n)}q_{b}^{(1^n)}=q_{b}^{(1^n)}q_{a}^{(n)}\\
		q_{a}^{(1^{n})}p_{b}^{(m)} = \textstyle\sum_{k=0}^{\min(m,n)} s^k\left(-\langle a,b \rangle_{\chi} \right)  p_{b}^{(n-k)}q_a^{(1^{m-k})}.
	\end{gathered}
\end{equation*}

\subsection{The Fock space\index{Fock space}}

Let $\halg{M}^- \subset \halg M$ denote the subalgebra generated by the set 
\[
\bigl\{ q_a^{(n)}1_k : a \in M,\, k \leq 0,\, n \geq 0 \bigr\}.
\] 
Let $\triv_0$ denote the trivial representation of $\halg{M}^-$, where $1_0$ acts as identity and $1_k$ acts by zero for $k < 0$.
The Fock space representation of the Heisenberg algebra $\halg M$ is defined as the induced representation
\[ 
\falg M = \Ind_{\halg{M}^-}^{\halg M}(\triv_0) \cong \halg M \otimes_{\halg{M}^-} \kk.
\]
We note that in $\falg M$ one has $1_k \otimes 1 = 1_k \otimes (1_0
\cdot 1) = 1_k1_0 \otimes 1 = 0$ for all $k \ne 0$.
It follows that $\falg M$ is generated by elements  
$p_a^{(n)} 1_0$ for $a \in M$ and $n \geq 0$. 
The $\ZZ$-grading on $\chalg{M}$ induces a grading on $\falg M$
where the degree $k$ part is canonically isomorphic to 
\begin{equation}
	\label{eq:fockdegk}
	\falg{M}^k \simeq \bigoplus_{k_1+2k_2+\dots=k}\bigotimes_{i}\Sym^{k_i}  (M \otimes_\ZZ {\kk}).\end{equation}
The idempotent $1_k \in \halg M$ acts by projection onto $\falg{M}^k$.
Alternatively, the Fock space can be described as $\falg M=\halg M/I$
where $I$ is the left ideal generated by the operators 
$1_k$ for $k \ne 0$ and  $q_a^{(n)}1_{k}$ for $k = 0$ and $n > 0$.

For $\chi$ non-degenerate, the Fock space is an irreducible and faithful representation of $\halg{M}$ with highest weight vector 1. If $\chi$ is of rank 1, irreducibility and faithfulness follows from the description of the Fock space representation as differential operators on an infinite polynomial algebra \cite[Section~2]{frenkel1981two}. As the form can be chosen to be diagonal, the higher rank case follows by taking a direct sum; the Fock space of the Heisenberg algebra of a direct sum of lattices is the tensor product of the Fock spaces of the Heisenberg algebras of the summands. Hence the representation can 
be described as differential operators on a polynomial algebra.

The next claim follows from the definition and irreducibility of the Fock space.

\begin{Lemma}\label{lem:fock_embeds}
	Let $\halg M \to \End(V)$ be a representation and let $v \in V$ be an element annihilated by $\halg{M}^- \setminus \{ 1_0\}$  which is invariant under $1_0$.
	Then the map $1 \mapsto v$ induces an embedding $\falg M \to V$ of $\halg M$-representations.
\end{Lemma}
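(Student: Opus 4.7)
The plan is to produce the morphism via the universal property of the induced representation $\falg M = \Ind_{\halg M^-}^{\halg M}(\triv_0)$ and then to invoke the irreducibility of $\falg M$ (already recorded in the paragraph preceding the statement) to upgrade non-vanishing of the induced map to injectivity.

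The first step is to verify that the $\kk$-linear map $\triv_0 \to V$ sending $1 \mapsto v$ is a homomorphism of $\halg M^-$-modules. On $\triv_0$, by definition, $1_0$ acts as the identity and $1_k$ for $k < 0$ acts as zero. For any generator $q_a^{(n)} 1_k$ of $\halg M^-$ with $k \le 0$ and $n \ge 0$, the idempotent relation $q_a^{(n)} 1_k = 1_{k-n}\, q_a^{(n)} 1_k$ forces it to act as zero on $\triv_0$ whenever $(k,n) \neq (0,0)$, since $1_{k-n}$ then acts as zero. Thus every generator of $\halg M^-$ other than $1_0$ acts by zero on $\triv_0$. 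The hypotheses imposed on $v$ are the exact analogue for $V$, so the assignment $1 \mapsto v$ is indeed $\halg M^-$-equivariant.

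The second step is Frobenius reciprocity: the $\halg M^-$-linear map above extends uniquely to a morphism $\phi \colon \falg M \to V$ of $\halg M$-representations, given concretely by $x \otimes 1 \mapsto x \cdot v$ for $x \in \halg M$. Since $\falg M$ is irreducible as an $\halg M$-module, $\ker \phi$ is either zero or all of $\falg M$; because $\phi(1_0 \otimes 1) = v$, provided $v \neq 0$ the kernel must be trivial and $\phi$ is therefore an embedding. There is no genuine obstacle in this argument: once the idempotent computation in the first step is in place, the conclusion is immediate from the universal property of induction combined with the irreducibility of $\falg M$.
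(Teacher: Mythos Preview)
Your argument is correct and matches the paper's approach exactly: the paper simply asserts that the claim ``follows immediately from the definition and irreducibility of the Fock space,'' and you have spelled out precisely those two ingredients (Frobenius reciprocity for the induced module, then irreducibility to kill the kernel). Your parenthetical ``provided $v \neq 0$'' is a fair caveat on the statement rather than on your proof; the paper's applications all take $v = 1$, so this is harmless.
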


\chapter{The Additive Heisenberg $2$-category}\label{sec:additive-Heisenberg-2cat}

In this section, we fix a $\Hom$-finite graded $\kk$-linear category
$\basecat$ which is  closed under shifts and has a Serre
	functor $S$. We then define a $2$-category
$\hcatadd\basecat$, the \emph{(additive) Heisenberg category}\index{additive Heisenberg category} of $\basecat$.
We present the results in this section for graded categories for comparison with the homotopy category of the dg version in Chapter~\ref{sec:dg-Heisenberg-2-cat}.
Any $\kk$-linear category can be seen as graded $\kk$-linear by
viewing the $\Hom$-spaces as placed in degree $0$. In such case 
all sign rules in this section can be ignored.

The category $\hcatadd\basecat$ is the Karoubi completion\index{Karoubi completion} of a simpler $2$-category $\hcatadd*\basecat$ which we set up in the following first two subsections.
This additive version of the Heisenberg category is less powerful than
the \dg version constructed in Chapter~\ref{sec:dg-Heisenberg-2-cat}.
We include it in the paper as it might be of wider interest and because the similarities and differences to the earlier constructions are more readily apparent in the purely $\kk$-linear setting.

{ In our constructions, we want to work with objects of {the} form $a \otimes V$ 
	where $a \in \basecat$ and $V \in \mathcal{G}rVect^{\mathrm{fin}}$,
	the category of finite-dimensional graded vector spaces. By this we
	mean a direct sum of $\dim V$ shifted copies of $a$ indexed by a choice of
	basis of $V$. The maps between two such objects $a \otimes V$ and $b
	\otimes W$ then correspond to matrices with values in $\homm_\basecat(a,b)$.  
	
	To do this without having to choose a basis, we replace $\basecat$ by 
	the category $\basecat \otimes_k  \mathcal{G}rVect^{\mathrm{fin}}$
	which is (non-canonically) equivalent to $\basecat$. The equivalence 
	is defined by choosing a {homogeneous } basis $\left\{e_1, \dots, e_n\right\}$ for
	every $V \in \mathcal{G}rVect^{\mathrm{fin}}$ and setting 
	$$ (a,V) \mapsto \bigoplus_{e_i \in \left\{e_1, \dots, e_n\right\} } 
	a[\deg(e_i)] \quad \quad a \in \basecat, V \in 
	\mathcal{G}rVect^{\mathrm{fin}} $$
	$$ \alpha \otimes \beta \mapsto \sum \beta_{ij}\left(a[\deg(e_j)]
	\xrightarrow{\alpha} b[\deg(f_i)]\right)
	\quad \quad \alpha \in \homm_{\basecat}(a,b), \beta \in \homm(V,W) $$
	where $(\beta_{ij})$ is the matrix of $\beta$ with respect to the
	chosen bases. 
	
	The inverse equivalence is given by 
	$$ a \mapsto (a,\kk) \quad \quad a \in \basecat $$
	$$ \alpha \mapsto \alpha \otimes \id \quad \quad \alpha \in
	\homm_\basecat(a,b). $$
}

\section{The category \texorpdfstring{$\hcatadd*\basecat$}{H'}: generators}
\label{subsec:heisencatdef-additive}

We now define a (strict) $2$-category $\hcatadd*\basecat$.
The objects of $\hcatadd*\basecat$ are the integers $\ho \in \ZZ$.

The $1$-morphism\index{$1$-morphism} categories are additive graded $\kk$-linear
categories whose $1$-morphisms are freely generated under
	$1$-composition by symbols
\[
\PP_a\colon \ho \to \ho+1
\quad\text{and}\quad
\QQ_a\colon \ho+1 \to \ho
\]
for each $a \in \basecat$ and $\ho \in \ZZ$.
Thus the objects of $\Hom_{\hcatadd*\basecat}(\ho,\,\ho*)$ are direct sums of finite strings generated by the symbols $\PP_a$ and $\QQ_a$ with $a \in \basecat$, such that the difference of the number of $\PP$'s and the number of $\QQ$'s in each summand is $\ho*-\ho$. 
The identity $1$-morphism of any $\ho \in \ZZ$ is denoted by $\hunit$. 

Strictly speaking, one should distinguish between $1$-morphisms with different sources in the notation, i.e.~write $\PP_a\hunit_{\ho}$ and $\hunit_{\ho}\QQ_a$. 
However, we will have $\Hom_{\hcatadd*\basecat}(\ho,\,\ho*) =
\Hom_{\hcatadd*\basecat}(\ho+i,\,\ho*+i)$ for each integer $i$, 
and do not distinguish these in our notation.

The $2$-morphism\index{$2$-morphism}s between a pair of $1$-morphisms form a $\kk$-vector space. 
These vector spaces are freely generated by a number of generators listed below, subject to the axioms of a strict $2$-category as well as certain relations which we detail in the next subsection.
We usually represent these $2$-morphisms as planar diagrams.
This requires certain sign rules, see Remark~\ref{rem:$2$-morphism-interchange-additive} below.
The diagrams are read bottom to top, i.e.~the source of a given $2$-morphism lies on the lower boundary, while the target lies on the upper boundary.

The $2$-morphism spaces are generated by three types of symbols.
Firstly, for every $\alpha \in \Hom_\basecat(a,b)$ there are arrows
\[
\begin{tikzpicture}[baseline={(0,0.4)}]
	\draw[->] (0,0) node[below] {$\PP_a$} -- node[label=right:{$\alpha$}, dot, pos=0.5] {} (0,1) node[above] {$\PP_{b}$};
\end{tikzpicture}
\qquad\text{and}\qquad
\begin{tikzpicture}[baseline={(0,0.4)}]
	\draw[->] (0,1) node[above] {$\QQ_a$} -- node[label=right:{$\alpha$}, dot, pos=0.5] {} (0,0) node[below] {$\QQ_{b}$};
\end{tikzpicture}.
\]
These $2$-morphisms are homogeneous of degree $|\alpha|$.
The remaining generators listed below are all of degree $0$.
By convention a strand without a dot is the same as one marked with the identity morphism.
Any such unmarked strand is an identity $2$-morphism in $\hcatadd*\basecat$.
The identity $2$-morphisms of the identity $1$-morphism $\hunit$ are
denoted by blank space.

Secondly, for any object $a \in \basecat$ there are cup\index{cup}s and cap\index{cap}s
\[
\begin{tikzpicture}[baseline={(0,0.25)}]
	\draw[->] (0,0) node[below] {$\PP_a$} arc[start angle=180, end angle=0, radius=.5] node[label=above:{$\hunit$},pos=0.5]{} node[below] {$\QQ_a$};
\end{tikzpicture},\quad
\begin{tikzpicture}[baseline={(0,0.25)}]
	\draw[->] (0,0) node[below] {$\PP_{Sa}$} arc[start angle=0, end angle=180, radius=.5] node[label=above:{$\hunit$},pos=0.5]{} node[below] {$\QQ_{a}$};
\end{tikzpicture},\quad
\begin{tikzpicture}[baseline={(0,-0.25)}]
	\draw[->] (0,0) node[above] {$\QQ_a$} arc[start angle=0, end angle=-180, radius=.5] node[label=below:{$\hunit$},pos=0.5]{} node[above] {$\PP_{Sa}$};
\end{tikzpicture},\quad
\begin{tikzpicture}[baseline={(0,-0.25)}]
	\draw[->] (0,0) node[above] {$\QQ_{a}$} arc[start angle=-180, end angle=0, radius=.5] node[label=below:{$\hunit$},pos=0.5]{} node[above] {$\PP_{a}$};
\end{tikzpicture}.
\]

Thirdly, for any pair of objects $a,\, b \in \basecat$ there is a crossing of two downward\footnote{We use the downward crossing rather than the upward crossing as a basic generator since in the \dg version of the Heisenberg category described in Chapter~\ref{sec:dg-Heisenberg-2-cat} this will lead to a more symmetric presentation.} strands: 
\begin{equation*}
	\begin{tikzpicture}[baseline={(0,0.4)}]
		\draw[<-] (0,0) node[below] {$\QQ_{a}$}-- (1,1) node[above] {$\QQ_{a}$};
		\draw[<-] (1,0) node[below] {$\QQ_{b}$}-- (0,1) node[above] {$\QQ_{b}$};
	\end{tikzpicture}.
\end{equation*}

For convenience, we define three further types of strand crossings from this basic one by composition with cups and caps:
\begin{equation}\label{eq:mixed-crossings-add}
	\begin{tikzpicture}[baseline={(0,1.45)}]
		\draw[->] (0,1) node[below] {$\PP_{a}$} -- (1,2) node[above] {$\PP_{a}$};
		\draw[->] (0,2) node[above] {$\QQ_{b}$} -- (1,1) node[below] {$\QQ_{b}$};
		\draw (1.5,1.5) node {$\coloneqq$};
		\draw[->, decoration={markings, mark=at position 0.25 with {\arrow{>}}, mark=at position 0.575 with {\arrow{>}}}, postaction={decorate}] 
		(2,0) node[below] {$\PP_{a}$} --
		(2,2) arc[start angle=180, end angle=0, radius=.5] --
		(4,1) arc[start angle=180, end angle=360, radius=.5] --
		(5,3) node[above] {$\PP_{a}$}; 
		\draw[<-, decoration={markings, mark=at position 0.32 with {\arrow{<}}, mark=at position 0.74 with {\arrow{<}}}, postaction={decorate}] 
		(3,0) node[below] {$\QQ_{b}$} --
		(3,1) --
		(4,2) --
		(4,3) node[above] {$\QQ_{b}$}; 
	\end{tikzpicture} 
	,\quad
	\begin{tikzpicture}[baseline={(0,1.45)}]
		\draw[->] (1,1) node[below] {$\PP_{b}$} -- (0,2) node[above] {$\PP_{b}$};
		\draw[<-] (0,1) node[below] {$\QQ_{a}$} -- (1,2) node[above] {$\QQ_{a}$};
		\draw (1.5,1.5) node {$\coloneqq$};
		\draw[->, decoration={markings, mark=at position 0.25 with {\arrow{>}}, mark=at position 0.575 with {\arrow{>}}}, postaction={decorate}] 
		(5,0) node[below] {$\PP_{b}$} --
		(5,2) arc[start angle=0, end angle=180, radius=.5] -- 
		(3,1) arc[start angle=360, end angle=180, radius=.5] --
		(2,3) node[above] {$\PP_{b}$}; 
		\draw[->, decoration={markings, mark=at position 0.29 with {\arrow{>}}, mark=at position 0.70 with {\arrow{>}}}, postaction={decorate}]
		(3,3) node[above] {$\QQ_{a}$} --
		(3,2) -- 
		(4,1) -- 
		(4,0) node[below] {$\QQ_{a}$}; 
	\end{tikzpicture},
\end{equation}
\begin{equation}\label{eq:up-crossing-add}
	\begin{tikzpicture}[baseline={(0,1.45)}]
		\draw[->] (0,1) node[below] {$\PP_{a}$} -- (1,2) node[above] {$\PP_{a}$};
		\draw[<-] (0,2) node[above] {$\PP_{b}$} -- (1,1) node[below] {$\PP_{b}$};
		\draw (1.5,1.5) node {$\coloneqq$};
		\draw[->, decoration={markings, mark=at position 0.25 with {\arrow{>}}, mark=at position 0.575 with {\arrow{>}}}, postaction={decorate}] 
		(2,0) node[below] {$\PP_{a}$} --
		(2,2) arc[start angle=180, end angle=0, radius=.5] --
		(4,1) arc[start angle=180, end angle=360, radius=.5] --
		(5,3) node[above] {$\PP_{a}$}; 
		\draw[->, decoration={markings, mark=at position 0.28 with {\arrow{>}}, mark=at position 0.70 with {\arrow{>}}}, postaction={decorate}] 
		(3,0) node[below] {$\PP_{b}$} --
		(3,1) --
		(4,2) --
		(4,3) node[above] {$\PP_{b}$}; 
	\end{tikzpicture}.
\end{equation}

\begin{Remark}
	\label{rem:$2$-morphism-interchange-additive}
	We draw compositions of basic $2$-morphisms as planar diagrams, as in
	\eqref{eq:mixed-crossings-add}-\eqref{eq:up-crossing-add}. In the ungraded case, the
	interchange law of $2$-categories guarantees that such diagrams can be 
	read without ambiguity.
	
	However, the interchange law for graded $2$-categories includes a sign:
	\begin{equation}
		\label{eq:interchange-law-for-graded-2-categories}
		(\alpha \circ_1 \beta) \circ_2 (\gamma \circ_1 \delta) = (-1)^{|\beta||\gamma|} (\alpha \circ_2 \gamma) \circ_1 (\beta \circ_2 \delta),
	\end{equation}
	where we write $\circ_1$ and $\circ_2$ for the 1- and $2$-composition\index{$2$-composition} operations respectively.
	This can lead to ambiguities.
	For example, the diagram
	\[
	\begin{tikzpicture}[scale=0.5]
		\draw[->] (0,0) node[below] {$\PP_b$} -- node[label=left:{$\beta$}, dot] {} (0,1) -- (0,2);
		\draw[->] (1,0) node[below] {$\PP_a$} -- (1,1) -- node[label=right:{$\alpha$}, dot] {} (1,2);
	\end{tikzpicture}
	\]
	could be read either as the $1$-composition of
	\[
	\begin{tikzpicture}[scale=0.5, baseline={(0,0.3)}]
		\draw[->] (0,0) node[below] {$\PP_b$} -- node[label=left:{$\beta$}, dot] {} (0,1) -- (0,2);
	\end{tikzpicture}
	\qquad
	\text{and}
	\qquad
	\begin{tikzpicture}[scale=0.5, baseline={(0,0.3)}]
		\draw[->] (1,0) node[below] {$\PP_a$} -- (1,1) -- node[label=right:{$\alpha$}, dot] {} (1,2);
	\end{tikzpicture}
	\]
	or the $2$-composition of 
	\[
	\begin{tikzpicture}[scale=0.5, baseline={(0,0)}]
		\draw[->] (0,0) node[below] {$\PP_b$} -- (0,1);
		\draw[->] (1,0) node[below] {$\PP_a$} -- node[label=right:{$\alpha$}, dot] {} (1,1);
	\end{tikzpicture}
	\qquad
	\text{atop}
	\qquad
	\begin{tikzpicture}[scale=0.5, baseline={(0,0)}]
		\draw[->] (0,0) node[below] {$\PP_b$} -- node[label=left:{$\beta$}, dot] {} (0,1);
		\draw[->] (1,0) node[below] {$\PP_a$} -- (1,1);
	\end{tikzpicture}.
	\]
	These differ by a factor of $(-1)^{|\alpha||\beta|}$.
	
	We impose the latter convention.
	Thus to read a diagram, one first slices it into lines containing no $2$-composition of basic $2$-morphisms and no dots at different heights.
	Every such line is a $1$-composition of basic $2$-morphisms, and the overall diagram is then the $2$-composition of these $1$-compositions.
	
	With this convention, a diagram with two or more dots at the same
	height represents the same $2$-morphism as the diagram with the
	rightmost of these dots moved a small distance downwards. 
	Graphically, $1$-composition corresponds to placing diagrams side-by-side and $2$-composition corresponds to stacking diagrams on top of each other.
\end{Remark}

\begin{Remark}
	When the domain or target of a diagram is irrelevant or evident from
	the context, we may omit the labels.
	This is the case usually with the empty string occurring as the target of caps and the domain of cups.
	We also usually smooth out the strings in the diagram. 
	For example, we may draw the left definition of \eqref{eq:mixed-crossings-add} more succinctly as
	\[
	\begin{tikzpicture}[baseline={(0,1.45)}]
		\draw[->] (0,1) -- (1,2);
		\draw[->] (0,2) -- (1,1);
		\draw (1.5,1.5) node {$\coloneqq$};
		\draw[->] 
		(2,1) to[out=90, in=180]
		(2.5,2) to[out=0, in=180]
		(3.5,1) to[out=0, in=270]
		(4,2);
		\draw[->] (3.5,2) -- (2.5,1);
	\end{tikzpicture}.
	\]
\end{Remark}

\section{The category \texorpdfstring{$\hcatadd*\basecat$}{H'}: relations between $2$-morphisms}
\label{subsec:heisencatdef2-additive}

In Section~\ref{subsec:heisencatdef-additive} we gave a list of 
generating symbols. The $2$-morphisms in 
$\hcatadd*\basecat$ are $1$- and $2$-compositions of these symbols, 
subject to the following list of relations. 
As a shorthand, a relation specified by an unoriented
diagram holds for all permissible orientations\index{permissible orientations} of this diagram. 

First, we impose the linearity relations\index{linearity relations}
\[
\begin{tikzpicture}[baseline={(0,0.4)}]
	\draw (0,0)
	-- node[label=left:{$\alpha$}, dot, pos=0.5] {}
	(0,1);
\end{tikzpicture}
+
\begin{tikzpicture}[baseline={(0,0.42)}]
	\draw (0,0)
	-- node[label=right:{$\beta$}, dot, pos=0.5] {}
	(0,1);
\end{tikzpicture}
=
\begin{tikzpicture}[baseline={(0,0.42)}]
	\draw (0,0)
	-- node[label=right:{$\alpha+\beta$}, dot, pos=0.5] {}
	(0,1);
\end{tikzpicture}
\qquad\qquad
c\,\;
\begin{tikzpicture}[baseline={(0,0.42)}]
	\draw (0,0)
	-- node[label=right:{$\alpha$}, dot, pos=0.5] {}
	(0,1);
\end{tikzpicture}
=
\begin{tikzpicture}[baseline={(0,0.42)}]
	\draw (0,0)
	-- node[label=right:{$c\alpha$}, dot, pos=0.5] {}
	(0,1);
\end{tikzpicture}
\]
for any $\alpha, \beta \in \Hom(a,b)$ and any scalar $c \in \kk$ for any compatible orientation of the strings.

Neighboring dots along a downward string can merge with a sign twist:
\begin{equation}\label{eq:colliding_dots_down-add}
	\begin{tikzpicture}[baseline={(0,-0.6)}]
		\draw[->] 
		(0,0) -- node[label=right:{$\alpha$}, dot, pos=0.33] {}
		node[label=right:{$\beta$}, dot, pos=0.66] {}
		(0,-1);
		\draw (1.5, -0.5) node {$=(-1)^{|\alpha||\beta|}$};
		\draw[->] (2.5,0) -- node[label=right:{$\beta\circ\alpha$}, dot, pos=0.5] {} (2.5,-1);
	\end{tikzpicture}.
\end{equation}

Dots may \enquote{slide} through caps and downwards crossings as follows:
\begin{equation}\label{eq:caps_and_dots-add}
	\begin{tikzpicture}[baseline=0]
		\draw[->] 
		(0,-0.2) node[below] {$\PP_a$}
		-- node[label=left:{$\alpha$}, dot, pos=1] {}
		(0,0) 
		arc[start angle=180, end angle=0, radius=.5]
		--
		(1,-0.2) node[below] {$\QQ_b$};
		\draw (1.5,0) node {$=$};
		\draw[->]
		(2,-0.2) node[below] {$\PP_a$}
		--
		(2,0)
		arc[start angle=180, end angle=0, radius=.5] 
		-- node[label=right:{$\alpha$}, dot, pos=0] {}
		(3,-0.2) node[below] {$\QQ_b$};
	\end{tikzpicture}
	\qquad\qquad
	\begin{tikzpicture}[baseline=0]
		\draw[<-]
		(0,-0.2) node[below] {$\QQ_b$}
		-- node[label=left:{$\alpha$}, dot, pos=1] {}
		(0,0) 
		arc[start angle=180, end angle=0, radius=.5]
		--
		(1,-0.2) node[below] {$\PP_{Sa}$};
		\draw (1.5,0) node {$=$};
		\draw[<-]
		(2,-0.2) node[below] {$\QQ_b$}
		--
		(2,0) 
		arc[start angle=180, end angle=0, radius=.5]
		-- node[label=right:{$S\alpha$}, dot, pos=0] {}
		(3,-0.2) node[below] {$\PP_{Sa}$};
	\end{tikzpicture}
\end{equation}
\begin{equation}\label{eq:downcross-and-dot-1-add}
	\begin{tikzpicture}[baseline={(0,0.4)}]
		\draw[<-] (0,0) -- node[label=left:{$\alpha$}, dot, pos=0.25] {} (1,1);
		\draw[<-] (1,0) -- (0,1);
	\end{tikzpicture}
	=
	\begin{tikzpicture}[baseline={(0,0.4)}]
		\draw[<-] (0,0) -- node[label=right:{$\alpha$}, dot, pos=0.75] {} (1,1);
		\draw[<-] (1,0) -- (0,1);
	\end{tikzpicture}.
\end{equation}
Note that when drawing diagrams, dots need to keep their relative heights when doing these operations in order to avoid accidentally introducing signs (cf.~Lemma~\ref{lem:dotslide-add} below).

Next, there are two sets of local relations for unmarked strings:
the \emph{adjunction relations}\index{adjunction relations}
\begin{equation}\label{eq:straighten-add}
	\begin{tikzpicture}[baseline={(0,0.9)}]
		\draw (0,0) -- (0,1)  arc[start angle=180, end angle=0, radius=.5] arc[start angle=-180, end angle=0, radius=.5] -- (2,2);
		\draw (2.5,1) node {$=$};
		\draw (3,0) -- (3,2);
		\draw (3.5,1) node {$=$};
		\draw (6,0) -- (6,1)  arc[start angle=0, end angle=180, radius=.5] arc[start angle=0, end angle=-180, radius=.5] -- (4,2);
	\end{tikzpicture}
\end{equation}
and the \emph{symmetric group\index{symmetric group}} relations\index{symmetric group relations} on downward strand\index{downward strand}s
\begin{equation}\label{eq:symmetric_group_relations-add}
	\begin{tikzpicture}[baseline={(0,0.9)}]
		\draw[rounded corners=15pt,<-] (0,0) -- (1,1) -- (0,2);
		\draw[rounded corners=15pt,<-] (1,0) -- (0,1) -- (1,2);
		\draw (1.5,1) node {$=$};
		\draw[<-] (2,0) -- (2,2) ;
		\draw[<-] (3,0) -- (3,2) ;
	\end{tikzpicture},
	\qquad\qquad\qquad
	\begin{tikzpicture}[baseline={(0,0.9)}]
		\draw[<-] (0,0) -- (2,2);
		\draw[rounded corners=15pt,<-] (1,0) -- (0,1) -- (1,2);
		\draw[<-] (2,0) -- (0,2);
		\draw (2.5,1) node {$=$};
		\draw[<-] (3,0) -- (5,2);
		\draw[rounded corners=15pt,<-] (4,0) -- (5,1) -- (4,2);
		\draw[<-] (5,0) -- (3,2);
	\end{tikzpicture}.
\end{equation}

Further, for any $\alpha \in \Hom_\basecat(a,Sa)$ and with $\Tr$ being
the Serre trace~\eqref{eq:Serre-trace-add} we have:
\begin{equation}\label{eq:circle_and_curl-add}
	\begin{tikzpicture}[baseline={(0,-0.1)}, xscale=-1]
		\draw[<-]
		(1,-0.9)  node[below] {$\QQ_{a}$} -- 
		(1,-0.5)  to[out=90, in=0]
		(0.3,0.5) to[out=180,in=90]
		(-0.1,0)  to[out=270,in=180]
		(0.3,-.5) to[out=0,in=270]
		(1,0.5)   -- 
		(1,0.9)   node[above] {$\QQ_{Sa}$};
	\end{tikzpicture}
	= 0,
	\qquad\qquad\qquad
	\begin{tikzpicture}[baseline={(0,0.15)}]
		\draw[decoration={markings, mark=at position 0.30 with {\arrow{>}}, mark=at position 0.805 with {\arrow{>}}}, postaction={decorate}]
		(1,0.1) -- node[label=right:{$\alpha$}, dot, pos=0.5] {}
		(1,0.4) arc[start angle=0, end angle=180, radius=.5] --
		(0,0.1) arc[start angle=180, end angle=360, radius=.5];
	\end{tikzpicture}
	=
	\Tr(\alpha).
\end{equation}

Finally, we have relations for crossings of opposite oriented strands. 
Consider the map 
\[     
\Psi\colon \Hom_\basecat(a, b) \otimes_\kk \Hom_\basecat(a, b)^* \to \Hom(\QQ_a\PP_b,\, \QQ_a\PP_b)
\]
sending $\alpha \otimes \beta \in \Hom(a, b) \otimes_\kk \Hom(a, b)^* \cong \Hom(a, b) \otimes_\kk \Hom(b, Sa)$ to
\[
\Psi(\alpha \otimes \beta) = \mkern-12mu 
\begin{tikzpicture}[baseline={(0,0.92)}]
	\draw[->] (4,1.8) node[above] {$\QQ_a$}
	arc[start angle=-180, end angle=0, radius=.5] node[label=right:{$\alpha$}, dot, pos=0.8] {}
	node[above] {$\PP_b$};
	\node at (4.5,1) {$\hunit$};
	\draw[<-] (4,0.2) node[below] {$\QQ_a$}
	arc[start angle=180, end angle=0, radius=.5] node[label=right:{$\beta$}, dot, pos=0.8] {} 
	node[below] {$\PP_b$};
\end{tikzpicture}.
\]
Consider $\id \in \End_\kk\bigl(\Hom(a, b)\bigr) \cong \Hom(a, b) \otimes_\kk \Hom(a, b)^*$. The final two relations are 
\begin{equation}\label{eq:up_down_braids-add}
	\begin{tikzpicture}[baseline={(0,0.9)}]
		\draw[->] (0,0) node[below] {$\PP_{a}$} -- (0.5,0.5) to[out=45, in=-45] (0.5,1.5) -- (0,2) node[above] {$\PP_{a}$};
		\draw[->] (1,2) node[above] {$\QQ_{b}$} -- (0.5,1.5) to[out=225, in=135] (0.5,0.5) -- (1,0) node[below] {$\QQ_{b}$};
		\draw (1.5,1) node {$=$};
		\draw[->] (2,0) node[below] {$\PP_{a}$} -- (2,2) node[above] {$\PP_{a}$} ;
		\draw[<-] (3,0) node[below] {$\QQ_{b}$} -- (3,2) node[above] {$\QQ_{b}$} ;
	\end{tikzpicture},
	\qquad\qquad\qquad
	\begin{tikzpicture}[baseline={(0,0.9)}]
		\draw[<-] (0,0) node[below] {$\QQ_{a}$} -- (0.5,0.5) to[out=45, in=-45] (0.5,1.5) -- (0,2) node[above] {$\QQ_{a}$};
		\draw[<-] (1,2) node[above] {$\PP_{b}$} -- (0.5,1.5) to[out=225, in=135] (0.5,0.5) -- (1,0) node[below] {$\PP_{b}$};
		\draw (1.5,1) node {$=$}; 
		\draw[<-] (2,0) node[below] {$\QQ_{a}$} -- (2,2) node[above] {$\QQ_{a}$} ;
		\draw[->] (3,0) node[below] {$\PP_{b}$} -- (3,2) node[above] {$\PP_{b}$} ;
		\draw (4,1) node {$-\ \ \Psi(\id)$};
	\end{tikzpicture}
\end{equation}

\section{Remarks on the $2$-morphism relations in \texorpdfstring{$\hcatadd*\basecat$}{HV'}}
\label{subsec:remarks_on_relations-additive}
In order to reduce the number of relations necessary to verify when defining a representation of the Heisenberg category, we have chosen to keep the number of generators and relations on the definition of $\hcatadd*\basecat$ small.
We now note some of their consequences.
One such consequence is that essentially we can homotopy deform string diagrams. This is made precise in the following sequence of lemmas.

\begin{Lemma}\label{lem:dotslide-add}
	Dots may freely \enquote{slide along} strands as well as through cups, caps and all types of crossings, picking up a sign when sliding past each other.
	That is, one has the following additional relations:
	\[
	\begin{tikzpicture}[baseline={(0,0.4)}]
		\draw (0,0) -- node[label=left:{$\alpha$}, dot, pos=0.33] {} (0,1);
		\draw (0.5, 0.5) node {$\cdots$};
		\draw (1,0) -- node[label=right:{$\beta$}, dot, pos=0.66] {} (1,1);
	\end{tikzpicture}
	=
	(-1)^{|\alpha||\beta|}
	\begin{tikzpicture}[baseline={(0,0.4)}]
		\draw (4,0) -- node[label=left:{$\alpha$}, dot, pos=0.66] {} (4,1);
		\draw (4.5, 0.5) node {$\cdots$};
		\draw (5,0) -- node[label=right:{$\beta$}, dot, pos=0.33] {} (5,1);
	\end{tikzpicture}
	\]
	\begin{equation*}
		\begin{tikzpicture}[baseline=0]
			\draw[->] 
			(0,0.2) node[above] {$\QQ_a$}
			-- node[label=left:{$\alpha$}, dot, pos=1] {}
			(0,0)   arc[start angle=180, end angle=360, radius=.5] 
			--
			(1,0.2) node[above] {$\PP_b$};
			\draw (1.5,0) node {$=$};
			\draw[->]
			(2,0.2) node[above] {$\QQ_a$}
			--
			(2,0)   arc[start angle=180, end angle=360, radius=.5] 
			-- node[label=right:{$\alpha$}, dot, pos=0] {}
			(3,0.2) node[above] {$\PP_b$};
		\end{tikzpicture}
		\qquad\qquad
		\begin{tikzpicture}[baseline=0]
			\draw[<-]
			(0,0.2) node[above] {$\PP_{Sb}$}
			-- node[label=left:{$S\alpha$}, dot, pos=1] {}
			(0,0)   arc[start angle=180, end angle=360, radius=.5]
			--
			(1,0.2) node[above] {$\PP_{a}$};
			\draw (1.5,0) node {$=$};
			\draw[<-]
			(2,0.2) node[above] {$\PP_{Sb}$}
			--
			(2,0)   arc[start angle=180, end angle=360, radius=.5]
			-- node[label=right:{$\alpha$}, dot, pos=0] {}
			(3,0.2) node[above] {$\PP_{a}$};
		\end{tikzpicture}
	\end{equation*}
	\[
	\begin{tikzpicture}[baseline=0]
		\draw[-] (0,0) -- node[label=left:{$\alpha$}, dot, pos=0.25] {} (1,1);
		\draw[-] (1,0) -- (0,1);
	\draw (1.5,0.5) node {$=$};
		\draw[-] (2,0) -- node[label=right:{$\alpha$}, dot, pos=0.75] {} (3,1);
		\draw[-] (3,0) -- (2,1);
	\end{tikzpicture}
	\qquad\qquad
	\begin{tikzpicture}[baseline=0]
		\draw[-] (0,0) -- (1,1);
		\draw[-] (1,0) -- node[label=right:{$\alpha$}, dot, pos=0.25] {} (0,1);
	\draw (1.5,0.5) node {$=$};
		\draw[-] (2,0) -- (3,1);
		\draw[-] (3,0) -- node[label=left:{$\alpha$}, dot, pos=0.75] {} (2,1);
	\end{tikzpicture}.
	\]
\end{Lemma}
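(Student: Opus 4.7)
The plan is to reduce each of the displayed identities to the already-imposed relations of Section~\ref{subsec:heisencatdef2-additive}. The four displays split into three groups: dot-commutation on parallel strands, dot-slides through cups, and dot-slides across the two strands of every type of crossing. I would treat these in turn.

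The first identity is essentially a restatement of the graded interchange law discussed in Remark~\ref{rem:2-morphism-interchange-additive}. By the reading convention spelled out there, each side is the 2-composition of basic 2-morphisms on two parallel strands, and the two diagrams differ only in the relative vertical order of the dots $\alpha$ and $\beta$. Interchanging the two orders multiplies the expression by $(-1)^{|\alpha||\beta|}$, so no further axiom is needed.

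For the cup identities I would mate through the adjunction. Given a cup with a dot on one of its strands, I insert a zigzag on that strand using the adjunction relation \eqref{eq:straighten-add}. This creates a cap through which I can push the dot using the cap-dot relation \eqref{eq:caps_and_dots-add}; straightening the remaining zigzag once more via \eqref{eq:straighten-add} yields the desired cup with the dot on the opposite strand. The two cup variants correspond to the two cap variants: the cap used in the zigzag is either the $(\PP_a, \QQ_a)$-counit or the $(\QQ_a, \PP_{Sa})$-counit, and this is precisely what dictates whether $\alpha$ is preserved or replaced by $S\alpha$ when passing through.

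For the crossing identities, the other-strand version of the basic downward crossing is obtained by pre- and post-composing with a second downward crossing and invoking the symmetric-group relation $\sigma^2 = \id$ in \eqref{eq:symmetric_group_relations-add}; conjugating the given relation \eqref{eq:downcross-and-dot-1-add} in this way swaps the roles of the two strands. For the mixed crossings \eqref{eq:mixed-crossings-add} and the upward crossing \eqref{eq:up-crossing-add}, I would unfold their definitions and reduce a dot-slide to the successive composition of dot-slides through a single downward crossing together with cups and caps, all of which are by now in hand. The only real subtlety lies in the cup case: one must select the correct orientation of the intermediate zigzag, since inserting it on the \enquote{wrong} strand leads to a cap of the wrong orientation and therefore the wrong Serre twist on the resulting dot. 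Once this matching is done, the rest is essentially bookkeeping.
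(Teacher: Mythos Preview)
Your proposal is correct and matches the paper's proof essentially step for step: the first relation is the graded interchange law, the cup relations are obtained by inserting a zigzag via \eqref{eq:straighten-add} and pushing the dot through the resulting cap via \eqref{eq:caps_and_dots-add}, the other-strand downward crossing relation is obtained from \eqref{eq:downcross-and-dot-1-add} by conjugating with a crossing using \eqref{eq:symmetric_group_relations-add}, and the remaining crossings are handled by unfolding their definitions \eqref{eq:mixed-crossings-add}--\eqref{eq:up-crossing-add}. The paper writes out the conjugation argument for the downward crossing as an explicit chain of equalities rather than saying ``conjugate'', but the content is identical.
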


\begin{proof}
	The first relation is simply a graphical depiction of the interchange law in graded $2$-categories.
	The relations in the second line follow from those in~\eqref{eq:caps_and_dots-add} by applying~\eqref{eq:straighten-add}:
	\[
	\begin{tikzpicture}[baseline={(0,-0.2)}]
		\draw[->] 
		(0,0.2) -- node[label=left:{$\alpha$}, dot, pos=1] {}
		(0,0)   arc[start angle=180, end angle=360, radius=.5]
		--
		(1,0.2);
	\end{tikzpicture}
	\ = \ 
	\begin{tikzpicture}[baseline={(0,-0.2)}]
		\draw[->]
		(0,0.2)    -- 
		(0,0)      to[out=270, in=180]
		(0.5,-0.5) to[out=0, in=180]
		(1.5, 0.2) to[out=0, in=180] node[label=right:{$\alpha$}, dot, pos=0.5] {}
		(2.5,-0.5) to[out=0, in=270]
		(3,0)      --
		(3,0.2);
	\end{tikzpicture}
	\ = \ 
	\begin{tikzpicture}[baseline={(0,-0.2)}]
		\draw[->]
		(2,0.2) --
		(2,0)  arc[start angle=180, end angle=360, radius=.5] 
		-- node[label=right:{$\alpha$}, dot, pos=0] {}
		(3,0.2);
	\end{tikzpicture}.
	\]
	Relations~\eqref{eq:downcross-and-dot-1-add} and~\eqref{eq:symmetric_group_relations-add} imply:
	\[
	\begin{tikzpicture}[baseline={(0,0.4)}, scale=0.8]
		\draw[<-] (0,0) -- (1,1);
		\draw[<-] (1,0) -- node[label=right:{$\mathrlap{\alpha}$}, dot, pos=0.25] {} (0,1);
	\end{tikzpicture}
	=
	\begin{tikzpicture}[baseline={(0,1.2)}, scale=0.8]
		\draw[<-] (0,0) -- (0,2) -- (1,3);
		\draw[<-] (1,0) -- node[label=right:{$\mathrlap{\alpha}$}, dot, pos=1] {}  (1,2) -- (0,3);
	\end{tikzpicture}
	=
	\begin{tikzpicture}[baseline={(0,1.2)}, scale=0.8]
		\draw[<-] (0,0) -- (1,1) -- (0,2) -- (1,3);
		\draw[<-] (1,0) -- (0,1) -- node[label=right:{$\mathrlap{\alpha}$}, dot, pos=1] {}  (1,2) -- (0,3);
	\end{tikzpicture}
	=
	\begin{tikzpicture}[baseline={(0,1.2)}, scale=0.8]
		\draw[<-] (0,0) -- (1,1) -- (0,2) -- (1,3);
		\draw[<-] (1,0) -- (0,1) -- node[label=right:{$\alpha$}, dot, pos=0] {}  (1,2) -- (0,3);
	\end{tikzpicture}
	=
	\begin{tikzpicture}[baseline={(0,0.4)}, scale=0.8]
		\draw[<-] (0,0) -- (1,1);
		\draw[<-] (1,0) -- node[label=left:{$\alpha$}, dot, pos=0.75] {} (0,1);
	\end{tikzpicture}.
	\]
	The remaining interactions of dots and crossings follow from the relations for downward crossings, cups and caps via the definition of the crossings.
\end{proof}

\begin{Lemma}\label{lem:colliding_dots_up-add}
	Dots on upward strands\index{upward strand} merge without a sign twist:
	\[
	\begin{tikzpicture}[baseline={(0,0.4)}]
		\draw[->]
		(0,0) --
		node[label=right:{$\alpha$}, dot, pos=0.33] {}
		node[label=right:{$\beta$}, dot, pos=0.66] {}
		(0,1);
		\draw (0.8, 0.5) node {$=$};
		\draw[->]
		(1.25,0) --
		node[label=right:{$\beta\circ\alpha$}, dot, pos=0.5] {}
		(1.25,1);
	\end{tikzpicture}
	\]
\end{Lemma}

\begin{proof}
	With $\epsilon = (-1)^{|\alpha||\beta|}$ we have
	\begin{multline*}
		\begin{tikzpicture}[baseline={(0,0.4)},scale=0.8]
			\draw[->] (0,0) --
			node[label=right:{$\alpha$}, dot, pos=0.3] {}
			node[label=right:{$\beta$}, dot, pos=0.7] {}
			(0,1);
		\end{tikzpicture}
		\ \overset{\mathclap{\text{\eqref{eq:straighten-add}}}}{=} \ 
		\begin{tikzpicture}[baseline={(0,0.4)},scale=0.8]
			\draw[->] (0,-0.5) -- (0,0) --
			node[label=right:{$\alpha$}, dot, pos=0.3] {}
			node[label=right:{$\beta$}, dot, pos=0.7] {}
			(0,1) 
			arc[start angle=180, end angle=0, radius=0.5] --
			(1,0)
			arc[start angle=180, end angle=360, radius=0.5] --
			(2,1.5);
		\end{tikzpicture}
		\ \overset{\mathclap{\text{\eqref{eq:caps_and_dots-add}}}}{=} \ 
		\begin{tikzpicture}[baseline={(0,0.4)},scale=0.8]
			\draw[->] (0,-0.5) -- (0,0) --
			node[label=right:{$\alpha$}, dot, pos=0.3] {}
			(0,1) 
			arc[start angle=180, end angle=0, radius=0.5] --
			node[label=right:{$\beta$}, dot, pos=0.3] {}
			(1,0)
			arc[start angle=180, end angle=360, radius=0.5] --
			(2,1.5);
		\end{tikzpicture}
		\ =\ 
		\epsilon\ 
		\begin{tikzpicture}[baseline={(0,0.4)},scale=0.8]
			\draw[->] (0,-0.5) -- (0,0) --
			node[label=right:{$\alpha$}, dot, pos=0.7] {}
			(0,1) 
			arc[start angle=180, end angle=0, radius=0.5] --
			node[label=right:{$\beta$}, dot, pos=0.7] {}
			(1,0)
			arc[start angle=180, end angle=360, radius=0.5] --
			(2,1.5);
		\end{tikzpicture}
		\ \overset{\mathclap{\text{\eqref{eq:caps_and_dots-add}}}}{=} \ 
		\epsilon\ 
		\begin{tikzpicture}[baseline={(0,0.4)},scale=0.8]
			\draw[->] (0,-0.5) -- (0,0) --
			(0,1) 
			arc[start angle=180, end angle=0, radius=0.5] --
			node[label=right:{$\alpha$}, dot, pos=0.3] {}
			node[label=right:{$\beta$}, dot, pos=0.7] {}
			(1,0)
			arc[start angle=180, end angle=360, radius=0.5] --
			(2,1.5);
		\end{tikzpicture}
		\\[0.7em]
		\ \overset{\mathclap{\text{\eqref{eq:colliding_dots_down-add}}}}{=} \ 
		\begin{tikzpicture}[baseline={(0,0.4)},scale=0.8]
			\draw[->] (0,-0.5) -- (0,0) --
			(0,1) 
			arc[start angle=180, end angle=0, radius=0.5] --
			node[label=right:{$\beta \circ \alpha$}, dot, pos=0.5] {}
			(1,0)
			arc[start angle=180, end angle=360, radius=0.5] --
			(2,1.5);
		\end{tikzpicture}
		\ \overset{\mathclap{\text{\eqref{eq:caps_and_dots-add}}}}{=} \ 
		\begin{tikzpicture}[baseline={(0,0.4)},scale=0.8]
			\draw[->] (0,-0.5) -- (0,0) --
			node[label=right:{$\beta \circ \alpha$}, dot, pos=0.5] {}
			(0,1) 
			arc[start angle=180, end angle=0, radius=0.5] --
			(1,0)
			arc[start angle=180, end angle=360, radius=0.5] --
			(2,1.5);
		\end{tikzpicture}
		\ \overset{\mathclap{\text{\eqref{eq:straighten-add}}}}{=} \ 
		\begin{tikzpicture}[baseline={(0,0.4)},scale=0.8]
			\draw[->] (1.25,0) --
			node[label=right:{$\beta\circ\alpha$}, dot, pos=0.5] {}
			(1.25,1);
		\end{tikzpicture}.
	\end{multline*}
\end{proof}

\begin{Remark}
	The adjunction relations~\eqref{eq:straighten-add} say that we have adjunctions of $1$-morphisms $(\PP_a,\,\QQ_a)$ and $(\QQ_a,\, \PP_{Sa})$ for any $a \in \basecat$.
\end{Remark}

\begin{Lemma}[Pitchfork relations, part I]\label{lem:pitchfork-I-add}
	The following relations hold in $\hcatadd*\basecat$:
	\[
	\begin{tikzpicture}[baseline={(0,0.4)}]
		\draw[->] (0,0) to[out=55, in=180] (1,0.8) to[out=0, in=100] (1.8,0);
		\draw[->] (0.1,0.9) -- (1,0);
	\end{tikzpicture}
	\ = \ 
	\begin{tikzpicture}[baseline={(0,0.4)}]
		\draw[->] (0,0) to[out=80, in=180] (0.8,0.8) to[out=0, in=125] (1.8,0);
		\draw[->] (1.7,0.9) -- (0.8,0);
	\end{tikzpicture}
	\qquad\qquad
	\begin{tikzpicture}[baseline={(0,0.4)}]
		\draw[->] (0,0) to[out=55, in=180] (1,0.8) to[out=0, in=100] (1.8,0);
		\draw[<-] (0.1,0.9) -- (1,0);
	\end{tikzpicture}
	\ = \ 
	\begin{tikzpicture}[baseline={(0,0.4)}]
		\draw[->] (0,0) to[out=80, in=180] (0.8,0.8) to[out=0, in=125] (1.8,0);
		\draw[<-] (1.7,0.9) -- (0.8,0);
	\end{tikzpicture}
	\]
	\[
	\begin{tikzpicture}[baseline={(0,0.4)}]
		\draw[<-] (0,0) to[out=55, in=180] (1,0.8) to[out=0, in=100] (1.8,0);
		\draw[->] (0.1,0.9) -- (1,0);
	\end{tikzpicture}
	\ = \ 
	\begin{tikzpicture}[baseline={(0,0.4)}]
		\draw[<-] (0,0) to[out=80, in=180] (0.8,0.8) to[out=0, in=125] (1.8,0);
		\draw[->] (1.7,0.9) -- (0.8,0);
	\end{tikzpicture}
	\qquad\qquad
	\begin{tikzpicture}[baseline={(0,-0.4)}, yscale=-1]
		\draw[<-] (0,0) to[out=55, in=180] (1,0.8) to[out=0, in=100] (1.8,0);
		\draw[<-] (0.1,0.9) -- (1,0);
	\end{tikzpicture}
	\ = \ 
	\begin{tikzpicture}[baseline={(0,-0.4)}, yscale=-1]
		\draw[<-] (0,0) to[out=80, in=180] (0.8,0.8) to[out=0, in=125] (1.8,0);
		\draw[<-] (1.7,0.9) -- (0.8,0);
	\end{tikzpicture}
	\]
	\[
	\begin{tikzpicture}[baseline={(0,-0.4)}, yscale=-1]
		\draw[->] (0,0) to[out=55, in=180] (1,0.8) to[out=0, in=100] (1.8,0);
		\draw[<-] (0.1,0.9) -- (1,0);
	\end{tikzpicture}
	\ = \ 
	\begin{tikzpicture}[baseline={(0,-0.4)}, yscale=-1]
		\draw[->] (0,0) to[out=80, in=180] (0.8,0.8) to[out=0, in=125] (1.8,0);
		\draw[<-] (1.7,0.9) -- (0.8,0);
	\end{tikzpicture}
	\qquad\qquad
	\begin{tikzpicture}[baseline={(0,-0.4)}, yscale=-1]
		\draw[->] (0,0) to[out=55, in=180] (1,0.8) to[out=0, in=100] (1.8,0);
		\draw[->] (0.1,0.9) -- (1,0);
	\end{tikzpicture}
	\ = \ 
	\begin{tikzpicture}[baseline={(0,-0.4)}, yscale=-1]
		\draw[->] (0,0) to[out=80, in=180] (0.8,0.8) to[out=0, in=125] (1.8,0);
		\draw[->] (1.7,0.9) -- (0.8,0);
	\end{tikzpicture}
	\]
\end{Lemma}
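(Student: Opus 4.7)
The pitchfork relations are naturality-type statements saying that a strand can be ``slid'' from one side of a cup or cap to the other through the inner mixed or upward crossing. The strategy is to unpack each side using the definitions of mixed crossings~\eqref{eq:mixed-crossings-add} and upward crossings~\eqref{eq:up-crossing-add}, reduce both sides to a diagram involving only downward crossings, cups and caps, and then show the results agree by iterated application of the straightening relations~\eqref{eq:straighten-add}. Because the defined crossings are themselves compositions of a downward crossing with four cups/caps, each expanded diagram contains several ``zig-zags'' that can be removed by straightening, and after removal the two sides of a given pitchfork equation collapse to the same canonical form.

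I will treat one case in detail and describe how the others reduce to it. Consider the first relation of the lemma, where a $\PP$-strand is pulled across a cap of the form $\QQ_a \PP_{Sa} \to \hunit$. Expanding the two outer parts of the left-hand diagram as the appropriate mixed crossings using~\eqref{eq:mixed-crossings-add} yields a composite involving an interior downward crossing, one pair of cups/caps on the right of the downward crossing, and the target cap. Applying~\eqref{eq:straighten-add} to cancel the right-hand zig-zag collapses this to a single downward crossing plus cap. Performing the analogous expansion on the right-hand side yields the same composite, but with the zig-zag placed to the left of the downward crossing; straightening again gives a downward crossing plus cap, and the two reduced diagrams coincide. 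For the pitchforks in which two of the strands point the same direction, one substitutes the corresponding definition of the upward crossing from~\eqref{eq:up-crossing-add} in place of~\eqref{eq:mixed-crossings-add}; otherwise the argument is identical.

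The case in which the pitchfork consists of three $\QQ$-strands (the lower-right of the lemma) does not involve any defined crossing at all: it is a downward crossing composed with a cup. Here one reduces to the previous case by bending the appropriate leg of the cup using~\eqref{eq:straighten-add}, applying the symmetric group relations~\eqref{eq:symmetric_group_relations-add} to move the downward crossing across, and then straightening back. A symmetric (upside-down) argument handles the three--$\PP$-strand case.

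The main obstacle is bookkeeping rather than genuine difficulty: one must track which of the four Serre-twisted cups and caps of~\eqref{eqn-intro-cups-and-caps} appear after each expansion, verify that the source and target objects of the two sides match after accounting for the Serre twists, and check that all manipulations only use the relations already established (linearity, straightening~\eqref{eq:straighten-add}, the symmetric group relations~\eqref{eq:symmetric_group_relations-add}, and the definitions~\eqref{eq:mixed-crossings-add}--\eqref{eq:up-crossing-add}). In particular, dot slides are not required for this lemma, so no signs from the graded interchange law of Remark~\ref{rem:2-morphism-interchange-additive} enter the proof. The verification therefore reduces to a finite set of straightforward planar moves.
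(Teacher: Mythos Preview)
Your main strategy---expand the non-basic crossings via \eqref{eq:mixed-crossings-add}--\eqref{eq:up-crossing-add} and then apply the straightening relations \eqref{eq:straighten-add}---is exactly the paper's argument, and it suffices uniformly for all six relations. However, your case analysis contains a misidentification: there is no pitchfork in this lemma consisting of three $\QQ$-strands (nor three $\PP$-strands), because every cup and cap in $\hcatadd*\basecat$ has exactly one upward and one downward leg. Hence the symmetric group relations \eqref{eq:symmetric_group_relations-add} are never needed here, and the paragraph treating that purported special case should be dropped. In each of the six relations at least one side involves a crossing that was \emph{defined} as a composite in \eqref{eq:mixed-crossings-add} or \eqref{eq:up-crossing-add}; expanding that definition produces a zig-zag adjacent to the cap or cup, which \eqref{eq:straighten-add} collapses directly to the other side. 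The paper illustrates this for the first relation and notes that the remaining five follow identically.
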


\begin{proof}
	These relations follow immediately from the definition of the crossings in~\eqref{eq:mixed-crossings-add} and~\eqref{eq:up-crossing-add} together with the adjunction relations~\eqref{eq:straighten-add}.
	For example, for the first relation one has
	\[
	\begin{tikzpicture}[baseline={(0,0.4)}]
		\draw[->] (0,0) to[out=55, in=180] (1,0.8) to[out=0, in=100] (1.8,0);
		\draw[->] (0.1,0.9) -- (1,0);
	\end{tikzpicture}
	\ \overset{\text{\eqref{eq:mixed-crossings-add}}}{=} \ 
	\begin{tikzpicture}[baseline={(0,1.4)}]
		\draw[->] 
		(2,1)     to[out=90, in=180]
		(2.5,1.8) to[out=0, in=180]
		(3.5,1)   to[out=0, in=180]
		(4.5,1.8) to[out=0,in=90]
		(5,1);
		\draw[->] (3.5,2) -- (2.5,1);
	\end{tikzpicture}
	\ \overset{\text{\eqref{eq:straighten-add}}}{=} \ 
	\begin{tikzpicture}[baseline={(0,0.4)}]
		\draw[->] (0,0) to[out=80, in=180] (0.8,0.8) to[out=0, in=125] (1.8,0);
		\draw[->] (1.7,0.9) -- (0.8,0);
	\end{tikzpicture}.
	\]
\end{proof}

\begin{Lemma}[Counter-clockwise loops]\label{lem:curls-add}
	The following relations hold in $\hcatadd*\basecat$:
	\begin{equation*}
		\begin{tikzpicture}[baseline={(0,-1.2)}, yscale=-1]
			\draw[<-] (0,0) to [out=90, in=270] (1,1.5) arc[start angle=0, end angle=180, radius=0.5] to[out=270, in=90] (1,0);
		\end{tikzpicture}
		=0,
		\qquad\qquad
		\begin{tikzpicture}[baseline={(0,-0.2)}]
			\draw[->]
			(1,-1)   -- 
			(1,-0.5) to[out=90, in=0] (0.3, 0.5) to[out=180, in=90]
			(-0.1,0) to[out=270,in=180] (0.3, -0.5) to[out=0, in=270]
			(1,0.5)  --
			(1,1);
		\end{tikzpicture}
		= 0,
		\qquad\qquad
		\begin{tikzpicture}[baseline={(0,0.8)}]
			\draw[->] (0,0) to [out=90, in=270] (1,1.5) arc[start angle=0, end angle=180, radius=0.5] to[out=270, in=90] (1,0);
		\end{tikzpicture}
		=0.
	\end{equation*}
\end{Lemma}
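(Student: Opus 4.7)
The three identities all express the vanishing of a single strand traversing a counter-clockwise loop, complementary to the clockwise $\QQ$-curl that is imposed to be zero by the first equation of \eqref{eq:circle_and_curl-add}. My plan is to reduce each of the three diagrams to that imposed relation.

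The cleanest entry point, I expect, is the middle diagram, which has the same shape as the curl in \eqref{eq:circle_and_curl-add} but with the strand oriented as a $\PP$ rather than a $\QQ$. I would first convert this $\PP$-loop into a $\QQ$-loop by attaching a cup-cap pair on the side via the adjunction (straightening) relation \eqref{eq:straighten-add}, and then slide the loop through the resulting caps using \eqref{eq:caps_and_dots-add}. The result is a $\QQ$-strand with a loop of precisely the form required to apply \eqref{eq:circle_and_curl-add}, so the diagram vanishes.

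For the first and third diagrams (the downward $\QQ$-strand and the upward $\PP$-strand with counter-clockwise loops, respectively), I would unfold the single crossing inside the loop using the defining equations \eqref{eq:mixed-crossings-add} and \eqref{eq:up-crossing-add}, which express mixed and upward crossings as specific compositions of cups, caps, and downward crossings. This produces a configuration with a pair of downward crossings that can be cancelled via the symmetric group relation \eqref{eq:symmetric_group_relations-add}. After this cancellation and one further straightening via \eqref{eq:straighten-add}, what remains in each case is a $\QQ$-loop of the form already treated, either by the middle-diagram argument or directly by \eqref{eq:circle_and_curl-add}.

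The main obstacle I anticipate is the book-keeping of indices through the Serre functor. Each cup and cap used in the reduction shifts a label by $S$ or $S^{-1}$, and sliding data through a cap is governed by \eqref{eq:caps_and_dots-add}, which inserts an extra $S$. I would need to verify in each reduction that the $\QQ$-loop which arises has its two endpoints labelled $\QQ_a$ and $\QQ_{Sa}$, matching the exact form of the curl in \eqref{eq:circle_and_curl-add}. The sign conventions for graded $2$-categorical interchange described in Remark~\ref{rem:2-morphism-interchange-additive} should not obstruct the argument, since the diagrams in Lemma~\ref{lem:curls-add} carry no dots and hence no nontrivial signs are generated during the manipulations.
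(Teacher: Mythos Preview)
Your overall strategy—reducing each curl to the known vanishing $\QQ$-curl of \eqref{eq:circle_and_curl-add} by adding cups and caps and then simplifying—is the same as the paper's. However, two of your concrete steps do not go through, and they share a common missing ingredient.

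For the middle diagram, you propose to ``slide the loop through the resulting caps using \eqref{eq:caps_and_dots-add}''. That relation only lets a \emph{dot} pass through a cap; it says nothing about moving a crossing, a cup, or an entire sub-diagram across a cap. A loop is not a decoration that can be slid like a dot—it contains a crossing, and moving a crossing past a cup or cap is exactly what the pitchfork relations (Lemma~\ref{lem:pitchfork-I-add}) do. The paper's argument is precisely the reverse of yours: it starts from the $\QQ$-curl (which is zero), attaches a cup, applies a pitchfork move to push the crossing past that cup, and then straightens via \eqref{eq:straighten-add} to obtain the desired curl.

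For the first and third diagrams, the expectation that unfolding the crossing via \eqref{eq:mixed-crossings-add} or \eqref{eq:up-crossing-add} produces a \emph{pair} of downward crossings is incorrect: each of those definitions introduces exactly one $\QQ\QQ$-crossing together with one extra cap and one extra cup. After unfolding you are left with a single downward crossing trapped between a cap and a cup of different adjunctions (one from the $(\PP,\QQ)$ pair and one from the $(\QQ,\PP_{S})$ pair), so neither the symmetric-group cancellation \eqref{eq:symmetric_group_relations-add} nor the straightening \eqref{eq:straighten-add} applies directly. To free that crossing and expose a genuine zigzag, you again need a pitchfork move. So in all three cases the step you are missing is Lemma~\ref{lem:pitchfork-I-add}; with it in hand, your outline becomes essentially the paper's proof.
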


\begin{proof}
	Using the left relation in \eqref{eq:circle_and_curl-add} and
a pitchfork move\index{pitchfork relation} across the bottom cup, we have
	\[
	0 = 
	\begin{tikzpicture}[baseline={(0,0)}, xscale=-1, yscale=-1]
		\draw[->]
		(1,-0.9) --
		(1,-0.5) to[out=90, in=0] (0.3,0.5) to[out=180, in=90]
		(-0.1,0) to[out=270, in=180] (0.3,-0.5) to[out=0, in=270]
		(1,0.5)  arc[start angle=180, end angle=0, radius=0.5]
		--
		(2,-0.9);
	\end{tikzpicture}
	=
	\begin{tikzpicture}[baseline={(0,-0.3)}, xscale=-1, yscale=-1]
		\draw[->]
		(1,-0.9)  to[out=90, in=270]
		(2.5,1)   to[out=90, in=0]
		(1.5,1.7) to[out=180, in=90]
		(0,0.5)   arc[start angle=180, end angle=360, radius=0.5]
		arc[start angle=180, end angle=0, radius=0.5]
		--
		(2,-0.9);
	\end{tikzpicture}
	.
	\]
	Straightening out via \eqref{eq:straighten-add}, one obtains the first relation.
	The other two relations are obtained in a similar manner.
\end{proof}

\begin{Lemma}\label{lem:upward_symmetric_group_relations-add}\index{upward
strand}
	The following relations hold in $\hcatadd*\basecat$:
	\[
	\begin{tikzpicture}[baseline={(0,0.9)}]
		\draw[rounded corners=15pt, ->] (0,0) -- (1,1) -- (0,2);
		\draw[rounded corners=15pt, ->] (1,0) -- (0,1) -- (1,2);
		\draw (1.5,1) node {$=$};
		\draw[->] (2,0) -- (2,2) ;
		\draw[->] (3,0) -- (3,2) ;
	\end{tikzpicture}
	\]
	\par 
	\[
	\begin{tikzpicture}[baseline={(0,0.9)}]
		\draw[->] (0,0) -- (2,2);
		\draw[rounded corners=15pt, ->] (1,0) -- (0,1) -- (1,2);
		\draw[->] (2,0) -- (0,2);
		\draw (2.5,1) node {$=$};
		\draw[->] (3,0) -- (5,2);
		\draw[rounded corners=15pt, ->] (4,0) -- (5,1) -- (4,2);
		\draw[->] (5,0) -- (3,2);
	\end{tikzpicture}
	\qquad\qquad\quad
	\begin{tikzpicture}[baseline={(0,0.9)}]
		\draw[->] (0,0) -- (2,2);
		\draw[rounded corners=15pt, ->] (1,0)-- (0,1) -- (1,2);
		\draw[<-] (2,0) -- (0,2);
		\draw (2.5,1) node {$=$};
		\draw[->] (3,0) -- (5,2);
		\draw[rounded corners=15pt, ->] (4,0) -- (5,1) -- (4,2);
		\draw[<-] (5,0) -- (3,2);
	\end{tikzpicture}
	\]
\end{Lemma}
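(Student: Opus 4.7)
The plan is to reduce all three identities to the already-established relations \eqref{eq:symmetric_group_relations-add} for downward strands. The strategy exploits the fact that the upward crossing and the mixed crossings were defined in \eqref{eq:mixed-crossings-add} and \eqref{eq:up-crossing-add} as the mates of the downward crossing under the adjunctions $(\PP_a, \QQ_a)$ and $(\QQ_a, \PP_{Sa})$ provided by the cups and caps. Consequently, any planar identity among downward crossings transports to the corresponding identity among upward or mixed crossings, and the transport is carried out diagrammatically by pitchfork moves.

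More precisely, for each of the three identities I would first unfold every upward or mixed crossing on the left-hand side into its definition, producing a large planar diagram whose only crossings are downward. Second, I would apply the pitchfork relations of Lemma~\ref{lem:pitchfork-I-add} to slide the internal cups and caps past the downward crossings until those crossings are grouped into an adjacent block. Third, I would apply the downward symmetric-group relation \eqref{eq:symmetric_group_relations-add} of the appropriate type (the R2 relation for the first identity, the R3 relation for the two braid identities) to simplify this block. Finally, the adjunction relations \eqref{eq:straighten-add} straighten out the remaining cups and caps, yielding precisely the right-hand side.

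To indicate the shape of the first identity: the left-hand side unfolds to two downward crossings, each sandwiched between a cup on one side and a cap on the other; two pitchfork moves push the middle cup and cap outwards so that the two downward crossings become adjacent and sit in the configuration of the downward R2 relation. Applying that relation collapses them into two parallel identity strands, after which a pair of applications of \eqref{eq:straighten-add} removes the outer cups and caps and leaves the two straight upward strands. The two braid identities are treated analogously, but with three downward crossings that must be pitchforked into the downward R3 configuration; the subsequent simplification and straightening then recover the right-hand sides.

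The main obstacle is purely combinatorial bookkeeping: verifying that at every stage the pitchfork move of Lemma~\ref{lem:pitchfork-I-add} being invoked is legal (i.e.~that the cup or cap involved has the correct type and orientation relative to the adjacent strand) and that after all moves the assembled sub-diagram is exactly one of the two downward configurations of \eqref{eq:symmetric_group_relations-add}. Since each pitchfork step preserves the underlying planar graph up to the chosen crossing conventions, no signs or dots enter the calculation and the bookkeeping, while tedious, is routine. Conceptually nothing new is required beyond the observation that the mate of a relation under an adjunction is again the same relation.
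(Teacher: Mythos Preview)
Your proposal is correct and follows essentially the same approach as the paper: reduce the upward and mixed relations to the downward symmetric group relations \eqref{eq:symmetric_group_relations-add} by unfolding the crossings via cups and caps, applying the pitchfork relations of Lemma~\ref{lem:pitchfork-I-add}, and then straightening with \eqref{eq:straighten-add}. The paper's proof additionally names the interchange law explicitly as one of the intermediate steps, but this is already implicit in your diagrammatic manipulations.
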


\begin{proof}
	These relations are obtained by adding appropriate cups and caps to
	\eqref{eq:symmetric_group_relations-add} and using the pitchfork and
	adjunction relations. For example, for the first relation, one has
	\[
	\begin{tikzpicture}[baseline={(0,0.9)}, xscale=0.5]
		\draw[rounded corners=15pt, ->] (0,0) -- (1,1) -- (0,2);
		\draw[rounded corners=15pt, ->] (1,0) -- (0,1) -- (1,2);
	\end{tikzpicture}
	\ = \ 
	\begin{tikzpicture}[baseline={(0,0.4)},scale=0.5]
		\draw[<-] (-2,3.5) -- (-1,2.5) -- (-1,-0.5) -- (-2,-1.5);
		\draw[<-] (-1,3.5) -- (-2,2.5) -- (-2,-0.5) -- (-1,-1.5);
	\end{tikzpicture}
	\ = \
	\begin{tikzpicture}[baseline={(0,0.4)},scale=0.5]
		\draw[<-] (-2,3.5) -- (-1,2.5) -- (-1,1.5) arc[start angle=180, end angle=360, radius=0.5] arc[start angle=180, end angle=0, radius=1.5] -- (3,-0.5) -- (2,-1.5);
		\draw[<-] (-1,3.5) -- (-2,2.5) -- (-2,0.5) arc[start angle=180, end angle=360, radius=1.5] arc[start angle=180, end angle=0, radius=0.5] -- (2,-0.5) -- (3,-1.5);
	\end{tikzpicture}
	\ = \
	\begin{tikzpicture}[baseline={(0,0.4)},scale=0.5]
		\draw[<-] (-2,3.5) -- (-2,1) -- (-1,0) arc[start angle=180, end angle=360, radius=0.5] -- (0,2) arc[start angle=180, end angle=0, radius=1.5] -- (2, 1) -- (2,-1.5);
		\draw[<-] (-1,3.5) -- (-1,1) -- (-2,0) arc[start angle=180, end angle=360, radius=1.5] -- (1,2) arc[start angle=180, end angle=0, radius=0.5] -- (3, 1) -- (3,-1.5);
	\end{tikzpicture}
	\ = \
	\begin{tikzpicture}[baseline={(0,0.4)},scale=0.5]
		\draw[<-] (-2,3.5) -- (-2,0) arc[start angle=180, end angle=360, radius=1.5];
		\draw[rounded corners=7.5pt, <-] (0,0) -- (1,1) -- (0,2);
		\draw (0,2) arc[start angle=180, end angle=0, radius=1.5] -- (3,-1.5);
		\draw[<-] (-1,3.5) -- (-1,0) arc[start angle=180, end angle=360, radius=0.5];
		\draw[rounded corners=7.5pt, <-] (1,0) -- (0,1) -- (1,2);
		\draw (1,2) arc[start angle=180, end angle=0, radius=0.5] -- (2,-1.5);
	\end{tikzpicture},
	\]
	where the second equality is relation~\eqref{eq:straighten-add}, the
	third equality is the interchange law in the $2$-category
	$\hcatadd*\basecat$ and the fourth is obtained by applying the
	pitchfork relations twice at the top and twice at the bottom. The
	first relation now follows by \eqref{eq:symmetric_group_relations-add}.
\end{proof}

\begin{Remark}\label{rem:symmetric_group_action-add}
	Relations \eqref{eq:symmetric_group_relations-add} imply that
we have an action of the symmetric group $\SymGrp n$ on $\QQ_a^n$ by
twisted unmarked downward strands\index{downward strand}, i.e., we have a morphism $\kk[\SymGrp n] \to \End(\QQ_a^n)$.
	Similarly, Lemma~\ref{lem:upward_symmetric_group_relations-add} shows that there exists an action of the symmetric group on $\PP_a^n$.
\end{Remark}

Fixing a basis $\{\beta_\ell\}$ of $\Hom(a,b)$ one can write the term $\Psi(\id)$ in \eqref{eq:up_down_braids-add} as
\[
\Psi(\id) = \sum_\ell
\begin{tikzpicture}[baseline={(0,0.7)}]
	\draw[->] (4,1.5) arc[start angle=-180, end angle=0, radius=.5] node[label=right:{$\beta_\ell$}, pos=0.8, dot] {};
	\draw[<-] (4,0) arc[start angle=180, end angle=0, radius=.5] node[label=right:{$\beta^\dual_\ell$}, pos=0.8, dot] {};
\end{tikzpicture},
\]
where $\{\beta^\dual_\ell\}$ is the dual basis of $\Hom(b,Sa) \cong \Hom(a,b)^\dual$.
It can also be written as the composition of $2$-morphisms
\begin{equation}\label{eq:half-Psi}
	\psi_1\colon \QQ_a\PP_b \to \Hom(a,b) \otimes_\kk \hunit
	\quad\text{and}\quad
	\psi_2\colon \Hom(a,b) \otimes_\kk \hunit \to \QQ_a\PP_b.
\end{equation}
Here $\psi_1$ is obtained form the map $\Hom(a,b)^\dual \to \Hom(\QQ_a\PP_b, \hunit)$ sending $\beta \in \Hom(b,Sa) \cong \Hom(a,b)^\dual$ to
$\smash[t]{
	\tikz[scale=0.5, baseline=0]
	\draw[<-] (0,0) arc[start angle=180, end angle=0, radius=.5] node[label=right:{$\beta$}, pos=0.8, dot] {};
}$
and $\psi_2$ is similarly obtained from the natural map $\Hom(a,b) \to \Hom(\hunit, \QQ_a\PP_b)$.
We note that the right relation in \eqref{eq:circle_and_curl-add} implies that the composition $\psi_1 \circ \psi_2$ is the identity on $\Hom(a,b)\otimes_\kk \hunit$.

\begin{Lemma}[Pitchfork relations, part II]\label{lem:pitchfork-II-add}
	The two remaining pitchfork relations hold in $\hcatadd*\basecat$, that is, one has
	\[
	\begin{tikzpicture}[baseline={(0,0.4)}]
		\draw[<-] (0,0) to[out=55, in=180] (1,0.8) to[out=0, in=100] (1.8,0);
		\draw[<-] (0.1,0.9) -- (1,0);
	\end{tikzpicture}
	\ = \ 
	\begin{tikzpicture}[baseline={(0,0.4)}]
		\draw[<-] (0,0) to[out=80, in=180] (0.8,0.8) to[out=0, in=125] (1.8,0);
		\draw[<-] (1.7,0.9) -- (0.8,0);
	\end{tikzpicture}
	\qquad\qquad
	\begin{tikzpicture}[baseline={(0,-0.4)}, yscale=-1]
		\draw[<-] (0,0) to[out=55, in=180] (1,0.8) to[out=0, in=100] (1.8,0);
		\draw[->] (0.1,0.9) -- (1,0);
	\end{tikzpicture}
	\ = \ 
	\begin{tikzpicture}[baseline={(0,-0.4)}, yscale=-1]
		\draw[<-] (0,0) to[out=80, in=180] (0.8,0.8) to[out=0, in=125] (1.8,0);
		\draw[->] (1.7,0.9) -- (0.8,0);
	\end{tikzpicture}
	\]
\end{Lemma}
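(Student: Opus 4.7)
My plan is to reduce both remaining pitchfork relations to Lemma~\ref{lem:pitchfork-I-add} by inserting an adjunction zigzag that locally converts a portion of the $\PP$-strand into a $\QQ$-strand. The obstacle to applying Lemma~\ref{lem:pitchfork-I-add} directly is that its relations only cover the Serre-shifted $\QQ\PP$-cap and $\PP\QQ$-cup when paired with a $\QQ$-strand; the cases with a $\PP$-strand are precisely the ones missing, because the definitions \eqref{eq:mixed-crossings-add} and \eqref{eq:up-crossing-add} of the crossings involving such a $\PP$-strand use the non-Serre cups and caps.

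For the first relation, I would start with the LHS and insert a zigzag $\PP_b \to \QQ_b \to \PP_b$ into the $\PP_b$-strand via \eqref{eq:straighten-add}, arranged so that the internal $\QQ_b$-segment is exactly the piece crossing the $\QQ_a$-leg of the Serre-shifted cap. The $\QQ_b$-segment then crosses the $\QQ_a$-leg as an ordinary downward $\QQ$-$\QQ$ crossing, which is a basic generator unaffected by any unfolding of definitions. Next, I would apply the pitchfork from Lemma~\ref{lem:pitchfork-I-add} for a $\QQ\PP$-cap with a $\QQ$-strand (the second relation of the second row) to slide this $\QQ_b$-segment from the $\QQ_a$-leg across the cap to the $\PP_{Sa}$-leg, where it becomes a mixed $\PP$-$\QQ$ crossing. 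Finally, I would straighten the zigzag via the reverse application of \eqref{eq:straighten-add}, leaving a straight $\PP_b$-strand that crosses the $\PP_{Sa}$-leg, which is the RHS.

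The second relation, involving the Serre-shifted cup $\PP_{Sa}\QQ_a$, follows by the identical strategy using the mirror form of the zigzag from \eqref{eq:straighten-add} and the cup version of the Lemma~\ref{lem:pitchfork-I-add} pitchfork (the corresponding relation in its third row for a Serre-shifted cup with a $\QQ$-strand).

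The hardest part will be verifying that the planar geometry of the zigzag insertion really does produce what is claimed: the inserted S-shape must be placed so that the internal $\QQ$-portion occupies precisely the segment participating in the crossing, without introducing any spurious crossings with the $\PP_{Sa}$-leg or with the other strands, and after the Lemma~\ref{lem:pitchfork-I-add} slide the geometry must allow the inverse straightening to close up cleanly into the RHS. This is a diagram-chasing argument, slightly more delicate than the proof of Lemma~\ref{lem:pitchfork-I-add} because of the extra zigzag, but of the same flavour.
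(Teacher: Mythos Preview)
Your approach has a genuine gap. The step ``insert a zigzag $\PP_b \to \QQ_b \to \PP_b$ \ldots\ arranged so that the internal $\QQ_b$-segment is exactly the piece crossing the $\QQ_a$-leg'' is not an application of the straightening relation~\eqref{eq:straighten-add}. Straightening lets you replace a \emph{straight} segment of a strand by a zigzag; it does not let you relocate a crossing from one segment to another. If you insert the zigzag on the $\PP_b$-strand above the crossing, the $\QQ_b$-segment sits above the Serre cap and crosses nothing; if you insert it below, the $\QQ_b$-segment sits between the legs and again crosses nothing. Getting the $\QQ_b$-segment to be the one crossing the $\QQ_a$-leg would require sliding the crossing from the $\PP_b$-part onto the $\QQ_b$-part of the zigzag, and that slide is precisely a pitchfork move (for the non-Serre cap/cup bounding the zigzag) of the type you are trying to prove. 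You could instead \emph{unfold the definition} of the mixed crossing $\QQ_a\PP_b\to\PP_b\QQ_a$, which does produce a basic $\QQ$-$\QQ$ crossing; but then the auxiliary $\QQ_{S^{-1}b}$-strand runs from a small Serre cap \emph{inside} the large cap to a small Serre cup \emph{outside to the left}, which is not the ``above-to-between-legs'' configuration required by any pitchfork in Lemma~\ref{lem:pitchfork-I-add}.

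There is also a structural reason your route cannot close: it never invokes the defining relations~\eqref{eq:up_down_braids-add} or~\eqref{eq:circle_and_curl-add}, and Lemma~\ref{lem:pitchfork-I-add} (proved purely from the crossing definitions and straightening) does not either. The paper's proof, by contrast, hinges on these relations. It first uses~\eqref{eq:up_down_braids-add} to exhibit the explicit isomorphism
\[
\PP_a\PP_{Sb}\QQ_b \oplus \bigl(\Hom(b,Sb)\otimes_\kk \PP_a\bigr)\ \xrightarrow{\ \sim\ }\ \PP_a\QQ_b\PP_{Sb},
\]
then proves an auxiliary identity (a cap-with-crossing equals a cap-beside-a-strand) by precomposing with this isomorphism and checking each summand: on the first summand both sides contain a counter-clockwise curl and vanish by~\eqref{eq:circle_and_curl-add} (via Lemma~\ref{lem:curls-add}), and on the second both reduce to $\Tr(\alpha)$ by the circle relation. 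The first pitchfork is then deduced from this auxiliary identity together with~\eqref{eq:up_down_braids-add} again, and the second from the first. In short, the extra input your argument is missing is exactly the pair~\eqref{eq:up_down_braids-add}--\eqref{eq:circle_and_curl-add}; without them the two remaining pitchforks are not consequences of the definitions and Lemma~\ref{lem:pitchfork-I-add} alone.
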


In particular, these relations show that we could have defined the upward crossing as a rotation of the left-wards crossing (instead of the right-wards one in \eqref{eq:up-crossing-add}) and obtained the same $2$-morphism.
The proof is inspired by the proof of \cite[Lemma~2.6]{Brundan:DefinitionOfHeisenbergCategory}.

\begin{proof}
	These two pitchfork relations are slightly harder to see than the ones in Lemma~\ref{lem:pitchfork-I-add}.
	First, the relations in Section~\ref{subsec:heisencatdef2-additive}
	imply that for any $a,b \in \basecat$ the morphism
	\begin{equation}\label{eq:lem:pitchfork-II-add:iso1}
		\left[
		\begin{tikzpicture}[baseline={(0,0.4)}]
			\draw[->] (0,0) -- (0,1);
			\draw[->] (0.5,0) -- (1.5,1);
			\draw[->] (0.5,1) -- (1.5,0);
		\end{tikzpicture}
		\,,\,
		\begin{tikzpicture}[baseline={(0,0.4)}]
			\draw[->] (0,0) -- (0,1);
		\end{tikzpicture}
		\,\psi_2
		\right]
		\colon
		\PP_a\PP_{Sb}\QQ_b \oplus \bigl(\Hom(b,Sb) \otimes_\kk \PP_a\bigr) \to \PP_a \QQ_b \PP_{Sb}
	\end{equation}
	is an isomorphism with inverse
	\[
	\begin{bmatrix}
		\begin{tikzpicture}[baseline={(0,0.4)}]
			\draw[->] (0,0) -- (0,1);
			\draw[<-] (0.5,0) -- (1.5,1);
			\draw[<-] (0.5,1) -- (1.5,0);
		\end{tikzpicture}
		\\ \\
		\begin{tikzpicture}[baseline={(0,0.4)}]
			\draw[->] (0,0) -- (0,1);
		\end{tikzpicture}
		\,\psi_1
	\end{bmatrix}
	\colon
	\PP_a \QQ_b \PP_{Sb} \to \PP_a\PP_{Sb}\QQ_b \oplus \bigl(\Hom(b,Sb) \otimes_\kk \PP_a\bigr).
	\]
	Next, we show that
	\begin{equation}\label{eq:lem:pitchfork-II-add:cap-crossing}
		\begin{tikzpicture}[baseline={(0,0.4)}]
			\draw[->] (1,0) to[out=90, in=0] (0.5,1) to[out=180, in=90] (0,0);
			\draw[->] (-0.3,0) -- (1.1,1);
		\end{tikzpicture}
		=
		\begin{tikzpicture}[baseline={(0,0.4)}]
			\draw[->] (0,0) -- (0,1);
			\draw[->] (1.5,0) to[out=90, in=0] (1,1) to[out=180, in=90] (0.5,0);
		\end{tikzpicture}.
	\end{equation}
	Precomposing with isomorphism \eqref{eq:lem:pitchfork-II-add:iso1}, 
	it remains to show that for any $\alpha \in \Hom(b,Sb)$: 
	\[
	\begin{tikzpicture}[baseline={(0,0.8)}]
		\draw[->] (0,0) to [out=60, in=270] (1,1.5) arc[start angle=0, end angle=180, radius=0.5] to[out=270, in=120] (1,0);
		\draw[->] (-0.3,0) -- (1.1,2);
	\end{tikzpicture}
	=
	\begin{tikzpicture}[baseline={(0,0.8)}]
		\draw[->] (-0.5,0) -- (-0.5,2);
		\draw[->] (0,0) to [out=90, in=270] (1,1.5) arc[start angle=0, end angle=180, radius=0.5] to[out=270, in=90] (1,0);
	\end{tikzpicture}
	\quad\text{and}\quad
	\begin{tikzpicture}[baseline={(0,0.15)}]
		\draw[decoration={markings, mark=at position 0.32 with {\arrow{>}}, mark=at position 0.82 with {\arrow{>}}}, postaction={decorate}]
		(1,0)   -- node[label=right:{$\alpha$}, dot, pos=0.5] {}
		(1,0.5) arc[start angle=0, end angle=180, radius=.5]
		--
		(0,0)   arc[start angle=180, end angle=360, radius=.5];
		\draw[->] (-0.1,-0.75) -- (1.1,1.25);
	\end{tikzpicture}
	=
	\begin{tikzpicture}[baseline={(0,0.15)}]
		\draw[->] (-0.5,-0.75) -- (-0.5,1.25);
		\draw[decoration={markings, mark=at position 0.32 with {\arrow{>}}, mark=at position 0.82 with {\arrow{>}}}, postaction={decorate}]
		(1,0)   -- node[label=right:{$\alpha$}, dot, pos=0.5] {}
		(1,0.5) arc[start angle=0, end angle=180, radius=.5]
		--
		(0,0)   arc[start angle=180, end angle=360, radius=.5];
	\end{tikzpicture}.
	\]
	The right diagram of the left equality has a counter-clockwise curl, hence is vanishing. Applying the third equality (read from its right to left) of Lemma~\ref{lem:upward_symmetric_group_relations-add} to the left diagram of the left equality, we can move the upward diagonal arrow to below the counter-clockwise curl.  Hence, this diagram also equals zero.
	Further we have
	\[
	\begin{tikzpicture}[baseline={(0,0.15)}]
		\draw[decoration={markings, mark=at position 0.32 with {\arrow{>}}, mark=at position 0.82 with {\arrow{>}}}, postaction={decorate}]
		(1,0)   -- node[label=right:{$\alpha$}, dot, pos=0.5] {}
		(1,0.5) arc[start angle=0, end angle=180, radius=.5]
		--
		(0,0)   arc[start angle=180, end angle=360, radius=.5];
		\draw[->] (-0.1,-0.75) -- (1.1,1.25);
	\end{tikzpicture}
	=
	\begin{tikzpicture}[baseline={(0,0.15)}]
		\draw[decoration={markings, mark=at position 0.32 with {\arrow{>}}, mark=at position 0.82 with {\arrow{>}}}, postaction={decorate}]
		(1,0)   -- node[label=right:{$\alpha$}, dot, pos=0.5] {}
		(1,0.5) arc[start angle=0, end angle=180, radius=.5]
		--
		(0,0)   arc[start angle=180, end angle=360, radius=.5];
		\draw[->] (1.1,-0.75) to[out=120,in=240] (1.1,1.25);
	\end{tikzpicture}
	=
	\begin{tikzpicture}[baseline={(0,0.15)}]
		\draw[decoration={markings, mark=at position 0.32 with {\arrow{>}}, mark=at position 0.82 with {\arrow{>}}}, postaction={decorate}]
		(1,0)   --
		(1,0.5) arc[start angle=0, end angle=180, radius=.5]
		-- node[label=left:{$\alpha$}, dot, pos=0.5] {}
		(0,0)   arc[start angle=180, end angle=360, radius=.5];
		\draw[->] (1.1,-0.75) to[out=120,in=240] (1.1,1.25);
	\end{tikzpicture}
	=
	\begin{tikzpicture}[baseline={(0,0.15)}]
		\draw[decoration={markings, mark=at position 0.32 with {\arrow{>}}, mark=at position 0.82 with {\arrow{>}}}, postaction={decorate}]
		(1,0)   --
		(1,0.5) arc[start angle=0, end angle=180, radius=.5]
		-- node[label=left:{$\alpha$}, dot, pos=0.5] {}
		(0,0)   arc[start angle=180, end angle=360, radius=.5];
		\draw[->] (1.3,-0.75) -- (1.3,1.25);
	\end{tikzpicture},
	\]
	which is $\Tr(\alpha)$ times the identity $2$-morphism and thus agrees
	with the rightmost $2$-morphism.
	
	Finally, applying \eqref{eq:lem:pitchfork-II-add:cap-crossing} to
	the first pitchfork relation we get
	\[
	\begin{tikzpicture}[baseline={(0,0.4)}]
		\draw[<-] (0,0) to[out=90, in=180] (0.5,1) to[out=0, in=90] (1,0);
		\draw[->] (0.3,0) to[out=90, in=270] (-0.1,1);
	\end{tikzpicture}
	\ = \ 
	\begin{tikzpicture}[baseline={(0,0.65)}]
		\draw[<-] (0,0) to[out=90, in=180] (0.5,1.5) to[out=0, in=90] (1,0);
		\draw[->] (0.3,0) to[out=90, in=270] (-0.1,1) to[out=90, in=210] (1.1,1.5);
	\end{tikzpicture}
	\ = \ 
	\begin{tikzpicture}[baseline={(0,0.65)}]
		\draw[<-] (0,0) to[out=90, in=180] (0.5,1.5) to[out=0, in=90] (1,0);
		\draw[->] (0.3,0) -- (0.3,0.5) to[out=90, in=210] (1.1,1.5);
	\end{tikzpicture}
	+
	\sum_\ell 
	\begin{tikzpicture}[baseline={(0,0.65)}]
		\draw[<-] (0,0) arc[start angle=180, end angle=0, radius=0.25] node[label=right:{\tiny$\tilde\beta_\ell$}, pos=0.8, dot] {};
		\draw[->] (1,0) to[out=90, in=0] (0.5,1.5) to[out=180,in=90] (0,1) arc[start angle=180, end angle=360, radius=0.25] node[label=right:{\tiny$\beta_\ell$}, pos=0.8, dot] {} to[out=90, in=210] (1.1,1.5);
	\end{tikzpicture}
	=
	\begin{tikzpicture}[baseline={(0,0.4)}]
		\draw[<-] (0,0) to[out=90, in=180] (0.5,1) to[out=0, in=90] (1,0);
		\draw[->] (0.7,0) to[out=90, in=270] (1.1,1);
	\end{tikzpicture},
	\]
	where the last equality holds because of the presence of counter-clockwise curls.
	
	The second relation immediately follows from the first one:
	\[
	\begin{gathered}
	\begin{tikzpicture}[baseline={(0,-0.4)}]
		\draw[<-] (0,0) to[out=-55, in=-180] (1,-0.8) to[out=0, in=-100] (1.8,0);
		\draw[->] (0.1,-0.9) -- (1,0);
	\end{tikzpicture}
	\ \overset{\text{\eqref{eq:straighten-add}}}{=} \ 
	\begin{tikzpicture}[baseline={(0,-1.4)}]
		\draw[<-] 
		(2,-1)     to[out=-90, in=-180]
		(2.5,-1.8) to[out=0, in=-180]
		(3.5,-1)   to[out=0, in=-180]
		(4.5,-1.8) to[out=0,in=-90]
		(5,-1);
		\draw[->] (3.5,-2) -- (4.5,-1);
	\end{tikzpicture}
	\\
	\ \overset{\text{first rel.}}{=} \ 
	\begin{tikzpicture}[baseline={(0,-1.4)}]
		\draw[<-] 
		(2,-1)     to[out=-90, in=-180]
		(2.5,-1.8) to[out=0, in=-180]
		(3.5,-1)   to[out=0, in=-180]
		(4.5,-1.8) to[out=0,in=-90]
		(5,-1);
		\draw[->] (3.5,-2) -- (2.5,-1);
	\end{tikzpicture}
	\ \overset{\text{\eqref{eq:straighten-add}}}{=} \ 
	\begin{tikzpicture}[baseline={(0,-0.4)}]
		\draw[<-] (0,0) to[out=-80, in=-180] (0.8,-0.8) to[out=0, in=-125] (1.8,0);
		\draw[->] (1.7,-0.9) -- (0.8,0);
	\end{tikzpicture}.
\end{gathered}
	\]
\end{proof}

Using the pitchfork relations one shows that the remaining triple moves\index{triple move relation} also hold.

\begin{Lemma}[Triple moves]\label{lem:triple_moves-add}
	The following relations holds in $\hcatadd*\basecat$:
	\[
	\begin{tikzpicture}[baseline={(0,0.9)}]
		\draw[-] (0,0) -- (2,2);
		\draw[rounded corners=15pt,-] (1,0) -- (0,1) -- (1,2);
		\draw[-] (2,0) -- (0,2);
		\draw (2.5,1) node {$=$};
		\draw[-] (3,0) -- (5,2);
		\draw[rounded corners=15pt,-] (4,0) -- (5,1) -- (4,2);
		\draw[-] (5,0) -- (3,2);
	\end{tikzpicture}
	\]
\end{Lemma}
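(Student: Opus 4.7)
The plan is to reduce every orientation of the triple move to an already-established case. The cases already in hand are the all-downward triple move from the second half of \eqref{eq:symmetric_group_relations-add}, together with the all-upward and one mixed triple move (two upward strands, one downward) proven in Lemma \ref{lem:upward_symmetric_group_relations-add}. The tools I would use are the adjunction relations \eqref{eq:straighten-add} and the full set of pitchfork relations collected in Lemmas \ref{lem:pitchfork-I-add} and \ref{lem:pitchfork-II-add}, which between them cover every possible orientation of a single crossing adjacent to a cup or a cap.

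The central step in each remaining case is to flip the orientation of one chosen strand of the triple move by a local manipulation. First, using \eqref{eq:straighten-add} I would insert a zig-zag into that strand, placing a cap above and a cup below the region that contains its two crossings with the other strands. This is an identity move and therefore preserves the 2-morphism. Second, I would pull each of the two crossings on that strand through the newly introduced cap and cup by applying the appropriate pitchfork relation, thereby reversing the strand's orientation in the region where the crossings occur while leaving its endpoints intact. The outcome is a triple-move diagram in which one strand has been re-oriented.

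Carrying out this manipulation identically on both sides of the desired equation preserves equality, and iterating reduces any given orientation to one of the three proven cases. Because each inserted cap or cup is an identity 2-morphism away from the other two strands, the interchange law in the strict 2-category $\hcatadd*\basecat$ ensures that the local flipping manipulation does not interfere with the portion of the diagram involving the other strands.

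The main obstacle I anticipate is purely bookkeeping: for each of the remaining orientations (such as the cases in which the middle strand points downward, or in which the two outer strands carry opposite orientations) one must select the correct strand to flip and verify that the correct pitchfork relation from Lemmas \ref{lem:pitchfork-I-add} and \ref{lem:pitchfork-II-add} applies to each of the two adjacent crossings. No new conceptual ingredient beyond the pitchfork and adjunction relations should be required, so once the strategy is in place each case will be mechanical.
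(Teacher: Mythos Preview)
Your proposal is correct and matches the paper's approach exactly: the paper's proof consists of the single sentence ``Using the pitchfork relations one shows that the remaining triple moves also hold,'' and your plan fleshes out precisely this strategy of flipping strand orientations via \eqref{eq:straighten-add} and Lemmas~\ref{lem:pitchfork-I-add} and~\ref{lem:pitchfork-II-add} to reduce to the already-established cases.
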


\begin{Remark}
	For any object $a \in \basecat$, $\alpha \in \Hom(a,a)$ and $\beta \in \Hom(a, Sa)$ one has
	\[
	\begin{tikzpicture}[baseline={(0,0.15)}]
		\draw[decoration={markings, mark=at position 0.36 with {\arrow{>}}, mark=at position 0.85 with {\arrow{>}}}, postaction={decorate}]
		(1,0) --
		node[label=right:{$\alpha$}, dot, pos=0.15] {}
		node[label=right:{$\beta$}, dot, pos=0.85] {}
		(1,0.5) arc[start angle=0, end angle=180, radius=.5]
		--
		(0,0) arc[start angle=180, end angle=360, radius=.5];
	\end{tikzpicture}
	=
	\begin{tikzpicture}[baseline={(0,0.15)}]
		\draw[decoration={markings, mark=at position 0.36 with {\arrow{>}}, mark=at position 0.85 with {\arrow{>}}}, postaction={decorate}]
		(1,0) --
		node[label=right:{$\beta$}, dot, pos=0.85] {}
		(1,0.5) arc[start angle=0, end angle=180, radius=.5]
		--
		node[label=left:{$\alpha$}, dot, pos=0.85] {}
		(0,0) arc[start angle=180, end angle=360, radius=.5];
	\end{tikzpicture}
	=
	(-1)^{|\alpha||\beta|}\,
	\begin{tikzpicture}[baseline={(0,0.15)}]
		\draw[decoration={markings, mark=at position 0.36 with {\arrow{>}}, mark=at position 0.85 with {\arrow{>}}}, postaction={decorate}]
		(1,0) --
		node[label=right:{$\beta$}, dot, pos=0.15] {}
		(1,0.5) arc[start angle=0, end angle=180, radius=.5]
		--
		node[label=right:{$\alpha$}, dot, pos=0.85] {}
		(0,0) arc[start angle=180, end angle=360, radius=.5];
	\end{tikzpicture}
	=
	(-1)^{|\alpha||\beta|}\,
	\begin{tikzpicture}[baseline={(0,0.15)}]
		\draw[decoration={markings, mark=at position 0.36 with {\arrow{>}}, mark=at position 0.85 with {\arrow{>}}}, postaction={decorate}]
		(1,0) --
		node[label=right:{$\beta$}, dot, pos=0.15] {}
		node[label=right:{$S\alpha$}, dot, pos=0.85] {}
		(1,0.5) arc[start angle=0, end angle=180, radius=.5]
		--
		(0,0) arc[start angle=180, end angle=360, radius=.5];
	\end{tikzpicture}
	\]
	This matches the identity of Proposition~\ref{prop:serre_trace_cyclic_additive}.
\end{Remark}

\section{The category \texorpdfstring{$\hcatadd \basecat$}{H}: Karoubi-completion}\label{subsec:idempotent-hcat-add}

{ A category is \emph{Karoubian}\index{Karoubian category} or \emph{idempotent
		complete} if all its idempotents are split. Given a category $\C$, 
	its \emph{Karoubi envelope\index{Karoubi completion}} or \emph{idempotent completion}\index{idempotent completion}
	is the universal pair $(\text{kar}(\C), \iota)$ where $\text{kar}(\C)$ 
	a Karoubian category\index{Karoubian category} and $\iota$ is a functor $C \rightarrow \text{kar}(\C)$.  
	The functor $\iota$ is necessarily fully faithful, see
	\cite[Exercice 7.5]{GrothedieckVerdier-Topos}.
}

\begin{Definition}\label{def:hcat-add}
	The \emph{(additive) Heisenberg category}\index{additive Heisenberg category} $\hcatadd\basecat$ of $\basecat$ is the Karoubi envelope of $\hcatadd*\basecat$.
\end{Definition}

The objects of $\hcatadd\basecat$ are those of $\hcatadd*\basecat$.
Its $1$-morphisms are pairs $(R,e)$, where $R$ is a $1$-morphism of
$\hcatadd*\basecat$ and $e\colon R \to R$ is a idempotent in
$\End_{\hcatadd*\basecat}(R)$. Its $2$-morphisms $(R_1,e_1) \to (R_2,e_2)$ are $2$-morphisms $f\colon R_1\to R_2$ from $\hcatadd*\basecat$ which satisfy $f = e_2 \circ f \circ e_1$.

\begin{Example}\label{ex:Khovanov-add}
	Let $\basecat = \catfVect$ be
	the category of finite-dimensional vector spaces over $\kk$.  It is the
	additive hull of the field $\kk$ considered as a single-object
	category. Then the Serre functor on $\basecat$ is the identity, and the category $\hcatadd\basecat$ reproduces Khovanov's categorification of the infinite Heisenberg algebra \cite{khovanov2014heisenberg}.
	More precisely, collapsing our category $\hcatadd\basecat$ to a monoidal $1$-category by identifying the objects, the morphism $\PP_\kk$ corresponds to $Q_+$ in \cite{khovanov2014heisenberg}, while $\QQ_\kk$ corresponds to $Q_-$.
	Since $\PP_{\kk \oplus \kk} \cong \PP_\kk \oplus \PP_\kk$, and similarly for $\QQ$, all data is encoded in the relations between these two morphisms.
\end{Example}

By Remark~\ref{rem:symmetric_group_action-add}, for each object $a \in \basecat$ there are canonical morphisms $\kk[\SymGrp n] \to \End(\PP_a^n)$ and $\kk[\SymGrp n] \to \End(\QQ_a^n)$.
Let
\[
e_\triv = \frac{1}{n!} \sum_{\sigma \in \SymGrp n} \sigma \in \kk[\SymGrp n]
\]
be the symmetriser idempotent of $\kk[\SymGrp n]$.
Abusing notation, we denote the image of the symmetriser under either of the above maps again by $e_\triv$.
The $2$-morphisms $e_\triv$ are idempotent endomorphisms of $\PP_a^{n}$ and $\QQ_a^{n}$ respectively, and hence split in $\hcatadd\basecat$.
We write $\PP_a^{(n)}$ and $\QQ_a^{(n)}$ for the corresponding $1$-morphisms $(\PP_a^n,\, e_\triv)$ and $(\QQ_a^n,\, e_\triv)$.

\begin{Theorem}\label{thm:cat_heisenberg_relations-add}
	For any $a, b \in \basecat$ and $n, m \in \NN$ we have the
	following relations in $\hcatadd\basecat$:
	\[
	\PP_a^{(m)}\PP_b^{(n)} \cong \PP_b^{(n)} \PP_a^{(m)}, \quad
	\QQ_a^{(m)}\QQ_b^{(n)} \cong \QQ_b^{(n)} \QQ_a^{(m)},
	\]
	\par
	\[
	\QQ_a^{(m)}\PP_b^{(n)} \cong \bigoplus_{i=0}^{\min(m,n)}\Sym^i \Hom_{\basecat}(a,b) \otimes_\kk \PP_b^{(n-i)}\QQ_a^{(m-i)}.
	\]
\end{Theorem}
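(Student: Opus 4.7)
The commutation isomorphisms for the $\PP$s and $\QQ$s follow from observing that the upward crossing \eqref{eq:up-crossing-add} and the downward crossing both square to the identity, by the first relation of Lemma \ref{lem:upward_symmetric_group_relations-add} and by \eqref{eq:symmetric_group_relations-add} respectively. Thus the upward crossing provides an involutive 2-isomorphism $\PP_a\PP_b\isoto\PP_b\PP_a$. Iterating, one obtains $\PP_a^m\PP_b^n\isoto\PP_b^n\PP_a^m$, and since this is built entirely out of crossings it intertwines the $\SymGrp m\times\SymGrp n$-actions on either side. Consequently it descends through the symmetriser idempotents to yield $\PP_a^{(m)}\PP_b^{(n)}\isoto\PP_b^{(n)}\PP_a^{(m)}$, and the argument for the $\QQ$s is identical.

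For the Heisenberg commutation relation the plan has two stages: first establish the base case
\[
\QQ_a\PP_b\;\cong\;\PP_b\QQ_a\;\oplus\;\bigl(\Hom_\basecat(a,b)\otimes_\kk\hunit\bigr),
\]
then expand $\QQ_a^m\PP_b^n$ by iteration and extract the image of the full symmetriser. For the base case, let $\sigma\colon\QQ_a\PP_b\to\PP_b\QQ_a$ and $\tau\colon\PP_b\QQ_a\to\QQ_a\PP_b$ be the two mixed crossings of \eqref{eq:mixed-crossings-add} (for appropriate labellings), and $\psi_1,\psi_2$ the 2-morphisms of \eqref{eq:half-Psi}. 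The morphisms
\[
\begin{bmatrix}\sigma\\ \psi_1\end{bmatrix}\colon\QQ_a\PP_b\longrightarrow\PP_b\QQ_a\oplus\bigl(\Hom(a,b)\otimes_\kk\hunit\bigr)\quad\text{and}\quad\bigl[\tau,\;\psi_2\bigr]\colon\PP_b\QQ_a\oplus\bigl(\Hom(a,b)\otimes_\kk\hunit\bigr)\longrightarrow\QQ_a\PP_b
\]
are mutually inverse. Indeed, $\tau\sigma+\psi_2\psi_1=\id_{\QQ_a\PP_b}$ is the second relation of \eqref{eq:up_down_braids-add}; $\sigma\tau=\id_{\PP_b\QQ_a}$ is the first relation of \eqref{eq:up_down_braids-add} (after relabelling $a\leftrightarrow b$); $\psi_1\psi_2=\id$ is the right relation of \eqref{eq:circle_and_curl-add}; and the off-diagonal vanishings $\psi_1\tau=0=\sigma\psi_2$ reduce via pitchfork moves to counter-clockwise loops killed by Lemma \ref{lem:curls-add}.

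Iterating the base case allows one to unpack $\QQ_a^m\PP_b^n$. Each application at a neighbouring pair $\QQ_a\PP_b$ inside a word either commutes it to $\PP_b\QQ_a$ or contracts it using $\psi_2\psi_1$ to a factor $\Hom(a,b)\otimes_\kk\hunit$. Inducting on $m+n$, after all $\QQ$s have migrated past all $\PP$s one arrives at
\[
\QQ_a^m\PP_b^n\;\cong\;\bigoplus_{k=0}^{\min(m,n)}\binom{m}{k}\binom{n}{k}k!\cdot\Hom(a,b)^{\otimes k}\otimes\PP_b^{n-k}\QQ_a^{m-k},
\]
where the multiplicity counts triples $(I,J,\pi)$ with $I\subset[m]$, $J\subset[n]$, $|I|=|J|=k$ and $\pi$ a bijection $I\to J$ recording which $\QQ$s were contracted with which $\PP$s. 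The 2-morphisms realising this decomposition are defined by planar diagrams and so carry a natural equivariance with respect to the $\SymGrp m\times\SymGrp n$-actions by downward and upward crossings on the two sides.

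The final step is to apply the symmetriser $e_\triv^{(m)}\otimes e_\triv^{(n)}$ to the above decomposition. The group $\SymGrp m\times\SymGrp n$ acts on the indexing set of triples $(I,J,\pi)$ by separate permutations of $[m]$ and $[n]$, and this action is transitive on the $k$-th stratum with stabiliser $(\SymGrp{m-k}\times\SymGrp{n-k})\times\SymGrp{k}^{\mathrm{diag}}$, where $\SymGrp{k}^{\mathrm{diag}}$ simultaneously permutes the matched elements of $I$ and $J$ and the corresponding factors of $\Hom(a,b)^{\otimes k}$. Applying the symmetriser therefore symmetrises the outer $\PP_b^{n-k}\QQ_a^{m-k}$ to $\PP_b^{(n-k)}\QQ_a^{(m-k)}$ and extracts the $\SymGrp k$-invariants $\Sym^k\Hom(a,b)$ from $\Hom(a,b)^{\otimes k}$, with multiplicity one per stratum, producing exactly the claimed formula. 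The main obstacle is verifying that the crossings genuinely induce the naive permutation action on the indexing set of triples $(I,J,\pi)$; this is a detailed diagram chase using the pitchfork relations (Lemmas \ref{lem:pitchfork-I-add}, \ref{lem:pitchfork-II-add}) and the triple moves (Lemmas \ref{lem:upward_symmetric_group_relations-add}, \ref{lem:triple_moves-add}) to slide crossings through cups, caps, and the $\psi_i$ 2-morphisms.
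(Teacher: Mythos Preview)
Your argument for the first two isomorphisms agrees with the paper's: the isomorphism is given by the braid of parallel crossings, which intertwines the symmetriser idempotents and hence descends to the Karoubi summands.

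For the Heisenberg relation your route is genuinely different. The paper does not iterate the base case and then symmetrise. Instead it writes down, directly on the symmetrised objects, explicit 2-morphisms
\[
g_i\colon \Sym^i\Hom(a,b)\otimes\PP_b^{(n-i)}\QQ_a^{(m-i)}\longrightarrow \QQ_a^{(m)}\PP_b^{(n)},\qquad
f_i\colon \QQ_a^{(m)}\PP_b^{(n)}\longrightarrow \Sym^i\Hom(a,b)\otimes\PP_b^{(n-i)}\QQ_a^{(m-i)},
\]
given by thick-strand diagrams labelled by elements of $\Sym^i\Hom(a,b)$ and its dual, and then verifies $g\circ f=\id$ and $f\circ g=\id$ by a sequence of recursive combinatorial lemmas (introducing auxiliary endomorphisms $\dcross stmni$ of $\QQ_a^{(m)}\PP_b^{(n)}$ and computing their relations via repeated application of \eqref{eq:up_down_braids-add} and \eqref{eq:circle_and_curl-add}). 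Your approach instead decomposes the \emph{unsymmetrised} object $\QQ_a^m\PP_b^n$ into $\bigoplus_k \binom{m}{k}\binom{n}{k}k!\,\Hom(a,b)^{\otimes k}\otimes\PP_b^{n-k}\QQ_a^{m-k}$ and then invokes an orbit--stabiliser argument for the image of the $\SymGrp m\times\SymGrp n$-symmetriser. This is more conceptual and avoids the paper's recursive combinatorics, but the trade-off is that the paper's explicit maps $f,g$ are later needed verbatim (for instance in the transposed-generator relations and in the Fock space computations), and your argument does not produce them directly. The step you flag as the ``main obstacle''---checking that the base-case isomorphism is genuinely $\SymGrp m\times\SymGrp n$-equivariant under crossings so that the idempotent acts on the index set of triples $(I,J,\pi)$ as you describe---is indeed where the work hides; this is correct but requires the same pitchfork and triple-move manipulations you list, so the diagram-chasing burden is comparable.
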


The symmetric powers of $\Hom_{\basecat}(a,b)$ in the last
isomorphism of Theorem~\ref{thm:cat_heisenberg_relations-add} categorify
the coefficient $s^k \langle a,b\rangle$ in \eqref{eq:heisrel3}. In
Remark~\ref{rem:why_sym} we explain that from any
$\PP_a^{(i)}$ to any $\PP_b^{(i)}$ there are morphisms which correspond to 
$i$ parallel strands labelled by elements of $\Sym^i \Hom_{\basecat}(a,b)$. 
The last isomorphism of Theorem~\ref{thm:cat_heisenberg_relations-add} is 
then naturally expressed in terms of these morphisms. In particular, 
in the case $m=n=1$, the $1$-precomposition of this $2$-isomorphism 
with $\id_{\PP_a}$ on the left is the isomorphism used in the proof 
of Lemma~\ref{lem:pitchfork-II-add}.

The proof of Theorem~\ref{thm:cat_heisenberg_relations-add} is entirely combinatorial and virtually the same as the one for the \dg version, Theorem~\ref{thm:cat_heisenberg_relations-dg}.
We thus skip it.
Similarly, the constructions and the results of Section~\ref{subsec:transposed_generators} have obvious analogues in the additive setting.

\section{The categorical Fock space\index{categorical Fock space} in the additive case}
\label{subset:cat_fock_add}

In this section we construct a categorical Fock space
$\fcatadd\basecat$ of the base category $\basecat$.
It consists of the categorical symmetric powers of $\basecat$.
We show that $\hcatadd\basecat$ has a representation on the categorical Fock space.

Once this is established, the same decategorification argument as in 
Section~\ref{subsec:grothfock} shows that 
{$\numGgp{\hcatadd\basecat}$} acts on $\numGgp{\fcatadd\basecat}$. 
Theorem~\ref{thm:cat_heisenberg_relations-add}, we have a group 
homomorphism from the classical Heisenberg algebra\index{classical Heisenberg algebra} $\halg\basecat$
to {$\numGgp{\hcatadd\basecat}$}. Thus $\halg\basecat$ acts on 
$\numGgp{\fcatadd\basecat}$ and the same argument as in 
Section~\ref{subsec:genuine_categorification} shows that 
the subrepresentation of $\numGgp{\fcatadd\basecat}$ generated
by $1 \in \numGgp{\sym^0 \basecat} \simeq \kk$ is the Fock space
representation $\falg\basecat$ of $\halg\basecat$. 

If $\numGgp{\basecat}$ is a finitely generated abelian group and 
if we have for all $N \geq 0$
\[\numGgp{\symbcn} \cong \bigoplus_{1^{\lambda_1}2^{\lambda_2} \dots
	\dashv \ho} \Sym^{\lambda_1}\numGgp{\basecat} \otimes
\Sym^{\lambda_2}\numGgp{\basecat} \otimes \cdots\]
then a dimension count shows that $\falg\basecat$ is 
the whole of $\numGgp{\fcatadd\basecat}$. In other words, 
our categorical Fock space categorifies the classical Fock space\index{classical Fock space}.

The $\ho$-fold tensor power $\basecat^{\otimes \ho}$ is the additive hull
({ that is}, the closure under finite direct sums)
of the category of $\ho$-tuples $a_1 \otimes \dots \otimes a_{\ho}$ of
objects of $\basecat$ with morphism spaces
\[
\homm_{\basecat^{\otimes \ho}}\left(a_1 \otimes \dots \otimes a_{\ho},\, b_1 \otimes \dots \otimes b_{\ho}\right) \coloneqq
\homm_{\basecat}(a_1,b_1) \otimes_\kk \dots \otimes_\kk \homm_{\basecat}(a_{\ho}, b_{\ho}).
\]
The category $\basecat^{\otimes \ho}$ can be endowed with an action of $\SymGrp \ho$, given on objects by
\begin{equation}
	\label{eqn-action-of-sn-on-v-otimes-n}
	\sigma(a_1 \otimes \dots \otimes a_{\ho}) \coloneqq a_{\sigma^{-1}(1)} \otimes \dots \otimes a_{\sigma^{-1}(\ho)}.
\end{equation}
{ The category of $\SymGrp \ho$-equivariant objects in $\basecat^{\otimes \ho}$ 
	\[
	\addsymbc{\ho} := (\basecat^{\otimes \ho})^{\SymGrp \ho}
	\]
	has as objects all tuples $\bigl(\underline a,\, (\epsilon_\sigma)_{\sigma
		\in \SymGrp \ho}\bigr)$ with $\underline a \in \basecat^{\otimes \ho}$
	and $\epsilon_\sigma \colon \underline a \isoto \sigma(\underline a)$
	isomorphisms compatible with the $\SymGrp \ho$-action. A morphism  
	$\bigl(\underline a,\, \epsilon_\sigma) \rightarrow 
	\bigl(\underline b,\, \tau_\sigma)$ is a morphism $\alpha\colon \underline{a}
	\rightarrow \underline{b}$ in {$\basecat^{\otimes \ho}$} such that $\sigma(\alpha) \circ \epsilon_\sigma = 
	\tau_\sigma \circ \alpha$ for all $\sigma \in \SymGrp \ho$.} 
We refer to \cite[Section~2]{SymCat} for details.
For ease of notation, we set $\addsymbc0 = \catfVect$ and $\addsymbc{\ho} = 0$ for $\ho < 0$.

\begin{Remark}
	If $\basecat$ is a $\kk$-linear category equipped with additional structure
	and/or conditions, e.g.~an abelian category, then  
	$\basecat^{\otimes \ho}$ will not automatically also have these. 
	In such case, in the definition above one should replace the additive 
	hull with an appropriate completion. For example, Deligne's tensor product 
	of abelian categories takes the abelian hull of $\ho$-tuples.
	We are particularly interested in the case of \dg enhanced triangulated categories, which we discuss in detail in Section~\ref{subsec:equivariant_cats} and Chapter~\ref{sec:cat_fock}. 
\end{Remark}

Let $\fcatadd*\basecat$ be the strict $2$-category with objects $\addsymbc{\ho}$, $1$-morphisms $\kk$-linear functors and $2$-morphisms natural transformations.
We want to define a $2$-functor\index{$2$-functor} 
$\Psi_\basecat'\colon \hcatadd*\basecat \to \fcatadd*\basecat$. For
this, we need the functors of restriction and induction. 
Let  $1 \times \SymGrp{\ho-1} \leq \SymGrp \ho$ be the subgroup
comprising the elements fixing the first letter. The restriction
functor is defined as 
\[
\begin{array}{r c c c} 
	\Res_{\SymGrp \ho}^{1 \times \SymGrp{\ho-1}} \colon & \addsymbc{\ho} &\to &  \bigl(\basecat^{\otimes \ho}\bigr)^{1 \times \SymGrp{\ho-1}} \\
	& \bigl(\underline a,\, (\epsilon_g)_{g \in \SymGrp \ho}\bigr)
	& \mapsto & \bigl(\underline a,\, (\epsilon_g)_{g \in 1 \times \SymGrp{\ho-1}}\bigr)
\end{array}
\]
on objects and by $\Id$ on morphisms. Its left and right adjoint, 
the induction functor, is 
\[
\begin{array}{r c c c}
	\Ind^{\SymGrp \ho}_{1 \times \SymGrp{\ho-1}} \colon & 
	\bigl(\basecat^{\otimes \ho}\bigr)^{1 \times \SymGrp{\ho-1}}
	& \to &  \addsymbc{\ho}
	\\
	& \bigl(\underline a,\, (\epsilon_h)_{h \in 1 \times
		\SymGrp{\ho-1}}\bigr) & \mapsto & \bigl(\bigoplus_{[f] \in \SymGrp \ho/(1
		\times \SymGrp{\ho-1})} f(\underline a),\, (\varepsilon_g)_{g \in \SymGrp \ho}\bigr)
\end{array}
\]
on objects. Here $\SymGrp \ho/(1 \times \SymGrp{\ho-1})$ is the set of
left cosets, the summation happens over a fixed choice of 
their representatives $f$, and the isomorphism
\[ \varepsilon_g\colon \bigoplus_{[f] \in \SymGrp \ho/(1 \times \SymGrp{\ho-1})} 
f(\underline a) \to \bigoplus_{[f'] \in \SymGrp \ho/(1 \times
	\SymGrp{\ho-1})} g f'(\underline a) \]
maps each summand $f(\underline a)$ to the summand $gf'(\underline a)$ 
with $[f]=[gf']$ via the isomorphism $f(\epsilon_h)$ where 
$h \in 1 \times \SymGrp{\ho-1}$ is such that $gf'=fh$. On morphisms, 
$\Ind^{\SymGrp \ho}_{1 \times \SymGrp{\ho-1}}$ is given by
\begin{equation*}
	\alpha \rightarrow 
	\sum_{[f] \in \SymGrp \ho/(1 \times \SymGrp{\ho-1})} f(\alpha). 
\end{equation*}
A more general treatment of these functors is given in Section~\ref{subsec:equivariant_cats} below.

On objects, we define $\Psi_\basecat'$ as
\[
\Psi_\basecat'(\ho) = \addsymbc{\ho}, \quad \quad \forall\; \ho
\in \ZZ. 
\]
On $1$-morphisms $\Psi_\basecat'$ sends $\PP_a\colon (\ho-1) \to \ho$ to the composition
\[
P_a\colon 
\addsymbc{\ho-1} \xrightarrow{a \otimes - }
\bigl(\basecat^{\otimes \ho}\bigr)^{1 \times \SymGrp{\ho-1}}
\xrightarrow{\Ind^{\SymGrp \ho}_{1 \times \SymGrp{\ho-1}}}
\addsymbc{\ho},
\]
and $\QQ_a\colon \ho \to \ho-1$ to the composition
\[
Q_a\colon
\addsymbc{\ho}
\xrightarrow{\Res_{\SymGrp \ho}^{1 \times \SymGrp{\ho-1}}}
\bigl(\basecat^{\otimes\ho}\bigr)^{1 \times \SymGrp{\ho-1}}
\xrightarrow{\Hom_\basecat(a, -) \otimes \id}
\addsymbc{\ho-1}.
\]
Tensor-Hom adjunction\index{Tensor-Hom adjunction} implies that $P_a$ is left adjoint to $Q_a$ and
the definition of a Serre functor further implies that $Q_a$ is left
adjoint to $P_{Sa}$.

\begin{Example}\label{ex:QaPb_add}
	Let  $(a_1 \otimes \dots \otimes a_{\ho},\,
	(\epsilon_\sigma)_{\sigma \in \SymGrp{\ho}})$ be an object in $\addsymbc{\ho}$.
	There are $\ho+1$ cosets of the subgroup $\SymGrp{\ho} <
	\SymGrp{\ho+1}$ fixing the symbol $1$. A set of representatives of
	these cosets is given by the cycles $\{\bigl(i\dots1\bigr)\}_{1
		\leq i \leq \ho+1}$. Denote each $\bigl(i\dots1\bigr)$ by $\xi_i$. 
	
	By definition of the $P_b$, we have
	\begin{align*}
		P_b(a_1\otimes \dots \otimes a_{\ho}) 
		= \Ind^{\SymGrp \ho + 1}_{1 \times \SymGrp{\ho}}
		\left(b \otimes a_1 \otimes \dots \otimes a_{\ho}\right)
		=
		\bigoplus_{i=1}^{\ho+1} \xi_i\left( b \otimes a_1 \otimes \dots \otimes
		a_{\ho} \right). 
	\end{align*}
	By the definition \eqref{eqn-action-of-sn-on-v-otimes-n} of the action of 
	$\SymGrp{\ho + 1}$ on $\basecat^{\otimes \ho + 1}$, $\xi_i$ acts
	by placing the $\xi_i^{-1}(j)$th factor into $j$th place. Thus we have  
	\begin{align*}
		\xi_i\left( b \otimes a_1 \otimes \dots \otimes a_{\ho} \right) 
		= 
		a_1 \otimes \dots \otimes a_{i-1} \otimes b \otimes a_{i} \otimes \dots \otimes a_{\ho} 
	\end{align*}
	and therefore 
	\begin{align}
		\label{eqn-direct-sum-description-of-p_b}
		P_b = 
		\bigoplus_{i=1}^{\ho+1} a_1 \otimes \dots
		\otimes a_{i-1} \otimes b \otimes a_{i} \otimes \dots \otimes a_{\ho}.
	\end{align}
	We describe the $\SymGrp{\ho+1}$-equivariant structure on this
	direct sum. Let $\sigma \in \SymGrp{\ho + 1}$. For each $\xi_i$,
	the element $\sigma \xi_{\sigma^{-1}(i)}$ lies in the same coset as they both 
	send $1$ to $i$. Thus 
	$$\xi^{-1}_{i} \sigma \xi_{\sigma^{-1}(i)} = 
	\bigl(1\cdots i\bigr) \sigma \bigl(\sigma^{-1}(i)\cdots1\bigr) \in 1
	\times \SymGrp{\ho} \subset \SymGrp{\ho+1}.$$ Let $\tau_i$ be
	the corresponding element of $\SymGrp{\ho}$. By definition, 
	the isomorphism 
	$$
	\varepsilon_\sigma\colon \quad 
	\bigoplus_{i=1}^{\ho+1} \xi_i\left( b \otimes a_1 \otimes \dots \otimes a_{\ho} \right) 
	\longrightarrow  
	\bigoplus_{i=1}^{\ho+1} \sigma \xi_i\left( b \otimes a_1 \otimes
	\dots \otimes a_{\ho} \right)
	$$
	is a sum of components
	$$ 
	\xi_i \circ (b \otimes -) (\epsilon_{\tau_i})\colon 
	\xi_i\left( b \otimes a_1 \otimes \dots \otimes a_{\ho} \right) 
	\rightarrow 
	\xi_i\left( 
	b \otimes \left( \tau_i \left(a_1 \otimes \dots
	\otimes a_{\ho} \right)\right)
	\right). 
	$$
	Hence, in terms of \eqref{eqn-direct-sum-description-of-p_b}, 
	$\varepsilon_\sigma$ is the sum of the components 
	$$
	\begin{multlined}
	a_1 \otimes \dots
	\otimes a_{i-1} \otimes b \otimes a_{i} \otimes \dots \otimes a_{\ho} \\
	\xrightarrow{\xi_i \circ (b \otimes -)(\epsilon_{\tau_i})}
	a_{\tau_i^{-1}(1)} \otimes \dots
	\otimes a_{\tau^{-1}_i(i-1)} \otimes b \otimes a_{\tau_i^{-1}(i)} \otimes
	\dots \otimes a_{\tau_i^{-1}(\ho)}. 
	\end{multlined} 
	$$
	
	It follows that 
	\begin{align*}
		Q_aP_b(a_1\otimes \dots \otimes a_{\ho}) & =
		Q_a \left(\bigoplus_{i=1}^{\ho+1} 
		\xi_i(b \otimes a_1 \otimes \dots \otimes \dots \otimes a_{\ho}{)} \right) 
		\\
		& =
		Q_a \left(\bigoplus_{i=1}^{\ho+1} a_1 \otimes \dots
		\otimes a_{i-1} \otimes b \otimes a_{i} \otimes \dots \otimes a_{\ho} \right) 
		\\ 
		&=
		\Hom(a,b) \otimes_\kk a_1 \otimes \dots \otimes a_{\ho} \oplus \\ &
		\phantom{{}={}}\bigoplus_{i=1}^{\ho} \Hom(a,a_1) \otimes_\kk a_2
		\otimes  \dots \otimes a_{i} \otimes b \otimes a_{i+1} \otimes \dots \otimes a_{\ho}.
	\end{align*}
	We describe the $\SymGrp{\ho}$-equivariant structure on this direct sum. 
	Let $\sigma \in \SymGrp{\ho}$. Let $1 \times \sigma$ be the
	corresponding element of $1 \times \SymGrp{\ho} \subset
	\SymGrp{\ho+1}$ and note that $1 \times \sigma (i) = 1$ if $i = 1$ and 
	$1 + \sigma (i - 1)$ if $i > 1$. As before, we have 
	$\xi^{-1}_{i} (1 \times \sigma) \xi_{(1 \times \sigma^{-1})(i)} \in 1
	\times \SymGrp{\ho}$, so let 
	$\tau_i$ be the corresponding element of $\SymGrp{\ho}$.
	
	Restricting the $\SymGrp{\ho+1}$-equivariant structure on
	$P_b(a_1\otimes \dots \otimes a_{\ho})$ described above, we see that
	the isomorphism 
	$$ \varepsilon'_\sigma \colon 
	Q_aP_b(a_1\otimes \dots \otimes a_{\ho})
	\longrightarrow 
	\sigma\left( Q_aP_b(a_1\otimes \dots \otimes a_{\ho}) \right)
	$$
	is the sum 
	$$ \sum_{i = 1}^{\ho+1} \left(\homm(a,-) \otimes \id\right) \circ \xi_i \circ (b \otimes -)
	(\epsilon_{\tau_i}). $$
	When $i = 1$ we have $\xi_1 = \xi_{(1 \times \sigma^{-1})(1)} = \id$, 
	so $\tau = \sigma$ and the corresponding summand of 
	$\varepsilon'_\sigma$ is 
	$$ \Hom(a,b) \otimes_\kk a_1 \otimes \dots \otimes a_{\ho}
	\xrightarrow{\id \otimes \epsilon_\sigma}
	\Hom(a,b) \otimes_\kk a_{\sigma^{-1}(1)} \otimes \dots \otimes
	a_{\sigma^{-1}(\ho)}. $$
	When $i > 1$, observe that $\tau_i(1) = 1$. This is because
	$$ \tau_i(1) = \xi^{-1}_{i} (1 \times \sigma) \xi_{(1 \times
		\sigma^{-1})(i)}(2) - 1 = \xi^{-1}_{i} (1 \times \sigma)(1) - 1 = 
	\xi^{-1}_{i}(1) - 1 = 2 - 1 = 1. $$
	The corresponding summand $\varepsilon'_\sigma$ is therefore 
	\begin{equation*}
		\begin{tikzcd}
			\Hom(a,a_1) \otimes_\kk a_2 \otimes  \dots \otimes a_{i-1} \otimes b
			\otimes a_{i} \otimes \dots \otimes a_{\ho}
			\ar{d}{(\Hom(a,-) \otimes \id) \circ \xi_i \circ(b \otimes
				-)\;\bigl(\epsilon_{\tau_i}\bigr)}
			\\
			\Hom(a,a_1) \otimes_\kk a_{\tau^{-1}(2)}
			\otimes  \dots \otimes a_{\tau^{-1}(i-1)} \otimes b \otimes
			a_{\tau^{-1}(i)} \otimes \dots \otimes a_{\tau^{-1}(\ho)}.
		\end{tikzcd}
	\end{equation*}
	
	If $a = b$, then the adjunction unit
	\[ 
	a_1 \otimes \dots \otimes a_{\ho} \to Q_aP_a(a_1\otimes \dots \otimes a_{\ho})
	\]
	embeds $a_1 \otimes \dots \otimes a_{\ho}$ 
	into the first summand as 
	$\{\id_a\} \otimes_\kk a_1 \otimes \dots \otimes a_{\ho}$.
\end{Example}

\begin{Example}\label{ex:PbQa_add}
	In the same way, we obtain
	\begin{align*}
		P_bQ_a(a_1\otimes \dots \otimes a_{\ho}) & = 
		P_b \bigl(\Hom(a,a_1) \otimes_\kk a_2 \otimes \dots \otimes a_{\ho}\bigr) \\ & =
		\bigoplus_{i=1}^{\ho} \Hom(a, a_1) \otimes_\kk a_2 \otimes \dots
		\otimes a_{i} \otimes b \otimes a_{i+1} \otimes \dots \otimes a_{\ho}.
	\end{align*}
	The equivariant structure is the same as in the preceding example,
	keeping in mind that 
	$$ (\Hom(a,-) \otimes \id) \circ ((i+1)\cdots1)\; \circ (b \otimes -)
	= (i...1) \circ (b \otimes -) \circ  (\Hom(a,-) \otimes \id). $$
	
	The adjunction counit
	\[
	P_{a}Q_{a}(a_1 \otimes \dots \otimes a_{\ho}) \to a_1 \otimes \dots \otimes a_{\ho}
	\] 
	first applies the adjunction map $\Hom(a,a_1) \otimes a \to a_1$ on each summand yielding
	\[
	\bigoplus_{i=1}^{\ho} a_2 \otimes \dots \otimes a_{i} \otimes a_1 \otimes a_{i+1} \dots \otimes a_{\ho}.
	\]
	Then the equivariant structure of $a_1 \otimes \dots \otimes a_{\ho}$ 
	provides a morphism 
	\[
	\bigoplus_{i=1}^{\ho} a_2 \otimes \dots \otimes a_{i} \otimes a_1 \otimes a_{i+1} \dots \otimes a_{l} 
	\xrightarrow{\sum \epsilon_{(12{\cdots}i)}} 
	a_1 \otimes \dots \otimes a_{\ho}.
	\]
\end{Example}

\begin{Example}
	\label{ex:unitcounit_add}
	In the same way one sees that the unit of the adjunction $Q_a \dashv P_{Sa}$ is given by the canonical map $a_1 \to \Hom(a, a_1) \otimes_\kk Sa$ coming from the $\Hom(a,-) \dashv - \otimes_{\kk} Sa$ adjunction followed by the diagonal map into the product.
	The counit is projection onto the factor $\Hom(a,Sa) \otimes_{\kk} a_1 \otimes \dots \otimes a_{\ho}$ followed by the Serre trace applied to $\Hom(a, Sa)$.
\end{Example}

It follows from the explicit computations above that there is an isomorphism
\begin{equation}\label{eq:QPPQ_add}
	Q_aP_b \cong \bigl(\Hom_\basecat(a,b) \otimes_\kk \hunit\bigr) \oplus P_b Q_a
\end{equation}
natural in $a,b \in \basecat$.

We can now define the action of $\Psi_\basecat'$ on $2$-morphisms.
Firstly, the dotted strings \tikz[baseline=0.2em] \draw[->] (0,0) -- node[label=right:{$\alpha$}, dot, pos=0.4] {} (0,1em); and \tikz[baseline=0.2em] \draw[<-] (0,0) -- node[label=right:{$\alpha$}, dot, pos=0.6] {} (0,1em); for $\alpha \in \Hom_\basecat(a,b)$ are sent to the natural transformations $P_a \Rightarrow P_b$ and $Q_b \Rightarrow Q_a$ induced by the natural transformations
\[
a \otimes - \xRightarrow{\alpha \otimes \id} b \otimes -
\qquad \text{and} \qquad
\Hom_\basecat(b,-) \xRightarrow{\alpha \circ -} \Hom_\basecat(a,-)
\]
respectively.

Next, the caps and cups
\[
\begin{tikzpicture}[baseline=0ex, scale=0.67]
	\draw[->] (0,0) node[below] {$\PP_{a}$} arc[start angle=180, end angle=0, radius=.5] node[below] {$\QQ_{a}$};
\end{tikzpicture}, \quad
\begin{tikzpicture}[baseline=0ex, scale=0.67]
	\draw[->] (0,0) node[below] {$\QQ_{a}$} arc[start angle=0, end angle=180, radius=.5] node[below] {$\PP_{Sa}$};
\end{tikzpicture}, \quad
\begin{tikzpicture}[baseline=0ex, scale=0.67]
	\draw[->] (0,0.5) node[above] {$\QQ_{a}$}arc[start angle=-180, end angle=0, radius=.5] node[above] {$\PP_{a}$};
\end{tikzpicture} \quad\text{and}\quad
\begin{tikzpicture}[baseline=0ex, scale=0.67]
	\draw[->] (0,0.5) node[above] {$\PP_{Sa}$} arc[start angle=0, end angle=-180, radius=.5] node[above] {$\QQ_{a}$};
\end{tikzpicture}
\]
are sent to the adjunction maps
\[
P_aQ_a \to \id, \quad
Q_{a}P_{Sa} \to \id, \quad
\id \to Q_aP_a, \quad
\text{and}\quad
\id \to P_{Sa}Q_a.
\]

Finally, the downward crossing
\[
\begin{tikzpicture}[baseline=-1ex, scale=0.67]
	\draw[<-] (0,-0.5) node[below] {$\QQ_{a}$} -- (1,0.5) node[above] {$\QQ_{a}$};
	\draw[<-] (1,-0.5) node[below] {$\QQ_{b}$} -- (0,0.5) node[above] {$\QQ_{b}$};
\end{tikzpicture}
\]
is sent to
{ the following functorial isomorphism. As functors 
	$\addsymbc{\ho} \rightarrow \addsymbc{\ho-2}$ we have
	$$ \QQ_{a} \QQ_{b} \simeq \bigl(\homm_\basecat(a,-) \otimes \homm_\basecat(b,-)
	\otimes \id_{\addsymbc{\ho-2}} \bigr) \circ 
	\Res_{\SymGrp \ho}^{1 \times 1 \times \SymGrp{\ho-2}},$$
	$$ \QQ_{b} \QQ_{a} \simeq \bigl(\homm_\basecat(b,-) \otimes \homm_\basecat(a,-)
	\otimes \id_{\addsymbc{\ho-2}} \bigr) \circ 
	\Res_{\SymGrp \ho}^{1 \times 1 \times \SymGrp{\ho-2}}.$$
	The latter can be further rewritten as
	$$ \QQ_{b} \QQ_{a} \simeq \bigl(\homm_\basecat(a,-) \otimes \homm_\basecat(b,-)
	\otimes \id_{\addsymbc{\ho-2}} \bigr) \circ (12) \circ 
	\Res_{\SymGrp \ho}^{1 \times 1 \times \SymGrp{\ho-2}}. 
	$$
	With these identifications in mind, we send the downward crossing
	to the functorial isomorphism $\QQ_{a} \QQ_{b} \xrightarrow{\sim}
	\QQ_{b} \QQ_{a}$ induced by the natural isomorphism 
	$$
	\Res_{\SymGrp \ho}^{1 \times 1 \times \SymGrp{\ho-2}}
	\xrightarrow{\sim} 
	(12) \circ \Res_{\SymGrp \ho}^{1 \times 1 \times \SymGrp{\ho-2}}$$
	given on any object $(\underline{a}, \epsilon_{\sigma})$ by
	$\epsilon_{(12)}$. 
}

Explicit computations (making particular use of the decomposition~\eqref{eq:QPPQ_add}) show that this definition of $\Psi_\basecat'$ is compatible with all 2-relations on $\hcatadd*\basecat$.
Thus we have the following result:

\begin{Proposition}
	The above definition gives a $2$-functor
	\[
	\Psi_\basecat'\colon \hcatadd*\basecat \to \fcatadd*\basecat.
	\]
\end{Proposition}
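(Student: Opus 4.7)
The plan is to check that the assignment $\Psi_\basecat'$ respects every defining 2-relation of $\hcatadd*\basecat$ listed in Section~\ref{subsec:heisencatdef2-additive}. Most of the relations follow from elementary functoriality and naturality arguments, but the two genuinely substantive checks are the circle/curl relations~\eqref{eq:circle_and_curl-add} and the up-down crossing relations~\eqref{eq:up_down_braids-add}. I would partition the verification into the following blocks.

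First, the easy relations: linearity of dots is immediate from $\kk$-linearity of the two natural transformations $\alpha\otimes\id$ and $\alpha\circ-$, and the dot-merging identity~\eqref{eq:colliding_dots_down-add} together with Lemma~\ref{lem:colliding_dots_up-add} reduce to the fact that $(\alpha\otimes\id)\circ(\beta\otimes\id) = (\alpha\circ\beta)\otimes\id$ (the sign in the downward case appears for the Koszul reason that composition in $\basecat^{\opp}$ is twisted). The straightening relations~\eqref{eq:straighten-add} are simply the triangle identities for the two adjunctions $P_a\dashv Q_a$ and $Q_a\dashv P_{Sa}$, which were used to define the functors. The dot-sliding relations through caps and cups~\eqref{eq:caps_and_dots-add} are naturality of the units and counits of these adjunctions with respect to morphisms in $\basecat$. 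The symmetric group relations~\eqref{eq:symmetric_group_relations-add} on downward strands reduce to the braiding axioms for the symmetric group action of $\SymGrp \ho$ on the categories $\basecat^{\otimes \ho}$, and similarly the dot-crossing relation~\eqref{eq:downcross-and-dot-1-add} is naturality of this symmetric braiding.

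Next, the circle/curl relations~\eqref{eq:circle_and_curl-add} need a short argument. For the left vanishing relation, the composition $\id \to P_{Sa}Q_a \to Q_aP_a \to \id$ (after straightening) sends $\underline{a} = a_1\otimes\cdots\otimes a_\ho$ to the contraction of the component $\Hom(a,Sa)\otimes_\kk\underline{a}$ under the Serre trace, but the element being contracted is a pure tensor where one factor is the zero morphism (coming from the unit/counit composition through $P_{Sa}$ rather than $P_a$), so the whole expression vanishes. For the right relation, a short computation using the explicit formula for the unit of $Q_a\dashv P_{Sa}$ (the diagonal map into the factor indexed by the Serre functor) shows that decorating with $\alpha\in\Hom_\basecat(a,Sa)$ and closing up produces exactly $\Tr(\alpha)\cdot\id$ in agreement with~\eqref{eq:Serre-trace-add}.

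The main obstacle will be the two up-down braid relations~\eqref{eq:up_down_braids-add}. Here I would use the natural decomposition~\eqref{eq:QPPQ_add} established via Examples~\ref{ex:QaPb_add} and~\ref{ex:PbQa_add}, which identifies
\[
Q_a P_b \cong \bigl(\Hom_\basecat(a,b)\otimes_\kk \hunit\bigr) \oplus P_b Q_a
\]
naturally in $a,b$. Under $\Psi_\basecat'$, the composition of a cup, a downward crossing, and a cap (the image of the left-hand side of the second relation in~\eqref{eq:up_down_braids-add}) is exactly the functor realising this isomorphism restricted to the component that compares $P_b Q_a$ with $Q_a P_b$ minus the part indexed by the loop through $\hunit$, which is precisely the image of $\Psi(\id)$. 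Concretely, one computes that on an object $\underline{a}\in\addsymbc{\ho}$ the first summand $\Hom(a,b)\otimes_\kk\underline{a}$ in $Q_a P_b(\underline{a})$ is realised by the loop term, while the remaining summands in $Q_a P_b(\underline{a})$ coincide with the summands of $P_b Q_a(\underline{a})$ via the same symmetric-group index shift, and this bijection of summands is exactly the image of the upward--downward crossing. The first relation in~\eqref{eq:up_down_braids-add}, by contrast, asserts that the corresponding braid through $(\PP,\QQ)$-caps and cups is the identity, which follows directly because $\Hom(a,b)$ does not appear as a summand in $P_a Q_b$ (there is no loop contribution in this orientation), so the decomposition is just the identity. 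Verifying both these statements requires carefully tracking the equivariant structure isomorphisms $\epsilon'_\sigma$ described in Example~\ref{ex:QaPb_add}, and this bookkeeping is the most involved part of the proof.
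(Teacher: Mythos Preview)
Your proposal is essentially correct and follows the same approach as the paper, which itself gives no details beyond the sentence ``Explicit computations (making particular use of the decomposition~\eqref{eq:QPPQ_add}) show that this definition of $\Psi_\basecat'$ is compatible with all 2-relations on $\hcatadd*\basecat$.'' Your outline of which relations are routine and which hinge on the decomposition~\eqref{eq:QPPQ_add} matches the structure of the detailed \dg-version proof in Section~\ref{subsec:fock2rel}.

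One point to correct: your treatment of the left curl vanishing in~\eqref{eq:circle_and_curl-add} is garbled. The curl is a 2-morphism $\QQ_a \to \QQ_{Sa}$, not a composition $\id \to P_{Sa}Q_a \to Q_aP_a \to \id$ (the middle step of that composition does not even typecheck). The actual decomposition, parallel to Lemma~\ref{lem:leftcurlvanish}, is
\[
Q_a \xrightarrow{\ Q_{\id_a}\cdot\unit\ } Q_aQ_{Sa}P_{Sa} \xrightarrow{\ (12)_*\ } Q_{Sa}Q_aP_{Sa} \xrightarrow{\ Q_{\id_{Sa}}\cdot\counit\ } Q_{Sa},
\]
and the reason it vanishes has nothing to do with a ``zero morphism'' appearing as a tensor factor. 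Rather, the unit embeds $Q_a(\underline a)$ into the summands of $Q_aQ_{Sa}P_{Sa}(\underline a)$ indexed by $\Hom(Sa,Sa)$ (via $\id_{Sa}$), the crossing reorders the two $Q$'s, and the $(Q_a,P_{Sa})$-counit then projects onto the summand indexed by $\Hom(a,Sa)$; these are disjoint summands in the decomposition of Example~\ref{ex:QaPb_add}, so the composite is zero. This is exactly the mechanism spelled out in the \dg case, and it is what you should invoke here.
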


Let $\fcatadd\basecat$ be the $2$-category with objects the Karoubi completions $\Kar(\addsymbc{\ho})$, $1$-morphisms $\kk$-linear functors, and $2$-morphisms natural transformations.
We call $\fcatadd\basecat$ the \emph{Fock category}\index{Fock category} or,
	equivalently, the  \emph{categorical Fock space} of
$\hcatadd\basecat$. By the universal property of the Karoubi
	envelope, we have:

\begin{Corollary}
	The functor $\Psi_\basecat'$ induces a $2$-functor
	\[
	\Psi_\basecat\colon \hcatadd\basecat \to \fcatadd\basecat.
	\]
\end{Corollary}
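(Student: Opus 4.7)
The plan is to invoke the universal property of the Karoubi envelope of a 2-category, applied at the level of 1-morphisms. Recall that $\hcatadd\basecat$ is constructed from $\hcatadd*\basecat$ by formally splitting idempotent 2-morphisms $e\colon R\to R$ to produce new 1-morphisms $(R,e)$. The universal property states that any 2-functor $F\colon \hcatadd*\basecat\to\bicat D$ extends uniquely (up to canonical 2-isomorphism) to a 2-functor $\hcatadd\basecat\to\bicat D$, provided that for every pair of objects $x,y\in\bicat D$, the 1-morphism category $\Hom_{\bicat D}(x,y)$ is Karoubi-complete, i.e.\ idempotent 2-morphisms split. It therefore suffices to verify this property for the target 2-category $\fcatadd\basecat$, and then apply the universal property to the composition of $\Psi'_\basecat\colon\hcatadd*\basecat\to\fcatadd*\basecat$ with the evident inclusion $\fcatadd*\basecat\hookrightarrow\fcatadd\basecat$ (where each $\kk$-linear functor on $\addsymbc\ho$ is extended canonically to $\Kar(\addsymbc\ho)$).

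To verify Karoubi-completeness of the 1-morphism categories of $\fcatadd\basecat$, fix two objects, that is, two Karoubi-complete $\kk$-linear categories of the form $\Kar(\addsymbc\ho)$ and $\Kar(\addsymbc{\ho*})$. The 1-morphism category between them is the category of $\kk$-linear functors with morphisms being natural transformations. Given any idempotent natural transformation $e\colon F\Rightarrow F$, the components $e_x\colon F(x)\to F(x)$ are idempotents in the Karoubi-complete target and so split as $F(x)\cong \operatorname{im}(e_x)\oplus \ker(e_x)$. The assignment $x\mapsto \operatorname{im}(e_x)$ canonically extends to a $\kk$-linear subfunctor of $F$ using functoriality of image in a Karoubi-complete $\kk$-linear category, and one checks that this subfunctor together with its complementary summand provides the required splitting of $e$ in the functor category. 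Thus $\fcatadd\basecat$ satisfies the hypothesis of the universal property.

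Finally, the combined 2-functor $\hcatadd*\basecat\to\fcatadd\basecat$ now extends uniquely to $\Psi_\basecat\colon\hcatadd\basecat\to\fcatadd\basecat$: an object $(R,e)$ of $\hcatadd\basecat$ is sent to the splitting of the idempotent $\Psi'_\basecat(e)$ guaranteed by the previous paragraph, and 2-morphisms $(R_1,e_1)\to(R_2,e_2)$, which by definition satisfy $f=e_2\circ f\circ e_1$, are sent to the corresponding factorisations through these splittings. Compatibility with vertical and horizontal composition is automatic from the universal property. The only mild subtlety, and the one step requiring care, is the construction of the splitting of an idempotent natural transformation into a functor (rather than a mere pointwise family of objects); but this is standard and follows from the naturality of $e$ together with the functoriality of the Karoubi splitting in the target.
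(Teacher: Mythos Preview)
Your argument is correct and is precisely the standard reasoning the paper leaves implicit: the Corollary is stated without proof, immediately after the Proposition establishing $\Psi'_\basecat$, and the intended justification is exactly the universal property of the Karoubi envelope together with the fact that functor categories into Karoubi-complete categories are themselves Karoubi-complete. Your write-up spells out what the paper treats as routine.
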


\begin{Remark}
	The functors $\PP_a$ and $\QQ_a$ in the above definition have both a right and left adjoint.
	Hence, if $\basecat$ is abelian they are exact.
	Thus they extend to the Deligne tensor product, i.e.~there exists an action of $\hcatadd\basecat$ on  the $2$-category with objects $\smash[t]{\widehat{\addsym}}^{\ho}\basecat = (\basecat^{\widehat\otimes \ho})^{\SymGrp \ho}$, where $\widehat\otimes$ is the Deligne tensor product of abelian categories \cite[Proposition 1.46.2]{etingof2009topics}.
\end{Remark}

\chapter{Preliminaries on \dg Categories}\label{sec:dg-prelim}

In this section, we review the existing formalism of \dg categories and enriched bicategories and introduce several new results we need for our construction of a \dg Heisenberg $2$-category.  Below we summarise the key items of notation we employ. 

Given a \dg category $\A$,  we denote by $\modA$ its \dg category of right $\A$-modules. We denote by $\hprojA$ and $\perfA$ the full subcategories of $\modA$ comprising h-projective\index{h-projective} modules and perfect module\index{perfect module}s, respectively. We write $\hperfA$ for their intersection. We denote by  $\catD(\A)$ the derived category\index{derived category} of right $\A$-modules, and by $\catDc(\A)$ its full subcategory of compact objects. Note that $\catD(\A) \simeq \Hzero(\hprojA)$ and $\catDc(\A) \simeq \Hzero(\hperfA)$. 

Given a scheme $X$, we write $\catDQCoh{X}$ for the derived category
of complexes of sheaves on $X$ with quasi-coherent cohomology and
$\catDbCoh{X}$ for its full subcategory of complexes with bounded, coherent cohomology. Let $\catDGCoh{X}$ be the standard \dg enhancement of $\catDbCoh{X}$.

In this paper we arrange \dg categories into a m{\'e}nagerie of $1$-categories, strict $2$-categories and \dg bicategories. Figure~\ref{fig:dg-cat-cats} gives an overview of these and their relation to each other:

\begin{figure}[htb]
	\centering
	\begin{tikzpicture}[commutative diagrams/every diagram,
		ref/.style={font={\tiny}},
		x={(4,0)}, y={(0,3)},
		]
		\begin{scope}[every node/.style={anchor=base}]
			\node (HoDGCatone) at (-1,1.25) {$\HoDGCatone$};
			\node (EnhCatone) at (-0.66,0.75) {$\EnhCatone$};
			\node (DGCatone) at (-1,0.25) {$\DGCatone$};
			\node (MoDGCatone) at (-1,-1) {$\MoDGCatone$};
			
			\node (HoDGCat) at (0,1.25) {$\HoDGCat$};
			\node (EnhCat) at (0,0.75) {$\EnhCat^{\vphantom{1}}$};
			\node (DGCat) at ($(DGCatone.base)+(1,0)$) {$\DGCat^{\vphantom{1}}$};
			\node (MoDGCat) at (0,-1) {$\MoDGCat$};
			\node (EnhCatKC) at ($(0,-1)-(0,2.5em)$) {$\EnhCatKC$};
			
			\node (DGCatdg) at ($(DGCat.base)+(1,0)$) {$\DGCatdg$};
			\node (EnhCatKCdg) at ($(EnhCatKC.base)+(1,0)$) {$\EnhCatKCdg$};
			\node (dgModCat) at ($(EnhCatKCdg.base)!0.75!(DGCatdg.base)$) {$\DGModCat$};
			\node (dgBiMod) at  ($(EnhCatKCdg.base)!0.45!(DGCatdg.base)$) {$\DGBiMod$};
			\node (dgBiModlfrp) at  ($(EnhCatKCdg.base)!0.25!(DGCatdg.base)$) {$\smash[b]{\DGBiMod_{\lfrp}}$};
			
			\node[ref] at (HoDGCatone.north east) {Sec.~\ref{section-enhanced-categories}};
			\node[ref] at (DGCatone.north east) {Def.~\ref{defn-enrichment-2categories}};
			\node[ref] at (MoDGCatone.north east) {Sec.~\ref{section-enhanced-categories}};
			
			\node[ref] at (EnhCatone.north east) {Sec.~\ref{section-enhanced-categories}};
			
			\node[ref] at (HoDGCat.north east) {Def.~\ref{defn-enrichment-2categories}};
			\node[ref] at (EnhCat.north east) {\cite[Sec.~1]{lunts2010uniqueness}};
			\node[ref] at (DGCat.north east) {Def.~\ref{defn-enrichment-2categories}};
			\node[ref] at (EnhCatKC.north east) {Def.~\ref{defn-EnhCatKC}};
			
			\node[ref] at (dgModCat.north east) {Def.~\ref{defn-dgModCat}};
			\node[ref] at (dgBiMod.north east) {Def.~\ref{defn-dgBiMod}};
			\node[ref] at (dgBiModlfrp.north east) {Sec.~\ref{sec:magic-wand}};
			\node[ref] at (EnhCatKCdg.north east) {Def.~\ref{defn-EnhCatKCdg}};
		\end{scope}
		
		\path[commutative diagrams/.cd, every arrow, every label]
		(DGCatone) edge node[anchor=south, rotate=90] {\tiny loc.~by} node[anchor=north, rotate=90] {\tiny quasi-equiv.} (HoDGCatone)
		(DGCatone) edge node[anchor=south, rotate=90] {\tiny loc.~by} node[anchor=north, rotate=90] {\tiny Morita equiv.} (MoDGCatone)
		(MoDGCatone) edge[out=120, in=240, commutative diagrams/hook] node[anchor=south, rotate=90] {$\A \mapsto \hperfA$} (HoDGCatone)
		(EnhCatone) edge[commutative diagrams/hook] (HoDGCatone)
		
		(HoDGCat) edge[dashed] (HoDGCatone)
		(EnhCat) edge[dashed] (EnhCatone)
		(EnhCat) edge[commutative diagrams/hook] (HoDGCat)
		(DGCat) edge[dashed] (DGCatone)
		(MoDGCat) edge[dashed] (MoDGCatone)
		(MoDGCat) edge[commutative diagrams/equal] (EnhCatKC)
		
		
		(DGCatdg) edge[decoration={snake,amplitude=0.33ex,segment length=1.5ex,pre length=1mm, post length=1mm}, decorate] (DGCat)
		(EnhCatKCdg) edge[decoration={snake,amplitude=0.33ex,segment length=1.5ex,pre length=1mm, post length=1mm}, decorate] (EnhCatKC)
		(dgModCat) edge[commutative diagrams/hook] (DGCatdg)
		(dgModCat) edge[transform canvas={xshift=-0.7ex}] node[left, pos=0.40] {$\tensorfn$} (dgBiMod)
		(dgBiMod) edge[transform canvas={xshift=0.5ex}] node[right, pos=0.60] {$\bimodapx$} (dgModCat)
		(dgBiModlfrp) edge[commutative diagrams/hook] (dgBiMod)
		(dgBiModlfrp) edge node[left, pos=0.35] {$L$} (EnhCatKCdg) 
		;
		
		\path (MoDGCat.north -| EnhCat.south west) edge[bend left=15,commutative diagrams/.cd, every arrow, every label, hook] node[anchor=south, rotate=90] {$\A \mapsto \hperfA$} (EnhCat.south west);
	\end{tikzpicture}
	\caption{Summary of various categories of \dg categories. Dashed arrows represent $1$-categorical truncation and the squiggly arrow represents taking homotopy categories of the $1$-morphism categories.}
	\label{fig:dg-cat-cats}
\end{figure}

\begin{itemize}
	\item $\DGCatone$ is the $1$-category of \dg categories and
\dg functors\index{DG functor}
	between them, see Section~\ref{section-enriched-bicategories}, Definition 
	\ref{defn-enrichment-2categories}, 
	\item $\HoDGCatone$ is the localisation of $\DGCatone$ by
	quasi-equivalences, see Section~\ref{section-enhanced-categories} and
	\cite{Toen-TheHomotopyTheoryOfDGCategoriesAndDerivedMoritaTheory}, 
	\item $\EnhCatone$ is the full subcategory of $\HoDGCatone$
	comprising pretriangulated \dg categories. We view it as
	the $1$-category of enhanced triangulated categories, see
	Section~\ref{section-enhanced-categories}. 
	\item $\MoDGCatone$ is the localisation of $\DGCatone$ by
	Morita equivalence\index{Morita equivalence}s. We view it as
the $1$-category of Morita enhanced triangulated categories\index{Morita
enhancement}, see Section~\ref{section-enhanced-categories} and
	\cite{Tabuada-InvariantsAdditifsDeDGCategories}, 
	\item $\DGCat$ is the strict $2$-category of the isomorphism classes of
	\dg categories, \dg functors, and closed degree zero
	\dg natural transformations,
	see Section~\ref{section-enriched-bicategories}, 
	Definition \ref{defn-enrichment-2categories},
	\item $\HoDGCat$ is a strict $2$-categorical version of $\HoDGCatone$
	constructed using the main results of 
	\cite{Toen-TheHomotopyTheoryOfDGCategoriesAndDerivedMoritaTheory}, 
	see 
	Section~\ref{section-enriched-bicategories}, Definition~\ref{defn-enrichment-2categories} and \cite{Toen-TheHomotopyTheoryOfDGCategoriesAndDerivedMoritaTheory}, 
	\item $\EnhCat$ is the strict $2$-category of enhanced triangulated
	categories, see \cite[Sec.~1]{lunts2010uniqueness}. 
	It is the $1$-full subcategory\index{$1$-full subcategory} of $\HoDGCat$ comprising 
	pretriangulated \dg categories.  
	\item $\MoDGCat$ is the strict $2$-category 
	of Morita enhanced triangulated categories, see
	Section~\ref{section-enhanced-categories}, Definition~\ref{defn-EnhCatKC}. 
	It is also known as $\EnhCatKC$, because it can be realised as 
	the $1$-full subcategory of $\EnhCat$ comprising homotopy
Karoubi complete\index{homotopy Karoubi completion}
	\dg categories. Here and throughout the paper the subscript $\kc$ means 
	`Karoubi complete'. 
	\item $\DGCatdg$ is the strict \dg $2$-category of  
	\dg categories, \dg functors, and (all) \dg natural transformations,
	\item $\DGModCat$ is the strict \dg $2$-category of \dg module
	categories. It is the $1$-full subcategory of $\DGCatdg$ consisting 
	of all \dg categories of form $\modA$ for some small \dg category
	$\A$, see Section~\ref{section-bimodule-approximation}, Definition 
	\ref{defn-dgModCat}, 
	\item $\DGBiMod$ is the \dg bicategory\index{DG bicategory} whose objects are small \dg
	categories and whose $1$-morphism categories are \dg categories 
	of \dg bimodules, see 
	Section~\ref{section-bimodule-approximation}, Definition \ref{defn-dgBiMod}, 
	\item $\DGBiMod_{\lfrp}$ is the $2$-full subcategory of $\DGBiMod$
	comprising the same objects and the $1$-morphisms given by
	left-h-flat and right-perfect bimodules, see 
	Sec.~\ref{sec:magic-wand}, 
	\item $\EnhCatKCdg$ is the lax-unital \dg bicategory of Morita enhanced
	triangulated categories. It is a \dg enhancement\index{DG enhancement} of $\EnhCatKC$
	and is a new object introduced in this paper, see Definition
	\ref{defn-EnhCatKCdg}. Alternatively, it can be constructed as
	a strictly unital $\HoDGCat$-enriched bicategory, see
	Section~\ref{section-enhanced-categories}, Definition
	\ref{defn-alternative-enhcatkcdg}. 
\end{itemize}

\section{Enriched bicategories}
\label{section-enriched-bicategories} 

The \dg version of the Heisenberg category, which we define in Chapter~\ref{sec:dg-Heisenberg-2-cat}, is a certain weak $2$-category and its representations are given by weak $2$-functors.  
The notion of a weak $2$-category we use is a \emph{bicategory}\index{bicategory}.  We refer to~\cite{Benabou-IntroductionToBicategories} for 
the original definition and a comprehensive technical treatment of bicategories. 

We need to work with \emph{enriched bicategories}\index{enriched bicategory}. 
The natural structure to enrich bicategories over is a monoidal 
bicategory or, more generally, a tricategory. The formal definitions can be found in 
\cite{GordonPowerStreet-CoherenceForTricategories}, and they are 
rather involved. However, a reader comfortable with the properties of cartesian 
products of categories and tensor products of \dg categories need not consider 
the formal definition of a tricategory for the purposes of reading this paper. 
We only work with enrichments over one of the following three strictly monoidal 
strict $2$-categories:
\begin{Definition}
	\label{defn-enrichment-2categories}\leavevmode
	\begin{enumerate}
		\item $\Cat$: The $2$-category of isomorphism classes of
		small categories, of functors, and of natural transformations. 
		The monoidal structure is the cartesian product $\times$.
		\item $\DGCat$: The $2$-category of isomorphism classes of
		small $\kk$-linear \dg categories, of \dg functors, and of
		(closed degree zero) \dg natural transformations.
		The monoidal structure is the tensor product $\otimes_\kk$ 
		over $\kk$. We further write $\DGCatone$ for the underlying 
		$1$-category of $\DGCat$, where we only consider \dg categories and \dg functors
		between them. 
		\item $\HoDGCat$: The $2$-categorical version considered in
		\cite{Toen-TheHomotopyTheoryOfDGCategoriesAndDerivedMoritaTheory} of the localisation of
		$\DGCatone$ by quasi-equivalences. 
		Its objects are the isomorphism classes of small $\kk$-linear \dg categories, 
		its $1$-morphisms are the isomorphism classes of right quasi-representable
		bimodules in $\catD(\AbimB)$, and its $2$-morphisms are the morphisms 
		between these in $\catD(\AbimB)$.
		The monoidal structure is given by the tensor product $\otimes_\kk$. 
	\end{enumerate}
\end{Definition}

For the general definition of an enriched bicategory we
refer the reader to \cite[Section~3]{GarnerShulman-EnrichedCategoriesAsAFreeCocompletion}. 
Considering only enrichments over strictly monoidal strict
$2$-categories allows us to give a simpler definition 
which is nearly identical to the original definition of 
a bicategory in \cite{Benabou-IntroductionToBicategories}.

\begin{Definition}\label{def:enrichedbicategory}
	Let $(\bicat M, \otimes, 1_{\bicat M})$ be a strictly monoidal strict $2$-category. 
	A \emph{bicategory $\bicat C$ enriched over $\bicat M$}\index{enriched bicategory} comprises the following
	data: 
	\begin{enumerate}
		\item a collection of \emph{objects} $\obj \bicat C$; 
		\item $\forall\; a,b \in \obj \bicat C$ a \emph{$1$-morphism object}\index{$1$-morphism} $\bicat C(a,b)$, which is an object in $\bicat M$; 
		\item $\forall\; a \in \obj \bicat C$ an \emph{identity element} $1_a: 1_{\bicat M} \rightarrow \bicat C(a,a)$, which is a $1$-morphism in $\bicat M$; 
		\item $\forall\; a,b,c \in \obj \bicat C$ the \emph{$1$-morphism composition}\index{$1$-composition}, which is a $1$-morphism in $\bicat M$:
		\begin{equation*}
			\mu\colon \bicat C(b,c) \otimes \bicat C(a,b)  \rightarrow \bicat C(a,c);
		\end{equation*}
		\item $\forall\; a,b,c,d \in \obj \bicat C$  the \emph{associator}\index{associator} $\alpha$ which is a $2$-isomorphism in $\bicat M$:
		\begin{equation*}
			\begin{tikzcd}[row sep = 0.25cm]
				\bicat C(c,d) \otimes \bicat C(b,c) \otimes \bicat C(a,b) 
				\ar{d}{\;\mu \otimes \id}
				\\
				\bicat C(b,d) \otimes \bicat C(a,b) 
				\ar{d}{\;\mu}
				\\
				\bicat C(a,d)
			\end{tikzcd}
			\xrightarrow[\ \ \textstyle\sim\ \ ]{\textstyle\alpha}
			\begin{tikzcd}[row sep = 0.25cm]
				\bicat C(c,d) \otimes \bicat C(b,c) \otimes \bicat C(a,b) 
				\ar{d}{\;\id \otimes \mu}
				\\
				\bicat C(c,d) \otimes \bicat C(a,c) 
				\ar{d}{\;\mu}
				\\
				\bicat C(a,d);
			\end{tikzcd}
		\end{equation*}
		\item $\forall\; a,b \in \obj \bicat C$ the \emph{unitors}\index{unitor} $\rho$ and $\lambda$ which are $2$-isomorphisms of $1$-morphisms in $\bicat M$:
		\begin{equation*}
			\begin{tikzcd}[row sep = 0.25cm]
				\bicat C(a,b) \ar[equals]{d} 
				\\
				\bicat C(a,b) \otimes 1_{\bicat M} \ar{d}{\;\id \otimes 1_a}
				\\
				\bicat C(a,b) \otimes \bicat C(a,a) \ar{d}{\;\mu}
				\\
				\bicat C(a,b)
			\end{tikzcd}
			\xrightarrow[\ \ \textstyle\sim\ \ ]{\textstyle\rho}
			\begin{tikzcd}
				\bicat C(a,b) \ar{ddd}{\id}
				\\
				\\
				\\
				\bicat C(a,b)
			\end{tikzcd}
		\end{equation*}
		and
		\begin{equation*}
			\begin{tikzcd}[row sep = 0.25cm]
				\bicat C(a,b) \ar[equals]{d} 
				\\
				1_{\bicat M} \otimes \bicat C(a,b) \ar{d}{\;1_a \otimes \id}
				\\
				\bicat C(a,a) \otimes \bicat C(a,b) \ar{d}{\;\mu}
				\\
				\bicat C(a,b)
			\end{tikzcd}
			\xrightarrow[\ \ \textstyle\sim\ \ ]{\textstyle\lambda}
			\begin{tikzcd}
				\bicat C(a,b) \ar{ddd}{\id}
				\\
				\\
				\\
				\bicat C(a,b)
			\end{tikzcd}
		\end{equation*}
	\end{enumerate}
	which must satisfy the following conditions:
	\begin{enumerate}[resume*]
		\item\label{item-associativity-coherence-diagram} 
		$\forall\; a,b,c,d,e \in \obj \bicat C$ the following diagram of $2$-morphisms between $1$-morphisms 
		$
		\bicat C(d,e) \otimes \bicat C(c,d) \otimes \bicat C(b,c) \otimes \bicat C(a,b) \rightarrow \bicat C(a,e)
		$
		must commute in $\bicat M$:
		\begin{equation*}
			\begin{tikzcd}[sep=2.1em, font=\footnotesize]
				\mu \circ (\mu \otimes \id) \circ  (\mu \otimes \id \otimes \id)
				\ar{r}{\mu \circ (\alpha \otimes \id) }
				\ar{d}{ \alpha \circ \id }
				&
				\mu \circ (\mu \otimes \id) \circ  (\id \otimes \mu \otimes \id)
				\ar{r}{ \alpha \circ \id } 
				&
				\mu \circ (\id \otimes \mu) \circ  (\id \otimes \mu \otimes \id)
				\ar{d}{\mu \circ (\id \otimes \alpha)}
				\\
				\mu \circ (\id \otimes \mu) \circ  (\mu \otimes \id \otimes \id)
				\ar{rr}{\alpha \circ \id}
				&
				&
				\mu \circ (\id \otimes \mu) \circ  (\id \otimes \id \otimes  \mu);
			\end{tikzcd}
		\end{equation*}
		\item\label{eqn-unitality-coherence-diagram}
		$\forall\; a,b,c \in \obj \bicat C$ the following diagram of $2$-morphisms between $1$-morphisms 
		$
		\bicat C(b,c) \otimes \bicat C(a,b) \rightarrow \bicat C(a,c)
		$
		must commute in $\bicat M$:
		\begin{equation*}
			\begin{tikzcd}
				\mu \circ (\mu \otimes \id) \circ (\id \otimes 1_b \otimes \id) 
				\ar{rr}{\alpha \circ (\id \otimes 1_b \id)}
				\ar{dr}[']{\mu \circ (\rho \otimes \id)}
				&&
				\mu \circ (\id \otimes \mu) \circ (\id \otimes 1_b \otimes \id) 
				\ar{dl}{\mu \circ (\id \otimes \lambda)}
				\\
				&
				\mu
				& 
			\end{tikzcd}.
		\end{equation*}
	\end{enumerate} 
\end{Definition}

\begin{Remark}  The objects of the three $2$-categories we define in 
	Definition \ref{defn-enrichment-2categories} are isomorphism 
	classes of categories. This is to make the strictly associative monoidal 
	structures provided by $\times$ and $\otimes_\kk$ also strictly unital. 
	To work with individual categories one only needs to adjust the definition 
	above to allow the monoidal structure on $\bicat M$ to be lax-unital. 
\end{Remark}

\begin{Examples}\leavevmode
	\begin{enumerate}
		\item A bicategory enriched over $\Cat$ is an ordinary bicategory 
		in the sense of \cite{Benabou-IntroductionToBicategories}. We refer
		to these simply as \emph{bicategories}. Special cases are: 
		\begin{enumerate}
			\item A bicategory with a single object is a \emph{monoidal category}\index{monoidal category}. 
			\item A bicategory whose associator and unitor isomorphisms are identity maps is a \emph{strict $2$-category}. 
		\end{enumerate}
		\item A bicategory enriched over $\DGCat$ is a \emph{\dg bicategory}\index{DG bicategory}. 
	\end{enumerate}
\end{Examples}

\begin{Remark}
	\label{rem:dg-interchange-law}
	Consider a \dg bicategory $\bicat C$.
	Then the data of the $1$-composition functor $\mu = \circ_1\colon  \bicat C(b,c) \otimes \bicat C(a,b) \rightarrow \bicat C(a,c)$ gives rise to the graded interchange law
	\[
	(\alpha \circ_1 \beta) \circ_2 (\gamma \circ_1 \delta) = (-1)^{|\beta||\gamma|} (\alpha \circ_2 \gamma) \circ_1 (\beta \circ_2 \delta),
	\]
	where we write $\circ_2$ for the $2$-composition, i.e., the composition in the $1$-morphism categories.
\end{Remark}

\begin{Definition}
	Let $(\bicat M, \otimes, 1_{\bicat M})$ be a strictly monoidal strict $2$-category.
	Let $\bicat C$ and $\bicat D$ be two bicategories enriched over $\bicat M$. 
	An \emph{enriched $2$-functor} $F\colon \bicat C \rightarrow \bicat D$ comprises 
	\begin{enumerate}
		\item a map $F\colon \obj \bicat C \rightarrow \obj \bicat D$;
		\item $\forall\; a,b \in \obj \bicat C$ a $1$-morphism $F_{a,b}$ in $\bicat M$,
		\[
		F_{a,b}\colon \bicat C(a,b) \rightarrow \bicat D(Fa,Fb);
		\]
		\item $\forall\; a \in \obj \bicat C$ a \emph{unit coherence}\index{unit coherence} $2$-morphism $\iota$ in $\bicat M$ between 
		the following $1$-morphisms $1_{\bicat M} \rightarrow \bicat D(Fa,Fa)$:
		\[
		\iota\colon 1_{Fa} \rightarrow F_{a,a} \circ 1_a;
		\]
		\item $\forall\; a,b,c \in \obj \bicat C$ a \emph{composition coherence}\index{composition coherence}
		$2$-morphism  $\phi$ in $\bicat M$ between the following $1$-morphisms $\bicat C(b,c) \otimes \bicat C(a,b)
		\rightarrow \bicat D(Fa,Fc)$:
		\[
		\phi\colon \mu_{\bicat D} \circ (F_{b,c} \otimes F_{a,b}) \rightarrow F_{a,c} \circ \mu_{\bicat C};
		\]
	\end{enumerate}
	which must satisfy the following conditions:
	\begin{enumerate}[resume*]
		\item \emph{associativity coherence:}  $\forall\; a,b,c,d \in \obj \bicat C$ 
		the following diagram of $2$-morphisms between $1$-morphisms 
		$
		\bicat C(c,d) \otimes \bicat C(b,c) \otimes \bicat C(a,b) \rightarrow \bicat D(Fa,Fd)
		$
		must commute in $\bicat M$:
		\begin{equation*}
			\begin{tikzcd}[column sep = 2.4cm, font=\footnotesize]
				\mu_{\bicat D} \circ (\mu_{\bicat D} \otimes \id) \circ (F_{c,d} \otimes F_{b,c} \otimes F_{a,b})
				\ar{r}{\alpha_{\bicat D} \circ (F_{c,d} \otimes F_{b,c} \otimes F_{a,b})}
				\ar{d}{\mu_{\bicat D}\circ (\phi \otimes  F_{a,b})}
				&
				\mu_{\bicat D} \circ (\id \otimes \mu_{\bicat D}) \circ (F_{c,d} \otimes F_{b,c} \otimes F_{a,b})
				\ar{d}{\mu_{\bicat D} \circ (F_{c,d} \otimes \phi)}
				\\
				\mu_{\bicat D} \circ (F_{b,d} \otimes F_{a,b}) \circ (\mu_{\bicat C} \otimes \id)
				\ar{d}{\phi \circ (\mu_{\bicat C} \otimes \id)}
				&
				\mu_{\bicat D} \circ (F_{c,d} \otimes F_{a,c}) \circ 
				(\id \otimes \mu_{\bicat C})
				\ar{d}{\phi \circ (\id \otimes \mu_{\bicat C})}
				\\
				F_{a,d} \circ \mu_{\bicat C} \circ (\mu_{\bicat C} \otimes \id)
				\ar{r}{F_{a,d} \circ \alpha_{\bicat C}}
				&
				F_{a,d} \circ \mu_{\bicat C} \circ (\id \otimes \mu_{\bicat C});
			\end{tikzcd}
		\end{equation*}
		\item \emph{unitality coherence:} $\forall\; a,b \in \obj \bicat C$
		the following diagrams of $2$-morphisms between $1$-morphisms $\bicat C(a,b) \rightarrow \bicat D(Fa,Fb)$ must commute in $\bicat M$:
		\begin{equation*}
			\begin{tikzcd}[column sep=2.5cm]
				\mu_{\bicat D}\circ (\id \otimes 1_{Fa}) \circ F_{a,b}
				\ar{r}{\mu_{\bicat D} \circ (\id \otimes \iota) \circ F_{a,b}}
				\ar{d}{\rho_{\bicat D} \circ F_{a,b}}
				&
				\mu_{\bicat D}\circ (F_{a,b} \otimes F_{a,a}) \circ (\id \otimes 1_a) 
				\ar{d}{\phi \circ (\id \otimes 1_a)  }
				\\
				F_{a,b}
				&
				F_{a,b} \circ \mu_{\bicat C} \circ (\id \otimes 1_a)
				\ar{l}[']{F_{a,b} \circ \rho_{\bicat C}}
			\end{tikzcd}
		\end{equation*}
		\medskip
		\begin{equation*}
			\begin{tikzcd}[column sep=2.5cm]
				\mu_{\bicat D}\circ (1_{Fb} \otimes \id) \circ F_{a,b}
				\ar{r}{\mu_{\bicat D} \circ (\iota \otimes \id) \circ F_{a,b}}
				\ar{d}{\lambda_{\bicat D} \circ F_{a,b}}
				&
				\mu_{\bicat D}\circ (F_{b,b} \otimes F_{a,b}) \circ (1_b \otimes \id) 
				\ar{d}{\phi \circ (\id \otimes 1_a) }
				\\
				F_{a,b}
				&
				F_{a,b} \circ \mu_{\bicat C} \circ (1_b \otimes \id)
				\ar{l}[']{F_{a,b} \circ \lambda_{\bicat C}}
			\end{tikzcd}
		\end{equation*}
	\end{enumerate}
\end{Definition}

\begin{Definition}
	A $2$-functor is said to be:
	\begin{itemize}
		\item \emph{strict}\index{strict $2$-functor} if its unit and composition coherence maps are the identity maps; 
		\item \emph{strong}\index{strong $2$-functor} if its unit and composition coherence maps are isomorphisms; 
		\item \emph{homotopy strong}\index{homotopy strong $2$-functor} if its unit and composition coherence maps are homotopy equivalences. 
		\item \emph{lax}\index{lax $2$-functor} if its unit and composition maps are not necessarily isomorphisms;
	\end{itemize}
\end{Definition}

\section{\Dg-categories}
\label{sec:dgcats}

For an introduction to \dg categories, \dg modules, and 
the related  technical notions, we refer the reader to
\cite[Section~2]{AnnoLogvinenko-SphericalDGFunctors}. 
For an in-depth treatment in the language of model categories
see \cite{Toen-LecturesOnDGCategories}. 
Below we review the main notions we use. 

\subsection{\dg categories and \dg modules}
\label{section-dg-categories-and-dg-modules}

A \emph{\dg \ (differential graded)}\index{DG category} category $\A$ is a category enriched over the monoidal 
category $\modk$ of complexes of $\kk$-modules. A (right) module $E$ over $
\A$ is a functor $E \colon \Aopp \rightarrow \modk$. For any $a \in \A$ we write $E_a$
for the complex $E(a) \in \modk$, the \emph{fibre of $E$ over $a$}\index{fibre}. 
We write $\modA$ for the \dg category of (right) $\A$-modules. 
Similarly, a left $\A$-module $F$ is a functor $F\colon \A \rightarrow
\modk$. We write $\leftidx{_a}F$ for the fibre $F(a) \in \modk$
of $F$ over $a \in \A$ and $\Amod$ for the \dg category of left 
$\A$-modules. For any $a \in \A$ define the right and left 
\emph{representable}\index{representable module} modules corresponding to $a$ to be 
$h^r(a) = \homm_\A(-,a) \in \modA$ and $h^l(a) = \homm_\A(a,-) \in
\Amod$. We further have Yoneda embeddings 
$\A \hookrightarrow \modA$ and $\Aopp \hookrightarrow \Amod$ 
whose images are the representable modules. 

Given another \dg category $\B$, an
$\AbimB$-bimodule $M$ is an $\Aopp \otimes_\kk \B$-module, that is, 
a functor $M\colon \A \otimes_{\kk} \Bopp \rightarrow \modk$. 
For any $a \in \A$ and $b \in \B$ we write $\aM \in \modB$
for the fibre $M(a,-)$ of $M$ over $a$, 
$M_b \in \Amod$ for the fibre $M(-,b)$ of $M$ over $b$, 
and $\aMb \in \modk$ for the fibre of $M$ over $(a,b)$. 
We write $\AmodB$ for the \dg category of $\A$-$\B$-bimodules. 
The categories $\modA$ and $\Amod$ of right and left $\A$-modules
can therefore be considered as the categories of $\kk$-$\A$- and 
$\A$-$\kk$-bimodules. For any \dg category $\A$, we write $\A$ for 
the diagonal $\AbimA$-bimodule defined by 
$\leftidx{_a}\A_b = \homm_\A(b,a)$ for all $a,b \in \A$ and morphisms
of $\A$ acting on the right and on the left by pre- and post-composition, respectively:
\begin{equation}
	\label{eqn-A-A-action-on-diagonal-bimodule}
	\A(\alpha \otimes \beta) = 
	(-1)^{\deg(\beta) \deg(-)} \alpha \circ (-) \circ \beta,
	\quad \quad 
	\forall\; \alpha \in \homm_\A(a,a'), \beta \in \homm_\A(b',b).
\end{equation}

\Dg bimodules over \dg categories admit 
a closed symmetric monoidal structure\index{closed symmetric monoidal structure}. Given three \dg categories 
$\A$, $\B$ and $\C$, we define functors 
\begin{align*}
	(-)\otimes_\B(-) & \colon \AmodB \otimes \BmodC \rightarrow \AmodC,\\
	\homm_\B(-,-)    & \colon \AmodB \otimes \CmodB \rightarrow \CmodA,\\
	\homm_\B(-,-)    & \colon \BmodA \otimes \BmodC \rightarrow \AmodC,
\end{align*}
by
\[
M \otimes_\B N = 
\cok (M \otimes_\kk \B \otimes_\kk N \xrightarrow{\action \otimes \id - \id \otimes \action} M \otimes_\kk N),
\]
\[\leftidx{_c}{\left(\homm_\B(M,N)\right)}_a = \homm_\B(\aM,\cN)\]
for $M,N$ with right $\B$-action, and 
\[\leftidx{_a}{\left(\homm_\B(M,N)\right)}_c = \homm_\B(M_c,N_a)\]
for $M,N$ with left $\B$-action, 
cf.~\cite[Section~2.1.5]{AnnoLogvinenko-SphericalDGFunctors}. 

\subsection{The derived category of a \dg category}

Let $\A$ be a \dg category. Its \emph{homotopy category}\index{homotopy category} $\Hzero(\A)$ is the $\kk$-linear category whose objects are the same as those of $\A$ and whose morphism spaces are $\Hzero(-)$ 
of the morphism complexes of $\A$. 

The category $\Hzero(\modA)$ has a natural structure of a triangulated category\index{triangulated category} defined fibrewise in $\modk$, that is: the homotopy category $\Hzero(\modk)$ of complexes of $\kk$-modules has a natural triangulated structure, and we apply it in each fibre over each $a \in \A$ to define the triangulated structure on $\Hzero(\modA)$. A \dg category $\A$ is \emph{pretriangulated}\index{pretriangulated category} 
if $\Hzero(\A)$ is a triangulated subcategory of $\Hzero(\modA)$ under the Yoneda embedding.

An $\A$-module $E$ is \emph{acyclic}\index{acyclic module} if it is acyclic fibrewise in
$\modk$. We denote by $\acyc \A$ the full subcategory of  $\modA$
consisting of acyclic modules. A morphism of $\A$-modules is a
\emph{quasi-isomorphism}\index{quasi-isomorphism} if it is one levelwise  in $\modk$. The
derived category $\catD(\A)$ is the localisation of $\Hzero(\modA)$ by
quasi-isomorphisms, constructed as the Verdier quotient $\Hzero(\modA)/\acyc \A$. 

The derived category can also be constructed on the \dg level. 
An $\A$-module $P$ is \emph{h-projective}\index{h-projective} (resp.~\emph{h-flat}\index{h-flat}) if $\homm_\A(P,C)$ (resp. $P \otimes_\A C$) is an acyclic complex of $\kk$-modules for any acyclic $C \in \modA$ (resp. $C \in \Amod$). We denote by $\hprojA$ the full subcategory of $\modA$ consisting of
h-projective modules. It follows from the definition that
in $\hprojA$ every quasi-isomorphism is a homotopy equivalence, 
and therefore we have $\catD(\A) \simeq \Hzero(\hprojA)$. Alternatively, 
one uses Drinfeld quotient\index{Drinfeld quotient}s \cite{Drinfeld-DGQuotientsOfDGCategories}: 
Given a \dg category $\A$ with a full subcategory $\C \subset \A$,
we can form the Drinfeld quotient $\A / \C$. When $\A$ and $\C$ are pretriangulated, 
this recovers the 
Verdier quotient as $\Hzero(\A/\C) \simeq \Hzero(\A)/\C$. 
Thus $\catD(\A) = \Hzero(\modA / \acyc { \A})$. 

An object $a$ of a triangulated category $\T$ is \emph{compact}\index{compact object} if
$\homm_\T(a,-)$ commutes with infinite direct sums. We write $\catDc(\A)$
for the full subcategory of $\catD(\A)$ comprising compact objects. 
An $\A$-module $E$ is \emph{perfect}\index{perfect module} if $E \in \catDc(\A)$.
We write $\perfA$ and $\hperfA$ for the full subcategories of $\modA$
comprising perfect modules and h-projective, perfect modules.

Let $\A$ be a \dg category. We denote by $\pretriag \A$ 
the \dg category of one-sided twisted complex\index{twisted complex}es over $\A$, see
\cite[Section~3.1]{AnnoLogvinenko-SphericalDGFunctors}. It is a \dg 
version of the notion of triangulated hull\index{triangulated hull}. There is a natural fully faithful inclusion 
$\pretriag \A \hookrightarrow \modA$ and $\Hzero(\pretriag \A)$ is the triangulated hull of $\Hzero(\A)$
in $\Hzero(\modA)$. Moreover, we have $\pretriag \A \subset \hperfA$
and $\catDc(\A) \simeq \Hzero(\hperfA)$ is the Karoubi
completion\index{Karoubi completion} of 
$\Hzero(\pretriag \A)$ in $\Hzero(\modA)$. 
We say that $\A$ is \emph{strongly pretriangulated}\index{strongly pretriangulated category}
if $\A \hookrightarrow \pretriag \A$ is an equivalence.  

Let $\A$ and $\B$ be \dg categories and let $M$ be an
$\A$-$\B$-bimodule. 
We say that $M$ is $\A$\emph{-perfect} (resp. $\B$\emph{-perfect}) 
if it is perfect levelwise in $\A$ (resp. $\B$). That is, 
$\aM$ (resp. $M_b$) is a perfect module for all $a \in \A$
(resp. $b \in \B$). Similarly, for other properties of modules
such as h-projective, h-flat, or representable. 

A \dg category $\A$ is \em smooth\index{smooth \dg category} \rm if the diagonal 
bimodule $\A$ is perfect as an $\A$-$\A$-bimodule. It is
\em proper\index{proper \dg category} \rm if $\A$ is Morita equivalent (see Section~\ref{section-enhanced-categories}) 
to a \dg algebra which is perfect over $\kk$. Equivalently, $\A$ is proper if and only if the 
total cohomology of each $\homm$-complex is
finitely-generated and $\catD(\A)$ is compactly generated. See  
\cite[Section~2.2]{ToenVaquie-ModuliOfObjectsInDGCategories} for further
details on these two notions. 

\subsection{Restriction and extension of scalars}
\label{section-restriction-and-extension-of-scalars}

Let $\A$ and $\B$ be two \dg categories and let $M$ be an
$\A$-$\B$-bimodule. Moreover, let $\A'$ and $\B'$ be another two
\dg categories and let $f\colon \A' \rightarrow \A$ and $g\colon
\B' \rightarrow \B$ be \dg functors. Define the \em restriction of
scalars of $M$ along $f$ and $g$ \rm to be the $\A'$-$\B'$-bimodule
${}_{f}{M}_{g}$ defined as $M \circ (f \otimes_\kk g)$. 
In particular, for any $a \in \A$ and $b \in \B$ we have
${}_{a}{({}_{f}{M}_{g})}{}_{b} = 
{}_{f(a)}{M}_{g(b)}$. We write ${}_{f}{M}$ and 
$M_g$ for ${}_{f}{M}_{\id}$ and ${}_{\id}{M}_{g}$,
respectively. 

Let $\A$ and $\B$ be two \dg categories and let $f: \A \rightarrow \B$ 
be a \dg functor\index{DG functor}. We have:
\begin{enumerate}
	\item The \em restriction of scalars \rm functor
	\begin{equation*}
		f_*\colon \modB \rightarrow \modA, 
	\end{equation*}
	is defined to be $(-) \otimes_\B \B_f$. It sends each $E \in \modB$ to its
	restriction $E_f \in \modA$, and therefore sends acyclic modules to
	acyclic modules. 
	\item The \em extension of scalars \rm functor
	\begin{equation*}
		f^*\colon \modA \rightarrow \modB, 
	\end{equation*}
	is defined to be $(-) \otimes_\A \leftidx{_f}{\B}$. For each $a \in \A$  
	it sends the representable module $h^r(a) \in \modA$ 
	to the representable module $h^r(f(a)) \in \modB$. It follows that 
	$f^*$ restricts to a functor $\hperfA \rightarrow \hperfB$. 
	\item The \em twisted extension of scalars \rm functor
	\begin{equation*}
		f^!\colon \modA \rightarrow \modB, 
	\end{equation*}
	is defined to be $\homm_{\A}(\B_f,-)$. 
\end{enumerate}
By Tensor-$\homm$ adjunction, $(f^*, f_*)$ and $(f_*, f^!)$ 
are adjoint pairs. As $f_*$ preserves acyclic modules, 
$f^*$ preserves h-projective modules and $f^!$ preserves h-injective\index{h-injective} modules. 

\section{Bimodule approximation}
\label{section-bimodule-approximation}

In this section, we define and describe the basic properties of a lax $2$-functor
$\bimodapx$ which approximates \dg functors between \dg module categories 
by (the tensor functor\index{tensor functor}s defined by) \dg bimodules. On per-functor basis, 
this was already examined by Keller in \cite[Section~6.4]{Keller-DerivingDGCategories}. 
We will apply the bimodule approximation\index{bimodule approximation} to the first step in 
our construction of a categorical Fock space for our Heisenberg \dg bicategory 
$\hcat\basecat$ (see Section~\ref{sec:magic-wand}). At this first step, a representation of 
a simpler strict \dg $2$-category $\hcat*\basecat$ is constructed 
with (non-derived) \dg functors. The bimodule approximation turns these 
into \dg bimodules which are then considered as enhanced exact functors, 
see Section~\ref{section-enhanced-categories}. 

We first look at bimodule approximation on the $1$-categorical level. 
\begin{Definition}
	Let $\A$ and $\B$ be \dg categories. The \emph{bimodule 
		approximation}\index{bimodule approximation} functor is  
	\begin{align*}
		\bimodapx\colon \DGFun(\modA,\, \modB) & \rightarrow \AmodB, \\
		F & \mapsto F(\A), 
	\end{align*}
	where $F(\A) \in \AmodB$ is the evaluation of $F$ at the diagonal
	bimodule $\A$. In other words,  $\leftidx{_a}{F(\A)}{_b} =
	F(\leftidx{_a}{\A})_b$ for all $a \in \A, b \in \B$. 
\end{Definition}

The bimodule approximation functor $\bimodapx$ is right adjoint to the \emph{`tensor functor'} functor:
\begin{align*}
	\tensorfn\colon \AmodB & \rightarrow \DGFun(\modA,\, \modB),\\
	M & \mapsto (-) \otimes_\A M. 
\end{align*}
The adjunction unit $\eta\colon \Id \rightarrow \bimodapx \circ \tensorfn$
is given by the natural isomorphism 
\[
M \xrightarrow{\sim} \A \otimes_\A M \quad \quad \quad \text{for } M \in \AmodB,
\]
and thus $\tensorfn$ is a fully faithful embedding. 

The adjunction counit $\epsilon\colon \tensorfn \circ \bimodapx  \rightarrow
\Id$ is given by the natural transformation 
\begin{equation}
	\label{eqn-tensor-approx-adjunction-counit}
	(-) \otimes_\A F(\A) \rightarrow F \quad \quad \quad \text{for } F \in \DGFun(\modA,\, \modB),
\end{equation}
defined by the map 
\begin{equation}
	\label{eqn-tensor-approx-adjunction-counit-objectwise}
	E \otimes_\A F(\A) \rightarrow F(E), \quad \quad \quad \text{for } E \in \modA 
\end{equation}
which is adjoint to the composition 
\[
E \xrightarrow{\sim} \homm_\A(\A,E) \xrightarrow{F} \homm_\B(F(\A),\, F(E)).
\]
The map \eqref{eqn-tensor-approx-adjunction-counit-objectwise} is an
isomorphism for representable $E$, and thus a homotopy
equivalence for $E \in \hperfA$. This implies, as noted in 
\cite[Section~6.4]{Keller-DerivingDGCategories}, that 
\eqref{eqn-tensor-approx-adjunction-counit} yields an isomorphism of 
derived functor\index{derived functor}s $\catDc(\A) \rightarrow \catD(\B)$ and hence, for $F$
continuous, of functors $\catD(\A) \rightarrow \catD(\B)$. 

We now consider two \dg bicategories whose $1$-morphisms are \dg functors and \dg bimodules, respectively. The objects of these bicategories are the same:  morally, they are the categories of \dg modules over  small \dg categories. For brevity, however, we define these objects to be the small \dg categories themselves:

\begin{Definition}\label{defn-dgModCat}
	Define $\DGModCat$ to be the strict \dg $2$-category whose objects are
	small \dg categories and whose $1$-morphism categories $\DGModCat(\A,\,\B)$ are 
	the \dg categories $\DGFun(\modA,\, \modB)$ of \dg functors 
	between their \dg module categories. 
\end{Definition}

\begin{Definition}\label{defn-dgBiMod}
	Define $\DGBiMod$ to be the following \dg bicategory:
	\begin{enumerate}
		\item Its \emph{objects} are small \dg categories. 
		\item $\forall\; \A,\, \B \in \obj$, the \dg category of \emph{$1$-morphisms} from $\A$ to $\B$ is $\AmodB$. 
		\item $\forall\; \A,\, \B,\, \C \in \obj$ the \emph{$1$-composition functor} is the tensor product of bimodules:
		\begin{align*}
			\BmodC \otimes \AmodB  & \rightarrow \AmodC \\
			(N,M) & \mapsto M \otimes_\B N. 
		\end{align*}
		\item $\forall\; \A \in \obj$ the \emph{identity $1$-morphism} of $\AmodA$ is the diagonal bimodule\index{diagonal bimodule} $\A$. 
		\item The \emph{associator}\index{associator} isomorphisms are the canonical isomorphisms
		$$ (M \otimes_\B N) \otimes_\C L \xrightarrow{\sim} M \otimes_\B (N \otimes_\C L).  $$
		\item $\forall\; M \in \AmodB$ the \emph{left and right unitor}\index{unitor} isomorphisms are the natural maps
		\begin{equation*}
			\A \otimes_\A M \xrightarrow{\sim} M
			\quad \quad \text{ and } \quad \quad 
			M \otimes_\B \B \xrightarrow{\sim} M.
		\end{equation*}
	\end{enumerate} 
\end{Definition}

The $1$-categorical functors $\tensorfn$ package up into an obvious strong $2$-functor.
\begin{Definition}
	Define the strong $2$-functor 
	\[ \tensorfn\colon \DGBiMod \rightarrow \DGModCat, \]
	\begin{enumerate}
		\item On \em objects\rm, $\tensorfn$ is the identity map, 
		\item On \em $1$-morphism categories\rm, $\tensorfn$ is the $1$-categorical functor $\tensorfn$ defined above. 
		\item For any small \dg category $\A$, the \em unit coherence \rm $2$-morphism 
		\[ 1_{\tensorfn \A} \rightarrow \tensorfn(1_\A), \]
		is the natural isomorphism:
		\[ \id_\modA \xrightarrow{\sim} (-) \otimes_\A \A. \]
		\item For any small \dg categories $\A$, $\B$, $\C$, and any 
		$M \in \AmodB$ and $N \in \BmodC$, the \em composition coherence \rm $2$-morphism  
		\[ \tensorfn(N) \circ_1 \tensorfn(M) \rightarrow \tensorfn(N \circ_1 M), \]
		is the natural transformation defined by canonical isomorphisms 
		$$ (- \otimes_\A M) \otimes_\B N \xrightarrow{\sim} (-) \otimes_\A (M
		\otimes_\B N). $$
	\end{enumerate}
\end{Definition}

Since the $2$-functor $\tensorfn$ is strong, it induces a natural
structure of a (lax) $2$-functor on the right adjoints of 
its $1$-categorical components.
\begin{Definition}
	Define the lax $2$-functor 
	\[ \bimodapx\colon \DGModCat \rightarrow \DGBiMod, \]
	as follows:
	\begin{enumerate}
		\item On \em objects\rm, $\bimodapx$ is the identity map, 
		\item On \em $1$-morphism categories\rm, $\bimodapx$ is the $1$-categorical functor $\bimodapx$ defined above. 
		\item For any small \dg category $\A$, the \em unit coherence \rm $2$-morphism 
		\[ 1_{\bimodapx \A} \rightarrow \bimodapx(1_\A), \]
		is the identity morphism of the diagonal bimodule $\A$. 
		\item For any small \dg categories $\A$, $\B$, $\C$, and any 
		\[F \in \DGFun(\modA,\, \modB)\] and \[G \in \DGFun(\modB,\modC),\] 
		the \em composition coherence \rm $2$-morphism  
		\[ \bimodapx(G) \circ_1 \bimodapx(F) \rightarrow \bimodapx(G \circ_1 F), \]
		is given by the adjunction counit of $(\tensorfn,\, \bimodapx)$ for $G$:
		\begin{equation}
			\label{eqn-bimodule-approximation-composition-coherence} 
			F(\A) \otimes_B G(\B) \xrightarrow{\epsilon_G} GF(\A). 
		\end{equation}
	\end{enumerate}
\end{Definition}

In general, the $2$-functor $\bimodapx$ is not even homotopy strong.
However, its unit coherence morphism is the identity map, while
below we show that under certain assumptions on the \dg functors 
$F$ and $G$ their composition coherence morphism  
\eqref{eqn-bimodule-approximation-composition-coherence}
is a fibrewise homotopy equivalence or quasi-isomorphism. This is
important for us, because then the composition of $\bimodapx$
with one of the homotopy quotients of $\DGBiMod$ by acyclics  
discussed in Section~\ref{section-enhanced-categories} below becomes 
homotopy strong when restricted to such \dg functors.

\begin{Proposition}
	\label{prop-bimod-approximation-is-quasi-iso-lax-on-tensor-and-hom}
	Let $\A$, $\B$, $\C$ be small \dg categories, and let 
	\[F \in \DGFun(\modA,\, \modB)\quad \textrm{and}\quad G \in \DGFun(\modB,\, \modC).\] Then 
	\begin{enumerate}
		\item \label{item-bimod-approx-cmps-coh-if-G-tensor-iso}
		If $G \simeq (-) \otimes_\B M$ for some $M \in \BmodC$, then 
		\eqref{eqn-bimodule-approximation-composition-coherence}
		is an isomorphism. 
		\item \label{item-bimod-approx-cmps-coh-if-G-hom-right-hperf-hmtpy-equiv}
		If $G \simeq \homm_\B(N, -)$ for some $N \in \CmodB$ which is 
		$B$-perfect and $\B$-h-projective, then 
		\eqref{eqn-bimodule-approximation-composition-coherence} is 
		fibrewise a homotopy equivalence in $\modA$. 
		\item \label{item-bimod-approx-cmps-coh-if-F-hperf-to-hperf-hmtpy-equiv}
		If $F$ restricts to a functor $\hperfA \rightarrow \hperfB$,
		then \eqref{eqn-bimodule-approximation-composition-coherence} is 
		fibrewise a homotopy equivalence in $\modC$. 
	\end{enumerate}
\end{Proposition}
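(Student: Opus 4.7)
The plan is to exploit that \eqref{eqn-bimodule-approximation-composition-coherence} is, by construction, the counit $\epsilon_G$ of the $(\tensorfn,\,\bimodapx)$-adjunction evaluated at $F(\A) \in \AmodB$. Naturality of $\epsilon_G$ lets one take fibres over $\A$: the $a$-th fibre is the ordinary counit $\epsilon_G(E)\colon E \otimes_\B G(\B) \to G(E)$ applied to $E = \leftidx{_a}F(\A) = F(\leftidx{_a}\A) = F(h^r(a)) \in \modB$. Each of the three claims will then be read off from the interplay between $G$ and this $E$.

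Parts \ref{item-bimod-approx-cmps-coh-if-G-tensor-iso} and \ref{item-bimod-approx-cmps-coh-if-F-hperf-to-hperf-hmtpy-equiv} reduce to short arguments. For \ref{item-bimod-approx-cmps-coh-if-G-tensor-iso}, when $G \simeq (-) \otimes_\B M$ the right unitor identifies $G(\B)$ with $M$, and $\epsilon_G(E)$ reduces to the associator $E \otimes_\B (\B \otimes_\B M) \isoto E \otimes_\B M$, an iso already at the bimodule level; checking that this agrees with the adjoint description of $\epsilon_G$ recorded just after \eqref{eqn-tensor-approx-adjunction-counit-objectwise} is a direct unfolding. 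For \ref{item-bimod-approx-cmps-coh-if-F-hperf-to-hperf-hmtpy-equiv}, the general fact stated there (iso for representable $E$, hence a homotopy equivalence for $E \in \hperfB$) applies verbatim to $G$ in place of $F$; since $h^r(a) \in \hperfA$, the hypothesis on $F$ gives $F(h^r(a)) \in \hperfB$, and the fibrewise coherence map is a homotopy equivalence in $\modC$.

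The substantive case is \ref{item-bimod-approx-cmps-coh-if-G-hom-right-hperf-hmtpy-equiv}, where $\epsilon_G(E)$ identifies with the canonical tensor--hom comparison
\[
  \mu_{N,E} \colon E \otimes_\B \homm_\B(N,\B) \to \homm_\B(N, E).
\]
I would fix $E$ and vary $N$: for $N = h^r(b)$ both sides reduce to $E_b$ and $\mu_{N,E}$ is the canonical iso. The $\B$-$h$-projectivity of $N$ makes $\homm_\B(-,\B)$ preserve distinguished triangles on $\pretriag\B$, so a standard induction on cones, shifts and finite sums extends $\mu_{N',E}$ to a homotopy equivalence for every $N' \in \pretriag\B$. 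The remaining and most delicate step is the passage from $\pretriag\B$ to all $\B$-perfect, $\B$-$h$-projective $N$ by closing under homotopy retracts: the splitting projector realising $N$ as a summand of some $N' \in \pretriag\B$ exists in $\modB$ but need not a priori respect the left $\C$-action on $N$. The cleanest workaround is a second fibrewise reduction, now over $\C$: for each $c \in \C$ the $\C$-structure disappears and the statement becomes a purely $\modB$-theoretic question, where the Karoubi closure of $\pretriag\B$ in $\hperfB$ is standard. Since being a homotopy equivalence in $\modC$ may be tested on fibres over $\C$, this completes the argument.
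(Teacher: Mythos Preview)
Your treatment of \ref{item-bimod-approx-cmps-coh-if-G-tensor-iso} and \ref{item-bimod-approx-cmps-coh-if-F-hperf-to-hperf-hmtpy-equiv} is correct and matches the paper. The problem is in \ref{item-bimod-approx-cmps-coh-if-G-hom-right-hperf-hmtpy-equiv}.

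Your opening reduction---taking fibres over $\A$---is the right move for part~\ref{item-bimod-approx-cmps-coh-if-F-hperf-to-hperf-hmtpy-equiv} but the wrong one for part~\ref{item-bimod-approx-cmps-coh-if-G-hom-right-hperf-hmtpy-equiv}. The statement there asks for a homotopy equivalence of $\A$-modules after taking fibres over $\C$, not the other way around. By passing to the $a$-fibre first you have replaced the goal with the assertion that $\mu_{N,E_a}$ is a homotopy equivalence in $\modC$ for each $E_a = F(h^r(a))$; this is a different (and not obviously true) claim. Your attempt to vary $N$ then runs into exactly the obstruction you flag---$N$ lives in $\CmodB$, not $\modB$, so writing ``$N = h^r(b)$'' or invoking retracts in $\modB$ does not respect the $\C$-action. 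The proposed fix, a second fibrewise reduction over $\C$, lands you in $\modk$, and the final sentence is where the argument actually fails: being a homotopy equivalence in $\modC$ \emph{cannot} be tested on fibres over $\C$. Fibrewise homotopy equivalence is only quasi-isomorphism, and there is no reason for either side of $\mu_{N,E_a}$ to be $h$-projective over $\C$.

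The paper's route avoids all of this by taking fibres over $\C$ \emph{directly}, without first fibring over $\A$. For each $c \in \C$ the hypothesis gives $\cN \in \hperfB$, and the coherence map becomes the evaluation
\[
  F(\A) \otimes_\B \homm_\B(\cN, \B) \longrightarrow \homm_\B(\cN, F(\A))
\]
as a morphism of $\A$-modules. Now $\cN$ genuinely lies in $\modB$, and both $F(\A) \otimes_\B \homm_\B(-,\B)$ and $\homm_\B(-,F(\A))$ are functors $\modB^{\opp} \to \Amod$; the induction over representables, $\pretriag\B$, and homotopy retracts that you sketched then goes through inside $\Amod$, giving the claimed homotopy equivalence without any further descent.
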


\begin{proof}
	Assertion \ref{item-bimod-approx-cmps-coh-if-G-tensor-iso} is clear. 
	
	\ref{item-bimod-approx-cmps-coh-if-G-hom-right-hperf-hmtpy-equiv}: 
	If $G \simeq \homm_B(N,-)$, then the morphism
	\eqref{eqn-bimodule-approximation-composition-coherence} 
	is the evaluation map 
	\[ F(\A) \otimes_B \homm_B(N,\,B) \xrightarrow{\text{eval}} \homm_B(N,\,F(\A)). \]
	Since the fibres of $\N$ over $\C$ are perfect and h-projective
	$\B$-modules, the fibres of this map over $\C$ are homotopy
	equivalences in $\modA$. 
	
	\ref{item-bimod-approx-cmps-coh-if-F-hperf-to-hperf-hmtpy-equiv}:
	Morphism \eqref{eqn-bimodule-approximation-composition-coherence} 
	is the $(\tensorfn,\bimodapx)$-adjunction counit 
	for $G$ applied to $F(\A)$. 
	Thus the fibres of \eqref{eqn-bimodule-approximation-composition-coherence} 
	in $\modC$ are given by applying the natural transformation 
	\[
	\id \otimes_\B G(\B) 
	\xrightarrow{\eqref{eqn-tensor-approx-adjunction-counit-objectwise}}
	G,
	\]
	to the fibres of $F(\A)$ in $\modB$. By assumption these fibres lie in
	$\hperfB$. Hence \eqref{eqn-tensor-approx-adjunction-counit-objectwise}
	is a homotopy equivalence. 
	%
	%
\end{proof}

\section{\Dg enhanced triangulated categories}
\label{section-enhanced-categories}

\Dg enhancements were introduced by Bondal and Kapranov 
in \cite{BondalKapranov-EnhancedTriangulatedCategories}. A \emph{\dg enhancement}\index{DG enhancement} 
of a triangulated category $\cat T$ is a pretriangulated \dg category $\A$
together with an exact equivalence $\Hzero(\A)\simeq \cat T$. 
These are considered up to quasi-equivalences and are naturally objects in $\HoDGCatone$, 
the localisation of $\DGCatone$ by quasi-equivalences
\cite{Toen-TheHomotopyTheoryOfDGCategoriesAndDerivedMoritaTheory}. We write $\EnhCatone$ for the full subcategory of $\HoDGCatone$ comprising pretriangulated \dg categories
and consider this to be the $1$-category of enhanced triangulated
categories\index{enhanced triangulated category}. 

A \emph{Morita \dg enhancement}\index{Morita enhancement} of a triangulated category $\T$
is a small \dg category $\A$ whose compact derived category $\catDc(\A)$ 
is equivalent to $\T$. These are considered up to \emph{Morita equivalences:}\index{Morita equivalence}
functors $\phi\colon \A \rightarrow \B$ such that  
$\phi^*: \catD(\A) \rightarrow \catD(\B)$ restricts to an equivalence $\catDc(\A)
\rightarrow \catDc(\B)$.  They are thus naturally the objects of $\MoDGCatone$, 
the localisation of $\DGCatone$ by Morita equivalences \cite{Tabuada-InvariantsAdditifsDeDGCategories}.  

Let $\A$ be a \dg category.  The Yoneda embedding $\A \hookrightarrow
\hperf(\A)$ is a Morita equivalence. Moreover, it identifies
$\MoDGCatone$ with the full subcategory of $\HoDGCatone$ consisting of
pretriangulated categories whose homotopy categories are
Karoubi-complete. Thus working in the Morita setting means working
with small Karoubi-complete triangulated categories, 
such as bounded derived categories of abelian
categories. If $\A$ is an enhancement of a Karoubi-complete
triangulated category $\T$, then it is also its Morita enhancement.
Conversely, if $\A$ is a Morita enhancement of a
triangulated category $\T$, then $\T$ is Karoubi-complete 
and $\hperfA$ is an ordinary enhancement of $\T$. 

The advantage of Morita enhancements is that
morphisms in $\MoDGCatone$ admit a nice description. 
The morphisms from $\A$ to $\B$ in $\MoDGCatone$ 
are in bijection with the isomorphism classes 
in $\catD(\AbimB)$ of $\B$-perfect $\AbimB$ bimodules 
\cite[Theorems 4.2, 7.2]{Toen-TheHomotopyTheoryOfDGCategoriesAndDerivedMoritaTheory}. 
We define an \em enhanced exact functor\index{enhanced exact functor} \rm $\A \rightarrow \B$ to 
be a $B$-perfect bimodule $M \in \catD(\AbimB)$. The underlying exact
functor between the underlying triangulated categories is 
$(-) \otimes^{\lder} M\colon \catDc(\A) \rightarrow \catDc(\B)$. 
An \em enhanced natural transformation \rm is a morphism in 
$\catD(\AbimB)$ between $\B$-perfect bimodules. 

The $1$-category $\MoDGCatone$ is thus refined to the following
strict $2$-category of Morita enhanced triangulated categories.
\begin{Definition}\label{defn-EnhCatKC}
	Define the strict $2$-category $\EnhCatKC$, with $\kc$ referring to Karoubi-complete, also denoted by $\MoDGCat$, to consist of the following data:
	\begin{enumerate}
		\item Its set of \emph{objects} is the set of all small \dg categories. 
		\item For any two $\A, \B \in \obj \EnhCatKC$, the category
		$\EnhCatKC(\A,\B)$ of \emph{$1$-morphisms}\index{$1$-morphism} from $\A$ to $\B$ is 
		the skeleton of $D_{\Bperf}(\AbimB)$. 
		\item For any triple $\A,\B,\C \in \obj \EnhCatKC$ the \emph{$1$-composition}\index{$1$-composition}
		functor is given by the derived tensor product of bimodules:
		\begin{align*}
			\EnhCatKC(\B,\C) \times \EnhCatKC(\A,\B)  & \rightarrow \EnhCatKC(\A,\C) \\
			(M,N) & \mapsto M \ldertimes_\B N. 
		\end{align*}
		\item For any $\A \in \obj \EnhCatKC$ the \emph{identity $1$-morphism} of
		$\EnhCatKC(\A,\A)$ is the diagonal bimodule $\A$. 
	\end{enumerate}
\end{Definition}

We have the $2$-functor $\EnhCatKC \rightarrow \Cat$ which
sends each Morita enhancement $\A$ to its underlying triangulated
category $\catDc(\A)$, each enhanced functor $M \in D_{\Bperf}(\AbimB)$ 
to its underlying exact functor $(-) \otimes^{\lder} M$, and each 
morphism in $D_{\Bperf}(\AbimB)$ to the induced natural 
transformation of these underlying exact functors. 

The $2$-category $\EnhCatKC$ can be identified, via the assignment $\A \mapsto \hperfA$
with the $2$-full subcategory of Karoubi-complete categories in  
the strict $2$-category $\EnhCat$ of enhanced triangulated categories 
defined in \cite[Section~1]{lunts2010uniqueness}. 
The strict $2$-category $\EnhCat$ 
is a $2$-categorical refinement of the $1$-category $\EnhCatone$. It coincides with 
the homotopy category of the $(\infty,2)$-category of 
\dg categories in \cite{AStudyInDAG1}. 

We next introduce a \dg enhancement $\EnhCatKCdg$ of $\EnhCatKC$: 
\begin{Definition}
	A \dg enhancement of a strict $2$-category $\bicat A$ is
	a \dg bicategory $\bicat C$ such that $\bicat A$ is $2$-equivalent to
	the strictification $\tilde{H}^0(\bicat C)$ of the bicategory
	$\Hzero(\bicat C)$ obtained by taking skeletons of its $1$-morphism 
	categories.  
\end{Definition}

We define the \dg bicategory $\EnhCatKCdg$ in terms of
the bar categories of modules and bimodules introduced in 
\cite{AnnoLogvinenko-BarCategoryOfModulesAndHomotopyAdjunctionForTensorFunctors}. 
Given small \dg categories $\A$ and $\B$, the bar-category\index{bar category} $\AmodbarB$
of $\A$-$\B$-bimodules is isomorphic to the \dg category of \dg
$\A$-$\B$-bimodules with $\Ainfty$-morphisms between them
\cite[Prop.~3.5]{AnnoLogvinenko-BarCategoryOfModulesAndHomotopyAdjunctionForTensorFunctors}. 
However, the bar-category has a simpler definition which avoids the
complexities of the fully general $\Ainfty$-machinery. 

We have $\Hzero(\AmodbarB) \simeq \catD(\AbimB)$, since 
all quasi-isomorphisms in $\AmodbarB$ are homotopy equivalences. 
The bar-category $\AmodbarB$ can be viewed as a more natural 
way to factor out the acyclic modules than taking the Drinfeld quotient:
one does not introduce formal contracting homotopies which do not
interact with the old morphisms, and thus retains
the natural monoidal structure in the form of the bar-tensor product
$\bartimes$ of bimodules. It corresponds to the $\Ainfty$-tensor
product of $\Ainfty$-bimodules under the identification of the
bar-category with the category of \dg bimodules with
$\Ainfty$-morphisms, see
\cite[Section~3.2]{AnnoLogvinenko-BarCategoryOfModulesAndHomotopyAdjunctionForTensorFunctors}.

\begin{Definition}\label{defn-EnhCatKCdg}
	Define the homotopy unital \dg bicategory $\EnhCatKCdg$ as follows: 
	\begin{enumerate}
		\item Its set of \em objects \rm is the set of small \dg categories. 
		\item For any two $\A, \B \in \obj \EnhCatKCdg$, the category of \em
		$1$-morphisms \rm from $\A$ to $\B$ is the full subcategory of $\AmodbarB$
		comprising $\B$-perfect bimodules. 
		\item For any triple $\A,\B,\C \in \obj \EnhCatKCdg$ 
		the $1$-composition functor is given by the bar tensor product of bimodules:
		\begin{align*}
			\BmodbarC \otimes \AmodbarB  & \rightarrow \AmodbarC \\
			(N, M) & \mapsto M \bartimes_\B N. 
		\end{align*}
		\item For any $\A \in \obj \EnhCatKCdg$ the \em identity $1$-morphism \rm of
		$\AmodbarA$ is the diagonal bimodule $\A$. 
		\item The \em associator \rm isomorphisms are the natural isomorphisms 
		$$ (M \bartimes_\B N) \bartimes_\C L \xrightarrow{\sim} 
		M \bartimes_\B (N \bartimes_\C L). $$
		\item The \em left and right unitor \rm morphisms are given 
		for any $1$-morphism $M \in \AmodbarB$ by the natural homotopy equivalences defined in
		\cite[Section~3.3]{AnnoLogvinenko-BarCategoryOfModulesAndHomotopyAdjunctionForTensorFunctors}:
		\begin{equation*}
			\A \bartimes_\A M \xrightarrow{\alpha_\A} M
			\quad \quad \text{ and } \quad \quad 
			M \bartimes_\B \B \xrightarrow{\alpha_\B} M. 
		\end{equation*}
	\end{enumerate}
\end{Definition}

Note that the \dg bicategory $\EnhCatKCdg$
is \em homotopy unital\rm: its unitor morphisms are not
isomorphisms, but only homotopy equivalences. On the homotopy level, such bicategories become genuine bicategories.
Indeed, the strictified homotopy bicategory $\tilde{H}^0(\EnhCatKCdg)$ is
$2$-isomorphic to $\EnhCatKC$. This is because
$\Hzero(\AmodbarB) \simeq \catD(\AbimB)$ and $\Hzero(\bartimes) \simeq \otimes^{\lder}$, see 
\cite[Section~3.2]{AnnoLogvinenko-BarCategoryOfModulesAndHomotopyAdjunctionForTensorFunctors}.

The homotopy unitality of $\EnhCatKCdg$ does not interfere with our
constructions. Its unitor morphisms have 
homotopy inverses which are genuine right inverses, see 
\cite[Section~3.3]{AnnoLogvinenko-BarCategoryOfModulesAndHomotopyAdjunctionForTensorFunctors}. 

We offer the following alternative construction of $\EnhCatKCdg$ where we use the monoidal Drinfeld quotient\index{monoidal Drinfeld quotient} instead of bar-categories to kill the acyclic bimodules in $\DGBiMod$. The original Drinfeld quotient \cite{Drinfeld-DGQuotientsOfDGCategories} is not compatible with monoidal structures such as $1$-composition in a bicategory. A construction by Shoikhet \cite{Shoikhet-DifferentialGradedCategoriesAndDeligneConjecture}
fixes this, and in Section~\ref{subsec:monoidal-drinfeld-quotients} we define the monoidal Drinfeld quotient of a \dg bicategory.  The price is the $1$-composition no longer being a  \dg functor but a quasifunctor\index{quasifunctor}, that is,  
a $1$-morphism in $\HoDGCat$. Thus, in 
this alternative construction $\EnhCatKCdg$ is only a $\HoDGCat$-enriched bicategory. 

\begin{Definition}[Alternative construction of $\EnhCatKCdg$]
	\label{defn-alternative-enhcatkcdg}
	Let $\DGBiMod_{\lfrp}$ denote the $2$-full subcategory of $\DGBiMod$ comprising all objects and the $1$-morphisms given by left-h-flat and right-perfect bimodules. The $\HoDGCat$-enriched bicategory $\EnhCatKCdg$ is the Drinfeld quotient 
	of $\DGBiMod_{\lfrp}$ by its two-sided ideal of $1$-morphisms given by acyclic bimodules. 
\end{Definition}

For this paper, it does not matter which of the two constructions one
uses. We use $\EnhCatKCdg$ as the target for a $2$-represention of our
Heisenberg $\HoDGCat$-enriched bicategory $\hcat\basecat$.  First, we
construct a $2$-functor from a simpler strict \dg $2$-category
$\hcat*\basecat$ to $\DGBiMod_{\lfrp}$, which is naturally a
subcategory of both above versions of $\EnhCatKCdg$. The two
constructions should be viewed merely as two different ways to kill
the acyclics in $\DGBiMod_{\lfrp}$.  Thus we obtain the (same)
$2$-functor $\hcat*\basecat \rightarrow \EnhCatKCdg$ whichever version
of the latter we use.  This $2$-functor is turned into the
desired $2$-representation\index{$2$-representation} of $\hcat\basecat$ via a formal
construction for which it is only important that acyclics are
null-homotopic in $\EnhCatKCdg$. 

\section{The perfect hull\index{perfect hull} of a \dg bicategory}\label{subsec:hperf}

In this section, describe the formalism of taking the perfect hull of a $\DGCat$- or $\HoDGCat$-enriched 
bicategory. On the homotopy level, this corresponds to taking a Karoubi-completed 
triangulated hull of each $1$-morphism category. 

Let $\A$ and $\B$ be \dg categories. We have a natural functor
\begin{equation}
	\label{eqn-modA-otimes-modB-to-modAotimesB}
	\modA \otimes \modB \rightarrow \rightmod{(\A \otimes \B)}
\end{equation}
which is defined as the composition 
\begin{equation*}
	\begin{tikzcd}
		\DGFun(\Aopp,\, \modk) \otimes \DGFun(\Bopp,\, \modk) 
		\ar{d}
		\\
		\DGFun(\Aopp \otimes \Bopp,\, \modk \otimes \modk)
		\ar{d}
		\\
		\DGFun(\Aopp \otimes \Bopp,\, \modk)
	\end{tikzcd}
\end{equation*}
whose first map is due to functoriality of the tensor product of
\dg categories, and whose second map is due to the natural monoidal
structure on $\modk$ given by the tensor product over $\kk$. Explicitly, 
given $E \in \modA$ and $F \in \modB$ the functor  
\eqref{eqn-modA-otimes-modB-to-modAotimesB} maps $E \otimes F$
to an $\A \otimes \B$-module whose fibre over $(a,b) \in \A \otimes
\B$ is the tensor product $E_a \otimes F_b$. 

Let $\C$ be a \dg category and let $\mu\colon \A \otimes \B \rightarrow
\C$ be a \dg functor. It extends naturally to 
\begin{equation*}
	\mu\colon \modA \otimes \modB \rightarrow \modC
\end{equation*}
which is defined as the composition
\[
\modA \otimes \modB
\xrightarrow{\eqref{eqn-modA-otimes-modB-to-modAotimesB}}
\rightmod{(\A \otimes \B)}
\xrightarrow{\mu^*}
\modC. 
\]
Explicitly, given $E \in \modA$ and $F \in \modB$ we have for all $c \in \C$
\[
\mu(E \otimes F)_{c} = \bigoplus_{a \in \A, \, b \in \B} (E_a \otimes
F_b) \otimes \homm_\C\bigl(c,\, \mu(a \otimes b)\bigr) / \text{relations},
\] 
where the relations identify 
the actions of $\A \otimes \B$ on $E_a \otimes \F_b$ and on 
$\mu(a \otimes b)$. 

The above generalises to the following.

\begin{Definition}
	\label{defn-extending-natural-transformations-to-module-categories} 
	Let $\A_1, \dots, \A_n$, $\C$ be \dg categories. 
	\begin{enumerate}
		\item Define the functor
		\begin{equation*}
			\varpi\colon \rightmod{\A_1} \otimes \dots \otimes \rightmod{\A_n}
			\rightarrow 
			\rightmod{(\A_1 \otimes \dots \otimes \A_n)}
		\end{equation*}
		to be the composition 
		\begin{align*}
			\bigotimes_{i=1}^n \DGFun\bigl(\A_i^{\opp},\, \modk\bigr) & \rightarrow 
			\DGFun\Bigl(\bigotimes_{i=1}^n \A_i^{\opp},\, \bigotimes_{i=1}^n \modk\Bigl) \\
			& \rightarrow 
			\DGFun\Bigl(\bigotimes_{i=1}^n \A_i^{\opp},\, \modk\Bigr),
		\end{align*}
		whose first map is due to the functoriality of tensor product 
		of \dg categories and whose second map is due to the natural monoidal structure  
		on $\modk$. 
		\item Define the functor 
		\begin{equation*}
			\Upsilon\colon
			\DGFun\bigl(\A_1 \otimes \dots \otimes \A_n,\, \C\bigr) 
			\rightarrow 
			\DGFun\bigl(\rightmod{\A_1} \otimes \dots \otimes \rightmod{\A_n},\, \rightmod{\C}\bigr)
		\end{equation*}
		to be the composition of the extension 
		of scalars functor 
		\[
		(-)^*\colon\DGFun\bigl(\A_1 \otimes \dots \otimes \A_n,\, \C\bigr)
		\rightarrow 
		\DGFun\bigl(\rightmod{(\A_1 \otimes \dots \otimes \A_n)},\, \modC\bigr)
		\]
		with the functor of precomposition with $\varpi$.
	\end{enumerate}
\end{Definition}

\begin{Lemma}
	\label{lem:properties-of-upsilon}
	For any \dg categories $\A_1, \dots, \A_n$, $\C$ we have:
	\begin{enumerate}
		\item \label{item-upsilon-commutes-with-yoneda}
		The functor $\Upsilon$ commutes with Yoneda embeddings, i.e.~the following diagram commutes for any 
		$\mu \in \DGFun(\A_1 \otimes \dots \otimes \A_n,\, \C)$:
		\begin{equation*}
			\begin{tikzcd}
				\A_1 \otimes \dots \otimes \A_n 
				\ar{r}{\mu} \ar[hook]{d}
				&
				\C
				\ar[hook]{d}
				\\
				\rightmod{\A_1} \otimes \dots \otimes \rightmod{\A_n}
				\ar{r}{\Upsilon(\mu)}
				&
				\rightmod{\C}.
			\end{tikzcd}
		\end{equation*}
		\item When $n = 1$, for any $\mu_1\colon \A_1 \rightarrow C$ we have $\Upsilon(\mu_1) = \mu_1^*$.  
		\item $\Upsilon(\id) = \varpi$.  
		\item Let $\C_1$, \dots, $\C_n$ be \dg categories
		and $\mu_1, \dots, \mu_n$ be \dg functors \[\mu_i\colon
		\A_i \rightarrow \C_i.\] Then 
		\begin{equation*}
			(\mu_1 \otimes \cdots \otimes \mu_n)^* \circ \varpi 
			\;\simeq\;
			\varpi \circ (\mu_1^* \otimes \cdots \otimes \mu_n^*). 
		\end{equation*}
		\item \label{item-upsilon-commutes-with-composition}
		Let $\mu \in \DGFun(\A_1 \otimes \cdots \otimes \A_n,\, \C)$. Let
		$m_1, \dots, m_n \in \mathbb{Z}$, let \[\left\{ \B_{ij}
		\right\}_{1 \leq i \leq n, 1 \leq j \leq m_i}\] be \dg categories, and $\left\{ \lambda_i \right\}$ be \dg functors
		\[\lambda_i \colon \B_{i1} \otimes \cdots \otimes \B_{i m_i} \rightarrow \A_i.\] Then 
		\begin{equation*}
			\Upsilon\bigl( \mu \circ (\lambda_1 \otimes \cdots \otimes \lambda_n) \bigr) 
			\simeq \Upsilon(\mu) \circ \bigl( \Upsilon(\lambda_1) \otimes \cdots
			\otimes \Upsilon(\lambda_n)\bigr).
		\end{equation*} 
	\end{enumerate}
\end{Lemma}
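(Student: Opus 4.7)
The plan is to verify each of the five assertions by unwinding the definitions of $\varpi$ and extension of scalars $\mu^{*}$, with two key observations doing all the work: $\varpi$ sends tensor products of representables to the representable of the tensor product, and extension of scalars commutes with Yoneda embeddings. Parts (2) and (3) are immediate: when $n=1$, both the functoriality step and the monoidal collapse step in the definition of $\varpi$ reduce to identities, so $\varpi = \id$ and $\Upsilon(\mu_1) = \mu_1^{*}$; while $\id^{*} = \id$ on module categories gives $\Upsilon(\id) = \varpi$.

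For part (1), the key computation is that for $a_i \in \A_i$, the fibre of $\varpi\bigl(h^r(a_1)\otimes\cdots\otimes h^r(a_n)\bigr)$ at $(b_1,\dots,b_n)$ is
$$\bigotimes_i \homm_{\A_i}(b_i,a_i) \;=\; \homm_{\bigotimes_i \A_i}(b_1\otimes\cdots\otimes b_n,\, a_1\otimes\cdots\otimes a_n),$$
so $\varpi$ takes this tensor of representables to $h^r(a_1\otimes\cdots\otimes a_n)$. Since $\mu^{*}$ sends $h^r(x)$ to $h^r(\mu(x))$, the square of part (1) commutes on the nose.

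For part (4), the strategy is first to verify the equivalence on tensor products of representables, where both sides strictly compute to $h^r\bigl(\mu_1(a_1)\otimes\cdots\otimes\mu_n(a_n)\bigr)$ by the computation above, and then to extend to arbitrary modules by cocontinuity. Each factor on both sides preserves colimits: the extensions of scalars are left adjoints, and $\varpi$ is cocontinuous in each variable (being built from the diagonal on $\modk$ and a Kan extension). Since every module is a colimit of representables, agreement propagates from representables to all of $\rightmod{\A_1}\otimes\cdots\otimes\rightmod{\A_n}$.

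Finally, part (5) follows by unwinding: the right-hand side expands to $\mu^{*}\circ\varpi\circ(\lambda_1^{*}\otimes\cdots\otimes\lambda_n^{*})\circ(\varpi_1\otimes\cdots\otimes\varpi_n)$. Applying part (4) to the $\lambda_i$ commutes the inner $\varpi$ past $(\lambda_1^{*}\otimes\cdots\otimes\lambda_n^{*})$, and an associativity property for $\varpi$ (namely that the composition of the level-wise $\varpi_i$ with the outer $\varpi'$ equals the total $\varpi$ for $\bigotimes_{i,j}\B_{ij}$) collapses the remaining $\varpi$'s into one. This associativity is itself a direct consequence of the associativity of $\otimes_\kk$ on \dg categories and of the monoidal structure on $\modk$. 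The two sides then agree with $(\mu\circ(\lambda_1\otimes\cdots\otimes\lambda_n))^{*}\circ\varpi_{\mathrm{total}} = \Upsilon\bigl(\mu\circ(\lambda_1\otimes\cdots\otimes\lambda_n)\bigr)$. The only real obstacle is the bookkeeping in part (5), where one must carefully label the several different instances of $\varpi$; the essential mathematical content is the single computation on representables used in parts (1) and (4).
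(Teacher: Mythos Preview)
Your proposal is correct and follows the same approach as the paper, which simply declares the lemma a ``straightforward verification'' and exhibits only the representable computation for part~(1)---precisely the computation you give. Your treatment is more thorough in spelling out parts~(4) and~(5); one minor imprecision is that the domain $\rightmod{\A_1}\otimes\cdots\otimes\rightmod{\A_n}$ is a tensor of \dg categories (so its objects are already formal tensors $E_1\otimes\cdots\otimes E_n$) rather than a cocomplete category, but your cocontinuity argument still works once phrased variablewise, since each $E_i$ is a colimit of representables in $\rightmod{\A_i}$ and both $\varpi$ and extension of scalars are given by explicit tensor formulas preserving such colimits in each slot.
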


\begin{proof}
	This is a straightforward verification. 
	
	For example, to establish \ref{item-upsilon-commutes-with-yoneda},
	let $\mu$ be a functor $\A_1 \otimes \dots \otimes \A_n \rightarrow
	\C$. Then for any $a_1 \in \A_1, \dots, a_n \in \A_n$ we have
	\[
	\Upsilon(\mu)(h^r(a_1) \otimes \dots \otimes  h^r(a_n)) = 
	\mu^* (h^r(a_1 \otimes \cdots \otimes a_n)) = 
	h^r\left(\mu(a_1 \otimes \cdots \otimes a_n)\right).
	\qedhere
	\]
\end{proof}

\begin{Lemma}
	\label{eqn-perfect-hull-of-a-DG-bifunctor}
	For any \dg categories $\A_1, \dots,\, \A_n$, $\C$ the functor
	\begin{equation*}
		\Upsilon\colon
		\DGFun(\A_1 \otimes \dots \otimes \A_n,\, \C) 
		\rightarrow 
		\DGFun(\rightmod{\A_1} \otimes \dots \otimes \rightmod{\A_n},\, \rightmod{\C}). 
	\end{equation*}
	restricts to a functor 
	\begin{equation*}
		\Upsilon\colon
		\DGFun(\A_1 \otimes \dots \otimes \A_n,\, \C) 
		\rightarrow 
		\DGFun(\hperf \A_1 \otimes \dots \otimes \hperf \A_n,\, \hperf \C). 
	\end{equation*}
\end{Lemma}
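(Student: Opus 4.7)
The plan is to factor $\Upsilon(\mu)$ as $\mu^* \circ \varpi$, as in Definition~\ref{defn-extending-natural-transformations-to-module-categories}, and to verify that each factor restricts appropriately. We will show that $\varpi$ restricts to a DG functor $\hperf\A_1 \otimes \cdots \otimes \hperf\A_n \to \hperf(\A_1 \otimes \cdots \otimes \A_n)$ and that $\mu^*$ restricts to $\hperf(\A_1 \otimes \cdots \otimes \A_n) \to \hperf\C$; composing these restrictions then yields the claimed restriction of $\Upsilon(\mu)$. As a last step, naturality in $\mu$ (i.e.~functoriality of $\Upsilon$) carries over automatically from the unrestricted functors. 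Throughout we use the characterisation of $\hperf\A$ as the closure of representable modules in $\modd\A$ under shifts, cones, homotopy equivalences, and homotopy direct summands.

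For the $\mu^*$ factor: the extension of scalars along the DG functor $\mu\colon \A_1\otimes\cdots\otimes\A_n \to \C$, reviewed in Section~\ref{section-restriction-and-extension-of-scalars}, is left adjoint to $\mu_*$. Since $\mu_*$ preserves acyclic modules, $\mu^*$ preserves $h$-projectivity. By construction $\mu^*$ sends the representable module of $a_1 \otimes \cdots \otimes a_n$ to the representable module of $\mu(a_1 \otimes \cdots \otimes a_n)$, and being a DG functor it commutes with shifts and cones and thus sends one-sided twisted complexes to one-sided twisted complexes. Finally, as a left adjoint it preserves homotopy direct summands. These properties together imply the desired restriction to $h$-perfect hulls.

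For the $\varpi$ factor: by Lemma~\ref{lem:properties-of-upsilon}\ref{item-upsilon-commutes-with-yoneda} applied with $\mu = \id$, $\varpi$ sends each pure tensor of representable modules to a representable module. Moreover, $\varpi$ is a DG functor out of a tensor product of DG categories, hence multilinear: with all but one argument fixed, it is a DG functor in the remaining variable and therefore commutes, in that variable, with shifts, cones, homotopy equivalences and homotopy direct summands. Since $\hperf\A_1 \otimes \cdots \otimes \hperf\A_n$ is generated as a DG category by pure tensors of objects of the factors, and each factor $\hperf\A_i$ is generated from its representables by the four stated operations, an iterated application of multilinearity (first along the shifts and cones constructing $\pretriag\A_i$, then along homotopy direct summands to pass to $\hperf\A_i$, varying $i$ one variable at a time) reduces the verification to the already-established case of pure tensors of representables.

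The main technical obstacle will be making rigorous this reduction to pure tensors in the formal tensor product $\hperf\A_1\otimes\cdots\otimes\hperf\A_n$ in $\DGCat$: objects of this tensor product are only formally built from pure tensors, so some care is needed to ensure that the inductive closure operations in each factor really do control the image of $\varpi$ on all objects. The key observation is that the property of landing in $\hperf(\A_1\otimes\cdots\otimes\A_n)$ is preserved under shifts, cones, and homotopy direct summands in the target, so it suffices to check the property on a generating class of objects in the source, namely the pure tensors; the multilinearity argument above then handles the closure operations one variable at a time.
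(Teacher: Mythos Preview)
Your proposal is correct and follows essentially the same approach as the paper. The paper's proof is a one-liner: it notes that $\Upsilon(\mu)$ sends tensor products of representables to representables (your Lemma~\ref{lem:properties-of-upsilon}\ref{item-upsilon-commutes-with-yoneda}) and concludes immediately that it therefore sends tensor products of $h$-projective perfect modules to $h$-projective perfect modules. Your explicit factorisation $\Upsilon(\mu)=\mu^*\circ\varpi$ and your multilinear closure argument spell out the ``therefore'' that the paper leaves implicit, but the underlying idea is identical.
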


\begin{proof}
	For any $\mu\colon \A_1 \otimes \dots \otimes \A_n \rightarrow C$
	the functor $\Upsilon(\mu)\colon \rightmod{\A_1} \otimes \dots \otimes \rightmod{\A_n} \rightarrow \rightmod{\C}$ 
	takes tensor products of representables to representables, and therefore 
	tensor products of h-projective, perfect modules to h-projective perfect
	modules.  
\end{proof}

We have the following key result.
\begin{Proposition}
	\label{prps-the-perfect-hull-of-a-dg-bicategory}
	Let $\bicat C$ be a \dg (resp.~$\HoDGCat$-enriched) bicategory. The following set of data defines a \dg (resp.~$\HoDGCat$-enriched) bicategory $\tilde{\bicat C}$:
	\begin{itemize}
		\item $\obj \tilde{\bicat C} \coloneqq \obj \bicat C$. 
		\item For each $a,b \in \obj \tilde{\bicat C}$, 
		\[ \tilde{\bicat C}(a,b) \coloneqq \hperf\bicat C(a,b).\] 
		\item For each $a \in \tilde{\bicat C}$,
		\[ \tilde{1}_a \coloneqq h^r(1_a). \]
		That is, it is the representable module defined by the
		identity $1$-morphism of $a$ in $\bicat C$. 
		\item For each $a,b,c \in \obj \tilde{\bicat C}$ the $1$-composition functor
		\[ \tilde{\mu}\colon \hperf \bicat C(b,c) \otimes \hperf\bicat C(a,b) \rightarrow 
		\hperf\bicat C(a,c) \]
		is the extension $\Upsilon(\mu)$ given in 
		{ Lemma}~\ref{eqn-perfect-hull-of-a-DG-bifunctor} 
		of the $1$-composition functor of $\bicat C$ 
		\[ \mu\colon C(b,c) \otimes \bicat C(a,b) \rightarrow \bicat C(a,c). \]
		\item For each $a,b,c,d \in \obj \tilde{\bicat C}$ the natural 
		associator isomorphism 
		\begin{equation*}
			\tilde{\alpha}\colon \tilde{\mu}(\tilde{\mu} \otimes \id) \simeq
			\tilde{\mu}(\id \otimes \tilde{\mu})
		\end{equation*}
		of functors 
		\[ \hperf \bicat C(c,d) \otimes \hperf \bicat C(b,c) \otimes \hperf \bicat C(a,b) \rightarrow \hperf \bicat C(a,d) \] 
		is the conjugate of the extension $\Upsilon(\alpha)$ of the associator isomorphism $\alpha$ of $\bicat C$ by the isomorphism of Lemma~\ref{lem:properties-of-upsilon}~\ref{item-upsilon-commutes-with-composition}:
		\begin{equation*}
			\begin{tikzcd}
				\Upsilon(\mu(\mu \otimes \id))
				\ar{r}{\Upsilon(\alpha)}
				&
				\Upsilon(\mu(\id \otimes \mu))
				\ar{d}{\simeq}
				\\
				\Upsilon(\mu)\left(\Upsilon(\mu) \otimes \id\right)
				\ar{u}{\simeq}
				\arrow[dashed]{r}{\tilde{\alpha}}
				&
				\Upsilon(\mu)\left(\id\otimes \Upsilon(\mu) \right).
			\end{tikzcd}
		\end{equation*}
		\item Similarly, for each $a,b \in \obj \tilde{\bicat C}$ the unitor
		isomorphism $\tilde{\lambda}$ (resp.~$\tilde{\rho}$) is the conjugate of the extension
		$\Upsilon(\lambda)$ (reps., $\Upsilon(\rho)$) of the corresponding 
		unitor isomorphism $\lambda$ (resp., $\rho$) of $\bicat C$ by the isomorphism of Lemma~\ref{lem:properties-of-upsilon}~\ref{item-upsilon-commutes-with-composition}. 
	\end{itemize}
\end{Proposition}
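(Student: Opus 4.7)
The strategy is to lift the entire bicategory structure on $\bicat C$ to its perfect hull by applying the functor $\Upsilon$ of Definition~\ref{defn-extending-natural-transformations-to-module-categories}, using that $\Upsilon$ is essentially a pseudofunctor on the ambient $(n+1)$-tuples of DG-categories: it commutes strictly with identities and Yoneda embeddings, and commutes with composition up to the canonical natural isomorphism of Lemma~\ref{lem:properties-of-upsilon}\ref{item-upsilon-commutes-with-composition}. Corollary~\ref{eqn-perfect-hull-of-a-DG-bifunctor} guarantees that $\tilde\mu = \Upsilon(\mu)$ preserves the perfect hulls, so the 1-composition functor $\tilde\mu\colon\tilde{\bicat C}(b,c)\otimes\tilde{\bicat C}(a,b)\to\tilde{\bicat C}(a,c)$ is well-defined. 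The representable module $h^r(1_a)\in\hperf\bicat C(a,a)$ is perfect and $h$-projective, so the identity 1-morphism $\tilde 1_a$ is well-defined as well.

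Next, I would construct the structure 2-isomorphisms. For each quadruple $a,b,c,d\in\obj\bicat C$, the image $\Upsilon(\alpha)$ of the associator is a natural isomorphism between $\Upsilon(\mu(\mu\otimes\id))$ and $\Upsilon(\mu(\id\otimes\mu))$ in $\DGFun(\hperf\bicat C(c,d)\otimes\hperf\bicat C(b,c)\otimes\hperf\bicat C(a,b),\,\hperf\bicat C(a,d))$. Conjugating by the canonical isomorphisms $\Upsilon(\mu(\mu\otimes\id))\cong\tilde\mu(\tilde\mu\otimes\id)$ and $\Upsilon(\mu(\id\otimes\mu))\cong\tilde\mu(\id\otimes\tilde\mu)$ from Lemma~\ref{lem:properties-of-upsilon}\ref{item-upsilon-commutes-with-composition} yields $\tilde\alpha$. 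The unitors $\tilde\lambda$ and $\tilde\rho$ are constructed in the same way, using also that $\Upsilon$ sends the identity functor to the Yoneda-extended identity, which is the unit on the perfect hull. Naturality and invertibility of $\tilde\alpha,\tilde\lambda,\tilde\rho$ follow immediately from those of $\alpha,\lambda,\rho$ together with the canonical nature of the isomorphisms in Lemma~\ref{lem:properties-of-upsilon}.

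It remains to verify the pentagon axiom \ref{item-associativity-coherence-diagram} and the triangle axiom \ref{eqn-unitality-coherence-diagram} of Definition~\ref{def:enrichedbicategory}. The plan is to express each edge of the pentagon for $\tilde{\bicat C}$ as the conjugate of the corresponding $\Upsilon$-image of an edge of the pentagon for $\bicat C$, and then to check that the pentagon in $\tilde{\bicat C}$ is assembled by conjugating the $\Upsilon$-image of the pentagon in $\bicat C$ by canonical isomorphisms arising from iterated applications of Lemma~\ref{lem:properties-of-upsilon}\ref{item-upsilon-commutes-with-composition}. Since $\Upsilon$ is a functor of $\DGFun$-categories, it sends the commutative pentagon of $\alpha$ to a commutative pentagon of $\Upsilon(\alpha)$; the desired pentagon for $\tilde\alpha$ then follows from coherence of the canonical isomorphisms of Lemma~\ref{lem:properties-of-upsilon}\ref{item-upsilon-commutes-with-composition}, which is in turn formal from the construction via $\varpi$. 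The same argument handles the triangle axiom.

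The main obstacle I anticipate is bookkeeping in the pentagon verification: one must confirm that the several applications of the ``commutes with composition'' isomorphism required to identify the five pentagon vertices of $\Upsilon$-images with the five vertices built out of $\tilde\mu$ fit together coherently, i.e.~that any two ways of rebracketing an iterated composition along these canonical isomorphisms agree. This is a MacLane-type coherence statement for the pseudofunctor $\Upsilon$, and it follows because the canonical isomorphisms are all induced by the universal property of the tensor product of DG-categories together with the strict associativity and unitality of $\otimes_\kk$ on $\modk$. In the $\HoDGCat$-enriched version, one works throughout with classes in $D(\AbimB)$ in place of genuine DG functors, but the same argument applies verbatim because $\Upsilon$ descends to the homotopy category (it preserves quasi-isomorphisms of bimodules between left $h$-flat ones), and the canonical isomorphisms of Lemma~\ref{lem:properties-of-upsilon}\ref{item-upsilon-commutes-with-composition} remain isomorphisms in $\HoDGCat$.
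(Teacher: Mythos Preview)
Your proposal is correct and follows essentially the same approach as the paper: express each $\tilde\alpha$ as a conjugate of $\Upsilon(\alpha)$ by the canonical isomorphisms of Lemma~\ref{lem:properties-of-upsilon}\ref{item-upsilon-commutes-with-composition}, then observe that the pentagon and triangle diagrams for $\tilde{\bicat C}$ reduce to the $\Upsilon$-images of the corresponding diagrams for $\bicat C$, which commute since $\Upsilon$ is a functor. The paper's proof is terser and does not explicitly raise the coherence bookkeeping you flag, but the underlying argument is the same.
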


\begin{proof}
	This is a straightforward verification. For example, to show that the diagram
	of Definition~\ref{def:enrichedbicategory}~\ref{item-associativity-coherence-diagram} commutes for $\tilde{\bicat C}$
	we write, according to the definition, each instance of $\tilde{\alpha}$ 
	in this diagram as a conjugate of $\Upsilon(\alpha)$ by the isomorphisms
	from Lemma~\ref{lem:properties-of-upsilon}~\ref{item-upsilon-commutes-with-composition}. The resulting
	diagram can then be simplified to the image under $\Upsilon$ of the 
	the same associativity coherence diagram 
	for $\bicat C$. 
	The claim then follows since the image of a commutative diagram
	under a functor is itself a commutative diagram. 
\end{proof}

\begin{Definition}
	\label{defn-the-perfect-hull-of-a-bicategory}
	Let $\bicat C$ be a \dg or $\HoDGCat$-enriched bicategory. The \emph{perfect hull}\index{perfect hull} of $\bicat C$, denoted 
	$\bihperf(\bicat C)$, is the bicategory defined in Proposition 
	\ref{prps-the-perfect-hull-of-a-dg-bicategory}. 
\end{Definition}

\begin{Remark}\label{rem:perfect-hull-is-bicategory}
	Even when  $\bicat C$ is a strict $2$-category, its perfect hull $\bihperf(\bicat C)$ is in general only a bicategory.
	Indeed, since $\Upsilon(\id)$ is only isomorphic to the identity (being given by an extension of scalars), the unitor and associator isomorphisms of $\bihperf(\bicat C)$ will not be equal to the identity.
\end{Remark}

\begin{Proposition}
	\label{prps-the-perfect-hull-of-a-dg-functor-of-dg-bicategories}
	Let $\bicat C$ and $\bicat D$ be \dg or $\HoDGCat$-enriched bicategories and $F\colon \bicat C \rightarrow \bicat D$ a $2$-functor. Then the following set of data
	defines a $2$-functor
	\begin{equation*}
		\bihperf(F): \bihperf(\bicat C) \rightarrow \bihperf(\bicat D). 
	\end{equation*}
	\begin{itemize}
		\item The map \[F\colon \obj \bihperf(\bicat C) \rightarrow \obj\bihperf(\bicat D)\]
		which equals the map $F \colon \obj \bicat C \rightarrow \obj \bicat D$ as 
		taking the perfect hull of a bicategory does not change the objects. 
		\item For every $a,b \in \obj \bihperf(\bicat C)$ the functor
		\[ \bihperf(F)_{a,b}\colon \bihperf \bicat C(a,b) \rightarrow \bihperf \bicat D(Fa,Fb) \]
		is defined to be the extension of scalars functor $F_{a,b}^*$. 
		\item For every $a \in \obj \bihperf(\bicat C)$ the $2$-morphism 
		\[ \iota\colon 1_{Fa} \rightarrow \hperf F(1_a) \]
		is the image under the Yoneda embedding of the corresponding
		$2$-morphism $\iota_F$ for $F$. 
		\item For each $a,b,c, \in \obj \bihperf(\bicat C)$ a natural transformation 
		\[
		\phi\colon \mu_{\bihperf(\bicat D)} \circ (\bihperf(F)_{b,c} \otimes \bihperf(F)_{a,b})
		\rightarrow
		\bihperf(F)_{a,c} \circ \mu_{\bihperf(\bicat C)},
		\]
		which is the conjugate by the isomorphisms 
		from Lemma~\ref{lem:properties-of-upsilon}~\ref{item-upsilon-commutes-with-composition}
		of the extension $\Upsilon(\phi_{F})$ of the corresponding 
		natural transformation for $F$.  
	\end{itemize}
\end{Proposition}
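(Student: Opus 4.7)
The plan is to verify that the listed data satisfies the axioms of a 2-functor by reducing both the associativity and unitality coherence diagrams of $\bihperf(F)$ to the corresponding coherence diagrams for $F$, in much the same fashion as was sketched for Proposition~\ref{prps-the-perfect-hull-of-a-dg-bicategory}. The key tool throughout is Lemma~\ref{lem:properties-of-upsilon}, and in particular part~\ref{item-upsilon-commutes-with-composition}, which guarantees that the extension $\Upsilon$ to module categories is compatible with composition of \dg functors up to canonical isomorphism.

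First I would check that the data makes sense. The functor $\bihperf(F)_{a,b} = F_{a,b}^* = \Upsilon(F_{a,b})$ indeed lands in $\hperf \bicat D(Fa,Fb)$ by Lemma~\ref{eqn-perfect-hull-of-a-DG-bifunctor}. The unit coherence $\iota$ is a morphism between $1_{Fa} = h^r(1_{Fa})$ and $\hperf F(1_a) = h^r(F_{a,a}(1_a))$, both lying in the Yoneda image, and is obtained by functoriality of $h^r$ applied to $\iota_F$. For the composition coherence $\phi$, observe that the source $\mu_{\bihperf(\bicat D)} \circ (\bihperf(F)_{b,c} \otimes \bihperf(F)_{a,b})$ equals $\Upsilon(\mu_{\bicat D}) \circ (\Upsilon(F_{b,c}) \otimes \Upsilon(F_{a,b}))$, which by Lemma~\ref{lem:properties-of-upsilon}~\ref{item-upsilon-commutes-with-composition} is naturally isomorphic to $\Upsilon(\mu_{\bicat D} \circ (F_{b,c} \otimes F_{a,b}))$; similarly for the target. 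Conjugating $\Upsilon(\phi_F)$ by these isomorphisms yields the desired 2-morphism.

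The substantive step is to verify that the associativity and unitality coherence diagrams commute. Following the template of Proposition~\ref{prps-the-perfect-hull-of-a-dg-bicategory}, I would rewrite every instance of $\tilde\alpha$, $\tilde\lambda$, $\tilde\rho$, $\tilde\mu$, $\iota$, and $\phi$ appearing in these diagrams as the conjugate of $\Upsilon$ applied to the corresponding structure morphism for $\bicat C$, $\bicat D$, or $F$, using the coherence isomorphisms from Lemma~\ref{lem:properties-of-upsilon}~\ref{item-upsilon-commutes-with-composition} as conjugating factors. Along each composed path of the diagram, the conjugating isomorphisms cancel in adjacent pairs, leaving precisely the image under $\Upsilon$ of the associativity (respectively unitality) coherence diagram for $F$. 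Since $F$ is a 2-functor these original diagrams commute, and since $\Upsilon$ is itself a functor, the image commutes as well.

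The main obstacle is bookkeeping. Each large coherence diagram contains several nested instances of $\mu$, $\alpha$, $\lambda$, $\rho$, $\phi$, and $\iota$, and a careful check is required to ensure that the various possible insertions of the isomorphisms from Lemma~\ref{lem:properties-of-upsilon}~\ref{item-upsilon-commutes-with-composition} agree; this ultimately reduces to naturality of $\Upsilon$ and a pentagon-type coherence for the nested iterations of these isomorphisms, both of which follow from the explicit definition of $\Upsilon$ as the composition of extension of scalars with precomposition by $\varpi$. In the $\HoDGCat$-enriched case the same argument applies, with equalities in the above replaced by isomorphisms in the homotopy category.
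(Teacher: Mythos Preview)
Your proposal is correct and follows exactly the approach the paper intends: the paper does not give an explicit proof of this proposition, but the argument is meant to be the same ``straightforward verification'' as in the proof of Proposition~\ref{prps-the-perfect-hull-of-a-dg-bicategory}, namely rewriting each structure morphism as a conjugate of its image under $\Upsilon$ via Lemma~\ref{lem:properties-of-upsilon}\ref{item-upsilon-commutes-with-composition} and then reducing to the coherence diagrams for $F$ itself. Your write-up in fact supplies more detail than the paper does.
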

\begin{proof}
	This is a straightforward verification.
\end{proof}

\begin{Remark} 
	\label{rem:pretrdg}
	By replacing the perfect hull $\hperf \bicat C(a,b)$ with the pretriangulated
	hull $\pretriag \bicat C(a,b)$ in
	Proposition~\ref{prps-the-perfect-hull-of-a-dg-bicategory} and
	Definition~\ref{defn-the-perfect-hull-of-a-bicategory}, one obtains
	the pretriangulated hull\index{pretriangulated hull} $\bipretriag(\bicat C)$ of a $\HoDGCat$-enriched 
	bicategory $\bicat C$.
\end{Remark}

\section{Monoidal Drinfeld quotient}\label{subsec:monoidal-drinfeld-quotients}

In this section we give a generalisation 
of the notion of the \emph{Drinfeld quotient} of a \dg category \cite{Drinfeld-DGQuotientsOfDGCategories}. The original notion is not
compatible with monoidal structures, which led Shoikhet to introduce
in \cite{Shoikhet-DifferentialGradedCategoriesAndDeligneConjecture}
the notion of a \emph{refined Drinfeld quotient} and use it to
construct the structure of a \emph{weak Leinster monoid}\index{weak Leinster monoid} on 
the Drinfeld quotient of a monoidal \dg category by a two-sided 
ideal of objects. 

Here, we use Shoikhet's construction to define the Drinfeld quotient
of a \dg bicategory by a two-sided ideal of $1$-morphisms. 
The result is a $\HoDGCat$-enriched bicategory. That is, the $1$-composition 
is no longer given by \dg functors, but by quasi-functors: compositions 
of genuine \dg functors with formal inverses of quasi-equivalences. 

We actually get a richer structure: $1$-morphism spaces in 
the quotient bicategory are not abstract objects of 
$\HoDGCat$, but concrete \dg categories. These admit a multi-object
analogue of a weak Leinster monoid structure. Localising by
quasi-equivalences simplifies it to an ordinary,
associative $1$-composition, whence we obtain a $\HoDGCat$-enriched
bicategory. 

Finally, our quotient construction works just as well with 
a bicategory that is already $\HoDGCat$-enriched and produces again a
$\HoDGCat$-enriched bicategory. 

Recall the original construction by Drinfeld:
\begin{Definition}[{\cite[Section~3.1]{Drinfeld-DGQuotientsOfDGCategories}}]
	\label{defn-original-drinfeld-quotient}
	Let $\C$ be a \dg category and $\A \subseteq \C$ a full
	\dg subcategory. The \emph{Drinfeld quotient}\index{Drinfeld quotient} $\C / \A$ is
	the \dg category freely generated over $\C$ by adding for each 
	$a \in \A$ a degree $-1$ contracting homotopy $\xi_a: a \rightarrow a$ 
	with $d \xi_a = \id_a$. 
	
	Explicitly: 
	\begin{enumerate}
		\item The objects of $\C / \A$ are those of $\C$. 
		\item $\forall\; c,d \in \C$ the morphism complex
		$\homm_{\C/\A}(c,d)$ comprises all composable words 
		\begin{equation*}
			f_{n} \xi_{a_n} f_{n-1} \cdots f_1 \xi_{a_1} f_0 
		\end{equation*}
		with $a_1, \cdots, a_n \in \A$
		and $f_0 \in \homm_\C(c, a_1)$, $f_{n} \in \homm_\C(a_{n}, d)$. 
		Composable here means that $f_i \in \homm_\C(a_{i}, a_{i+1})$ for $0 < i < n$. 
		The degree of 
		such a word
		is $(\sum \deg f_i) - n$. The 
		differential is given by the Leibniz rule and, when differentiating
		one of the $\xi_i$, the subsequent composition of $f_{i-1}$ and 
		$f_{i}$ in $\C$. 
		\item The composition in $\C / \A$ is given by the concatenation of words 
		and the subsequent composition in $\C$ of the two letters at which 
		the concatenation happens. 
		\item The identity morphisms in $\C/\A$ are the identity morphisms 
		of $\C$. 
	\end{enumerate}
\end{Definition}

We have the natural embedding $\C \rightarrow \C / \A$ which is the
identity on objects. On morphisms, it considers morphisms of $\C$ 
as length $1$ composable words; that is, $n = 0$ in the notation of
Definition~\ref{defn-original-drinfeld-quotient}~(2). We thus have 
an embedding $\catDc(\C) \rightarrow \catDc(\C/\A)$. It sends 
the objects of $\catDc(\A) \subset
\catDc(\C)$ to zero, and therefore by the universal property of the
Verdier quotient it filters through a unique functor 
$\catDc(\C)/\catDc(\A) \rightarrow \catDc(\C/\A)$.

The main properties of the Drinfeld quotient are summarised 
as follows: 

\begin{Theorem}[{\cite[Theorem 1.6.2]{Drinfeld-DGQuotientsOfDGCategories}, \cite[Theorem 4.0.3]{Tabuada-OnDrinfeldsDGQuotient}}]
	\label{theorem-main-properties-of-drinfeld-quotients}
	Let $\C$ be a \dg category and let $\A \subseteq \C$ be a full
	subcategory. Then: 
	\begin{enumerate}
		\item \label{item-drinfeld-quotient-descends-to-verdier-quotient} 
		The natural functor $\catDc(\C)/\catDc(\A) \rightarrow \catDc(\C/\A)$ is 
		an exact equivalence.  
		\item \label{item-universal-property-with-respect-to-morphisms-killing-A}
		Let $\B$ be a \dg category. 
		The natural functor $\C \rightarrow \C / \A$ gives a fully 
		faithful functor
		\begin{equation*}
			\homm_{\HoDGCat}(\C/\A,\, \B) \rightarrow \homm_{\HoDGCat}(\C,\, \B),
		\end{equation*}
		whose image comprises the quasi-functors whose underlying
		functors \[\Hzero(\C) \rightarrow \Hzero(\B)\] send the objects of $\A$ to
		zero. 
	\end{enumerate}
\end{Theorem}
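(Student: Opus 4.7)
The plan is to prove the two statements in tandem, using part (2) as the main tool, with part (1) following from it by identifying the Drinfeld quotient with a concrete model of the Verdier quotient up to quasi-equivalence. Throughout, the key structural input is that the generators $\xi_a$ with $d\xi_a = \id_a$ are exactly the data needed to trivialise $a \in \A$ functorially on the DG level, so $\C/\A$ should be universal for this property in $\HoDGCatone$.

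First I would verify the ``easy'' half of part (2): that precomposition with $\C \to \C/\A$ lands in the subcategory described. This is immediate, since every $a \in \A$ acquires a contracting homotopy in $\C/\A$ and hence its image under any quasi-functor becomes null-homotopic in $H^0(\B)$. For the other direction (the core of part (2)), given a quasi-functor $F\colon \C \to \B$ whose underlying $H^0$-functor kills $\A$, I would replace $F$ by an honest DG functor using a cofibrant replacement of $\C$ and then extend to $\C/\A$ by choosing, for each $a \in \A$, an actual contracting homotopy $\eta_a\colon F(a) \to F(a)$ with $d\eta_a = \id_{F(a)}$, and declaring $F(\xi_a) = \eta_a$. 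This gives a DG functor $\C/\A \to \B$ out of the explicit presentation in Definition \ref{defn-original-drinfeld-quotient}. To pass from ``existence'' to ``fully faithful equivalence of mapping spaces'' I would show that the space of choices of coherent $\{\eta_a\}$ is contractible, which reduces to the standard fact that the space of nullhomotopies of a nullhomotopic map in $\B$ is either empty or contractible; two different choices are connected by a canonical higher homotopy, giving the 2-morphism structure and trivialising the obstruction.

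For part (1), having part (2) in hand, I would run the argument twice. The Verdier quotient $D_c(\C)/D_c(\A)$ satisfies exactly the same universal property on the level of triangulated categories: exact functors $D_c(\C) \to \mathcal T$ killing $D_c(\A)$ factor uniquely through it. Applying part (2) with $\B$ taken from a family of test categories, both $\C/\A$ and any DG enhancement of $D_c(\C)/D_c(\A)$ corepresent the same functor on $\HoDGCat$, so the natural map is an equivalence in $\HoDGCat$; taking $H^0 \circ \hperf$ yields the desired exact equivalence on compact derived categories. Essential surjectivity is then clear since objects in $\C/\A$ come from $\C$. Concretely, one may also verify fully faithfulness by identifying words $f_n \xi_{a_n} f_{n-1} \cdots f_1 \xi_{a_1} f_0$ in $\hom_{\C/\A}(c,d)$ with iterated mapping cone constructions: each $\xi_{a_i}$ corresponds to a roof whose cone lies in $\A$, and concatenation of words matches composition of roofs in the calculus of fractions computing $D_c(\C)/D_c(\A)$.

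The main obstacle is the coherent construction of the contracting homotopies in the proof of part (2). A single $\eta_a$ is easy to choose once $F(a)$ is known null-homotopic, but one must choose the whole family $\{\eta_a\}_{a \in \A}$ in a way that is natural enough for all the compositional relations in $\C/\A$ to go through. This is really a statement about contractibility of a certain simplicial space of nullhomotopies, which in a strictly DG setting requires either a careful cofibrant replacement argument or an appeal to the model structure on $\DGCatone$ (where $\A \hookrightarrow \C \to \C/\A$ must be shown to be a homotopy cofiber sequence). The rest of the argument, including part (1), is then essentially formal.
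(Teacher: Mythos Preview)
The paper does not supply its own proof; the theorem is stated with citations to Drinfeld (for part~(1)) and Tabuada (for part~(2)), and no further argument is given.

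Your sketch contains a genuine error at the key step of part~(2). The claim that ``the space of nullhomotopies of a nullhomotopic map in $\B$ is either empty or contractible'' is false: the set of $\eta \in \Hom^{-1}(x,y)$ with $d\eta = f$ is a torsor over the closed degree $-1$ morphisms $Z^{-1}\Hom(x,y)$, which need not vanish even in cohomology. Two choices of $\{\eta_a\}$ give two DG functors $\C/\A \to \B$ that are typically not isomorphic as DG functors, and showing they become equal in $\HoDGCat$ is precisely the content of the full faithfulness you are trying to establish --- it does not follow from an elementary contractibility statement. You do name the correct mechanism in your final paragraph (the model structure on $\DGCatone$, with $\C \to \C/\A$ a homotopy cofiber of $\A \hookrightarrow \C$); that is Tabuada's actual proof, not a fallback, and your ``choose $\eta_a$'' construction is only its strict-level shadow.

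Your derivation of (1) from (2) also has a gap: invoking ``any DG enhancement of $D_c(\C)/D_c(\A)$'' presupposes that such an enhancement exists with the expected universal property, which is essentially what (1) asserts. A non-circular route is to show directly that $H^0(\hperf(\C/\A))$ satisfies the triangulated universal property of the Verdier quotient and then appeal to its uniqueness. Alternatively, the concrete ``words $=$ roofs'' comparison you sketch at the end is much closer to Drinfeld's original argument, which proceeds by filtering $\Hom_{\C/\A}$ by word length and analysing the associated graded via a bar-type resolution; that line can be made to work, but it is an independent computation rather than a corollary of part~(2).
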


Let $\biC$ be a \dg bicategory. For any collections $\biA, \biB$ of $1$-morphisms
of $\biC$, write $\biA \circ_1 \biB$ for the collection of $1$-morphisms 
of $\biC$ $2$-isomorphic 
to $a \circ_1 b$ with $a \in \biA$, $b \in \biB$. 
The \em two-sided ideal $\biI_\biA$ generated by $\biA$ \rm 
is the $2$-full subcategory of $\biC$ supported on objects
and $1$-morphisms of $\biC \circ_1 \biA \circ_1 \biC$. Here, by abuse of
notation, $\biC$ denotes the collection of all its $1$-morphisms. 

Let $\biC$ be a \dg bicategory and $\biA$ a collection of $1$-morphisms
of $\biC$. For any $a,b \in \biC$ write $\biC(a,b)/\biA$ for the Drinfeld
quotient $\biC(a,b)/\biA(a,b)$. These do not apriori form a bicategory. 
First of all, any $1$-composition involving a contractible $1$-morphism 
would have to be contractible. Were a bicategory structure to exist, 
for any $f \in \biA(a,b)$ and any $g \in \biC(b,c)$ 
the $1$-composition $\id_g \circ_1 \xi_f$ would have 
to be a contracting homotopy for $g \circ_1 f$. Unless
$g \circ_1 f$ lies in $\biA(a,c)$, there is no reason 
for it to be contractible in $\biC(a,c)/\biA$. 

This could be rectified by replacing $\biA$ with two-sided ideal
$\biI_\biA$. We could then attempt 
to define the $1$-composition $\id_g \circ_1 \xi_f$
to be contracting homotopy $\xi_{g \circ_1 f}$. However, 
the interchange law  \eqref{eq:interchange-law-for-graded-2-categories} for $1$-composition demands that for any $2$-morphism 
$\alpha\colon g \rightarrow h$ in $\biC(b,c)$ we have: 
\[
(\id_g \circ_1 \xi_f) \circ_2 (\alpha \circ_1 \id_f)
= \alpha \circ_1 \xi_f = (-1)^{\deg(\alpha)}
(\alpha \circ_1 \id_f) \circ_1 (\id_g \circ_1 \xi_f).
\]
If we define $\id_g \circ_1 \xi_f = \xi_{g \circ_1 f}$, 
this would then ask for $\xi_{g \circ_1 f}$ to supercommute with 
$(\alpha \circ_1 \id_f)$. But, by definition, 
there are no relations between $\xi_{g \circ_1 f}$ and 
any $2$-morphisms in $\biC(a,c)$!

This is why the original Drinfeld
quotient works poorly with monoidal structures: it is freely
generated by the contracting homotopies $\xi_f$ over the original 
category. Thus $\xi_f$ cannot satisfy the relations in 
the definition of $1$-composition. The $1$-composition functor
\begin{equation*}
	\circ_1\colon \biC(b,c)/\biI_\biA \otimes \biC(a,b)/\biI_\biA
	\rightarrow \biC(a,c)/\biI_\biA
\end{equation*}
could not exist because its target 
is a free category generated by $\xi_f$, while its source is not. 

In \cite{Shoikhet-DifferentialGradedCategoriesAndDeligneConjecture},
Shoikhet solves this by constructing a free resolution of \[\biC(b,c)/\biI_\biA \otimes \biC(a,b)/\biI_\biA,\] 
which admits a natural $1$-composition functor into $\biC(a,b)/\biI_\biA$.
He defines:

\begin{Definition}[{\cite[Section~4.3]{Shoikhet-DifferentialGradedCategoriesAndDeligneConjecture}}]
	Let $\C$ be a \dg category and let $\A_1, \dots, \A_n$ be full
	subcategories. The \em refined Drinfeld quotient \rm $\C/(\A_1, \dots,
	\A_n)$ is the \dg category whose underlying graded category is freely
	generated over that of $\C$ by introducing for any
	\[
	i_1 < i_2 < \dots < i_k \text{ and } a \in \A_{i_1} \cap \dots \cap \A_{i_k} 
	\]
	a new degree $k$ element
	\begin{equation*}
		\xi^{i_1 \dots i_k}_a.
	\end{equation*}
	The differential on these new elements is defined by setting
	\begin{equation*}
		d \xi^{i_1}_a = \id_a
	\end{equation*}
	and for $k > 1$
	\begin{equation*}
		d \xi^{i_1 \dots i_k}_a =
		\sum_{j = 1}^k (-1)^{j-1} \xi^{i_1 \dots \hat{i}_j \dots  i_k}_a. 
	\end{equation*}
\end{Definition}

\begin{Remark}
	When $n = 1$, the refined Drinfeld quotient $\C / \A_1$ coincides with the ordinary Drinfeld quotient.
	In this case we therefore omit the superscript in the notation above and write $\xi_a$ for $\xi^1_a$.
\end{Remark} 

The reason for considering the above as a refinement of the original 
Drinfeld quotient is the following theorem by Shoikhet:

\begin{Theorem}[{\cite[Lemma 4.3]{Shoikhet-DifferentialGradedCategoriesAndDeligneConjecture}}]
	Let $\C$ be a \dg category and let $\A_1, \dots, \A_n$ be full
	subcategories. The functor
	\[ \Psi\colon \C/(\A_1, \dots, \A_n) \rightarrow \C/\bigcup_{i = 1}^n \A_i \]
	defined as the identity on objects and morphisms of $\C$ and as
	\begin{align*}
		\Psi(\xi^{i_1}_a) &= \xi_a, \\
		\Psi(\xi^{i_1 \dots i_k}_a) & = 0 \quad \quad \text{for } k > 1,
	\end{align*}
	is a quasi-equivalence of \dg categories. 
\end{Theorem}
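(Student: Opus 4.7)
The functor $\Psi$ is the identity on objects, so essential surjectivity is immediate. The entire content of the statement therefore reduces to showing that $\Psi$ induces a quasi-isomorphism on each Hom-complex $\homm_{\C/(\A_1,\dots,\A_n)}(c,d) \to \homm_{\C/\bigcup_i \A_i}(c,d)$. The plan is to construct a strict DG section $\sigma$ of $\Psi$ and then exhibit an explicit chain homotopy between $\sigma\circ\Psi$ and the identity.

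For the section, set $J_a := \{i : a \in \A_i\}$ for each $a \in \bigcup_i \A_i$, and fix a choice $\iota(a) \in J_a$. Define $\sigma\colon \C/\bigcup_i \A_i \to \C/(\A_1,\dots,\A_n)$ by the identity on $\C$ together with $\sigma(\xi_a) = \xi^{\iota(a)}_a$. Since $d\xi^{\iota(a)}_a = \id_a = d\xi_a$, this extends to a well-defined DG functor, and tautologically $\Psi \circ \sigma = \id$. Consequently, $\Psi_{c,d}$ is surjective both on morphism complexes and on their cohomologies, reducing the problem to proving that $\sigma\circ\Psi$ is chain-homotopic to the identity.

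The key combinatorial input is that for each $a$ the graded vector space
\[
  \tilde \Xi_a \;:=\; \kk\cdot\id_a \;\oplus\; \bigoplus_{\emptyset\ne I\subseteq J_a} \kk\cdot\xi^I_a,
\]
endowed with the differential induced from $\C/(\A_1,\dots,\A_n)$ (so that $d\xi^i_a = \id_a$), is naturally identified with the augmented simplicial chain complex of the simplex on vertex set $J_a$, hence is contractible. Taking $\iota(a)$ as the cone vertex produces a canonical degree $+1$ contracting homotopy $\tilde h_a$ on $\tilde\Xi_a$ satisfying $d\tilde h_a + \tilde h_a d = \id_{\tilde\Xi_a}$: explicitly $\tilde h_a(\id_a) = \xi^{\iota(a)}_a$, $\tilde h_a(\xi^I_a) = \pm\xi^{I\cup\{\iota(a)\}}_a$ when $\iota(a)\notin I$, and $\tilde h_a(\xi^I_a) = 0$ when $\iota(a)\in I$, with signs dictated by the simplicial orientation.

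The global chain homotopy is then assembled by a bar-style formula. For a composable word $w = f_m \xi^{I_m}_{a_m} f_{m-1} \cdots \xi^{I_1}_{a_1} f_0$, set
\[
  h(w) \;=\; \sum_{k=1}^{m} \epsilon_k \cdot f_m \xi^{I_m}_{a_m} \cdots \xi^{I_{k+1}}_{a_{k+1}} f_k \, \tilde h_{a_k}\!\bigl(\xi^{I_k}_{a_k}\bigr)\, f_{k-1}\, \xi^{\iota(a_{k-1})}_{a_{k-1}} \cdots \xi^{\iota(a_1)}_{a_1} f_0,
\]
where positions to the left of $k$ are replaced by their $\sigma\Psi$-images, position $k$ is resolved by $\tilde h_{a_k}$, positions to the right of $k$ are left untouched, and $\epsilon_k$ is the Koszul sign produced by moving the degree $+1$ operator $\tilde h_{a_k}$ into position. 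A telescoping computation then yields $dh + hd = \id - \sigma\Psi$: the local identity $d\tilde h_{a_k} + \tilde h_{a_k} d = \id$ contributes at position $k$, while consecutive terms in the sum cancel. The role of the augmentation map $\epsilon\colon \xi^i_a\mapsto \id_a$ in $\tilde\Xi_a$ is precisely to encode the ``connecting'' part of the differential on the Hom-complex, in which $d\xi^i_a = \id_a$ causes the two adjacent $\C$-morphisms to be composed; this compatibility is why the contracting homotopy of the \emph{augmented} simplex complex extends to the Hom-complexes.

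The main technical obstacle is the careful bookkeeping of Koszul signs and the correct treatment of these connecting terms where $d\xi^i_a = \id_a$ merges neighbouring $\C$-morphisms, as well as the boundary case $k = 1$ where the right tail is empty. A sign check in the simplest case $n = 2$, $\A_1 = \A_2 = \{a\}$, $\iota(a) = 1$, $w = \alpha\xi^2_a\beta$, with $h(w) = (-1)^{|\alpha|}\alpha\xi^{12}_a\beta$, shows that $dh(w) + hd(w)$ indeed equals $\alpha\xi^2_a\beta - \alpha\xi^1_a\beta = w - \sigma\Psi(w)$; the general formula is an inductive extension of this model computation. An alternative, less hands-on route would be to filter both sides by the number of $\xi$-letters and run a spectral sequence argument, whose $E^1$-page reduces to the reduced simplicial chain complexes of the $\Delta^{|J_a|-1}$, but convergence requires care in degrees where $\C$ has unbounded $\homm$-complexes, whereas the explicit homotopy above works unconditionally.
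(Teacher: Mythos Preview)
The paper does not prove this theorem itself; it is stated with a citation to \cite[Lemma~4.3]{Shoikhet-DifferentialGradedCategoriesAndDeligneConjecture} and no proof is supplied. Your proposal gives a self-contained argument, and the strategy is sound: build a strict DG section $\sigma$ by choosing a distinguished index $\iota(a)$ for each $a$, identify the local ``letter complex'' $\tilde\Xi_a$ with the augmented chain complex of the simplex on $J_a$ (hence contractible by coning at the vertex $\iota(a)$), and then propagate the contracting homotopy to the full $\homm$-complex via a bar-type formula. This is the standard technique for statements of this shape and is presumably close to Shoikhet's own argument.

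Two small points are worth tightening. First, your prose and your displayed formula disagree about which side of position $k$ receives the $\sigma\Psi$-replacement: the formula has the lower-index positions $1,\dots,k-1$ replaced, and that is the convention under which the telescope collapses to $\id - \sigma\Psi$. Second, the tail in your displayed formula reads $\xi^{\iota(a_{k-1})}_{a_{k-1}}\cdots\xi^{\iota(a_1)}_{a_1}$, but $\sigma\Psi(\xi^{I_j}_{a_j})$ equals $\xi^{\iota(a_j)}_{a_j}$ only when $|I_j|=1$ and vanishes when $|I_j|>1$; the term should therefore be understood to be zero whenever some $I_j$ with $j<k$ is not a singleton. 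Neither of these affects the correctness of the argument, only its presentation.
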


Observe that $\C/(\A_1, \dots, \A_n)$ and $\C/\bigcup_{i =
	1}^n \A_i$ are therefore isomorphic in $\HoDGCat$. It follows 
from Theorem~\ref{theorem-main-properties-of-drinfeld-quotients} that
the former enjoys the same unique lifting property as the latter
with respect to quasifunctors out of $\C$ which kill $\bigcup_{i =
	1}^n \A_i$ on the homotopy level.

At the same time, the next example shows that the \refined Drinfeld quotient serves as a free
resolution of the tensor product of ordinary Drinfeld quotients.
\begin{Example}
	\label{exmpl-refined-quotient-as-a-resolution-of-the-tensor-product-of-n-quotients-case-n=2}
	Let $\C_1$ and $\C_2$ be \dg categories and $\A_i \subset \C_i$
	be full subcategories. Let 
	\begin{equation*}
		\beta_{\Dr}\colon \C_1 \otimes \C_2 / (\A_1 \otimes \C_2, \C_1 \otimes \A_2)
		\rightarrow \left(\C_1 / \A_1\right) \otimes \left(\C_2 / \A_2\right)
	\end{equation*}
	be the functor defined as the identity on objects and 
	the morphisms of $\C_1 \otimes \C_2$ and as follows
	on the contracting homotopies:
	\begin{align*}
		\beta_{\Dr}(\xi^1_{a_1 \otimes c_2} )  &= \xi_{a_1} \otimes \id_{c_2}, \\
		\beta_{\Dr}(\xi^2_{c_1 \otimes a_2} )  &= \id_{c_1} \otimes \xi_{a_2}, \\ 
		\beta_{\Dr}(\xi^{12}_{a_1 \otimes a_2} )  &= \xi_{a_1} \otimes \xi_{a_2}. 
	\end{align*}
	It can be readily verified that $\beta_{\Dr}$ is a quasi-equivalence of
	\dg categories. 
\end{Example}

The above example can be formalised as follows: 

\begin{Definition}[{\cite[Section~4.4]{Shoikhet-DifferentialGradedCategoriesAndDeligneConjecture}}]
	Let $\PDGCat$ be the following category:
	\begin{itemize}
		\item Its objects are pairs $(\C;\, \A_1, \dots, \A_n)$
		where $\C$ is a \dg category and $\A_1, \dots, \A_n$ is an ordered
		$n$-tuple of full subcategories of $\C$.
		\item A morphism
		\[ (\C;\, \A_1, \dots, \A_n) \rightarrow (\D;\,\B_1, \dots, \B_m) \]
		is a pair $(F,f)$ where $f\colon \{1,\dots,n\}
		\rightarrow \{1,\dots, m\}$ is a map of sets and $F\colon\C \rightarrow \D$
		is a \dg functor such that $F(\A_i) \subset \B_{f(j)}$. 
	\end{itemize}
	We define a monoidal structure on $\PDGCat$
	by setting 
	\[ (\C;\, \A_1, \dots, \A_n) \otimes (\D;\, \B_1, \dots, \B_m) \]
	to be
	\[
	(\C \otimes \D;\, \A_1 \otimes \D, \dots, \A_n \otimes\D, \C \otimes
	\B_1, \dots, \C \otimes \B_m)
	\]
	and the unit to be $(\kk;\emptyset)$. 
\end{Definition}

\begin{Theorem}[{\cite[Section~4.4]{Shoikhet-DifferentialGradedCategoriesAndDeligneConjecture}}]
	The \refined Drinfeld quotient defines a functor:
	\begin{equation*}
		Dr\colon \PDGCat \rightarrow \DGCatone,
	\end{equation*}
	which has a natural homotopy monoidal structure given by 
	quasi-equivalences
	\begin{equation*}
		\begin{multlined}
		\beta\colon 
		Dr\bigl( (\C;\,\A_1,\dots, \A_n) \otimes (\D;\,\B_1,\dots,\B_m) \bigr)
	\\	\rightarrow 
		Dr\left(\C;\,\A_1,\dots, \A_n\right) \otimes Dr\left(\D;\,\B_1,\dots,\B_m\right). 
		\end{multlined}
	\end{equation*}
\end{Theorem}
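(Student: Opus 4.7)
The plan is to split the proof into the functor part, the construction of $\beta$, the quasi-equivalence claim (the main obstacle), and the colax-monoidal coherences. Throughout, the key observation is that the family of generators $\{\xi^{i_1\dots i_k}_a\}$ behaves like a Koszul/exterior complex indexed by subsets of $\{1,\dots,n\}$, which makes most verifications formal.

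First, to define $Dr$ on a morphism $(F,f)\colon(\C;\A_1,\dots,\A_n)\to(\D;\B_1,\dots,\B_m)$, I would send morphisms of $\C$ to their images under $F$, and send a generator $\xi^{i_1\dots i_k}_a$ to $\operatorname{sgn}(\sigma)\,\xi^{f(i_{\sigma(1)})\dots f(i_{\sigma(k)})}_{F(a)}$ whenever $f$ is injective on $\{i_1,\dots,i_k\}$ (with $\sigma$ the unique permutation sorting the images), and to $0$ otherwise. Compatibility with the differential reduces to two cases: on the injective locus it is immediate from the formula for $d\xi^{i_1\dots i_k}_a$, while on the non-injective locus all terms of $d$ vanish except for the two obtained by deleting the two colliding indices, whose signs cancel by the standard Koszul sign computation. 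Functoriality (preservation of identities and composition) is then automatic from the definition.

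Next I would define the colax-monoidal structure $\beta$, generalising Example~\ref{exmpl-refined-quotient-as-a-resolution-of-the-tensor-product-of-n-quotients-case-n=2}. A generator of $Dr\bigl((\C;\underline\A)\otimes(\D;\underline\B)\bigr)$ has the form $\xi^{I\sqcup J}_{c\otimes d}$ where $I\subseteq\{1,\dots,n\}$ and $J\subseteq\{1,\dots,m\}$ (with the offset $n$ applied to $J$), and where $c\in\bigcap_{i\in I}\A_i$, $d\in\bigcap_{j\in J}\B_j$. I would send this generator to $\xi^I_c\otimes\xi^J_d$ in $Dr(\C;\underline\A)\otimes Dr(\D;\underline\B)$ with the standard Koszul sign, extending $F\otimes G$ on old morphisms. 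Compatibility with $d$ is again a Koszul-type verification: the formula for $d\xi^{I\sqcup J}$ splits over index removals, and matches the Leibniz rule applied to $\xi^I\otimes\xi^J$.

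The main obstacle is showing that $\beta$ is a quasi-equivalence. The approach I would take is to filter $Dr\bigl((\C;\underline\A)\otimes(\D;\underline\B)\bigr)$ by the cardinality of the index set of the $\xi$-generators appearing, inducing a spectral sequence; the associated graded computes a Koszul-like double complex whose cohomology admits an explicit Künneth decomposition matching the target. Alternatively and perhaps more concretely, one can proceed by induction on $n+m$, reducing to the base case $n=m=1$ which is Example~\ref{exmpl-refined-quotient-as-a-resolution-of-the-tensor-product-of-n-quotients-case-n=2}; the inductive step uses that both sides sit in homotopy pushout squares obtained by adjoining one further family of contracting homotopies, and $\beta$ is a map of such squares which is a quasi-equivalence on each corner by the inductive hypothesis. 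Either route ultimately rests on the fact that the formal addition of a degree $-1$ contracting homotopy $\xi_a$ replaces $a$ by a contractible object, so that iterated refined quotients are resolutions of ordinary ones.

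Finally, naturality of $\beta$ in morphisms of $\PDGCat$ is immediate from its definition on generators, since both the source and target of $\beta$ are functorial in $(F,f)$ through the same Koszul formula. The colax-monoidal coherence axioms, namely the pentagon and unit triangles, reduce on generators $\xi^{I\sqcup J\sqcup K}$ to associativity and unitality of the exterior product of multi-indices, which is formal. This completes the proof plan.
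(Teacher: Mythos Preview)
The paper does not give its own proof of this theorem: it is stated with a citation to \cite[Section~4.4]{Shoikhet-DifferentialGradedCategoriesAndDeligneConjecture} and no proof environment follows. So there is nothing in the paper to compare your proposal against; the result is imported wholesale from Shoikhet.

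That said, your outline is a reasonable reconstruction of what such a proof should look like. The definition of $Dr$ on morphisms via the sign-twisted map on generators, with vanishing when $f$ collapses indices, is the natural choice and the Koszul cancellation you describe for the differential is correct. Your construction of $\beta$ on generators $\xi^{I\sqcup J}_{c\otimes d}\mapsto \pm\,\xi^I_c\otimes\xi^J_d$ is exactly the generalisation of Example~\ref{exmpl-refined-quotient-as-a-resolution-of-the-tensor-product-of-n-quotients-case-n=2} one would want. For the quasi-equivalence claim, the inductive approach via homotopy pushouts is probably the cleaner of your two suggestions, though you should be a bit more careful about what ``homotopy pushout'' means here in $\DGCatone$ and why adjoining one further family of $\xi$'s presents both sides as such; the filtration/spectral sequence route would require bounding the filtration or otherwise controlling convergence. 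If you want to check your argument against the source, consult Shoikhet's Lemma~4.3 and the surrounding discussion in Section~4.4.
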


The case considered in Example
\ref{exmpl-refined-quotient-as-a-resolution-of-the-tensor-product-of-n-quotients-case-n=2}
follows by observing that in $\PDGCat$ we have 
\[
(\C;\,\A) \otimes (\D;\,\B) =
(\C \otimes \D;\, \A \otimes \D, \C \otimes \B).
\]

We now return to the problem of constructing the Drinfeld quotient 
of a \dg bicategory. Let $\biC$ be a \dg bicategory and $\biA$
a collection of its $1$-morphisms. The $1$-composition functor 
\begin{equation*}
	\circ_1\colon \biC(b,c)/\biI_\biA \otimes \biC(a,b)/\biI_\biA
	\rightarrow \biC(a,c)/\biI_\biA,
\end{equation*} 
which does not exist in $\DGCat$, can now be defined in $\HoDGCat$ as
follows. The homotopy monoidal structure of the \refined
Drinfeld quotient functor gives us a quasi-equivalence
\begin{equation*}
	\beta_{\Dr}\colon 
	\biC(b,c) \otimes \biC(a,b) / \left(\biI_\biA \otimes \biC, \biC \otimes
	\biI_\biA\right)
	\longrightarrow
	\biC(b,c)/\biI_\biA \otimes \biC(a,b)/\biI_\biA.
\end{equation*} 
On the other hand, since $\biI_\biA$ is a two-sided ideal, the
original $1$-composition functor 
\[ \circ^{\old}_1\colon \biC(b,c) \otimes \biC(a,b) \rightarrow \biC(a,c), \]
takes $\biI_\biA(b,c) \otimes \biC(a,b)$ and $\biC(b,c) \otimes
\biI_\biA(a,b)$ to $\biI_\biA(a,c)$, and thus uniquely extends in 
$\HoDGCat$ to a quasi-functor 
\[
\circ^{\old}_1\colon 
\biC(b,c) \otimes \biC(a,b) / \left(\biI_\biA \otimes \biC, \biC \otimes \biI_\biA\right)
\rightarrow \biC(a,c)/\biI_\biA.
\]
We can therefore define $\circ_1$ 
in $\HoDGCat$ as the composition 
\[
\biC(b,c)/\biI_\biA \otimes \biC(a,b)/\biI_\biA
\xrightarrow{\beta_{\Dr}^{-1}} 
\biC(b,c) \otimes \biC(a,b) / \left(\biI_\biA \otimes \biC, \biC \otimes \biI_\biA\right)
\xrightarrow{\circ^{\old}_1}
\biC(a,c)/\biI. 
\]

\begin{Theorem}
	\label{theorem-construction-of-the-drinfeld-quotient-of-a-bicategory}
	Let $\biC$ be a \dg bicategory, or more generally a $\HoDGCat$-enriched
	bicategory. Let $\biA$ be a collection of $1$-morphisms
	in $\biC$, and let $\biI_\biA$ be the two-sided ideal generated by $\biA$. 
	Then the following data defines a $\HoDGCat$-enriched bicategory: 
	\begin{itemize}
		\item The same set of objects as $\biC$. 
		\item For any $a,b \in \biC$, the \dg category of $1$-morphisms from 
		$a$ to $b$ is $\biC(a,b)/\biI_\biA$. 
		\item For any $a,b,c \in \biC$, the $1$-composition functor
		\[
		\begin{multlined}
		\circ_1 \colon \biC(b,c)/\biI_\biA \otimes \biC(a,b)/\biI_\biA
		\xrightarrow{\beta_{\Dr}^{-1}}
		\biC(b,c) \otimes \biC(a,b) / \left(\biI_\biA \otimes \biC, \biC \otimes \biI_\biA\right)
		\\ \xrightarrow{\circ^{\old}_1}
		\biC(a,c)/ \biI_\biA,
		\end{multlined}
		\]
		\item The associator and unitor $2$-isomorphisms in 
		$\HoDGCat$ which are similarly obtained from the  
		associator and unitor $2$-isomorphisms of $\biC$ 
		via the precomposition with $\beta^{-1}_{\Dr}$. 
		
		For example, for any $a,b,c,d \in \biC$, 
		the quasi-functors 
		$\circ_1(\circ_1 \otimes \id)$ and  
		$\circ_1(\id \otimes \circ_1):$
		\[ 
		\biC(c,d)/\biI_{\biA} \otimes \biC(b,c)/\biI_{\biA} \otimes \biC(a,b)/\biI_{\biA}
		\rightarrow \biC(a,d)/\biI_{\biA}
		\]
		are the composition of the quasi-functor
		$\beta_{\Dr}^{-1}:$
		\[
		\begin{gathered}
			\biC(c,d)/\biI_{\biA} \otimes \biC(b,c)/\biI_{\biA} \otimes \biC(a,b)/\biI_{\biA} 
			\rightarrow \\
			\biC(c,d)\otimes \biC(b,c) \otimes \biC(a,b)/
			\left( 
			\biI_\biA\otimes \biC \otimes \biC,
			\biC\otimes \biI_\biA \otimes \biC,
			\biC\otimes \biC \otimes \biI_\biA
			\right)
		\end{gathered}
		\]
		with the quasi-functors 
		$\circ^{\old}_1(\circ^{\old}_1 \otimes \id)$ and 
		$\circ^{\old}_1(\id \otimes \circ^{\old}_1)$:
		\[
		\begin{multlined}
		\biC(c,d)\otimes \biC(b,c) \otimes \biC(a,b)/
		\left( 
		\biI_\biA\otimes \biC \otimes \biC,
		\biC\otimes \biI_\biA \otimes \biC,
		\biC\otimes \biC \otimes \biI_\biA
		\right)
		\\ \rightarrow 
		\biC(a,d)/\biI_{\biA}.
		\end{multlined}
		\]
		We thus define the new associator by precomposing the old
		associator with $\beta_{\Dr}^{-1}$. 
	\end{itemize}
\end{Theorem}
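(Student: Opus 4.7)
The plan is to verify the data listed in the statement satisfies Definition~\ref{def:enrichedbicategory} for a $\HoDGCat$-enriched bicategory, using the $\HoDGCat$-valued functoriality and colax-monoidality of the refined Drinfeld quotient and reducing all coherence checks to the corresponding axioms already satisfied by $\biC$.

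First I would check that the $1$-composition functor is well-defined as a morphism in $\HoDGCat$. Since $\beta_{Dr}$ is a quasi-equivalence by Shoikhet's theorem, it has a formal inverse $\beta_{Dr}^{-1}$ in $\HoDGCat$. Because $\biI_\biA$ is a two-sided ideal, the original $1$-composition $\circ_1^{\text{old}}$ sends $\biI_\biA(b,c) \otimes \biC(a,b)$ and $\biC(b,c) \otimes \biI_\biA(a,b)$ into $\biI_\biA(a,c)$; by the universal property (Theorem~\ref{theorem-main-properties-of-drinfeld-quotients}\ref{item-universal-property-with-respect-to-morphisms-killing-A}) it therefore descends uniquely in $\HoDGCat$ to the quotient by $(\biI_\biA \otimes \biC,\, \biC \otimes \biI_\biA)$, giving the displayed quasi-functor. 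The same argument produces the associator and unitor candidates as $2$-isomorphisms in $\HoDGCat$: the associator $\alpha$ and the unitors $\lambda,\rho$ of $\biC$ are natural transformations between \dg functors which preserve the ideal $\biI_\biA$, so they descend to the refined quotient, and we conjugate by $\beta_{Dr}^{-1}$ to land between the intended compositions.

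Next I would verify the pentagon and triangle axioms. The strategy is to rewrite each of the two routes around the coherence diagram as a composition involving (i) the descent of the corresponding $\biC$-coherence $2$-isomorphism, and (ii) a chain of instances of $\beta_{Dr}^{-1}$. The key technical input is that the colax-monoidal structure $\beta$ on $Dr$ is coherent: for any triple $(\C_1;\,\A_1),(\C_2;\,\A_2),(\C_3;\,\A_3) \in \PDGCat$, the two natural quasi-equivalences
\[
  Dr\bigl((\C_1;\,\A_1) \otimes (\C_2;\,\A_2) \otimes (\C_3;\,\A_3)\bigr) \longrightarrow Dr(\C_1;\,\A_1) \otimes Dr(\C_2;\,\A_2) \otimes Dr(\C_3;\,\A_3)
\]
obtained by bracketing either to the left or to the right agree. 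Applied to our setting with $\C_i = \biC(a_{i-1},a_i)$ and $\A_i = \biI_\biA(a_{i-1},a_i)$, this coherence says that the two natural ways of expressing the triple quotient as a bi-quotient agree in $\HoDGCat$. Together with the pentagon for $\biC$ (which holds strictly on the numerators), this produces the pentagon for the quotient. The triangle axiom is handled identically, using the colax-monoidal compatibility of $\beta$ with the unit object $(\kk;\emptyset)$ and the triangle axiom in $\biC$.

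The hard part is the bookkeeping: at each vertex of the pentagon we have different bracketings of the threefold tensor product and corresponding different placements of the $\beta_{Dr}^{-1}$'s, and one must check that conjugation by the web of $\beta_{Dr}^{-1}$'s turns the pentagon for $\biC$ into the pentagon for the quotient. The clean way to organise this is to observe that the construction is functorial in $\biC$ for the class of $2$-functors preserving $\biI_\biA$; in particular, the assignment $\biC \mapsto \biC/\biI_\biA$ upgrades to a strong $2$-functor on the sub-$3$-category of $\HoDGCat$-enriched bicategories with ideals preserved, and coherence of this upgrade follows from coherence of $\beta$. Once this is set up, the pentagon and triangle for the quotient are merely the images of those for $\biC$, completing the proof.
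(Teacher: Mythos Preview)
Your proposal is correct and follows exactly the approach the paper intends: the paper's own proof consists entirely of the sentence ``See the proof of \cite[Theorem 5.4]{Shoikhet-DifferentialGradedCategoriesAndDeligneConjecture},'' so you have in fact supplied more detail than the authors do, correctly identifying that the coherence axioms reduce to those of $\biC$ via the coassociativity of the colax-monoidal structure $\beta$ on the refined Drinfeld quotient functor.
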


\begin{proof}
	
	Shoikhet began his proof of \cite[Theorem
	5.4]{Shoikhet-DifferentialGradedCategoriesAndDeligneConjecture} by
	constructing a Leinster monoid $F_\A$ in $\PDGCat$ out of a certain monoidal 
	DG category $\A_0$ and the two-sided ideal $\I_0$ of acyclic objects in 
	it. His construction works exactly the same for an arbitrary 
	monoidal \dg category $\A$ and an arbitrary two-sided ideal $\I$ in $\A$. 
	
	In general, a Leinster monoid\index{Leistner monoid} $L_\bullet$ 
	is a simplicial structure, generalising the 
	notion of an algebra in a monoidal category\index{monoidal
category}, cf.~\cite[Defn.~2.1]{Shoikhet-DifferentialGradedCategoriesAndDeligneConjecture}.
	It has colax maps $\beta_{m,n}\colon 
	L_{m+n} \rightarrow L_m \otimes L_n$ which 
	are weak equivalences, and thus each $L_n$ is weakly equivalent to 
	$(L_1)^{\otimes n}$. The non-extremal face maps 
	$L_n \rightarrow L_{n-1}$ should be thought of as analogues of 
	applying the algebra operation to subsequent 
	pairs of $L_1$'s in $(L_1)^{\otimes n}$, and the degeneracy maps\index{degeneracy maps}
	$L_{n} \rightarrow L_{n+1}$ as applying the algebra unit in between 
	two subsequent $L_1$'s. It follows that if the colax maps are not just
	weak equivalences, but isomorphisms, we have a unital algebra
	structure on $L_1$ whose algebra operation is 
	$$ L_1 \otimes L_1 \xrightarrow{\beta^{-1}_{1,1}} L_2
	\xrightarrow{\text{the unique non-extremal face}} L_1 $$
	and whose unit is the degeneracy map $\mathbb{1} \simeq L_0
	\rightarrow L_1$.
	
	The colax maps of the Leinster monoid $F_\A$ in $\PDGCat$ constructed
	by Shoikhet are the identity maps and $(F_\A)_1 = (\A;\I)$. 
	Applying the refined Drinfeld quotient functor, we obtain Leinster
	monoid $\Dr(F_\A)$ in $\DGCatone$ whose colax maps are
	the quasi-equivalences $\beta_{\Dr}$ and $(\Dr(F_\A))_1 = \A / \I$. 
	We then view it as a Leinster monoid in $\HoDGCatone$. There its
	colax maps become invertible, and we obtain the induced structure 
	of unital algebra on $\A / \I$ in $\HoDGCatone$. This structure 
	is the one claimed in the assertion of this Theorem. Thus we have
	proved the Theorem for an arbitrary monoidal \dg category, i.e. a DG
	bicategory with a single object. The proof for a general DG
	bicategory works identically, but with a more cumbersome notation. 
\end{proof}

\begin{Definition}\label{def:monoidal-Drinfeld-quotient}
	Let $\biC$ be a $\HoDGCat$-enriched bicategory. 
	Let $\biA$ be a collection of $1$-morphisms
	in $\biC$ and $\biI_\biA$ be the two-sided ideal generated by $\biA$. 
	The \emph{monoidal Drinfeld quotient $\biC/\biI_\biA$}\index{monoidal Drinfeld quotient} is the
	$\HoDGCat$-enriched bicategory constructed in Theorem
	\ref{theorem-construction-of-the-drinfeld-quotient-of-a-bicategory}. 
\end{Definition}

We have a natural functor $\biC \rightarrow \biC/\biI_\biA$ which is
a strict $2$-functorial embedding:

\begin{Definition}
	Let $\biC$ be a $\HoDGCat$-enriched bicategory.
	Let $\biA$ be a collection of $1$-morphisms in $\biC$, and let $\biI_\biA$ be 
	the two-sided ideal generated by $\biA$. Define a strict $2$-functor
	\[ \iota\colon \biC \hookrightarrow \biC/\biI_\biA, \]
	to be the identity on the objects. On $1$-morphisms, for any 
	$a,b \in \biC$ define 
	\[ \iota\colon \biC(a,b) \rightarrow \biC/\biI_\biA(a,b), \]
	to be the natural inclusion of the category into its Drinfeld quotient
	\[ \biC(a,b) \hookrightarrow \biC(a,b)/\biI_\biA(a,b). \]
\end{Definition}

We can now formulate an analogue of 
Theorem \ref{theorem-main-properties-of-drinfeld-quotients}
summarising the main properties of our monoidal Drinfeld quotient:

\begin{Theorem}
	\label{theorem-the-universal-properties-of-monoidal-drinfeld-quotient}
	Let $\biC$ be a $\HoDGCat$-enriched bicategory.
	Let $\biA$ be a collection of $1$-morphisms in $\biC$, and let $\biI_\biA$ be 
	the two-sided ideal generated by $\biA$. Then:
	\begin{enumerate}
		\item \label{item-as-enhanced-triag-cats-$1$-morphism-categories-are-verdier-quotients}
		For any $a,b \in \biC$, the following natural functor is an exact
		equivalence
		\[ 
		D_c\bigl(\biC/\biI_\biA(a,b)\bigr) \rightarrow
		D_c\bigl(\biC(a,b)\bigr)\Big/D_c\bigl(\biI_\biA(a,b)\bigr).
		\]
		\item \label{item-bicat-unique-lifting-property-for-functors-killing-the-ideal}
		Let $\biD$ be another $\HoDGCat$-enriched bicategory and let 
		$F\colon \biC \rightarrow \biD$ be a $2$-functor. If $F(\biI_\biA)$ is
		null-homotopic in the $1$-morphism categories of $\biD$, then there exists
		a unique lift of $F$ to a $2$-functor $F'\colon \biC/\biI_\biA \rightarrow \biD$:
		\begin{equation*}
			\begin{tikzcd}[column sep = 2cm]
				\biC 
				\ar{r}{F}
				\ar{dr}[']{\iota} 
				& 
				\biD
				\\
				&
				\biC/\biI_\biA.
				\ar[dashed]{u}[']{\exists!\;\; F'}
			\end{tikzcd}
		\end{equation*}
	\end{enumerate}
\end{Theorem}

\begin{proof}
	\ref{item-as-enhanced-triag-cats-$1$-morphism-categories-are-verdier-quotients}:
	This is immediate from the corresponding result for ordinary
	Drinfeld quotients. 
	
	\ref{item-bicat-unique-lifting-property-for-functors-killing-the-ideal}:
	This is due to the $2$-categorical unique lifting property of ordinary
	Drinfeld quotients (Theorem \ref{theorem-main-properties-of-drinfeld-quotients}), as follows:
	
	The data of a $2$-functor consists of a map of object sets, a collection 
	of functors between $1$-morphisms categories and composition/unit
	coherence morphisms\index{coherence morphisms}. Since the embedding $\iota\colon \biC \rightarrow
	\biC/\biI_\biA$ is the identity on object sets, the condition 
	$F = F' \circ \iota$ completely determines the action of $F'$ on
	objects. Next, let $a,b \in \biC$ be any pair of objects. Since
	\[ \iota_{a,b}\colon \biC(a,b) \hookrightarrow \biC(a,b)/\biI_\biA(a,b) \]
	is the canonical embedding of a category into its Drinfeld quotient, 
	and since, by assumption, $\Hzero(F_{a,b})$ kills $\biI_\biA(a,b)$,
	the quasifunctor
	\[ F_{a,b}\colon \biC(a,b) \rightarrow \biD(Fa,Fb), \]
	lifts to a unique quasifunctor 
	\[ F'_{a,b}\colon \biC(a,b)/\biI_\biA(a,b) \rightarrow \biD(Fa,Fb), \]
	such that $F'_{a,b} \circ \iota_{a,b} = F_{a,b}$. 
	
	It remains to show that composition and unit coherence morphisms exist
	and are unique. This is due to the lifting property in 
	Theorem \ref{theorem-main-properties-of-drinfeld-quotients} being
	$2$-categorical in $\HoDGCat$. We treat the composition coherence
	morphism below, the proof for unit coherence is similar. 
	
	Let $a,b,c \in \biC$ be objects. Consider the diagram
	\begin{equation*}
		\begin{tikzcd}[column sep = 1.75cm, row sep=1cm]
			\biC(b,c) \otimes \biC(a,b) 
			\ar{d}{\iota_{b,c}\otimes\iota_{a,b}}
			\ar{r}{\mu_\biC}
			&
			\biC(a,c)
			\ar{d}{\iota_{a,c}}
			\\
			\biC(b,c)/\biI_\biA(b,c) \otimes \biC(a,b)/\biI_\biA(a,b) 
			\ar{d}{F'_{b,c}\otimes F'_{a,b}}
			\ar{r}{\mu_{\biC/\biI_\biA}}
			&
			\biC(a,c)/\biI_\biA(a,c) 
			\ar{d}{F'_{a,c}}
			\\
			\biD(Fb,Fc) \otimes \biD(Fa,Fb) 
			\ar{r}{\mu_{\biD}}
			&
			\biD(Fa,Fc).
		\end{tikzcd}
	\end{equation*}
	It follows from our definition of $\mu_{\biC/\biI_\biA}$ that the top
	square commutes on the nose. Indeed, this can be takes as an
	alternative definition of $\mu_{\biC/\biI_\biA}$ since 
	$\biC(b,c)/\biI_\biA(b,c) \otimes \biC(a,b)/\biI_\biA(a,b)$ is quasi-equivalent
	to $\biC(b,c) \otimes \biC(a,b) / (\biI_\biA(b,c) \otimes \biC(b,c),
	\biC(a,b) \otimes \biI_\biA(a,b))$ and thus enjoys its unique lifting
	property with respect to the quasi-functors out of $\biC(b,c) \otimes
	\biC(a,b)$. 
	
	On the other hand, by our definition of $F'_{a,b}$ and $F'_{b,c}$
	it follows that the outer perimeter of the above diagram composes to 
	\begin{equation*}
		\begin{tikzcd}
			\biC(b,c) \otimes \biC(a,b) 
			\ar{d}{F_{b,c}\otimes\F_{a,b}}
			\ar{r}{\mu_\biC}
			&
			\biC(a,c)
			\ar{d}{F_{a,c}}
			\\
			\biD(Fb,Fc) \otimes \biD(Fa,Fb) 
			\ar{r}{\mu_{\biD}}
			&
			\biD(Fa,Fc).
		\end{tikzcd}
	\end{equation*}
	The composition coherence morphism $\phi_F$ is a choice of a $2$-morphism 
	in $\HoDGCat$ which makes this diagram commute. Since the lifting
	property of Drinfeld quotients is $2$-categorical, there
	exists a unique $2$-morphism $\phi_F'$ which makes the bottom square 
	in the first diagram commute, and composes with $\iota_{b,c} \otimes
	\iota_{a,b}$ to give $\phi_F$. 
\end{proof}

\section{Homotopy Serre functors}\label{subsec:dg-homotopy-serre}

Let $\A$ be a pretriangulated \dg category. A \emph{homotopy Serre functor\index{homotopy Serre functor}} on $\A$ is 
a quasi-autoequivalence $S$ of $\A$ equipped with a closed degree zero $\AbimA$-bimodule 
quasi-isomorphism 
\[
\eta\colon \A \rightarrow \left(\leftidx{_S}{\A}\right)^*,
\]
such that $S$ and $\eta$ induce a Serre functor on $\Hzero(\A)$ in the sense of 
Section~\ref{subsec:serre}.  Here $(-)^*$ denotes the dualisation over $\kk$ and 
$_S$ denotes the twist of the left $\A$-action by $S$. Explicitly, the
data of $\eta$ can be thought of as a collection of quasi-isomorphisms
natural in $a,b \in \A$:
\[
\eta_{a,b}\colon \homm_\A(a,b) \simeq \homm_\A(b,Sa)^*,
\]

Since the $\kk$-dualisation $(-)^*$ commutes with taking cohomologies, 
the dual of a quasi-isomorphism is a quasi-isomorphism. 
It also follows that the natural map 
\[
\leftidx{_S}{\A} \rightarrow \left(\leftidx{_S}{\A}\right)^{**}. 
\]
is a quasi-isomorphism if $\A$ is proper. The composition 
\[
\leftidx{_S}{\A} \rightarrow \leftidx{_S}{\A}^{**} \xrightarrow{\eta^*}
\A^*, 
\]
is then also a quasi-isomorphism. By abuse of notation, we also 
denote it by $\eta^*$. 

\begin{Lemma}
	Let $\A$ be a smooth and proper \dg category. Then $\hperfA$ 
	admits a homotopy Serre functor given by $S = (-) \otimes_\A \A^*$.  
\end{Lemma}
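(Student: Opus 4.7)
The plan is to lift Shklyarov's classical Serre functor on $D_c(\A) = H^0(\hperfA)$ to a homotopy Serre functor at the dg level. I would need to check three things in turn: that $S = (-) \otimes_\A \A^*$ restricts to a quasi-autoequivalence of $\hperfA$; that the bimodule morphism $\eta\colon \A \to ({}_S\A)^*$ exists as a strict closed degree zero map; and that $\eta$ is a quasi-isomorphism whose induced map on homotopy categories coincides with Shklyarov's Serre duality on $D_c(\A)$. For the first task, smoothness of $\A$ gives that the diagonal bimodule $\A$ is perfect as an object of $D(\A \otimes \Aopp)$, and hence so is its $\kk$-dual $\A^*$. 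Properness guarantees that each fibre $\homm_\A(b,a)$ is a perfect complex of $\kk$-modules, so the fibres of $\A^*$ are as well. Together, after replacing $\A^*$ with an $h$-projective resolution, tensoring with it preserves $h$-perfection and lifts to a functor on $\hperfA$; the resulting functor is a quasi-autoequivalence because $\A^*$ is invertible in $D(\A \otimes \Aopp)$ in the smooth proper case.

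For the second step, I would identify ${}_S\A$ with the bimodule $\A^*$ naturally encoding the endofunctor $S$, so that the target $({}_S\A)^*$ becomes the double dual $\A^{**}$. The canonical double-duality bimodule morphism $\A \to \A^{**}$ is strict, closed, and of degree zero, and I would take this to be $\eta$. On fibres, $\eta$ realises the trace pairing $\homm_\A(a,b) \otimes \homm_\A(b, Sa) \to \kk$, which may be described Hochschild-theoretically: compose $a \to b \to Sa = a \otimes_\A \A^*$ to land in $\homm_\A(a, a \otimes_\A \A^*)$, then apply the canonical evaluation $\A \otimes_{\A \otimes \Aopp} \A^* \to \kk$ assembled from the fibrewise pairings $\homm_\A(b,a) \otimes \homm_\A(b,a)^* \to \kk$.

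For the third step, fibrewise $\eta$ becomes the double-duality map on a perfect complex of $\kk$-modules (by properness), which is a quasi-isomorphism; since bimodule quasi-isomorphisms can be checked fibrewise, $\eta$ itself is a quasi-isomorphism. Passing to $H^0$ gives a natural isomorphism of bifunctors on $D_c(\A)$ which, by essential uniqueness of Serre functors up to isomorphism \cite{BondalOrlov:2001:ReconstructionOfAVariety}, must coincide with Shklyarov's Serre duality~\cite{Shklyarov-OnSerreDualityForCompactHomologicallySmoothDGAlgebras}. Hence $(S, \eta)$ is a homotopy Serre functor on $\hperfA$. The main obstacle is step two: realising the trace pairing as a strict, rather than merely homotopy, bimodule morphism is precisely what distinguishes the dg statement from the previously known $H^0$-level Serre duality, and it is this strictness (provided by the double-duality perspective on bimodules) that is needed to obtain a homotopy Serre functor in the sense of the paper.
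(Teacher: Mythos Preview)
Your overall plan is sound and matches the paper's: cite Shklyarov for the $H^0$-statement, argue that $S=(-)\otimes_\A\A^*$ is a quasi-autoequivalence of $\hperfA$, and then construct the bimodule quasi-isomorphism $\eta$ at the dg level. Where your argument parts ways from the paper, and where it develops a gap, is in your step two.

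Your proposed identification ``${}_S\A=\A^*$'' is literally correct only at the level of the \emph{small} category $\A$: for $a,b\in\A$ one has $({}_S\A)(a,b)=\homm(b,Sa)=(h^r(a)\otimes_\A\A^*)_b={}_a(\A^*)_b$, a tautology. But the definition of a homotopy Serre functor lives over the pretriangulated category $\hperfA$, and there the twisted diagonal has fibre $\homm_\A(Q,\,P\otimes_\A\A^*)$ for general $P,Q\in\hperfA$. This is \emph{not} on the nose the fibre of any dual of the diagonal of $\hperfA$; asserting that it is amounts to already knowing $\homm(Q,SP)\simeq\homm(P,Q)^*$, which is the content of the lemma. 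So your double-duality map $\A\to\A^{**}$ is the correct thing on representables, but you still owe an argument that it extends to a strict closed degree-zero $\hperfA$-bimodule map and remains a quasi-isomorphism there. Your Hochschild description of the trace is a reasonable start on this, but as written it does not yet produce the bimodule map over $\hperfA$.

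The paper fills exactly this gap by a direct bifunctorial construction: for $P\in\hperfA$ and any $Q$ it writes down the composite
\[
\homm_\A(Q,\,P\otimes_\A\A^*)\;\to\;\homm_\A\bigl(Q,\,\homm_\A(P,\A)^*\bigr)\;\cong\;\bigl(Q\otimes_\A\homm_\A(P,\A)\bigr)^*\;\to\;\homm_\A(P,Q)^*,
\]
where the first and last maps are the natural evaluation maps that are isomorphisms for representable $P$ and hence homotopy equivalences for $P\in\hperfA$, and the middle isomorphism is tensor--hom adjunction. This avoids any appeal to a non-tautological identification of ${}_S(\hperfA)$ with a dual bimodule, and it is what makes the argument go through at the $\hperfA$ level. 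If you want to pursue your double-duality viewpoint, these two evaluation maps are precisely what you must supply to promote your small-$\A$ identity to a strict $\hperfA$-bimodule quasi-isomorphism.
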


\begin{proof}
	It was shown in \cite{Shklyarov-OnSerreDualityForCompactHomologicallySmoothDGAlgebras}
	that $S$ descends to a Serre functor on $\Hzero(\hperf \A) \simeq \catDc(\A)$. 
	It remains to demonstrate that there is a quasi-isomorphism 
	$\eta\colon \hperfA \rightarrow \left(\leftidx{_S}{\hperfA}\right)^*$.
	Since Serre functors are unique, such $\eta$ would then necessarily be
	a \dg lift of the bifunctorial isomorphisms $\eta$ of the Serre functor
	on $\catDc(\A)$.
	
	We prove a more general statement. Let $P \in \hperfA$ and $Q$ be any \dg $\A$-module. 
	Consider the natural morphism functorial in $P$ 
	\[ P \otimes_\A \homm_\kk(\A,\kk) \longrightarrow \homm_\kk\bigl(\homm_\A(P,\A),\,\kk\bigr). \]
	It is an isomorphism on representable $P$ and hence a homotopy 
	equivalence on $P \in \hperfA$. We thus obtain a bifunctorial homotopy equivalence
	\[
	\beta\colon \homm_\A \bigl(Q,\, P \otimes_\A \homm_\kk(\A,\kk)\bigr)
	\longrightarrow \homm_\A\bigl(Q,\, \homm_\kk(\homm_\A(P,\A),\,\kk)\bigr).
	\]
	By Tensor-Hom adjunction, the RHS is canonically isomorphic to 
	\[ \homm_\kk \bigl(Q \otimes_\A \homm_\A(P,\A),\, \kk \bigr), \]
	and since $P \in \hperfA$, the natural morphism 
	$Q \otimes_\A \homm_\A(P,\A) \rightarrow \homm_\A(P,Q)$ 
	is a homotopy equivalence. Thus $\beta$ can be rewritten as a homotopy equivalence
	\[
	\eta\colon \homm_\A\bigl(Q,\, P \otimes_\A \homm_\kk(\A,\kk)\bigr) 
	\longrightarrow \homm_\kk\bigl(\homm_\A(P,Q),\, \kk\bigr),
	\]
	or, in other words, as
	\[ \eta\colon \homm_\A(Q,\, SP) \longrightarrow \homm_\A(P,\,Q)^*.\qedhere\]
\end{proof}

\begin{Example}
	\label{ex:Xsmoothpropenh}
	For $X$ a smooth and proper scheme over $\kk$, the enhanced derived category $\catDGCoh{X}$ is smooth and proper, and hence admits a homotopy Serre functor lifting the Serre functor on $\catDbCoh{X}$ from Example~\ref{ex:dbcoh-serre}.
\end{Example}

As before, a homotopy Serre functor $S$ induces a \emph{Serre trace map}\index{Serre trace} 
\begin{equation*}
	\Tr \colon \homm_{\A}(a, Sa) \rightarrow \kk, 
	\quad \quad
	\alpha \mapsto \eta_{a,a}(\id_{a})(\alpha)=\eta_{a,Sa}(\alpha)(\id_{a}).
\end{equation*}
As in Proposition~\ref{prop:serre_trace_cyclic_additive}, we have
\begin{equation*}
	\Tr(\beta \circ \alpha) =
	(-1)^{\deg(\alpha)\deg(\beta)}\Tr(S\alpha \circ \beta).
\end{equation*}
for any $a,b \in \A$ and any $\alpha \in \homm_\A(a,b)$, $\beta \in \homm_\A(b,Sa)$.

\section{\texorpdfstring{$G$}{G}-equivariant \dg categories for strong group actions}
\label{subsec:equivariant_cats}

Let $\A$ be a small \dg category with a \emph{strong}\index{strong action} action of 
a finite group $G$. That is, with an embedding of $G$ into the group of
\dg automorphisms of $\A$.

\begin{Definition}\label{def:semidirect-product}
	The \emph{semi-direct product\index{semi-direct product}} $\A \rtimes G$ is the following \dg
	category:
	\begin{itemize}
		\item $\obj \A \rtimes G = \obj \A$,
		\item For any $a,b \in \obj(\A \rtimes G)$ their morphism complex is
		\begin{equation*}
			\homm^i_{\A \rtimes G}(a,b) :=
			\left\{ (\alpha, g) \; \middle| \;
			\alpha \in \homm^i_{\A}(g.a,b), g \in G
			\right\}
		\end{equation*}
		with $\deg_{\A \rtimes G} (\alpha,g) = \deg_{\A} \alpha$ and 
		$d_{\A \rtimes G}(\alpha,g) = (d_\A \alpha,g)$,
		\item The composition in $\A \rtimes G$ is given by 
		\begin{equation*}
			(\alpha_1, g_1) \circ (\alpha_2, g_2) =
			(\alpha_1 \circ g_1.\alpha_2,\, g_1 g_2). 
		\end{equation*}
		\item For any $a \in \obj (\A \rtimes G)$ the identity morphism 
		of $a$ is $(\id_a, 1_G)$. 
	\end{itemize}
\end{Definition}

One can think of this as taking $\A$ and
formally adding for every object $a \in \A$ a closed degree $0$ isomorphism 
$a \rightarrow g.a$ for every $g \in G$. We then impose relations:
these isomorphisms compose via the multiplication in $G$, and 
their composition with the native morphisms of $\A$ is subject 
to the relations
\begin{equation}
	\label{eqn-twisted-action-of-G-relations}
	g \circ \alpha = g.\alpha \circ g \quad \quad \forall\;  g \in G, \alpha \in \A. 
\end{equation}
Therefore an action of $\A \rtimes G$ is equivalent to  
the action of $\A$ and an action of $G$ subject to 
\eqref{eqn-twisted-action-of-G-relations}. Here by action of $G$
we mean the action of the above tautological isomorphisms corresponding
to the elements of $G$. 

We have a natural embedding 
\begin{equation*}
	\eta \colon \A \hookrightarrow \A \rtimes G 
\end{equation*}
given by the identity on objects and $\alpha \mapsto (\alpha,\, \id_G)$ 
on morphisms. On the other hand, the projections 
$(\alpha, g) = \alpha \circ g \mapsto \alpha$ and 
$(\alpha, g) = g \circ g^{-1}.\alpha \mapsto g^{-1}\alpha$    
give rise to the decompositions 
\begin{equation}
	\label{eqn-decomposition-of-the-diagonal-A-rtimes-G-bimodule}
	\homm_{\A \rtimes G}(a,\, b) \simeq 
	\bigoplus_{g \in G} \homm_{\A}(g.a,\,b)
	\simeq 
	\bigoplus_{g \in G} \homm_{\A}(a,\,g^{-1}.b).
\end{equation}
We can think of these as decompositions of the diagonal bimodule:
\begin{equation}
	\label{equation-decomposition-of-the-equivariant-diagonal-bimodule}
	\A \rtimes G \simeq 
	\bigoplus_{g \in G} \A_{g}
	\simeq
	\bigoplus_{g \in G} \leftidx{_{g}}\A, 
\end{equation}
where $g$ denotes the autoequivalence $g\colon \A \rightarrow \A$. 
{ Both decompositions respect {the} 
	$\AbimA$-action and so the direct summands
	are themselves $\A$-$\A$-bimodules.} The induced right
and left actions of any $h \in G$ on the first decomposition are given by 
\[ \A_{g}  \xrightarrow{\id} \A_{gh}, \]
\[ \A_{g}  \xrightarrow{h.(-)} \A_{hg}, \]
and similarly for the second decomposition. 

The action of $G$ on $\A$ induces the action of $G$ on $\modA$ 
where each $g \in G$ acts via the extension of scalars functor 
$g^*$ with respect to the action functor $g\colon \A \rightarrow \A$. 
A \emph{$G$-equivariant $\A$-module}\index{equivariant module} 
is a pair $(E, \epsilon)$ where $E \in \modA$ and $\epsilon=(\epsilon_g)_{g \in G}$ is
a collection of isomorphisms 
\[ \epsilon_g\colon E \xrightarrow{\sim} g^* E \quad\quad g \in G \]
such that 
\[
\epsilon_{hg} =  E \xrightarrow{\epsilon_g} g^* E
\xrightarrow{g^* \epsilon_h} g^* h^* E = (hg)^* E \quad \quad g,h \in G.
\]
The \dg category $\modd^G\text{-}\A$ has as its objects $G$-equivariant
$\A$-modules and as its morphisms the morphisms between the
underlying $\A$-modules which commute with the isomorphisms $\psi_g$. 
See \cite[Section~2.1]{SymCat} for further details.  
The following generalises the classical correspondence 
between representations of a group and modules over 
the associated skew group algebra \cite[Chapter~5,
Remark~5.56]{leuschke2012cohen}:

\begin{Lemma}
	\label{lemma-A-rtimes-G-modules-are-G-equiv-A-modules}
	There are mutually inverse isomorphisms of categories
	\[ \rightmod{(\A \rtimes G)} \leftrightarrows \modd^G\mkern-4mu\mhyphen\A. \]
\end{Lemma}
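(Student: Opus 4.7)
The plan is to construct explicit functors in both directions and verify they are mutually inverse. Both are defined in a completely canonical way using the decomposition \eqref{equation-decomposition-of-the-equivariant-diagonal-bimodule} and the tautological morphisms $a \to g.a$ living inside $\A \rtimes G$.

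First, I construct $F\colon \rightmod{(\A \rtimes G)} \to \modd^G\mkern-4mu\mhyphen\A$. Given $E \in \rightmod{(\A \rtimes G)}$, I define its underlying $\A$-module to be the restriction $\eta^* E$ along $\eta\colon \A \hookrightarrow \A \rtimes G$. To equip $\eta^* E$ with an equivariant structure, for each $g \in G$ and each $a \in \A$ I use the morphism $(\id_{g.a},\, g) \in \homm_{\A \rtimes G}(a,\, g.a)$ (which under the second identification in \eqref{eqn-decomposition-of-the-equivariant-diagonal-bimodule} picks out the summand indexed by $g^{-1}$). Applying $E$ to this closed isomorphism produces an isomorphism $E_{g.a} \to E_a$; inverting gives the desired isomorphism $\epsilon_g\colon \eta^* E \xrightarrow{\sim} g^*\eta^* E$. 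The cocycle identity $\epsilon_{hg} = (g^*\epsilon_h) \circ \epsilon_g$ follows from the composition rule $(\id,g_1) \circ (\id,g_2) = (\id,g_1 g_2)$ in Definition~\ref{def:semidirect-product}, while naturality of $\epsilon$ in $\A$-module morphisms is immediate from the bimodule relation \eqref{eqn-twisted-action-of-G-relations}, which is exactly encoded in the composition law. On morphisms $F$ is the identity, and the fact that $\A \rtimes G$-module maps automatically commute with $\epsilon$ again comes from the compatibility relation in $\A \rtimes G$.

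Conversely, I construct $F^{-1}\colon \modd^G\mkern-4mu\mhyphen\A \to \rightmod{(\A \rtimes G)}$ as follows. Given $(E,\epsilon)$, declare the underlying assignment on objects to be that of $E$. A morphism $(\alpha,\,g) \in \homm_{\A \rtimes G}(a,b)$ with $\alpha\colon g.a \to b$ in $\A$ acts on $E$ by the composition
\[
E_b \xrightarrow{E(\alpha)} E_{g.a} = (g^*E)_a \xrightarrow{\epsilon_g^{-1}} E_a.
\]
Checking that this respects composition reduces to the naturality of $\epsilon$ together with the cocycle identity, and functoriality on morphisms of equivariant $\A$-modules is immediate because such morphisms commute with the $\epsilon_g$'s by definition. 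This construction is manifestly compatible with the decomposition \eqref{eqn-decomposition-of-the-decomposition-of-the-diagonal-A-rtimes-G-bimodule}: the summand $\homm_\A(g.a,b)$ of $\homm_{\A \rtimes G}(a,b)$ acts via the $(g,-)$-part of the formula above.

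Finally, $F \circ F^{-1} = \id$ and $F^{-1} \circ F = \id$ on the nose. Indeed, in the round trip $F \circ F^{-1}$ we recover the original $\A$-action by restricting to $g = 1_G$ in the formula (where $\epsilon_1 = \id$), and we recover the original $\epsilon_g$ by unwinding the construction of $F$ on the tautological morphisms $(\id_{g.a},g)$, which $F^{-1}$ sends precisely to $\epsilon_g^{-1}$. The opposite composition collapses similarly once one observes that every morphism $(\alpha,g)$ in $\A \rtimes G$ admits the tautological decomposition $(\alpha,1_G) \circ (\id_{g.a},g)$ inherited from \eqref{eqn-decomposition-of-the-decomposition-of-the-diagonal-A-rtimes-G-bimodule}, so the action of $(\alpha,g)$ on any $\A \rtimes G$-module is forced by its $\A$-action together with the isomorphisms coming from $G$. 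The only real work is a careful bookkeeping of the direction of arrows and left-versus-right actions; no \dg subtleties arise since everything is defined by finite direct sums and composition with closed degree zero isomorphisms.
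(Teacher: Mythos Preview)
Your proposal is correct and follows essentially the same construction as the paper's proof: restrict along $\eta$ to get the underlying $\A$-module and read off $\epsilon_g$ from the action of the tautological isomorphisms $g$, and conversely extend an equivariant module by letting $g$ act via $\epsilon_g$. Two minor slips: in the paper's conventions restriction of scalars is denoted $\eta_*$ rather than $\eta^*$, and your reference ``\verb|eqn-decomposition-of-the-decomposition-of-the-diagonal-A-rtimes-G-bimodule|'' has a doubled phrase and should be \eqref{eqn-decomposition-of-the-diagonal-A-rtimes-G-bimodule}.
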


\begin{proof}
	Given a $G$-equivariant $\A$-module $(E,\epsilon)$ we can extend  
	the action of $\A$ on $E$ to the action of $\A \rtimes G$ by 
	having $g$ act by $\epsilon_g\colon E_{a} \rightarrow E_{g^{-1}.a} = (g^* E)_a$. 
	Conversely, given a $\A \rtimes G$-module $E$ we can define 
	a $G$-equivariant structure on the $\A$-module $\eta_* E$ by 
	defining $\epsilon_g$ to be the action of $g$. 
	These operations are functorial and mutually inverse.
\end{proof}

Generalizing the setting from Section~\ref{subset:cat_fock_add}, for any subgroup $H\subset G$ there is a functor
\[ \iota\colon \A \rtimes H  \to  \A \rtimes G\]
given by the identity on objects, and by the identity times the inclusion on morphisms. This functor induces restriction and induction functors
\begin{align*}
	\Res_G^H \coloneqq \iota_{\ast}\colon & \rightmod{(\A \rtimes G)} \to \rightmod{(\A \rtimes H)},\\
	\Ind_H^G \coloneqq \iota^{\ast}\colon & \rightmod{(\A \rtimes H)} \to \rightmod{(\A \rtimes G)}.
\end{align*}

From the viewpoint of equivariant modules, the restriction functor can be written as
\[
\begin{array}{r c c c} 
	\Res_G^H\colon &  \modd^G\mkern-4mu\mhyphen\A & \to & \modd^H\mkern-4mu\mhyphen\A \\
	& \bigl(E,\, (\epsilon_g)_{g \in G}\bigr) & \mapsto & \bigl(E,\, (\epsilon_g)_{g \in H}\bigr)
\end{array}
\]
on objects and as the identity on morphisms.
Similarly, the induction functor is
\begin{align*}
	\Ind_H^G\colon \quad \quad \modd^H\mkern-4mu\mhyphen\A \quad & \to
	\quad \quad \quad \modd^G\mkern-4mu\mhyphen\A \\
	\bigl(E,\, (\epsilon_h)_{h \in H}\bigr) & \mapsto \bigl({ \bigoplus}_{[f] \in G/H} f^\ast E,\, (\epsilon_g)_{g \in G}\bigr)
\end{align*}
on objects where for every $g \in G$ 
\[ \epsilon_g\colon { \bigoplus}_{[f] \in G/H} f^\ast E \to { \bigoplus}_{[f'] \in G/H} g^\ast f'^\ast E \]
maps the $f$-permuted component in the domain to the $gf'$-permuted component in the target via $f^\ast \epsilon_h$ when $[f]=[gf'] \in G/H$ and $h \in H$ is such that $gf'=fh$. A similar formula applies to morphisms.

For us actions by symmetric groups\index{symmetric group}, and in particular symmetric powers\index{symmetrix power} of categories, will be of interest.
The $n$-fold tensor power $\A^{\otimes n}$ of a \dg category $\A$ has as objects $n$-tuples $a_1 \otimes \dots \otimes a_n$ of objects of $\A$ and has morphism complexes
\[
\homm_{\A^{\otimes n}}\left(a_1 \otimes \dots \otimes a_n,\, b_1 \otimes \dots \otimes b_n\right) \coloneqq
\homm_{\A}(a_1,b_1) \otimes_\kk \dots \otimes_\kk \homm_{\A}(a_n, b_n).
\]
We therefore have a natural strong action of the symmetric group $\SymGrp n$ on $\A^{\otimes n}$ by permuting the factors of objects and the factors of morphisms. 

\begin{Definition}\label{def:symmetric-power}
	Let $\A$ be an enhanced triangulated category. 
	We define $\sym^n\A$, the \emph{$n$-th symmetric power}\index{categorical symmetric power} of $\A$, 
	to be the semidirect product $\A^{\otimes n} \rtimes \SymGrp n$. 
\end{Definition}

The corresponding triangulated category is $\catDc(\sym^n\A) \simeq \Hzero\bigl(\hperf(\sym^n\A)\bigr)$. 
We have 
\[
\modd(\sym^n\A) \simeq \modd^{\SymGrp n}(\A^{\otimes n})
\]
by Lemma~\ref{lemma-A-rtimes-G-modules-are-G-equiv-A-modules}. 
It follows from the decomposition~\eqref{eqn-decomposition-of-the-diagonal-A-rtimes-G-bimodule} that this further restricts to 
\begin{align}
	\label{eqn-equivalence-modules-over-twalg-with-equiv-modules}
	\hperf(\sym^n\A) \simeq \hperf^{\SymGrp n}(\A^{\otimes n}), 
\end{align}
{where} $\hperf^{\SymGrp n}(\A^{\otimes n})$ is the full subcategory of
$\modd^{\SymGrp n}(\A^{\otimes n})$ consisting of the equivariant 
modules which are perfect in $\modd(\A^{\otimes n})$ after forgetting
the equivariant structure. 
Since $\hperf^{\SymGrp n}(\A^{\otimes n})$ was the definition of the
completed $n$-th symmetrical power $\widehat{\sym}^n \A$ of $\A$ in
\cite[Section~2.2.7]{SymCat}, that 
category is equivalent to the $\hperf$ hull of our $\sym^n \A$. This discrepancy
is due to us working in the Morita enhancement\index{Morita
enhancement} setting, where 
to pass to the underlying triangulated category one first takes the $\hperf$ 
hull, and then its homotopy category.

\section[The numerical Grothendieck group and the Heisenberg algebra]{The numerical Grothendieck group and the Heisenberg algebra of a \dg category}\label{subsec:prelim_grothendieck_group}\label{subsec:heisenberg_algebra_cat}

Consider a smooth and proper \dg category $\basecat$.
The Grothendieck group of $\basecat$,
\[ \mathrm{K}_0({\basecat}) = \mathrm{K}_0(\catDc(\basecat)), \]
comes equipped with the \emph{Euler}\index{Euler pairing} (or \emph{Mukai}) \emph{pairing}
\[
\bigl\langle [a],[b] \bigr\rangle_{\chi} \coloneqq
\chi\bigl(\Hom_{\hperf \basecat}(a,b)\bigr ) =
\sum_{n\in \ZZ} (-1)^n  \dim \Hom^n_{\catDc(\basecat)}(a,b).
\]

\begin{Example}\label{ex:nonsymmetric_pairing}
	The Euler pairing is in general neither symmetric nor antisymmetric.
	A simple example is given by the Grothendieck group of $\mathrm{K}_0(\ps 1) = \mathrm{K}_0({\catDbCoh{\ps 1}})$.
	It has a semiorthogonal basis given by the classes $\{ [\sO],[\sO(1)] \}$ for which the matrix of $\chi$ is 
	\[
	\begin{pmatrix}
		1 & 2 \\ 0 & 1
	\end{pmatrix}
	\]
	This matrix is clearly not diagonalisable over the integers.
\end{Example}

\begin{Proposition}\label{prop:tabuada_Ggp}
	Let $\basecat$ be a smooth and proper \dg category.
	\begin{enumerate}
		\item For every pair of objects $a$, $b$ of $\catDc(\basecat)$,
		\[
		\bigl\langle [a],[b] \bigr\rangle_{\chi} =
		\bigl\langle [b],[Sa] \bigr\rangle_{\chi} =
		\bigl\langle [S^{-1}b], [a] \bigr\rangle_{\chi},
		\]
		where $S$ is the Serre functor on $\catDc(\basecat)$.
		\item The left and right kernels of $\chi$ agree.
	\end{enumerate}
\end{Proposition}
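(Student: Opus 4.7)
The plan is straightforward, relying essentially on Serre duality.

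For part (1), I would apply the homotopy Serre functor on $\hperf\basecat$ provided by Section~\ref{subsec:dg-homotopy-serre}. The quasi-isomorphism $\eta_{a,b}\colon \homm_\basecat(a,b) \xrightarrow{\sim} \homm_\basecat(b,Sa)^{*}$ is cohomological degree $0$ but involves $\kk$-linear duality, which reverses cohomological degrees. Thus passing to cohomology gives
\[
  \dim \Hom^n_{D_c(\basecat)}(a,b) = \dim \Hom^{-n}_{D_c(\basecat)}(b,Sa)
\]
for every $n \in \ZZ$. Taking the alternating sum over $n$ yields the first equality $\langle[a],[b]\rangle_\chi = \langle[b],[Sa]\rangle_\chi$. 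The second equality follows by substituting $a \mapsto S^{-1}a$ (equivalently, by applying the same identity with the roles of $a,b$ appropriately relabelled and using that $S$ is an autoequivalence on $D_c(\basecat)$).

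For part (2), I would first note that $S$, being a quasi-autoequivalence, induces an automorphism of $K_0(\basecat)$, which I again denote by $S$. Extending part (1) bilinearly to $K_0(\basecat)$, we have
\[
  \langle v, w \rangle_\chi \;=\; \langle w, Sv \rangle_\chi \;=\; \langle S^{-1}w, v \rangle_\chi
\]
for all $v, w \in K_0(\basecat)$. Suppose $v$ lies in the left kernel, so $\langle v, w \rangle_\chi = 0$ for all $w$. Then $\langle S^{-1}w, v \rangle_\chi = 0$ for all $w$; since $S^{-1}$ is a bijection on $K_0(\basecat)$, the element $S^{-1}w$ ranges over all of $K_0(\basecat)$, and we conclude $\langle u, v\rangle_\chi = 0$ for every $u$, placing $v$ in the right kernel. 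The reverse inclusion is proved symmetrically using the identity $\langle w,v\rangle_\chi = \langle v, Sw \rangle_\chi$.

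There is no real obstacle here: the only substantive input is the existence of the homotopy Serre functor (already established in Section~\ref{subsec:dg-homotopy-serre}) and the elementary observation that $S$ acts invertibly on $K_0(\basecat)$.
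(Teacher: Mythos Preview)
Your argument is correct and complete. The paper, however, does not give its own proof of this proposition: it simply cites Lemma~4.25 and Proposition~4.24 of \cite{tabuada2015noncommutative}. Your approach is therefore more self-contained than the paper's, supplying the elementary Serre-duality computation directly rather than deferring to the literature. One minor point of care: in part~(1) you write $\homm_\basecat(a,b)$, but since the statement concerns arbitrary objects of $D_c(\basecat)$ you should be invoking the homotopy Serre functor on $\hperf\basecat$ (which is exactly what Section~\ref{subsec:dg-homotopy-serre} provides), so the $\homm$-spaces are those of $\hperf\basecat$ rather than of $\basecat$ itself. This is purely notational and does not affect the argument.
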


\begin{proof}
	This is Lemma~4.25 and Proposition~4.24 of \cite{tabuada2015noncommutative}.
\end{proof}

The \emph{numerical Grothendieck group}\index{numerical Grothendieck group} $\numGgp{\basecat}$ of a smooth and proper \dg category $\basecat$ is $\mathrm{K}_0(\basecat)/\ker(\chi)$.
We further set $\numGgp{\basecat,\,\kk} \coloneqq \numGgp{\basecat} \otimes_{\ZZ} \kk$.

\begin{Proposition}[{\cite[Theorem 1.2]{tabuada2016noncommutative}, \cite[Theorem 1.2]{tabuada2017finite}}]
	The numerical Grothendieck group $\numGgp{\basecat}$ of a smooth and proper \dg category $\basecat$ is a finitely generated free abelian group.
\end{Proposition}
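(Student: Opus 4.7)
The plan is to separate the statement into two independent assertions: that $\numGgp{\basecat}$ is torsion-free, and that it is finitely generated as an abelian group. Together these imply freeness in the strict sense, since any finitely generated torsion-free abelian group is free.

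Torsion-freeness is the easy part, handled directly from the construction. Suppose $[a] \in \numGgp{\basecat}$ satisfies $n[a] = 0$ for some integer $n > 0$. Then for every class $[b] \in K_0(\basecat)$ one has $n\langle[a],[b]\rangle_\chi = \langle n[a], [b]\rangle_\chi = 0$ in $\ZZ$, forcing $\langle[a],[b]\rangle_\chi = 0$ for all $[b]$. Thus $[a]$ lies in the left radical of $\chi$, which by Proposition~\ref{prop:tabuada_Ggp}\,(2) coincides with $\ker(\chi)$, so $[a] = 0$ in $\numGgp{\basecat}$.

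Finite generation is the substantive step. The natural route is to exploit the fact that for a smooth and proper \dg category $\basecat$, periodic cyclic homology $HP_\ast(\basecat)$ is finite-dimensional over $\kk$, by Kaledin's degeneration theorem. The Chern character gives a map $\mathrm{ch}\colon K_0(\basecat) \to HP_0(\basecat)$, and one checks that the Euler pairing is recovered by pairing Chern characters against the Mukai pairing on $HP_\ast$. Consequently $\ker(\mathrm{ch}) \subseteq \ker(\chi)$, producing an injection $\numGgp{\basecat} \otimes_\ZZ \kk \hookrightarrow HP_0(\basecat)$. In particular $\numGgp{\basecat}$ is of finite rank.

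The main obstacle is then passing from finite rank to finite generation over $\ZZ$: a torsion-free abelian group of finite rank need not be finitely generated (witness $\ZZ[1/p]$). This is the content of the two Tabuada theorems cited, and overcoming it requires realising $\numGgp{\basecat}$ inside an a priori finitely generated lattice rather than inside a $\kk$-vector space. The strategy in \cite{tabuada2017finite} is to embed $\numGgp{\basecat}$ into the numerical Grothendieck group of a suitable ``commutative model'' obtained from a compact generator of $D_c(\basecat)$, where finite generation is known. Combined with the torsion-freeness already established, this identifies $\numGgp{\basecat}$ as a subgroup of a finitely generated free abelian group; since subgroups of free abelian groups are free, the result follows.
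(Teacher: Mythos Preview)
The paper does not provide its own proof of this proposition; it simply records the statement and cites Tabuada's two papers. So there is no in-paper argument to compare against.

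Your proposal is not so much a proof as a structured outline of what the cited references accomplish, and as such it is accurate. The torsion-freeness argument is complete and correct. Your identification of the real difficulty --- that finite rank over $\kk$ (via $HP_0$) does not by itself yield finite generation over $\ZZ$ --- is exactly the point, and you are right that this is where the weight of Tabuada's work lies. Your final paragraph gestures at the method but does not actually carry it out; in effect you are still deferring the hard step to the citations, just as the paper does. If the intent was to give a self-contained proof, there is a genuine gap at the finite-generation step; if the intent was to explain what the cited theorems say and why the easy half is easy, then your write-up is more informative than the paper's bare citation and could reasonably replace it as an expository remark.
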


As $\chi$ is non-degenerate and integral on $\numGgp{\basecat}$, we
call the pair $(\numGgp{\basecat},\, \chi)$ the \emph{Mukai
	lattice}\index{Mukai lattice} of $\basecat$.

\begin{Example}\label{ex:mukai}
	For ${\basecat}={\catDGCoh X}$, where $X$ is smooth and projective, the Euler form\index{Euler form} can be computed by
	Hirzebruch--Riemann--Roch theorem\index{Hirzebruch--Riemann--Roch theorem} (see, for example, \cite[Section~6.3]{caldararu2010mukai}):
	\[
	\chi\bigl(\Hom(a,b)\bigr) =
	\chi(a^{\dual} \otimes b) =
	\int_X \operatorname{ch}(a^{\dual} \otimes b) \cdot \operatorname{td}(T_X).
	\]
	This implies that the kernel of $\chi$ equals the kernel of the Chern character\index{Chern character} map to Chow groups tensored with $\mathbb{Q}$.
\end{Example}

\begin{Definition}
	\label{def:halgbasecat}
	Let $\basecat$ be a smooth and proper \dg category.
	We write $\halg{\basecat}$ for the idempotent modified Heisenberg algebra $\halg{(\numGgp\basecat,\, \chi)}$.
	The corresponding Fock space representation is denoted by $\falg{\basecat}$.
\end{Definition}

\begin{Example} 
	\label{Ex:ps1pt2}	
	For $\basecat = \catDGCoh{\ps 1}$ as in Example~\ref{ex:nonsymmetric_pairing}, $\chi$  is nondegenerate and its Smith normal form is the unit $2 \times 2$ matrix  $ \mathrm{Id}_2$.
	Therefore, 
	\[\halg{\catDGCoh{\ps 1}} \simeq  H_{\ZZ^2, \mathrm{Id}_2} = \halg{\catDGCoh {\mathrm{pt} \sqcup \mathrm{pt}}}\] by Corollary~\ref{cor:halg_iso_to_symmetric}. 
\end{Example}

\begin{Lemma}
	\label{lem:indmapnum}
	Let $\A$, $\B$ be smooth and proper \dg categories, and let $F\colon \A \to \B$ be a \dg functor.
	Then $F^*\colon \catD(\A) \rightarrow \catD(\B)$ and $F_*\colon \catD(\B) \rightarrow \catD(\A)$ 
	preserve compactness and induce 
	\[
	F^\ast\colon \numGgp{\A} \to \numGgp{\B},
	\]
	\[
	F_\ast\colon \numGgp{\B} \to \numGgp{\A}.
	\]
\end{Lemma}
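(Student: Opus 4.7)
The plan is to establish the lemma in two stages: first, that $F^\ast$ and $F_\ast$ preserve compactness (so that they restrict to functors $D_c(\A) \rightleftarrows D_c(\B)$), and second, that the resulting $K_0$-maps descend to the numerical Grothendieck groups.

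Preservation of compactness for $F^\ast$ is immediate: extension of scalars sends each representable $\A$-module $h^r(a)$ to the representable $\B$-module $h^r(F(a))$, and since $F^\ast$ is a triangulated functor commuting with arbitrary coproducts (being a left adjoint), it carries the thick closure of representables in $D(\A)$, namely $D_c(\A)$, into the analogous subcategory of $D(\B)$.

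The substantive point is that $F_\ast$ also preserves compactness; here the smooth-and-proper hypothesis on both $\A$ and $\B$ enters essentially. For each $b \in \B$, the restricted module $F_\ast(h^r(b)) \in \modA$ has fibre $\Hom_\B(F(a), b)$ over $a \in \A$, and each such fibre is a perfect complex of $\kk$-modules by properness of $\B$. To upgrade this fibrewise perfectness to perfectness as an $\A$-module, I would appeal to the Toën--Vaquié characterisation of smooth proper \dg categories: every \dg functor between smooth proper \dg categories represents a morphism in the Morita localisation $\MoDGCatone$, and the corresponding bimodule is perfect on each side. In particular the $\B$-$\A$-bimodule representing $F_\ast$ is right-$\A$-perfect, whence $F_\ast(h^r(b)) \in \hperfA$. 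Taking thick closures then gives $F_\ast(D_c(\B)) \subseteq D_c(\A)$. I expect this to be the only real obstacle, since it is the sole step making essential use of both smoothness and properness.

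For the descent to the numerical quotients, I would use the adjunction $F^\ast \dashv F_\ast$ together with the equality of left and right kernels of $\chi$ from Proposition~\ref{prop:tabuada_Ggp}. Given $[a] \in \ker \chi_\A$ and any $y \in D_c(\B)$, adjunction yields
\[
  \chi_\B\bigl([F^\ast a],\, [y]\bigr) = \chi\bigl(\Hom_\B(F^\ast a, y)\bigr) = \chi\bigl(\Hom_\A(a, F_\ast y)\bigr) = \chi_\A\bigl([a],\, [F_\ast y]\bigr),
\]
and this vanishes because $F_\ast y \in D_c(\A)$ by the previous step and $[a]$ lies in the kernel of $\chi_\A$. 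Thus $[F^\ast a]$ lies in the (right, hence full) kernel of $\chi_\B$, so $F^\ast$ descends to a map $\numGgp{\A} \to \numGgp{\B}$. The argument for $F_\ast$ is symmetric: for $[b] \in \ker \chi_\B$ and any $x \in D_c(\A)$, adjunction gives $\chi_\A([x],\, [F_\ast b]) = \chi_\B([F^\ast x],\, [b]) = 0$.
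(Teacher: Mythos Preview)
Your overall strategy matches the paper's: first show $F^\ast$ and $F_\ast$ preserve compactness, then use the adjunction $F^\ast \dashv F_\ast$ to see that both send $\ker\chi$ to $\ker\chi$. The treatment of $F^\ast$ and the descent argument are essentially identical to the paper's.

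The only substantive difference is in the proof that $F_\ast$ preserves compactness. The paper argues more concretely: since $F_\ast$ is tensoring with the $\B$-$\A$-bimodule $\B_F$, it suffices that $\B_F$ be $\A$-perfect, i.e.\ that each fibre $\leftidx{_b}{\B}{_F}$ be a perfect $\A$-module. Because $\A$ is \emph{smooth}, perfectness over $\A$ is equivalent to $\kk$-perfectness of all fibres (this is the standard ``smooth implies regular'' fact, cited from \cite[Prop.~2.14]{AnnoLogvinenko-SphericalDGFunctors}); and $\kk$-perfectness of $\leftidx{_b}{\B}{_{Fa}} = \Hom_\B(Fa,b)$ is exactly properness of $\B$. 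Your appeal to a Toën--Vaquié characterisation is pointing in the right direction, but the specific claim you invoke---that for smooth proper $\A$, $\B$ the bimodule of any \dg functor is automatically perfect on each side---is essentially the conclusion rather than a citable black box; the underlying mechanism is precisely the ``smooth $\Rightarrow$ ($\kk$-perfect $\Leftrightarrow$ perfect)'' implication the paper uses. Making that explicit closes the gap.
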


\begin{proof}
	As explained in \ref{section-restriction-and-extension-of-scalars},
	for any $\A$ and $\B$, not necessarily smooth or proper, 
	the extension of scalars functor $F^*\colon \modA \rightarrow \modB$
	always restricts to a functor $\hperfA \rightarrow \hperfB$. Thus 
	its derived functor preserves compactness. 
	
	We now show that $F_*\colon \modB \rightarrow \modA$ restricts to $\perfA \rightarrow
	\perfB$, whence its derived functor preserves compactness. 
	As $F_*$ is tensoring with the $\B$-$\A$-bimodule
	$\B_F$, it suffices to show $\B_F$ to be $\A$-perfect
	\cite[Prop.~2.14]{AnnoLogvinenko-SphericalDGFunctors}. Let $b \in \B$.
	For an $\A$-module $\leftidx{_b}{\B}{_F}$ to be perfect it
	suffices, since $\A$ is smooth, for it to be $\kk$-perfect 
	\cite[Cor.~2.15]{AnnoLogvinenko-SphericalDGFunctors}. In other words,  
	for any $a \in \A$ the total cohomology of the $\kk$-module
	$\leftidx{_b}{\B}{_{Fa}}$ has to be finite. This holds since $\B$ is proper. 
	
	The remaining assertions now follow by adjunction of $F^*$ and $F_*$.
	Indeed, for any $a \in \catDc(\A)$ and for any $b \in \catDc(\B)$ we have
	\begin{align*}
		\chi(F^\ast(a),\, b) & =
		\sum (-1)^i \dim \Hom^i_{\catDc(\B)}(F^\ast(a),\, b) \\ & = 
		\sum (-1)^i \dim \Hom^i_{\catDc(\A)}(a,\, F_\ast(b)) = 
		\chi(a,\, F_\ast(b)). 
	\end{align*}
	Thus $F^\ast$ and $F_\ast$ take $\ker \chi$ to $\ker \chi$ and 
	so induce maps of numerical Grothendieck groups\index{numerical Grothendieck group}.   
\end{proof}

\chapter{The \dg Heisenberg $2$-category}\label{sec:dg-Heisenberg-2-cat}

{ Let $\basecat$ be any smooth and proper \dg category. 
	We fix this choice throughout the rest of the paper. Now, recall 
	from Section~\ref{section-enhanced-categories} that we work with DG
	categories up to Morita equivalence, viewing them as enhanced
	triangulated categories. Replace therefore $\basecat$ by its 
	perfect hull $\hperf\basecat$. This doesn't change the Morita
	equivalence class of $\basecat$. However, it ensures that $\basecat$
	is homotopy direct summand complete and admits a homotopy Serre functor. 
	Note that, as explained in Section~\ref{subsec:dg-homotopy-serre}, 
	any homotopy Serre functor $S$ induces a Serre trace map 
	$\Tr\colon\Hom_{\basecat}(a, Sa) \to \kk$ for any $a \in \basecat$. }

In this section we define a $\HoDGCat$-enriched bicategory $\hcat\basecat$, 
the \emph{Heisenberg category}\index{Heisenberg category} of $\basecat$. This category is 
a monoidal Drinfeld quotient\index{monoidal Drinfeld
quotient} of the perfect hull of a simpler strict
\dg $2$-category $\hcat*\basecat$ which we set up in the following
paragraphs. We take the Drinfeld quotient to impose certain relations
in $\hcat\basecat$ which we only expect to hold on the level of
homotopy categories, unlike the relations we impose on
$\hcat*\basecat$ which must hold on the \dg level. 

\section{The category \texorpdfstring{$\hcat*\basecat$}{H'}: generators}
\label{subsec:heisencatdef-dg}

The objects of $\hcat*\basecat$ are the integers $\ho \in \ZZ$.

As in the additive setting of Chapter~\ref{sec:additive-Heisenberg-2cat}, we have $1$-morphisms labeled $\QQ_a$ for $a \in \basecat$.
However, as we have only a homotopy Serre functor, we need to more carefully distinguish between the left and right duals of $\QQ_a$.
The $1$-morphisms are therefore freely generated by 
\begin{itemize}
	\item $\PP_a\colon \ho \to \ho+1$, 
	\item $\QQ_a\colon \ho+1 \to \ho$, 
	\item $\RR_a\colon \ho \to \ho+1$, 
\end{itemize}
for each $a \in \basecat$ and $\ho \in \ZZ$.
Thus the objects of $\Hom_{\hcat*\basecat}(\ho,\ho*)$ are finite words in the symbols $\PP_a$, $\QQ_a$, and $\RR_a$ with $a \in \basecat$, 
such that the difference of the number of $\PP$s and $\RR$s and the 
number of $\QQ$s is $\ho*-\ho$. 
The identity $1$-morphism of any $\ho \in \mathbb{Z}$ is denoted as
$\hunit$. 

The $2$-morphisms between two $1$-morphisms form a complex of vector spaces. 
These vector spaces are freely generated by the generators listed
below, subject to the axioms of a (strict) \dg $2$-category as well as
the relations we detail in the next section.
As before, we represent these $2$-morphisms as planar diagrams, using the same sign rules as in Remark~\ref{rem:$2$-morphism-interchange-additive}.
We recall that diagrams are read bottom to top, i.e., the source of a given $2$-morphism lies on the lower boundary, while the target lies on the upper boundary.

We now list the generating $2$-morphisms. 
For every $\alpha \in \Hom_\basecat(a,b)$ there are arrows
\[
\begin{tikzpicture}[baseline=0]
	\draw[->] (0,0) node[below] {$\PP_a$} -- node[label=right:{$\alpha$}, dot, pos=0.5] {} (0,1) node[above] {$\PP_{b}$};
\end{tikzpicture}, \quad
\begin{tikzpicture}[baseline=0]
	\draw[->] (0,1) node[above] {$\QQ_a$} -- node[label=right:{$\alpha$}, dot, pos=0.5] {} (0,0) node[below] {$\QQ_{b}$};
\end{tikzpicture}, \quad 
\begin{tikzpicture}[baseline=0]
	\draw[->] (0,0) node[below] {$\RR_a$} -- node[label=right:{$\alpha$}, dot, pos=0.5] {} (0,1) node[above] {$\RR_{b}$};
\end{tikzpicture}.
\]
These $2$-morphisms are homogeneous of degree $|\alpha|$. The 
remaining generators listed below are all of degree $0$. By convention 
a strand without a dot is the same as one marked with the identity morphism.
Any such unmarked string is an identity $2$-morphism in $\hcat*\basecat$.
The identity $2$-morphisms of the $1$-morphisms $\hunit$ are usually
pictured by a blank space. 

For every $a \in \basecat$, there is a special arrow marked with a star:
\begin{equation}\label{eq:H'V-genereator-serre}
	\begin{tikzpicture}[baseline={(0,0.4)}]
		\draw[->] (0,0) node[below] {$\PP_{Sa}$} -- node[serre, pos=0.5] {} (0,1) node[above] {$\RR_{a}$};
	\end{tikzpicture}. 
\end{equation}

Furthermore, for any objects $a, b \in \basecat$ there are cups and caps
\[
\begin{tikzpicture}[baseline={(0,0.25)}]
	\draw[->] (0,0) node[below] {$\PP_a$} arc[start angle=180, end angle=0, radius=.5] node[label=above:{$\hunit$}, pos=0.5]{} node[below] {$\QQ_a$};
\end{tikzpicture}, \quad
\begin{tikzpicture}[baseline={(0,0.25)}]
	\draw[->] (0,0) node[below] {$\RR_a$} arc[start angle=0, end angle=180, radius=.5] node[label=above:{$\hunit$}, pos=0.5]{} node[below] {$\QQ_{a}$};
\end{tikzpicture}, \quad
\begin{tikzpicture}[baseline={(0,-0.25)}]
	\draw[->] (0,0) node[above] {$\QQ_a$} arc[start angle=0, end angle=-180, radius=.5] node[label=below:{$\hunit$}, pos=0.5]{} node[above] {$\RR_{a}$};
\end{tikzpicture}, \quad
\begin{tikzpicture}[baseline={(0,-0.25)}]
	\draw[->] (0,0) node[above] {$\QQ_{a}$} arc[start angle=-180, end angle=0, radius=.5] node[label=below:{$\hunit$}, pos=0.5]{} node[above] {$\PP_{a}$};
\end{tikzpicture},
\]
as well as crossings of two downward strands\index{downward strand}: 
\begin{equation}
	\label{eqn-H'V-generators-downcrossing}
	\begin{tikzpicture}[baseline={(0,0.4)}]
		\draw[<-] (0,0) node[below] {$\QQ_{a}$} -- (1,1) node[above] {$\QQ_{a}$};
		\draw[<-] (1,0) node[below] {$\QQ_{b}$} -- (0,1) node[above] {$\QQ_{b}$};
	\end{tikzpicture}
\end{equation}

We recall again the sign convention for reading planar diagrams from Remark~\ref{rem:$2$-morphism-interchange-additive}.
As before, we often \enquote{prettify} diagrams by smoothing them out.

We give each $2$-morphism space a \dg structure. With the grading
defined above, it remains to define the differential.  If $f$ is a single
strand with one dot labelled $\alpha$, then $d(f)$ is the same diagram
with the label replaced by $d(\alpha)$. In particular, the
differential of a strand labelled with the identity is
$d(\mathrm{id})=0$. The differentials of the remaining generating
$2$-morphisms --- the caps, the cups, the crossings, and the star ---
are zero. The differential of a general $2$-morphism is then 
determined by  the following graded Leibniz rules for $1$- and
$2$-compositions. These follow from the definition of a \dg bicategory:
\begin{itemize}
	\item $d(h \circ_1 g) = d(h) \circ_1 g + (-1)^{|h|} h \circ_1 d(g)$,
	\item $d(h \circ_2 g) = d(h) \circ_2 g + (-1)^{|h|} h \circ_2 d(g)$.
\end{itemize}

For convenience, we define four further types of strand crossings from the basic one in \eqref{eqn-H'V-generators-downcrossing} by composition with cups and caps:
\begin{equation}\label{eq:other-crossings-dg}
	\begin{split}
		\begin{tikzpicture}[baseline={(0,1.45)}]
			\draw[->] (0,1) node[below] {$\PP_{a}$} -- (1,2) node[above] {$\PP_{a}$};
			\draw[->] (0,2) node[above] {$\QQ_{b}$} -- (1,1) node[below] {$\QQ_{b}$};
			\draw (1.5,1.5) node {$\coloneqq$};
			\draw[->] 
			(2,1) node[below] {$\PP_{a}$} to[out=90, in=180]
			(2.5,2) to[out=0, in=180]
			(3.5,1) to[out=0, in=270]
			(4,2) node[above] {$\PP_{a}$};
			\draw[->] (3.3,2) node[above] {$\QQ_{b}$} -- (2.7,1) node[below] {$\QQ_{b}$};
		\end{tikzpicture},
		\qquad\qquad
		\begin{tikzpicture}[baseline={(0,1.45)}]
			\draw[<-] (0,1) node[below] {$\QQ_{a}$} -- (1,2) node[above] {$\QQ_{a}$};
			\draw[<-] (0,2) node[above] {$\RR_{b}$} -- (1,1) node[below] {$\RR_{b}$};
			\draw (1.5,1.5) node {$\coloneqq$};
			\draw[<-] 
			(2,1) node[below] {$\QQ_{a}$} to[out=90, in=180]
			(2.5,2) to[out=0, in=180]
			(3.5,1) to[out=0, in=270]
			(4,2) node[above] {$\QQ_{a}$};
			\draw[<-] (3.3,2) node[above] {$\RR_{b}$} -- (2.7,1) node[below] {$\RR_{b}$};
		\end{tikzpicture},\\
		\begin{tikzpicture}[baseline={(0,1.45)}]
			\draw[->] (0,1) node[below] {$\PP_{a}$} -- (1,2) node[above] {$\PP_{a}$};
			\draw[<-] (0,2) node[above] {$\PP_{b}$} -- (1,1) node[below] {$\PP_{b}$};
			\draw (1.5,1.5) node {$\coloneqq$};
			\draw[->] 
			(2,1) node[below] {$\PP_{a}$} to[out=90, in=180]
			(2.5,2) to[out=0, in=180]
			(3.5,1) to[out=0, in=270]
			(4,2) node[above] {$\PP_{a}$};
			\draw[<-] (3.3,2) node[above] {$\PP_{b}$} -- (2.7,1) node[below] {$\PP_{b}$};
		\end{tikzpicture},
		\qquad\qquad
		\begin{tikzpicture}[baseline={(0,1.45)}]
			\draw[->] (0,1) node[below] {$\RR_{a}$} -- (1,2) node[above] {$\RR_{a}$};
			\draw[<-] (0,2) node[above] {$\RR_{b}$} -- (1,1) node[below] {$\RR_{b}$};
			\draw (1.5,1.5) node {$\coloneqq$};
			\draw[->] 
			(2,1) node[below] {$\RR_{a}$} to[out=90, in=180]
			(2.5,2) to[out=0, in=180]
			(3.5,1) to[out=0, in=270]
			(4,2) node[above] {$\RR_{a}$};
			\draw[<-] (3.3,2) node[above] {$\RR_{b}$} -- (2.7,1) node[below] {$\RR_{b}$};
		\end{tikzpicture}.
	\end{split}
\end{equation}

\section{The category \texorpdfstring{$\hcat*\basecat$}{H'}: relations between $2$-morphisms}
\label{subsec:heisencatdef2-dg}

In the preceding subsection we gave the list of the generating symbols for $2$-morphisms.
We obtain all $2$-morphisms in $\hcat*\basecat$ $1$- and $2$-compositions of these symbols, subject to the axioms of a strict \dg $2$-category and a list of relations we impose in this section.

First, we impose the linearity relations:
\begin{equation}\label{eq:linearity-dg}
	\begin{tikzpicture}[baseline={(0,0.4)}]
		\draw (0,0) -- node[label=left:{$\alpha$}, dot, pos=0.5] {} (0,1);
	\end{tikzpicture}
	+
	\begin{tikzpicture}[baseline={(0,0.42)}]
		\draw (0,0) -- node[label=right:{$\beta$}, dot, pos=0.5] {} (0,1);
	\end{tikzpicture}
	=
	\begin{tikzpicture}[baseline={(0,0.42)}]
		\draw (0,0) -- node[label=right:{$\alpha+\beta$}, dot, pos=0.5] {} (0,1);
	\end{tikzpicture}
	\qquad\qquad
	c\,\;
	\begin{tikzpicture}[baseline={(0,0.42)}]
		\draw (0,0) -- node[label=right:{$\alpha$}, dot, pos=0.5] {} (0,1);
	\end{tikzpicture}
	=
	\begin{tikzpicture}[baseline={(0,0.42)}]
		\draw (0,0) -- node[label=right:{$c\alpha$}, dot, pos=0.5] {} (0,1);
	\end{tikzpicture}
\end{equation}
for any scalar $c \in \kk$ and any compatible orientation of the strings.

Neighboring dots along a downward string can merge with a sign twist:
\begin{equation}\label{eq:colliding_dots_down-dg}
	\begin{tikzpicture}[baseline={(0,-0.6)}]
		\draw[->] 
		(0,0) -- node[label=right:{$\alpha$}, dot, pos=0.33] {}
		node[label=right:{$\beta$}, dot, pos=0.66] {}
		(0,-1);
		\draw (1.5, -0.5) node {$=(-1)^{|\alpha||\beta|}$};
		\draw[->] (2.5,0) -- node[label=right:{$\beta\circ\alpha$}, dot, pos=0.5] {} (2.5,-1);
	\end{tikzpicture}.
\end{equation}
A dot can swap with a star according to the following rule:
\begin{equation}\label{eq:dot_sliding_past_star}
	\begin{tikzpicture}[baseline={(0,0.4)}]
		\draw[->] (0,0) -- node[serre, pos=0.33] {}
		node[label=right:{$\alpha$}, dot, pos=0.66] {}
		(0,1);
		\draw (0.8, 0.5) node {$=$};
		\draw[->] (1.25,0) -- node[label=right:{$S\alpha$}, dot, pos=0.33] {}
		node[serre, pos=0.66] {}
		(1.25,1);
	\end{tikzpicture}.
\end{equation}
Dots may \enquote{slide} through the generating cups and crossing as follows:
\begin{equation}\label{eq:cupsslide-dg}
	\begin{tikzpicture}[baseline=0]
		\draw[->] 
		(0,-0.2) node[below] {$\PP_a$}
		-- node[label=left:{$\alpha$}, dot, pos=1] {}
		(0,0)    arc[start angle=180, end angle=0, radius=.5]
		--
		(1,-0.2) node[below] {$\QQ_b$};
		\draw (1.5,0) node {$=$};
		\draw[->]
		(2,-0.2) node[below] {$\PP_a$}
		--
		(2,0)    arc[start angle=180, end angle=0, radius=.5] 
		-- node[label=right:{$\alpha$}, dot, pos=0] {}
		(3,-0.2) node[below] {$\QQ_b$};
	\end{tikzpicture}
	\qquad\qquad
	\begin{tikzpicture}[baseline=0]
		\draw[<-]
		(0,-0.2) node[below] {$\QQ_b$}
		-- node[label=left:{$\alpha$}, dot, pos=1] {}
		(0,0)    arc[start angle=180, end angle=0, radius=.5]
		--
		(1,-0.2) node[below] {$\RR_{a}$};
		\draw (1.5,0) node {$=$};
		\draw[<-]
		(2,-0.2) node[below] {$\QQ_b$}
		--
		(2,0)    arc[start angle=180, end angle=0, radius=.5]
		-- node[label=right:{$\alpha$}, dot, pos=0] {}
		(3,-0.2) node[below] {$\RR_{a}$};
	\end{tikzpicture}
\end{equation}
\begin{equation}\label{eq:crossingslide-dg}
	\begin{tikzpicture}[baseline={(0,0.4)}]
		\draw[<-] (0,0) -- node[label=left:{$\alpha$}, dot, pos=0.25] {} (1,1);
		\draw[<-] (1,0) -- (0,1);
	\end{tikzpicture}
	=
	\begin{tikzpicture}[baseline={(0,0.4)}]
		\draw[<-] (0,0) -- node[label=right:{$\alpha$}, dot, pos=0.75] {} (1,1);
		\draw[<-] (1,0) -- (0,1);
	\end{tikzpicture}.
\end{equation}
Note that when drawing diagrams, dots need to keep their relative heights when doing these operations in order to avoid accidentally introducing signs (cf.~Remark~\ref{rem:dg_slide_interchange} below).

There are two sets of local relations for unmarked strings:
the \emph{adjunction relations}
\begin{equation}\label{eq:straighten-dg}
	\begin{tikzpicture}[baseline={(0,0.9)}]
		\draw (0,0) -- (0,1) arc[start angle=180, end angle=0, radius=.5] arc[start angle=-180, end angle=0, radius=.5] -- (2,2);
		\draw ( 2.5,1) node{$=$};
		\draw (3,0) -- (3,2);
		\draw ( 3.5,1) node{$=$};
		\draw (6,0) -- (6,1) arc[start angle=0, end angle=180, radius=.5] arc[start angle=0, end angle=-180, radius=.5] -- (4,2);
	\end{tikzpicture}
\end{equation}
and the \emph{symmetric group} relations\index{triple move relation}
on downward strands\index{downward strand}
\begin{equation}\label{eq:symmetric_group_relations-dg}
	\begin{tikzpicture}[baseline={(0,0.9)}]
		\draw[rounded corners=15pt, <-] (0,0) -- (1,1) -- (0,2);
		\draw[rounded corners=15pt, <-] (1,0) -- (0,1) -- (1,2);
		\draw (1.5,1) node {$=$};
		\draw[<-] (2,0) -- (2,2) ;
		\draw[<-] (3,0) -- (3,2) ;
	\end{tikzpicture}
	\qquad\qquad\qquad
	\begin{tikzpicture}[baseline={(0,0.9)}]
		\draw[<-] (0,0) -- (2,2);
		\draw[rounded corners=15pt, <-] (1,0) -- (0,1) -- (1,2);
		\draw[<-] (2,0) -- (0,2);
		\draw (2.5,1) node {$=$};
		\draw[<-] (3,0) -- (5,2);
		\draw[rounded corners=15pt, <-] (4,0) -- (5,1) -- (4,2);
		\draw[<-] (5,0) -- (3,2);
	\end{tikzpicture}
\end{equation}

Finally there are three relations\index{curl relation}\index{bubble
relation} involving a star-marked string
\begin{equation}\label{eq:circle_and_curl-dg}
	\begin{tikzpicture}[baseline={(0,-0.1)}, xscale=-1]
		\draw[<-] 
		(1,-0.9)  node[below] {$\QQ_{a}$} -- 
		(1,-0.5)  to[out=90, in=0]
		(0.3,0.5) to[out=180,in=90]
		(-0.1,0)  node[serre] {} to[out=270,in=180] (0.3,-.5) to[out=0,in=270]
		(1,0.5)   --
		(1,0.9) node[above] {$\QQ_{Sa}$};
	\end{tikzpicture}
	= 0,
	\qquad\qquad\qquad
	\begin{tikzpicture}[baseline={(0,0.4)}]
		\draw[decoration={markings, mark=at position 0.33 with {\arrow{>}}, mark=at position 0.83 with {\arrow{>}}}, postaction={decorate}]
		(1,0.2) -- node[label=right:{$\alpha$}, dot, pos=0.25] {}
		node[serre, pos=0.75] {}
		(1,0.8) arc[start angle=0, end angle=180, radius=.5]
		--
		(0,0.2) arc[start angle=180, end angle=360, radius=.5];
	\end{tikzpicture}
	=
	\Tr(\alpha),
\end{equation}
where $\alpha \in \Hom_\basecat(a,Sa)$, and
\begin{equation}\label{eq:up_down_braid-dg}
	\begin{tikzpicture}[baseline={(0,0.9)}]
		\draw[->] 
		(0,0)     node[below] {$\PP_{Sa}$} 
		--
		(0.5,0.5) to[out=45,in=-45] node[serre, pos=0.5] {} 
		(0.5,1.5) -- 
		(0,2) node[above] {$\RR_{a}$};
		\draw[->] 
		(1,2)     node[above] {$\QQ_{b}$}
		--
		(0.5,1.5) to[out=225, in=135]
		(0.5,0.5) --
		(1,0) node[below] {$\QQ_{b}$};
		\draw (1.5,1) node {$=$};
		\draw[->]
		(2,0) node[below] {$\PP_{Sa}$}
		-- node[serre, pos=0.5] {}
		(2,2) node[above] {$\RR_{a}$};
		\draw[<-]
		(3,0) node[below] {$\QQ_{b}$}
		--
		(3,2) node[above] {$\QQ_{b}$};
	\end{tikzpicture}.
\end{equation}

The relations \eqref{eq:circle_and_curl-dg} are the analogues of the relations \eqref{eq:circle_and_curl-add}. As leftward caps involve an $\RR$ but rightward cups involve a $\PP$, a star needs to be added between the two. Similarly, to get a consistent diagram a star must appear in both sides of \eqref{eq:up_down_braid-dg}, the analogue of the left relation from \eqref{eq:up_down_braids-add}.

We do not have an equivalent of the right relation in
\eqref{eq:up_down_braids-add} because to define the map $\Psi$ we need
the natural isomorphism 
$\homm(b,Sa) \simeq \homm(a,b)^*$ afforded to us by the genuine Serre
functor. In the present \dg setup we only have a homotopy Serre functor
which only gives us a natural homotopy equivalence $\homm(b,Sa)
\rightarrow \homm(a,b)^*$, but not its natural inverse. We can't
therefore define the map $\Psi$. More spefically, of the two
composants $\psi_1$ and $\psi_2$ of the term $\Psi(\id)$ described 
after Remark \ref{rem:symmetric_group_action-add} in
Section~\ref{subsec:remarks_on_relations-additive} we have $\psi_2$, but 
not $\psi_1$. However, the two relations in
$\eqref{eq:circle_and_curl-add}$ and the left relation in 
\eqref{eq:up_down_braids-add} together are equivalent to the map 
$$
\QQ_a \PP_{b}
\xrightarrow{
	\left[
	\begin{tikzpicture}[baseline={(0,0.15)}, scale=0.5]
		\draw[->] (1.5,0) -- (0.5,1);
		\draw[->] (1.5,1) -- (0.5,0);
	\end{tikzpicture}
	\,,\,
	\,\psi_1
	\right]
}
\PP_{b}\QQ_a \oplus \bigl(\Hom(a,b) \otimes_\kk \hunit \bigr) 
$$
being the left inverse of the map 
$$  \PP_{b}\QQ_a \oplus \bigl(\Hom(a,b) \otimes_\kk \hunit \bigr) 
\xrightarrow{
	\left[
	\begin{tikzpicture}[baseline={(0,0.15)}, scale=0.5]
		\draw[->] (0.5,0) -- (1.5,1);
		\draw[->] (0.5,1) -- (1.5,0);
	\end{tikzpicture}
	\,,\,
	\,\psi_2
	\right]
}
\QQ_a \PP_{b},
$$
while the right relation in \eqref{eq:up_down_braids-add} is
equivalent
to it being the right inverse. 

Thus, having imposed the equivalents of the two relations in
$\eqref{eq:circle_and_curl-add}$ and the left relation in 
\eqref{eq:up_down_braids-add}, to have the equivalent of
the right relation in $\eqref{eq:up_down_braids-add}$ 
we only need the map $\left[
\begin{tikzpicture}[baseline={(0,0.15)}, scale=0.5]
	\draw[->] (0.5,0) -- (1.5,1);
	\draw[->] (0.5,1) -- (1.5,0);
\end{tikzpicture}
\,,\,
\,\psi_2
\right]$
be a homotopy equivalence. We impose it in Section~\ref{subsec:idempotent-hcat}
by taking the Drinfeld quotient by its cone.

\section{Remarks on the $2$-morphism relations in \texorpdfstring{$\hcat*\basecat$}{H'}}
\label{subsec:remarks_on_relations-dg}

Let us remark on some of the above relations for $2$-morphisms and their consequences.

\begin{Remark}
	The reader familiar with the categorifications of Khovanov and Cautis--Licata \cite{khovanov2014heisenberg,cautis2012heisenberg} or the classical Heisenberg algebra might find the appearance of the third type of $1$-morphisms, i.e.~$\RR_a$, confusing.
	In the Fock space representation constructed in Chapter~\ref{sec:cat_fock}, the $1$-morphism $\QQ_a$ is sent to a pushforward functor $\phi_{a,*}$, while $\PP_a$ and $\RR_a$ are sent to the left adjoint $\phi_a^*$ and right adjoint $\phi_a^!$ respectively.
	In $\hcat*\basecat$ this is expressed by the relations~\eqref{eq:straighten-dg} which state that there are adjunctions of $1$-morphisms $(\PP_a,\,\QQ_a)$ and $(\QQ_a,\, \RR_a)$ for any $a \in \basecat$.
	
	Up to homotopy, the Serre functor lets us switch between left and right adjoints: $\phi_{Sa}^*$ and $\phi_a^!$ are identified in the homotopy category (note that in Khovanov's case the Serre functor is trivial, while in the Cautis--Licata setting it is a shift by $2$, see Examples~\ref{ex:Khovanov-dg} and~\ref{ex:CautisLicata_part1}).
	However, on the \dg level, there is only a canonical natural transformation $\phi_{Sa}^* \to \phi_a^!$.
	This natural transformation is represented by the starred arrow~\eqref{eq:H'V-genereator-serre}.
	In Section~\ref{subsec:idempotent-hcat}, we take the Drinfeld
	quotient by this arrow, forcing it to be an isomorphism on the homotopy level.
\end{Remark}

\begin{Remark}\label{rem:dg_slide_interchange}
	Since composing with the identity on either side doesn't change $2$-morphisms, dots may freely \enquote{slide along} straight strands as long as the relative height of all dots is kept the same.
	The interchange law introduces a sign when two dots slide past each other:
	\begin{equation}\label{eq:commuting_dots-dg}
		\begin{tikzpicture}[baseline={(0,0.4)}]
			\draw (0,0) -- node[label=left:{$\alpha$}, dot, pos=0.33] {} (0,1);
			\draw (0.5, 0.5) node {$\cdots$};
			\draw (1,0) -- node[label=right:{$\beta$}, dot, pos=0.66] {} (1,1);
			\draw (2.5, 0.5) node {$=(-1)^{|\alpha||\beta|}$};
			\draw (4,0) -- node[label=left:{$\alpha$}, dot, pos=0.66] {} (4,1);
			\draw (4.5, 0.5) node {$\cdots$};
			\draw (5,0) -- node[label=right:{$\beta$}, dot, pos=0.33] {} (5,1);
		\end{tikzpicture}.
	\end{equation}
	The axioms governing the differential in a \dg $2$-category are compatible 
	with this super-commutativity:
	\begin{equation}\label{eq:d_and_commutativity}
		\begin{tikzpicture}[baseline={(0,0.4)}]
			\draw (-1, 0.5) node {$d\Bigg($};
			\draw (-0.25,0) -- node[label=left:{$\alpha$}, dot, pos=0.33] {} (-0.25,1);
			\draw (0.25,0) -- node[label=right:{$\beta$}, dot, pos=0.66] {} (0.25,1);
			\draw (1, 0.5) node {$\Bigg)=$};
			\draw (2,0) -- node[label=left:{$\alpha$}, dot, pos=0.33] {} (2,1);
			\draw (2.5,0) -- node[label=right:{$d\beta$}, dot, pos=0.66] {} (2.5,1);
			\draw (4, 0.5) node {$+(-1)^{|\beta|}$};
			\draw (5.5,0) -- node[label=left:{$d\alpha$}, dot, pos=0.33] {} (5.5,1);
			\draw (6,0) -- node[label=right:{$\beta$}, dot, pos=0.66] {} (6,1);
			\draw (7.75, 0.5) node {$=(-1)^{|\alpha||\beta|}d\Bigg($};
			\draw (9.5,0) -- node[label=left:{$\alpha$}, dot, pos=0.66] {} (9.5,1);
			\draw (10,0) -- node[label=right:{$\beta$}, dot, pos=0.33] {} (10,1);
			\draw (10.75, 0.5) node {$\Bigg)$};
		\end{tikzpicture}
	\end{equation}
	In particular, it does not matter which dot one ``moves''  to the bottom of the diagram.
\end{Remark}

\begin{Lemma}\label{lem:colliding_dots_up-dg}
	Dots on upward strands\index{upward strand} merge without a sign change:
	\[
	\begin{tikzpicture}[baseline={(0,0.4)}]
		\draw[->] 
		(0,0) -- node[label=right:{$\alpha$}, dot, pos=0.33] {}
		node[label=right:{$\beta$}, dot, pos=0.66] {}
		(0,1);
		\draw (0.8, 0.5) node {$=$};
		\draw[->] (1.25,0) -- node[label=right:{$\beta\circ\alpha$}, dot, pos=0.5] {} (1.25,1);
	\end{tikzpicture}
	\]
\end{Lemma}

\begin{proof}
	The same proof as in Lemma~\ref{lem:colliding_dots_up-add} applies.
\end{proof}

The sign rules also imply that merging of dots is compatible with the
graded Leibniz rules for $\basecat$ and
$\Hom_{\hcat*\basecat}(\ho,\ho*)$. See \eqref{eq:d_and_commutativity}
and note that in $\basecat$ we have: 
\begin{equation*} 
	d(\beta \circ \alpha) = d(\beta) \circ \alpha + (-1)^{|\beta|} \beta
	\circ d(\alpha).
\end{equation*}

\begin{Lemma}\label{lem:dotslide-dg}
	Dots may freely slide through cups, caps and all types of crossings:
	\[
	\begin{tikzpicture}[baseline=0]
		\draw[->] 
		(0,0.2) node[above] {$\QQ_a$}
		-- node[label=left:{$\alpha$}, dot, pos=1] {}
		(0,0)   arc[start angle=180, end angle=360, radius=.5]
		--
		(1,0.2) node[above] {$\PP_b$};
		\draw (1.5,0) node {$=$};
		\draw[->]
		(2,0.2) node[above] {$\QQ_a$}
		--
		(2,0)   arc[start angle=180, end angle=360, radius=.5] 
		-- node[label=right:{$\alpha$}, dot, pos=0] {}
		(3,0.2) node[above] {$\PP_b$};
	\end{tikzpicture}
	\qquad\qquad
	\begin{tikzpicture}[baseline=0]
		\draw[<-]
		(0,0.2) node[above] {$\RR_{b}$}
		-- node[label=left:{$\alpha$}, dot, pos=1] {}
		(0,0)   arc[start angle=180, end angle=360, radius=.5]
		--
		(1,0.2) node[above] {$\QQ_{a}$};
		\draw (1.5,0) node {$=$};
		\draw[<-]
		(2,0.2) node[above] {$\RR_{b}$}
		--
		(2,0)   arc[start angle=180, end angle=360, radius=.5]
		-- node[label=right:{$\alpha$}, dot, pos=0] {}
		(3,0.2) node[above] {$\QQ_{a}$};
	\end{tikzpicture}
	\]
	\[
	\begin{tikzpicture}[baseline={(0,0.4)}]
		\draw[-] (0,0) -- node[label=left:{$\alpha$}, dot, pos=0.25] {} (1,1);
		\draw[-] (1,0) -- (0,1);
	\end{tikzpicture}
	=
	\begin{tikzpicture}[baseline={(0,0.4)}]
		\draw[-] (0,0) -- node[label=right:{$\alpha$}, dot, pos=0.75] {} (1,1);
		\draw[-] (1,0) -- (0,1);
	\end{tikzpicture}
	\qquad\qquad
	\begin{tikzpicture}[baseline={(0,0.4)}]
		\draw[-] (0,0) -- (1,1);
		\draw[-] (1,0) -- node[label=right:{$\alpha$}, dot, pos=0.25] {} (0,1);
	\end{tikzpicture}
	=
	\begin{tikzpicture}[baseline={(0,0.4)}]
		\draw[-] (0,0) -- (1,1);
		\draw[-] (1,0) -- node[label=left:{$\alpha$}, dot, pos=0.75] {} (0,1);
	\end{tikzpicture}.
	\]
\end{Lemma}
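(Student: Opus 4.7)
The plan is to mirror the proof of Lemma~\ref{lem:dotslide-add} verbatim, since the generators, relations, and diagrammatic manipulations used there are all in place in the \dg setting too. The only novelty is the presence of the third family of strands $\RR_a$ and the star morphism \eqref{eq:H'V-genereator-serre}, but neither plays a role in the dot-sliding arguments because the dots and the caps involved in the lemma are independent of the star. The graded interchange law introduces signs only when two dots pass one another vertically, a situation that never occurs in the calculations that follow.

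I would prove the two cup/cap relations first. Starting from the left-hand side of the first relation, one inserts a zig-zag on the outgoing $\PP_b$ strand using \eqref{eq:straighten-dg}, producing a diagram that now contains the generating cup between $\PP_b$ and $\QQ_b$. The dot, which was on the incoming $\QQ_a$ strand, can now be pushed through the new cup by applying \eqref{eq:cupsslide-dg}; straightening again using \eqref{eq:straighten-dg} yields the right-hand side. Concretely:
\[
  \begin{tikzpicture}[baseline={(0,-0.2)}]
    \draw[->]
      (0,0.2) -- node[label=left:{$\alpha$}, dot, pos=1] {}
      (0,0)   arc[start angle=180, end angle=360, radius=.5]
              --
      (1,0.2);
  \end{tikzpicture}
  \ \overset{\eqref{eq:straighten-dg}}{=}\
  \begin{tikzpicture}[baseline={(0,-0.2)}]
    \draw[->]
      (0,0.2)    --
      (0,0)      to[out=270, in=180]
      (0.5,-0.5) to[out=0, in=180]
      (1.5, 0.2) to[out=0, in=180] node[label=right:{$\alpha$}, dot, pos=0.5] {}
      (2.5,-0.5) to[out=0, in=270]
      (3,0)      --
      (3,0.2);
  \end{tikzpicture}
  \ \overset{\eqref{eq:cupsslide-dg}}{=}\
  \begin{tikzpicture}[baseline={(0,-0.2)}]
    \draw[->]
      (2,0.2) --
      (2,0)   arc[start angle=180, end angle=360, radius=.5]
              -- node[label=right:{$\alpha$}, dot, pos=0] {}
      (3,0.2);
  \end{tikzpicture}.
\]
The second cap relation is proved identically, using the $\QQ\mhyphen\RR$ version of the cup in \eqref{eq:cupsslide-dg}.

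Next I would establish the two crossing slide relations. The first crossing relation is obtained from \eqref{eq:crossingslide-dg} combined with \eqref{eq:symmetric_group_relations-dg} exactly as in the proof of Lemma~\ref{lem:dotslide-add}: one inserts a pair of canceling crossings at top or bottom using the inverse relation in \eqref{eq:symmetric_group_relations-dg}, applies \eqref{eq:crossingslide-dg} to the appropriate sub-diagram, and then cancels again. The second crossing relation follows from the first by the triangle move in \eqref{eq:symmetric_group_relations-dg}. For the remaining orientations and strand types (including those involving $\PP$, $\RR$, and mixed cases), the crossings are defined via cups and caps in \eqref{eq:other-crossings-dg}, so sliding a dot through them reduces to the two cap relations already established together with the slide through the basic downward crossing.

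The main potential obstacle is bookkeeping: there are several strand types ($\PP_a$, $\QQ_a$, $\RR_a$), several orientations, and the graded structure could in principle contribute signs. However, each generating relation used is itself sign-free in the relevant sense (it involves only one dot moving through a degree-zero generator), and the relative vertical order of dots is preserved throughout all manipulations — cf.~Remark~\ref{rem:dg_slide_interchange} — so no unexpected signs arise. Consequently the additive proof carries over unchanged.
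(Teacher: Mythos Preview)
Your proposal is correct and takes essentially the same approach as the paper, which simply states that the proof of Lemma~\ref{lem:dotslide-add} applies verbatim. Your explicit zig-zag computation for the cup relations and your reduction of the crossing cases to \eqref{eq:crossingslide-dg}, \eqref{eq:symmetric_group_relations-dg}, and the definitions \eqref{eq:other-crossings-dg} are precisely the steps carried out in that earlier proof.
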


\begin{proof}
	The same proof as in Lemma~\ref{lem:dotslide-add} applies.
\end{proof}

\begin{Lemma}\label{lem:basic-relations-dg}
	The following relations hold in $\hcat*\basecat$ for all objects $a,b,c \in \basecat$.
	\begin{enumerate}
		\item All allowed pitchfork relations\index{pitchfork
relation}:
		\[
		\begin{tikzpicture}[baseline={(0,0.4)}]
			\draw[->] (0,0) node[below] {$\PP_a$} to[out=55, in=180] (1,0.8) to[out=0, in=100] (1.8,0) node[below] {$\QQ_a$};
			\draw[->] (0.1,0.9) -- (1,0) node[below] {$\QQ_b$};
		\end{tikzpicture}
		\ = \ 
		\begin{tikzpicture}[baseline={(0,0.4)}]
			\draw[->] (0,0) node[below] {$\PP_a$} to[out=80, in=180] (0.8,0.8) to[out=0, in=125] (1.8,0) node[below] {$\QQ_a$};
			\draw[->] (1.7,0.9) -- (0.8,0) node[below] {$\QQ_b$};
		\end{tikzpicture}
		\qquad\quad
		\begin{tikzpicture}[baseline={(0,0.4)}]
			\draw[->] (0,0) node[below] {$\PP_a$} to[out=55, in=180] (1,0.8) to[out=0, in=100] (1.8,0) node[below] {$\QQ_a$};
			\draw[<-] (0.1,0.9) -- (1,0) node[below] {$\PP_b$};
		\end{tikzpicture}
		\ = \ 
		\begin{tikzpicture}[baseline={(0,0.4)}]
			\draw[->] (0,0) node[below] {$\PP_a$} to[out=80, in=180] (0.8,0.8) to[out=0, in=125] (1.8,0) node[below] {$\QQ_a$};
			\draw[<-] (1.7,0.9) -- (0.8,0) node[below] {$\PP_b$};
		\end{tikzpicture}
		\]
		\[
		\begin{tikzpicture}[baseline={(0,0.4)}]
			\draw[<-] (0,0) node[below] {$\QQ_a$} to[out=55, in=180] (1,0.8) to[out=0, in=100] (1.8,0) node[below] {$\RR_a$};
			\draw[->] (0.1,0.9) -- (1,0) node[below] {$\QQ_b$};
		\end{tikzpicture}
		\ = \ 
		\begin{tikzpicture}[baseline={(0,0.4)}]
			\draw[<-] (0,0) node[below] {$\QQ_a$} to[out=80, in=180] (0.8,0.8) to[out=0, in=125] (1.8,0) node[below] {$\RR_a$};
			\draw[->] (1.7,0.9) -- (0.8,0) node[below] {$\QQ_b$};
		\end{tikzpicture}
		\qquad\quad
		\begin{tikzpicture}[baseline={(0,0.4)}]
			\draw[<-] (0,0) node[below] {$\QQ_a$} to[out=55, in=180] (1,0.8) to[out=0, in=100] (1.8,0) node[below] {$\RR_a$};
			\draw[<-] (0.1,0.9) -- (1,0) node[below] {$\RR_b$};
		\end{tikzpicture}
		\ = \ 
		\begin{tikzpicture}[baseline={(0,0.4)}]
			\draw[<-] (0,0) node[below] {$\QQ_a$} to[out=80, in=180] (0.8,0.8) to[out=0, in=125] (1.8,0) node[below] {$\RR_a$};
			\draw[<-] (1.7,0.9) -- (0.8,0) node[below] {$\RR_b$};
		\end{tikzpicture}
		\]
		\[
		\begin{tikzpicture}[baseline={(0,-0.4)}, yscale=-1]
			\draw[->] node[above] {$\QQ_a$} (0,0) to[out=55, in=180] (1,0.8) to[out=0, in=100] (1.8,0) node[above] {$\PP_a$};
			\draw[<-] (0.1,0.9) -- (1,0) node[above] {$\QQ_b$};
		\end{tikzpicture}
		\ = \ 
		\begin{tikzpicture}[baseline={(0,-0.4)}, yscale=-1]
			\draw[->] (0,0) node[above] {$\QQ_a$} to[out=80, in=180] (0.8,0.8) to[out=0, in=125] (1.8,0) node[above] {$\PP_a$};
			\draw[<-] (1.7,0.9) -- (0.8,0) node[above] {$\QQ_b$};
		\end{tikzpicture}
		\qquad\quad
		\begin{tikzpicture}[baseline={(0,-0.4)}, yscale=-1]
			\draw[->] (0,0) node[above] {$\QQ_a$} to[out=55, in=180] (1,0.8) to[out=0, in=100] (1.8,0) node[above] {$\PP_a$};
			\draw[->] (0.1,0.9) -- (1,0) node[above] {$\PP_b$};
		\end{tikzpicture}
		\ = \ 
		\begin{tikzpicture}[baseline={(0,-0.4)}, yscale=-1]
			\draw[->] (0,0) node[above] {$\QQ_a$} to[out=80, in=180] (0.8,0.8) to[out=0, in=125] (1.8,0) node[above] {$\PP_a$};
			\draw[->] (1.7,0.9) -- (0.8,0) node[above] {$\PP_b$};
		\end{tikzpicture}
		\]
		\[
		\begin{tikzpicture}[baseline={(0,-0.4)}, yscale=-1]
			\draw[<-] (0,0) node[above] {$\RR_a$} to[out=55, in=180] (1,0.8) to[out=0, in=100] (1.8,0) node[above] {$\QQ_a$};
			\draw[<-] (0.1,0.9) -- (1,0) node[above] {$\QQ_b$};
		\end{tikzpicture}
		\ = \ 
		\begin{tikzpicture}[baseline={(0,-0.4)}, yscale=-1]
			\draw[<-] (0,0) node[above] {$\RR_a$} to[out=80, in=180] (0.8,0.8) to[out=0, in=125] (1.8,0) node[above] {$\QQ_a$};
			\draw[<-] (1.7,0.9) -- (0.8,0) node[above] {$\QQ_b$};
		\end{tikzpicture}
		\qquad\quad
		\begin{tikzpicture}[baseline={(0,-0.4)}, yscale=-1]
			\draw[<-] (0,0) node[above] {$\RR_a$} to[out=55, in=180] (1,0.8) to[out=0, in=100] (1.8,0) node[above] {$\QQ_a$};
			\draw[->] (0.1,0.9) -- (1,0) node[above] {$\RR_b$};
		\end{tikzpicture}
		\ = \ 
		\begin{tikzpicture}[baseline={(0,-0.4)}, yscale=-1]
			\draw[<-] (0,0) node[above] {$\RR_a$} to[out=80, in=180] (0.8,0.8) to[out=0, in=125] (1.8,0) node[above] {$\QQ_a$};
			\draw[->] (1.7,0.9) -- (0.8,0) node[above] {$\RR_b$};
		\end{tikzpicture}
		\]
		\item All counterclockwise curls\index{curl relation} vanish:
		\begin{equation*}
			\begin{tikzpicture}[baseline={(0,-1.1)}, yscale=-1]
				\draw[<-]
				(0,0)   node[above] {$\RR_a$}
				to[out=90, in=270]
				(1,1.5) node[serre] {}
				arc[start angle=0, end angle=180, radius=0.5]
				to[out=270, in=90]
				(1,0)   node[above] {$\QQ_{Sa}$};
			\end{tikzpicture}
			=0,
			\qquad
			\begin{tikzpicture}[baseline={(0,-0.1)}]
				\draw[->]
				(1,-1)    node[below] {$\RR_{Sa}$}
				-- 
				(1,-0.5)  to[out=90, in=0]
				(0.3,0.5) to[out=180,in=90]
				(-0.1,0)  to[out=270,in=180]
				(0.3,-.5) to[out=0,in=270]
				node[serre, pos=0.4] {}
				(1,0.5)   --
				(1,1)     node[above] {$\RR_{a}$};
			\end{tikzpicture}
			= 0,
			\qquad
			\begin{tikzpicture}[baseline={(0,-0.1)}]
				\draw[->]
				(1,-1)    node[below] {$\PP_{Sa}$} -- 
				(1,-0.5)  to[out=90, in=0]  
				node[serre, pos=0.6] {}
				(0.3,0.5) to[out=180,in=90]
				(-0.1,0)  to[out=270,in=180]
				(0.3,-.5) to[out=0,in=270]
				(1,0.5)   --
				(1,1)     node[above] {$\PP_{a}$};
			\end{tikzpicture}
			= 0,
			\qquad
			\begin{tikzpicture}[baseline={(0,0.9)}]
				\draw[->]
				(0,0)   node[below] {$\PP_{Sa}$}
				to [out=90, in=270]
				(1,1.5) node[serre] {}
				arc[start angle=0, end angle=180, radius=0.5]
				to[out=270, in=90]
				(1,0)   node[below] {$\QQ_a$};
			\end{tikzpicture}
			=0.
		\end{equation*}
		\item 
		\label{it:basic-relations-dg3}
		The symmetric group relations on upward
strands\index{upward strand} of the same type:
		\[
		\begin{tikzpicture}[baseline={(0,0.9)}]
			\draw[rounded corners=15pt, ->] (0,0) node[below] {$\PP_a$} -- (1,1) -- (0,2);
			\draw[rounded corners=15pt, ->] (1,0) node[below] {$\PP_b$} -- (0,1) -- (1,2);
			\draw (1.5,1) node {$=$};
			\draw[->] (2,0) node[below] {$\PP_a$} -- (2,2) ;
			\draw[->] (3,0) node[below] {$\PP_b$} -- (3,2) ;
		\end{tikzpicture}
		\qquad\qquad
		\begin{tikzpicture}[baseline={(0,0.9)}]
			\draw[->] (0,0) node[below] {$\PP_a$} -- (2,2);
			\draw[rounded corners=15pt, ->] (1,0) node[below] {$\PP_b$} -- (0,1) -- (1,2);
			\draw[->] (2,0) node[below] {$\PP_c$} -- (0,2);
			\draw (2.5,1) node {$=$};
			\draw[->] (3,0) node[below] {$\PP_a$} -- (5,2);
			\draw[rounded corners=15pt, ->] (4,0) node[below] {$\PP_b$} -- (5,1) -- (4,2);
			\draw[->] (5,0) node[below] {$\PP_c$} -- (3,2);
		\end{tikzpicture}
		\]
		\[
		\begin{tikzpicture}[baseline={(0,0.9)}]
			\draw[rounded corners=15pt, ->] (0,0) node[below] {$\RR_a$} -- (1,1) -- (0,2);
			\draw[rounded corners=15pt, ->] (1,0) node[below] {$\RR_b$} -- (0,1) -- (1,2);
			\draw (1.5,1) node {$=$};
			\draw[->] (2,0) node[below] {$\RR_a$} -- (2,2) ;
			\draw[->] (3,0) node[below] {$\RR_{b}$} -- (3,2) ;
		\end{tikzpicture}
		\qquad\qquad
		\begin{tikzpicture}[baseline={(0,0.9)}]
			\draw[->] (0,0) node[below] {$\RR_a$} -- (2,2);
			\draw[rounded corners=15pt, ->] (1,0) node[below] {$\RR_{b}$} -- (0,1) -- (1,2);
			\draw[->] (2,0) node[below] {$\RR_{c}$} -- (0,2);
			\draw (2.5,1) node {$=$};
			\draw[->] (3,0) node[below] {$\RR_a$} -- (5,2);
			\draw[rounded corners=15pt, ->] (4,0) node[below] {$\RR_{b}$} -- (5,1) -- (4,2);
			\draw[->] (5,0) node[below] {$\RR_{c}$} -- (3,2);
		\end{tikzpicture}
		\]
		\item The remaining allowed triple moves\index{triple
move relation}:
		\[
		\begin{tikzpicture}[baseline={(0,0.9)}]
			\draw[->] (0,0) node[below] {$\PP_a$} -- (2,2);
			\draw[rounded corners=15pt, ->] (1,0) node[below] {$\PP_{b}$} -- (0,1) -- (1,2);
			\draw[<-] (2,0) node[below] {$\QQ_{c}$} -- (0,2);
			\draw (2.5,1) node {$=$};
			\draw[->] (3,0) node[below] {$\PP_a$} -- (5,2);
			\draw[rounded corners=15pt, ->] (4,0) node[below] {$\PP_{b}$} -- (5,1) -- (4,2);
			\draw[<-] (5,0) node[below] {$\QQ_{c}$} -- (3,2);
		\end{tikzpicture}
		\qquad
		\begin{tikzpicture}[baseline={(0,0.9)}]
			\draw[->] (0,0) node[below] {$\PP_a$} -- (2,2);
			\draw[rounded corners=15pt, <-] (1,0) node[below] {$\QQ_{b}$} -- (0,1) -- (1,2);
			\draw[<-] (2,0) node[below] {$\QQ_{c}$} -- (0,2);
			\draw (2.5,1) node {$=$};
			\draw[->] (3,0) node[below] {$\PP_a$} -- (5,2);
			\draw[rounded corners=15pt, <-] (4,0) node[below] {$\QQ_{b}$} -- (5,1) -- (4,2);
			\draw[<-] (5,0) node[below] {$\QQ_{c}$} -- (3,2);
		\end{tikzpicture}
		\]
		\[
		\begin{tikzpicture}[baseline={(0,0.9)}]
			\draw[<-] (0,0) node[below] {$\QQ_a$} -- (2,2);
			\draw[rounded corners=15pt, ->] (1,0) node[below] {$\RR_{b}$} -- (0,1) -- (1,2);
			\draw[->] (2,0) node[below] {$\RR_{c}$} -- (0,2);
			\draw (2.5,1) node {$=$};
			\draw[<-] (3,0) node[below] {$\QQ_a$} -- (5,2);
			\draw[rounded corners=15pt, ->] (4,0) node[below] {$\RR_{b}$} -- (5,1) -- (4,2);
			\draw[->] (5,0) node[below] {$\RR_{c}$} -- (3,2);
		\end{tikzpicture}
		\qquad
		\begin{tikzpicture}[baseline={(0,0.9)}]
			\draw[<-] (0,0) node[below] {$\QQ_a$} -- (2,2);
			\draw[rounded corners=15pt, <-] (1,0) node[below] {$\QQ_{b}$} -- (0,1) -- (1,2);
			\draw[->] (2,0) node[below] {$\RR_{c}$} -- (0,2);
			\draw (2.5,1) node {$=$};
			\draw[<-] (3,0) node[below] {$\QQ_a$} -- (5,2);
			\draw[rounded corners=15pt, <-] (4,0) node[below] {$\QQ_{b}$} -- (5,1) -- (4,2);
			\draw[->] (5,0) node[below] {$\RR_{c}$} -- (3,2);
		\end{tikzpicture}
		\]
	\end{enumerate}
\end{Lemma}

\begin{proof}
	These are proved similarly to Lemmas~\ref{lem:pitchfork-I-add}, \ref{lem:curls-add},
	\ref{lem:upward_symmetric_group_relations-add}
	and~\ref{lem:triple_moves-add}. 
	One notes that the more complicated proof of Lemma~\ref{lem:pitchfork-II-add} is not needed, as we do not require the left and right mates of the downward crossing to coincide.
\end{proof}

\section{The category \texorpdfstring{$\hcat \basecat$}{H}: the perfect hull and homotopy relations}
\label{subsec:idempotent-hcat}

We construct the Heisenberg category $\hcat\basecat$ out of 
category $\hcat*\basecat$ in two steps. First, 
we apply Definition~\ref{defn-the-perfect-hull-of-a-bicategory} to form the
perfect hull $\bihperf(\hcat*\basecat)$. This is no longer a strict
$2$-category, but a bicategory. It has the objects of
$\hcat*\basecat$, but the $1$-morphism categories are replaced by
their perfect hulls. In particular, they are strongly pre-triangulated and
homotopy Karoubi-complete. 

The pre-triangulated structure we obtain on $1$-morphism categories of 
$\bihperf(\hcat*\basecat)$ allows us to formulate the final relations
we need to impose. Roughly, these postulate that certain
$2$-morphisms are isomorphisms in the homotopy category.

Let $a,b \in \basecat$. Since $\basecat$ is proper,
$\homm_{\basecat}(a,b)$ has finite dimensional cohomology and thus is
a perfect \dg $\kk$-module. Hence for any $1$-morphism $E \in
\hcat*\basecat$ the tensor product $\homm_{\basecat}(a,b) \otimes_{\kk} E$ 
lies in $\bihperf(\hcat*\basecat)$. Indeed, since any complex of
vector spaces is homotopy equivalent to the direct sum of its
cohomologies $\homm_{\basecat}(a,b) \otimes_{\kk} E$ is
homotopy equivalent to $\bigoplus_i H^i(\homm_{\basecat}(a,b))
\otimes_{\kk} E$ which is a direct sum of a finite number of copies of $E$.

Similar to \eqref{eq:half-Psi}, we have the natural $2$-morphism
\[
\psi_2\colon \Hom_\basecat(a,b) \otimes_\kk \hunit \to \QQ_a \PP_b
\]
in $\bihperf(\hcat*\basecat)$ obtained 
by adjunction from the map of complexes of vector spaces 
\begin{align*}
	\psi_2^{\text{adj}}\colon \Hom_{\basecat}(a,b) &\to 
	\Hom_{\bihperf(\hcat*\basecat)}(\hunit,\; \QQ_a\PP_b) \\
	\beta \quad &\mapsto \quad
	\smash[b]{\tikz[scale=0.5,baseline={(0,-0.25)}] \draw[->] (0,0) arc[start angle=-180, end angle=0, radius=.5] node[label=right:{$\beta$}, pos=0.7, dot] {};}. 
\end{align*}
We no longer have its counterpart $\psi_1$ as we do not have 
a map \[\Hom_{\basecat}(a,b)^\dual \to \Hom_{\basecat}(b,Sa).\] 

The map $\psi_2^{\text{adj}}$ is closed of degree $0$ since for any
$\beta \in \homm_{\basecat}(a,b)$ we have
\[ 
\begin{multlined}
d\psi_2^{\text{adj}}(\beta) =
d_{\Hom_{\bihperf(\hcat*\basecat)}(\hunit,\;
	\QQ_a\PP_b)}\left(\psi_2^{\text{adj}}(\beta)\right) - 
\psi_2^{\text{adj}}\left(d_{\homm_{\basecat}(a,b)} \beta\right) \\ = 
d\left(\smash[b]{\tikz[scale=0.5,baseline={(0,-0.25)}] \draw[->] (0,0) arc[start angle=-180, end angle=0, radius=.5] node[label=right:{$\beta$}, pos=0.7, dot] {};}\right)
-
\smash[b]{\tikz[scale=0.5,baseline={(0,-0.25)}] \draw[->] (0,0) arc[start angle=-180, end angle=0, radius=.5] node[label=right:{$d\beta$}, pos=0.7, dot] {};}
= 0. 
\end{multlined}
\]
Therefore the map $\psi_2$ is also closed of degree $0$.

Together with a crossing, $\psi_2$ induces a natural degree zero closed $2$-morphism
\begin{equation}\label{eq:dg-baby-Heisenberg-morphism}
	\PP_{b}\QQ_a \oplus \bigl(\Hom(a,b) \otimes_\kk \hunit \bigr) 
	\xrightarrow{
		\left[
		\begin{tikzpicture}[baseline={(0,0.15)}, scale=0.5]
			\draw[->] (0.5,0) -- (1.5,1);
			\draw[->] (0.5,1) -- (1.5,0);
		\end{tikzpicture}
		\,,\,
		\,\psi_2
		\right]
	}
	\QQ_a \PP_{b},
\end{equation}
and on the homotopy level, where $\psi_1$ does exist, we would like
\eqref{eq:dg-baby-Heisenberg-morphism} to be an isomophism. 

Secondly, in the homotopy category of $\basecat$, the functor $S$
becomes an actual Serre functor.  In terms of the graphical calculus, 
this means that on the homotopy level we would like
\begin{equation} 
	\label{eq:dg-star-morphism}
	\PP_{Sa} \xrightarrow{\tikz[scale=0.5] \draw[->](0,0) -- node[serre, pos=0.5] {} (0,1);} \RR_a 
\end{equation}
to be isomorphisms for all $a \in \basecat$.

We therefore take the monoidal Drinfeld quotient\index{monoidal Drinfeld
quotient} (see
Definition~\ref{def:monoidal-Drinfeld-quotient}) of
$\bihperf(\hcat*\basecat)$ by the cones of
\eqref{eq:dg-baby-Heisenberg-morphism} and \eqref{eq:dg-star-morphism}. 
This produces a $\HoDGCat$-enriched bicategory where 
\eqref{eq:dg-baby-Heisenberg-morphism} and \eqref{eq:dg-star-morphism}
are homotopy equivalences:

\begin{Definition}\label{def:hcat}
	The \emph{Heisenberg category} $\hcat\basecat$ of $\basecat$ 
	is the Drinfeld quotient of the h-perfect hull\index{perfect hull} of $\hcat*\basecat$ by 
	the two-sided ideal generated by the $1$-morphisms
	\[
	\cone\bigg(\PP_{Sa} \xrightarrow{\tikz[scale=0.5] \draw[->](0,0) -- node[serre, pos=0.5] {} (0,1);} \RR_a \biggr)
	\]
	\[
	\cone\biggl(
	\PP_{b}\QQ_a \oplus \bigl( \Hom(a,b) \otimes_\kk \hunit \bigr) 
	\xrightarrow{
		\left[
		\begin{tikzpicture}[baseline={(0,0.15)},scale=0.5]
			\draw[->](0.5,0) -- (1.5,1);
			\draw[->](0.5,1) -- (1.5,0);
		\end{tikzpicture}
		\,,\,
		\,\psi_2
		\right]
	}
	\QQ_a \PP_{b}
	\biggr)
	\]
	for all $a, b \in \basecat$.
\end{Definition}

The graded homotopy category $H^*(\A)$ of a \dg category $\A$ is defined to have the same objects as $\A$ and morphism spaces $\Hom_{H^*(\A)}(a,b) = \bigoplus_{i \in \ZZ} H^i(\Hom_\A(a,b))$.
The graded homotopy category $H^*(\hcat\basecat)$ of $\hcat\basecat$ is similarly defined by replacing the $1$-morphism categories with their graded homotopy categories. In particular, each $\Hom_{H^*(\hcat\basecat)}(\ho,\ho*)$ is a Karoubian category.
In $H^*(\hcat\basecat)$ 
one no longer has to distinguish between the $1$-morphisms $\PP$ and $\RR$ and
thus one recovers the formalism of Chapter~\ref{sec:additive-Heisenberg-2cat}, including the labels on cups and caps.

\begin{Lemma}\label{lem:up_down_braids-homotopy}
	Relations~\eqref{eq:up_down_braids-add} hold in $H^*(\hcat\basecat)$.
\end{Lemma}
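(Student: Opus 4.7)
The two relations in~\eqref{eq:up_down_braids-add} have different characters and I will treat them separately. The first relation, asserting that an up-down double crossing on strands $\PP_a$ and $\QQ_b$ equals the identity, holds already at the level of $\hcat*\basecat$; no passage to $H^*$ is required. To prove it I would expand both mixed crossings via their definitions in~\eqref{eq:other-crossings-dg} as compositions of basic downward $\QQ$-crossings with cups and caps, and then reduce the resulting planar diagram to two parallel straight strands using the pitchfork moves (Lemma~\ref{lem:basic-relations-dg}), the adjunction relations~\eqref{eq:straighten-dg}, and the symmetric-group (Reidemeister~II) relation on downward strands from~\eqref{eq:symmetric_group_relations-dg}. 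The calculation is entirely analogous to the one sketched in the additive setting (compare Lemma~\ref{lem:upward_symmetric_group_relations-add}).

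The second relation genuinely requires passage to $H^*(\hcat\basecat)$, as the term $\Psi(\id) = \psi_2 \circ \psi_1$ involves a morphism $\psi_1\colon \QQ_a\PP_b \to \Hom_\basecat(a,b) \otimes_\kk \hunit$ that is not available in $\hcat*\basecat$: it only becomes well defined once the star map~\eqref{eq:dg-star-morphism} is inverted in passage to $\hcat\basecat$. My plan is to exploit the fact that, by construction (Definition~\ref{def:hcat}),
\[
\Phi = [\,c,\,\psi_2\,]\colon \PP_b\QQ_a \oplus \bigl(\Hom_\basecat(a,b) \otimes_\kk \hunit\bigr) \longrightarrow \QQ_a\PP_b
\]
is a homotopy equivalence in $\hcat\basecat$, hence an isomorphism in the triangulated category $H^0(\hcat\basecat)$. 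Letting $c'\colon \QQ_a\PP_b \to \PP_b\QQ_a$ denote the opposite mixed crossing (defined analogously to $c$ from the basic pieces in~\eqref{eq:other-crossings-dg}), I will show that the column matrix $\Psi \coloneqq \binom{c'}{\psi_1}$ is a left inverse to $\Phi$ in $H^0(\hcat\basecat)$. Since $\Phi$ is an isomorphism, $\Psi$ is then automatically a two-sided inverse, and the identity $\Phi \circ \Psi = \id_{\QQ_a\PP_b}$ expands to
\[
c \circ c' + \psi_2 \circ \psi_1 = \id_{\QQ_a\PP_b},
\]
which is precisely the second relation of~\eqref{eq:up_down_braids-add}.

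Establishing $\Psi\circ\Phi = \id$ reduces to the four diagrammatic identities $c' c = \id$, $\psi_1 \psi_2 = \id$, $c' \psi_2 = 0$ and $\psi_1 c = 0$ in $H^0(\hcat\basecat)$. The first is the relation proved in the opening paragraph. The second is the bubble collapse and follows from the right-hand identity in~\eqref{eq:circle_and_curl-dg} together with the Serre duality identification $\Hom_\basecat(b,Sa) \simeq \Hom_\basecat(a,b)^\dual$, which becomes an isomorphism in $H^0$ because $\basecat$ is smooth and proper (see Section~\ref{subsec:dg-homotopy-serre}). The vanishing $c' \psi_2 = 0$ is proved by sliding the cup defining $\psi_2$ past the crossing $c'$ via pitchfork moves and reducing the resulting diagram to a counter-clockwise curl, which vanishes by Lemma~\ref{lem:basic-relations-dg}.

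The main obstacle is the last identity $\psi_1 c = 0$: since $\psi_1$ lives only in the homotopy category, one cannot simply apply graphical manipulations in $\hcat*\basecat$. The plan here is to first use the invertibility of the star map~\eqref{eq:dg-star-morphism} in $\hcat\basecat$ to replace any internal occurrence of $\PP_{Sa}$ by $\RR_a$, thereby expressing $\psi_1$ as a planar diagram built from the Serre cap $\QQ_a\RR_a\to\hunit$, and then performing a curl-vanishing computation analogous to the other three cases. Once this is done, $\Psi\circ\Phi = \id$ holds in $H^0(\hcat\basecat)$, and the lemma follows.
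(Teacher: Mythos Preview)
Your treatment of the second relation is essentially the paper's argument: verify that $\Psi=\binom{c'}{\psi_1}$ is a one-sided inverse of $\Phi=[c,\psi_2]$, then use invertibility of $\Phi$ in $H^*(\hcat\basecat)$ to conclude. That part is fine.

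The gap is in your handling of the first relation. You claim it already holds in $\hcat*\basecat$ and propose to expand both mixed crossings via~\eqref{eq:other-crossings-dg}. But look again at the list in~\eqref{eq:other-crossings-dg}: the four derived crossings there are $\PP_a\QQ_b\to\QQ_b\PP_a$, $\QQ_a\RR_b\to\RR_b\QQ_a$, $\PP_a\PP_b\to\PP_b\PP_a$, and $\RR_a\RR_b\to\RR_b\RR_a$. There is \emph{no} crossing $c'\colon \QQ_b\PP_a\to\PP_a\QQ_b$ in $\hcat*\basecat$. To build one you would need a cup $\hunit\to\PP_?\QQ_?$ or a cap $\QQ_?\PP_?\to\hunit$, and neither exists among the generators: the only adjunctions are $\PP_a\dashv\QQ_a$ and $\QQ_a\dashv\RR_a$. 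So the first relation of~\eqref{eq:up_down_braids-add} cannot even be formulated in $\hcat*\basecat$, and your proposed reduction via pitchfork, straightening and Reidemeister~II moves has no starting point.

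The paper's fix is exactly the relation~\eqref{eq:up_down_braid-dg}. In $H^*(\hcat\basecat)$ the star $\starmap a\colon\PP_{Sa}\to\RR_a$ is invertible, so the crossing $\QQ_b\RR_a\to\RR_a\QQ_b$ yields, after conjugating by the star, the missing crossing $c'\colon\QQ_b\PP_{Sa}\to\PP_{Sa}\QQ_b$. Relation~\eqref{eq:up_down_braid-dg} then reads $(\starmap a\otimes\id_{\QQ_b})\circ c'\circ c=\starmap a\otimes\id_{\QQ_b}$, and cancelling the invertible star gives $c'c=\id$; relabelling via the autoequivalence $S$ gives the general statement. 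This is also the identity you invoke as ``$c'c=\id$'' in your four-component check for the second relation, so the error propagates there as well. Once you correct this step by appealing to~\eqref{eq:up_down_braid-dg}, the rest of your argument goes through and coincides with the paper's proof.
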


\begin{proof}
	The left-hand relation is just \eqref{eq:up_down_braid-dg} after identifying $\RR_a$ with $\PP_{Sa}$ and relabeling.
	
	Relations~\ref{eq:circle_and_curl-dg} and \eqref{eq:up_down_braid-dg} together with the curl relation\index{curl relation}s of Lemma~\ref{lem:basic-relations-dg} show that 
	\[
	\begin{bmatrix}
		\begin{tikzpicture}[scale=0.5]
			\draw[<-] (0.5,0) -- (1.5,1);
			\draw[<-] (0.5,1) -- (1.5,0);
		\end{tikzpicture}
		\\
		\psi_1
	\end{bmatrix}
	\circ
	\left[
	\begin{tikzpicture}[baseline={(0,0.15)},scale=0.5]
		\draw[->] (0.5,0) -- (1.5,1);
		\draw[->] (0.5,1) -- (1.5,0);
	\end{tikzpicture}
	\,,\,
	\,\psi_2
	\right]
	\colon
	\PP_{b}\QQ_a \oplus \bigl( \Hom(a,b) \otimes_\kk \hunit \bigr) \to
	\PP_{b}\QQ_a \oplus \bigl( \Hom(a,b) \otimes_\kk \hunit \bigr)
	\]
	is the identity.
	Since in $H^*(\hcat\basecat)$ the $2$-morphism \eqref{eq:dg-baby-Heisenberg-morphism} is an isomorphism, the other composition
	\[
	\left[
	\begin{tikzpicture}[baseline={(0,0.15)},scale=0.5]
		\draw[->] (0.5,0) -- (1.5,1);
		\draw[->] (0.5,1) -- (1.5,0);
	\end{tikzpicture}
	\,,\,
	\,\psi_2
	\right]
	\circ
	\begin{bmatrix}
		\begin{tikzpicture}[scale=0.5]
			\draw[<-] (0.5,0) -- (1.5,1);
			\draw[<-] (0.5,1) -- (1.5,0);
		\end{tikzpicture}
		\\
		\psi_1
	\end{bmatrix}
	=
	\begin{tikzpicture}[baseline={(0,0.4)}, scale=0.5]
		\draw[<-] (0,0) -- (0.5,0.5) to[out=45, in=-45] (0.5,1.5) -- (0,2);
		\draw[<-] (1,2) -- (0.5,1.5) to[out=225, in=135] (0.5,0.5) -- (1,0);
	\end{tikzpicture}  
	+ \Psi(\id)
	\colon \QQ_a\PP_{b} \to \QQ_a\PP_{b}
	\]
	is also the identity, as required.
\end{proof}

\begin{Corollary}\label{cor:graded-to-graded}
	There exists a canonical $2$-functor
	\[
	\hcatadd{H^*(\basecat)} \to H^*(\hcat{\basecat}).
	\]
\end{Corollary}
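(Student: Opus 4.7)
The plan is to first build a strict $2$-functor $F'\colon \hcatadd*{H^*(\basecat)}\to H^*(\hcat\basecat)$ by declaring it on generators and checking all relations, and then extend $F'$ to the Karoubi completion. Since the graded homotopy category of an $h$-perfect \dg category is Karoubi-complete, the $1$-morphism categories of $H^*(\hcat\basecat)$ are Karoubi-complete, so by the universal property of $\hcatadd{-}$ (Definition~\ref{def:hcat-add}) any such $F'$ extends uniquely.

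On objects, send $\ho\mapsto\ho$. Since the objects of $H^*(\basecat)$ coincide with those of $\basecat$, send the generating $1$-morphisms $\PP_a,\QQ_a$ of $\hcatadd*{H^*(\basecat)}$ to the $1$-morphisms of the same name in $\hcat\basecat$. Define the action on generating $2$-morphisms as follows: a dot labelled $[\alpha]\in H^*(\Hom_\basecat(a,b))$ is sent to the cohomology class of the corresponding dg dot; the downward crossing is sent to the downward crossing; the cups and caps decorated by $\hunit$ are sent to the corresponding dg cups and caps, with the two cups and caps involving $\PP_{Sa}$ translated to the $\RR_a$-versions via the isomorphism $\PP_{Sa}\cong\RR_a$ in $H^*(\hcat\basecat)$ provided by the star morphism \eqref{eq:dg-star-morphism} (which is a homotopy equivalence by construction of $\hcat\basecat$ via the Drinfeld quotient).

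The substance of the argument is then to verify each defining relation of Section~\ref{subsec:heisencatdef2-additive} in $H^*(\hcat\basecat)$. The linearity relations, the downward dot-collision~\eqref{eq:colliding_dots_down-add}, the dot-slide through a downward crossing~\eqref{eq:downcross-and-dot-1-add}, the adjunction relations~\eqref{eq:straighten-add}, and the symmetric group relations~\eqref{eq:symmetric_group_relations-add} are literally the relations \eqref{eq:colliding_dots_down-dg},~\eqref{eq:crossingslide-dg},~\eqref{eq:straighten-dg}, and~\eqref{eq:symmetric_group_relations-dg} of $\hcat*\basecat$. The dot-slide rule~\eqref{eq:caps_and_dots-add} involving $S\alpha$ follows from the dg rule~\eqref{eq:cupsslide-dg} combined with the star/dot swap~\eqref{eq:dot_sliding_past_star}, after commuting the star past the cap via the isomorphism $\PP_{Sa}\cong\RR_a$. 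The vanishing of the left counter-clockwise loop in~\eqref{eq:circle_and_curl-add} and the bubble formula $\Tr(\alpha)$ are the first and second parts of~\eqref{eq:circle_and_curl-dg} respectively, again after translating the star across the $\PP_{Sa}\cong\RR_a$ identification. The remaining, genuinely homotopy-theoretic relation is the up-down braid pair~\eqref{eq:up_down_braids-add}, and this is precisely the content of Lemma~\ref{lem:up_down_braids-homotopy}.

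The principal obstacle is bookkeeping: one must be careful that the identification $\PP_{Sa}\cong\RR_a$ together with the star/dot swap \eqref{eq:dot_sliding_past_star} really reproduces the $S\alpha$ in \eqref{eq:caps_and_dots-add}, and that signs arising from the graded interchange law in $\hcat*\basecat$ (cf.~Remark~\ref{rem:dg-interchange-law}) match those of Remark~\ref{rem:2-morphism-interchange-additive}. Once these checks go through, $F'$ is well-defined as a $2$-functor into the bicategory $H^*(\hcat\basecat)$; its unique extension to the Karoubi completion $\hcatadd{H^*(\basecat)}\to H^*(\hcat\basecat)$ produces the desired canonical $2$-functor.
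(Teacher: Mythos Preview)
Your proposal is correct and follows essentially the same approach as the paper: define a $2$-functor on $\hcatadd*{H^*(\basecat)}$ by sending generators to generators, verify that all the defining relations of Section~\ref{subsec:heisencatdef2-additive} hold in $H^*(\hcat\basecat)$ (the only nontrivial one being~\eqref{eq:up_down_braids-add}, handled by Lemma~\ref{lem:up_down_braids-homotopy}), and then pass to the Karoubi completion. The paper compresses this into two sentences, relying on the preceding discussion that after identifying $\PP_{Sa}\cong\RR_a$ one ``recovers the formalism of Section~\ref{sec:additive-Heisenberg-2cat}''; your version spells out the relation-by-relation bookkeeping explicitly, which is more informative but not a different argument.
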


\begin{proof}
	As all relations in $\hcatadd*{H^*(\basecat)}$ are satisfied in $H^*(\hcat{\basecat})$, there exists a canonical functor $\hcatadd*{H^*(\basecat)} \to H^*(\hcat{\basecat})$.
	Taking Karoubi completion\index{Karoubi completion} gives the desired functor.
\end{proof}

\begin{Example}\label{ex:Khovanov-dg}
	Let $\basecat = \kk$, the field $\kk$ considered as a single-object
	\dg category concentrated in degree $0$.  The Serre functor $S$
	on $\basecat$ is the identity. We have $\basecat =
	H^*(\basecat)$, the additive construction $\hcatadd{\basecat}$
	is Khovanov's categorification of the infinite Heisenberg algebra
	\cite{khovanov2014heisenberg}, and the $2$-functor from
	Corollary~\ref{cor:graded-to-graded} is a fully faithful embedding of
	graded $2$-categories. In the \dg construction we take the perfect 
	hulls of the categories of $1$-morphisms, so 
	the $1$-morphisms in $H^*(\hcat{\basecat})$ are not only words in 
	$\PP$ and $\QQ$ and their idempotents, but also finite complexes
	thereof. The category $\hcat\basecat$ is hence a \dg enhanced
	triangulated hull of Khovanov's categorification.
	The isomorphism \eqref{eq:dg-baby-Heisenberg-morphism} in $\Hzero(\hcat\basecat)$ recovers the defining relation with central charge $k=-1$ from \cite[(1.5)]{brundan2018degenerate}, which was shown to be an alternative of Khovanov's presentation. We expect that our construction has analogues for central charges $k \neq -1$.
\end{Example}

\begin{Example}\label{ex:CautisLicata_part1}
	Consider a finite subgroup $\Gamma$ of $\SL(2,\CC)$ with corresponding simple surface singularity $Y = \as 2/\Gamma$ and minimal resolution $X$.
	Let $\catDGCoh{X}$ be the \dg enhanced bounded derived category of coherent sheaves on $X$.
	For $\cat V$ the full subcategory of $\catDGCoh{X}$ consisting of
	sheaves supported on the exceptional divisor $E$,  the category
	$\hcat\basecat$ is a \dg enhancement of the category
	$\mathcal{H}^\Gamma$ introduced by Cautis--Licata
	\cite[Section~6]{cautis2012heisenberg}.
	
	Indeed, the exceptional divisor $E$ decomposes into $(-2)$-curves $E_i$ labeled by the non-trivial irreducible representations of $\Gamma$.
	Let $I_\Gamma$ be the vertices of the McKay quiver of $\Gamma$.
	Denote by $0$ the vertex corresponding to the trivial representation.
	For $i \in I_\Gamma$ define 
	\[
	\sheaf E_i = \begin{cases}
		\sO_E[-1]     & \text{if $i = 0$} \\
		\sO_{E_i}(-1) & \text{otherwise.}
	\end{cases}
	\]
	The generators $P_i$ and $Q_i$ of $\mathcal{H}^\Gamma$ for $i \in
	I_\Gamma$ correspond in $H^*(\hcat\basecat)$ to $1$-morphisms $\PP_{\sheaf E_i}$ and
	$\QQ_{\sheaf E_i}[ 1 ]$, respectively.
	As $X$ is Calabi--Yau, its Serre functor is $[2]$.
	Thus the shifts chosen above reproduce the grading on turns defined in \cite[Section~6.1]{cautis2012heisenberg}:
	\[
	\begin{tikzpicture}[baseline=0]
		\draw[->] (0,0) node[below] {$P_i$} arc[start angle=180, end angle=0, radius=.5] node[midway, label=above:{$\id[ 1]$}] {} node[below] {$Q_i$};
	\end{tikzpicture},\quad
	\begin{tikzpicture}[baseline=0]
		\draw[->] (0,0) node[below] {$P_i$} arc[start angle=0, end angle=180, radius=.5] node[midway, label=above:{$\id[ -1]$}] {} node[below] {$Q_i$};
	\end{tikzpicture},\quad
	\begin{tikzpicture}[baseline=0]
		\draw[->] (0,0) node[above] {$Q_i$} arc[start angle=0, end angle=-180, radius=.5] node[midway, label=below:{$\id[-1]$}] {} node[above] {$P_i$};
	\end{tikzpicture},\quad
	\begin{tikzpicture}[baseline=0]
		\draw[->] (0,0) node[above] {$Q_i$} arc[start angle=-180, end angle=0, radius=.5] node[midway, label=below:{$\id[ 1]$}] {} node[above] {$P_i$};
	\end{tikzpicture},\quad
	\]
	One has
	\begin{equation}\label{eq:cl_homs}
		\Hom^*(\sheaf E_i,\, \sheaf E_j) = 
		\begin{cases} 
			\CC \oplus \CC[-2], & i = j \\
			\CC[-1],            & |i - j | = 1 \\
			0,                  & \text{otherwise.}
		\end{cases}
	\end{equation}
	Thus a dot on a $2$-morphism in $\mathcal{H}^\Gamma$ corresponds to a
	basis vector of either $\CC[-2]$ or $\CC[-1]$. Picking such a basis, one obtains a $2$-functor
	\[
	\mathcal{H}^\Gamma \to H^*(\hcat\basecat)
	\]
	factoring through the $2$-functor $\hcatadd{H^*(\basecat)} \to H^*(\hcat\basecat)$ of Corollary~\ref{cor:graded-to-graded}.
	
	Equivalently by \cite[Theorem~2.3]{KapranovVasserot:2000:KleinianSingularities}, instead of the sheaves $\sheaf E_i$, one could use the irreducible representations $V_i$ of $\Gamma$ considered as skyscraper sheaves at the origin on the quotient stack $[\as 2/\Gamma]$.
	In this setting one works in the ambient category $\catDGCoh{[\as 2/\Gamma]}$, see also Example~\ref{ex:CautisLicata_part2}.
\end{Example}

\chapter{Structure of the Heisenberg Category}\label{sec:structure}

In this section we deduce a number of properties of the Heisenberg category and we investigate its relationship with the classical Heisenberg algebra.

\section{The Heisenberg commutation relations: \dg level}\label{subsec:cat_heisenberg_relations}

As observed in Remark~\ref{rem:symmetric_group_action-add}, the
symmetric group relations \eqref{eq:symmetric_group_relations-dg}
give us a canonical morphism $\kk[\SymGrp n] \to \End(\QQ_a^n)$.
Similarly, by Lemma~\ref{lem:basic-relations-dg} there are morphisms to $\End(\PP_a^n)$ and $\End(\RR_a^n)$.
Endomorphisms in the image of these maps are made up of unlabelled
strands, thus they are closed and of degree $0$. 

\begin{Remark}\label{rem:Xiprime}
	The homomorphisms $\kk[\SymGrp n] \to \End(\PP_a^n)$ vary in a
	family over $\basecat^{\otimes n}$.
	That is, there exist natural functors
	\[
	\Xi^{\prime\PP}_{\ho,\ho+n} \colon \sym^n\basecat \to \Hom_{\hcat\basecat}(\ho,\, \ho+n),
	\]
	where $\sym^n\basecat := \basecat^{\otimes n} \rtimes \SymGrp n$ is
	the semi-direct product of
	Definition~\ref{def:semidirect-product}.
	On objects $\Xi^{\prime\PP}_{\ho,\ho+n}$ is given by
	\[
	\Xi^{\prime\PP}_{\ho,\ho+n}(a_1 \otimes \dots \otimes a_n) = \PP_{a_1} \dotsm \PP_{a_n}
	\]
	and on morphisms by sending 
	\[
	(\alpha_1 \otimes \dots \otimes \alpha_n,\, \sigma)
	\quad \text{for} \quad \alpha_i \in \Hom_{\basecat}(a_{\sigma^{-1}(i)},\, b_i),\, \sigma \in \SymGrp n
	\]
	to the braid corresponding to $\sigma$ followed by parallel vertical
	strands dotted with $\alpha_1, \dots, \alpha_n$:
	\[
	\begin{tikzpicture}[baseline={(0,0.6)}, scale=0.67]
		\draw[->] (-1,0) node[below] {$\PP_{a_1}$} -- (-1,0.2) to[out=90, in=270] (1,2) -- (1,2.5) node[label=left:{$\alpha_{n-1}$}, dot, pos=0.25]{} node[above] {$\PP_{b_{n-1}}$} ;
		\draw[->] (0,0) node[below] {$\PP_{a_2}$} -- (0,0.2) to[out=90, in=270] (2,2) -- (2,2.5) node[label=right:{$\alpha_n$}, dot, pos=0.25]{} node[above] {$\PP_{b_n}$} ;
		\draw[->] (2,0) node[below] {$\PP_{a_n}$} -- (2,0.2) to[out=90, in=270] (-1,2) -- (-1,2.5) node[label=left:{$\alpha_1$}, dot, pos=0.25]{} node[above] {$\PP_{b_1}$} ;         
		\draw (1,-0.15) node[below] {$\cdots$};
		\draw (-0.15,2.65) node[above] {$\cdots$};
	\end{tikzpicture}
	\]
	Similarly, we have canonical functors $\Xi^{\prime\RR}_{\ho,\ho+n}$ sending $a_1 \otimes \dots \otimes a_n$ to $\RR_{a_1} \dotsm \RR_{a_n}$ and contravariant functors $\Xi^{\prime\QQ}_{\ho,\ho+n}$ sending $a_1 \otimes \dots \otimes a_n$ to $\QQ_{a_1} \dotsm \QQ_{a_n}$.
	
	We can further let $\ho$ and $n$ vary by defining a $2$-category
	$\bicat{Sym}_\basecat$ with objects $\ho \in \ZZ$, $1$-morphism
	categories $\Hom_{\bicat{Sym}_\basecat}(\ho,\, \ho+n) = \sym^n\basecat$ and 
	$1$-composition given by the functors $\sym^{n_1}\basecat \otimes
	\sym^{n_2}\basecat \to \sym^{n_1+n_2}\basecat$ induced by 
	$\SymGrp{n_1} \times \SymGrp{n_2} \hookrightarrow \SymGrp{n_1+n_2}$.
	We then have a natural functor $\Xi^{\PP}\colon \bihperf(\bicat{Sym}_\basecat) \to \hcat\basecat$ and similarly for $\QQ$ and $\RR$.
\end{Remark}

Let
\[
e := e_\triv := \frac{1}{n!} \sum_{\sigma \in \SymGrp n} \sigma \in \kk[\SymGrp n]
\]
be, as in Section~\ref{subsec:idempotent-hcat-add}, the symmetriser
idempotent in $\kk[\SymGrp n]$. Where we work with no other
idempotents of $\kk[\SymGrp n]$ and no confusion is possible, 
we use the shorter notation $e$ for $e_\triv$. 
Denote its image under any of the above maps again by $e$.
The maps $e$ are (strict) idempotent endomorphisms of $\PP_a^n$,
$\QQ_a^n$ and $\RR_a^n$ respectively, and hence split in
$H^*(\hcat\basecat)$. A standard construction gives natural
representatives of the corresponding homotopy direct summands.

\begin{Definition}
	\label{defn-symmetric-powers-of-PP-QQ-and-RR}
	Let $\PP_a^{(n)}$, $\QQ_a^{(n)}$ and
	$\RR_a^{(n)}$ be the convolutions of the twisted
	complexes
	\begin{align*}
		\PP_a^{(n)} :=  \left\{
		\dots 
		\xrightarrow{\; e \;}
		\PP_a^{n}
		\xrightarrow{1 - e}
		\PP_a^{n}
		\xrightarrow{\; e \;}
		\PP_a^{n}
		\xrightarrow{1 - e}
		\underset{\degzero}{\PP_a^{n}}
		\right\},
		\\
		\QQ_a^{(n)} :=  \left\{
		\dots 
		\xrightarrow{\; e \;}
		\QQ_a^{n}
		\xrightarrow{1 - e}
		\QQ_a^{n}
		\xrightarrow{\; e \;}
		\QQ_a^{n}
		\xrightarrow{1 - e}
		\underset{\degzero}{\QQ_a^{n}}
		\right\},
		\\
		\RR_a^{(n)} :=  \left\{
		\dots 
		\xrightarrow{\; e \;}
		\RR_a^{n}
		\xrightarrow{1 - e}
		\RR_a^{n}
		\xrightarrow{\; e \;}
		\RR_a^{n}
		\xrightarrow{1 - e}
		\underset{\degzero}{\RR_a^{n}}
		\right\}.
	\end{align*}
	These are h-projective and perfect modules over 
	{ the}
	$1$-morphism categories of $\hcat*\basecat$. They are h-projective since 
	bounded above complexes of representable modules are semifree.
	They are perfect since in the homotopy categories they 
	are the direct summands of $\PP_a^{n}$, $\QQ_a^{n}$, and $\RR_a^{n}$ 
	defined by the idempotents $e$. Thus, being h-projective
	and perfect, these modules define $1$-morphisms of $\hcat\basecat$
	which we also denote by
	$\PP_a^{(n)}$, $\QQ_a^{(n)}$ and $\RR_a^{(n)}$. 
\end{Definition}

We can now state the main result of this section:

\begin{Theorem}\label{thm:cat_heisenberg_relations-dg}\leavevmode
	\begin{enumerate}
		\item\label{it:cat_heisenberg_relations-dg1}
		\label{item-heisenberg-relations-dg}
		For any $a, b \in \basecat$ and $n, m \in \NN$ the following holds in $\hcat\basecat$:
		\[
		\PP_a^{(m)}\PP_{b}^{(n)} \cong \PP_{b}^{(n)} \PP_a^{(m)}, \quad
		\QQ_a^{(m)}\QQ_{b}^{(n)} \cong \QQ_{b}^{(n)} \QQ_a^{(m)}.
		\]
		\item\label{it:cat_heisenberg_relations-dg2} 
		\label{item-heisenberg-relations-homotopy}
		For any $a, b \in \basecat$ and $n, m \in \NN$  there exists a
		homotopy equivalence in $\hcat\basecat$
		\begin{equation}\label{eq:Heisenberg_map}
			\bigoplus_{i=0}^{\mathclap{\min(m,n)}}\ \Sym^i \Hom_{\basecat}(a, b) \otimes_\kk \PP_{b}^{(n-i)} \QQ_a^{(m-i)} 
			\to
			\QQ_a^{(m)}\PP_{b}^{(n)}, 
		\end{equation}
		and thus the following holds in $H^*(\hcat\basecat)$:
		\[
		\QQ_a^{(m)}\PP_{b}^{(n)} \cong\ \bigoplus_{i=0}^{\mathclap{\min(m,n)}}\ \Sym^i\Hom_{H^*(\basecat)}(a,b) \otimes_\kk \PP_{b}^{(n-i)}\QQ_a^{(m-i)}.
		\]
	\end{enumerate}
\end{Theorem}

\begin{Remark}
	Dually, one can formulate a version of Theorem~\ref{thm:cat_heisenberg_relations-dg}, using the $1$-morphisms $\RR$ instead of $\PP$.
	That is, one has isomorphisms
	\[
	\RR_a^{(m)}\RR_{b}^{(n)} \cong \RR_{b}^{(n)} \RR_a^{(m)}
	\]
	and a homotopy equivalence $2$-morphism
	\begin{equation} 
		\label{eq:Heisenberg_map_R_version}
		\QQ_a^{(m)}\RR_{b}^{(n)}
		\to
		\ \bigoplus_{i=0}^{\mathclap{\min(m,n)}}\ \Sym^i \Hom_{\basecat}(b, a)^* \otimes_\kk \RR_{b}^{(n-i)} \QQ_a^{(m-i)}.
	\end{equation} 
	In the  graded homotopy category, identifying 
	$\Hom_{H^*(\basecat)}(b,a)^*$ with $\Hom_{H^*(\basecat)}(a, Sb)$ and
	$\RR_{b}$ with $\PP_{Sa}$ identifies 
	\eqref{eq:Heisenberg_map_R_version} with 
	\eqref{eq:Heisenberg_map}.
\end{Remark}

\begin{Remark}\label{rem:why_sym}
	The appearance of the symmetric powers of $\Hom_{\basecat}(a,b)$ 
	is related to the following observation.
	Since $\sigma e = e = e \sigma$ for any $\sigma \in \SymGrp{n}$, one sees that any crossings of parallel strands can be absorbed into the symmetrisers.
	In particular, for $\alpha, \beta \in \Hom(a,b)$ one has
	\[
	\begin{tikzpicture}[baseline=-0.5ex]
		\node[sym] (top) at (0,1.25) {$\PP_b^{(2)}$};
		\node[sym] (bottom) at (0,-1.25) {$\PP_a^{(2)}$};
		\draw[->] (bottom.120) -- node[label=left:{$\alpha$}, near start, dot] {} (top.240);
		\draw[->] (bottom.60) -- node[label=right:{$\beta$}, near end, dot] {} (top.300);
	\end{tikzpicture}
	=
	\begin{tikzpicture}[baseline=-0.5ex]
		\node[sym] (top) at (0,1.25)  {$\PP_b^{(2)}$};
		\node[sym] (bottom) at (0,-1.25) {$\PP_a^{(2)}$};
		\draw[->] (bottom.120) -- node[label=left:{$\alpha$}, pos=0.1, dot] {} (top.300);
		\draw[->] (bottom.60) -- node[label=right:{$\beta$}, pos=0.3, dot] {} (top.240);
	\end{tikzpicture}
	=
	\begin{tikzpicture}[baseline=-0.5ex]
		\node[sym] (top) at (0,1.25)  {$\PP_b^{(2)}$};
		\node[sym] (bottom) at (0,-1.25) {$\PP_a^{(2)}$};
		\draw[->] (bottom.120) -- node[label=right:{$\alpha$}, pos=0.7, dot] {} (top.300);
		\draw[->] (bottom.60) -- node[label=left:{$\beta$}, pos=0.85, dot] {} (top.240);
	\end{tikzpicture}
	=
	\begin{tikzpicture}[baseline=-0.5ex]
		\node[sym] (top) at (0,1.25)  {$\PP_b^{(2)}$};
		\node[sym] (bottom) at (0,-1.25) {$\PP_a^{(2)}$};
		\draw[->] (bottom.60) -- node[label=right:{$\alpha$}, pos=0.7, dot] {} (top.300);
		\draw[->] (bottom.120) -- node[label=left:{$\beta$}, pos=0.85, dot] {} (top.240);
	\end{tikzpicture}
	=
	(-1)^{|\alpha|\cdot|\beta|}
	\begin{tikzpicture}[baseline=-0.5ex]
		\node[sym] (top) at (0,1.25) {$\PP_b^{(2)}$};
		\node[sym] (bottom) at (0,-1.25) {$\PP_a^{(2)}$};
		\draw[->] (bottom.60) -- node[label=right:{$\alpha$}, near end, dot] {} (top.300);
		\draw[->] (bottom.120) -- node[label=left:{$\beta$}, near start, dot] {} (top.240);
	\end{tikzpicture}.
	\]
	Thus $i$ parallel strands are naturally labeled by elements of $\Sym^i\Hom_{\basecat}(a,b)$ (using the Koszul sign convention as always).
\end{Remark}

In the remainder of this subsection we set up the maps occurring in
Theorem~\ref{thm:cat_heisenberg_relations-dg} and
prove the relations in
Theorem~\ref{thm:cat_heisenberg_relations-dg}\ref{item-heisenberg-relations-dg}
which hold on the \dg level. In the next subsection we prove the relation
in Theorem~\ref{thm:cat_heisenberg_relations-dg}\ref{item-heisenberg-relations-homotopy}
which holds on the homotopy level. 

We begin with several remarks detailing some \dg $2$-morphisms between 
$\PP^{(n)}$s, $\QQ^{(n)}$s and $\RR^{(n)}$s which can be induced 
from those between $\PP^{n}$s, $\QQ^{n}$s and $\RR^{n}$s:

\begin{Remark}
	\label{remark-canonical-morphisms-in-out-(n)s}
	
	We have the canonical $2$-morphisms defined by $e$ on degree $0$ terms:
	\[ \PP_a^{n} \xrightarrow{e} \PP_a^{(n)} \xrightarrow{e} \PP_a^{n}, \]
	Any $2$-morphism in or out of $\PP_a^{n}$ induces via
	pre- or postcomposition a $2$-morphism in or out of $\PP_a^{(n)}$. 
	
	In the homotopy category, where as in any triangulated category 
	all idempotents are split, $\PP_a^{(n)}$ is a direct summand of 
	$\PP_a^{n}$. The canonical $2$-morphisms above become 
	the morphisms of inclusion of and projection onto this direct summand.  
	Thus, pre- or postcompositions with them are DG
	equivalents of taking the component corresponding to this direct
	summand. 
	
	The same holds for $\QQ$s, $\RR$s, and any $1$-composition of these.  
\end{Remark}

\begin{Remark}
	\label{remark-between-(n)s-symmetric-group-images}
	
	Let $\alpha\colon \PP_a^{n} \rightarrow \PP_a^{n}$ in 
	$\hcat*\basecat$. Recall that when illustrating maps of
twisted complexes\index{twisted complex} we only draw their non-zero components.  In the homotopy category $\PP_a^{n}$ splits as $\PP_a^{(n)} \oplus 
		\overline{\PP_a^{(n)}}$, where 
		$\overline{\PP_a^{(n)}}$ is the complement summand. By Remark~\ref{remark-canonical-morphisms-in-out-(n)s}, 
		the $2$-morphism 
		\begin{equation*}
			e\alpha{e} \coloneqq
			\begin{tikzcd}
				\dots
				\ar{r}{\;e\;}
				&
				\PP_a^{n} 
				\ar{r}{1 - e}
				&
				\PP_a^{n} 
				\ar{r}{\;e\;}
				&
				\PP_a^{n} 
				\ar{r}{1 - e}
				&
				\PP_a^{n} 
				\ar{d}{e{\alpha}e}
				\\
				\dots
				\ar{r}{\;e\;}
				&
				\PP_a^{n} 
				\ar{r}{1 - e}
				&
				\PP_a^{n} 
				\ar{r}{\;e\;}
				&
				\PP_a^{n} 
				\ar{r}{1- e}
				&
				\PP_a^{n}.
			\end{tikzcd}
		\end{equation*}
		gives in the homotopy category the $\PP_a^{(n)} \rightarrow \PP_a^{(n)}$ 
		component of $\alpha$. Note, that so does 
		\begin{equation*}
			\alpha{e} \coloneqq
			\begin{tikzcd}
				\dots
				\ar{r}{\;e\;}
				&
				\PP_a^{n} 
				\ar{r}{1 - e}
				&
				\PP_a^{n} 
				\ar{r}{\;e\;}
				&
				\PP_a^{n} 
				\ar{r}{1 - e}
				&
				\PP_a^{n} 
				\ar{d}{{\alpha}e}
				\\
				\dots
				\ar{r}{\;e\;}
				&
				\PP_a^{n} 
				\ar{r}{1 - e}
				&
				\PP_a^{n} 
				\ar{r}{\;e\;}
				&
				\PP_a^{n} 
				\ar{r}{1- e}
				&
				\PP_a^{n}.
			\end{tikzcd}
		\end{equation*}
		
		However on the \dg level $\alpha{e}$ contains extra information. Indeed, both morphisms
		are defined by a map of twisted
complexes\index{twisted complex} with a single component $\PP_a^{n} \rightarrow \PP_a^{n}$. These two maps $\PP_a^{n} \rightarrow \PP_a^{n}$, 
		$\alpha{e}\alpha$ and $\alpha{e}$, 
		are different even in the homotopy category: $e\alpha{e}$ has a single component $\PP_a^{(n)} \rightarrow \PP_a^{(n)}$, 
		while $\alpha{e}$ also has
		a $\PP_a^{(n)} \rightarrow \overline{\PP_a^{(n)}}$ component. 
	
	If $\alpha$ supercommutes with $e$ we have another $2$-morphism 
	$\PP_a^{(n)} \rightarrow \PP_a^{(n)}$ given by
	\begin{equation*}
		\tilde{\alpha} \coloneqq
		\begin{tikzcd}
			\dots
			\ar{r}{\;e\;}
			&
			\PP_a^{n} 
			\ar{r}{1 - e}
			\ar{d}{\alpha}
			&
			\PP_a^{n} 
			\ar{r}{\;e\;}
			\ar{d}{\alpha}
			&
			\PP_a^{n} 
			\ar{r}{1 - e}
			\ar{d}{\alpha}
			&
			\PP_a^{n} 
			\ar{d}{\alpha}
			\\
			\dots
			\ar{r}{\;e\;}
			&
			\PP_a^{n} 
			\ar{r}{1 - e}
			&
			\PP_a^{n} 
			\ar{r}{\;e\;}
			&
			\PP_a^{n} 
			\ar{r}{1- e}
			&
			\PP_a^{n}.
		\end{tikzcd}
	\end{equation*}
	It is homotopic to $\alpha{e}$ and thus to $e\alpha{e}$: consider the degree
	$-1$ twisted complex map comprising degree $i \rightarrow (i-1)$
	components given by $\alpha$. As operations on $\alpha$, 
	both preserve the degree and commute with the differential. 
	In particular, if $\alpha$
	is closed of degree $0$, so are $\alpha{e}$ and $\tilde{\alpha}$.

	When $\alpha$ is an image of some $\sigma \in S_n$ under $\Xi^{\PP}$, 
	it commutes with $e$ because in $\kk[S_n]$
	\[ \sigma e = e = e \sigma. \]
	Hence $\tilde{\sigma}$ is well defined and homotopic to 
	$e\sigma{e} = e$. Thus, all $\tilde{\sigma}$ are homotopic to 
	$\id = \tilde{\id}$. 
	
	Similar considerations apply to a $1$-composition of
	several powers of $\PP$s. Let $\sigma \in S_m$ and
	let $\alpha$ be a $2$-morphism 
	\[ \alpha \colon \PP_{a_1}^{n_1} \PP_{a_2}^{n_2} \dots \PP_{a_m}^{n_m} 
	\rightarrow \PP_{a_{\sigma(1)}}^{n_{\sigma(1)}} 
	\PP_{a_{\sigma(2)}}^{n_{\sigma(2)}}
	\dots \PP_{a_{\sigma(m)}}^{n_{\sigma(m)}}.\]
	If $\alpha$ supercommutes with the symmetriser $e$ of each 
	$\PP_{a_i}^{n_i}$, then we have a map 
	\[ \tilde{\alpha} \colon \PP_{a_1}^{(n_1)} \PP_{a_2}^{(n_2)} \dots
	\PP_{a_m}^{(n_m)} 
	\rightarrow \PP_{a_{\sigma(1)}}^{(n_{\sigma(1)})} 
	\PP_{a_{\sigma(2)}}^{(n_{\sigma(2)})}
	\dots \PP_{a_{\sigma(m)}}^{(n_{\sigma(m)})}\]
	defined by the twisted complex map comprising degree $i \rightarrow i$ 
	components $\sum \alpha$.  Note that $\PP_{a_1}^{(n_1)}
	\PP_{a_2}^{(n_2)} \dots \PP_{a_m}^{(n_m)}$ is the product of the 
	twisted complexes defining each individual $\PP_{a_j}^{(n_j)}$ and thus a
	twisted complex whose degree $i$ element is the direct sum 
	$\bigoplus_{i_1 + \dots + i_n = i} 
	\PP_{a_1}^{n_1} \PP_{a_2}^{n_2} \dots \PP_{a_m}^{n_m}$
	where the multi-index $(i_1, \dots, i_n)$ gives the degrees in each 
	twisted complex of the product where each $\PP_{a_j}^{(n_j)}$ comes
	from. By $\sum \alpha$ above we mean the map sending each  
	$(i_1, \dots, i_n)$-indexed summand of the source to the 
	$(i_1, \dots, i_n)$-indexed summand of the target via $\alpha$. 
	In the simple case when $\sigma = \id$ and 
	$\alpha = \alpha_1 \alpha_2 \dots \alpha_n$ with 
	$\alpha_i\colon \PP_{a_i}^{n_i} \rightarrow \PP_{a_i}^{n_i}$ 
	we get $\tilde{\alpha} = \tilde{\alpha}_1 \dots \tilde{\alpha}_n$.  
	
	Now, let $n = \sum_{i = 0}^{m} n_i$, let $\phi: S_m \hookrightarrow S_n$ be
	the embedding of $S_m$ as the permutation group\index{permutation group} of $n_i$-tuples of
	elements, and let $S_{n_1} \times \dots \times S_{n_m} \leq S_n$
	be the subgroup of permutations which respect the partition $(n_1,
	\dots, n_m)$. If $\rho \in S_n$ is such that $\Xi^{\PP}(\rho)$ 
	is a morphism 
	\[ \PP_{a_1}^{n_1} \PP_{a_2}^{n_2} \dots \PP_{a_m}^{n_m} 
	\rightarrow \PP_{a_{\sigma(1)}}^{n_{\sigma(1)}} 
	\PP_{a_{\sigma(2)}}^{n_{\sigma(2)}}
	\dots \PP_{a_{\sigma(m)}}^{n_{\sigma(m)}}, \]
	then $\rho = \phi(\sigma) \tau$ for some 
	\[ \tau = (\tau_1, \dots, \tau_m) \in S_{n_1} \times \dots \times S_{n_m}. \]
	Indeed, by its definition the $2$-morphism $\Xi^{\PP}(\rho)$ has only
	unmarked strings which can only go from $\PP_{a_i}$ to $\PP_{a_i}$ and 
	not some other $\PP_{a_j}$. Thus $\rho$ must send each $n_i$-tuple in 
	the partition $(n_1, \dots, n_m)$ of $n$, in some order,
	to the corresponding $n_i = n_{\sigma(\sigma^{-1}(i))}$-tuple 
	in the permuted partition $(n_{\sigma(1)}, \dots, n_{\sigma(m)})$. 
	Thus doing $\rho$ is the same as individually permuting the elements
	of each $n_i$-tuple by some $\tau \in S_{n_1} \times \dots \times
	S_{n_m}$ and then doing $\phi(\sigma)$ to permute the $n_i$-tuples.
	
	Now, $\tau$ commutes with the symmetrisers $e_{n_i} \in \kk[\SymGrp{n_i}] \subseteq \kk[\SymGrp n]$ since
	\[  \tau e_{n_i} = (\tau_1, \dots, \tau_{i-1}, e_{n_i},
	\tau_{i+1}, \dots, \tau_m) = e_{n_i} \tau. \]
	The corresponding map 
	\[ \tilde{\tau}\colon  
	\PP_{a_1}^{(n_1)} \PP_{a_2}^{(n_2)} \dots \PP_{a_m}^{(n_m)} \rightarrow 
	\PP_{a_1}^{(n_1)} \PP_{a_2}^{(n_2)} \dots \PP_{a_m}^{(n_m)}, \]
	is the $1$-composition of the maps 
	$\tilde{\tau_i}\colon \PP_{a_i}^{n_i} \rightarrow \PP_{a_i}^{n_i}$
	described above, each of which is homotopic to $\id$. Thus
	$\tilde{\tau}$ itself is homotopic to $\id$. 
	
	On the other hand, $\phi(\sigma)$ commutes with
	the symmetrisers $e_{n_i}$ since their action is contained
	within each $n_i$-tuple. The corresponding map 
	\begin{equation}
		\label{eqn-tilde-sigma-isomorphism} 
		\widetilde{\phi(\sigma)}\colon \PP_{a_1}^{(n_1)} \PP_{a_2}^{(n_2)} \dots
		\PP_{a_m}^{(n_m)} 
		\rightarrow \PP_{a_{\sigma(1)}}^{(n_{\sigma(1)})} 
		\PP_{a_{\sigma(2)}}^{(n_{\sigma(2)})}
		\dots \PP_{a_{\sigma(m)}}^{(n_{\sigma(m)})}, 
	\end{equation}
	is then a 
	$2$-isomorphism, whose inverse is 
	$\widetilde{\phi(\sigma^{-1})}$. In particular, 
	in the simplest possible case $m = 2$ and $\sigma = (12)$,
	we get a $2$-isomorphism  
	\begin{equation}
		\label{eqn-tilde-12-isomorphism} 
		\widetilde{\phi(12)}\colon \PP_{a_1}^{(n_1)} \PP_{a_2}^{(n_2)}
		\xrightarrow{\sim} \PP_{a_2}^{(n_2)} \PP_{a_1}^{(n_1)}. 
	\end{equation}
	Similar considerations apply to $1$-compositions of powers of $\QQ$s
	and $\RR$s. 
\end{Remark}

\begin{Remark}
	\label{remark-symmetric-power-labels-on-thick-strands}
	Let $\alpha\colon \PP^n_a \rightarrow \PP_b^n$. Arguing as in
	Remark~\ref{remark-between-(n)s-symmetric-group-images} we see that if
	$\alpha$ commutes with the symmetrisers of $\PP^n_a$ and $\PP^n_b$, it
	defines a $2$-morphism 
	\[ \tilde{\alpha}\colon \PP^{(n)}_a \rightarrow \PP^{(n)}_b. \]
	
	Suppose such $\alpha$ lies in the image of the functor $\Xi^{\PP}$. 
	Then, as per Remark~\ref{rem:Xiprime}, we have
	\[ \alpha = \Xi^{\PP}(\beta)\;\Xi^{\PP}(\sigma), \quad \quad\text{for } 
	\beta \in 
	\Hom_{\basecat^{\otimes n}}(a^n,b^n),\; \sigma \in S_n. 
	\]
	We saw in Remark~\ref{remark-between-(n)s-symmetric-group-images}
	that $\Xi^{\PP}(\sigma)\colon \PP^{n}_a \rightarrow \PP^{n}_a$
	commutes with $e$ and the corresponding map
	$\tilde{\sigma}\colon \PP^{(n)}_a \rightarrow \PP^{(n)}_a$ is
	homotopic to the identity. 
	
	For any $\tau \in S_n$ we have in $\sym^n\basecat$
	\[ \tau \circ \beta = \tau(\beta) \circ \tau, \]
	and hence $\beta$ commutes with $e$ if and only if
	$e(\beta) = \beta$. In other words, if and only if $\beta$ lies 
	in the image of the canonical embedding 
	\[ \psi\colon \Sym^n \Hom_\basecat(a,b) 
	\rightarrow 
	\Hom_{\basecat^{\otimes n}}(a^n,b^n).
	\]
	In particular, for any $\gamma \in \Sym^n \Hom_\basecat(a,b)$, 
	$\psi(\gamma)$ commutes with $e$ and hence its image under
	$\Xi^{\PP}$ gives a well-defined map 
	\[ \widetilde{\psi(\gamma)}\colon \PP^{(n)}_a \rightarrow \PP^{(n)}_b, 
	\quad \quad \gamma \in \Sym^n \Hom_\basecat(a,b). \]
	By the above, up to homotopy, all the maps $\PP^{(n)}_a \rightarrow
	\PP^{(n)}_b$ induced from those in the image of $\Xi^{\PP}$ 
	are of this form. 
\end{Remark}

Throughout this section, we draw diagrams to define morphisms 
between $1$-compositions of $\PP^{(n)}$s and $\QQ^{(n)}$s. 

Any diagram defining a morphism $\alpha$ between the 
corresponding $1$-compositions of $\PP^{n}$s and $\QQ^{n}$s
defines a morphism $e{\alpha}e$ between 
those of $\PP^{(n)}$s and $\QQ^{(n)}$s as detailed in 
Remark~\ref{remark-canonical-morphisms-in-out-(n)s}. 

If $\alpha$ commutes with the symmetriser differentials of source and
target $1$-compositions of $\PP^{n}$s and $\QQ^{n}$s, it furthermore
defines a morphism $\tilde{\alpha}$ between 
$1$-compositions of $\PP^{(n)}$s and $\QQ^{(n)}$s as detailed in 
Remark~\ref{remark-between-(n)s-symmetric-group-images}. 

The two morphisms $e{\alpha}e$ and $\tilde{\alpha}$ thus produced
are homotopic. In this subsection, working on \dg level, we only want to 
work with the $(\widetilde{-})$ construction of
Remark~\ref{remark-between-(n)s-symmetric-group-images}, as it 
can produce termwise \dg isomorphisms\index{DG isomorphism} of 
twisted complexes. 
Thus we only consider the diagrams which commute with the symmetriser 
differentials. In Section~\ref{subsec:cat_heisenberg_relations_homotopy}, 
working in the homotopy category, we employ arbitrary diagrams and use 
the symmetrising $e(-)e$ construction of
Remark~\ref{remark-canonical-morphisms-in-out-(n)s}. 
It produces twisted complex maps concentrated in degree $0$, which
can only be homotopy equivalences. We stress again, that in the
homotopy category there is no difference between the two
constructions. 

It is crucial for our proofs that the construction of a morphism 
between $1$-compositions of $\PP^{(n)}$s and $\QQ^{(n)}$s from a
diagram defining the morphism between the corresponding
$1$-compositions of $\PP^{(n)}$s and $\QQ^{(n)}$s is compatible
with $2$-composition, that is –– with vertical concatenation of
diagrams. For the $(\widetilde{-})$ construction this is automatic. 
For the $e(-)e$ construction this means that any two diagrams
$\alpha$ and $\beta$ we compose must satisfy  
\begin{align}
	\label{eqn-the-condition-for-composition-of-diagrams-between-Pn-and-Qns}
	e{\alpha}e{\beta}e = e{\alpha}{\beta}e 
\end{align}

In this subsection, we use diagrams which commute with the
symmetrisers and use the $(\widetilde{-})$ construction, so this is 
not an issue. In \S\ref{subsec:cat_heisenberg_relations_homotopy}
we use arbitrary diagrams and use the $e(-)e$ construction, 
so we check the condition 
\eqref{eqn-the-condition-for-composition-of-diagrams-between-Pn-and-Qns}
by hand. In Section~\ref{subsec:gfid}, this is a simple idempotent 
absorption argument: the symmetriser idempotent 
of a subgroup can be absorbed into the symmetriser idempotent of the
group. In Section~\ref{subsec:fgid} a more elaborate argument is
necessary and we show that
\eqref{eqn-the-condition-for-composition-of-diagrams-between-Pn-and-Qns}
only holds up to a desired numerical coefficient.

We use the following conventions to simplify the diagrams in 
the context of this section: 

\begin{enumerate}
	\item A box containing $a^n$  at the top or the bottom of the
	diagram denotes both $1$-morphisms $\PP_a^{(n)}$ and $\QQ_a^{(n)}$:
	\begin{center}
		\begin{tikzpicture}
			\node[sym] (tr) at (1,1) {$a^{n}$};
		\end{tikzpicture}
	\end{center}
	We never use type $\RR$ $1$-morphisms, so the orientation of 
	the attached strands makes clear what is meant. 
	
	When such box occurs inside the diagram, it is 
	the symmetriser idempotent $e_{S_n}$. Note that in the context of
	$e(\widetilde{-})e$ construction, the boxes at the top and the bottom
	can also be viewed as occurences of symmetriser idempotents. We mainly
	use this notation to differentiate between the LHS and the RHS of
	the condition
	\eqref{eqn-the-condition-for-composition-of-diagrams-between-Pn-and-Qns}. 
	For example, if we start with diagrams
	\[
	\alpha = 
	\begin{tikzpicture}[baseline=-0.5ex]
		\node (tl) at (-1,1.25)  {$\QQ_a^m$};
		\node (tr) at (1,1.25)   {$\PP_a^n$};
		\node (bl) at (-1,-1.25) {$\PP_a^{n-i}$};
		\node (br) at (1,-1.25)  {$\QQ_a^{m-i}$};
		\draw[down, <-, many] (br.60) -- node[near start, desc] {$m-i$} (tl.240);
		\draw[cc, ->, many] (tl.300) to[out=330, in=210] (tr.240);
		\draw[up, ->, many] (bl.120) -- node[near start, desc] {$n-i$} (tr.300);
	\end{tikzpicture}
	\quad \quad \quad
	\beta =
	\begin{tikzpicture}[baseline=-0.5ex]
		\node (tl) at (-1,1.25)  {$\PP_a^{n-i}$};
		\node (tr) at (1,1.25)   {$\QQ_a^{m-i}$};
		\node (bl) at (-1,-1.25) {$\QQ_a^{m}$};
		\node (br) at (1,-1.25)  {$\PP_a^{n}$};
		\draw[up, ->, many] (br.60) -- node[near end, desc] {$m-i$} (tl.240);
		\draw[down, <-, many] (bl.120) -- node[near end, desc] {$n-i$} (tr.300);
		\draw[cc, <-, many] (bl.60) to[out=30, in=150] (br.120);
	\end{tikzpicture}
	\]
	then the induced morphisms between 
	$\QQ_a^{(m)}\PP_a^{(n)}$ and 
	$\PP_a^{(n-i)}\QQ_a^{(m-i)}$ are
	\[
	e{\alpha}e = 
	\begin{tikzpicture}[baseline=-0.5ex]
		\node[sym] (tl) at (-1,1.25)  {$a^m$};
		\node[sym] (tr) at (1,1.25)   {$a^n$};
		\node[sym] (bl) at (-1,-1.25) {$a^{n-i}$};
		\node[sym] (br) at (1,-1.25)  {$a^{m-i}$};
		\draw[down, <-, many] (br.60) -- node[near start, desc] {$m-i$} (tl.240);
		\draw[cc, ->, many] (tl.300) to[out=330, in=210] (tr.240);
		\draw[up, ->, many] (bl.120) -- node[near start, desc] {$n-i$} (tr.300);
	\end{tikzpicture}
	\quad \quad \quad 
	e{\beta}e =
	\begin{tikzpicture}[baseline=-0.5ex]
		\node[sym] (tl) at (-1,1.25)  {$a^{n-i}$};
		\node[sym] (tr) at (1,1.25)   {$a^{m-i}$};
		\node[sym] (bl) at (-1,-1.25) {$a^{m}$};
		\node[sym] (br) at (1,-1.25)  {$a^{n}$};
		\draw[up, ->, many] (br.60) -- node[near end, desc] {$m-i$} (tl.240);
		\draw[down, <-, many] (bl.120) -- node[near end, desc] {$n-i$} (tr.300);
		\draw[cc, <-, many] (bl.60) to[out=30, in=150] (br.120);
	\end{tikzpicture}
	\]
	and the LHS and the RHS of 
	\eqref{eqn-the-condition-for-composition-of-diagrams-between-Pn-and-Qns}
	are
	\[
	e{\alpha}e{\beta}e = 
	\begin{tikzpicture}[baseline=-0.5ex]
		\node[sym] (tl) at (-1,1.5) {$a^{m}$};
		\node[sym] (tr) at (1,1.5) {$a^{n}$};
		\node[sym] (ml) at (-1,0) {$a^{n-i}$};
		\node[sym] (mr) at (1,0) {$a^{m-i}$};
		\node[sym] (bl) at (-1,-1.5) {$a^{m}$};
		\node[sym] (br) at (1,-1.5) {$a^{n}$};
		\draw[cc, ->, many] (tl.300) to[out=330, in=210] (tr.240);
		\draw[cc, <-, many] (bl.60) to[out=30, in=150] (br.120);
		\draw[down, <-, many] (bl.120) to[out=90, in=270] (mr.270);
		\draw[down, <-, many] (mr.90) to[out=90,in=270] (tl.240);
		\draw[up, ->, many] (br.60) to[out=90, in=270] (ml.270);
		\draw[up, ->, many] (ml.90) to[out=90, in=270] (tr.300);
	\end{tikzpicture}
	\quad \quad \quad 
	e{\alpha}{\beta}e = 
	\begin{tikzpicture}[baseline=-0.5ex]
		\node[sym] (tl) at (-1,1.5) {$a^{m}$};
		\node[sym] (tr) at (1,1.5) {$a^{n}$};
		\node[sym] (bl) at (-1,-1.5) {$a^{m}$};
		\node[sym] (br) at (1,-1.5) {$a^{n}$};
		\draw[cc, ->, many] (tl.300) to[out=330, in=210] node[desc] {$i$} (tr.240);
		\draw[cc, <-, many] (bl.60) to[out=30, in=150] node[desc] {$i$} (br.120);
		\draw[down, <-, many] (bl.120) to[out=30, in=270] (0.75,0) node[desc] {$m-i$} to[out=90, in=330] (tl.240);
		\draw[up, ->, many] (br.60) to[out=150, in=270] (-0.75,0) node[desc] {$n-i$} to[out=90, in=210] (tr.300);
	\end{tikzpicture}.
	\]

	\item Upwards strands are coloured blue, downwards strands red 
	and counterclockwise turns green (clockwise turns will not appear 
	in the argument):
	\begin{center}
		\begin{tikzpicture}
			\node[sym] (t) at (0,1) {$a$};
			\node[sym] (b) at (0,0) {$a$};
			\draw[up, ->] (b) --  (t);
		\end{tikzpicture}\quad\quad
		\begin{tikzpicture}
			\node[sym] (t) at (0,1) {$a$};
			\node[sym] (b) at (0,0) {$a$};
			\draw[down, ->] (t) --  (b);
		\end{tikzpicture}\quad\quad
		\begin{tikzpicture}[baseline=-0.7cm]
			\node[sym] (l) at (0,0) {$a$};
			\node[sym] (r) at (1.5,0) {$a$};
			\draw[cc, ->] (l.300) to[out=330, in=210]  (r.240);
			
		\end{tikzpicture}
	\end{center}
	This colouring is solely for the convenience of the reader 
	and does not have any additional meaning.
	
	\item We denote multiple unadorned parallel strands all starting at
	one box and ending at another box by a single thick strand
	labelled with the strand multiplicity.  
	
	Thus, an upward braid of thick strands of multiplicities
	$n_1, \dots, n_m$ permuting $m$ boxes $a_1^{n_1}$, \dots, $a_m^{n_m}$ 
	defines a $2$-isomorphism between the corresponding $1$-compositions 
	of $\PP_{a_i}^{n_i}$. It depends only on the  
	permutation type $\sigma \in S_m$ of the braid. Moreover, as seen in 
	Remark~\ref{remark-between-(n)s-symmetric-group-images}, 
	it commutes with the symmetrisers and thus defines the
	$2$-isomorphism $\widetilde{\phi(\sigma)}$ of 
	\eqref{eqn-tilde-sigma-isomorphism} 
	between the corresponding
	$1$-compositions of $\PP_{a_i}^{(n_i)}$. 
	
	For example, the $2$-isomorphism $\PP_{a}^{(m)} \PP_{b}^{(n)} \rightarrow 
	\PP_{b}^{(n)} \PP_{a}^{(m)}$ of \eqref{eqn-tilde-12-isomorphism} is
	\begin{equation*}
		\begin{tikzpicture}[baseline=-0.5ex]
			\node[sym] (tl) at (-1,1) {$b^{n}$};
			\node[sym] (tr) at (1,1) {$a^{m}$};
			\node[sym] (bl) at (-1,-1) {$a^{m}$};
			\node[sym] (br) at (1,-1) {$b^{n}$};
			\draw[up, ->, many] (br.90) -- node[near end, desc] {$n$} (tl.270);
			\draw[up, ->, many] (bl.90) -- node[near end, desc] {$m$} (tr.270);
		\end{tikzpicture}
	\end{equation*}

	\item A thick strand from box $a^n$ to box $b^n$ labelled with 
	an element 
	\[\alpha \in \Sym^n \Hom_\basecat(a,b)\]
	denotes the $2$-morphism 
	\[\Xi^\PP(\psi(\alpha))\] of Remark 
	\ref{remark-symmetric-power-labels-on-thick-strands}. 
	As it commutes with the symmetrisers, it defines 
	{ a} $2$-morphism $\widetilde{\psi(\alpha)}$ between the corresponding 
	symmetric powers of $\PP$s or $\QQ$s.

	For example, suppose that $\alpha$ is an elementary symmetric tensor
	\[ \alpha = \alpha_1 \stimes \dotsb \stimes \alpha_n \coloneqq \frac{1}{n!}\sum_{\sigma \in S_n} \alpha_{\sigma(1)} \dotsb  \alpha_{\sigma(n)} .\]
	A thick strand labelled $\alpha$ is the sum of all permutations of
	$n$ parallel strands adorned with the $\alpha_i$s.
	In particular, for even degree $\alpha$ and $\beta$ we have
	\[
	\begin{tikzpicture}[baseline=-0.5ex]
		\node[sym] (top) at (0,0.75) {$a^2$};
		\node[sym] (bottom) at (0,-0.75) {$a^2$};
		\draw[->, up, many] (bottom.90) -- node[dot, label=right:{$\alpha \stimes \beta$}] {} (top.270);
	\end{tikzpicture}
	= 
	\frac{1}{2}
	\left(
	\begin{tikzpicture}[baseline=-0.5ex]
		\node (top) at (0,0.75) {$\PP_b^2$};
		\node (bottom) at (0,-0.75) {$\PP_a^2$};
		\draw[->] (bottom.120) -- node[label=left:{$\alpha$}, near start, dot] {} (top.240);
		\draw[->] (bottom.60) -- node[label=right:{$\beta$}, near end, dot] {} (top.300);
	\end{tikzpicture}
	+ 
	\begin{tikzpicture}[baseline=-0.5ex]
		\node (top) at (0,0.75) {$\PP_b^2$};
		\node (bottom) at (0,-0.75) {$\PP_a^2$};
		\draw[->] (bottom.120) -- node[label=left:{$\beta$}, near start, dot] {} (top.240);
		\draw[->] (bottom.60) -- node[label=right:{$\alpha$}, near end, dot] {} (top.300);
	\end{tikzpicture}
	\right)
	\]
\end{enumerate}

With the above notation in mind, we have immediately:

\begin{proof}[Proof of
	Theorem~\ref{thm:cat_heisenberg_relations-dg}\ref{item-heisenberg-relations-dg}]

	We claim that the $2$-morphisms
	\[
	\begin{tikzpicture}[baseline=-0.5ex]
		\node[sym] (tl) at (-1,1) {$b^{n}$};
		\node[sym] (tr) at (1,1) {$a^{m}$};
		\node[sym] (bl) at (-1,-1) {$a^{m}$};
		\node[sym] (br) at (1,-1) {$b^{n}$};
		\draw[up, ->, many] (br.90) -- node[near end, desc] {$n$} (tl.270);
		\draw[up, ->, many] (bl.90) -- node[near end, desc] {$m$} (tr.270);
	\end{tikzpicture}
	\text{ and }
	\begin{tikzpicture}[baseline=-0.5ex]
		\node[sym] (tl) at (-1,1) {$a^{m}$};
		\node[sym] (tr) at (1,1) {$b^{n}$};
		\node[sym] (bl) at (-1,-1) {$b^{n}$};
		\node[sym] (br) at (1,-1) {$a^{m}$};
		\draw[up, ->, many] (br.90) -- node[near start, desc] {$m$} (tl.270);
		\draw[up, ->, many] (bl.90) -- node[near start, desc] {$n$} (tr.270);
	\end{tikzpicture}. 
	\]
	are inverse to each other. Indeed, as $\widetilde{(-)}$ is
	compatible with the compositions, we can vertically concatenate 
	the diagrams and then apply
	Lemma~\ref{lem:basic-relations-dg}~\ref{it:basic-relations-dg3}
	multiple times to get the claim.
	Thus we have 
	the relation $\PP_a^{(m)}\PP_{b}^{(n)} \cong \PP_{b}^{(n)} \PP_a^{(m)}$. The second relation $\QQ_a^{(m)}\QQ_{b}^{(n)} \cong \QQ_{b}^{(n)} \QQ_a^{(m)}$
	is implied by a similar pair of diagrams but involving
downward strands\index{downward strand}.
\end{proof}

\section{The Heisenberg commutation relations: homotopy
	level}\label{subsec:cat_heisenberg_relations_homotopy}

Next, let us construct the $2$-morphism giving~\eqref{eq:Heisenberg_map}.
Defining a map 
\[
g_i\colon \Sym^i\Hom_{\basecat}(a,b) \otimes_\kk \PP_{b}^{(n-i)}\QQ_a^{(m-i)} \to \QQ_a^{(m)}\PP_{b}^{(n)}.
\]
is equivalent to defining a map
\[
\tilde g_i\colon \Sym^i\Hom_{\basecat}(a,b) \to \Hom_{\hcat\basecat}\bigl(\PP_{b}^{(n-i)}\QQ_a^{(m-i)},\, \QQ_a^{(m)}\PP_{b}^{(n)}\bigr).
\]
For any $\alpha \in \Sym^i\Hom(a,b)$, define
\[
\tilde g_i(\alpha) = 
\begin{tikzpicture}[baseline=-0.5ex]
	\node[sym] (tl) at (-1,1.25)  {$a^m$};
	\node[sym] (tr) at (1,1.25)   {$b^n$};
	\node[sym] (bl) at (-1,-1.25) {$b^{n-i}$};
	\node[sym] (br) at (1,-1.25)  {$a^{m-i}$};
	\draw[down, <-, many] (br.60) -- node[near start, desc] {$m-i$} (tl.240);
	\draw[cc, ->, many] (tl.300) to[out=330, in=210] node[near start, dot] {} node[above] {$\alpha$} (tr.240);
	\draw[up, ->, many] (bl.120) -- node[near start, desc] {$n-i$} (tr.300);
\end{tikzpicture}
\]
Now, define $g \coloneqq \sum_i g_i$.
The map $g$ does in general not have an inverse on the \dg level.
However, we can define an inverse map $f$ in $H^*(\hcat\basecat)$.
To define a map 
\[
f_i\colon \QQ_a^{(m)}\PP_{b}^{(n)} \to \Sym^i\Hom_{H^*(\basecat)}(a,b) \otimes_\kk \PP_{b}^{(n-i)}\QQ_a^{(m-i)},
\]
we define a map
\[
\tilde f_i\colon \bigr(\Sym^i\Hom_{H^*(\basecat)}(a,b)\bigl)^* \to \Hom_{\hcat\basecat}\bigl(\QQ_a^{(m)}\PP_{b}^{(n)},\, \PP_{b}^{(n-i)}\QQ_a^{(m-i)}\bigr),
\]
or equivalently a map
\[
\tilde f_i' \colon \Sym^i \Hom_{H^*(\basecat)}(b,Sa) \to
\Hom_{H^*(\hcat\basecat)}\bigl(\QQ_a^{(m)}\PP_{b}^{(n)},\, \PP_{b}^{(n-i)}\QQ_a^{(m-i)}\bigr),
\]
where we use the identification $\Hom(a,\, b)^* = \Hom(b,\, Sa)$ in $H^*(\basecat)$.
Set
\[
\tilde f_i'(\alpha) = 
\begin{tikzpicture}[baseline=-0.5ex]
	\node[sym] (tl) at (-1,1.25)  {$b^{n-i}$};
	\node[sym] (tr) at (1,1.25)   {$a^{m-i}$};
	\node[sym] (bl) at (-1,-1.25) {$a^{m}$};
	\node[sym] (br) at (1,-1.25)  {$b^{n}$};
	\draw[up, ->, many] (br.60) -- node[near end, desc] {$m-i$} (tl.240);
	\draw[down, <-, many] (bl.120) -- node[near end, desc] {$n-i$} (tr.300);
	\draw[cc, <-, many] (bl.60) to[out=30, in=150] node[near end, dot] {} node[above right] {$\alpha$} (br.120);
\end{tikzpicture}
\]
Finally, set 
\[
f = \sum_i i! \binom{m}{i} \binom{n}{i} f_i.
\]

We now show that $f$ and $g$ are inverse isomorphisms in $H^*(\hcat\basecat)$.
The proof is entirely combinatorial: one composition follows from
repeated application of the second relation
in~\eqref{eq:up_down_braids-add}, which holds in $H^*(\hcat\basecat)$
by Lemma~\ref{lem:up_down_braids-homotopy}. The other follows
from relations \eqref{eq:circle_and_curl-dg}.
The reader uninterested in combinatorics may want to skip ahead to Section~\ref{subsec:transposed_generators}.

\subsection{The composition $g \circ f$ is the identity.}\label{subsec:gfid}
For simplicity, in this section we denote the image of any closed degree zero $2$-morphism of $\hcat\basecat$ in $H^*(\hcat\basecat)$ by the same symbol as the original $2$-morphism.

\begin{Remark}\label{rem:f_in_basis}
	Choose a basis  $\{\beta_\ell\}$ for $H^*(\Hom_\basecat(a,\, b))$ with dual basis $\{\beta_\ell^\dual\}$ of $H^*(\Hom_\basecat(b,\, Sa))$.
	Let $I=(\ell_1,\dots,\ell_i)$ be a multi-index.
	Then the dual to $\beta_{\ell_1} \stimes \dotsb \stimes \beta_{\ell_i} \in \Sym^i H^*(\Hom_\basecat(a,\, b))$ is $\frac1{m(I)}\beta_{\ell_1}^\dual \stimes \dotsb \stimes \beta_{\ell_i}^\dual \in \Sym^i H^*(\Hom_\basecat(b,\, Sa))$, where $m(I) = \prod m_j(I)!$ with $m_j(I)$ the number of times the index $j$ appears in $I$.
	
	Let $\dcross00{m-i}{n-i}i$ denote the $2$-morphism $g_i \circ f_i$. We have
	\[
	\dcross00{m-i}{n-i}i = \,
	\sum_{\ell_1,\dotsc,\ell_i}\ 
	\begin{tikzpicture}[baseline={($(0,1)-(0,.5ex)$)}]
		\node[sym] (tl) at (-1,3.5)  {$a^m$};
		\node[sym] (tr) at (1,3.5)   {$b^n$};
		\node[sym] (ml) at (-1,1)  {$b^{n-i}$};
		\node[sym] (mr) at (1,1)   {$a^{m-i}$};
		\node[sym] (bl) at (-1,-1.5) {$a^{m}$};
		\node[sym] (br) at (1,-1.5)  {$b^{n}$};
		\draw[up, ->, many] (br.60) -- node[near end, desc] {$m-i$} (ml.240);
		\draw[down, <-, many] (bl.120) -- node[near end, desc] {$n-i$} (mr.300);
		\draw[cc, <-, many] (bl.60) to[out=30, in=150] node[near end, dot] {} node[above right] {$\beta_{\ell_1}^\dual \stimes \dotsb \stimes \beta_{\ell_i}^\dual$} (br.120);
		\draw[down, <-, many] (mr.60) -- node[near start, desc] {$m-i$} (tl.240);
		\draw[cc, ->, many] (tl.300) to[out=330, in=210] node[near start, dot] {} node[below] {$\beta_{\ell_1} \stimes \dotsb \stimes \beta_{\ell_i}$} (tr.240);
		\draw[up, ->, many] (ml.120) -- node[near start, desc] {$n-i$} (tr.300);
	\end{tikzpicture}
	\]
\end{Remark}

We first verify that the composition condition
\eqref{eqn-the-condition-for-composition-of-diagrams-between-Pn-and-Qns}
holds:
\[
\begin{tikzpicture}[baseline=-0.5ex]
	\node[sym] (tl) at (-1,1.5) {$a^{m}$};
	\node[sym] (tr) at (1,1.5) {$b^{n}$};
	\node[sym] (ml) at (-1,0) {$b^{n-i}$};
	\node[sym] (mr) at (1,0) {$a^{m-i}$};
	\node[sym] (bl) at (-1,-1.5) {$a^{m}$};
	\node[sym] (br) at (1,-1.5) {$b^{n}$};
	\draw[cc, ->, many] (tl.300) to[out=330, in=210] node[dot, near start] {} (tr.240);
	\draw[cc, <-, many] (bl.60) to[out=30, in=150] node[dot, near end] {} (br.120);
	\draw[down, <-, many] (bl.120) to[out=90, in=270] (mr.270);
	\draw[down, <-, many] (mr.90) to[out=90,in=270] (tl.240);
	\draw[up, ->, many] (br.60) to[out=90, in=270] (ml.270);
	\draw[up, ->, many] (ml.90) to[out=90, in=270] (tr.300);
\end{tikzpicture}
\,\,=
\begin{tikzpicture}[baseline=-0.5ex]
	\node[sym] (tl) at (-1,1.5) {$a^{m}$};
	\node[sym] (tr) at (1,1.5) {$b^{n}$};
	\node[sym] (bl) at (-1,-1.5) {$a^{m}$};
	\node[sym] (br) at (1,-1.5) {$b^{n}$};
	\draw[cc, ->, many] (tl.300) to[out=330, in=210] node[desc] {$i$} node[dot, near start] {} (tr.240);
	\draw[cc, <-, many] (bl.60) to[out=30, in=150] node[desc] {$i$} node[dot, near end] {} (br.120);
	\draw[down, <-, many] (bl.120) to[out=30, in=270] (0.75,0) node[desc] {$m-i$} to[out=90, in=330] (tl.240);
	\draw[up, ->, many] (br.60) to[out=150, in=270] (-0.75,0) node[desc] {$n-i$} to[out=90, in=210] (tr.300);
\end{tikzpicture}.
\]
It does because we can absorb the middle idempotents into the top or bottom 
ones. We can move elements of (or their sums) of $S_{n-i}$ (resp. $S_{m-i}$) 
in the middle idempotents all the way up or down their strands 
where they can be viewed as elements of the corresponding 
subgroup $S_{n_i} < S_n$ (resp. $S_{m-i} < S_m$). 
Pre- or postcomposing with these does not change the 
symmetriser idempotent of $S_{n}$ (resp. $S_{m}$).

Adding $s$ downward strands on the left and $t$ upward strands on the right, we denote the resulting $2$-endomorphism of $\QQ_a^{(m+s)}\PP_{b}^{(n+t)}$ by $\dcross st{m-i}{n-i}i$.
Relabeling slightly, with a choice of basis as in Remark \ref{rem:f_in_basis} this gives
\[
\dcross stmni = 
\sum_{\ell_1,\dotsc,\ell_i}
\begin{tikzpicture}[baseline=-0.5ex]
	\node[sym] (tl) at (-1,1.5) {$a^{s+m+i}$};
	\node[sym] (tr) at (1,1.5) {$b^{t+n+i}$};
	\node[sym] (bl) at (-1,-1.5) {$a^{s+m+i}$};
	\node[sym] (br) at (1,-1.5) {$b^{t+n+i}$};
	\draw[cc, ->, many] (tl.300) to[out=330, in=210] node[desc] {$i$} (tr.240);
	\draw[cc, <-, many] (bl.60) to[out=30, in=150] node[desc] {$i$} (br.120);
	\draw[down, <-, many] (bl.140) -- node[desc] {$s$} (tl.220);
	\draw[down, <-, many] (bl.120) to[out=30, in=270] (0.75,0) node[desc] {$m$} to[out=90, in=330] (tl.240);
	\draw[up, ->, many] (br.60) to[out=150, in=270] (-0.75,0) node[desc] {$n$} to[out=90, in=210] (tr.300);
	\draw[up, ->, many] (br.40) -- node[desc] {$t$}(tr.320);
\end{tikzpicture},
\]
where the arcs are labeled by $\beta_{\ell_1}\stimes \dotsb \stimes \beta_{\ell_i}$ and $\beta_{\ell_1}^\dual\stimes \dotsb \stimes \beta_{\ell_i}^\dual$ respectively.
One notes that
\[
\dcross s t m0i = \dcross{s+m}t00i
\quad\text{and}\quad
\dcross s t 0ni = \dcross s{t+n}00i.
\]
To simplify notation, write $\sdcross smi \coloneqq \dcross ssmmi$ for the symmetric situation.

\begin{Lemma}
	\[
	\sdcross 0 n 0 =
	- n \sdcross 0 {n-1} 1 +
	\sum_{i=0}^{n-1} (-1)^i \frac{(n-1)!}{(n-1-i)!} \sdcross 1 {n-1-i} i.
	\]
\end{Lemma}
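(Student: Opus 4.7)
The plan is to prove the identity by iterating the homotopy single-strand braid relation supplied by Lemma~\ref{lem:up_down_braids-homotopy}, whose $n=1$ incarnation is precisely the base case
\[
\sdcross 0 1 0 = \sdcross 1 0 0 - \sdcross 0 0 1,
\]
since $\sdcross 1 0 0$ is the identity on $\QQ_a\PP_b$ and $\sdcross 0 0 1 = \Psi(\id)$.

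For general $n$, I would first single out one $\QQ_a$-strand and one $\PP_b$-strand from the symmetrising idempotents of $\QQ_a^{(n)}$ and $\PP_b^{(n)}$, using the decompositions $\QQ_a^{(n)}\cong e_\triv(\QQ_a\QQ_a^{n-1})$ and $\PP_b^{(n)}\cong e_\triv(\PP_b^{n-1}\PP_b)$. The relation $\sigma e_\triv = e_\triv\sigma = e_\triv$ from Remark~\ref{remark-between-(n)s-symmetric-group-images} means that any choice of strand gives the same 2-morphism, and this is precisely what will produce the combinatorial factors below. Applying the base relation to the double braid between the singled-out pair splits $\sdcross 0 n 0$ as a \emph{through-strand} contribution plus a \emph{cup-cap} contribution. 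After invoking the pitchfork and adjunction relations of Lemma~\ref{lem:basic-relations-dg} to reabsorb the surrounding strands into the symmetrisers, the through-strand contribution is immediately recognised as $\sdcross 1 {n-1} 0$.

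Next I would analyse the cup-cap contribution. It introduces a single arc summed over the basis $\sum_\ell \beta_\ell\stimes\beta_\ell^\dual$, and leaves the remaining $n-1$ strands on each side in a configuration that, after pulling the cup and cap past the crossings via pitchforks, is exactly $\sdcross 0 {n-1} 0$ placed next to the new arc. Induction on $n$ then lets me substitute the formula with $n$ replaced by $n-1$ into this cup-cap sandwich. Each round of iteration contributes one more arc (a sign $-1$) and one fewer crossing, so the $i$ nested arcs land on the ambient $\QQ_a^{(n)}\PP_b^{(n)}$ as either an $\sdcross 1 {n-1-i} i$ or an $\sdcross 0 {n-1} 1$ term. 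A telescoping count of how many choices of the singled-out strand survive under the symmetriser identifications produces the falling factorial $(n-1)!/(n-1-i)!$ in front of $\sdcross 1 {n-1-i} i$.

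The main obstacle will be bookkeeping the two distinct roles the singled-out strand can play in the very first peeling step, and checking that this yields the asymmetric coefficient $-n$ in front of $\sdcross 0 {n-1} 1$ as opposed to $-(n-1)$. The point is that the first cup-cap may either (i) involve the peeled-off strand itself, which contributes to $\sdcross 0 {n-1} 1$ with a multiplicity of $n$ (the number of ways to pair the peeled strand on one side with any of the $n$ strands on the other side, all identified by the idempotent), or (ii) involve two of the remaining strands, which feeds into the inductive expansion and produces the $\sdcross 1 {n-1-i} i$ terms for $i\ge 1$. Keeping these two contributions apart while repeatedly using the idempotent absorption of Remark~\ref{remark-between-(n)s-symmetric-group-images} and the pitchfork identities is the delicate combinatorial core of the argument; the signs and factorials then fall out from a direct induction on $n$.
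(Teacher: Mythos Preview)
Your plan contains a genuine gap at the step where you identify the through-strand contribution. After singling out one $\QQ_a$-strand and one $\PP_b$-strand and applying the base relation \eqref{eq:up_down_braids-add} to that single pair, the through-strand contribution is \emph{not} $\sdcross 1 {n-1} 0$. In $\sdcross 0 n 0$ each of the $n$ downward strands double-crosses each of the $n$ upward strands; undoing just one of these $n^2$ pairings leaves your singled-out $\QQ_a$-strand still double-crossing the remaining $n-1$ upward strands, and likewise the singled-out $\PP_b$-strand with the remaining $n-1$ downward strands. These residual mixed crossings cannot be absorbed into the symmetrisers (only same-type crossings can, as in Remark~\ref{remark-between-(n)s-symmetric-group-images}) and are not removed by pitchfork or adjunction moves, which only slide crossings past cups and caps. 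So the through term still carries $n^2-1$ mixed double-crossings rather than the $(n-1)^2$ present in $\sdcross 1 {n-1} 0$, and your induction never gets off the ground. The same issue afflicts your description of the cup-cap term: the new arc is not sitting next to a clean copy of $\sdcross 0 {n-1} 0$, because the capped strands had further crossings with the rest of the diagram. Your closing discussion of the coefficient $-n$ versus $-(n-1)$ suggests you sensed something was off here, but the proposed fix (counting pairings under the idempotent) does not address the extra crossings.

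The paper avoids this by moving strands \emph{asymmetrically}, one at a time, through the intermediate non-symmetric diagrams $\dcross 1 0 {k}{k+1}{i}$. First one downward strand is pulled all the way to the left, untwisting it from all $n$ upward strands: this single move yields $\sdcross 0 n 0 = \dcross 1 0 {n-1}{n}{0} - n\,\sdcross 0 {n-1} 1$, and is the sole source of the coefficient $-n$. Only then, with that strand already straight, is one upward strand pulled to the right through the remaining $n-1$ downward strands, giving $\dcross 1 0 {n-1}{n}{0} = \sdcross 1 {n-1} 0 - (n-1)\,\dcross 1 0 {n-2}{n-1}{1}$. Iterating this second step on the asymmetric remainder produces the recursion $\dcross 1 0 {k}{k+1}{i} = \sdcross 1 k i - k\,\dcross 1 0 {k-1}{k}{i+1}$, which telescopes directly to the stated sum with falling-factorial coefficients. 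The crucial point is that one never tries to straighten two strands of opposite type in the same step.
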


\begin{proof}
	First move the left-most downwards strand of $\sdcross0n0$ all the way to the left.
	To do so, one has to untwist $n$ down-up double crossings, introducing $n$ terms of the from $-\sdcross0{n-1}1$ via relation~\eqref{eq:up_down_braids-add}:
	\[
	\sdcross0n0 = \dcross10{n-1}n0 - n \sdcross0{n-1}1,
	\]
	or graphically,
	\[
	\begin{tikzpicture}[baseline=-0.5ex]
		\node[sym] (tl) at (-1,1.5) {$a^{n}$};
		\node[sym] (tr) at (1,1.5) {$b^{n}$};
		\node[sym] (bl) at (-1,-1.5) {$a^{n}$};
		\node[sym] (br) at (1,-1.5) {$b^{n}$};
		\draw[down, <-, many] (bl.120) to[out=30, in=270] (0.75,0) node[desc] {$n$} to[out=90, in=330] (tl.240);
		\draw[up, ->, many] (br.60) to[out=150, in=270] (-0.75,0) node[desc] {$n$} to[out=90, in=210] (tr.300);
	\end{tikzpicture}
	=
	\begin{tikzpicture}[baseline=-0.5ex]
		\node[sym] (tl) at (-1,1.5) {$a^{n}$};
		\node[sym] (tr) at (1,1.5) {$b^{n}$};
		\node[sym] (bl) at (-1,-1.5) {$a^{n}$};
		\node[sym] (br) at (1,-1.5) {$b^{n}$};
		\draw[down, <-] (bl.140) -- (tl.220);
		\draw[down, <-, many] (bl.120) to[out=30, in=270] (0.75,0) node[desc] {$n-1$} to[out=90, in=330] (tl.240);
		\draw[up, ->, many] (br.60) to[out=150, in=270] (-0.75,0) node[desc] {$n$} to[out=90, in=210] (tr.300);
	\end{tikzpicture}
	-n\sum_{\ell}\!\!
	\begin{tikzpicture}[baseline=-0.5ex]
		\node[sym] (tl) at (-1,1.5) {$a^{n}$};
		\node[sym] (tr) at (1,1.5) {$b^{n}$};
		\node[sym] (bl) at (-1,-1.5) {$a^{n}$};
		\node[sym] (br) at (1,-1.5) {$b^{n}$};
		\draw[cc, ->] (tl.300) to[out=330, in=210] node[dot, near start, label={above right:{$\beta_\ell$}}] {} (tr.240);
		\draw[cc, <-] (bl.60) to[out=30, in=150] node[dot, near end, label={below left:{$\beta_\ell^\vee$}}] {} (br.120);
		\draw[down, <-, many] (bl.120) to[out=30, in=270] (0.75,0) node[desc] {$n-1$} to[out=90, in=330] (tl.240);
		\draw[up, ->, many] (br.60) to[out=150, in=270] (-0.75,0) node[desc] {$n-1$} to[out=90, in=210] (tr.300);
	\end{tikzpicture}.
	\]
	Now move the rightmost upward strand of $\dcross10{n-1}n0$ all the way to the right.
	To do so, one has to untwist with $n-1$ downwards strands, introducing $n-1$ terms of the form $-\dcross10{n-2}{n-1}1$:
	\[
	\dcross10{n-1}n0 = \sdcross 1{n-1}0 -(n-1) \dcross10{n-2}{n-1}1.
	\]
	\[
	\begin{tikzpicture}[baseline=-0.5ex]
		\node[sym] (tl) at (-1,1.5) {$a^{n}$};
		\node[sym] (tr) at (1,1.5) {$b^{n}$};
		\node[sym] (bl) at (-1,-1.5) {$a^{n}$};
		\node[sym] (br) at (1,-1.5) {$b^{n}$};
		\draw[down, <-] (bl.140) -- (tl.220);
		\draw[down, <-, many] (bl.120) to[out=30, in=270] (0.75,0) node[desc] {$n-1$} to[out=90, in=330] (tl.240);
		\draw[up, ->, many] (br.60) to[out=150, in=270] (-0.75,0) node[desc] {$n$} to[out=90, in=210] (tr.300);
	\end{tikzpicture}
	=
	\begin{tikzpicture}[baseline=-0.5ex]
		\node[sym] (tl) at (-1,1.5) {$a^{n}$};
		\node[sym] (tr) at (1,1.5) {$b^{n}$};
		\node[sym] (bl) at (-1,-1.5) {$a^{n}$};
		\node[sym] (br) at (1,-1.5) {$b^{n}$};
		\draw[down, <-] (bl.140) -- (tl.220);
		\draw[down, <-, many] (bl.120) to[out=30, in=270] (0.75,0) node[desc] {$n-1$} to[out=90, in=330] (tl.240);
		\draw[up, ->, many] (br.60) to[out=150, in=270] (-0.75,0) node[desc] {$n-1$} to[out=90, in=210] (tr.300);
		\draw[up, ->] (br.40) -- (tr.320);
	\end{tikzpicture}
	- (n-1)\sum_{\ell}
	\begin{tikzpicture}[baseline=-0.5ex]
		\node[sym] (tl) at (-1,1.5) {$a^{n}$};
		\node[sym] (tr) at (1,1.5) {$b^{n}$};
		\node[sym] (bl) at (-1,-1.5) {$a^{n}$};
		\node[sym] (br) at (1,-1.5) {$b^{n}$};
		\draw[cc, ->] (tl.300) to[out=330, in=210] node[dot, near start, label={above right:{$\beta_\ell$}}] {} (tr.240);
		\draw[cc, <-] (bl.60) to[out=30, in=150] node[dot, near end, label={below left:{$\beta_\ell^\vee$}}] {} (br.120);
		\draw[down, <-] (bl.140) -- (tl.220);
		\draw[down, <-, many] (bl.120) to[out=30, in=270] (0.75,0) node[desc] {$n-2$} to[out=90, in=330] (tl.240);
		\draw[up, ->, many] (br.60) to[out=150, in=270] (-0.75,0) node[desc] {$n-1$} to[out=90, in=210] (tr.300);
	\end{tikzpicture}
	\]
	Repeat the last step for $-(n-1) \dcross10{n-2}{n-1}1$, obtaining $-(n-1)\sdcross1{n-2}1$ and $(n-1)(n-2)$ terms of the form $\dcross10{n-3}{n-2}2$:
	\[
	\begin{tikzpicture}[baseline=-0.5ex]
		\node[sym] (tl) at (-1,1.5) {$a^{n}$};
		\node[sym] (tr) at (1,1.5) {$b^{n}$};
		\node[sym] (bl) at (-1,-1.5) {$a^{n}$};
		\node[sym] (br) at (1,-1.5) {$b^{n}$};
		\draw[cc, ->] (tl.300) to[out=330, in=210] node[dot, near start, label={above right:{$\beta_\ell$}}] {} (tr.240);
		\draw[cc, <-] (bl.60) to[out=30, in=150] node[dot, near end, label={below left:{$\beta_\ell^\vee$}}] {} (br.120);
		\draw[down, <-] (bl.140) -- (tl.220);
		\draw[down, <-, many] (bl.120) to[out=30, in=270] (0.75,0) node[desc] {$n-2$} to[out=90, in=330] (tl.240);
		\draw[up, ->, many] (br.60) to[out=150, in=270] (-0.75,0) node[desc] {$n-1$} to[out=90, in=210] (tr.300);
	\end{tikzpicture}
	=
	\begin{tikzpicture}[baseline=-0.5ex]
		\node[sym] (tl) at (-1,1.5) {$a^{n}$};
		\node[sym] (tr) at (1,1.5) {$b^{n}$};
		\node[sym] (bl) at (-1,-1.5) {$a^{n}$};
		\node[sym] (br) at (1,-1.5) {$b^{n}$};
		\draw[cc, ->] (tl.300) to[out=330, in=210] node[dot, near start, label={above right:{$\beta_\ell$}}] {} (tr.240);
		\draw[cc, <-] (bl.60) to[out=30, in=150] node[dot, near end, label={below left:{$\beta_\ell^\vee$}}] {} (br.120);
		\draw[down, <-] (bl.140) -- (tl.220);
		\draw[down, <-, many] (bl.120) to[out=30, in=270] (0.75,0) node[desc] {$n-2$} to[out=90, in=330] (tl.240);
		\draw[up, ->, many] (br.60) to[out=150, in=270] (-0.75,0) node[desc] {$n-2$} to[out=90, in=210] (tr.300);
		\draw[up, ->] (br.40) -- (tr.320);
	\end{tikzpicture}
	- (n-2)\sum_{\ell'}
	\begin{tikzpicture}[baseline=-0.5ex]
		\node[sym] (tl) at (-1,1.5) {$a^{n}$};
		\node[sym] (tr) at (1,1.5) {$b^{n}$};
		\node[sym] (bl) at (-1,-1.5) {$a^{n}$};
		\node[sym] (br) at (1,-1.5) {$b^{n}$};
		\draw[cc, ->, many] (tl.300) to[out=330, in=210] node[dot, near start] {} (tr.240);
		\draw[cc, <-, many] (bl.60) to[out=30, in=150] node[dot, near end] {} (br.120);
		\draw[down, <-] (bl.140) -- (tl.220);
		\draw[down, <-, many] (bl.120) to[out=30, in=270] (0.75,0) node[desc] {$n-3$} to[out=90, in=330] (tl.240);
		\draw[up, ->, many] (br.60) to[out=150, in=270] (-0.75,0) node[desc] {$n-2$} to[out=90, in=210] (tr.300);
	\end{tikzpicture},
	\]
	where the dots are marked with $\beta_\ell \stimes \beta_{\ell'}$ and $\beta_\ell^\vee \stimes \beta_{\ell'}^\vee$ respectively.
	Recursive application of this procedure yields the desired formula.
\end{proof}

Rearranging and changing indices by $1$, we obtain
\begin{equation}\label{eq:link_1}
	\sdcross 1 n 0 =
	\sdcross 0 {n+1} 0 +
	(n+1) \sdcross 0 n 1 +
	\sum_{i=1}^{n} (-1)^{i+1} \frac{n!}{(n-i)!} \sdcross 1 {n-i} i.
\end{equation}
The remainder of the argument is now just repeated application of this formula.

\begin{Lemma}\label{lem:sdcross1n0}
	\[
	\sdcross 1 n 0 = 
	\sdcross 0{n+1}0 + (2n+1)\sdcross0n1 + n^2\sdcross0{n-1}2.
	\]
\end{Lemma}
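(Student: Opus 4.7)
The plan is to prove Lemma~\ref{lem:sdcross1n0} by iteratively applying the basic move used to derive \eqref{eq:link_1} until the right-hand side is expressed purely as a linear combination of the symbols $\sdcross 0 m i$, and then to identify the coefficients using a simple combinatorial cancellation.

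First, I would upgrade \eqref{eq:link_1} to a relation valid for any number of pre-existing arcs. The same argument as in the previous lemma (moving the left-most downward strand all the way to the left, then the right-most upward strand all the way to the right, and iterating) applies verbatim to $\sdcross 1 m i$, because the $i$ extra arcs at the top and bottom of the diagram sit entirely outside the region where the strand moves occur. This yields, for all $m,i\ge 0$, the identity
\[
    \sdcross 1 m i = \sdcross 0 {m+1} i + (m+1)\sdcross 0 m {i+1} + \sum_{j=1}^{m} (-1)^{j+1}\frac{m!}{(m-j)!}\sdcross 1 {m-j} {i+j}.
\]
Using this repeatedly on every $\sdcross 1 {m-j} {i+j}$ that appears on the right terminates in a unique expansion $\sdcross 1 m i = \sum_{k\ge 0} a_k(m,i)\, \sdcross 0 {m-k+1} {i+k}$, where the coefficients $a_k(m,i)$ satisfy
\[
    a_k(m,i) = \delta_{k,0} + (m+1)\delta_{k,1} + \sum_{j=1}^{m} (-1)^{j+1}\frac{m!}{(m-j)!}\, a_{k-j}(m-j,i+j).
\]

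Next, I would compute the $a_k$ by induction on $k$. A direct calculation from the recursion gives $a_0(m,i)=1$, $a_1(m,i)=2m+1$, and $a_2(m,i)=m^2$, matching the claimed coefficients. For $k\ge 3$, assuming $a_\ell(m,i)=0$ for $3\le \ell<k$, the only contributions to $a_k(m,i)$ come from $\ell\in\{0,1,2\}$ with $j=k-\ell$, and factoring out $(-1)^{k+1}\frac{m!}{(m-k+1)!}$ reduces the sum to the bracket
\[
    (m-k+1) - (2m-2k+3) + (m-k+2) = 0.
\]
This cancellation shows $a_k(m,i)=0$ for all $k\ge 3$, so the expansion of $\sdcross 1 m i$ terminates after three terms. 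Specialising to $m=n$, $i=0$ yields the lemma.

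The main obstacle is keeping the bookkeeping of indices straight when substituting the generalised identity into itself, but there is no conceptual difficulty: the proof is essentially an explicit summation identity once the generalised form of \eqref{eq:link_1} is in place. The fact that exactly three $a_k$ survive reflects the combinatorial miracle that the bracket above vanishes for $k\ge 3$, which is what forces the expression for $\sdcross 1 n 0$ to stabilise at the stated three-term formula rather than involving arbitrarily many arcs.
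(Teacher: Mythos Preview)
Your proof is correct, and it takes a genuinely different route from the paper's.

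Both arguments rest on the same unstated extension of \eqref{eq:link_1}: the identity is insensitive to the number $i$ of pre-existing arcs, because those arcs lie outside the region where the strand moves happen. You make this explicit up front; the paper uses it implicitly when it applies the induction hypothesis to $\sdcross 1 {n+1-i} i$ with $i\ge 1$.

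Where the two diverge is in the induction variable. The paper inducts on $n$: it applies \eqref{eq:link_1} once to $\sdcross 1 {n+1} 0$, substitutes the three-term formula for each $\sdcross 1 {n+1-i} i$ with smaller first index, and then collects coefficients of $\sdcross 0 {n+2-\ell} \ell$ term by term. You instead expand $\sdcross 1 m i$ completely into $\sum_k a_k \sdcross 0 {m-k+1}{i+k}$, observe that the $a_k$ are independent of $i$, and induct on $k$ to show $a_k=0$ for $k\ge 3$ via the single bracket $(m-k+1)-(2m-2k+3)+(m-k+2)=0$. Your organization isolates the underlying cancellation into one transparent line, whereas the paper's coefficient-matching buries the same cancellation inside a longer ``carefully rearranging terms'' computation. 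The paper's version has the advantage of needing no auxiliary notation for the coefficients $a_k$, but yours explains more clearly \emph{why} only three terms survive.
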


\begin{proof}
	For $n=0$, this is just equation \eqref{eq:link_1}.
	Using induction and \eqref{eq:link_1} we get
	\begin{align*}
		\sdcross 1{n+1}0 & =
		\sdcross 0{n+1}0 + (n+2) \sdcross 0 {n+1} 1 + \sum_{i=1}^{n+1} (-1)^{i+1} \frac{(n+1)!}{(n+1-i)!} \sdcross 1 {n+1-i} i \\ &=
		\sdcross 0{n+1}0 + (n+2) \sdcross 0 {n+1} 1 + \sum_{i=1}^{n+1} (-1)^{i+1} \frac{(n+1)!}{(n+1-i)!} \biggl( \sdcross 0 {n+2-i} i +\\& \qquad + (2(n+1-i)+1)\sdcross0{n+1-i}{i+1} + (n+1-i)^2\sdcross0{n-i}{i+2}\biggr).
	\end{align*}
	Carefully rearranging terms, one obtains
	\begin{multline*}
		\sdcross 0{n+1}0 + (n+2) \sdcross 0 {n+1} 1 + \frac{(n+1)!}{n!}\sdcross0{n+1}1 \\ + \biggl(-\frac{(n+1)!}{(n-1)!} + (2n+1)\frac{(n+1)!}{n!}\biggr)\sdcross0n2 + \\
		+ \sum_{\ell=3}^{n+2} \biggl(
		(-1)^{\ell+1} \frac{(n+1)!}{(n+1-\ell)!} + 
		(-1)^{\ell} \frac{(n+1)!}{(n+2-\ell)!}(2(n+2-\ell)+1) + \\
		(-1)^{\ell-1} \frac{(n+1)!}{(n+3-\ell)!}(n+3-\ell)^2
		\biggr)\sdcross0{n+2-\ell}{\ell} \\
		+\biggl(\frac{(n+1)!}{0!}(-1)^{n+2} + \frac{(n+1)!}{1!}(-1)^{n+1}1^2\biggr)\sdcross00{n+2},
	\end{multline*}
	which one easily checks to be equal to
	\[
	\sdcross 0{n+2}0 + (2n+3)\sdcross0{n+1}1 + (n+1)^2\sdcross0n2.
	\qedhere
	\]
\end{proof}

\begin{Lemma}
	\[
	\sdcross k00 = 
	\sum_{i=0}^k i! \binom{k}{i}^2 \sdcross0{k-i}{i}.
	\]
\end{Lemma}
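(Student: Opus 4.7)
The plan is to proceed by induction on $k$, with the base case $k=0$ being immediate (both sides reduce to the identity of $\QQ_a^{(0)}\PP_b^{(0)}=\hunit$) and the case $k=1$ following from the $n=0$ specialization of Lemma~\ref{lem:sdcross1n0}, which gives $\sdcross{1}{0}{0}=\sdcross{0}{1}{0}+\sdcross{0}{0}{1}$.

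For the inductive step, the strategy is to mimic the technique of the proof of Lemma~\ref{lem:sdcross1n0}, iteratively applying the key identity \eqref{eq:link_1}. The first task is to establish an extension of Lemma~\ref{lem:sdcross1n0} of the form
$$
\sdcross{1}{n}{j} \;=\; \sdcross{0}{n+1}{j} + (2n+1)\sdcross{0}{n}{j+1} + n^{2}\sdcross{0}{n-1}{j+2},
$$
valid for all $n\geq 0$ and $j\geq 0$. The proof is formally identical to that of Lemma~\ref{lem:sdcross1n0}: the extra $j$ arcs appearing on both sides of the diagram are passive and do not interact with the strands being reorganized, so the same sequence of applications of relation~\eqref{eq:up_down_braids-add} and of the straightening relation~\eqref{eq:straighten-dg} pushes through unchanged.

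Next, one reduces $\sdcross{k}{0}{0}$ to a $\ZZ$-linear combination of quantities $\sdcross{1}{k-1}{j}$ by successively pulling the outermost straight down-strand across the interior of the diagram. Each such crossing either reproduces an identity configuration (via the first relation of~\eqref{eq:up_down_braids-add}) or introduces an arc term (via the second relation of~\eqref{eq:up_down_braids-add}). This expansion is structurally the same as the one used in the proof of Lemma~\ref{lem:sdcross1n0}, except that it is carried out on the outer pair of strands rather than on a single inner one; iterating the process yields a finite expansion
$$
\sdcross{k}{0}{0} \;=\; \sum_{j\geq 0} d_{k,j}\,\sdcross{1}{k-1}{j}
$$
with explicit integer coefficients $d_{k,j}$. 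Substituting the extended Lemma~\ref{lem:sdcross1n0} into the right-hand side and collecting terms yields an expression purely in $\sdcross{0}{k-i}{i}$, completing the inductive step after verifying that the resulting coefficient of $\sdcross{0}{k-i}{i}$ equals $i!\binom{k}{i}^{2}$.

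The main obstacle is the combinatorial bookkeeping in the last step: showing that the composite coefficients collapse precisely to $i!\binom{k}{i}^{2}$. The cleanest way to handle this is to interpret the coefficient combinatorially as the number of ways to (i) choose $i$ of the $k$ downward strands to be absorbed into arcs, (ii) choose $i$ of the $k$ upward strands similarly, and (iii) pair the two chosen subsets bijectively; this interpretation should emerge naturally from the iterated use of \eqref{eq:up_down_braids-add}, since each application of the second relation in~\eqref{eq:up_down_braids-add} corresponds to selecting one down-strand and one up-strand to become an arc. An alternative approach, should the direct count prove unwieldy, is to verify the coefficient identity by a generating-function argument or by induction on $i$ within the inductive step on $k$.
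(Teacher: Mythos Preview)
Your overall framework matches the paper's: induction on $k$, then an application of (an extension of) Lemma~\ref{lem:sdcross1n0}, then a coefficient check. You are also right that the paper implicitly uses the extension $\sdcross{1}{n}{j} = \sdcross{0}{n+1}{j} + (2n+1)\sdcross{0}{n}{j+1} + n^{2}\sdcross{0}{n-1}{j+2}$ for arbitrary $j$; making this explicit is a good instinct.

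However, your description of the reduction step is confused, and this is where the proposal goes wrong. You write that one reduces $\sdcross{k}{0}{0}$ to a combination $\sum_j d_{k,j}\,\sdcross{1}{k-1-j}{j}$ ``by successively pulling the outermost straight down-strand across the interior of the diagram.'' But no pulling is needed, and your iterative process never invokes the induction hypothesis you set up. The correct step is immediate: the induction hypothesis is an identity of diagrams in $\End\bigl(\QQ_a^{(k-1)}\PP_b^{(k-1)}\bigr)$, and adjoining one straight down-strand on the left and one straight up-strand on the right to every term sends $\sdcross{k-1}{0}{0}$ to $\sdcross{k}{0}{0}$ and each $\sdcross{0}{k-1-j}{j}$ to $\sdcross{1}{k-1-j}{j}$ (this is exactly what the first index in the $\psi$-notation records). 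So one gets for free
\[
\sdcross{k}{0}{0} \;=\; \sum_{j=0}^{k-1} j!\binom{k-1}{j}^{2}\,\sdcross{1}{k-1-j}{j},
\]
with the coefficients $d_{k,j}=j!\binom{k-1}{j}^{2}$ supplied directly by the hypothesis. This is precisely what the paper does (with $k$ shifted to $k+1$). After substituting the extended Lemma~\ref{lem:sdcross1n0} and regrouping, the coefficient of $\sdcross{0}{k-i}{i}$ is a three-term sum in $j!\binom{k-1}{j}^{2}$ for $j\in\{i-2,i-1,i\}$, which one checks equals $i!\binom{k}{i}^{2}$ by elementary algebra. Your ``combinatorial interpretation'' at the end is a pleasant heuristic for why the answer is $i!\binom{k}{i}^{2}$, but it is not a substitute for this verification, and it does not by itself constitute a proof without carefully tracking how the symmetrisers interact with the arc positions produced by the iterated use of \eqref{eq:up_down_braids-add}.
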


\begin{proof}
	For $k = 1$ this is immediate from \eqref{eq:link_1}.
	Assume that the identity holds for some integer $k$.
	Then
	\[
	\sdcross{k+1}00 =
	\sum_{i=0}^k i! \binom{k}{i}^2 \sdcross1{k-i}{i}.
	\]
	We can now substitute in the identity of Lemma~\ref{lem:sdcross1n0}.
	\[
	\sdcross{k+1}00 =
	\sum_{i=0}^k i! \binom{k}{i}^2 \biggl(
	\sdcross0{k-i+1}{i} +
	(2(k-i)+1)\sdcross0{k-i}{i+1} +
	(k-i)^2\sdcross0{k-i-1}{i+2}
	\biggr) \\
	\]
	Rearranging gives
	\begin{multline*}
		\sdcross0{k+1}0 +
		\bigl(k^2 + (2k+1)\bigr)\sdcross0k1\\ + 
		\sum_{\ell = 2}^k \biggl(
		\ell!\binom{k}{\ell}^2 +
		(\ell-1)!\binom{k}{\ell-1}^2 (2(k-\ell+1)+1) + \\
		(\ell-2)!\binom{k}{\ell-2}^2 (k-\ell+2)^2
		\biggr)\sdcross0{k+1-\ell}{\ell}\\ +
		\biggl(k! \binom{k}{k}^2 + (k-1)!\binom{k}{k-1}^2 1^2\biggr) \sdcross00{k+1}.
	\end{multline*}
	Again, one easily checks this to be equal to
	\[
	\sum_{i=0}^{k+1} i! \binom{k+1}{i}^2 \sdcross0{k+1-i}{i}.
	\qedhere
	\]
\end{proof}

\begin{Corollary}
	\[
	\dcross mn000 = 
	\sum_{i=0}^{\min(m,n)} i! \binom{m}{i}\binom{n}{i} \dcross00{m-i}{n-i}{i}.
	\]
	In other words $g \circ f = 1$.
\end{Corollary}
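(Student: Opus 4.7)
The plan is to extend the chain of arguments used to prove the preceding Lemma from the symmetric case $k = m = n$ to arbitrary $m$ and $n$. The diagrammatic moves (moving the leftmost downward strand to the left, iteratively moving the rightmost upward strand to the right, and applying relation~\eqref{eq:up_down_braids-add} at every up--down double crossing) do not rely on $m = n$; they only require the presence of some downward and some upward strands. Thus each intermediate identity should admit an asymmetric analog, and the only real work is coefficient bookkeeping.

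First I would prove the asymmetric analog of the unnamed lemma just before~\eqref{eq:link_1}:
\[
\dcross 0 0 m n 0 = \dcross 1 1 {m-1}{n-1} 0 - n\, \dcross 0 0 {m-1}{n-1} 1 + \sum_{i=1}^{\min(m,n)-1} (-1)^i\, \frac{(m-1)!}{(m-1-i)!}\, \dcross 1 1 {m-1-i}{n-1-i} i.
\]
The coefficient $-n$ comes from the leftmost downward strand crossing the $n$ upward strands, and the factorial ratio from the iteratively moved rightmost upward strand crossing the shrinking family of $m-1, m-2, \ldots$ middle downward strands. Rearranging (and allowing $j$ pre-existing arcs, which are passive to the moves) yields the asymmetric generalization of~\eqref{eq:link_1}:
\[
\dcross 1 1 m n j = \dcross 0 0 {m+1}{n+1} j + (n+1)\, \dcross 0 0 m n {j+1} + \sum_{i=1}^{\min(m,n)} (-1)^{i+1}\, \frac{m!}{(m-i)!}\, \dcross 1 1 {m-i}{n-i}{i+j}.
\]

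Next I would prove the asymmetric analog of Lemma~\ref{lem:sdcross1n0}:
\[
\dcross 1 1 m n 0 = \dcross 0 0 {m+1}{n+1} 0 + (m+n+1)\, \dcross 0 0 m n 1 + mn\, \dcross 0 0 {m-1}{n-1} 2,
\]
by induction on $\min(m,n)$, substituting the generalized~\eqref{eq:link_1} into itself in exactly the same telescoping fashion as in the symmetric proof. When $m = n$ the coefficients reduce to $(2n+1)$ and $n^2$, recovering Lemma~\ref{lem:sdcross1n0}.

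The Corollary itself I would then prove by induction on $\min(m,n)$ with $n - m$ fixed. The base cases $m = 0$ and $n = 0$ are immediate: in both, $\dcross m n 000$ is an identity 2-morphism and only the $i = 0$ summand on the right-hand side is nonzero, matching via the identities $\dcross 0 n 0 0 0 = \dcross 0 0 0 n 0$ and $\dcross m 0 0 0 0 = \dcross 0 0 m 0 0$. For the inductive step from $(m,n)$ to $(m+1, n+1)$, adding a free straight downward strand on the left and a free straight upward strand on the right of the inductive hypothesis gives
\[
\dcross{m+1}{n+1}000 = \sum_{i=0}^{\min(m,n)} i!\, \binom{m}{i}\binom{n}{i}\, \dcross 1 1 {m-i}{n-i} i.
\]
Substituting the asymmetric version of Lemma~\ref{lem:sdcross1n0} into each summand and collecting coefficients of $\dcross 0 0 {m+1-i}{n+1-i} i$ reduces the claim to verifying the combinatorial identity
\begin{multline*}
i!\binom{m}{i}\binom{n}{i} + (i-1)!\binom{m}{i-1}\binom{n}{i-1}(m+n-2i+3) \\ + (i-2)!\binom{m}{i-2}\binom{n}{i-2}(m-i+2)(n-i+2) = i!\binom{m+1}{i}\binom{n+1}{i},
\end{multline*}
which specializes to the identity used in the symmetric proof when $m = n$ and can be checked directly (including the boundary case $i = \min(m,n)+1$, where only the last two summands on the left contribute). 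The main obstacle in writing out this argument is purely notational: keeping track of the five-index symbols $\dcross s t m n i$ through the telescoping. All essential diagrammatic and combinatorial content is already present in the proof of the preceding Lemma.
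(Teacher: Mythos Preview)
Your approach is correct but takes a different and more laborious route than the paper. You induct on $\min(m,n)$ with the difference $n-m$ held fixed, which forces you to reprove asymmetric versions of all the intermediate lemmas (the unnamed lemma, \eqref{eq:link_1}, and Lemma~\ref{lem:sdcross1n0}) before reaching the Corollary. The paper instead inducts on the asymmetry $j = |m-n|$, taking the already-established symmetric case as the base. For the inductive step the paper ignores the leftmost downward strand, applies the induction hypothesis for $(n+j,n)$, and then moves only that single strand back through the $n-i$ upward strands; this gives the two-term identity
\[
\dcross10{n+j-i}{n-i}{i} = \dcross00{n+j-i+1}{n-i}{i} + (n-i)\,\dcross00{n+j-i}{n-i-1}{i+1},
\]
and the resulting binomial check is the simpler two-term recursion
\[
\ell!\textstyle\binom{n+j}{\ell}\binom{n}{\ell} + (\ell-1)!\binom{n+j}{\ell-1}\binom{n}{\ell-1}(n-\ell+1) = \ell!\binom{n+j+1}{\ell}\binom{n}{\ell}.
\]
So the paper never needs the asymmetric analog of Lemma~\ref{lem:sdcross1n0} or your three-term combinatorial identity. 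Your route buys a self-contained argument that does not rely on the symmetric case as a black box, at the cost of redoing the telescoping arguments in greater generality; the paper's route is shorter because it reuses the symmetric work and reduces the inductive step to a single strand move.
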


\begin{proof}
	Without loss of generality we can assume that $m \ge n$, say $m = n + j$.
	We will induct on $j$.
	We already considered the case that $j=0$.
	
	We ignore the left-most string and use the induction hypothesis to obtain
	\[
	\dcross{n+j+1}{n}000 =
	\sum_{i=0}^{n} i! \binom{n+j}{i}\binom{n}{i} \dcross10{n+j-i}{n-i}{i}.
	\]
	As in the first step of the proof of Lemma~\ref{lem:sdcross1n0}, we have
	\[
	\dcross10{n+j-i}{n-i}{i} = 
	\dcross00{n+j-i+1}{n-i}{i} + 
	(n-i)\dcross00{n+j-i}{n-i-1}{i+1}. 
	\]
	Thus,
	\[
	\dcross{n+j+1}{n}000 =
	\sum_{i=0}^{n} i! \binom{n+j}{i}\binom{n}{i} (\dcross00{n+j-i+1}{n-i}{i} + (n-i)\dcross00{n+j-i}{n-i-1}{i+1}).
	\]
	Grouping terms, this is
	\begin{multline*}
		0!\binom{n+j}0\binom{n}{0} \dcross00{n+j+1}n0 + \\
		\sum_{\ell=1}^{n} \biggl( \ell! \binom{n+j}{\ell}\binom{n}{\ell} + (\ell-1)!\binom{n+j}{\ell-1}\binom{n}{\ell-1}(n-\ell+1)\biggr)\dcross00{n+j+1-\ell}{n-\ell}{\ell}.
	\end{multline*}
	This is easily shown to be equal to the desired expression
	\[
	\sum_{\ell=0}^{n} \ell! \binom{n+j+1}{\ell}\binom{n}{\ell} \dcross00{n+j+1-\ell}{n-\ell}{\ell}.
	\qedhere
	\]
\end{proof}

\subsection{The composition $f \circ g$ is the identity.}\label{subsec:fgid}
We have
\[
f_j \circ g_i \;=\; 
\begin{tikzpicture}[baseline=-0.5ex]
	\node[sym] (tl) at (-1,2) {$b^{n-j}$};
	\node[sym] (tr) at (1,2) {$a^{m-j}$};
	\node[sym] (ml) at (-1,0) {$a^m$};
	\node[sym] (mr) at (1,0) {$b^n$};
	\node[sym] (bl) at (-1,-2) {$b^{n-i}$};
	\node[sym] (br) at (1,-2) {$a^{m-i}$};
	\draw[cc, ->, many] (ml.300) to[out=300, in=240] node[dot, near start] {} node[desc] {$i$} (mr.240);
	\draw[cc, <-, many] (ml.60) to[out=60, in=120] node[dot, near end] {} node[desc] {$j$} (mr.120);
	\draw[up, ->, many] (bl.90) to[out=90, in=270] (mr.300);
	\draw[up, ->, many] (mr.60) to[out=90,in=270] (tl.270);
	\draw[down, <-, many] (br.90) to[out=90, in=270] (ml.240);
	\draw[down, <-, many] (ml.120) to[out=90, in=270] (tr.270);
\end{tikzpicture}.
\]
When $i \ne j$, every combination of summands of the middle symmetriser 
idempotents produces a diagram which contains a left curl and hence vanishes.
Thus $f_j \circ g_i = 0$ if $i \neq j$.

When $i = j$, we claim that the composition condition 
\eqref{eqn-the-condition-for-composition-of-diagrams-between-Pn-and-Qns}
holds up to a coefficient:
\begin{equation}
	\label{eqn-composition-condition-for-f_i-g_i}
	\begin{tikzpicture}[baseline=-0.5ex]
		\node[sym] (tl) at (-1,2) {$b^{n-i}$};
		\node[sym] (tr) at (1,2) {$a^{m-i}$};
		\node[sym] (ml) at (-1,0) {$a^m$};
		\node[sym] (mr) at (1,0) {$b^n$};
		\node[sym] (bl) at (-1,-2) {$b^{n-i}$};
		\node[sym] (br) at (1,-2) {$a^{m-i}$};
		\draw[cc, ->, many] (ml.300) to[out=300, in=240] node[dot, near start] {} node[desc] {$i$} (mr.240);
		\draw[cc, <-, many] (ml.60) to[out=60, in=120] node[dot, near end] {} node[desc] {$j$} (mr.120);
		\draw[up, ->, many] (bl.90) to[out=90, in=270] (mr.300);
		\draw[up, ->, many] (mr.60) to[out=90,in=270] (tl.270);
		\draw[down, <-, many] (br.90) to[out=90, in=270] (ml.240);
		\draw[down, <-, many] (ml.120) to[out=90, in=270] (tr.270);
	\end{tikzpicture}
	\quad = \quad
	\frac{1}{i!\binom{m}{i}\binom{n}{i}} 
	\cdot
	\begin{tikzpicture}[baseline=-0.5ex]
		\node[sym] (tl) at (-1,2) {$b^{n-i}$};
		\node[sym] (tr) at (1,2) {$a^{m-i}$};
		\node[sym] (bl) at (-1,-2) {$b^{n-i}$};
		\node[sym] (br) at (1,-2) {$a^{m-i}$};
		\draw[up, ->, many] (bl.90) to[out=30, in=270] (1,0) node[desc] {$n-i$} to[out=90, in=330] (tl.270);
		\draw[down, <-, many] (br.90) to[out=150, in=270] (-1,0) node[desc] {$n-i$} to[out=90, in=210] (tr.270);
	\end{tikzpicture}.
\end{equation}
Indeed, the pair of the middle idempotents in the LHS of 
\eqref{eqn-composition-condition-for-f_i-g_i} are a $2$-morphism
\begin{equation}
	\label{eqn-middle-idempotents-in-f_i-g_i}
	\frac{1}{m!}\frac{1}{n!} \sum_{\sigma \in S_m, \tau \in S_n} \sigma \circ_1\tau
\end{equation}
where $\circ_1$ denotes $1$-composition. We first observe that if 
$\sigma \notin S_{m-i} \times S_i < S_m$ or $\tau \notin S_{i} \times
S_{n-i}$ the resulting diagram contains a left curl and hence vanishes. 
Let 
$$ \sigma = (\sigma_{m-i}, \sigma_{i}) \in S_{m-i} \times S_i, $$
$$ \tau = (\tau_{i}, \tau_{n-i}) \in S_{i} \times S_{n-i}. $$
On the diagram coresponding to this summand, we can slide $\sigma_i$
along the central bubble\index{bubble relation} and compose it with $\tau_i$. We obtain a
counterclockwise bubble of $i$ parallel strands with a single 
element $\tau_i \sigma_i \in S_i$ inserted into it. Unless this
element is $\id_{S_i}$, the resulting diagram contains a left curl. 
When it is $\id_{S_i}$, we get an unmarked $i$-stranded counterclockwise 
bubble which is the identity endomorphism of $\hunit$ and hence can be
erased. On the remaining diagram, we can absorb $\sigma_{m-i}$ and
$\tau_{n-i}$ into the top or bottom idempotents and thus obtain
the diagram on the RHS of
\eqref{eqn-composition-condition-for-f_i-g_i}. 

Thus when expanding the middle idempotents in the LHS of 
\eqref{eqn-composition-condition-for-f_i-g_i} the non-vanishing
diagrams are given by the summands  
$$ \sigma_{m-i} \circ_1 \upsilon_i \circ_1 \upsilon^{-1}_i \circ_1, \tau_{n-i}
\quad \quad \quad \sigma_{m-i} \in S_{m-i}, \upsilon_i \in S_i,
\tau_{n-i} \in S_{n-i} $$
of \eqref{eqn-middle-idempotents-in-f_i-g_i}. 
There are $(m-i)! i! (n-i)!$ of them and each produces  
the diagram on the RHS of
\eqref{eqn-composition-condition-for-f_i-g_i}, whence 
the equality in \eqref{eqn-composition-condition-for-f_i-g_i} holds. 

By the left relation in \eqref{eq:up_down_braids-add} the  
RHS of \eqref{eqn-composition-condition-for-f_i-g_i} is
\[
\frac{1}{i!\binom{m}{i}\binom{n}{i}} \id_{\PP_{b}^{(n-i)}\QQ_a^{(m-i)}}.
\]
Since $g = \sum g_i$ and $f = \sum_i i! \binom{m}{i} \binom{n}{i} f_i$, 
it follows that $f \circ g = \id$. 
This finishes the proof of Theorem~\ref{thm:cat_heisenberg_relations-dg}.

\section{The transposed generators}\label{subsec:transposed_generators}

Given any partition $\lambda$ of $n$ write $e_\lambda \in \kk[\SymGrp n]$ for the corresponding Young symmetriser.
It is a minimal idempotent of $\kk[\SymGrp n]$.
Thus, similar to the definition of the $1$-morphisms $\PP_a^{(n)}$ and
$\QQ_a^{(n)}$, it induces $1$-morphisms $\PP_a^{\lambda}$ and
$\QQ_a^{\lambda}$ in $\hcat\basecat$. 

Recall the transposed generators $p_a^{(1^n)}$ and $q_a^{(1^n)}$, $n \in \ZZ_{>0}$ from Section~\ref{subsubsec:transposedgenalg}.
We have the antisymmetriser idempotent corresponding to the partition $(1^n)$
\[
e_\sign = \frac{1}{n!} \sum_{\sigma \in \SymGrp n} \sgn(\sigma)\sigma \in \kk[\SymGrp n]
\]
on which $S_n$ acts by the sign character. 
Let $\PP_a^{(1^n)}$ and $\QQ_a^{(1^n)}$ be the corresponding
$1$-morphisms  defined analogously to
Definition~\ref{defn-symmetric-powers-of-PP-QQ-and-RR}. 

Arguing as in
Remark~\ref{remark-symmetric-power-labels-on-thick-strands}, we see  
that elements of $\Sym^n \Hom(a, b)$ define morphisms 
from $\PP_a^{(1^n)}$ to $\PP_{b}^{(1^n)}$, 
while those of $\bigwedge^n \Hom(a,b)$ define morphisms 
from $\PP_a^{(n)}$ to $\PP_{b}^{(1^n)}$.  

The category $\hcat*\basecat$ has a covariant autoequivalence $F$
which 
\begin{itemize}
	\item is identity on objects and $1$-morphisms, 
	\item on $2$-morphisms it multiplies the crossings by $-1$, while 
	preserving all other generating diagrams.  
\end{itemize}
The induced autoequivalence $F$ of $\hcat\basecat$ swaps the
$1$-morphisms above with those of Section~\ref{subsec:cat_heisenberg_relations}:
\[
F(\PP_a^{(n)}) = \PP_a^{(1^n)}, \quad
F(\PP_a^{(1^n)}) = \PP_a^{(n)}, \quad
F(\QQ_a^{(n)}) = \QQ_a^{(1^n)}, \quad
F(\QQ_a^{(1^n)}) = \QQ_a^{(n)}.
\]
Thus the relations of Theorem~\ref{thm:cat_heisenberg_relations-dg}
also hold for the transposed $1$-morphisms. 

\begin{Lemma}\label{lemma:shifts_and_symmetrisers}
	If $\basecat$ is pretriangulated, then for any $a \in \basecat$
	we have in $\hcat\basecat$ isomorphisms 
	\[
	\PP_{a[1]} \cong \PP_{a}[1] \text{ and } 
	\QQ_{a[1]} \cong \QQ_{a}[-1], 
	\] 
	and isomorphisms
	\[
	\PP_{a[1]}^{(n)} \cong \PP_a^{(1^n)}[n]
	\quad\text{and}\quad
	\QQ_{a[1]}^{(n)} \cong \QQ_a^{(1^n)}[-n].
	\]
\end{Lemma}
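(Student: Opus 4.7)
The plan is to exploit the canonical closed morphisms $\sigma_a \in \Hom^{-1}_\basecat(a, a[1])$ and $\sigma_a^{-1} \in \Hom^{+1}_\basecat(a[1], a)$ available in any pretriangulated \dg category; they are mutually inverse, with $\sigma_a \circ \sigma_a^{-1} = \id_{a[1]}$ and $\sigma_a^{-1} \circ \sigma_a = \id_a$.

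For the first pair of isomorphisms, observe that the dot labelled $\sigma_a$ on a $\PP$-strand is a closed degree $-1$ 2-morphism $\PP_a \to \PP_{a[1]}$ in $\hcat*\basecat$, which, via the shift available in the perfect hull $\bihperf(\hcat*\basecat)$, is equivalent to a closed degree $0$ morphism $\PP_a[1] \to \PP_{a[1]}$. The dot $\sigma_a^{-1}$ then provides its two-sided inverse: both compositions reduce to identity 2-morphisms via the dot-merging Lemma~\ref{lem:colliding_dots_up-dg}. The $\QQ$-case is analogous, noting that a dot on a $\QQ$-strand reverses the 2-morphism direction and that dot-merging on $\QQ$-strands differs from the $\PP$-case only by a Koszul sign, which is readily absorbed into the choice of inverse dot.

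For the statement on symmetric powers, iterate the above to obtain an isomorphism $\PP_{a[1]}^n \cong \PP_a^n[n]$ realised by $n$ simultaneous dots $\sigma_a^{-1}$ on parallel strands, and then show that under this identification the symmetriser $e_\triv$ on $\PP_{a[1]}^n$ corresponds to the antisymmetriser $e_\sign$ on $\PP_a^n[n]$. The critical two-strand computation combines the dot-slide Lemma~\ref{lem:dotslide-dg} (applied to each of the two dots in turn) with the graded interchange law of Remark~\ref{rem:dg-interchange-law} to produce
\[
  c \circ_2 (\sigma_a^{-1} \otimes \sigma_a^{-1}) = (-1)^{|\sigma_a^{-1}|\,|\sigma_a^{-1}|}\,(\sigma_a^{-1} \otimes \sigma_a^{-1}) \circ_2 c' = -(\sigma_a^{-1} \otimes \sigma_a^{-1}) \circ_2 c',
\]
where $c$ and $c'$ are the unadorned crossings on $\PP_a$- and $\PP_{a[1]}$-strands respectively. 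Every generating transposition therefore picks up a minus sign, and by induction an arbitrary $\tau \in \SymGrp n$ acquires the sign $\sgn(\tau)$. Consequently $e_\triv$ and $e_\sign$ correspond, and passing to the direct summands (the convolutions of the twisted complexes defining $\PP^{(n)}$ and $\PP^{(1^n)}$) gives $\PP_{a[1]}^{(n)} \cong \PP_a^{(1^n)}[n]$. The $\QQ$-statement is established by the parallel argument applied to $\QQ$-generators.

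The main obstacle is the careful sign bookkeeping, in particular at the interface between the graded interchange law and the dot-slide relation; a secondary technical point is that $\PP^{(n)}$ is defined as a twisted-complex convolution rather than a strict direct summand, so one must verify that the symmetriser/antisymmetriser correspondence of idempotents lifts to these convolutions, which is automatic once one passes to the Karoubi-complete homotopy category implicit in $\hcat\basecat$.
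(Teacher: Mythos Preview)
Your proof is correct and follows essentially the same route as the paper: both arguments use the canonical degree~$\pm 1$ morphism between $a$ and $a[1]$ to build the basic isomorphism $\PP_{a[1]}\cong\PP_a[1]$, and both establish the symmetriser/antisymmetriser swap by the two-strand computation combining dot-slide through the crossing with the graded interchange law. The only minor difference is that the paper writes down an explicit termwise isomorphism $\tilde\iota^n$ of the twisted complexes defining $\PP_{a[1]}^{(n)}$ and $\PP_a^{(1^n)}[n]$, whereas you pass to the Karoubi-complete homotopy category and invoke the general fact that an isomorphism intertwining idempotents induces one on summands; both are valid, with the paper's version being slightly more explicit at the \dg level.
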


\begin{proof}
	We prove the statements about $\PP$s.  Those about $\QQ$s are proved similarly with a twist in the sign; see the end of the proof below.
	
	Let $i\colon a[1] \rightarrow a$ be the degree $-1$ morphism in
	$\basecat$ defined by $\id_a$. Let $\iota'\colon \PP_{a[1]} \rightarrow
	\PP_{a}$ be the corresponding morphism
	$\begin{tikzpicture}[scale=0.5, baseline={(0,0.125)}]
		\draw[->] (1,0) -- node[label=right:{$i$}, dot] {} (1,1);
	\end{tikzpicture}$
	in $\hcat*\basecat$. Finally, let 
	$\iota\colon \PP_{a[1]} \rightarrow \PP_{a}[1]$ be the degree zero
	morphism in $\hcat*\basecat$ defined by $\iota'$. It is an isomorphism 
	as it has an inverse $\iota^{-1}: \PP_{a}[1] \rightarrow  \PP_{a[1]}$
	which is similarly defined by $\id_a$. 
	
	By definition, $\PP_{a[1]}^{(n)}$ is the convolution of the twisted
	complex 
	\begin{equation*}
		\begin{tikzcd}
			\dots
			\ar{r}{e_\triv}
			&
			\PP_{a[1]}^{n} 
			\ar{r}{1 - e_\triv}
			&
			\PP_{a[1]}^{n} 
			\ar{r}{e_\triv}
			&
			\PP_{a[1]}^{n} 
			\ar{r}{1 - e_\triv}
			&
			\underset{\degzero}{\PP_{a[1]}^{n}},
		\end{tikzcd}
	\end{equation*}
	while is $\PP_a^{(1^n)}[n]$ the convolution of
	\begin{equation*}
		\begin{tikzcd}
			\dots
			\ar{r}{e_\sign}
			&
			\PP_{a}^{n}[n]
			\ar{r}{1 - e_\sign}
			&
			\PP_{a}^{n}[n]
			\ar{r}{e_\sign}
			&
			\PP_{a}^{n}[n]
			\ar{r}{1 - e_\sign}
			&
			\underset{\degzero}{\PP_{a}^{n}[n]}.
		\end{tikzcd}
	\end{equation*}
	Consider the following map of twisted complexes
	\begin{equation*}
		\tilde{\iota}^n \coloneqq
		\begin{tikzcd}
			\dots
			\ar{r}{e_\triv}
			&
			\PP_{a[1]}^{n} 
			\ar{r}{1 - e_\triv}
			\ar{d}{\iota^n}
			&
			\PP_{a[1]}^{n} 
			\ar{r}{e_\triv}
			\ar{d}{\iota^n}
			&
			\PP_{a[1]}^{n} 
			\ar{r}{1 - e_\triv}
			\ar{d}{\iota^n}
			&
			\PP_{a[1]}^{n} 
			\ar{d}{\iota^n}
			\\
			\dots
			\ar{r}{e_\sign}
			&
			\PP_a^{n}[n]
			\ar{r}{1 - e_\sign}
			&
			\PP_a^{n}[n] 
			\ar{r}{e_\sign}
			&
			\PP_a^{n}[n] 
			\ar{r}{1- e_\sign}
			&
			\PP_a^{n}[n].
		\end{tikzcd}
	\end{equation*}
	We claim that $\iota^n\colon \PP_{a[1]}^{n} \rightarrow \PP_a^{n}[n]$ 
	intertwines the idempotents $e_\triv$ and $e_\sign$:
	
	\[ \iota^ne_\triv =e_\sign\iota^n. \]
	It follows that $\tilde{\iota}^n$ is closed of degree $0$. We conclude that
	it is an isomorphism, as $\iota^n$ is one. 
	
	To prove the claim, it suffices to show that degree $-n$ map  
	$\iota^n\colon \PP_{a[1]}^{n} \rightarrow \PP_a^{n}$ intertwines 
	$e_\triv$ and $e_\sign$. 
	This is a straightforward verification in 
	$\hcat*\basecat$. We give the details for $n = 2$; the general case follows in the same manner. 
	
	When $n = 2$, we have
	\begin{align*}
		e_\triv &=
		\frac12 \biggl(
		\begin{tikzpicture}[scale=0.5, baseline={(0,0.125)}]
			\draw[->] (0,0) -- (0,1);
			\draw[->] (1,0) -- (1,1);
		\end{tikzpicture}
		+ 
		\begin{tikzpicture}[scale=0.5, baseline={(0,0.125)}]
			\draw[->] (0,0) --  (1,1);
			\draw[->] (1,0) --  (0,1);
		\end{tikzpicture}
		\biggr),\\
		e_\sign & = 
		\frac12\biggl(
		\begin{tikzpicture}[scale=0.5, baseline={(0,0.125)}]
			\draw[->] (0,0) -- (0,1);
			\draw[->] (1,0) -- (1,1);
		\end{tikzpicture}
		-
		\begin{tikzpicture}[scale=0.5, baseline={(0,0.125)}]
			\draw[->] (0,0) --  (1,1);
			\draw[->] (1,0) --  (0,1);
		\end{tikzpicture}
		\biggr).
	\end{align*}
	The $2$-morphism $\begin{tikzpicture}[scale=0.5, baseline={(0,0.125)}]
		\draw[->] (0,0) -- (0,1);
		\draw[->] (1,0) -- (1,1);
	\end{tikzpicture}$ is the identity map, and clearly $\iota^2$
	intertwines the identity maps. It remains to show that 
	it intertwines $\begin{tikzpicture}[scale=0.5, baseline={(0,0.125)}] \draw[->] (0,0) --  (1,1);
		\draw[->] (1,0) --  (0,1);
	\end{tikzpicture}$ and 
	$-\begin{tikzpicture}[scale=0.5, baseline={(0,0.125)}]
		\draw[->] (0,0) --  (1,1);
		\draw[->] (1,0) --  (0,1);
	\end{tikzpicture}$, that is:
	\begin{equation*}
		\begin{tikzpicture}[baseline={(0,0.875)}]
			\draw[->] (0,0) -- (1,1) -- node[label=right:{$i$}, dot] {} (1,2);
			\draw[->] (1,0) -- (0,1) -- node[label=right:{$i$}, dot] {} (0,2);
		\end{tikzpicture}
		\quad
		= 
		\quad
		-
		\;\;
		\begin{tikzpicture}[baseline={(0,0.875)}] 
			\draw[->] (0,0) -- node[label=right:{$i$}, dot] {}  (0,1) -- (1,2);
			\draw[->] (1,0) -- node[label=right:{$i$}, dot] {}  (1,1) -- (0,2);
		\end{tikzpicture}.
	\end{equation*}
	To see this, recall that according to our convention explained in
	Remark \ref{rem:$2$-morphism-interchange-additive}, the diagram 
	$\begin{tikzpicture}[scale = 0.5, baseline={(0,0.125)}] 
		\draw[->] (0,0) -- node[label=right:{$i$}, dot] {}  (0,1);
		\draw[->] (1,0) -- node[label=right:{$i$}, dot] {}  (1,1);
	\end{tikzpicture}$
	should be read as 
	$\begin{tikzpicture}[scale = 0.5, baseline={(0,0.125)}] 
		\draw[->] (0,0) -- node[label=right:{$i$}, dot, pos=0.55] {} (0,1);
		\draw[->] (1,0) -- node[label=right:{$i$}, dot, pos=0.25] {} (1,1);
	\end{tikzpicture}$. Since $i$ has degree $-1$,  
	the graded interchange law states
	\[
	\begin{tikzpicture}[scale = 0.5, baseline={(0,0.125)}] 
		\draw[->] (0,0) -- node[label=right:{$i$}, dot, pos=0.25] {} (0,1);
		\draw[->] (1,0) -- node[label=right:{$i$}, dot, pos=0.55] {} (1,1);
	\end{tikzpicture} 
	= (-1)^{(-1)(-1)}
	\begin{tikzpicture}[scale = 0.5, baseline={(0,0.125)}] 
		\draw[->] (0,0) -- node[label=right:{$i$}, dot, pos=0.55] {} (0,1);
		\draw[->] (1,0) -- node[label=right:{$i$}, dot, pos=0.25] {} (1,1);
	\end{tikzpicture} 
	= 
	-
	\begin{tikzpicture}[scale = 0.5, baseline={(0,0.125)}] 
		\draw[->] (0,0) -- node[label=right:{$i$}, dot, pos=0.55] {} (0,1);
		\draw[->] (1,0) -- node[label=right:{$i$}, dot, pos=0.25] {} (1,1);
	\end{tikzpicture}.
	\]
	Consequently:
	\begin{equation*}
		\begin{tikzpicture}[baseline={(0,0.875)}]
			\draw[->] (0,0) -- (1,1) -- node[label=right:{$i$}, dot] {} (1,2);
			\draw[->] (1,0) -- (0,1) -- node[label=right:{$i$}, dot] {} (0,2);
		\end{tikzpicture}
		= \ 
		\begin{tikzpicture}[baseline={(0,0.875)}]
			\draw[->] (0,0) -- (1,1) -- node[label=right:{$i$}, dot, pos=0.25] {} (1,2);
			\draw[->] (1,0) -- (0,1) -- node[label=right:{$i$}, dot] {} (0,2);
		\end{tikzpicture}
		= \ 
		\begin{tikzpicture}[baseline={(0,0.875)}] 
			\draw[->] (0,0) -- node[label=right:{$i$}, dot, pos=0.25] {} (0,1) -- (1,2);
			\draw[->] (1,0) -- node[label=right:{$i$}, dot] {}  (1,1) -- (0,2);
		\end{tikzpicture}
		= \ 
		-
		\;\;
		\begin{tikzpicture}[baseline={(0,0.875)}] 
			\draw[->] (0,0) -- node[label=right:{$i$}, dot] {}  (0,1) -- (1,2);
			\draw[->] (1,0) -- node[label=right:{$i$}, dot] {}  (1,1) -- (0,2);
		\end{tikzpicture}.
	\end{equation*}
	
	For $Q$, let $i\colon a \rightarrow a[1]$ be the degree $1$ morphism in
	$\basecat$ defined by $\id_a$. Let $\iota'\colon \QQ_{a[1]} \rightarrow
	\QQ_{a}$ be the corresponding morphism
	$\begin{tikzpicture}[scale=0.5, baseline={(0,0.125)}]
		\draw[<-] (1,0) -- node[label=right:{$i$}, dot] {} (1,1);
	\end{tikzpicture}$
	in $\hcat*\basecat$. Moreover, let 
	$\iota\colon \QQ_{a[1]} \rightarrow \QQ_{a}[-1]$ be the degree zero
	morphism in $\hcat*\basecat$ defined by $\iota'$. Again, it is an isomorphism 
	with inverse $\iota^{-1}: \QQ_{a}[-1] \rightarrow  \QQ_{a[1]}$
	defined similarly by $\id_a$. The rest of the proof is similar. 
\end{proof}

This result affords us the following further relations:

\begin{Proposition}\label{prop:transposed_cat_heisenberg_relations}\leavevmode
	\begin{enumerate}
		\item\label{it:transposed_cat_heisenberg_relations1}  For any $a, b \in \basecat$ and $n, m \in \NN$ the following holds
		in $\hcat\basecat$: 
		\[
		\PP_a^{(1^m)}\PP_{b}^{(n)} \cong \PP_{b}^{(n)} \PP_a^{(1^m)}, \quad
		\QQ_a^{(1^m)}\QQ_{b}^{(n)} \cong \QQ_{b}^{(n)} \QQ_a^{(1^m)},
		\]
		\item\label{it:transposed_cat_heisenberg_relations2} For any $a, b \in \basecat$ and $n, m \in \NN$ we have a
		homotopy equivalence in $\hcat\basecat$:
		\begin{equation*}
			\bigoplus_{i=0}^{\min(m,n)} \bigwedge^i \Hom_{\basecat}(a, b) \otimes_\kk \PP_{b}^{(n-i)} \QQ_a^{(1^{m-i})} 
			\to
			\QQ_a^{(1^m)}\PP_{b}^{(n)}.
		\end{equation*}
		and thus the following holds in $H^*(\hcat\basecat)$ 
		\[
		\QQ_a^{(1^m)}\PP_{b}^{(n)} \cong \bigoplus_{i=0}^{\min(m,n)}\bigwedge^i \Hom_{H^*(\basecat)}(a,b) \otimes_\kk \PP_{b}^{(n-i)}\QQ_a^{(1^{m-i})}
		\]
	\end{enumerate}
	The above also holds with the roles of $(1^m)$ and $(n)$ interchanged.
\end{Proposition}

\begin{proof}
	Replace $a$ with $b[-1]$, resp. with $b[1]$ in Lemma~\ref{lemma:shifts_and_symmetrisers} to get (up to a shift) claim~\ref{it:transposed_cat_heisenberg_relations1} from  Theorem~\ref{thm:cat_heisenberg_relations-dg}~\ref{it:cat_heisenberg_relations-dg1}.
	Claim~\ref{it:transposed_cat_heisenberg_relations2} follows similarly from Theorem~\ref{thm:cat_heisenberg_relations-dg}~\ref{it:cat_heisenberg_relations-dg2} using the identification 
	\[ \mathrm{Sym}^i \mathrm{Hom}(a[1],b)\simeq \mathrm{Sym}^i (\mathrm{Hom}(a,b)[-1])\simeq \bigwedge^i (\mathrm{Hom}(a,b))[-1] \]
	of graded symmetric powers. For the final statement, apply the automorphism $F$.
\end{proof}

\begin{Example}\label{ex:CautisLicata_part3}
	Let $\Gamma \subset \mathrm{SL}(2,\CC)$ be a finite subgroup.
	In Example~\ref{ex:CautisLicata_part1} we defined the $1$-morphisms $P_i =
	\PP_{\sheaf E_i}$ and $Q_i = \QQ_{\sheaf E_i}[1]$ for each $i \in I_\Gamma$.
	Thus the $1$-morphism $Q_i^{(n)}$ of $\mathcal H^\Gamma$ in \cite{cautis2012heisenberg} corresponds to the $1$-morphism $\QQ_{\sheaf E_i[-1]}^{(1^n)}$.
	From \eqref{eq:cl_homs} one obtains 
	\[
	\Hom^*(\sheaf E_i[-1],\, \sheaf E_j) = 
	\begin{cases} 
		\CC[1] \oplus \CC[-1], & i = j \\
		\CC,                   & \langle i, j \rangle = -1 \\
		0,                     & \text{otherwise.}
	\end{cases}
	\]
	The $k$-th exterior power of $\CC[1] \oplus \CC[-1]$ is
	$\bigoplus_{j=0}^k \CC[k-2j]$. Identifying it with
	$H^{\ast}(\mathbb{P}^k)[k]$, we see that Proposition~\ref{prop:transposed_cat_heisenberg_relations} agrees with \cite[Proposition 2]{cautis2012heisenberg}:
	\begin{equation*}
		\begin{gathered} 
			P_i^{(m)}P_j^{(n)} \cong P_j^{(n)} P_i^{(m)}, \quad
			Q_i^{(m)}\QQ_j^{(n)} \cong Q_j^{(n)} Q_i^{(m)},
			\\[1ex]
			Q_i^{(m)}P_j^{(n)} \cong 
			\begin{cases}
				\bigoplus_{k=0}^{\min(m,n)} H^{\ast}(\mathbb{P}^k) [k] \otimes_\kk P_j^{(n-k)} Q_i^{(m-k)} & \textrm{ if } i = j \in I_{\Gamma} ,\\
				P_j^{(n)} Q_i^{(m)} \oplus P_j^{(n-1)}Q_i^{(m-1)} & \textrm{ if } \langle i, j\rangle=-1,\\
				P_j^{(n)} Q_i^{(m)} & \textrm{ if } \langle i, j\rangle = 0.
			\end{cases}
		\end{gathered}
	\end{equation*}
\end{Example}

\section{Grothendieck groups}\label{subsec:grothendieck_groups}

Recall the definition of the numerical Grothendieck group\index{numerical Grothendieck group} $\numGgp{}$ 
of a \dg category given in
Section~\ref{subsec:prelim_grothendieck_group}. It 
is the quotient of the usual Grothendieck group by the kernel of the Euler pairing.
Recall from Section~\ref{subsubsec:idempotent_modification} that we write $\halg\basecat$ for the idempotent modified
Heisenberg algebra of the lattice $(\numGgp\basecat,\, \chi)$.
We note again that we use the numerical Grothendieck group to ensure that this algebra has trivial centre.

In this section we compare $\halg\basecat$ to the Grothendieck group of 
the Heisenberg category $\hcat\basecat$.
Let $\mathrm{K}_0(\hcat\basecat,\,\kk)$ be the $\kk$-linear category with the same objects as $\hcat\basecat$ and morphism spaces
\[
\Hom_{\mathrm{K}_0(\hcat\basecat,\,\kk)}(\ho,\ho*) = \mathrm{K}_0\bigl(\Hom_{\hcat\basecat}(\ho,\ho*),\, \kk\bigr),
\]
where for any \dg category $\A$ we set $\mathrm{K}_0(\A,\,\kk) = \mathrm{K}_0(\A) \otimes_\ZZ \kk$.
As forming Grothendieck groups is functorial, 
the $1$-composition of $\hcat\basecat$ induces the composition on
$\mathrm{K}_0(\hcat\basecat,\,\kk)$. 

A closed string diagram defines an endomorphism of $\hunit$. 
Some of these endomorphisms are non-trivial and are not subject to any
relations. For example, those defined by clockwise bubbles, the
compositions of clockwise cups followed by clockwise caps. Thus 
the categories $\Hom_{\hcat\basecat}(\ho,\ho*)$ are not $\homm$-finite. Thus we cannot use the Euler pairing to obtain the corresponding numerical Grothendieck groups.

\begin{Remark}
	For $\basecat = \catdgfVect$ the Hom-spaces  of $\hcat\basecat$, while infinite-dimensional, are controlled by $\End(\hunit)$ and the degenerate affine Hecke algebra\index{degenerate affine Hecke algebra} \cite[Proposition 4]{khovanov2014heisenberg}.
	Some version of this observation is expected to hold more generally, see for example \cite[Conjecture~2]{cautis2012heisenberg}.
	It is not however clear how to define the degenerate affine Hecke
	algebra in our generality. We intend to return to this question in
	future work. Instead, we use an ad-hoc definition of the numerical
		Grothendieck group given in Definition~\ref{def:knumheis} below.
	
\end{Remark}

To kill the centre, we need to at least quotient each $\mathrm{K}_0\bigl(\Hom_{\hcat\basecat}(\ho,\ho*),\, \kk\bigr)$ by the classes $[\PP_a]$ and $[\QQ_a]$ for $[a]$ in the kernel of the Euler pairing on $\mathrm{K}_0(\basecat)$, as well as by any direct summands of these coming from the symmetric group action on parallel strands.

To formulate this, recall the functors of Remark~\ref{rem:Xiprime}:
\[
\Xi^{\prime\PP}_{\ho,\ho+n} \colon \symbc n \to
\Hom_{\hcat\basecat}(\ho,\,\ho+n).
\]
Taking h-perfect hulls we obtain functors
\[
\Xi^{\PP}_{\ho,\ho+n} \colon \hperf(\symbc n) \to \Hom_{\hcat\basecat}(\ho,\,\ho+n),
\]
and similarly contravariant functors $\Xi^{\QQ}_{\ho,\ho+n}$.
These further package up into $2$-functors
$$\Xi^{\PP}, \Xi^{\QQ}\colon \bihperf(\bicat{Sym}_\basecat)
\rightarrow \hcat\basecat. $$ 
As these are integral parts of the structure of $\hcat\basecat$, we expect 
them to descend to the numerical Grothendieck groups. We thus make 
the following definition:

\begin{Definition}
	\label{def:knumheis}
	Let $I$ be the two-sided ideal of $\mathrm{K}_0(\hcat\basecat,\, \kk)$ generated by the images under $\Xi^\PP$ and $\Xi^\QQ$ of the kernels of the Euler pairings on $\mathrm{K}_0(\bihperf(\bicat{Sym}_\basecat),\, \kk)$.
	The $1$-category $\numGgp{\hcat\basecat,\,\kk}$ is the quotient of $\mathrm{K}_0(\hcat\basecat,\,\kk)$ by $I$.
\end{Definition}

\begin{Remark} Recall the $1$-morphisms $\PP_a^\lambda$ and $\QQ_a^\lambda$ defined in Section~\ref{subsec:transposed_generators}.
	The ideal $I$ contains the classes $[\PP_a^\lambda]$ and $[\QQ_a^\lambda]$ for all $a \in \basecat$ with $[a]$ in the kernel of the Euler pairing
	and all Young diagrams $\lambda$.  
	
	If $I$ is generated by these classes, then using the Giambelli identity, $I$ is in this case equivalently generated by classes of the form $[\PP_a^{(n)}]$ and $[\QQ_a^{(n)}]$, see for example \cite[Remark~6]{cautis2012heisenberg}.
	This is exactly the minimal ideal one needs to quotient out in order for the Heisenberg algebra to have no centre.
	
	In general, however, there may exist images of additional homotopy 
	idempotents in the kernel of the Euler pairing on $\mathrm{K}_0(\symbc n,\,\kk)$.
	In order to catch these and to obtain the expected natural morphisms
	$\numGgp{\bicat{Sym}_\basecat,\,\kk} \to \numGgp{\hcat\basecat,\,\kk}$ one needs to use the less intuitive definition of $I$ given above.
\end{Remark}

{ At the outset, we completed $\basecat$ to $\hperf\basecat$, see 
	the introduction to Chapter~\ref{sec:dg-Heisenberg-2-cat}. 
	We can therefore choose 
	a basis of $\numGgp{\basecat}$ consisting of the classes of objects 
	of $\basecat$. The elements $p_a^{(n)}$ and $q_a^{(n)}$ indexed by
	the objects $a$ in this basis generate the Heisenberg algebra
	$\halg\basecat$.} Theorem~\ref{thm:cat_heisenberg_relations-dg} implies 
that there is a canonical morphism of $\kk$-algebras
\[
\pi\colon \halg\basecat \to \numGgp{\hcat\basecat,\,\kk}
\]
sending the generators $p_a^{(n)}$ to the class of $\PP_a^{(n)}$ and
$q_a^{(n)}$ to the class of $\QQ_a^{(n)}$.

\begin{Theorem}\label{thm:injective}
	The map $\pi\colon \halg\basecat \to \numGgp{\hcat\basecat,\,\kk}$ is an injective map of  $\kk$-algebras.
\end{Theorem}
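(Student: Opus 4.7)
The plan is to deduce injectivity of $\pi$ from the classical faithfulness of the Fock space, using Theorem~\ref{thm:main2} as the decategorification bridge. The Mukai pairing $\chi$ is non-degenerate on $\numGgp\basecat$ by Proposition~\ref{prop:tabuada_Ggp}, so $\falg\basecat$ is a faithful $\halg\basecat$-module. The homotopy lax $2$-functor $\Phi_\basecat\colon \hcat\basecat \to \fcat\basecat$ of Theorem~\ref{thm:main2} descends on Grothendieck groups to an action of $K_0(\hcat\basecat,\,\kk)$ on the graded vector space
\[
V \coloneqq \bigoplus_{\ho \in \ZZ} \numGgp{\symbc\ho,\,\kk},
\]
and, modulo the technical point discussed below, this action factors through $\numGgp{\hcat\basecat,\,\kk}$. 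Composing with $\pi$ produces an $\halg\basecat$-module structure $\rho\colon \halg\basecat \to \End_\kk V$.

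To see that $\rho$ is faithful, I would apply Lemma~\ref{lem:fock_embeds} to the vacuum class $v \coloneqq [\kk] \in \numGgp{\symbc 0,\,\kk}$. The idempotent $1_\ho$ acts as projection onto the weight-$\ho$ summand of $V$, giving $1_0 \cdot v = v$ and $1_k \cdot v = 0$ for $k \neq 0$. For any $a \in \basecat$ and $n > 0$, the operator $q_a^{(n)}$ acts through $\Phi_\basecat(\QQ_a^{(n)})\colon \symbc 0 \to \symbc{-n}$, but $\symbc{-n} = 0$ by convention, so $q_a^{(n)} \cdot v = 0$. Hence $v$ is annihilated by $\halg\basecat^- \setminus \{1_0\}$ and fixed by $1_0$, and Lemma~\ref{lem:fock_embeds} supplies an injection of $\halg\basecat$-modules $\iota\colon \falg\basecat \hookrightarrow V$. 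Since $\falg\basecat$ is faithful, $\rho$ must have trivial kernel, and therefore $\pi$ is injective.

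The main technical obstacle in carrying out this plan is to verify that the $K_0$-level action of $\hcat\basecat$ on $V$ factors through the quotient $\numGgp{\hcat\basecat,\,\kk}$ of Definition~\ref{def:knumheis}. Concretely, one must check that for any class $[z]$ in the kernel of the Euler pairing on $K_0(\bihperf(\bicat{Sym}_\basecat),\,\kk)$, the endomorphisms of $V$ induced by $\Xi^{\PP}([z])$ and $\Xi^{\QQ}([z])$ vanish. The input here is the adjunction identity $\chi_{\symbc{\ho+n}}\bigl(\Phi_*(\PP^{(n)}_a)x,\,y\bigr) = \chi_{\symbc\ho}\bigl(x,\,\Phi_*(\QQ^{(n)}_a)y\bigr)$, available through the biadjunctions on $\hcat\basecat$ and Lemma~\ref{lem:indmapnum}; combined with the Künneth-type compatibility of the Euler pairing with the symmetrisation functors $\Xi^{\PP}, \Xi^{\QQ}$ (each of which is built from tensoring with a class and inducing along $\SymGrp n \hookrightarrow \SymGrp{n+k}$), this reduces the descent to a linear algebra verification on $\numGgp{\basecat}^{\otimes n} \rtimes \kk[\SymGrp n]$. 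Once established, the faithfulness argument of the previous paragraph goes through verbatim.
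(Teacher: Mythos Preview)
Your proposal is correct and follows essentially the same route as the paper: both argue that the composite $\halg\basecat \xrightarrow{\pi} \numGgp{\hcat\basecat,\,\kk} \to \End\bigl(\bigoplus_{\ho}\numGgp{\symbc\ho,\,\kk}\bigr)$ is faithful by applying Lemma~\ref{lem:fock_embeds} to the vacuum class in $\numGgp{\symbc 0,\,\kk}$, so that injectivity of $\pi$ follows from faithfulness of the classical Fock space. The paper packages the descent step you flag as a technical obstacle into Corollary~\ref{cor:KgpFock}, where the key trick is the swap $\Phi_\basecat\circ\Xi^{\PP}(E)(F)\simeq \Phi_\basecat\circ\Xi^{\PP}(F)(E)$ together with the observation that all $1$-morphisms in the image of $\Phi_\basecat$ are left- and right-perfect and hence preserve $\ker\chi$; this is cleaner than the K\"unneth-type argument you sketch, but the content is the same.
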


\begin{proof}
	In Chapter~\ref{sec:cat_fock} we construct a categorical analogue of the Fock space together with a 2-representation of $\hcat\basecat$ on it.
	By Corollary~\ref{cor:KgpFock}, this 2-representation induces on the level of K-groups a homomorphism of algebras
	\begin{equation}\label{eq:prop:injective:representation}
		\halg\basecat \xrightarrow{\pi} \numGgp{\hcat\basecat,\,\kk} \to
		\End\left(\bigoplus_{\ho \geq 0}  \numGgp{\symbc\ho,\, \kk}\right). 
	\end{equation}
	As $1 \in \numGgp{\sym^0 \basecat,\, \kk} \cong \kk$ is annihilated by
	$\halg\basecat^- \setminus \{1_0\}$ and is fixed by $1_0$, 
	Lemma~\ref{lem:fock_embeds} produces an embedding
	\begin{equation} 
		\label{eq:falgembedstonumGgp}
		\falg{\basecat} \to \bigoplus_{\ho \geq 0} \numGgp{\symbc \ho,\, \kk}\end{equation}
	of the classical Fock space.
	Hence the representation \eqref{eq:prop:injective:representation} of $\halg\basecat$ on 
	\[\bigoplus_{\ho \geq 0} \numGgp{\symbc\ho,\, \kk}\] is faithful.
	Therefore $\pi$ is necessarily injective.
\end{proof}

Surjectivity of $\pi$ is a considerably subtler question, due to the possible 
appearance of additional homotopy idempotents when taking the perfect hull 
$\hcat*\basecat$.
This is closely related to the question of whether 
$\numGgp{\symbc \ho,\, \kk}$ and 
\[\falg{\basecat}^N = \bigoplus_{k_1+2k_2+\dots=N}\bigotimes_{i}\Sym^{k_i}  (\numGgp{\basecat,\, \kk}),\]
the degree $\ho$ part of the Fock space
are  isomorphic.
To the authors' knowledge, there exists no general criterion for this, cf.~the remarks in Section~\ref{subsec:grothfock}.

\begin{Conjecture}\label{conj:pi_iso}
	If the canonical morphism  $\falg{\basecat}^N \to \numGgp{\symbc \ho}$ is an isomorphism, then so is $\pi$.
\end{Conjecture}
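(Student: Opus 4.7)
The plan is to assume the hypothesis, note that $\pi$ is already injective by Theorem~\ref{thm:injective}, and establish surjectivity by showing that every class of a $1$-morphism in $\hcat\basecat$ lies in the image of $\pi$.

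First I would record the Heisenberg commutation relation in K-theory obtained by taking Euler characteristics in Theorem~\ref{thm:cat_heisenberg_relations-dg}\ref{item-heisenberg-relations-homotopy}: in $\numGgp{\hcat\basecat,\kk}$ one has
\[
[\QQ_a^{(m)}]\,[\PP_b^{(n)}] \;=\; \sum_{i=0}^{\min(m,n)} s^{i}\langle a,b\rangle_{\chi}\cdot[\PP_b^{(n-i)}]\,[\QQ_a^{(m-i)}],
\]
together with the evident commutativity relations from Theorem~\ref{thm:cat_heisenberg_relations-dg}\ref{item-heisenberg-relations-dg} and their transposed counterparts from Proposition~\ref{prop:transposed_cat_heisenberg_relations}. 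Iterating these, any polynomial in the classes of the generators can be rewritten as a $\kk$-linear combination of \emph{normally ordered} products in which all $[\PP]$'s precede all $[\QQ]$'s.

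Second, I would exploit the hypothesis to identify the normally ordered classes with the image of $\pi$. By construction a normally ordered monomial $[\PP_{a_1}]\dotsm[\PP_{a_k}]\,[\QQ_{b_1}]\dotsm[\QQ_{b_l}]$ is the image under $\Xi^{\PP}$ and $\Xi^{\QQ}$ of the classes of $a_1\otimes\dots\otimes a_k$ and $b_1\otimes\dots\otimes b_l$ in $\numGgp{\symbc k,\kk}$ and $\numGgp{\symbc l,\kk}$. Under the assumption $\Sym^{n}\numGgp{\basecat}\cong\numGgp{\symbc n}$ every class in the target is already a polynomial in such elementary classes, so the additivity relation~\eqref{eq:heisrel1} at $n=1$ identifies each normally ordered monomial with $\pi\bigl(p_{[a_1]}^{(1)}\dotsm p_{[a_k]}^{(1)}\,q_{[b_1]}^{(1)}\dotsm q_{[b_l]}^{(1)}\bigr)$, and the Giambelli identity derived from~\eqref{eq:heisrel1}--\eqref{eq:heisrel2} expresses each Young-symmetrised summand $[\PP_a^\lambda]$ as $\pi$ of a polynomial in the $p_{[a]}^{(j)}$.

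The hard part will be to reduce an arbitrary K-class in $\numGgp{\hcat\basecat,\kk}$ to a sum of normally ordered ones. Because $\hcat\basecat$ is obtained from $\hcat*\basecat$ by first passing to the $h$-perfect hull and then forming the monoidal Drinfeld quotient of Definition~\ref{def:monoidal-Drinfeld-quotient}, its $1$-morphism K-groups receive contributions from arbitrary homotopy idempotents in the Karoubi completion, including those built out of cups, caps and curls that mix $\PP$ and $\QQ$ strands. My intended strategy is to proceed by induction on the total number of generating strands, at each step invoking Theorem~\ref{thm:cat_heisenberg_relations-dg}\ref{item-heisenberg-relations-homotopy} together with its $\RR$-version to split off one normally ordered layer; the remainder involves strictly fewer strands, to which the induction hypothesis applies. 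Granting this reduction, the image of $\pi$ is a subalgebra of $\numGgp{\hcat\basecat,\kk}$ containing every class of a $1$-morphism, hence coincides with the whole K-group, and Theorem~\ref{thm:injective} then upgrades this to the asserted isomorphism.
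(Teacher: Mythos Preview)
The statement you are attempting to prove is a \emph{Conjecture} in the paper, not a theorem: the authors do not prove it, and they say explicitly that surjectivity of $\pi$ is \enquote{a considerably subtler question, due to the possible appearance of additional homotopy idempotents when taking the perfect hull.} What they do prove is a converse (Theorem~\ref{thm:piisothenphi}): if $\pi$ is an isomorphism, then $\Sym^n\numGgp\basecat \cong \numGgp{\symbc n}$.

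Your proposal correctly identifies the hard step and then does not resolve it. The induction on the \enquote{total number of generating strands} is not well-defined on $\numGgp{\hcat\basecat,\kk}$: the $1$-morphisms of $\hcat\basecat$ are not words in $\PP$'s and $\QQ$'s but $h$-perfect modules over the $1$-morphism categories of $\hcat*\basecat$, that is, homotopy direct summands of one-sided twisted complexes of such words. The Heisenberg commutation relations let you normally order any \emph{monomial} in the generators, but they say nothing about an arbitrary homotopy idempotent on a mixed word such as $\PP_a\QQ_b\PP_c$. Such an idempotent can be built from cups, caps, dots and curls, and need not factor through any of the symmetric-group idempotents you have under control; splitting it may produce a K-class that is not visibly a polynomial in $[\PP_a^{(n)}]$ and $[\QQ_a^{(n)}]$. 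This is exactly the content of the remark following the conjecture: the statement is that the only relevant idempotents in the homotopy category are those coming from $\SymGrp n$ acting on parallel strands, and the paper offers no mechanism to verify this. Your hypothesis about $\numGgp{\symbc n}$ controls idempotents on pure $\PP$-words and pure $\QQ$-words via $\Xi^\PP$ and $\Xi^\QQ$, but there is no analogous functor from a known category into the mixed sectors of $\hcat\basecat$, so the hypothesis gives you no leverage there.

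In short, the first two paragraphs of your plan are fine, but the third paragraph is where the actual conjecture lives, and the proposed induction does not get off the ground.
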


The main content of the conjecture is that on the level of Grothendieck groups the operation of taking perfect hulls only adds the classes $[\PP_a^{(n)}]$ and $[\QQ_a^{(n)}]$ as additional generators.
On the homotopy categories, taking the perfect hull corresponds to
taking the triangulated hull and Karoubi completion\index{Karoubi
completion}.
Thus, alternatively, the statement is that the only relevant
idempotents in the homotopy category are those arising from the action
of the symmetric groups on upward\index{upward strand} or downward
strands\index{downward strand}.
We prove a converse to Conjecture~\ref{conj:pi_iso} in Section~\ref{subsec:fock_cat_2}.

We want to stress that a 2-representation of $\hcat\basecat$ is completely determined by the images of $\PP_a$, $\QQ_a$, $\RR_a$, and the generating $2$-morphisms.
Thus the possible appearance of additional idempotents in
$\hcat\basecat$ (i.e., $\pi$ being possibly non-surjective) does not
complicate the construction of categorical Heisenberg actions.

\begin{Example}
	\label{ex:khovanovnumgrp}
	Taking  $\basecat = \kk$, the $1$-morphisms in $\hcat\basecat$ are 
	homotopy direct summands of one-sided twisted complexes\index{twisted complex} of direct sums of $\PP_\kk$ and $\QQ_\kk$.
	As $\Hom_\basecat(\kk,\kk) = \kk$, such one-sided complexes are actual
	complexes and their morphisms are morphisms of complexes.
	Idempotents of such complexes must be idempotent in each degree.
	It follows that $\mathrm{K}_0(\hcat\basecat,\,\kk) = \numGgp{\hcat\basecat,\,\kk}$
	coincides with the Grothendieck group of Khovanov's category
	\cite{khovanov2014heisenberg}. By the main result of \cite{brundan2018degenerate} this further coincides with the infinite Heisenberg algebra.
\end{Example}

In general, $1$-morphisms in $\hcat\basecat$ may be one-sided twisted complexes
with non-trivial higher differentials. One cannot then simply 
take idempotents in each degree.  The conjecture says that the situation is however no worse than in $\symbc n$.

\section{Quantum enhancement}
Several previous works on Heisenberg categorification, like \cite{cautis2012heisenberg}, use a quantum deformation of the Heisenberg algebra. This \emph{quantum Heisenberg algebra}\index{quantum Heisenbergalgebra} $\halg\basecat^t$ has coefficients taken from $\kk[t,t^{-1}]$, where $t$ is a formal variable. 

For a graded vector space $V$ define
\[ [V] \coloneqq \sum_{n \in \ZZ} \dim\,V_n t^n. \]
Using this, the unital algebra $\chalg\basecat^t$ is defined by the same generators and relations as $\chalg\basecat$ except that relation \eqref{eq:heisrel3} is replaced by
\begin{equation*}\label{eq:heisrel3q}
	q_{a}^{(n)}p_{b}^{(m)} = 
	\sum_{k = 0}^{\mathclap{\min(m,n)}} [\Sym^k H^{\ast} \Hom(a, b)]\, p_{b}^{(m-k)}q_{a}^{(n-k)}.
\end{equation*}
Its idempotent modification\index{idempotent modification} $\halg\basecat^q$ is then obtained exactly as in Section~\ref{subsubsec:idempotent_modification}. 

\begin{Example}
	For $n \in \NN$, let $[n]$ denote the quantum integer
	\[ [n] \coloneqq \frac{t^{-n}-t^{n}}{t^{-1}-t}=t^{-n+1}+t^{-n+3}+\dots+t^{n-3}+t^{n-1}.\]
	Note that when setting $t=1$ in the last expression, one gets $[n]=n$.
	Define moreover $[n] := [-n]$ for $n \in \ZZ_{<0}$. 
	Suppose that there is a set of generating objects of $\basecat$ such that Hom-spaces between these objects satisfy
	\[ [H^{\ast} \Hom(a, b)] = [\langle a, b \rangle_{\chi}].\]
	If moreover the form $\chi$ is symmetric, then our definition coincides with \cite[Definition~5.1]{suarez2017integral} (see also \cite[Equation (6)]{cautis2012heisenberg}). These conditions hold e.g. in Example \ref{ex:CautisLicata_part3}.
\end{Example}	




Since $\hcat\basecat$ is graded, $\numGgp{\hcat\basecat,\,\kk}$ is an algebra over $\kk[t,t^{-1}]$, where $t$ acts via the shift. Similarly, $\numGgp{\symbc\ho,\, \kk}$ is naturally a $\kk[t,t^{-1}]$-module, such that $\numGgp{\hcat\basecat,\,\kk}$ acts $\kk[t,t^{-1}]$-linearly on it. Hence, there is a $\kk[t,t^{-1}]$-algebra homomorphism  
\[\numGgp{\hcat\basecat,\,\kk} \to \End_{\kk[t,t^{-1}]}\left(\bigoplus_{\ho \geq 0}  \numGgp{\symbc\ho,\, \kk}\right).\]
\begin{Proposition}
	The morphism $\pi$ extends to an injective map of $\kk[t,t^{-1}]$-algebras
	\[ \pi: \halg\basecat^t \to \numGgp{\hcat\basecat,\,\kk}.\]
\end{Proposition}
\begin{proof} We need only show that $\pi$ is a map of $\kk[t,t^{-1}]$-algebras, that is, $\pi$ is compatible with the $t$-action on the source and the target. This is straigthforward from the definitions.
\end{proof}

Letting $\halg\basecat^{t-} \subset \halg\basecat^{t}$ denote again the subalgebra generated by the set 
\[
\bigl\{ q_a^{(n)}1_k : a \in M,\, k \leq 0,\, n \geq 0 \bigr\},
\] 
the quantum Fock space is obtained as the induced representation
\[ 
\falg\basecat^t = \Ind_{\halg\basecat^{t-}}^{\halg\basecat^{t}}(\triv_0) \cong \halg\basecat^{t} \otimes_{\halg\basecat^{t-}} \kk[t,t^{-1}].
\]
The embedding \eqref{eq:falgembedstonumGgp} is also compatible with the shift, so it can be enhanced to
\[
\falg\basecat^t \to \bigoplus_{\ho \geq 0}  \numGgp{\symbc\ho,\, \kk}.
\]
%

\chapter{The Categorical Fock Space}\label{sec:cat_fock}

As in the additive case, we construct a category called the
categorical Fock space from the symmetric powers of the \dg category
$\basecat$. We show that the Heisenberg category $\hcat\basecat$ 
acts on this categorical Fock space. The relation between
this representation and the classical Fock space representation is
explored in the next section.

\section{Symmetric powers of \texorpdfstring{\dg}{DG} categories}
\label{sec:symmetric-powers-of-dg-categories}

Recall from Definition~\ref{def:symmetric-power} that the $\ho$th symmetric power of $\basecat$ is defined as $\symbc\ho = \basecat^{\otimes \ho} \rtimes \SymGrp\ho$.

\begin{Example}
	\label{ex:symcomp}
	If $X$ is a scheme, then $\sym^{\ho}\catDGCoh{X} \cong \catDGCoh{X^{\ho}}^{\SymGrp \ho}$ 
	is Morita equivalent to the standard \dg enhancement 
	$\catDGCoh{[X^{\ho}/\SymGrp \ho]}$ of the $\ho$-th symmetric quotient stack\index{symmetric quotient stack} of $X$.
	We thus have $\catDc(\sym^{\ho}\catDGCoh{X}) \simeq \catDbCoh{[X^{\ho}/\SymGrp \ho]}$, 
	the derived category of $\SymGrp \ho$-equivariant perfect complexes on $X^{\ho}$ 
	\cite[Example 2.2.8(a)]{SymCat}.
\end{Example}

\begin{Definition}
	For any $1 \leq k \leq \ho$ define the group monomorphism 
	\[
	\iota_k\colon \SymGrp{\ho-1} \hookrightarrow \SymGrp\ho,
	\]
	by identifying $\SymGrp {\ho-1}$ with the subgroup of $\SymGrp \ho$
	consisting of permutations which keep $k$ fixed. 
\end{Definition}

\begin{Lemma}
	\label{lem-the-decomposition-of-Sn+1-into-Sn}
	The group $\SymGrp \ho$ admits the following decomposition into $\SymGrp{\ho-1}$-cosets:
	\begin{equation*}
		\SymGrp{\ho} =
		\sum_{i = 1}^{\ho} (1i) \iota_{1}(\SymGrp{\ho-1}) = 
		\sum_{i = 1}^{\ho} \iota_{1}(\SymGrp{\ho-1}) (1i)
	\end{equation*}
\end{Lemma}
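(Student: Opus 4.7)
The plan is to prove both decompositions by identifying $\iota_1(\SymGrp{\ho-1})$ as the stabilizer of $1$ under the natural action of $\SymGrp \ho$ on $\{1, \dots, \ho\}$ and then invoking the orbit-stabilizer correspondence in the explicit form provided by the transpositions $(1i)$.

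First, observe directly from the definition that $\iota_1(\SymGrp{\ho-1}) = \{\tau \in \SymGrp\ho : \tau(1) = 1\}$. Hence for any $\tau \in \iota_1(\SymGrp{\ho-1})$ one has $\bigl((1i)\tau\bigr)(1) = (1i)(1) = i$, so each coset $(1i)\iota_1(\SymGrp{\ho-1})$ is contained in the set of permutations sending $1$ to $i$. Conversely, given any $\sigma \in \SymGrp\ho$, set $i = \sigma(1)$; then $\tau := (1i)\sigma$ fixes $1$, hence lies in $\iota_1(\SymGrp{\ho-1})$, and $\sigma = (1i)\tau$. The cosets corresponding to distinct values of $i \in \{1, \dots, \ho\}$ are disjoint (since elements of different cosets send $1$ to different places), which yields the first decomposition
\[
  \SymGrp\ho = \bigsqcup_{i=1}^{\ho} (1i)\iota_1(\SymGrp{\ho-1}).
\]

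For the second decomposition the argument is symmetric: a coset $\iota_1(\SymGrp{\ho-1})(1i)$ consists of those $\sigma$ with $\sigma^{-1}(1) = i$ (equivalently $\sigma(i) = 1$), since $\bigl(\tau(1i)\bigr)^{-1}(1) = (1i)\tau^{-1}(1) = i$ for $\tau$ fixing $1$. Given any $\sigma \in \SymGrp\ho$, let $i = \sigma^{-1}(1)$; then $\sigma(1i)$ fixes $1$, so $\sigma = \bigl(\sigma(1i)\bigr)(1i) \in \iota_1(\SymGrp{\ho-1})(1i)$. Distinct values of $i$ again give disjoint cosets, proving the second equality.

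There is no real obstacle here: both statements are orbit-stabilizer bookkeeping. The only point worth being careful about is keeping track of which side the transposition sits on in each coset, and correspondingly whether $\sigma$ or $\sigma^{-1}$ sends $1$ to $i$; once that is pinned down, both decompositions follow from a one-line check.
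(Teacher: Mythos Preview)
Your proof is correct; the paper does not give a proof of this lemma, treating it as an elementary observation about coset representatives.
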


This observation
can be used to rearrange the complete decomposition 
\eqref{equation-decomposition-of-the-equivariant-diagonal-bimodule}
of the diagonal bimodule of $\symbc\ho$ as follows.

\begin{Corollary}
	\label{cor-decomposition-of-the-diagonal-bimodule-S_l-into-S_l-1}
	There is the following direct sum decompositions of the diagonal bimodule: 
	\begin{equation*}
		\symbc\ho \simeq 
		\bigoplus_{i = 1}^{\ho} 
		\leftidx{_{i}}{\basecat}_1 \otimes 
		\leftidx{_{\hat{1} \circ (1i)}}{\left(\symbc{\ho-1}\right)}{_{\hat{1}}}
		\simeq
		\bigoplus_{i = 1}^{\ho}
		\leftidx{_{1}}{\basecat}_i 
		\otimes 
		\leftidx{_{\hat{1}}}{\left(\symbc{\ho-1}\right)}{_{\hat{1} \circ (1i)}}
	\end{equation*}
	where the left and right indices denote taking the left and right
	arguments of the bimodule $\symbc\ho$ 
	and applying the following:
	\begin{itemize}
		\item for $i$ in $\left\{1,\dots,n\right\}$, 
		the map $i\colon  \mathrm{Ob}(\symbc\ho) \rightarrow  \mathrm{Ob}(\basecat)$ 
		projects to the $i$-th factor, 
		\item for $i$ in $\left\{1,\dots,n\right\}$, the map 
		$\hat{i}\colon  \mathrm{Ob}(\symbc\ho) \rightarrow \mathrm{Ob}(\basecat^{\otimes(\ho-1)})$
		projects to all factors but $i$-th,
		\item for $i$, $j$ in $\left\{1,\dots,n\right\}$, the map $(ij)\colon 
		\mathrm{Ob}(\symbc\ho) \rightarrow \mathrm{Ob}(\symbc\ho)$ transposes 
		$i$-th and $j$-th factors. 
	\end{itemize}
\end{Corollary}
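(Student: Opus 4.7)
The plan is to deduce both decompositions directly from \eqref{equation-decomposition-of-the-equivariant-diagonal-bimodule} applied to $\basecat^{\otimes\ho} \rtimes \SymGrp\ho$, by partitioning the index set according to the coset decompositions of Lemma~\ref{lem-the-decomposition-of-Sn+1-into-Sn}. For the first asserted isomorphism I would use the right coset decomposition $\SymGrp\ho = \sum_i \iota_1(\SymGrp{\ho-1})(1i)$: writing $g = \sigma(1i)$ with $\sigma \in \iota_1(\SymGrp{\ho-1})$ forces $g^{-1}(1) = i$. The second isomorphism uses the mirror left coset decomposition $\SymGrp\ho = \sum_i (1i)\iota_1(\SymGrp{\ho-1})$, for which $g(1) = i$; it is entirely analogous (and also deducible from the first by applying opposite categories), so I would only spell out the first case.

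After this grouping the remaining step is to exhibit, for each fixed $i$, an isomorphism of $\basecat^{\otimes\ho}$-bimodules
\[
\bigoplus_{\sigma \in \iota_1(\SymGrp{\ho-1})} \bigl(\basecat^{\otimes\ho}\bigr)_{\sigma(1i)} \;\simeq\; \leftidx{_i}{\basecat}_1 \otimes \leftidx{_{\hat{1}\circ(1i)}}{\bigl(\symbc{\ho-1}\bigr)}_{\hat{1}}.
\]
I would do this fibrewise: for $(a,b) \in \symbc\ho \times \symbc\ho$ and $g = \sigma(1i)$, the fibre of $(\basecat^{\otimes\ho})_g$ factors as
\[
\Hom_{\basecat^{\otimes\ho}}(g.a,\, b) = \Hom_\basecat(a_i,\, b_1) \otimes \bigotimes_{k=2}^{\ho} \Hom_\basecat(a_{g^{-1}(k)},\, b_k),
\]
so the $k=1$ factor is precisely the fibre of $\leftidx{_i}{\basecat}_1$. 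The remaining $\ho-1$ factors, summed over $\sigma$, reassemble into $\Hom_{\symbc{\ho-1}}(\hat{1}\circ(1i)(a),\, \hat{1}(b))$ via the identification $\sigma \mapsto \tau$: since $\hat{1}\circ(1i)(a)$ places $a_1$ in slot $i$ and $a_k$ in each remaining slot $k \neq i$, the entries $a_{g^{-1}(k)}$ for $k \geq 2$ agree with those of $\tau.\hat{1}\circ(1i)(a)$ position by position.

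The main obstacle is purely combinatorial bookkeeping: one must carefully track how the transposition $(1i)$ in the coset representative interacts with the projection $\hat{1}$ used to decorate the $\symbc{\ho-1}$-factor, and verify that the bijection $\sigma \leftrightarrow \tau$ really does intertwine the two parametrisations of the fibres. I would introduce explicit notation for this bijection at the outset to keep index chasing under control. Naturality in $a,b$ and compatibility with the residual $\basecat^{\otimes\ho}$-bimodule structures is then immediate from \eqref{eqn-A-A-action-on-diagonal-bimodule}, since both sides are built tensor-factor-wise from the pre- and post-composition action on fibres of the diagonal bimodule of $\basecat$.
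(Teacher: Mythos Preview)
Your overall strategy matches the paper's: start from \eqref{equation-decomposition-of-the-equivariant-diagonal-bimodule}, partition by the coset decompositions of Lemma~\ref{lem-the-decomposition-of-Sn+1-into-Sn}, and reassemble fibrewise. However, you have swapped which coset decomposition yields which of the two displayed isomorphisms, and this produces a concrete index error in your fibrewise computation.

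With $g = \sigma(1i)$ and $\sigma$ fixing $1$, you correctly obtain the $k=1$ factor $\Hom_\basecat(a_i,\,b_1)$. But this is \emph{not} the fibre of $\leftidx{_i}{\basecat}_1$: in the paper's conventions (see the worked illustration just above the proof), $\leftidx{_{\underline b}}{(\leftidx{_i}{\basecat}_1)}_{\underline a} = \Hom_\basecat(a_1,\,b_i)$. What you have computed is the fibre of $\leftidx{_1}{\basecat}_i$, which is the first factor in the \emph{second} decomposition. Likewise, your description of $\hat 1\circ(1i)$ as acting on $a$ is consistent with the second decomposition (where $\hat 1\circ(1i)$ is the right index), not the first (where it is the left index and should be applied to $b$). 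So your argument, as written, actually proves the second isomorphism; the ``analogous'' mirror argument then gives the first.

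For comparison, the paper obtains the first decomposition from the \emph{left} cosets $g = (1i)\iota_1(\sigma)$, but with an extra step you do not mention: the isomorphism $(\basecat^{\otimes\ho})_{(1i)\iota_1(\sigma)} \simeq \leftidx{_{(1i)}}{(\basecat^{\otimes\ho})}_{\iota_1(\sigma)}$, $\alpha\mapsto(1i)\cdot\alpha$, which moves the transposition to the left side of the bimodule before peeling off the first tensor factor. Without that twist, the $k=1$ factor does not come out clean for the left-coset parametrisation. Either route is fine, but the pairing (right cosets $\leftrightarrow$ second decomposition, left cosets plus twist $\leftrightarrow$ first decomposition) is the opposite of what you stated.
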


We illustrate this notation. Let $\underline{a} = a_1 \otimes
\cdots \otimes a_{\ho}, \; \underline{b} = b_1 \otimes \cdots \otimes b_{\ho} \in \symbc\ho$. Then 
\[
\leftidx
{_{\underline{b}}}
{\left(\symbc\ho\right)}
{_{\underline{a}}}
= 
\homm_{\symbc\ho}\left(a_1 \otimes \cdots \otimes a_{\ho},\, b_1 \otimes \cdots \otimes b_{\ho}\right). 
\]
Our notation gives 
\begin{equation*}
	\leftidx
	{_{\underline{b}}}
	{\left(\leftidx{_{i}}{\basecat}{_1}\right)}
	{_{\underline{a}}}
	= \homm_{\basecat}(a_1,b_i),
	\end{equation*}
and
\begin{multline*}
\leftidx
{_{\underline{b}}}
{\left(\leftidx{_{\hat{1} \circ (1i)}}{\left(\symbc{\ho-1}\right)}{_{\hat{1}}}
	\right)}
{_{\underline{a}}}
=\\ 
\homm_{\symbc{\ho-1}}\left(a_2 \otimes \cdots \otimes a_{\ho},\,
b_2 \otimes \cdots \otimes b_{i-1} \otimes b_1 \otimes b_{i+1} \otimes
\cdots \otimes b_{\ho}\right).
\end{multline*}
It is clear that there is natural inclusion of \dg $\kk$-modules
\begin{equation*}
	\begin{tikzcd}[row sep = 0.5cm, font=\small]
		\homm_{\basecat}(a_1,b_i) \otimes
		\homm_{\symbc{\ho-1}}\left(a_2 \otimes \cdots \otimes a_{\ho},\,
		b_2 \otimes \cdots \otimes b_{i-1} \otimes b_1 \otimes b_{i+1} \otimes
		\cdots \otimes b_{\ho}\right)
		\ar[hook]{d}
		\\
		\homm_{\symbc\ho}\left(a_1 \otimes
		\cdots \otimes a_{\ho},\, b_1 \otimes \cdots \otimes b_{\ho}\right), 
	\end{tikzcd}
\end{equation*}
and the proof below demonstrates that summing this over all $i \in \{1,
\dots, \ho\}$ gives a complete decomposition of the diagonal bimodule. 

Let us stress that the index maps $i$, $\hat{i}$ and $(ij)$ are
maps of sets and are not functorial. Thus the expressions 
like $\leftidx{_{1}}{\basecat}_1$ in
Corollary~\ref{cor-decomposition-of-the-diagonal-bimodule-S_l-into-S_l-1} are
not $\symbc\ho$-bimodules by themselves:
while 
\[
{_{\underline{b}}} {\left(\leftidx{_{i}}{\basecat}{_1}\right)}
{_{\underline{a}}} = \homm_{\basecat}(a_1,b_i)
\]
is perfectly well-defined, one cannot uniquely pick out the first factor in 
some \[\alpha \in \Hom_{\symbc\ho}(\underline{b},\, \underline{b}')\]
to act with it on $\homm_{\basecat}(a_1,b_i)$. Nonetheless,
if we use Lemma \ref{lem-the-decomposition-of-Sn+1-into-Sn}
to decompose $\alpha$ with respect to the permutation type  
into $\sum \alpha_i$, then each $\alpha_i$ does act naturally 
on the summand
$\leftidx{_{i}}{\basecat}_1 \otimes \leftidx{_{\hat{1} \circ
		(1i)}}{\left(\symbc{\ho-1}\right)}{_{\hat{1}}}$.  
Thus we can view
Corollary~\ref{cor-decomposition-of-the-diagonal-bimodule-S_l-into-S_l-1} 
as an isomorphism of $\symbc\ho$-bimodules, with 
the index maps indicating the left and right actions
of $\symbc\ho$ on the decompositions. 

\begin{proof}[Proof of Corollary~\ref{cor-decomposition-of-the-diagonal-bimodule-S_l-into-S_l-1}]
	First, by the decomposition 
	\eqref{equation-decomposition-of-the-equivariant-diagonal-bimodule}
	we have:
	\begin{equation*}
		\symbc{\ho}
		\simeq
		\bigoplus_{\sigma \in \SymGrp \ho}
		\left(\basecat^{\otimes \ho}\right)_\sigma. 
	\end{equation*}
	We then use the decomposition $\SymGrp \ho = \sum_{i = 1}^{\ho} (1i) \iota_{1}(\SymGrp{\ho-1})$ from  Lemma~\ref{lem-the-decomposition-of-Sn+1-into-Sn}
	to obtain 
	\begin{equation*}
		\bigoplus_{\sigma \in \SymGrp \ho}
		\left(\basecat^{\otimes \ho}\right)_{\sigma} 
		\simeq 
		\bigoplus_{i = 1}^{\ho}
		\bigoplus_{\sigma \in \SymGrp{\ho-1}}
		\left(\basecat^{\otimes \ho}\right)_{(1i)\iota_1(\sigma)}. 
	\end{equation*}
	The  $\basecat^{\otimes \ho}$-bimodule isomorphism 
	$ (\basecat^{\otimes \ho})_{(1i)\iota_1(\sigma)} 
	\simeq 
	\leftidx{_{(1i)}}{\left(\basecat^{\otimes \ho}\right)}{_{\iota_1(\sigma)}}$
	given by $\alpha \mapsto (1i)\cdot\alpha$ implies that
	\begin{equation*}
		\bigoplus_{i = 1}^{\ho}
		\bigoplus_{\sigma \in \SymGrp{\ho-1}}
		\left(\basecat^{\otimes \ho}\right)_{(1i)\iota_1(\sigma)}
		\simeq 
		\bigoplus_{i = 1}^{\ho}
		\bigoplus_{\sigma \in \SymGrp{\ho-1}}
		\leftidx{_{(1i)}}{\left(\basecat^{\otimes \ho}\right)}{_{\iota_1(\sigma)}}. 
	\end{equation*}
	Now we can decompose $\basecat^{\otimes \ho}$ into 
	$\leftidx{_1}\basecat_1 \otimes
	\leftidx{_{\hat{1}}}{\left(\basecat^{\otimes(\ho-1)}\right)}{_{\hat{1}}}$, 
	which further gives us
	\begin{equation*}
		\bigoplus_{i = 1}^{\ho}
		\bigoplus_{\sigma \in \SymGrp{\ho-1}}
		\leftidx{_{(1i)}}{\left(\basecat^{\otimes \ho}\right)}{_{\iota_1(\sigma)}}
		= 
		\bigoplus_{i = 1}^{\ho}
		\bigoplus_{\sigma \in \SymGrp{\ho-1}}
		\leftidx{_i}{\basecat}{_1} \otimes
		\leftidx{_{\hat{1}\circ(1i)}}{\left(\basecat^{\otimes(\ho-1)}\right)}{_{\sigma
				\circ \hat{1}}}. 
	\end{equation*}
	Finally, by 
	\eqref{equation-decomposition-of-the-equivariant-diagonal-bimodule}
	we have $\symbc{\ho-1} = 
	\bigoplus_{\sigma \in \SymGrp{\ho-1}} \left(\basecat^{\otimes(\ho-1)}\right)_{\sigma}$
	and therefore 
	\begin{equation*}
		\bigoplus_{i = 1}^{\ho}
		\bigoplus_{\sigma \in \SymGrp{\ho-1}}
		\leftidx{_i}\basecat_1 \otimes
		\leftidx{_{\hat{1}\circ(1i)}}{\left(\basecat^{\otimes(\ho-1)}\right)}{_{\sigma  \circ {\hat{1}}}}
		\simeq 
		\bigoplus_{i = 1}^{\ho}
		\leftidx{_i}{\basecat}{_1} \otimes
		\leftidx{_{\hat{1}\circ(1i)}}{\left(\symbc{\ho-1}\right)}{_{\hat{1}}}. 
	\end{equation*}
	This establishes the first decomposition. The second decomposition 
	is proved similarly.  
\end{proof}

Recall from Section~\ref{subsec:equivariant_cats} that $\symbc{\ho}$ and $\basecat^{\otimes \ho}$ have 
the same objects, while  
the morphisms of $\symbc{\ho}$ are 
generated under composition by those of $\basecat^{\otimes \ho}$ plus 
the formal isomorphisms corresponding to the elements of $\SymGrp{\ho}$. 
Thus the data of a \dg functor from $\symbc{\ho}$
to some \dg category $\B$ is the data of a functor 
$\basecat^{\otimes \ho} \rightarrow \B$ plus the data of where
the formal isomorphisms go.

\begin{Definition}
	\label{def:phia}
	Let  $a \in \basecat$.  Define the functor 
	\begin{equation*}
		\phi_a \colon  \symbc{\ho - 1} \rightarrow \symbc\ho
	\end{equation*}
	to be the extension of the functor 
	\begin{equation*}
		\basecat^{\otimes(\ho-1)} \xrightarrow{a \otimes \id} 
		\basecat^{\otimes \ho} 
	\end{equation*}
	which sends the formal isomorphisms of $\SymGrp{\ho-1}$ to those
	of $\SymGrp \ho$ via \[\iota_1\colon \SymGrp{\ho-1} \hookrightarrow
	\SymGrp{\ho},\] the embedding as the subgroup of permutations which are trivial on the first element. 
\end{Definition}

As explained in Section~\ref{section-restriction-and-extension-of-scalars}, 
we have three induced functors
\begin{align*}
	\phi^*_a\colon &
	\rightmod{\symbc{\ho-1}} \to \rightmod{\symbc\ho},
	\\
	\phi_{a *}\colon &
	\rightmod{\symbc\ho} \to \rightmod{\symbc{\ho-1}},
	\\
	\phi^!_a\colon &
	\rightmod{\symbc{\ho-1}} \to \rightmod{\symbc\ho},
\end{align*}
which form two adjoint pairs 
$(\phi_a^*,\, \phi_{a *})$ and $(\phi_{a *},\, \phi_a^!)$. 
The action of the first two functors on representable object\index{representable object}s can be described as follows. 
\begin{Lemma}\label{lem:onrepresentables}\leavevmode
	Let $h^r$ denote right representable modules, 
	as per Section~\ref{section-dg-categories-and-dg-modules}. Then: 
	
	\begin{enumerate}
		\item\label{it-phi_a^*-on-representables}
		For any $a_1 \otimes \dots \otimes a_{\ho-1} \in \symbc{\ho-1}$ we have
		\begin{equation*}
			\phi_a^*\bigl(h^r(a_1 \otimes \dots \otimes a_{\ho-1})\bigr)
			\simeq 
			h^r(a \otimes a_1\otimes \dots \otimes a_{\ho-1}).
		\end{equation*}
		\item\label{it-phi_a_*-on-representables}
		For any $a_1 \otimes \dots \otimes a_{\ho} \in \symbc\ho$ we have
		\begin{equation*}
			\phi_{a*}\bigl(h^r(a_1 \otimes \cdots \otimes a_{\ho})\bigr)
			\simeq 
			\bigoplus_{i = 1}^{\ho}
			\homm_{\basecat}(a,a_i) \otimes
			h^r(a_1 \otimes \cdots \widehat{a_{i}} \cdots \otimes a_{\ho})
		\end{equation*}
	\end{enumerate}
\end{Lemma}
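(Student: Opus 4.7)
My plan is to handle the two parts separately: part~\ref{it-phi_a^*-on-representables} is essentially formal from the general properties of extension of scalars recalled in Section~\ref{section-restriction-and-extension-of-scalars}, while part~\ref{it-phi_a_*-on-representables} is an application of the decomposition from Corollary~\ref{cor-decomposition-of-the-diagonal-bimodule-S_l-into-S_l-1}.

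For part~\ref{it-phi_a^*-on-representables}, I would recall that $\phi_a^* = (-)\otimes_{\symbc{\ho-1}}\leftidx{_{\phi_a}}{\symbc\ho}$ and then observe that for any $x\in \symbc{\ho-1}$ one has the canonical identification
\[
  h^r(x)\otimes_{\symbc{\ho-1}}\leftidx{_{\phi_a}}{\symbc\ho}
  \;\simeq\; \homm_{\symbc\ho}\bigl(-,\,\phi_a(x)\bigr)
  \;=\; h^r\bigl(\phi_a(x)\bigr),
\]
which is simply the fact that extension of scalars takes representables to representables (see the discussion in Section~\ref{section-restriction-and-extension-of-scalars}). Since by Definition~\ref{def:phia} the functor $\phi_a$ acts on the object $a_1\otimes\cdots\otimes a_{\ho-1}$ by prepending $a$, the claim follows immediately.

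For part~\ref{it-phi_a_*-on-representables}, the strategy is to unpack the definition and apply Corollary~\ref{cor-decomposition-of-the-diagonal-bimodule-S_l-into-S_l-1}. By definition of restriction of scalars, for any $\underline b = b_1\otimes\cdots\otimes b_{\ho-1}\in\symbc{\ho-1}$ we have
\[
  \phi_{a*}\bigl(h^r(a_1\otimes\cdots\otimes a_{\ho})\bigr)_{\underline b}
  \;=\; \homm_{\symbc\ho}\bigl(a\otimes b_1\otimes\cdots\otimes b_{\ho-1},\;a_1\otimes\cdots\otimes a_{\ho}\bigr).
\]
Applying the first decomposition of Corollary~\ref{cor-decomposition-of-the-diagonal-bimodule-S_l-into-S_l-1} with the outer object in the left slot being $a_1\otimes\cdots\otimes a_\ho$ and in the right slot being $a\otimes b_1\otimes\cdots\otimes b_{\ho-1}$ yields
\[
  \bigoplus_{i=1}^{\ho}\homm_{\basecat}(a,a_i)\,\otimes\,
  \homm_{\symbc{\ho-1}}\bigl(b_1\otimes\cdots\otimes b_{\ho-1},\,a_1\otimes\cdots\widehat{a_i}\cdots\otimes a_{\ho}\bigr),
\]
where the indexing maps $1$ and $\hat 1\circ(1i)$ of the Corollary pick out the first slot $a$ of the right argument (supplying $\homm_\basecat(a,a_i)$) and send the remaining slots to the standard embedding of $\symbc{\ho-1}$. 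Recognising the second tensor factor as $h^r(a_1\otimes\cdots\widehat{a_i}\cdots\otimes a_{\ho})$ evaluated at $\underline b$ produces the claimed formula.

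The only non-trivial bookkeeping is verifying that the decomposition is natural in $\underline b$ with respect to the right $\symbc{\ho-1}$-action relevant to $\phi_{a*}$. This amounts to checking that morphisms $\underline b\to\underline{b'}$ in $\symbc{\ho-1}$, acting on the left-hand side via their image $1_a\otimes(-)$ followed by $\iota_1$ on the symmetric group factor, correspond under the decomposition to the obvious diagonal action on the summands. This compatibility is precisely encoded in the bimodule structure statement of Corollary~\ref{cor-decomposition-of-the-diagonal-bimodule-S_l-into-S_l-1} via the indexing map $\hat 1$ on the right of the $\symbc{\ho-1}$-factor, and I expect no difficulty beyond carefully tracking indices.
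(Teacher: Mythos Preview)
Your proposal is correct and follows essentially the same approach as the paper. For part~\ref{it-phi_a_*-on-representables} the paper tensors the representable with the bimodule ${\symbc\ho}_{\phi_a}$ and then applies Corollary~\ref{cor-decomposition-of-the-diagonal-bimodule-S_l-into-S_l-1} to that bimodule, whereas you compute fibrewise and apply the same Corollary directly to the $\Hom$-space; these are the same computation written in two equivalent ways.
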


\begin{proof}
	For Part~\ref{it-phi_a^*-on-representables}, we have:
	\begin{equation*}
		\begin{multlined}
		\phi_a^*\bigl(h^r(a_1 \otimes \dots \otimes a_{\ho-1})\bigr) 
		:=
		h^r(a_1 \otimes \dots \otimes a_{\ho-1})
		\otimes_{\symbc{\ho-1}}
		\leftidx{_{\phi_a}}{\symbc\ho}
		\\ \simeq 
		h^r(a \otimes a_1\otimes \dots \otimes a_{\ho-1}).
		\end{multlined}
	\end{equation*}
	
	For Part~\ref{it-phi_a_*-on-representables}, we have 
	\begin{equation*}
		\phi_{a*}\bigl(h^r(a_1 \otimes \dots \otimes a_{\ho})\bigr) 
		:= 
		h^r(a_1 \otimes \dots \otimes a_{\ho})
		\otimes_{\symbc\ho}
		\symbc\ho_{\phi_a}
	\end{equation*}
	By Corollary~\ref{cor-decomposition-of-the-diagonal-bimodule-S_l-into-S_l-1}
	we have 
	\begin{equation*}
		{\symbc \ho}_{\phi_a}
		\simeq 
		\bigoplus_{i = 1}^{\ho}
		\leftidx{_{i}}{\basecat}{_a} \otimes 
		\leftidx{_{\hat{1} \circ (1i)}}{\symbc{\ho-1}}
	\end{equation*}
	and hence 
	\begin{equation*}
		\begin{multlined}
		h^r(a_1 \otimes \dots \otimes a_{\ho})
		\otimes_{\symbc\ho}
		\symbc\ho_{\phi_a}
		\\ \simeq 
		\bigoplus_{i = 1}^{\ho}
		\leftidx{_{a_{i}}}{\basecat}{_a} \otimes 
		\leftidx{_{a_2 \otimes \dots \otimes a_{i-1} \otimes a_1 \otimes
				a_{i+1} \otimes \dots \otimes a_{\ho}}}
		{(\basecat^{\otimes(\ho-1)} \rtimes \SymGrp{\ho-1})}. 
\end{multlined}	
\end{equation*}
	Since in $\symbc{\ho-1}$ we have 
	\[a_2 \otimes \cdots \otimes a_{i-1} \otimes a_1 \otimes a_{i+1} \otimes \dots a_{\ho}
	\simeq a_1 \otimes \cdots \widehat{a_{i}} \cdots \otimes a_{\ho},\] we
	have 
	\begin{equation*}
		\phi_{a*}\bigl(h^r(a_1 \otimes \cdots \otimes a_{\ho})\bigr)  
		\simeq  
		\bigoplus_{i = 1}^{\ho}
		\homm_{\basecat}(a,a_i) \otimes
		h^r(a_1 \otimes \cdots \widehat{a_{i}} \cdots \otimes a_{\ho}).
		\qedhere
	\end{equation*}
\end{proof}

Lemma~\ref{lem:onrepresentables}~\ref{it-phi_a^*-on-representables} shows that 
the bimodule $\leftidx{_{\phi_a}}{\symbc\ho}$ 
defining $\phi^*_{a}$ is always right-representable. 
Thus it is always right-perfect and right-h-projective. 
On the other hand, by Lemma~\ref{lem:onrepresentables}~\ref{it-phi_a_*-on-representables} the bimodule 
${\symbc\ho}_{\phi_a}$ defining $\phi_{a *}$ is always right-h-flat,
but is right-perfect and right h-projective if and only if 
$\basecat$ is proper. Similarly, 
${\symbc\ho}_{\phi_a}$ is always left representable,  
while $\leftidx{_{\phi_a}}{\symbc\ho}$ 
is always left-h-flat, but is left-perfect and left-h-projective
if and only if $\basecat$ is proper.  
We conclude that when $\basecat$ is proper 
both $\leftidx{_{\phi_a}}{\symbc\ho}$
and ${\symbc\ho}_{\phi_a}$ are left- and right-perfect and left- and
right-h-projective. In particular, they define $1$-morphisms in
$\EnhCatKCdg$ and, by abuse of notation, we denote these again 
by $\phi^*_{a}$ and $\phi_{a *}$, respectively. 

The twisted inverse image functor $\phi^!_a$ is not a priori 
a functor of tensoring with a bimodule. However, in presence of a
homotopy Serre functor, it is quasi-isomorphic to one:

\begin{Proposition}
	\label{prop-phi-a-shriek-isomorphic-to-phi-Sa-upper-star}
	Let $\basecat$ be proper and assume that $\basecat$ admits a homotopy Serre
	functor $S$. Then there is a quasi-isomorphism of \dg functors
	\[ \starmap{a}\colon \phi_{Sa}^* \to \phi_a^!. \]
\end{Proposition}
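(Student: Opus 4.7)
The plan is to construct $\starmap{a}$ via the $(\phi_{a*}, \phi_a^!)$-adjunction and the homotopy Serre trace, and then to verify that the resulting DG natural transformation is a pointwise quasi-isomorphism by reducing to representable modules and invoking the defining property of $\eta$.

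By adjunction, specifying a DG natural transformation $\phi_{Sa}^* \to \phi_a^!$ is equivalent to specifying a counit-style natural transformation $\phi_{a*}\phi_{Sa}^* \to \id_{\modd(\symbc{\ho-1})}$. Since $\phi_{a*}\phi_{Sa}^*$ is tensoring with the $\symbc{\ho-1}$-bimodule $\leftidx{_{\phi_{Sa}}}{(\symbc\ho)}_{\phi_a}$, this reduces to constructing a bimodule morphism $\leftidx{_{\phi_{Sa}}}{(\symbc\ho)}_{\phi_a} \to \symbc{\ho-1}$. Applying Corollary~\ref{cor-decomposition-of-the-diagonal-bimodule-S_l-into-S_l-1} to the fibre $\homm_{\symbc\ho}(a \otimes c',\, Sa \otimes c)$ yields the splitting
\[
    \homm_\basecat(a, Sa) \otimes_\kk \homm_{\symbc{\ho-1}}(c', c) \;\oplus\; \bigoplus_{i=2}^{\ho} \homm_\basecat(a, c_{i-1}) \otimes_\kk \homm_{\symbc{\ho-1}}(c',\, c_1 \otimes \cdots \otimes c_{i-2} \otimes Sa \otimes c_i \otimes \cdots \otimes c_{\ho-1}),
\]
whose distinguished $i=1$ summand is a sub-bimodule, since the left and right $\symbc{\ho-1}$-actions pass through $\iota_1$, fix position $1$, and act trivially on the $\homm_\basecat(a, Sa)$ tensor factor. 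I will define $\starmap{a}$ to be $\Tr \otimes \id$ on this summand and zero on the complementary sub-bimodule; closedness and degree zero of the resulting map are immediate from those of $\Tr$.

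To check that $\starmap{a}$ is a pointwise quasi-isomorphism, one reduces to $h$-projective modules. The functor $\phi_{Sa}^*$ preserves $h$-projectives because by Lemma~\ref{lem:onrepresentables}\ref{it-phi_a^*-on-representables} it sends representables to representables; $\phi_a^!$ preserves quasi-isomorphisms because $\symbc\ho_{\phi_a}$ is, by Lemma~\ref{lem:onrepresentables}\ref{it-phi_a_*-on-representables} and properness of $\basecat$, fibrewise a finite direct sum of $\kk$-perfect modules tensored with left representables, hence fibrewise $h$-projective over $\symbc{\ho-1}$. It therefore suffices to verify quasi-isomorphy on representable inputs $h^r(c)$. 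A second application of Corollary~\ref{cor-decomposition-of-the-diagonal-bimodule-S_l-into-S_l-1} identifies the fibre of $\phi_{Sa}^*(h^r(c)) = h^r(Sa \otimes c)$ at $d \in \symbc\ho$ with $\bigoplus_{i} \homm_\basecat(d_i, Sa) \otimes_\kk \homm_{\symbc{\ho-1}}(d_1 \otimes \cdots \widehat{d_i} \cdots \otimes d_\ho, c)$, while a Yoneda-style unwinding of $\homm_{\symbc{\ho-1}}(\symbc\ho_{\phi_a}, h^r(c))$ (using $\kk$-perfectness of each $\homm_\basecat(a, d_i)$) identifies the fibre of $\phi_a^!(h^r(c))$ at $d$ with $\bigoplus_{i} \homm_\basecat(a, d_i)^\ast \otimes_\kk \homm_{\symbc{\ho-1}}(d_1 \otimes \cdots \widehat{d_i} \cdots \otimes d_\ho, c)$. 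Under $\starmap{a}$ the $i$-th summands are matched via the map $\homm_\basecat(a, d_i)^\ast \to \homm_\basecat(d_i, Sa)$ dual to $\eta$, which is a quasi-isomorphism by hypothesis. The main obstacle is the combinatorial bookkeeping matching the two applications of the decomposition and checking bimodule-equivariance; this is controlled by the $\SymGrp{\ho-1}$-invariance of the position-$1$ summand together with a careful identification of the coset representatives for $\iota_1(\SymGrp{\ho-1}) \subset \SymGrp\ho$ appearing in Lemma~\ref{lem-the-decomposition-of-Sn+1-into-Sn}.
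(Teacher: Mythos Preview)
Your argument is correct and takes a genuinely different route from the paper's. You construct $\starmap a$ by adjunction: specifying its mate $\phi_{a*}\phi_{Sa}^* \to \id$ as projection to the $i=1$ summand followed by $\Tr\otimes\id$ (this is exactly the ``starred cap'' of Lemma~\ref{lem:starred-cap-fock}, so your $\starmap a$ agrees with the paper's). The paper instead builds $\starmap a$ directly on the bimodule defining $\phi_{Sa}^*$: it decomposes $\leftidx{_{\phi_{Sa}}}{\symbc\ho}$ via Corollary~\ref{cor-decomposition-of-the-diagonal-bimodule-S_l-into-S_l-1}, applies the Serre structure map $\eta^*\colon \leftidx{_{Sa}}{\basecat} \to (\basecat_a)^*$ summandwise, and then repackages the result as $\phi_a^!E$ using representability of the $\symbc{\ho-1}$-factors and $\kk$-perfectness of the $\basecat$-factors. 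This produces the quasi-isomorphism for arbitrary $E$ in one pass, with no reduction step. Your approach is more conceptual and links cleanly to the later use of $\starmap a$ in the Fock space action; the paper's is more economical because it avoids the reduction to representables (which, while valid, tacitly uses that $\phi_a^!$ preserves quasi-isomorphisms and commutes with arbitrary direct sums up to quasi-isomorphism --- both consequences of properness that deserve a word). One correction: in your final fibre match the map goes $\homm_\basecat(d_i, Sa) \to \homm_\basecat(a, d_i)^*$, not the reverse; this is precisely the map $\eta^*$ of Section~\ref{subsec:dg-homotopy-serre}. You also assert without computation that the adjunct of your trace-based counit realises summandwise as $\eta^*$; this is the crux of the verification and should be checked explicitly (it unwinds from the defining relation $\Tr(\alpha)=\eta(\id)(\alpha)$ together with naturality of $\eta$).
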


\begin{proof}
	Let $E \in \rightmod{\symbc{\ho-1}}$. 
	By Corollary~\ref{cor-decomposition-of-the-diagonal-bimodule-S_l-into-S_l-1} we have
	\begin{equation*}
		\phi_{Sa}^* E
		= 
		E \otimes_{\symbc{\ho-1}}\;\leftidx{_{\phi_{Sa}}}{\symbc\ho}
		\simeq 
		E \otimes_{\symbc{\ho-1}}
		\left(
		\bigoplus_{i = 1}^{\ho}
		\leftidx{_{Sa}}{\basecat}_i \otimes_\kk 
		\symbc{\ho-1}_{\hat{1} \circ (1i)}
		\right). 
	\end{equation*}
	
	The homotopy Serre functor $S$ on $\basecat$ comes with a
	quasi-isomorphism \[\eta\colon \basecat \rightarrow
	\left(\leftidx{_S}{\basecat}{}\right)^*.\] Since $\basecat$ is proper, 
	$\eta^*\colon \leftidx{_S}{\basecat}{} \rightarrow \basecat^*$
	is also a quasi-isomorphism. Hence so is
	\begin{equation}\label{eq:Snatmorph}
	\begin{multlined}
		E \otimes_{\symbc{\ho-1}}
		\left(
		\bigoplus_{i = 1}^{\ho}
		\leftidx{_{Sa}}{\basecat}_i 
		\otimes_\kk
		\symbc{\ho-1}_{\hat{1} \circ (1i)}
		\right)
		\\ \to 
		E \otimes_{\symbc{\ho-1}}
		\left(
		\bigoplus_{i = 1}^{\ho}
		(\leftidx{_{i}}{\basecat}_a)^*
		\otimes_\kk 
		\symbc{\ho-1}_{\hat{1} \circ (1i)}
		\right).
	\end{multlined}
	\end{equation}
	Since $\leftidx{_{\hat{1} \circ (1i)}}{\symbc{\ho-1}}$ are representables, we have
	\begin{equation*}
		\homm_{\symbc{\ho-1}}
		(
		\leftidx{_{\hat{1} \circ (1i)}}
		{\symbc{\ho-1}},\, E
		)
		\simeq 
		E_{\hat{1} \circ (1i)}
		\simeq 
		E \otimes_{\symbc{\ho-1}}
		\symbc{\ho-1}_{\hat{1} \circ (1i)}
	\end{equation*}
	and therefore 
	\begin{equation*}
	\begin{multlined}
		E \otimes_{\symbc{\ho-1}}
		\left(
		\bigoplus_{i = 1}^{\ho}
		(\leftidx{_{i}}{\basecat}_a)^*
		\otimes 
		\symbc{\ho-1}_{\hat{1} \circ (1i)}
		\right)
		\\ 
		\simeq\,
		\bigoplus_{i = 1}^{\ho}
		(\leftidx{_{i}}{\basecat}_a)^* \otimes 
		\homm_{\symbc{\ho-1}}
		\left(
		\leftidx{_{\hat{1} \circ (1i)}}{\symbc{\ho-1}},\, E
		\right). 
	\end{multlined}
	\end{equation*}
	Since $\basecat$ is proper, $\leftidx{_{i}}{\basecat}{_a}$ are perfect 
	as $\kk$-modules. Thus the natural map 
	\begin{equation*}
		\begin{multlined}
		\bigoplus_{i = 1}^{\ho}
		\left(
		\leftidx{_{i}}{\basecat}_a
		\right)^* 
		\otimes 
		\homm_{\symbc{\ho-1}}
		\left(
		\leftidx{_{\hat{1} \circ (1i)}}{\symbc{\ho-1}},\, E
		\right)
		\\ \longrightarrow 
		\bigoplus_{i = 1}^{\ho}
		\homm_{\symbc{\ho-1}}
		\left(
		\leftidx{_{i}}{\basecat}{_a} 
		\otimes 
		\leftidx{_{\hat{1} \circ (1i)}}{\symbc{\ho-1}},\, E
		\right),
		\end{multlined}
	\end{equation*}
	is a quasi-isomorphism. Finally, by
	Corollary~\ref{cor-decomposition-of-the-diagonal-bimodule-S_l-into-S_l-1}
	again, we have  
	\begin{equation*}
		\bigoplus_{i = 1}^{\ho}
		\homm_{\symbc{\ho-1}}
		\left(
		\leftidx{_{i}}{\basecat}{_a} 
		\otimes 
		\leftidx{_{\hat{1} \circ (1i)}}{\symbc{\ho-1}},\, E
		\right)
		\simeq
		\homm_{\symbc{\ho-1}}
		\left(\symbc{\ho}_{\phi_a},\, E\right) 
		=
		\phi_a^! E.
	\end{equation*}
\end{proof}

\begin{Corollary}
	Let $\basecat$ be proper and assume it admits a homotopy Serre
	functor $S$. The bimodule approximation $\bimodapx(\phi_a^!)$ is a
	right- and left-perfect and left-h-projective
	$\symbc{\ho-1}$-$\symbc{\ho}$-bimodule.
\end{Corollary}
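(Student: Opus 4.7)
The plan is to analyse $\bimodapx(\phi_a^!)$ by combining two ingredients: the quasi-isomorphism $\starmap{a}\colon \phi_{Sa}^* \to \phi_a^!$ from Proposition~\ref{prop-phi-a-shriek-isomorphic-to-phi-Sa-upper-star}, which transports properties that are derived-invariant, and the explicit decomposition of the diagonal bimodule from Corollary~\ref{cor-decomposition-of-the-diagonal-bimodule-S_l-into-S_l-1}, which gives a concrete presentation of the fibres.

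First, I would apply functoriality of $\bimodapx$ to $\starmap a$ to obtain a map of $\symbc{\ho-1}$-$\symbc\ho$-bimodules $\bimodapx(\phi_{Sa}^*) \to \bimodapx(\phi_a^!)$. Since $\starmap a$ is a quasi-isomorphism when evaluated on every object of $\symbc{\ho-1}$, and in particular on each representable $\leftidx{_c}{\symbc{\ho-1}}$, this map is a fibrewise quasi-isomorphism of bimodules. But $\bimodapx(\phi_{Sa}^*) = \leftidx{_{\phi_{Sa}}}{\symbc\ho}$, whose right fibre at any $c \in \symbc{\ho-1}$ is the representable right $\symbc\ho$-module $h^r(Sa \otimes c)$, and thus perfect. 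Since being perfect is a property of the isomorphism class in $D(\symbc\ho)$, it transfers across the quasi-isomorphism and $\bimodapx(\phi_a^!)$ is right-perfect.

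Next, I would compute $\bimodapx(\phi_a^!) = \homm_{\symbc{\ho-1}}(\symbc\ho_{\phi_a},\,\symbc{\ho-1})$ directly. By Corollary~\ref{cor-decomposition-of-the-diagonal-bimodule-S_l-into-S_l-1}, the fibre of $\symbc\ho_{\phi_a}$ at any $\underline d = d_1 \otimes \dotsb \otimes d_\ho \in \symbc\ho$ is a finite direct sum of representable right $\symbc{\ho-1}$-modules tensored with the $\kk$-modules $\homm_\basecat(a,d_i)$, which are perfect over $\kk$ since $\basecat$ is proper. Applying $\homm_{\symbc{\ho-1}}(-,\,\symbc{\ho-1})$ commutes with such direct sums and with tensoring by perfect $\kk$-modules (up to $\kk$-duality), and takes representable right modules to representable left modules. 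Hence the left fibre of $\bimodapx(\phi_a^!)$ at $\underline d$ is a finite direct sum of representable left $\symbc{\ho-1}$-modules shifted and tensored with finite dimensional $\kk$-vector spaces. Such modules are left-perfect and left-$h$-projective, completing the proof.

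There is essentially no obstacle once Proposition~\ref{prop-phi-a-shriek-isomorphic-to-phi-Sa-upper-star} and the decomposition from Corollary~\ref{cor-decomposition-of-the-diagonal-bimodule-S_l-into-S_l-1} are in place; the only point requiring a little care is that $h$-projectivity is not itself derived-invariant, which is precisely why the direct computation via the decomposition is needed for the left-hand side rather than transport of structure along $\starmap a$.
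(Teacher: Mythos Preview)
Your argument is correct and follows essentially the same approach as the paper: use the quasi-isomorphism $\starmap a$ from Proposition~\ref{prop-phi-a-shriek-isomorphic-to-phi-Sa-upper-star} to transport right-perfectness from $\bimodapx(\phi_{Sa}^*)$, and compute the left fibres directly via the decomposition of ${\symbc\ho}_{\phi_a}$. The paper packages the second step slightly more cleanly by recognising the left fibre at $\underline d$ as the $\symbc{\ho-1}$-dual of $\phi_{a*}(h^r(\underline d))$ and then invoking the general fact that dualisation preserves $h$-projective perfect modules, which avoids having to argue (as you implicitly do, using that $\kk$ is a field) that $\homm_\kk(V,-)$ behaves like $V^*\otimes_\kk(-)$ for $\kk$-perfect $V$.
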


\begin{proof}
	By the definition of the bimodule approximation functor in
	Section~\ref{section-bimodule-approximation}, for any $b \in \symbc{\ho-1}$,
	the fibre $\leftidx{_b}{\bimodapx(\phi_a^!)}$ is the $\symbc{\ho}$-module
	$\phi_a^!(h^r(b))$. By Proposition
	\ref{prop-phi-a-shriek-isomorphic-to-phi-Sa-upper-star},
	$\phi_a^!(h^r(b))$
	is quasi-isomorphic to $\phi_{Sa}^*(h^r(b))$. Since the latter is the
	representable object $h^r(\phi_a(b))$, we conclude that the former is
	perfect. 
	
	Now let $c \in \symbc{\ho}$. We have 
	\[
	\begin{multlined}
	{\bimodapx(\phi_a^!)}_c = 
	{\phi_a^! ({\symbc{\ho-1}})}_c
	\simeq  
	\homm_{\symbc{\ho-1}}
	\left(\leftidx{_c}{(\symbc{\ho})}_{\phi_a},\, \symbc{\ho-1}\right) 
	\\ = 
	\homm_{\symbc{\ho-1}}
	\left(\phi_{a *}(c),\, \symbc{\ho-1}\right). 
	\end{multlined}
	\]
	It is well known that the dualisation functor\index{dualisation functor} sends h-projective and 
	perfect modules to h-projective and perfect modules
	\cite[Section~2.2]{AnnoLogvinenko-SphericalDGFunctors}. 
	By Lemma~\ref{lem:onrepresentables}~\ref{it-phi_a_*-on-representables}
	and properness of $\basecat$, the $\symbc{\ho-1}$-module $\phi_{a *}(c)$ is 
	h-projective and perfect, hence so is its dual  
	$\homm_{\symbc{\ho-1}} \left(\phi_{a *}(c),\, \symbc{\ho-1}\right)$. 
\end{proof}

\section{The categorical Fock space \texorpdfstring{$\fcat\basecat$}{F}}
\label{subsec:catfockconst}

In Chapter~\ref{sec:dg-Heisenberg-2-cat} we fixed a smooth and proper 
enhanced triangulated category $\basecat \in \EnhCatKCdg$.
That is, $\basecat$ is a smooth and proper \dg category 
considered as a Morita enhancement\index{Morita
enhancement} of the triangulated category 
$\catDc(\basecat) = \Hzero(\hperf \basecat)$. As 
$\basecat$ is smooth and proper, it admits 
an enhanced Serre functor $S$ given by the bimodule
$\basecat^*$ 
\cite{Shklyarov-OnSerreDualityForCompactHomologicallySmoothDGAlgebras}, 
and in Section~\ref{subsec:dg-homotopy-serre} we proved that it lifts to a
homotopy Serre functor $S$ on $\hperf \basecat$.
Replacing $\basecat$ by $\hperf\basecat$ if necessary, we can assume that $\basecat$ itself admits a homotopy Serre functor $S$.

We then defined the Heisenberg $2$-category $\hcat\basecat$ of $\basecat$. 
It was constructed in two steps:
\begin{enumerate}
	\item First, we defined  in Sections~\ref{subsec:heisencatdef-dg} and \ref{subsec:heisencatdef2-dg} a strict \dg $2$-category
	$\hcat*{\basecat}$. 
	Its object set is $\mathbb{Z}$, its $1$-morphisms are freely generated 
	by formal symbols $\PP_a, \RR_a\colon \ho \rightarrow \ho + 1$ and
	$\QQ_a\colon \ho \rightarrow \ho - 1$ for $a \in \basecat$, and its 
	$2$-morphisms are certain string diagrams connecting up the endpoints
	which correspond to $\PP$s, $\QQ$s, and $\RR$s of the source and target 
	$1$-morphisms. 
	
	\item Next,  in Section~\ref{subsec:idempotent-hcat} we took
	the perfect hull (see Section~\ref{subsec:hperf}) of
	$\hcat*{\basecat}$ and then a monoidal Drinfeld quotient\index{monoidal Drinfeld quotient} (see Section~\ref{subsec:monoidal-drinfeld-quotients})of
	$\bihperf(\hcat*{\basecat})$ by a certain $2$-sided ideal
	$I_{\basecat}$ of $1$-morphisms. This was to make each $\RR_a$
	homotopy equivalent to $\PP_{Sa}$ and impose a certain homotopy
	relation on $\PP$s and $\QQ$s.  The resulting $\HoDGCat$-enriched
	bicategory is the Heisenberg $2$-category $\hcat\basecat$. 
\end{enumerate}

Our next aim is to construct a $2$-representation $\fcat{\basecat}$ of 
$\hcat\basecat$ analogous to the Fock space representation of
a Heisenberg algebra. 

\begin{Lemma} 
	\label{lemma-enhcatkcdg-to-hperf-enhcatkcdg-is-quasi-equivalence}
	The Yoneda embedding of $\EnhCatKCdg$ into
	$\bihperf(\EnhCatKCdg)$ is a quasi-equivalence. In particular, both of
	these are \dg enhancements of the strict $2$-category $\EnhCatKC$ of
	enhanced triangulated categories.
\end{Lemma}

\begin{proof}
	The procedure of taking the perfect hull does not change the Morita equivalence class of a
	\dg category and, if the \dg category is pre-triangulated and its homotopy
	category is Karoubi-complete, it does not change
	the homotopy category either. The $1$-morphism categories 
	$\homm_{\EnhCatKCdg}(\A,\B)$ of $\EnhCatKCdg$ are defined so that
	their homotopy categories are $D_{\Bperf}(\AbimB)$
	In particular, they are triangulated and Karoubi-complete. We conclude
	that taking the perfect hull of $\EnhCatKCdg$ does not change its homotopy 
	$2$-category.
\end{proof}

We therefore make the following definition.

\begin{Definition}
	\label{def:catfockspacedg}
	\leavevmode
	\begin{enumerate}
		\item 
		The strict \dg $2$-category $\fcat*{\basecat}$ is the 
		$1$-full subcategory of $\DGModCat$ (see 
		Section~\ref{section-bimodule-approximation}) whose objects are 
		symmetric powers $\symbcn$ with $N \in \mathbb{Z}$.  
		By convention, $\symbcn$ is the zero category       
		if $N < 0$ and is the unit object $\kk$ of $\DGModCat$ if $N=0$.
		
		\item The \emph{categorical Fock space}\index{categorical Fock space} $\fcat{\basecat}$ of $\basecat$ 
		is the $\HoDGCat$-enriched bicategory which is the perfect
		hull of the $1$-full subcategory of $\EnhCatKCdg$ whose
		objects are the symmetric powers $\symbcn$ with $N \in \mathbb{Z}$.  
	\end{enumerate}
\end{Definition}

\section{The representation \texorpdfstring{$\Phi'_{\basecat}$}{Phi'}: the generators}
\label{subsec:fock2gen}

In this and the next section we carry out the first step of the  
construction outlined in Section~\ref{subsec:catfockconst} and 
define a strict \dg $2$-functor
\begin{equation*}
	\Phi'_{\basecat} \colon \hcat*{\basecat} \rightarrow \fcat*{\basecat}.
\end{equation*}

\underline{\emph{Objects:}} For any object $\ho \in \mathbb{Z}$ of 
$\hcat*{\basecat}$ we define
\begin{equation*}
	\Phi'_{\basecat}(\ho) = \symbcn. 
\end{equation*}

\underline{\emph{$1$-morphisms:}}\index{$1$-morphism} 
The $1$-morphisms of $\hcat*{\basecat}$ are freely generated by
$\PP_a, \RR_a\colon  \ho \rightarrow \ho+1$ and
$\QQ_a\colon \ho \rightarrow \ho-1$ for all $a \in \basecat$ and $\ho \in \mathbb{Z}$. We define
$\Phi'_{\basecat}$ on morphisms by setting
\begin{align*}
	\Phi'_{\basecat}(\PP_a) &= \phi^*_{a},
	\\
	\Phi'_{\basecat}(\QQ_a) &= \phi_{a *},
	\\
	\Phi'_{\basecat}(\RR_a) &= \phi^!_{a} 
\end{align*}
where $\phi^*_a$, $\phi_{a*}$, and $\phi_{a}^!$ are the \dg functors 
we constructed in 
Section~\ref{sec:symmetric-powers-of-dg-categories}
for any $a \in \basecat$ and $\ho \in \mathbb{Z}$. 

For clarity, we write $P_a$ (respectively, $Q_a$, $R_a$) for $\phi^*_{a}$, (respectively, $\phi_{a *}$ , $\phi^!_{a}$),
when considered as the image of $\PP_a$ (respectively, $\QQ_a$, $\RR_a$) under
$\Phi'_{\basecat}$. 

\begin{Example}\label{ex:Krug_part1}
	Let $X$ be a smooth projective variety and 
	$\basecat = \catDGCoh{X}$ 
	be the standard enhancement of $D^b_{\text{coh}}(X)$. 
	As per Example~\ref{ex:symcomp}, the symmetric powers
	$\symbcn$ of $\basecat$ are Morita enhancements\index{Morita
enhancement} of 
	the derived categories $\catDGCoh{ [ X^{\ho} / \SymGrp{\ho}]  }$ of the 
	symmetric quotient stacks\index{symmetric quotient stack} of $X$.
	Functors $P_a$ and $Q_a$ are the \dg
	enhancements of functors $P_a^{(1)}$ and $Q_a^{(1)}$ defined by 
	Krug in \cite[Section~2.4]{krug2018symmetric}, 
	while $Q_{S^{-1}a}$ correspond to the left adjoints considered in \cite[Section~3.2]{krug2018symmetric}.
	The higher powers $P_a^{(n)}$ and $Q_a^{(n)}$ will arise automatically
	from our calculus, cf.~Example~\ref{ex:Krug_part2}.
\end{Example}

\begin{Example}\label{ex:QAPBandPBQA}
	Let $a,b \in \basecat$ and let $a_1 \otimes \cdots \otimes a_{\ho} \in
	\symbcn$. Let $h^r(a_1 \otimes \cdots \otimes a_{\ho})$ be the
	corresponding representable module in $\hperf(\symbcn)$.
	\begin{enumerate}
		\item\label{it:QAPB} We have
		\begin{align*}
			Q_b P_a h^r(a_1 \otimes \dots \otimes a_{\ho}) & \simeq  
			\phi_{b*}^{} \phi^*_a h^r(a_1 \otimes \cdots \otimes a_{\ho})
			\\
			&\simeq 
			\phi_{b*}^{}  h^r(a \otimes a_1 \otimes \cdots \otimes a_{\ho})
			\\
			&\simeq
			\homm_{\basecat}(b,a) \otimes h^r(a_1 \otimes \cdots \otimes a_{\ho}) \oplus {}
			\\
			&\quad \oplus
			\left( \bigoplus_{i=1}^{\ho}  \homm_{\basecat}(b,a_i) \otimes
			h^r(a \otimes a_1 \otimes \cdots \widehat{a_i} \cdots \otimes a_{\ho}) \right).
		\end{align*}
		\item\label{it:PBQA} On the other hand,
		\begin{align*}
			P_a Q_b h^r(a_1 \otimes \dots \otimes a_{\ho}) & \simeq 
			\phi^*_a \phi_{b*}^{} h^r(a_1 \otimes \cdots \otimes a_{\ho})
			\\
			& \simeq 
			\phi^*_a \left(
			\bigoplus_{i = 1}^{\ho} \homm_{\basecat}(b,a_i) \otimes
			h^r(a_1 \otimes \cdots \widehat{a_i}\cdots \otimes a_{\ho})
			\right)
			\\
			& \simeq
			\bigoplus_{i = 1}^{\ho} \homm_{\basecat}(b,a_i) \otimes
			\phi^*_a h^r(a_1 \otimes \cdots \widehat{a_i}\cdots \otimes a_{\ho})
			\\
			& \simeq
			\bigoplus_{i = 1}^{\ho} \homm_{\basecat}(b,a_i) \otimes
			h^r(a \otimes a_1 \otimes \cdots \widehat{a_i}\cdots \otimes a_{\ho}).
		\end{align*}
	\end{enumerate}
\end{Example}

\underline{\emph{$2$-morphisms:}}\index{$2$-morphism}
The $2$-morphisms of $\hcat*{\basecat}$ are generated, subject to
relations, by four sets of generating $2$-morphisms, cf.~Section~\ref{subsec:heisencatdef-dg}:
\begin{enumerate}[itemsep=0.5em,topsep=0.5em, partopsep=0.5em]
	\item The marked arrows
	\begin{tikzpicture}[baseline=-1ex, scale=0.67]
		\draw[->] (0,-0.5) node[below] {$\PP_{a}$} -- node[label=right:{$\alpha$}, dot, pos=0.5] {} (0,0.5) node[above] {$\PP_{b}$};
	\end{tikzpicture},
	\begin{tikzpicture}[baseline=-1ex, scale=0.67]
		\draw[<-] (0,-0.5) node[below] {$\QQ_{b}$} -- node[label=right:{$\alpha$}, dot, pos=0.5] {} (0,0.5) node[above] {$\QQ_{a}$};
	\end{tikzpicture} and 
	\begin{tikzpicture}[baseline=-1ex, scale=0.67]
		\draw[->] (0,-0.5) node[below] {$\RR_{a}$} -- node[label=right:{$\alpha$}, dot, pos=0.5] {} (0,0.5) node[above] {$\RR_{b}$};
	\end{tikzpicture}.
	\item The Serre relation 
	\begin{tikzpicture}[baseline=-1ex, scale=0.67]
		\draw[->] (0,-0.5) node[below] {$\PP_{Sa}$} -- node[serre, pos=0.5] {} (0,0.5) node[above] {$\RR_{a}$};
	\end{tikzpicture}.
	\item The cups and caps
	\begin{tikzpicture}[baseline=0ex, scale=0.67]
		\draw[->] (0,0) node[below] {$\PP_{a}$} arc[start angle=180, end angle=0, radius=.5] node[below] {$\QQ_{a}$};
	\end{tikzpicture},
	\begin{tikzpicture}[baseline=0ex, scale=0.67]
		\draw[->] (0,0) node[below] {$\RR_{a}$} arc[start angle=0, end angle=180, radius=.5] node[below] {$\QQ_{a}$};
	\end{tikzpicture},
	\begin{tikzpicture}[baseline=0ex, scale=0.67]
		\draw[->] (0,0.5) node[above] {$\QQ_{a}$}arc[start angle=-180, end angle=0, radius=.5] node[above] {$\PP_{a}$};
	\end{tikzpicture} and
	\begin{tikzpicture}[baseline=0ex, scale=0.67]
		\draw[->] (0,0.5) node[above] {$\QQ_{a}$} arc[start angle=0, end angle=-180, radius=.5] node[above] {$\RR_{a}$};
	\end{tikzpicture}.
	\item The crossing
	\begin{tikzpicture}[baseline=-1ex, scale=0.67]
		\draw[<-] (0,-0.5) node[below] {$\QQ_{a}$} -- (1,0.5) node[above] {$\QQ_{a}$};
		\draw[<-] (1,-0.5) node[below] {$\QQ_{b}$} -- (0,0.5) node[above] {$\QQ_{b}$};
	\end{tikzpicture}.
\end{enumerate}

We define $\Phi'_{\basecat}$ on these generating $2$-morphisms as
follows:
\begin{enumerate}
	\item Given $\alpha \in \Hom_\basecat(a,b)$, we have a natural
	transformation of functors $\symbcnmone \rightarrow \symbcn$:
	\[
	\alpha \otimes \id\colon
	\quad
	\phi_a = a \otimes \id 
	\; \longrightarrow \;
	\phi_b = b \otimes \id .
	\]
	We set 
	\begin{equation*}
		\Phi'_{\basecat}\left(
		\;
		\begin{tikzpicture}[baseline=1.5ex, scale=0.67]
			\draw[->] (0,0) node[below] {$\PP_a$} -- node[label=right:{$\alpha$}, dot, pos=0.5] {} (0,1) node[above] {$\PP_b$};
		\end{tikzpicture} 
		\right)
		= (\alpha \otimes \id)^* 
		\text{, }\quad
		\Phi'_{\basecat}\left(
		\;
		\begin{tikzpicture}[baseline=1.5ex, scale=0.67]
			\draw[->] (0,1) node[above] {$\QQ_a$} -- node[label=right:{$\alpha$}, dot, pos=0.5] {} (0,0) node[below] {$\QQ_b$};
		\end{tikzpicture}
		\right)
		= (\alpha \otimes \id)_*
	\end{equation*}
and
\begin{equation*}
		\Phi'_{\basecat}\left(
		\;
		\begin{tikzpicture}[baseline=1.5ex, scale=0.67]
			\draw[->] (0,0) node[below] {$\RR_a$} -- node[label=right:{$\alpha$}, dot, pos=0.5] {} (0,1) node[above] {$\RR_b$};
		\end{tikzpicture} 
		\right)
		= (\alpha \otimes \id)^!.
	\end{equation*}
	We denote these natural transformations by $P_\alpha$, $Q_\alpha$ and $R_\alpha$ respectively.
	\item 
	With $\starmap{a}\colon \phi^*_{Sa} \to \phi^!_a$ as in
	Proposition~\ref{prop-phi-a-shriek-isomorphic-to-phi-Sa-upper-star}, we set
	\begin{equation*}
		\Phi'_{\basecat}\left(
		\;
		\begin{tikzpicture}[baseline=1.5ex, scale=0.67]
			\draw[->] (0,0) node[below] {$\PP_{Sa}$} -- node[serre, pos=0.5] {} (0,1) node[above] {$\RR_a$};
		\end{tikzpicture} 
		\right)
		= \starmap{a},
	\end{equation*}
	\item
	
	As seen in Section~\ref{section-restriction-and-extension-of-scalars},
	we have adjunctions $(\phi^*_a \dashv \phi_{a *})$ and $(\phi_{a *} \dashv \phi^!_a)$. We set:
	\begin{equation*}
		\Phi'_{\basecat}\left(
		\begin{tikzpicture}[baseline={(0,0.1)}, scale=0.67]
			\draw[->] (0,0) node[below] {$\PP_a$} arc[start angle=180, end angle=0, radius=.5] node[label=above:{$\hunit$},pos=0.5] {} node[below] {$\QQ_a$};
		\end{tikzpicture}
		\right)
		= \left[\phi_a^* \phi_{a *} \xrightarrow{\counit} \Id\right], 
		\quad
		\Phi'_{\basecat}\left(
		\begin{tikzpicture}[baseline={(0,0.1)}, scale=0.67]
			\draw[->] (0,0) node[below] {$\RR_{a}$} arc[start angle=0, end angle=180, radius=.5] node[label=above:{$\hunit$},pos=0.5]{} node[below] {$\QQ_{a}$};
		\end{tikzpicture}
		\right)
		= \left[\phi_{a *} \phi^!_{a} \xrightarrow{\counit} \Id\right], 
	\end{equation*}
	\begin{equation*}
		\Phi'_{\basecat}\left(
		\begin{tikzpicture}[baseline={(0,-0.25)}, scale=0.67]
			\draw[->] (0,0) node[above] {$\QQ_a$} arc[start angle=0, end angle=-180, radius=.5] node[label=below:{$\hunit$},pos=0.5]{} node[above] {$\RR_{a}$};
		\end{tikzpicture}
		\right)
		= \left[\Id \xrightarrow{\unit} \phi^!_{a} \phi_{a *}\right],
		\quad
		\Phi'_{\basecat}\left(
		\begin{tikzpicture}[baseline={(0,-0.25)}, scale=0.67]
			\draw[->] (0,0) node[above] {$\QQ_{a}$} arc[start angle=-180, end angle=0, radius=.5] node[label=below:{$\hunit$},pos=0.5]{} node[above] {$\PP_{a}$};
		\end{tikzpicture}
		\right)
		= \left[\Id \xrightarrow{\unit} \phi_{a *} \phi^*_{a}\right].
	\end{equation*}
	
	\item We have an isomorphism of functors $\symbcnmtwo \to \symbcn$
	\[ (12): \phi_a \circ \phi_b \simeq \phi_b \circ \phi_a \]
	given objectwise by the transposition $(12) \in S_n$. We set 
	\[
	\Phi'_{\basecat}\left(
	\begin{tikzpicture}[baseline={(0,0.2)}, scale=0.67]
		\draw[<-] (0,0) node[below] {$\QQ_{a}$}-- (1,1) node[above] {$\QQ_{a}$};
		\draw[<-] (1,0) node[below] {$\QQ_{b}$}-- (0,1) node[above] {$\QQ_{b}$};
	\end{tikzpicture}
	\right)
	= (12)_{\ast}.
	\]
\end{enumerate}

\begin{Remark}
	The differentials on natural transformations in $\fcat*\basecat$ match those in $\hcat*\basecat$.
	For the dots this follows from $d(\alpha \otimes \id) = d(\alpha) \otimes \id$, while all the other defining transformations (the Serre map $\eta$, adjunctions and the transposition) are closed.
\end{Remark}

\begin{Example}\label{ex:PQ-adjunction}
	In the notation of Example~\ref{ex:QAPBandPBQA}, the adjunction
	unit $\id \to Q_aP_a$ is given on representables by embedding
	$h^r(a_1 \otimes \dots \otimes a_{\ho})$ as $\id_a \otimes h^r(a_1
	\otimes \dots \otimes a_{\ho})$ into the first summand. The 
	adjunction counit $P_aQ_a \to \id$ is induced by the evaluation 
	maps $\Hom_\basecat(a, a_i) \otimes a \to a_i$, followed by the transposition $(1i)$ and the universal morphism out of the direct sum.
\end{Example}

\begin{Example}\label{ex:QR-adjunction}
	Using the decomposition of Corollary~\ref{cor-decomposition-of-the-diagonal-bimodule-S_l-into-S_l-1}, we have for any $\symbcn$-module $E$
	\begin{multline*}
		Q_bR_a(E) =
		\phi_{b*}^{}\phi_a^!(E)  \simeq
		\Hom_{\symbcn}\bigl( {}_{\phi_b}(\symbcnpone)_{\phi_a},\, E\bigr) \\  \simeq
		\Hom_{\symbcn}( {}_b\basecat_a \otimes \symbcn,\, E) \oplus
		\bigoplus_{i=1}^{\ho} \Hom_{\symbcn}( {}_i\basecat_a \otimes {}_{\hat 1 \circ (1i)}\symbcn,\, E).
	\end{multline*}
	The adjunction counit $Q_aR_a \to \id$ is given by projecting onto
	\[
	\Hom_{\symbcn}( {}_a\basecat_a \otimes \symbcn,\, E)
	\]
	followed by the morphism induced by the map 
	$\kk \to {}_a\basecat_a$ sending $1 \mapsto \id_{a}$:
	\[
	\Hom_{\symbcn}( {}_a\basecat_a \otimes \symbcn,\, E) \to 
	\Hom_{\symbcn}( \symbcn,\, E) \cong E.
	\]
	To see this, note that by the description of adjunction
	units and counits for Tensor-Hom adjunction
	\cite[Section~2.1]{AnnoLogvinenko-BarCategoryOfModulesAndHomotopyAdjunctionForTensorFunctors}
	our adjunction counit comes from the natural evaluation map
	\[
	\Hom_{\symbcn}( {\symbcnpone} _{\phi_a}, E)
	\otimes_{\symbcnpone} \symbcnpone_{\phi_a} \to E,
	\quad \sum f \otimes g \mapsto \sum f(g)
	\]
	via the identification of the left-hand side with 
	$\Hom_{\symbcn}({}_{\phi_a} {\symbcnpone}_{\phi_a}, E)$
	via the isomorphism $f \mapsto f \otimes 1$. Thus our counit is 
	the map 
	\[\Hom_{\symbcn}(\leftidx{_{\phi_a}}{\symbcnpone}{_{\phi_a}}, E) \to E\]
	given by $f \mapsto f(1)$. Since  
	$1 \in \leftidx{_{\phi_a}}{\symbcnpone}{_{\phi_a}}$
	lies in the component
	\[\Hom(a,a)\otimes {\symbcn},\]
	we can project to that. Then evaluating at $1_a \otimes 1_{\symbcn}$
	is first mapping $\Hom_{\symbcn}(\Hom(a,a)\otimes {\symbcn}, E)$ to
	$\Hom_{\symbcn}({\symbcn}, E)$ and then identifying this with $E$. 
	This gives the claim.
\end{Example}

\begin{Example}\label{ex:CautisLicata_part2}
	Let $\Gamma \leq \mathrm{SL}(2,\mathbb{C})$ be finite and $\basecat$ as 
	in Examples~\ref{ex:CautisLicata_part1} and \ref{ex:CautisLicata_part3}. 
	Let $A_1^{\Gamma}$ denote $\mathbb{C}[x,y] \rtimes \Gamma$, the skew
	group algebra. Its abelian category of modules
	$\text{Mod-}A_1^{\Gamma}$ is equivalent to
	$\catCoh{[\mathbb{C}^2/\Gamma]}$, 
	the abelian category of coherent sheaves on the quotient stack. 
	We can therefore view the algebra $A_1^{\Gamma}$ as a Morita \dg
	enhancement of $\catDbCoh{[\mathbb{C}^2/\Gamma]}$
	and view $\sym^{\ho} A_1^{\Gamma}$ as a Morita \dg enhancement of 
	$\catDbCoh{\Sym^{\ho} [\mathbb{C}^2/\Gamma]}$. In $\hperf \sym^{\ho} A_1^{\Gamma}$ take the full subcategory 
	corresponding to the sheaves supported at the origin $(0,\dots,0)  
	\in \Sym^{\ho} [\mathbb{C}^2/\Gamma]$ where $0$ is the origin 
	of $\mathbb{C}^2$. Its homotopy category is the target of the
	2-representation considered in \cite[Section~4]{cautis2012heisenberg}.
	The functors $P_{i}$ and $Q_{i}$ representing $\PP_{i}$ and $\QQ_{i}$ from
	Example~\ref{ex:CautisLicata_part1} as well as the natural
	transformations defined above are the same as those constructed in
	\cite[Section~4.3]{cautis2012heisenberg}.  Again, the higher powers
	$P_i^{(n)}$ and $Q_i^{(n)}$ arise automatically from our calculus (see
	Example~\ref{ex:CautisLicata_part3} and
	Section~\ref{subsec:Fock_symmetrisers}).  
\end{Example}

\section{The representation \texorpdfstring{$\Phi'_{\basecat}$}{Phi'}: the Heisenberg 2-relations} 
\label{subsec:fock2rel}

We now prove the following:
\begin{Theorem}\label{thm:fock-2-functor-step1}
	The images assigned in Section~\ref{subsec:fock2gen} to 
	the generating $2$-morphisms  
	of $\hcat*\basecat$ satisfy the Heisenberg $2$-relation\index{$2$-relation}s of
	Section~\ref{subsec:heisencatdef2-dg}. We thus have a 
	strict \dg $2$-functor 
	\[\Phi'_\basecat \colon \hcat*\basecat \to \fcat*\basecat.\]
\end{Theorem}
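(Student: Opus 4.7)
The plan is to verify each of the relations listed in Section~\ref{subsec:heisencatdef2-dg} by unwinding the definitions of $\Phi'_\basecat$ given in Section~\ref{subsec:fock2gen} and reducing to standard properties of natural transformations, adjunctions, and the homotopy Serre functor. Since the 2-functor is strict by construction, the unit and composition coherence morphisms are identities, and I only need to check the relations themselves. I will group the relations by the tool they require.

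First, the relations that follow immediately from the algebraic structure. The linearity relations~\eqref{eq:linearity-dg} follow from the $\kk$-linearity of the assignments $\alpha \mapsto (\alpha \otimes \id)^*$ and its counterparts, together with the $\kk$-linearity of the functor of natural transformations. The dot-merging relation~\eqref{eq:colliding_dots_down-dg} follows from the identities $(\alpha \otimes \id)_* (\beta \otimes \id)_* = ((\alpha \circ \beta) \otimes \id)_*$ combined with the Koszul sign produced by the interchange law in $\DGCat$ (see Remark~\ref{rem:dg-interchange-law}); note that $\Phi'_\basecat$ automatically respects the differentials, as $d(\alpha \otimes \id) = d(\alpha) \otimes \id$ and all structural natural transformations (adjunction units/counits, transpositions, the Serre morphism $\eta$) are closed of degree $0$.

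Next, the relations expressing naturality. The dot-through-cap relation~\eqref{eq:cupsslide-dg} and the dot-through-crossing relation~\eqref{eq:crossingslide-dg} encode the naturality of adjunction units/counits and the naturality of the transposition $(12)_*$ in each tensor factor. The adjunction (straightening) relations~\eqref{eq:straighten-dg} are precisely the two triangle identities for the adjoint pairs $(\phi_a^*, \phi_{a*})$ and $(\phi_{a*}, \phi_a^!)$ from Section~\ref{section-restriction-and-extension-of-scalars}. The symmetric group relations~\eqref{eq:symmetric_group_relations-dg} for unmarked downward strands amount to the identities $(12)(12) = e$ and the braid relation $(12)(23)(12) = (23)(12)(23)$ inside $\SymGrp{n}$, implemented pointwise on tensor factors.

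The relations involving the star~\eqref{eq:H'V-genereator-serre} are the most delicate. For~\eqref{eq:dot_sliding_past_star} the assertion that a dot labelled $\alpha$ slides across a star and becomes $S\alpha$ is the naturality of the quasi-isomorphism $\starmap{a}\colon \phi_{Sa}^* \to \phi_a^!$ in the variable $a$: indeed the Serre functor lifts to a \dg functor on $\hperf \basecat$ together with a natural transformation $\eta$ (Section~\ref{subsec:dg-homotopy-serre}), and tracing through the construction of $\starmap{a}$ in Proposition~\ref{prop-phi-a-shriek-isomorphic-to-phi-Sa-upper-star} shows that the appropriate square commutes on the nose. The counterclockwise curl vanishes because, by Lemma~\ref{lem:onrepresentables}, the composition $\phi_{a*}\phi_{Sa}^*$ on a representable picks out a factor of $\Hom_\basecat(a, Sa)$ followed by evaluation at $\id_a$ and the map induced by $\eta$; tracking through Corollary~\ref{cor-decomposition-of-the-diagonal-bimodule-S_l-into-S_l-1} shows that the image of the curl lands in a summand that is projected away. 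The trace relation in~\eqref{eq:circle_and_curl-dg} reduces, via the explicit descriptions of the unit and counit (Examples~\ref{ex:PQ-adjunction} and~\ref{ex:QR-adjunction}) combined with $\starmap{a}$, to the statement that evaluating $\eta_{a,a}(\id_a)$ at $\alpha$ produces $\Tr(\alpha)$, which is the very definition of the Serre trace.

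The hardest relation will be the up-down braid~\eqref{eq:up_down_braid-dg}, which asserts that on 1-morphism $\phi_{Sa}^* \phi_{b*}$ the composite of two cup/cap moves interchanging a $\QQ$ and a $\PP_{Sa}/\RR_a$ strand equals the identity. This is the compatibility between the two adjunctions $(\phi_b^*, \phi_{b*})$ and $(\phi_{b*}, \phi_b^!)$ mediated by $\starmap{b}$ (or rather the dual mate calculus under $S$), and it is where all structures must be threaded together. The plan is to use the explicit bimodule descriptions in Examples~\ref{ex:QAPBandPBQA}--\ref{ex:QR-adjunction}: writing both sides of the relation as morphisms between summands of the direct sum decompositions from Corollary~\ref{cor-decomposition-of-the-diagonal-bimodule-S_l-into-S_l-1}, one matches the summands using that the transposition $(12)_*$ permutes the index $i$ correctly, and verifies that the sum over cosets of $\SymGrp{n-1}$ in $\SymGrp{n}$ collapses, via the triangle identity for $(\phi_{b*}, \phi_b^!)$ together with naturality of $\starmap{b}$, to the identity on $\phi_{Sa}^*\phi_{b*}$. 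Once this is done, all relations listed in Section~\ref{subsec:heisencatdef2-dg} are verified, and the universal property of $\hcat*\basecat$ produces the desired strict \dg 2-functor $\Phi'_\basecat\colon \hcat*\basecat \to \fcat*\basecat$.
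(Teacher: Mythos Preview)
Your proposal follows the paper's strategy: verify each relation by unwinding the definitions of $\Phi'_\basecat$ on the generators. Most of your reductions agree with the paper's lemmas. Two points deserve comment.

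First, the paper isolates a technical reduction you use only implicitly (Lemma~\ref{lem:fcat-check-on-representables}): any 2-morphism in $\hcat*\basecat$ whose source and target involve only $\PP$s and $\QQ$s is sent by $\Phi'_\basecat$ to a natural transformation between \emph{tensor} functors, and such a transformation is determined by its values on representable modules. This is what justifies the passage to explicit computations on $h^r(a_1 \otimes \dots \otimes a_\ho)$. Crucially, this reduction does \emph{not} apply once an $\RR$ is present, since $\phi_a^!$ is a $\Hom$-functor rather than a tensor functor.

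Second, your treatment of relation~\eqref{eq:up_down_braid-dg} contains some slips: the star in the diagram is $\starmap{a}$, not $\starmap{b}$; the relation is not an identity on $\phi_{Sa}^*\phi_{b*}$ but equates the double crossing with $\starmap{a} \circ_1 \id_{\QQ_b}\colon \PP_{Sa}\QQ_b \to \RR_a\QQ_b$; and the adjunctions built into the two crossings are those for the $a$- and $Sa$-indexed strands, not the $b$-indexed one. More importantly, the paper does not compute directly with $\phi_a^!$ here. Instead it caps off the $\RR_a$-end via the $(\QQ_a,\RR_a)$-counit, converting~\eqref{eq:up_down_braid-dg} by adjunction into an equivalent commutative triangle
\[
\begin{tikzcd}
  \QQ_a\PP_{Sa}\QQ_b \ar[rr] \ar[dr] & & \QQ_b\QQ_a\PP_{Sa} \ar[dl] \\
  & \QQ_b &
\end{tikzcd}
\]
involving only $\PP$s and $\QQ$s, to which Lemma~\ref{lem:fcat-check-on-representables} applies. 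Both legs are then checked on representables via the explicit summand descriptions of Lemmas~\ref{lem:PQQP} and~\ref{lem:starred-cap-fock}. Your direct plan could in principle be made to work, but without this adjunction trick you would have to handle $\phi_a^!$ explicitly, and restricting to representables is not a priori sufficient for that.
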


We verify the Heisenberg $2$-relations of
Section~\ref{subsec:heisencatdef2-dg}  in a series of lemmas.

\begin{Lemma}\label{lem:fcat-check-on-representables}
	Let $\alpha$ be a $2$-morphism in $\hcat*{\basecat}$ 
	between $1$-morphisms $\ho \rightarrow \ho*$ which only involve $\PP$s and $\QQ$s. 
	The natural transformation $\Phi'_{\basecat}(\alpha)$ of 
	\dg functors $\rightmod{\sym^{\ho} \basecat} \rightarrow \rightmod{\sym^{\ho*} \basecat}$
	is completely determined by its action on representable modules
	$h^r(a_1 \otimes \dots \otimes a_{\ho})$.
\end{Lemma}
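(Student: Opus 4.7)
The plan is to exploit the fact that each generating $1$-morphism of the form $\PP_a$ or $\QQ_a$ is sent by $\Phi'_\basecat$ to a tensor product with a bimodule. By the discussion in Section~\ref{section-restriction-and-extension-of-scalars}, we have
\[
  \phi_a^* = (-) \otimes_{\symbcnmone} \leftidx{_{\phi_a}}{\symbcn}
  \qquad \text{and} \qquad
  \phi_{a*} = (-) \otimes_{\symbcn} \symbcn_{\phi_a}.
\]
Since $\alpha$ involves no $\RR$-strings, the source and target of $\Phi'_\basecat(\alpha)$ are $1$-compositions of such tensor functors, and are therefore themselves tensor functors $(-) \otimes M$ and $(-) \otimes N$ for bimodules $M,\, N$ built as iterated bimodule tensor products of the two bimodules above.

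First I would appeal to the $(\tensorfn,\bimodapx)$-adjunction of Section~\ref{section-bimodule-approximation} whose unit is an isomorphism, making $\tensorfn$ a fully faithful embedding. This identifies natural transformations $(-) \otimes M \Rightarrow (-) \otimes N$ bijectively with bimodule morphisms $M \to N$; in particular $\Phi'_\basecat(\alpha)$ is completely determined by its underlying bimodule map. Next, a bimodule morphism is determined fibrewise, and the fibres $M_{e_1 \otimes \dots \otimes e_\ho}$ and $N_{e_1 \otimes \dots \otimes e_\ho}$ are exactly the values of the tensor functors on the representable modules $h^r(e_1 \otimes \dots \otimes e_\ho)$. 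Hence knowing $\Phi'_\basecat(\alpha)$ on representables reconstructs the bimodule map, and so the entire natural transformation. An equivalent reformulation is that tensor functors commute with arbitrary colimits, every \dg module admits a semi-free resolution built from representables, and natural transformations between colimit-preserving functors are determined by their values on a generating set.

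There is no serious obstacle: the lemma is a straightforward Yoneda-style consequence of the fact that $\PP_a$ and $\QQ_a$ are sent to tensor functors. The role of the hypothesis that $\alpha$ contains no $\RR$-strings is precisely to ensure this, since $\phi_a^! = \homm_{\symbcn}(\symbcn_{\phi_a},\,-)$ is not itself a tensor functor (though it becomes quasi-isomorphic to $\phi_{Sa}^*$ via $\starmap a$ once one passes to the Drinfeld quotient by $I_\basecat$, which is how the $\RR$-case will ultimately be handled on the homotopy level).
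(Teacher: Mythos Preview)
Your proof is correct and follows essentially the same approach as the paper: both arguments observe that $\PP$s and $\QQ$s are sent to tensor functors, invoke the full faithfulness of $\tensorfn$ from the $(\tensorfn,\bimodapx)$-adjunction of Section~\ref{section-bimodule-approximation}, and conclude that the natural transformation is determined by its image under $\bimodapx$, i.e.\ by its values on representables. Your version is somewhat more explicit about the intermediate step that a bimodule map is determined fibrewise, and your remark on why the $\RR$-free hypothesis is needed is a helpful addition, but the substance is the same.
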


This Lemma means that any relation in $\hcat*{\basecat}$ whose source 
and target only involve $\PP$s and $\QQ$s can be verified 
in $\fcat*{\basecat}$ by checking it on the representable modules.  

\begin{proof}
	By definition, $\Phi'_{\basecat}$ maps $\PP$s and $\QQ$s to the functors
	of extension and restriction of scalars. These are tensor functors\index{tensor functor} -- 
	they are given by tensoring with a bimodule. In other words, they lie
	in the image of the fully faithful functor
	\[ \tensorfn\colon \AmodB \rightarrow \DGFun(\modA,\,\modB), \]
	described in the section Section~\ref{section-bimodule-approximation}. 
	Its right adjoint is the bimodule approximation functor $\bimodapx$
	and the fully faithfullness of $\tensorfn$ implies that
	a natural transformations of tensor functors is completely determined
	by its image under $\bimodapx$. The claim now follows, since 
	$\bimodapx$ is the restriction to the diagonal bimodule, 
	i.e.~to the representables. 
\end{proof}

\begin{Lemma}\label{lem:fcat_adj_isotopy}
	The straightening relation \eqref{eq:straighten-dg} is satisfied in $\fcat*\basecat$:
	\[
	\Phi'_{\basecat}\left(\;
	\begin{tikzpicture}[scale=0.5, baseline={(0,0.4)}]
		\draw (0,0) -- (0,1)  arc[start angle=180, end angle=0, radius=.5] arc[start angle=-180, end angle=0, radius=.5] -- (2,2);
	\end{tikzpicture}
	\;\right)
	=
	\Phi'_{\basecat}\left(\;
	\begin{tikzpicture}[scale=0.5, baseline={(0,0.4)}]
		\draw (3,0) -- (3,2);
	\end{tikzpicture}
	\;\right)
	=
	\Phi'_{\basecat}\left(\;
	\begin{tikzpicture}[scale=0.5, baseline={(0,0.4)}]
		\draw (6,0) -- (6,1)  arc[start angle=0, end angle=180, radius=.5] arc[start angle=0, end angle=-180, radius=.5] -- (4,2);
	\end{tikzpicture}
	\;\right)
	\]
	for any allowed orientation and labeling of the strands.
\end{Lemma}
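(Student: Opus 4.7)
The plan is to observe that under $\Phi'_{\basecat}$ each straightening diagram is sent to one of the triangle identities for the two adjunctions $(\phi_a^*, \phi_{a*})$ and $(\phi_{a*}, \phi_a^!)$, which hold on the nose in $\fcat*\basecat$. There are eight allowed orientations to check (four for each side of the equality), but they split into four essentially distinct cases. Two involve only $\PP$ and $\QQ$ strands and use the $(\phi_a^* \dashv \phi_{a*})$ adjunction; the other two involve a $\RR$ and a $\QQ$ strand and use the $(\phi_{a*} \dashv \phi_a^!)$ adjunction.

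I would proceed case by case. For an upward $\PP_a$ strand on the left of the equality, the diagram decomposes, via the interchange law, into a horizontal composition $(\id_{\PP_a} \circ_1 \text{cup}) \circ_2 (\text{cap} \circ_1 \id_{\PP_a})$ whose image under $\Phi'_{\basecat}$ is the composite
\begin{equation*}
  P_a \xrightarrow{\;\unit \,P_a\;} P_a Q_a P_a \xrightarrow{\;P_a\,\counit\;} P_a,
\end{equation*}
which is one of the triangle identities for the adjunction $(\phi_a^* \dashv \phi_{a*})$ and hence equals $\id_{P_a}$. The mirror orientation, with a downward $\QQ_a$ strand, dualises to the other triangle identity for the same adjunction. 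Analogously, the right-hand side of the equation uses the cup/cap pair involving $\RR_a$ and produces the two triangle identities for $(\phi_{a*} \dashv \phi_a^!)$, namely
\begin{equation*}
  Q_a \xrightarrow{\;Q_a\,\unit\;} Q_a R_a Q_a \xrightarrow{\;\counit\,Q_a\;} Q_a, \qquad
  R_a \xrightarrow{\;\unit\,R_a\;} R_a Q_a R_a \xrightarrow{\;R_a\,\counit\;} R_a.
\end{equation*}

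There is no serious obstacle here: the whole point of the way we set up $\Phi'_{\basecat}$ on cups and caps in Section~\ref{subsec:fock2gen} is to make the straightening relations tautologous. The only subtle bookkeeping is to confirm that the assignment of which cup/cap corresponds to which unit/counit is consistent with the orientation conventions on the source and target of each zig-zag, i.e.~that a $\PP_a$-on-the-left cup really does get sent to the unit of $(\phi_a^*, \phi_{a*})$ and not, say, that of $(\phi_{a*}, \phi_a^!)$. Once this is checked, each of the eight verifications reduces to a single triangle identity, and the lemma follows. For the cases involving only $\PP$s and $\QQ$s one could alternatively invoke Lemma~\ref{lem:fcat-check-on-representables} and check on representable modules using the explicit description of unit and counit in Example~\ref{ex:PQ-adjunction}, but this adds nothing beyond the direct verification above.
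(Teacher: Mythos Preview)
Your proposal is correct and takes essentially the same approach as the paper: the caps and cups are sent to the unit and counit of the adjunctions $(\phi_a^*,\phi_{a*})$ and $(\phi_{a*},\phi_a^!)$, so each straightening relation becomes a triangle identity. The paper's proof is simply the one-line version of what you spell out, and your orientation count is slightly generous (there are four allowed zig-zags, one per triangle identity), but this does not affect the argument.
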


\begin{proof}
	Caps and cups are sent to the unit and counit morphisms of adjoint
	pairs of functors. The claim now follows from the standard relations
	\[ 
	\bigl(F \xrightarrow{F\eta} FGF \xrightarrow{\varepsilon F} F\bigr) =
	\id_F
	\quad \quad\text{and}\quad \quad
	\bigl(G  \xrightarrow{\varepsilon G} GFG \xrightarrow{G\eta} G\bigr) =
	\id_G
	\]
	satisfied by any adjunction $(F \dashv G)$ with unit $\eta$ and counit 
	$\epsilon$. 
\end{proof}

\begin{Lemma}
	Relation~\eqref{eq:cupsslide-dg} is satisfied is $\fcat*\basecat$: 
	dots may slide through cups and caps.
\end{Lemma}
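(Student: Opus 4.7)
The plan is to translate both displayed identities into compatibility statements for adjunction counits via $\Phi'_{\basecat}$, recognise them as instances of the mate correspondence between two pairs of adjunctions, and verify them explicitly on representable modules using Lemma~\ref{lem:fcat-check-on-representables}.

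For the first relation, under $\Phi'_{\basecat}$ the LHS becomes
\[
\phi_a^* \phi_{b*} \xrightarrow{P_\alpha \cdot \phi_{b*}} \phi_b^* \phi_{b*} \xrightarrow{\counit} \Id,
\]
while the RHS becomes
\[
\phi_a^* \phi_{b*} \xrightarrow{\phi_a^* \cdot Q_\alpha} \phi_a^* \phi_{a*} \xrightarrow{\counit} \Id.
\]
Both $P_\alpha$ and $Q_\alpha$ arise from the same natural transformation $\alpha \otimes \id\colon \phi_a \Rightarrow \phi_b$ of functors $\symbcnmone \to \symbcn$ by the functoriality of extension and restriction of scalars respectively, which identifies $Q_\alpha$ as the mate of $P_\alpha$ with respect to the adjunctions $\phi_a^* \dashv \phi_{a*}$ and $\phi_b^* \dashv \phi_{b*}$; the claimed equality is then the standard compatibility of a $2$-cell between left adjoints with its mate between right adjoints. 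I will make this explicit by invoking Lemma~\ref{lem:fcat-check-on-representables} to restrict to a representable $h^r(a_1 \otimes \cdots \otimes a_{\ho})$, where by Lemma~\ref{lem:onrepresentables} we have
\[
\phi_a^* \phi_{b*}\, h^r(a_1 \otimes \cdots \otimes a_{\ho}) \;\simeq\; \bigoplus_{i=1}^{\ho} \Hom_{\basecat}(b, a_i) \otimes h^r(a \otimes a_1 \otimes \cdots \widehat{a_i} \cdots \otimes a_{\ho}).
\]
Using the explicit form of the counit from Example~\ref{ex:PQ-adjunction}, both compositions reduce on each summand to the evaluation-and-reorder map determined by $\phi \otimes (-) \mapsto (\phi \circ \alpha)\cdot\mathrm{reorder}(-)$; the agreement then comes from associativity of composition in $\basecat$.

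The second relation is handled by an entirely parallel argument, with the adjunctions $\phi_{a*} \dashv \phi_a^!$ and $\phi_{b*} \dashv \phi_b^!$ replacing $\phi_a^* \dashv \phi_{a*}$ and $\phi_b^* \dashv \phi_{b*}$. The roles previously played by $P_\alpha$ and $Q_\alpha$ are now filled by $Q_\alpha$ (between the left adjoints $\phi_{b*}$ and $\phi_{a*}$) and $R_\alpha$ (between the right adjoints $\phi_a^!$ and $\phi_b^!$); both again arise from $\alpha \otimes \id\colon \phi_a \Rightarrow \phi_b$ by functoriality, so they are mates of one another under this pair of adjunctions. The concrete verification on representables uses the explicit description of the counit $\phi_{a*} \phi_a^! \to \Id$ given in Example~\ref{ex:QR-adjunction} as the projection onto the summand indexed by $\id_a \in \Hom_{\basecat}(a,a)$, and again reduces on each summand to associativity of composition in $\basecat$.

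I do not anticipate a substantive obstacle: the content of the lemma is the mate calculus identity, and on representables both sides unwind to the same morphism built from composition with $\alpha$. The only care required is in matching up the diagrammatic conventions (the orientation of $\PP$-, $\QQ$-, and $\RR$-strands and the placement of the dot at positions $1$ versus $0$) with the covariance of $P_\alpha$, $Q_\alpha$, $R_\alpha$ in $\alpha$ and with the flip of direction inherent to the mate correspondence.
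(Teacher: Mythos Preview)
Your approach is essentially the same as the paper's: reduce to explicit verification using the descriptions in Examples~\ref{ex:PQ-adjunction} and~\ref{ex:QR-adjunction}, and observe that both sides collapse to composition with $\alpha$. The mate-correspondence framing you add is a clean conceptual packaging the paper does not spell out, but the underlying computation is identical.

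One small imprecision: for the second relation you invoke Lemma~\ref{lem:fcat-check-on-representables} and speak of ``verification on representables'', but that lemma is stated only for $1$-morphisms built from $\PP$'s and $\QQ$'s, whereas the second relation involves $\RR_a = \phi_a^!$, which is not a tensor functor. This is harmless in practice---Example~\ref{ex:QR-adjunction} computes $Q_bR_a(E)$ and the counit for an arbitrary module $E$, so you can verify the identity directly on all modules without passing through representables---but you should adjust the wording accordingly rather than citing Lemma~\ref{lem:fcat-check-on-representables} there.
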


\begin{proof}
	We need to show that the following pairs of maps are equal for any $\alpha \in \Hom_\basecat(a,b)$:
	\begin{enumerate}
		\item 
		$P_aQ_{b} \xrightarrow{P_{\alpha}Q_{\id_{b}}} P_{b}Q_{b} \xrightarrow{\counit} \id$ and
		$P_aQ_{b} \xrightarrow{P_{\id_a}Q_{\alpha}} P_aQ_{a} \xrightarrow{\counit} \id$;
		\item
		$\id \xrightarrow{\unit} Q_{a}P_a \xrightarrow{Q_{\id_a}P_{\alpha}} Q_{a}P_{b}$ and
		$\id \xrightarrow{\unit} Q_{b}P_{b} \xrightarrow{Q_{\alpha}P_{\id_{b}}} Q_{a}P_{b}$.
		\item 
		$Q_{b}R_{a} \xrightarrow{Q_{\id_{b}}R_{\alpha}}   Q_{b}R_{b} \xrightarrow{\counit} \id$ and
		$Q_{b}R_{a} \xrightarrow{Q_{\alpha}R_{\id_{a}}} Q_{a}R_{a} \xrightarrow{\counit} \id$;
		\item
		$\id \xrightarrow{\unit} R_{a}Q_{a} \xrightarrow{R_{\alpha}Q_{\id_a}}   R_{b}Q_{a}$ and  
		$\id \xrightarrow{\unit} R_{b}Q_{b} \xrightarrow{R_{\id_{b}}Q_{\alpha}} R_{b}Q_{a} $;
	\end{enumerate}
	By adjunction, (1) and (2) are equivalent, as are (3) and (4).
	We will show (1). The proof of (3) is similar, using the description of Example~\ref{ex:QR-adjunction}.
	
	From Example~\ref{ex:QAPBandPBQA}~\ref{it:PBQA} it follows that
	\begin{equation}\label{eq:PAQBel}
		P_aQ_{b}h^r(a_1\otimes \dots \otimes a_{\ho}) =
		\bigoplus_{i=1}^{\ho} \Hom_\basecat(b,a_i) \otimes h^r(a \otimes a_1 \otimes \cdots  \widehat{a_{i}} \cdots \otimes a_{\ho}).
	\end{equation}
	The map $P_{\alpha}Q_{\id_{b}}$ is given on each summand 
	by applying $\alpha$ to the second factor. It lands in 
	\[
	\bigoplus_{i=1}^{\ho} \Hom_\basecat(b,a_i) \otimes h^r(b \otimes a_1 \otimes \cdots  \widehat{a_{i}} \cdots \otimes a_{\ho}).
	\]
	The counit map takes each summand and evaluates the first factor on
	the second factor: 
	\[
	\bigoplus_{i=1}^{\ho} \Hom_\basecat(b,a_i) \otimes h^r(b \otimes a_1 \otimes \cdots  \widehat{a_{i}} \cdots \otimes a_{\ho}) \to h^r(a_1 \otimes \cdots \otimes a_{\ho}).
	\]
	
	Computing the second composition in a similar way, we see that 
	the equality of these compositions is equivalent to 
	the commutativity of the following diagram:
	\[
	\begin{tikzcd}[sep=1.4em, font=\small]
		\Hom(b,a_i) \otimes h^r(a \otimes a_1 \otimes \cdots \widehat{a_{i}} \cdots \otimes a_{\ho})
		\arrow[r, "\id \otimes \alpha"] \arrow[d,"\alpha \otimes \id"] &
		\Hom(b,a_i) \otimes h^r(b \otimes a_1 \otimes \cdots  \widehat{a_{i}} \cdots \otimes a_{\ho}) \arrow[d] \\
		\Hom(a,a_i) \otimes h^r(a \otimes a_1 \otimes \cdots  \widehat{a_{i}} \cdots \otimes a_{\ho}) \arrow[r] &
		h^r(a_1 \otimes \cdots \otimes a_{\ho})
	\end{tikzcd}. 
	\]
	This diagram commutes by the functoriality of tensor product. 
\end{proof}

The next observation is immediate from the construction.

\begin{Lemma}
	Relation~\eqref{eq:crossingslide-dg} is satisfied in $\fcat*\basecat$.
	That is, dots move freely through crossings:
	\[
	\Phi'_{\basecat}\left(
	\begin{tikzpicture}[scale=0.67, baseline={(0,0.25)}]
		\draw[<-] (0,0) -- (1,1);
		\draw[<-] (1,0) -- (0,1);
		\node[label=left:{$\alpha$}, dot] at (0.25,0.25) {};
	\end{tikzpicture}
	\;\right)
	=
	\Phi'_{\basecat}\left(\;
	\begin{tikzpicture}[scale=0.67, baseline={(0,0.25)}]
		\draw[<-] (2,0) -- (3,1);
		\draw[<-] (3,0) -- (2,1);
		\node[label=right:{$\alpha$}, dot] at (2.75,0.75) {};
	\end{tikzpicture}
	\right).
	\]
\end{Lemma}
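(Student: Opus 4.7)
The plan is to prove the lemma by reducing to the behavior of both sides on representable modules, where the calculation becomes transparent.

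First I would apply Lemma~\ref{lem:fcat-check-on-representables}: both sides of the claimed equality are natural transformations between the tensor functors $\Phi'_{\basecat}(\QQ_a \QQ_b)$ and $\Phi'_{\basecat}(\QQ_b \QQ_a)$, which are compositions of restriction-of-scalars functors, hence themselves restrictions of scalars. Thus it suffices to verify the equality of the two natural transformations after evaluating on a representable module $h^r(e_1 \otimes \cdots \otimes e_{\ho})$ for an arbitrary tuple $(e_1, \dots, e_{\ho}) \in \basecat^{\otimes \ho}$.

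Next I would unpack the two sides explicitly on such a representable. By Lemma~\ref{lem:onrepresentables}\ref{it-phi_a_*-on-representables} applied twice, $\Phi'_{\basecat}(\QQ_a \QQ_b)(h^r(e_1 \otimes \cdots \otimes e_{\ho}))$ is a direct sum of terms of the form
\[
  \homm_\basecat(b, e_j) \otimes_\kk \homm_\basecat(a, e_i) \otimes_\kk h^r(e_1 \otimes \cdots \widehat{e_i} \cdots \widehat{e_j} \cdots \otimes e_{\ho})
\]
indexed by pairs $i \neq j$, and similarly for $\Phi'_{\basecat}(\QQ_b \QQ_a)$ with the roles of $a$ and $b$ swapped. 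Under $\Phi'_{\basecat}$, the downward crossing acts as $(12)_{*}$ which, on such a representable, simply relabels these summands by the transposition of the two $\Hom$-factors; a dot $\alpha \in \Hom_\basecat(b, b')$ sitting on the $\QQ_b$-strand acts by precomposition with $\alpha$ on the corresponding $\homm_\basecat(b', -)$-factor (and trivially elsewhere), by definition of $\Phi'_{\basecat}$ on $Q_\alpha = (\alpha \otimes \id)_\ast$.

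The verification then reduces to the observation that sliding the dot across the crossing corresponds to applying the same precomposition with $\alpha$ on the same $\Hom$-factor, but before versus after the relabeling induced by $(12)_*$. Since the two operations act on disjoint tensor factors, they commute strictly; equivalently, it is the naturality of the symmetric-group action on $\basecat^{\otimes \ho}$ with respect to morphisms in a single factor. This makes the two composites literally equal as maps of representable modules, and hence as natural transformations. The only genuine step is bookkeeping the indices correctly in the direct sum decomposition; no further structural input is required, which is why the lemma is stated as ``immediate from the construction''. I do not anticipate a significant obstacle beyond ensuring the Koszul signs match, and these match because $\alpha$ interacts only with the $\Hom$-factor on its own strand.
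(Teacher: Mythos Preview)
Your proposal is correct, and the paper's own proof is simply the sentence ``immediate from the construction.'' Your argument via Lemma~\ref{lem:fcat-check-on-representables} and the explicit direct-sum decomposition is valid but more elaborate than needed: one can observe directly that the defining natural transformations $(12)\colon \phi_b \circ \phi_a \to \phi_a \circ \phi_b$ and $\alpha \otimes \id\colon \phi_a \to \phi_{a'}$ already satisfy $(12) \circ (\id_b \otimes \alpha \otimes \id) = (\alpha \otimes \id_b \otimes \id) \circ (12)$ as natural transformations of functors $\basecat^{\otimes(\ho-2)} \to \basecat^{\otimes \ho}$, and then apply the functor $(-)_*$, bypassing the reduction to representables entirely.
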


\begin{Lemma}
	The symmetric group relations \eqref{eq:symmetric_group_relations-dg} hold in $\fcat*\basecat$.
\end{Lemma}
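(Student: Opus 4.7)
The plan is to exploit the fact that the downward crossing is sent by $\Phi'_\basecat$ to the action of a transposition, and the symmetric group relations in $\hcat*\basecat$ are precisely the defining relations of $S_n$.

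First, I would apply Lemma~\ref{lem:fcat-check-on-representables}: both sides of the two relations in \eqref{eq:symmetric_group_relations-dg} are $2$-morphisms between $1$-compositions of only $\QQ$s, so it suffices to show that the images agree when evaluated on representable modules. Concretely, for the first relation it suffices to check that the composition
\[\phi_{a*}\phi_{b*} \xrightarrow{(12)_*} \phi_{b*}\phi_{a*} \xrightarrow{(12)_*} \phi_{a*}\phi_{b*}\]
equals $\id$ on $h^r(e_1 \otimes \cdots \otimes e_{\ho})$. For the second, one must verify that, applied to a representable, the two ways of realizing the long permutation $(13)$ as a product of adjacent transpositions in $S_\ho$ give the same natural transformation from $\phi_{c*}\phi_{b*}\phi_{a*}$ to $\phi_{a*}\phi_{b*}\phi_{c*}$.

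Next, using Example~\ref{ex:QAPBandPBQA}~\ref{it:PBQA} repeatedly (or directly expanding restriction of scalars via Corollary~\ref{cor-decomposition-of-the-diagonal-bimodule-S_l-into-S_l-1}), I would rewrite each of the relevant iterated compositions as direct sums indexed by ordered $2$- or $3$-element subsets of $\{1,\dots,\ho\}$, with the effect of a single downward crossing being to transpose two adjacent tensor-factors via the action of an adjacent transposition in $S_\ho \subset \symbcn$. Under this identification the two sides of the first relation both become the identity (since $(ij)^2 = e$ in $S_\ho$), and the two sides of the second relation reduce to the braid identity $(12)(23)(12)=(23)(12)(23)$ holding inside $S_\ho$.

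The main step, and the only one requiring care, is keeping track of the bimodule conventions so that the symbol \enquote{$(12)_*$} really does act as the expected adjacent transposition on the summand $\leftidx{_i}{\basecat}{_a} \otimes \leftidx{_{\hat{1}\circ(1i)}}{\symbc{\ho-1}}$ after unfolding through Corollary~\ref{cor-decomposition-of-the-diagonal-bimodule-S_l-into-S_l-1} twice (or three times, for the braid relation). Once the index bookkeeping has been carried out, both relations are immediate consequences of the fact that the images of the generating crossings lie in the image of the strong action of $S_\ho$ on $\basecat^{\otimes \ho}$, and that the Coxeter relations are satisfied there on the nose. I expect this reduction to be the only nontrivial point; the remainder of the verification is formal.
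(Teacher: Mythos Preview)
Your proposal is correct and ultimately reduces the relations to the same Coxeter identities $(12)^2=\id$ and $(12)(23)(12)=(23)(12)(23)$ in $\SymGrp\ho$ that the paper invokes. The difference is one of economy: the paper observes that the downward crossing is by definition $(12)_*$, and since $(-)_*$ is functorial on natural transformations of functors $\symbc{\ho-2}\to\symbcn$, the identities $((12)_*)^2=\id$ and the braid move follow immediately from the corresponding identities among the natural isomorphisms $(12)$, $(23)$ themselves. No reduction to representables, no unfolding of Corollary~\ref{cor-decomposition-of-the-diagonal-bimodule-S_l-into-S_l-1}, and no index bookkeeping is needed. Your route via Lemma~\ref{lem:fcat-check-on-representables} and explicit summand decompositions would work, but it re-proves functoriality by hand; the step you flag as ``the only nontrivial point'' is in fact vacuous once you recognise that $(-)_*$ respects composition of natural transformations on the nose.
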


\begin{proof}
	For the double crossing, the identity $((12)_\ast)^2=\id$ follows from the fact that $(12)^2=\id$ in $\symbcn$.
	The triple move similarly follows by splitting the steps as
	\[ (12) \circ (23) \circ (12)  = (23) \circ (12) \circ (23). \qedhere \]
\end{proof}

\begin{Lemma}\leavevmode
	\begin{enumerate}
		\item\label{it:colliding-dots-fock} The composition relation \eqref{eq:colliding_dots_down-dg} holds in $\fcat*\basecat$. Namely,
		$(Q_{b} \xRightarrow{Q_\alpha} Q_a) \circ (Q_{c} \xRightarrow{Q_\beta} Q_{b})$ is equal to $(-1)^{|\alpha||\beta|}\cdot Q_{c} \xRightarrow{Q_{\beta \circ \alpha}} Q_a$.
		\item\label{it:dot-sliding-past-star-fock} Relation~\eqref{eq:dot_sliding_past_star} holds in $\fcat*\basecat$. Namely,
		$(P_{Sb} \xRightarrow{\starmap{b}} R_{b}) \circ (P_{Sa} \xRightarrow{P_{S\alpha}} P_{Sb})$ is equal to
		$(R_{a} \xRightarrow{R_{\alpha}} R_{b}) \circ (P_{Sa} \xRightarrow{\starmap{A}} R_a)$.
	\end{enumerate}
\end{Lemma}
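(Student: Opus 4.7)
The plan is to verify each identity by direct fiberwise computation, appealing to Lemma~\ref{lem:fcat-check-on-representables} to reduce part (1) to representable modules.

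For part (1), I would unwind the definition of restriction of scalars. Viewing a right $\symbcn$-module $E$ as a \dg functor $\symbcn^{\opp} \to \modk$, the natural transformation $Q_\alpha = (\alpha \otimes \id)_*$ acts at each component $c \in \symbcnmone$ by $(Q_\alpha)_{E,c} = E(\alpha \otimes \id_c)$. Composing gives
\[
    (Q_\alpha \circ Q_\beta)_{E,c} = E(\alpha \otimes \id_c) \circ_{\modk} E(\beta \otimes \id_c).
\]
The Koszul sign arises from composition in the opposite \dg category: $\alpha \circ_{\basecat^{\opp}} \beta = (-1)^{|\alpha||\beta|}\, \beta \circ_\basecat \alpha$. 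Functoriality of $E$ then yields $E(\alpha) \circ_{\modk} E(\beta) = (-1)^{|\alpha||\beta|}\, E(\beta \circ_\basecat \alpha)$, which matches $(-1)^{|\alpha||\beta|} Q_{\beta \circ \alpha}$ exactly.

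For part (2), I would verify naturality in $a$ of the star isomorphism $\starmap a\colon \phi_{Sa}^* \Rightarrow \phi_a^!$. The construction of $\starmap a$ in Proposition~\ref{prop-phi-a-shriek-isomorphic-to-phi-Sa-upper-star} factors through a chain of canonical isomorphisms (the coset decomposition of Corollary~\ref{cor-decomposition-of-the-diagonal-bimodule-S_l-into-S_l-1}, Tensor-Hom adjunction, and properness-induced dualisation of $\kk$-perfect objects), together with a single application of the map $\eta^*\colon {}_S\basecat \to \basecat^*$ coming from the homotopy Serre structure. All the canonical isomorphisms are natural in the bimodule variables and thus commute with the morphisms induced by $\alpha\colon a \to b$. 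The remaining substantive check reduces to the fact that $\eta$ is a morphism of $\basecat$-$\basecat$-bimodules, i.e., that pre-composition with $\alpha$ on the source side is intertwined by $\eta$ with post-composition with $S\alpha$ on the target side (modulo dualisation); this is precisely the bi-equivariance of $\eta$.

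The main obstacle is part (2): one must carefully track how $\alpha$ acts through each step of the coset decomposition and verify that the various naturality squares commute compatibly with the $\SymGrp{n-1}$-equivariance and the Tensor-Hom identifications. Part (1), by contrast, is conceptually immediate once the source of the Koszul sign is identified as the opposite-category convention.
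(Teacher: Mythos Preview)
Your proposal is correct and takes essentially the same approach as the paper, which simply notes that part~(1) follows from $(\beta \otimes \id) \circ (\alpha \otimes \id) = (\beta \circ \alpha) \otimes \id$ together with the sign rules for contravariant \dg functors, and that part~(2) is a consequence of naturality of the Serre morphism $\eta^*$. Your expanded treatment unpacks exactly these two observations; the appeal to Lemma~\ref{lem:fcat-check-on-representables} in part~(1) is harmless but unnecessary, since your fiberwise argument already works for arbitrary modules.
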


\begin{proof}
	Part~\ref{it:colliding-dots-fock} is clear from $(\beta \otimes \id) \circ (\alpha \otimes \id) = (\beta \circ \alpha) \otimes \id$, taking the sign rules for contravariant \dg functors into account.
	Part~\ref{it:dot-sliding-past-star-fock} is a consequence of naturality of the Serre morphism $\eta^*$.
\end{proof}

\begin{Lemma}
	\label{lem:PQQP}	
	For every $a, b \in \mathrm{Ob}({\basecat})$ and $a_1 \otimes \dots \otimes a_{\ho} \in \symbcn$ there exists a natural isomorphism on representable objects
	\[ 
	Q_bP_a(h^r(a_1 \otimes \dots \otimes a_{\ho})) \cong
	\bigl(\Hom_\basecat(b,a) \otimes h^r(a_1 \otimes \dots \otimes a_{\ho})\bigr) \oplus P_a Q_b(h^r(a_1 \otimes \dots \otimes a_{\ho})).
	\]
	The image of 
	\[
	\begin{tikzpicture}[baseline=-1ex, scale=0.67]
		\draw[->] (0,-0.5) node[below] {$\PP_{a}$} -- (1,0.5) node[above] {$\PP_{a}$};
		\draw[<-] (1,-0.5) node[below] {$\QQ_{b}$} -- (0,0.5) node[above] {$\QQ_{b}$};
	\end{tikzpicture}
	\]
	under $\Phi'_{\basecat}$ embeds $P_a Q_b(h^r(a_1 \otimes \dots \otimes a_{\ho}))$ as the second summand.
\end{Lemma}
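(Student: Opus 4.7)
The plan is to deduce both statements directly from the explicit descriptions already established in Example~\ref{ex:QAPBandPBQA}. For the first statement, the isomorphism is essentially tautological. We already computed
\[
Q_bP_a(h^r(\underline e)) = \bigl(\Hom_\basecat(b,a) \otimes h^r(\underline e)\bigr) \oplus \bigoplus_{i=1}^{\ho} \Hom_\basecat(b,e_i) \otimes h^r(a\otimes e_1\otimes\cdots\widehat{e_i}\cdots\otimes e_{\ho}),
\]
and the second direct sum above is exactly $P_aQ_b(h^r(\underline e))$. Both decompositions trace back to applying Corollary~\ref{cor-decomposition-of-the-diagonal-bimodule-S_l-into-S_l-1} with the coset decomposition $\SymGrp{\ho+1}=\sum_{i=1}^{\ho+1}(1i)\iota_1(\SymGrp\ho)$, so they are canonical in $\underline e$. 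Naturality therefore holds on the nose, and by Lemma~\ref{lem:fcat-check-on-representables} this already determines the isomorphism of functors on the entire module category.

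For the second statement, I would unfold the mixed crossing via its definition in \eqref{eq:other-crossings-dg}. That definition exhibits the crossing as the composition
\[
\PP_a\QQ_b \xrightarrow{\id\otimes\unit} \PP_a\QQ_b\QQ_a\PP_a \xrightarrow{\id\otimes\text{cross}\otimes\id} \PP_a\QQ_a\QQ_b\PP_a \xrightarrow{\counit\otimes\id} \QQ_b\PP_a,
\]
where the unit and counit are those of $\PP_a\dashv\QQ_a$, and the middle arrow is the downward crossing of the two adjacent $\QQ$'s. Applying $\Phi'_\basecat$ translates this into a concrete composition of natural transformations, each of which has been made objectwise explicit: the unit is described in Example~\ref{ex:PQ-adjunction}, its counit is the dual picture, and the middle arrow is $(12)_*$.

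The computation is then a direct evaluation on representables. Starting from an element of the $i$-th summand $\Hom(b,e_i)\otimes h^r(a\otimes e_1\otimes\cdots\widehat{e_i}\cdots\otimes e_{\ho})$ of $P_aQ_b(h^r(\underline e))$, applying the unit inserts the factor $\id_a\in\Hom(a,a)$ (landing in the representable-like summand singled out in Example~\ref{ex:PQ-adjunction}), the transposition $(12)_*$ swaps the newly inserted $a$ with the neighbouring $b$-slot, and the counit pairs the outer $\PP_a$ with the new $\QQ_a$ via the evaluation $\Hom(a,a)\otimes-\to -$ applied to $\id_a$. The output is precisely the $i$-th summand of the decomposition of $Q_bP_a(h^r(\underline e))$ given by the first statement; in particular nothing lands in the $\Hom(b,a)\otimes h^r(\underline e)$ summand.

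The only real obstacle is permutation bookkeeping: the two decompositions of $Q_bP_a(h^r(\underline e))$ and $P_aQ_b(h^r(\underline e))$ come from different applications of Corollary~\ref{cor-decomposition-of-the-diagonal-bimodule-S_l-into-S_l-1} (using different coset representatives in $\SymGrp{\ho+1}$ versus $\SymGrp\ho$), and one must keep track of the transpositions $(1i)$ introduced to interconvert between them. Matching these conventions so that the summands line up consistently is the main technical step; once it is done, the composition above is literally the identity from the $i$-th summand of $P_aQ_b$ to the same-indexed summand of $Q_bP_a$.
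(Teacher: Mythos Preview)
Your proposal is correct and follows essentially the same approach as the paper. The paper's proof is terser: it records the same three-step decomposition
\[
\phi_a^*\phi_{b*}^{}
\xrightarrow{\unit}
\phi_a^*\phi_{b*}^{}\phi_{a*}^{}\phi_a^*
\cong
\phi_a^*\phi_{a*}^{}\phi_{b*}^{}\phi_a^*
\xrightarrow{\counit}
\phi_{b*}^{}\phi_a^*
\]
and then simply defers to Example~\ref{ex:PQ-adjunction} for the explicit effect on representables, without spelling out the permutation bookkeeping you flag as the main technical step.
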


\begin{proof}
	The first assertion follows from Example~\ref{ex:QAPBandPBQA}. The
	image of the crossing under $\Phi'_\basecat$ is:
	\[
	\phi_a^*\phi_{b*}^{}
	\xrightarrow{\unit} 
	\phi_a^*\phi_{b*}^{}\phi_{a*}^{}\phi_a^*
	\cong
	\phi_a^*\phi_{a*}^{}\phi_{b*}^{}\phi_a^*
	\xrightarrow{\counit} 
	\phi_{b*}^{}\phi_a^*.
	\]
	Here we used that the commutativity of the the tensor product implies that
	\[ \phi_{b*}^{}\phi_{a*}^{} \simeq\phi_{b \otimes a*}\simeq\phi_{a \otimes b*} \simeq\phi_{a*}^{}\phi_{b*}^{}.\]
	The second assertion follows from the
	description of unit and counit maps in Example~\ref{ex:PQ-adjunction}. 
\end{proof}

The following gives a description of the image of the \enquote{starred cup.}
\begin{Lemma}\label{lem:starred-cap-fock}
	The natural transformation
	\[
	\zeta = 
	\Phi'_\basecat\left(
	\begin{tikzpicture}[baseline={(0,0)}, scale=0.67]
		\draw[->] (1,0) node[below] {$\PP_{Sa}$} -- node[serre, pos=0.5] {} (1,0.5) arc[start angle=0, end angle=180, radius=0.5] -- (0,0) node[below] {$\QQ_a$};
	\end{tikzpicture}
	\right)
	\]
	is given by the bimodule map
	\begin{align*}
		\zeta\colon 
		\leftidx{_{\phi_{Sa}}}{\symbcn}
		\otimes_{\symbcn}
		\symbcn_{\phi_a}
		&\rightarrow 
		\symbcnmone
		\\
		f \otimes h
		& \mapsto 
		\Tr\left(\leftidx{_{Sa}}{(f \circ h)}{_{a}}\right) \;
		\leftidx{_{\widehat{Sa}}}{(f \circ h)}{_{\widehat{a}}},
	\end{align*}
	where the notation indicates that we take the first summand in terms of the decomposition of 
	$\leftidx{_{\phi_{Sa}}}{\symbcn}{_{\phi_a}}$
	provided by
	Corollary~\ref{cor-decomposition-of-the-diagonal-bimodule-S_l-into-S_l-1}. In terms of Example~\ref{ex:QAPBandPBQA}, $\zeta$ maps
	\begin{multline*}
		Q_aP_{Sa}(h^r(a_1\otimes\dots\otimes a_{\ho-1})) \cong 
		\homm_{\basecat}(a,Sa) \otimes h^r(a_1 \otimes \cdots \otimes a_{\ho-1}) \oplus {}\\
		\oplus \left( \bigoplus_{i=1}^{\ho-1}  \homm_{\basecat}(a,a_i) \otimes h^r(a \otimes a_1 \otimes \cdots \widehat{a_i} \cdots \otimes a_{\ho-1}) \right)
	\end{multline*}
	onto the first summand, followed by applying the Serre trace map $\Tr$.
\end{Lemma}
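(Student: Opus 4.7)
The plan is to decompose $\zeta$ into its two factors and compute each one explicitly using the definitions from Section~\ref{sec:symmetric-powers-of-dg-categories}. By definition, $\zeta$ is the $2$-composition
\[
\phi_{Sa}^{*} \, \phi_{a*} \xrightarrow{\;\starmap{a} \cdot \id\;} \phi_a^{!} \, \phi_{a*} \xrightarrow{\;\counit\;} \id,
\]
so I will describe both arrows as bimodule maps out of $\leftidx{_{\phi_{Sa}}}{\symbcn} \otimes_{\symbcn} \symbcn_{\phi_a}$ and then compose.

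First I would unwind the counit. By Example~\ref{ex:QR-adjunction} together with Corollary~\ref{cor-decomposition-of-the-diagonal-bimodule-S_l-into-S_l-1}, the counit $\phi_a^{!}\phi_{a*} \to \id$ is, under the bimodule identification, the map
\[
\Hom_{\symbcn}\bigl( \leftidx{_{\phi_a}}{\symbcn}{_{\phi_a}},\, \symbcn \bigr) \to \symbcnmone
\]
which projects onto the summand $\Hom_{\symbcn}(\leftidx{_a}{\basecat}{_a} \otimes \symbcnmone,\, \symbcn)$ and then evaluates at $\id_a \otimes 1$. Next I would unwind $\starmap{a}$: by Proposition~\ref{prop-phi-a-shriek-isomorphic-to-phi-Sa-upper-star} it is built termwise on the decomposition of Corollary~\ref{cor-decomposition-of-the-diagonal-bimodule-S_l-into-S_l-1} from the quasi-isomorphism $\eta^{*}\colon \leftidx{_S}\basecat \to \basecat^{*}$, followed by the canonical evaluation and Tensor--Hom adjunction.

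The main computation is then to compose these two descriptions. Writing the source as
\[
\leftidx{_{\phi_{Sa}}}{\symbcn} \otimes_{\symbcn} \symbcn_{\phi_a} \simeq \bigoplus_{i=1}^{\ho} \leftidx{_{Sa}}{\basecat}{_i} \otimes_\kk \leftidx{_{\hat 1\circ(1i)}}{(\symbcnmone)} \otimes_{\symbcn} \symbcn_{\phi_a},
\]
I would check that after applying $\starmap{a}$ and then the counit, only the $i=1$ summand survives (the other summands are killed because after Serre dualisation they contribute to $\Hom(\leftidx{_i}\basecat_a \otimes \leftidx{_{\hat 1\circ(1i)}}\symbcnmone, \,\symbcn)$ with $i\ne 1$, which the counit projection discards). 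On the surviving summand, $\eta^{*}$ composed with evaluation at $\id_a$ is exactly the Serre trace $\Tr\colon \Hom_\basecat(a,\, Sa) \to \kk$ by the defining relation $\Tr(\beta) = \eta_{a,a}(\id_a)(\beta)$. The remaining factor $\leftidx{_{\widehat{Sa}}}(f\circ h)_{\widehat a} \in \symbcnmone$ is simply what is left in $\symbcnmone$ after extracting the $(Sa,\,a)$-component of $f \circ h$; this is functorial in the $\symbcnmone$-bimodule structure on the two sides.

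The second assertion, describing $\zeta$ on $Q_a P_{Sa}$ applied to a representable, then follows by specialising to $h^r(a_1 \otimes \dots \otimes a_{\ho-1})$ and using Example~\ref{ex:QAPBandPBQA}~\ref{it:QAPB} to identify $Q_a P_{Sa}(h^r(a_1 \otimes \dots \otimes a_{\ho-1}))$: the summand $\Hom_\basecat(a,\,Sa) \otimes h^r(a_1 \otimes \dots \otimes a_{\ho-1})$ corresponds to $i=1$ above and is mapped by $\Tr$, while the summands indexed by $i \ge 2$ correspond to the $i\ne 1$ summands killed by the counit. The main obstacle I expect is bookkeeping: carefully keeping track of the permutation twist $\hat 1 \circ (1i)$ and the $\symbcn$-bimodule actions through the Tensor--Hom adjunction used in the proof of Proposition~\ref{prop-phi-a-shriek-isomorphic-to-phi-Sa-upper-star}, so that the identification of the $i=1$ summand with $\Hom_\basecat(a,\, Sa) \otimes \symbcnmone$ is the canonical one and the Serre trace appears with the correct sign and normalisation. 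Everything else is a direct unwinding of definitions.
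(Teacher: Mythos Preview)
Your proposal is correct and is exactly the ``direct computation'' the paper alludes to but omits: the paper simply states the lemma with the one-line justification ``A direct computation gives the following description of the image of the starred cup'' and moves on. Your plan---factor $\zeta$ as $\starmap{a}$ followed by the $(\phi_{a*},\phi_a^!)$-counit, unwind the counit via Example~\ref{ex:QR-adjunction} and $\starmap{a}$ via Proposition~\ref{prop-phi-a-shriek-isomorphic-to-phi-Sa-upper-star}, then observe that only the $i=1$ summand in the decomposition of Corollary~\ref{cor-decomposition-of-the-diagonal-bimodule-S_l-into-S_l-1} survives and that $\eta^*$ evaluated at $\id_a$ is the Serre trace---is precisely what that direct computation amounts to. One small slip: you wrote the composite as $\phi_{Sa}^*\,\phi_{a*} \to \phi_a^!\,\phi_{a*} \to \id$, but the bimodule in the statement and the counit of Example~\ref{ex:QR-adjunction} correspond to $\phi_{a*}\,\phi_{Sa}^* \to \phi_{a*}\,\phi_a^! \to \id$; the rest of your argument already uses the correct order, so this is just notation.
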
 
\begin{proof}
	Proposition~\ref{prop-phi-a-shriek-isomorphic-to-phi-Sa-upper-star} gives the star quasi-isomorphism on $a$. Then, similarly as in Example~\ref{ex:unitcounit_add}, the counit is a projection onto the first summand from Corollary~\ref{cor-decomposition-of-the-diagonal-bimodule-S_l-into-S_l-1} followed by the Serre trace applied to $\Hom(a, Sa)$.
\end{proof}


\begin{Lemma}
	The Serre trace relation on the right hand side of~\eqref{eq:circle_and_curl-dg} holds in $\fcat*\basecat$:
	\[
	\Phi'_{\basecat}\left(\;
	\begin{tikzpicture}[scale=0.67,baseline={(0,0.25)}]
		\draw[decoration={markings, mark=at position 0.36 with {\arrow{>}}, mark=at position 0.85 with {\arrow{>}}}, postaction={decorate}] (1,0)
		--
		node[label=right:{$\alpha$}, dot, pos=0.25] {}
		node[serre, pos=0.75] {}
		(1,1)
		arc[start angle=0, end angle=180, radius=.5]
		--
		(0,0)
		arc[start angle=180, end angle=360, radius=.5];
	\end{tikzpicture}
	\right)
	=
	\Tr(\alpha), \quad \quad 
	\]
\end{Lemma}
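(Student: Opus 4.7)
The plan is to unfold the image of the closed diagram into an explicit composition of four natural transformations, then collapse the last two using the preceding Lemma~\ref{lem:starred-cap-fock}, and finally evaluate the resulting composition on representable modules.

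First, I would read the diagram bottom-to-top and identify it with the composition
\[
\id \xrightarrow{\;\unit\;} Q_aP_a \xrightarrow{\;Q_aP_\alpha\;} Q_aP_{Sa} \xrightarrow{\;Q_a\starmap{a}\;} Q_aR_a \xrightarrow{\;\counit\;} \id,
\]
where the bottom cup is the unit of $(P_a,Q_a)$, the dot is $P_\alpha$ with $\alpha\in\Hom_\basecat(a,Sa)$, the star is the Serre map $\starmap{a}$ and the top cap is the counit of $(Q_a,R_a)$. The last two arrows compose to exactly the natural transformation $\zeta$ described in Lemma~\ref{lem:starred-cap-fock}, so the composition simplifies to
\[
\id \xrightarrow{\;\unit\;} Q_aP_a \xrightarrow{\;Q_aP_\alpha\;} Q_aP_{Sa} \xrightarrow{\;\zeta\;} \id.
\]
Since both the source and target of this natural transformation are the identity tensor functor, Lemma~\ref{lem:fcat-check-on-representables} (applicable because source and target involve only $P$s and $Q$s, here trivially) reduces the claim to verifying equality on representable modules $h^r(\underline a)$, $\underline a = a_1\otimes\cdots\otimes a_{\ho-1}$.

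Next, I would track $h^r(\underline a)$ step by step through the composition using Examples~\ref{ex:QAPBandPBQA}\ref{it:QAPB} and~\ref{ex:PQ-adjunction}. The unit embeds $h^r(\underline a)$ into
\[
Q_aP_a\,h^r(\underline a) \;\cong\; \Hom_\basecat(a,a)\otimes h^r(\underline a) \;\oplus\; \bigoplus_{i=1}^{\ho-1}\Hom_\basecat(a,a_i)\otimes h^r(a\otimes\cdots\widehat{a_i}\cdots)
\]
as the element $\id_a$ in the first summand. Applying $Q_aP_\alpha$ is post-composition with $\alpha$ in each Hom-factor, preserving the decomposition and sending $\id_a\mapsto \alpha\in\Hom_\basecat(a,Sa)$ in the first summand. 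Finally, by Lemma~\ref{lem:starred-cap-fock}, $\zeta$ is the projection onto the first summand $\Hom_\basecat(a,Sa)\otimes h^r(\underline a)$ followed by the Serre trace on the $\Hom$-factor. Thus the total composition sends $h^r(\underline a)$ to $\Tr(\alpha)\cdot h^r(\underline a)$, which is precisely multiplication by the scalar $\Tr(\alpha)$.

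The routine but genuinely subtle step will be verifying that $Q_aP_\alpha$ acts on the first summand of $Q_aP_a\,h^r(\underline a)$ by post-composition with $\alpha$ under the identification of Example~\ref{ex:QAPBandPBQA}\ref{it:QAPB}. This requires unpacking $P_\alpha$ as the bimodule map induced by $\alpha\otimes\id\colon\phi_a\Rightarrow\phi_{Sa}$, tracing it through the decomposition of the diagonal bimodule given by Corollary~\ref{cor-decomposition-of-the-diagonal-bimodule-S_l-into-S_l-1} (the summand corresponding to $\id\in\SymGrp\ho$ is the one labelled $\Hom_\basecat(a,-)$), and checking that naturality is compatible with the splitting. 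Once this identification is in hand, the conclusion follows from the definition of the Serre trace $\Tr$ as $\eta_{a,a}(\id_a)(\alpha)$.
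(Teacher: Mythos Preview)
Your proposal is correct and follows essentially the same route as the paper: decompose the closed diagram as $\unit$, then $Q_aP_\alpha$, then the starred cap $\zeta$ of Lemma~\ref{lem:starred-cap-fock}, and evaluate on representables. The paper streamlines one step by reducing to the case $\ho=0$, so that $Q_aP_a\,h^r(1)\simeq\Hom_\basecat(a,a)$ has only the single summand and the bookkeeping with the other direct summands never arises; your general-$\ho$ computation reaches the same conclusion. One minor wording correction: on the non-first summands $\Hom_\basecat(a,a_i)\otimes h^r(a\otimes\cdots\widehat{a_i}\cdots)$ the map $Q_aP_\alpha$ does not act by post-composition on the Hom-factor but rather by $\alpha$ on the first slot of the representable; this is harmless for your argument since the image of the unit is concentrated in the first summand.
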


\begin{proof}
	Assume first that $\ho=0$. Then we need to  compute the image of 
	$h^r(1)$ for $1 \in \sym^0\basecat = \kk$.
	By Example~\ref{ex:PQ-adjunction}, the unit corresponding to the cup at the bottom sends this to 
	\[
	\id_a \in Q_{a}P_{a}(h^r(1))\simeq \phi_{a\ast}\phi_a^\ast h^r(1)\simeq \Hom_{\basecat}(a,a).
	\]
	Composing with $Q_{\id_a} P_\alpha$ sends this to $\alpha \in \Hom(a, Sa)$.
	Finally, the starred cup \[\zeta = \counit \circ (\phi_{A,*}\starmap{A})\] sends $\alpha$ to $\Tr(\alpha)$ by Lemma~\ref{lem:starred-cap-fock}.
	For general $\ho$, we need to compute the image of
	$h^r(a_1 \otimes \dots \otimes a_{\ho})$ for $a_1 \otimes \dots
	\otimes a_{\ho} \in \sym^{\ho}\basecat$. We get the same computation as
	above but tensored over $\kk$ with the identity morphism of 
	$a_1 \otimes \dots \otimes a_{\ho}$.
\end{proof}

\begin{Lemma}\label{lem:leftcurlvanish}
	The left curl on the left side of \eqref{eq:circle_and_curl-dg} vanishes in $\fcat*\basecat$:
	\[
	\Phi'_{\basecat}\left(
	\begin{tikzpicture}[baseline={(0,-0.05)}, xscale=-.67]
		\draw[<-]
		(1,-0.9) node[below] {$\QQ_{a}$} -- 
		(1,-0.5) to[out=90, in=0] (0.3,0.5) to[out=180,in=90]
		(-0.1,0) node[serre] {} to[out=270,in=180] (0.3,-.5) to[out=0,in=270]
		(1,0.5) -- (1,0.9) node[above] {$\QQ_{Sa}$};
	\end{tikzpicture}
	\right)
	= 0.
	\]
\end{Lemma}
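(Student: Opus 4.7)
First I decompose the curl as a sequence of generating $2$-morphisms — a cup creating a $\QQ_{Sa}\PP_{Sa}$ pair to the right of the main $\QQ_a$ strand, the Serre star converting $\PP_{Sa}$ into $\RR_a$, a downward crossing of $\QQ_a$ with $\QQ_{Sa}$, and a cap absorbing $\QQ_a\RR_a$.  Applying $\Phi'_{\basecat}$, this becomes the following composition in $\fcat*\basecat$:
\[
\begin{aligned}
\phi_{a*} &\xrightarrow{\eta} \phi_{a*}\phi_{Sa*}\phi_{Sa}^* \xrightarrow{(12)_*} \phi_{Sa*}\phi_{a*}\phi_{Sa}^* \\
&\xrightarrow{\starmap{a}} \phi_{Sa*}\phi_{a*}\phi_a^! \xrightarrow{\varepsilon} \phi_{Sa*},
\end{aligned}
\]
where $\eta$ is the unit of $\phi_{Sa}^*\dashv\phi_{Sa*}$ (whiskered with $\phi_{a*}$), $\varepsilon$ is the counit of $\phi_{a*}\dashv\phi_a^!$ (whiskered with $\phi_{Sa*}$), and $(12)_*$ is the natural isomorphism induced by the transposition of the first two factors in $\symbc{\ho+1}$.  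The interchange law permits me to reorder the crossing and the star freely.

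Since both source and target are restriction-of-scalars functors, by an adaptation of Lemma~\ref{lem:fcat-check-on-representables} it suffices to check that the composition vanishes on a representable module $X = h^r(e_1\otimes\cdots\otimes e_\ho)$.  Using Lemma~\ref{lem:onrepresentables}, Example~\ref{ex:PQ-adjunction} and Corollary~\ref{cor-decomposition-of-the-diagonal-bimodule-S_l-into-S_l-1}, the unit embeds $\phi_{a*}(X)$ as $\id_{Sa}\otimes(-)$ into the identity-permutation summand of $\phi_{a*}\phi_{Sa*}\phi_{Sa}^*(X)$.  The crossing $(12)_*$ acts as precomposition with the transposition $a\otimes Sa\otimes d_1\otimes\cdots\to Sa\otimes a\otimes d_1\otimes\cdots$, and therefore sends this image into the transposition-permutation summand of $\phi_{Sa*}\phi_{a*}\phi_{Sa}^*(X)$.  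Under the Serre quasi-isomorphism of Proposition~\ref{prop-phi-a-shriek-isomorphic-to-phi-Sa-upper-star}, this summand is identified with the summand $\Hom(a,Sa)^*\otimes X(a\otimes d_1\otimes\cdots)$ appearing in the decomposition of $\phi_{Sa*}\phi_{a*}\phi_a^!(X)$ from Example~\ref{ex:QR-adjunction}.

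Finally, the counit $\varepsilon$ projects onto the distinguished summand $\Hom(a,a)^*\otimes X(-)$ coming from the diagonal embedding $\kk\hookrightarrow{}_a\basecat_a,\ 1\mapsto\id_a$, and annihilates every other summand.  Since the image of the curl lies entirely in a non-distinguished summand, the composition vanishes on $X$, hence on all of $\fcat*\basecat$ by naturality.

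The main obstacle is the combinatorial bookkeeping required to verify that the transposition $(12)_*$ does in fact displace the unit's image from the distinguished summand to a summand orthogonal to the one selected by the counit.  This amounts to two applications of Corollary~\ref{cor-decomposition-of-the-diagonal-bimodule-S_l-into-S_l-1}, tracking carefully how the permutation shifts the matching of source positions to target positions in the $\Hom$-decomposition.
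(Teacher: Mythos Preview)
Your proof is correct and follows essentially the same strategy as the paper: decompose the curl into unit, crossing, star, and cap; evaluate on a representable; and observe that the unit lands in a summand that the final projection kills. The paper's decomposition is
\[
  Q_{a} \xrightarrow{\;Q_{\id_{a}}\unit\;}
  Q_a Q_{Sa} P_{Sa} \xrightarrow{\;(12)_{\ast}\;}
  Q_{Sa} Q_{a} P_{Sa} \xrightarrow{\;Q_{\id_{Sa}} \zeta\;} Q_{Sa},
\]
where $\zeta$ is the \emph{starred cap} already computed in Lemma~\ref{lem:starred-cap-fock}. By packaging the star and the counterclockwise cap together into $\zeta$, the paper avoids passing through $\phi_a^!$ and the quasi-isomorphism of Proposition~\ref{prop-phi-a-shriek-isomorphic-to-phi-Sa-upper-star}: Lemma~\ref{lem:starred-cap-fock} says directly that $\zeta$ projects onto the summand $\Hom(a,Sa)\otimes h^r(\cdots)$ and applies the Serre trace, so one only has to check that the unit followed by the crossing has zero component there. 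Your route instead applies the star first and then the $(\phi_{a*}\dashv\phi_a^!)$-counit from Example~\ref{ex:QR-adjunction}; this is equivalent by the interchange law but requires you to track the image through the Serre identification, which is the ``combinatorial bookkeeping'' you flag at the end. Both approaches yield the same vanishing; the paper's is a bit cleaner because the starred-cap lemma has already done the bookkeeping for you.
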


\begin{proof}
	This diagram decomposes as 
	\[ 
	Q_{a} \xrightarrow{Q_{\id_{a}}\mathrm{unit} }
	Q_a Q_{Sa} P_{Sa} \xrightarrow{(12)_{\ast} } 
	Q_{Sa} Q_{a} R_a \xrightarrow{Q_{\id_{Sa}} \zeta}Q_{Sa},
	\]
	where $\zeta$ is as in Lemma~\ref{lem:starred-cap-fock}.
	Using the notation of Example~\ref{ex:QAPBandPBQA}, the first step embeds
	\[
	Q_a(h^r(a_1 \otimes \dots a_{\ho})) \cong
	\bigoplus_{i=1}^{\ho} \Hom(a,a_i) \otimes h^r(a_1\otimes \cdots \widehat{a_i} \cdots \otimes a_{\ho})
	\]
	into the first factor of
	\begin{multline*}
		Q_aQ_{Sa}P_{Sa}(h^r(a_1 \otimes \dots a_{\ho})) \\ \cong
		\left( \bigoplus_{i=1}^{\ho} \Hom(a,a_i) \otimes \Hom(Sa,Sa) \otimes h^r(a_1\otimes \cdots \widehat{a_i} \cdots \otimes a_{\ho}) \right) \oplus {} \\
		\oplus \bigoplus_{j=1}^{\ho} \Biggl(\Hom(a,Sa) \otimes \Hom(Sa ,a_j)  \otimes h^r(a_1\otimes \cdots \widehat{a_j} \cdots \otimes a_{\ho})  \\
		\oplus \bigoplus_{\substack{i=1 \\ i \neq j}}^{\ho} \Hom(a,a_i) \otimes \Hom(Sa,a_j)  \otimes h^r(Sa \otimes a_1\otimes \cdots \widehat{a_i} \cdots \widehat{a_j} \cdots \otimes a_{\ho}) \Biggr)
	\end{multline*}
	by tensoring with $\id_{Sa} \in \Hom(Sa, Sa)$.
	The crossing changes the order of the summands, and the starred cap $\zeta$ projects onto the summand
	\[
	\bigoplus_{i=1}^{\ho} \Hom(a,Sa) \otimes \Hom(Sa ,a_i)  \otimes h^r(a_1\otimes \cdots \widehat{a_i} \cdots \otimes a_{\ho})
	\]
	followed by the Serre trace applied to $\Hom(a, Sa)$.
	As the component corresponding to this summand is zero after the first step, the whole composition vanishes.
\end{proof}

\begin{Remark}
	The proof of Lemma \ref{lem:leftcurlvanish} also explains why the right curls in $\hcat\basecat$ are not required to vanish. Therein the unit at the first step and the counit at the last step are both given by diagonal maps, and hence they do not automatically compose to zero.
\end{Remark}

\begin{Lemma}\label{lem:up_down_braid-fock}
	The relation in \eqref{eq:up_down_braid-dg} holds in $\fcat*\basecat$, i.e.
	\[
	\Phi'_\basecat\left(
	\begin{tikzpicture}[baseline={(0,0.6)}, scale=0.67]
		\draw[->] (0,0) node[below] {$\PP_{Sa}$}  -- (0.5,0.5) 
		to[out=45,in=-45] 
		node[serre, pos=0.5] {} 
		(0.5,1.5) -- (0,2) node[above] {$\RR_{a}$};
		\draw[->] (1,2) node[above] {$\QQ_{b}$} -- (0.5,1.5)
		to[out=225, in = 135] (0.5,0.5) -- (1,0) node[below] {$\QQ_{b}$};
	\end{tikzpicture}
	\right)
	=
	\Phi'_\basecat\left(
	\begin{tikzpicture}[baseline={(0,0.6)}, scale=0.67]
		\draw[->] (2,0) node[below] {$\PP_{Sa}$} -- node[serre, pos=0.5]{} (2,2) node[above] {$\RR_{a}$} ;
		\draw[<-] (3,0) node[below] {$\QQ_{b}$} -- (3,2) node[above] {$\QQ_{b}$} ;
	\end{tikzpicture}
	\right).
	\]
\end{Lemma}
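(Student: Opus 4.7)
The proof is a direct verification in $\fcat*\basecat$. By \eqref{eq:other-crossings-dg} the image under $\Phi'_\basecat$ of the LHS unfolds as the three-step composition
\[
  P_{Sa}Q_b \xrightarrow{\alpha_1} Q_bP_{Sa} \xrightarrow{\id_{Q_b}\otimes\,\starmap{a}} Q_bR_a \xrightarrow{\alpha_2} R_aQ_b,
\]
where $\alpha_1$ is the up-down crossing built from the unit and counit of the adjunction $\phi_{Sa}^*\dashv\phi_{Sa*}$, and $\alpha_2$ is built analogously from the unit and counit of $\phi_{a*}\dashv\phi_a^!$. The RHS is simply $\starmap{a}\otimes\id_{Q_b}$. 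I would first reduce to checking equality on representables $h^r(e_1\otimes\cdots\otimes e_{\ho})$: the natural transformations appearing in $\alpha_1$ and $\alpha_2$ come from units and counits of adjunctions of tensor functors, and $\starmap{a}$ is defined on representables via Proposition~\ref{prop-phi-a-shriek-isomorphic-to-phi-Sa-upper-star}, so a mild strengthening of Lemma~\ref{lem:fcat-check-on-representables} applies.

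The core calculation then expands $Q_bP_{Sa}(h^r(e_1\otimes\cdots\otimes e_{\ho}))$ and $R_aQ_b(h^r(e_1\otimes\cdots\otimes e_{\ho}))$ via the decompositions in Examples~\ref{ex:QAPBandPBQA}--\ref{ex:QR-adjunction} into direct sums indexed by the position $i \in \{0,1,\ldots,\ho\}$ of the inserted $Sa$-factor, where $i=0$ corresponds to the summand contributing the extra $\Hom(b,Sa)$-factor. By (the $P_{Sa}$-analogue of) Lemma~\ref{lem:PQQP}, the crossing $\alpha_1$ embeds $P_{Sa}Q_b$ precisely onto the sum of the $i \geq 1$ summands of $Q_bP_{Sa}$, skipping the $i=0$ summand. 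After applying $\id_{Q_b}\otimes\starmap{a}$ and $\alpha_2$, each $i \geq 1$ summand maps identically onto the corresponding summand of $R_aQ_b$ produced by $\starmap{a}\otimes\id_{Q_b}$, while the $i=0$ summand never appears on either side.

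The main obstacle is the summand bookkeeping --- in particular, verifying that the Serre pairing implicit in $\starmap{a}$ composes correctly with the counit of $\phi_{a*}\dashv\phi_a^!$ coming from $\alpha_2$ to produce exactly $\starmap{a}$ acting on the leading factor. The argument parallels in structure that of Lemma~\ref{lem:leftcurlvanish}, but with the additional crossing requiring one more layer of tracking. An alternative, more abstract approach would exploit that $\starmap{a}$ is, up to homotopy, the canonical mate between the two adjunctions $\phi_{Sa}^* \dashv \phi_{Sa*}$ and $\phi_{a*} \dashv \phi_a^!$ induced by Serre duality; in that framework the cancellation is essentially formal, but the concrete calculation on representables is the most transparent.
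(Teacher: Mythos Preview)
Your outline is sound in spirit, but the paper takes a different route that neatly sidesteps the one genuine obstacle you flag only in passing: the ``mild strengthening of Lemma~\ref{lem:fcat-check-on-representables}''. That lemma, as stated and proved, applies only to $2$-morphisms between $1$-morphisms built from $\PP$s and $\QQ$s, because its proof uses that these are tensor functors (lying in the image of $\tensorfn$). Your target $R_aQ_b = \phi_a^!\phi_{b*}$ involves $\phi_a^!$, which is a $\Hom$-functor and not a tensor functor. The strengthening you need---that natural transformations into $\phi_a^!\phi_{b*}$ are still determined on representables---is true (e.g.\ via continuity, using that $\symbc\ho_{\phi_a}$ is right-perfect when $\basecat$ is proper), but it is not automatic and not established in the paper.

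The paper avoids this entirely: rather than compare the two transformations $P_{Sa}Q_b \to R_aQ_b$ directly, it uses the adjunction $Q_a \dashv R_a$ to bend the $\RR_a$-strand back down, converting the relation into an equivalent one between $2$-morphisms $\QQ_a\PP_{Sa}\QQ_b \to \QQ_b$. Now only $\PP$s and $\QQ$s appear, Lemma~\ref{lem:fcat-check-on-representables} applies as-is, and the verification reduces to the already-computed starred cap $\zeta$ of Lemma~\ref{lem:starred-cap-fock} together with the crossing description of Lemma~\ref{lem:PQQP}. The summand bookkeeping is then essentially the same as in your sketch. Ironically, this adjunction trick is precisely the ``more abstract mate approach'' you mention as an alternative at the end---in the paper it is not an alternative but the key first move.
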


\begin{proof}
	To use Lemma~\ref{lem:fcat-check-on-representables},
	we prove the statement which is equivalent by adjunction: 
	\[
	\Phi'_\basecat \left( 
	\begin{tikzpicture}[baseline={(0,0.6)}, scale=0.67]
		\draw[->]
		(0,0)     node[below] {$\PP_{Sa}$}  --
		(0.5,0.5) to[out=45,in=-45] node[serre, pos=0.5] {} 
		(0.5,1.5) --
		(0,2)     arc[start angle=45, end angle=180, radius={sqrt(2)/(1+sqrt(2))}]
		--
		(-1,0)    node[below] {$\QQ_{a}$};
		\draw[->]
		(1,2)     node[above] {$\QQ_{b}$} --
		(0.5,1.5) to[out=225, in = 135]
		(0.5,0.5) --
		(1,0)     node[below] {$\QQ_{b}$};
	\end{tikzpicture}
	=
	\begin{tikzpicture}[baseline={(0,0.6)}, scale=0.67]
		\draw[->]
		(0,0)   node[below] {$\PP_{Sa}$} to[out=45,in=270]
		(1,1.5) -- node[serre, pos=0.5] {} (1,1.6)
		arc[start angle=0, end angle=180, radius=0.5]
		to[out=270, in=90] (-1,0.2)
		--
		(-1,0)  node[below] {$\QQ_{a}$};
		\draw[->]
		(-1,2) node[above] {$\QQ_{b}$}
		to[out=270, in = 135]
		(1,0)  node[below] {$\QQ_{b}$};
	\end{tikzpicture}
	\right)
	=
	\Phi'_\basecat \left(
	\begin{tikzpicture}[baseline={(0,0.6)}, scale=0.67]
		\draw[->] (2,0) node[below] {$\PP_{Sa}$} -- node[serre, pos=0.6]{} (2,1.5) arc[start angle=0, end angle=180, radius=0.5] -- (1,0) node[below] {$\QQ_{a}$} ;
		\draw[<-] (3,0) node[below] {$\QQ_{b}$} -- (3,2) node[above] {$\QQ_{b}$} ;
	\end{tikzpicture}
	\right). 
	\]
	In other words, 
	$\Phi'_\basecat$ preserves the commutativity of the diagram
	\[
	\begin{tikzcd}
		\QQ_a\PP_{Sa}\QQ_b 
		\arrow[rr, "{\begin{tikzpicture}[scale=0.3] \draw[->] (0,2) -- (2,0); \draw[->] (1,2) -- (0,0); \draw[->] (1,0) -- (2,2); \end{tikzpicture}}"]
		\arrow[dr, "{\begin{tikzpicture}[scale=0.3] \draw[->] (1,0) -- (1,0.5) arc[start angle=0, end angle=180, radius=0.5] -- (0,0); \node[serre] at (1,0.4) {}; \draw[->] (1.8, 1) -- (1.8,0); \end{tikzpicture}}"']
		& &
		\QQ_b\QQ_a\PP_{Sa}
		\arrow[dl, "{\begin{tikzpicture}[scale=0.3] \draw[->] (1,0) -- (1,0.5) arc[start angle=0, end angle=180, radius=0.5] -- (0,0); \node[serre] at (1,0.4) {}; \draw[->] (-0.8, 1) -- (-0.8,0); \end{tikzpicture}}"]
		\\
		& \QQ_b &
	\end{tikzcd}
	\]
	By Lemma~\ref{lem:PQQP} the first crossing in $\Phi_\basecat'\bigl({\begin{tikzpicture}[scale=0.2, baseline={(0,0.1)}] \draw[->] (0,2) -- (2,0); \draw[->] (1,2) -- (0,0); \draw[->] (1,0) -- (2,2); \end{tikzpicture}}\bigr)$ embeds
	$ Q_aP_{Sa}Q_b(h^r(a_1 \otimes \dots \otimes a_{\ho}))$, that is
	\begin{multline}\label{eq:lem:up_down_braid-fock:QPQ}
		\bigoplus_{i=1}^{\ho} \biggl( \Hom(a,Sa) \otimes \Hom(b,a_i) \otimes h^r (a_1 \otimes \dotsb \widehat{a_i} \dotsb \otimes a_{\ho}) \oplus {} \\
		\oplus \bigoplus_{\substack{j=1\\j \ne i}}^{\ho} \Hom(a,a_j) \otimes \Hom(b,a_i) \otimes h^r(Sa \otimes a_1 \otimes \dotsb \widehat{a_i} \dotsb \widehat{a_j} \dotsb \otimes a_{\ho}) \biggr),
	\end{multline}
	into $Q_aQ_bP_{Sa}(h^r(a_1 \otimes \dots \otimes a_{\ho}))$, that is
	\begin{multline*}
		Q_a\bigl(\Hom(b,Sa) \otimes h^r(a_1 \otimes \dots \otimes a_{\ho})\bigr) \oplus Q_aP_{Sa}Q_b\bigl(h^r(a_1 \otimes \dots \otimes a_{\ho})\bigr) = \\
		 \bigoplus_{j=1}^{\ho} \Hom(a,a_i) \otimes \Hom(b,Sa) \otimes h^r(a_1 \otimes \dotsb \widehat{a_i} \dotsb \otimes a_{\ho}) \oplus Q_aP_{Sa}Q_b\bigl(h^r(a_1 \otimes \dots \otimes a_{\ho})\bigr).
	\end{multline*}
	The second crossing changes the summand order. By Lemma~\ref{lem:starred-cap-fock} $\Psi_\basecat'\bigl({\begin{tikzpicture}[scale=0.3, baseline={(0,0.05)}] \draw[->] (1,0) -- (1,0.5) arc[start angle=0, end angle=180, radius=0.5] -- (0,0); \node[serre] at (1,0.4) {}; \draw[->] (1.8, 1) -- (1.8,0); \end{tikzpicture}}\bigr)$ projects onto
	\begin{equation}\label{eq:lem:up_down_braid-fock:end}
		\bigoplus_{i=1}^{\ho} \Hom(a,Sa) \otimes \Hom(b,a_i) \otimes h^r (a_1 \otimes \dotsb \widehat{a_i} \dotsb \otimes a_{\ho}),
	\end{equation}
	followed by $\Tr\colon \Hom(a,Sa) \to \kk$.
	On the other hand, $\Psi_\basecat'\bigl({\begin{tikzpicture}[scale=0.3, baseline={(0,0.05)}] \draw[->] (1,0) -- (1,0.5) arc[start angle=0, end angle=180, radius=0.5] -- (0,0); \node[serre] at (1,0.4) {}; \draw[->] (-0.8, 1) -- (-0.8,0); \end{tikzpicture}}\bigr)$ projects \eqref{eq:lem:up_down_braid-fock:QPQ} directly onto \eqref{eq:lem:up_down_braid-fock:end}, followed by the Serre trace.
	Thus the two sides are the same natural transformation.
\end{proof}

\section{From \texorpdfstring{$\Phi'_{\basecat}$ to $\Phi_{\basecat}$}{Phi' to Phi}}
\label{sec:magic-wand}

In the previous two sections, we constructed a strict $2$-functor 
\[ \Phi'_\basecat\colon \hcat*\basecat \rightarrow \fcat*\basecat \]
of strict \dg $2$-categories. Recall that $\fcat*\basecat$ 
is a $1$-full subcategory of $\DGModCat$, the strict \dg $2$-category 
whose objects are small \dg categories, and whose
$1$-morphisms are \dg functors between their module categories. 
We next apply the lax $2$-functor of bimodule approximation defined in 
Section~\ref{section-bimodule-approximation}:
\[ \bimodapx\colon \DGModCat \rightarrow \DGBiMod. \]
Its target is the \dg bicategory $\DGBiMod$ whose objects are
small \dg categories and whose $1$-morphisms are their \dg bimodule
categories. On objects, $\bimodapx$ is the identity map. 
On $1$-morphisms, for any small \dg categories $\A$ and $\B$ it
is the \dg functor
\[ \bimodapx\colon \DGFun(\modA,\, \modB) \rightarrow \AmodB, \]
defined by $F \mapsto F(\A)$.

The $1$-morphisms of $\hcat*\basecat$ are generated by
$\PP_a$, $\QQ_a$, and $\RR_a$ for $a \in \basecat$. 
$2$-functor $\Phi'_\basecat$ sends these to \dg functors 
$\phi_a^*$, $\phi_{a *}$, and $\phi^!_a$. 
In Section~\ref{sec:symmetric-powers-of-dg-categories} we proved that
the images of these under $\bimodapx$ are left 
h-projective and right-perfect bimodules. We thus obtain
a composition 
\[  \hcat*\basecat \xrightarrow{\Phi'_\basecat} \fcat*\basecat
\xrightarrow{\bimodapx} \DGBiMod, \]
whose image is contained in the $2$-full subcategory $\DGBiMod_{\lfrp}$ of $\DGBiMod$
consisting of the left-h-flat and right-perfect bimodules.

We remark that the $2$-functor
$\bimodapx$ does not send all $1$-morphisms of $\fcat*\basecat$ to 
$\DGBiMod_{\lfrp}$. Indeed, by definition $\homm_{\fcat*\basecat}(0,1)$
consists of all \dg functors $\modk \rightarrow
\modd\text{-}\basecat$. For any $E \in \modd\text{-}\basecat$ we have
the functor $(-) \otimes E$ which $\bimodapx$ sends to $E$
considered as $\kk$-$\basecat$-bimodule. Thus for any non-perfect $E$
the corresponding tensor functor $(-) \otimes E$ is a $1$-morphism of
$\fcat*\basecat$ whose image under $\bimodapx$ isn't right-perfect.

Recall the $\HoDGCat$-enriched bicategory $\EnhCatKCdg$ 
of enhanced triangulated categories defined 
in Section~\ref{section-enhanced-categories}.
We next apply a strict $2$-functor 
\[ L \colon \DGBiMod_{\lfrp} \rightarrow \EnhCatKCdg. \]
On objects, $L$ is the identity map. 
On $1$-morphisms, depending on which of the two definitions 
of $\EnhCatKCdg$ one uses,
$L$ is either the natural embedding
\[ \AmodB_{\lfrp} \hookrightarrow \AmodbarB_{\lfrp}, \]
into the bar category\index{bar category} of bimodules, or the natural embedding
\[ \AmodB_{\lfrp} \hookrightarrow \AmodB_{\lfrp}/\acyc, \]
into the Drinfeld quotient by acyclics. On the level of homotopy
categories, both are just the standard localisation of \dg bimodules 
by quasi-isomorphisms.  

We thus obtain a composition 
\begin{equation}
	\label{eqn-hlax-functor-H'-to-EnhCatKCdg}
	\hcat*\basecat \xrightarrow{\Phi'_\basecat} \fcat*\basecat
	\xrightarrow{\bimodapx} \DGBiMod_{\lfrp} \xrightarrow{L}
	\EnhCatKCdg. 
\end{equation}
The $2$-functors $\Phi'_\basecat$ and $L$ are strict. In general, 
the $2$-functor $\bimodapx$ is lax, but it follows from Proposition 
\ref{prop-bimod-approximation-is-quasi-iso-lax-on-tensor-and-hom} 
that on the \dg functors $\phi_a^*$, $\phi_{a *}$, and $\phi^!_a$ 
its coherence morphisms are quasi-isomorphisms. Since $L$
sends quasi-isomorphisms to homotopy equivalences, it follows
that the composition \eqref{eqn-hlax-functor-H'-to-EnhCatKCdg} is
a homotopy strong $2$-functor. 

Next, we take perfect hulls as per 
Section~\ref{defn-the-perfect-hull-of-a-bicategory}. By definition, 
$\fcat\basecat$ is the perfect hull 
of the $1$-full subcategory of $\EnhCatKCdg$ comprising the symmetric powers $\symbcn$.
Thus it contains the perfect hull of the image of 
\eqref{eqn-hlax-functor-H'-to-EnhCatKCdg}.
We thereby
obtain a homotopy strong $2$-functor
\begin{equation}
	\label{eqn-hlax-functor-hperf-H'-to-F}
	\bihperf(\hcat*\basecat) \xrightarrow{\hperf(L \circ \bimodapx \circ
		\Phi'_\basecat)} \fcat\basecat.
\end{equation}

The Heisenberg $2$-category $\hcat\basecat$ is 
the monoidal Drinfeld quotient\index{monoidal Drinfeld
quotient} of 
$\bihperf(\hcat*\basecat)$ by the two-sided $1$-morphism ideal 
$\mathcal{I}_\basecat$ 
generated by the following two classes of $1$-morphisms:
\begin{enumerate}
	\item For each $a \in \basecat$, the cone of the Serre relation
	$2$-morphism
	\begin{equation}
		\label{eqn-generators-of-the-ideal-to-quotient-by-1}
		\PP_{Sa} \xrightarrow{\tikz[scale=0.5]
			\draw[->](0,0) -- node[serre, pos=0.5] {} (0,1);} \RR_a,
	\end{equation}
	\item For each $a,b \in \basecat$, the cone of the $2$-morphism
	\begin{equation}
		\label{eqn-generators-of-the-ideal-to-quotient-by-2}
		\PP_{b}\QQ_a \oplus (\hunit \otimes \Hom(a,b)) 
		\xrightarrow{
			\left[
			\begin{tikzpicture}[baseline={(0,0.15)},scale=0.5]
				\draw[->](0.5,0) -- (1.5,1);
				\draw[->](0.5,1) -- (1.5,0);
			\end{tikzpicture}
			\,,\,
			\,\psi_2
			\right]
		}
		\QQ_a \PP_{b}.
	\end{equation}
\end{enumerate}
We claim that \eqref{eqn-hlax-functor-hperf-H'-to-F}
sends these to null-homotopic $1$-morphisms in $\fcat\basecat$. 
It suffices to check that \eqref{eqn-hlax-functor-hperf-H'-to-F}
sends the $2$-morphisms
\eqref{eqn-generators-of-the-ideal-to-quotient-by-1}
and
\eqref{eqn-generators-of-the-ideal-to-quotient-by-2}
to homotopy equivalences.
Recall that in both definitions of $\EnhCatKCdg$
in Section~\ref{section-enhanced-categories} its $1$-morphisms
are \dg bimodules and its $2$-morphisms are defined in terms 
of morphisms of \dg bimodules. In first definition we take bar
morphisms and in the second we take the Drinfeld quotient of the usual
bimodule category by acyclics. In both cases, all usual morphisms 
of \dg bimodules are valid $2$-morphisms. We say that a $2$-morphism
is a quasi-isomorphism if it is an usual morphism of \dg bimodules which 
is a quasi-isomorphism. All such $2$-morphisms are homotopy equivalences: for bar morphisms this is shown in \cite[Cor.~3.8]{AnnoLogvinenko-BarCategoryOfModulesAndHomotopyAdjunctionForTensorFunctors}, 
while in the Drinfeld quotient by acyclics the cone of a
quasi-isomorphism is null-homotopic because it is acyclic. 
It thus suffices to check that \eqref{eqn-hlax-functor-hperf-H'-to-F} 
sends 
\eqref{eqn-generators-of-the-ideal-to-quotient-by-1}
and
\eqref{eqn-generators-of-the-ideal-to-quotient-by-2}
to quasi-isomorphisms. For the former this follows by Lemma  
\ref{prop-phi-a-shriek-isomorphic-to-phi-Sa-upper-star}, 
and for the latter by Example \ref{ex:QAPBandPBQA}. 

We conclude that \eqref{eqn-hlax-functor-hperf-H'-to-F}
sends all the $1$-morphisms in $\mathcal{I}_\basecat$ to
null-homotopic ones. By the universal property of the Drinfeld quotient, 
\eqref{eqn-hlax-functor-hperf-H'-to-F} lifts to a homotopy-lax $2$-functor
\begin{equation*}
	\Phi_\basecat\colon \hcat\basecat =
	\bihperf(\hcat*\basecat)/\mathcal{I}_{\basecat} \rightarrow 
	\fcat\basecat.
\end{equation*}
This homotopy strong $2$-functor gives our categorical Fock space
$\fcat\basecat$ the structure of a representation of the Heisenberg
$2$-category $\hcat\basecat$:

\begin{Theorem}\label{thm:cat_Fock_representation}
	The constructions above give a homotopy strong $2$-functor
	\[
	\Phi_\basecat \colon \hcat\basecat \to \fcat\basecat,
	\]
	that is, a $2$-categorical representation of $\hcat\basecat$ on $\fcat\basecat$.
\end{Theorem}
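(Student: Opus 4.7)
The plan is to assemble $\Phi_\basecat$ as a composition of four constructions, each of which has already been set up earlier in the paper, and then verify that the resulting $2$-functor kills the two-sided ideal $\mathcal{I}_\basecat$ by which $\hcat\basecat$ is defined as a monoidal Drinfeld quotient of $\bihperf(\hcat*\basecat)$.

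First I would invoke Theorem~\ref{thm:fock-2-functor-step1} to obtain the strict \dg $2$-functor $\Phi'_\basecat\colon \hcat*\basecat \to \fcat*\basecat \subseteq \DGModCat$, sending $\PP_a \mapsto \phi_a^*$, $\QQ_a \mapsto \phi_{a*}$, $\RR_a \mapsto \phi_a^!$ and the generating $2$-morphisms to the natural transformations fixed in Section~\ref{subsec:fock2gen}. Next I would postcompose with the lax $2$-functor $\bimodapx\colon \DGModCat \to \DGBiMod$ of Section~\ref{section-bimodule-approximation}. By the analysis at the end of Section~\ref{sec:symmetric-powers-of-dg-categories}, bimodule approximations of $\phi_a^*$, $\phi_{a*}$, $\phi_a^!$ are all left-$h$-flat and right-perfect, so the composite lands in $\DGBiMod_{\lfrp}$. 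Then I would apply the strict $2$-functor $L\colon \DGBiMod_{\lfrp} \to \EnhCatKCdg$ (either the embedding into the bar category or into the Drinfeld quotient by acyclics) from Section~\ref{section-enhanced-categories}. The composite $L \circ \bimodapx \circ \Phi'_\basecat$ is homotopy lax because $\bimodapx$ has quasi-isomorphism coherence morphisms on the classes of functors appearing in the image of $\Phi'_\basecat$: this follows from Proposition~\ref{prop-bimod-approximation-is-quasi-iso-lax-on-tensor-and-hom} applied to the fact that $\phi_a^*$ is a tensor functor (part~\ref{item-bimod-approx-cmps-coh-if-G-tensor-iso}), $\phi_a^!$ is a $\homm$-functor against a right-perfect, left-$h$-projective bimodule (part~\ref{item-bimod-approx-cmps-coh-if-G-hom-right-hperf-hmtpy-equiv}), and $\phi_{a*}$ preserves $h$-perfect modules (part~\ref{item-bimod-approx-cmps-coh-if-F-hperf-to-hperf-hmtpy-equiv}); the subsequent localisation by $L$ then turns these quasi-isomorphisms into honest homotopy equivalences.

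The fourth step is to take perfect hulls using Proposition~\ref{prps-the-perfect-hull-of-a-dg-functor-of-dg-bicategories}, giving a homotopy lax $2$-functor $\bihperf(\hcat*\basecat) \to \bihperf(\EnhCatKCdg)$; since $\fcat\basecat$ was defined as the perfect hull of the $1$-full subcategory of $\EnhCatKCdg$ on the $\symbcn$, this factors through $\fcat\basecat$ as a homotopy lax $2$-functor
\[
\Psi \colon \bihperf(\hcat*\basecat) \to \fcat\basecat.
\]
It remains to descend $\Psi$ across the monoidal Drinfeld quotient of Definition~\ref{def:hcat}. By the universal property of the monoidal Drinfeld quotient (Theorem~\ref{theorem-the-universal-properties-of-monoidal-drinfeld-quotient}\ref{item-bicat-unique-lifting-property-for-functors-killing-the-ideal}), it suffices to check that $\Psi$ sends the generators of the two-sided ideal $\mathcal{I}_\basecat$ to null-homotopic $1$-morphisms in $\fcat\basecat$. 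Since $\fcat\basecat \subseteq \EnhCatKCdg$ where cones of quasi-isomorphisms are null-homotopic, it is enough to show that $\Psi$ sends the two defining $2$-morphisms \eqref{eqn-generators-of-the-ideal-to-quotient-by-1} and \eqref{eqn-generators-of-the-ideal-to-quotient-by-2} to quasi-isomorphisms of bimodules. For the first, this is exactly Proposition~\ref{prop-phi-a-shriek-isomorphic-to-phi-Sa-upper-star}, which provides the quasi-isomorphism $\starmap{a}\colon \phi_{Sa}^* \to \phi_a^!$. For the second, the direct computation of Example~\ref{ex:QAPBandPBQA} and Lemma~\ref{lem:PQQP} gives a natural isomorphism $Q_a P_b \cong (\Hom_\basecat(a,b)\otimes_\kk \hunit) \oplus P_b Q_a$ whose splitting is precisely $[\text{crossing},\,\psi_2]$ after passing to bimodules.

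The main obstacle will be the bookkeeping in the third and fourth steps: tracking how the lax coherence data of $\bimodapx$ interacts with the strictification, perfect hull, and the quasi-functor $1$-composition of the monoidal Drinfeld quotient, so that the factorisation through $\hcat\basecat$ is well-defined as a homotopy lax $\HoDGCat$-enriched $2$-functor and not merely on homotopy categories. The key technical input making this routine is the combination of Proposition~\ref{prps-the-perfect-hull-of-a-dg-functor-of-dg-bicategories} and Theorem~\ref{theorem-the-universal-properties-of-monoidal-drinfeld-quotient}, which together guarantee that coherence $2$-isomorphisms which are homotopy equivalences at each stage assemble into a homotopy lax structure on the final $2$-functor.
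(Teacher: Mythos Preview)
Your proposal is correct and follows essentially the same four-step assembly as the paper's argument in Section~\ref{sec:magic-wand}: construct $\Phi'_\basecat$ via Theorem~\ref{thm:fock-2-functor-step1}, compose with $L \circ \bimodapx$ and invoke Proposition~\ref{prop-bimod-approximation-is-quasi-iso-lax-on-tensor-and-hom} for homotopy laxness, take perfect hulls, and descend across the monoidal Drinfeld quotient by checking the two generating cones via Proposition~\ref{prop-phi-a-shriek-isomorphic-to-phi-Sa-upper-star} and Example~\ref{ex:QAPBandPBQA}. One small imprecision: Proposition~\ref{prop-bimod-approximation-is-quasi-iso-lax-on-tensor-and-hom} concerns the composition coherence for a \emph{pair} $(F,G)$, not a single functor, so your assignment of parts \ref{item-bimod-approx-cmps-coh-if-G-tensor-iso}--\ref{item-bimod-approx-cmps-coh-if-F-hperf-to-hperf-hmtpy-equiv} to individual functors should really be read as saying that for any pair drawn from $\{\phi_a^*, \phi_{a*}, \phi_a^!\}$ at least one of the hypotheses applies---but this is exactly what the paper asserts without elaboration, so no harm done.
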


\begin{Corollary}\label{cor:homotopy_Fock_representation}
	There exists a $2$-categorical representation of $\hcat{H^*(\basecat)}$ on the categories $H^*(\hperf\symbcn)$.
\end{Corollary}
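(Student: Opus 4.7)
The plan is to obtain the representation by applying the graded homotopy category functor $H^*(-)$ to the homotopy lax $2$-functor $\Phi_\basecat\colon\hcat\basecat\to\fcat\basecat$ constructed in Theorem~\ref{thm:cat_Fock_representation}, and then precomposing with the canonical $2$-functor from Corollary~\ref{cor:graded-to-graded}. We interpret the statement's notation $\hcat{H^*(\basecat)}$ as the additive Heisenberg $2$-category $\hcatadd{H^*(\basecat)}$ attached to the graded category $H^*(\basecat)$, since this is the version defined in the paper for graded input data.

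First, I would apply $H^*$ to every $1$-morphism category of $\hcat\basecat$ and $\fcat\basecat$. By the definition of $\fcat\basecat$, its $1$-morphism categories are perfect hulls of subcategories of $\EnhCatKCdg$, whose graded homotopy categories are precisely the graded $\Hom$-categories of enhanced exact functors in $\EnhCatKC$ between the $H^*(\hperf\symbcn)$. Thus $H^*(\fcat\basecat)$ is a strict $2$-category whose objects act on the triangulated categories $H^*(\hperf\symbcn)$ via the underlying exact functors described in Section~\ref{section-enhanced-categories}. Applying $H^*$ to $\Phi_\basecat$ converts the homotopy-equivalence coherence morphisms of the homotopy lax $2$-functor into genuine isomorphisms, producing a strong $2$-functor
\[
  H^*(\Phi_\basecat)\colon H^*(\hcat\basecat)\longrightarrow H^*(\fcat\basecat).
\]
The $1$-composition of $\hcat\basecat$ is only $\HoDGCat$-enriched (coming from the monoidal Drinfeld quotient), but this enrichment descends cleanly to an ordinary bicategory composition on $H^*$, since $\HoDGCat$-morphisms by definition localise to actual functors on graded homotopy categories.

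Next, I would precompose with the $2$-functor $\hcatadd{H^*(\basecat)}\to H^*(\hcat\basecat)$ provided by Corollary~\ref{cor:graded-to-graded}, yielding
\[
  \hcatadd{H^*(\basecat)}\longrightarrow H^*(\hcat\basecat)\xrightarrow{\;H^*(\Phi_\basecat)\;}H^*(\fcat\basecat).
\]
Viewing $H^*(\fcat\basecat)$ as acting on $H^*(\hperf\symbcn)$ via the enhanced-exact-functor interpretation of its $1$-morphisms, this composition is the desired $2$-representation.

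The only point that requires genuine checking is that the axioms for a (strong) $2$-functor really do hold for $H^*(\Phi_\basecat)$. The composition-coherence diagrams of $\Phi_\basecat$ commute in $\fcat\basecat$ up to the data specified by the definition of a homotopy lax $2$-functor; because these coherence $2$-cells are homotopy equivalences, the diagrams become strictly commutative isomorphism-diagrams in $H^*(\fcat\basecat)$. This, combined with the verification that $H^*$ intertwines the $\HoDGCat$-enriched $1$-composition of $\hcat\basecat$ with the ordinary composition of its graded homotopy $2$-category, is the main (essentially formal) obstacle; everything else is a straightforward application of the universal properties already established for $\Phi_\basecat$ and for Corollary~\ref{cor:graded-to-graded}.
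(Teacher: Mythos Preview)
Your proposal is correct and follows essentially the same approach as the paper: the paper's proof is the single sentence ``This follows immediately by combining Theorem~\ref{thm:cat_Fock_representation} with Corollary~\ref{cor:graded-to-graded},'' which is exactly the composition you describe. Your additional remarks about why $H^*$ turns the homotopy lax $2$-functor into a strong one, and your explicit identification of $\hcat{H^*(\basecat)}$ with $\hcatadd{H^*(\basecat)}$, simply spell out details the paper leaves implicit.
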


\begin{proof}
	This follows immediately by combining Theorem~\ref{thm:cat_Fock_representation} with Corollary~\ref{cor:graded-to-graded}.
\end{proof}

If one is only interested in the action of homotopy categories, the functor $L \circ \bimodapx$ above can safely be ignored.
More precisely, on homotopy categories one has a canonical isomorphism $\phi_{Sa} \cong \phi_a^!$ and hence one only needs to understand the functors $\phi_a^*$ and $\phi_{a,*}$.
As these functors are already given by bimodules, the functor $L \circ \bimodapx$ simply restricts them to $\hperf\symbcn$.

\chapter{Structure of the Categorical Fock Space}
\label{sec:strfock}

\section{The symmetrised operators}\label{subsec:Fock_symmetrisers}

As described in Section~\ref{subsec:cat_heisenberg_relations}, the $1$-morphisms $\PP_{a}$, $\QQ_{a}$  and $\RR_{a}$ induce $1$-morphisms $\PP_{a}^{(n)}$, $\QQ_{a}^{(n)}$ and $\RR_{a}^{(n)}$ of $\hcat{\basecat}$ for $n \ge 0$ via symmetrisers.
These are represented by operators $P_{a}^{(n)}$, $Q_{a}^{(n)}$ and $R_{a}^{(n)}$ on $\fcat\basecat$.
In order to explicitly describe the effect of these operators on $\fcat\basecat$, we consider the functor
\[
\phi_{a^n}\colon \symbc{\ho} \to \symbc{\ho+n}, \qquad a_1 \otimes
\dots \otimes a_{\ho} \mapsto a \otimes \dots \otimes a \otimes a_1 \otimes \dots a_{\ho}.
\] The $1$-morphisms 
$P_{a}^{n}$, $Q_{a}^{n}$ and $R_{a}^{n}$ are the images of the functors
\begin{align*}
	\phi^*_{a^n}\colon& \rightmod{\symbc{\ho}} \to
	\rightmod{\symbc{\ho+n}}, \\
	\phi_{a^n,*}\colon& \rightmod{\symbc{\ho+n}} \to \rightmod{\symbc{\ho}}, 
	\intertext{and}
	\phi^!_{a^n}\colon& \rightmod{\symbc{\ho}} \to
	\rightmod{\symbc{\ho_n}},
\end{align*}
under the functor $L \circ \smash[b]{\bimodapx}$ of
Section~\ref{sec:magic-wand}, with $P_{a}^{(0)} = Q_{a}^{(0)} =
R_a^{(0)} = \id$.  Recall that in Definition \ref{def:catfockspacedg}
we defined the Fock space $\fcat\basecat$ as a $1$-full subcategory 
of the $2$-category $\bihperf(\EnhCatKCdg)$ which by Lemma 
\ref{lemma-enhcatkcdg-to-hperf-enhcatkcdg-is-quasi-equivalence}
is a \dg enhancement of the strict $2$-category 
$\EnhCatKC$ of enhanced triangulated categories. Thus $1$-morphisms in 
the Fock space are enhanced functors between enhanced triangulated 
categories. The underlying exact functors have the following explicit description: 
\begin{Lemma}
	\label{lemma-explicit-descriptions-of-p^n_a-q^n_a}
	Let $a$ be an object of $\basecat$. Then:
	\begin{enumerate}
		\item 
		\label{item-explicit-description-of-the-exact-functor-p^n_a}
		The exact functor 
		\[ p_{a}^{n}\colon \catDc(\symbc{\ho}) \rightarrow \catDc(\symbc{\ho+n}) \] 
		underlying the enhanced functor $P_{a}^{n}$ is isomorphic to the composition 
		\[
		\catDc(\symbc{\ho}) \xrightarrow{h^r(a^n) \otimes (-)}
		\catDc(\symbc{n} \otimes \symbc{\ho})
		\xrightarrow{\Ind_{\SymGrp n \times \SymGrp{\ho}}^{{\SymGrp{\ho+n}}}}
		\catDc(\symbc{\ho+n}), 
		\]
		where $h^r(a^n) \otimes (-)$ is the evaluation of 
		\eqref{eqn-modA-otimes-modB-to-modAotimesB} at $h^r(a^n)$. This \dg
		functor sends 
		any $E \in \modd\text{-}\symbc{\ho}$ to the module over $\symbc{\ho}
		\otimes \symbc{n}$ whose fibers are given by the tensor product over $\kk$
		of the fibers of $h^r(a^n)$ and the fibers of $E$. As it sends
		acyclics to acyclics, it descends to the derived categories as-is.  
		
		\item 
		\label{item-explicit-description-of-the-exact-functor-q^n_a}
		The exact functor 
		\[ q_{a}^{n}\colon \catDc(\symbc{\ho}) \rightarrow \catDc(\symbc{\ho+n}) \]
		underlying the enhanced functor $Q_{a}^{n}$ is isomorphic to the composition 
		\[
		\catDc(\symbc{\ho+n})
		\xrightarrow{\Res_{{\SymGrp{\ho+n}}}^{\SymGrp n \times \SymGrp{{\ho}}}}
		\catDc(\symbc{\ho} \otimes \symbc{n})  
		\xrightarrow{\homm_{\symbc{n}}({h^r(a^n),-})}
		\catDc(\symbc{\ho}), 
		\]
		where $\homm_{\symbc{n}}({h^r(a^n),-})$ is the right adjoint of 
		$h^r(a^n) \otimes (-)$. It is the \dg functor of taking $\homm$-spaces
		as $\symbc{\ho}$ modules. With $h^r(a^n)$ in 
		the first argument, it is isomorphic to the functor
		$(-)_{a^n}$ of taking fibers over $a^n \in \symbc{n}$. As it sends
		acyclics to acyclics, it descends to the derived categories as-is.  
	\end{enumerate}
\end{Lemma}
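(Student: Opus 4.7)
The plan is to factor $\phi_{a^n}$ through a single intermediate symmetric power and then invoke functoriality of extension and restriction of scalars. Concretely, I would first observe that $\phi_{a^n}$ decomposes as
\[
\symbc{\ho} \xrightarrow{a^n \otimes (-)} \symbc{n} \otimes \symbc{\ho} \xrightarrow{\iota} \symbc{n+\ho},
\]
where $\iota$ is the canonical DG functor induced by the inclusion of symmetric groups $\SymGrp{n} \times \SymGrp{\ho} \hookrightarrow \SymGrp{n+\ho}$ fixing the block decomposition $\{1,\dots,n\} \sqcup \{n+1,\dots,n+\ho\}$. The first factor sends an object $a_1 \otimes \dots \otimes a_\ho$ to the pair $(a^n,\, a_1 \otimes \dots \otimes a_\ho)$; composition with $\iota$ recovers $\phi_{a^n}$ on the nose.

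For part~\ref{item-explicit-description-of-the-exact-functor-p^n_a}, functoriality of extension of scalars yields $\phi^*_{a^n} \simeq \iota^* \circ (a^n \otimes (-))^*$. The second factor $(a^n \otimes (-))^*$ is precisely the tensor functor $h^r(a^n) \otimes (-)$ in the sense of \eqref{eqn-modA-otimes-modB-to-modAotimesB}: on representables this is immediate from Lemma~\ref{lem:onrepresentables}\ref{it-phi_a^*-on-representables}, and it then extends to all modules by colimits. Meanwhile $\iota^*$ is, by definition of the semi-direct product in Section~\ref{subsec:equivariant_cats}, the induction functor $\Ind_{\SymGrp{n}\times\SymGrp{\ho}}^{\SymGrp{n+\ho}}$. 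Since these functors are tensor functors, they preserve $h$-projectives and descend to the compact derived categories, giving the claimed factorisation of $p_a^n$.

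For part~\ref{item-explicit-description-of-the-exact-functor-q^n_a}, the quickest route is adjunction: $\phi_{a^n,*}$ is right adjoint to $\phi^*_{a^n}$, and by part~(1) this right adjoint equals $\homm_{\symbc{n}}(h^r(a^n),-) \circ \iota_*$, where $\iota_*$ is the restriction functor, which by construction is $\Res^{\SymGrp{n+\ho}}_{\SymGrp n \times \SymGrp\ho}$. The Yoneda-type identification $\homm_{\symbc n}(h^r(a^n),-) \simeq (-)_{a^n}$ identifies the second factor with the fibre-over-$a^n$ functor. Since restriction of scalars and $\homm_{\symbc n}(h^r(a^n),-)$ both preserve acyclics, they descend to the derived categories without resolutions.

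The main technical concern is bookkeeping through Section~\ref{sec:magic-wand}: the statement is about the enhanced functors $P_a^n$ and $Q_a^n$, which are obtained from $\phi^*_{a^n}$ and $\phi_{a^n,*}$ by first applying bimodule approximation $\bimodapx$ and then the localisation~$L$. One has to verify that the coherence morphisms arising in this passage are quasi-isomorphisms, so that after $L$ they become genuine isomorphisms in $\EnhCatKCdg$; this follows from Proposition~\ref{prop-bimod-approximation-is-quasi-iso-lax-on-tensor-and-hom} applied to our tensor and $\homm$-type functors, together with the fact that extension and restriction of scalars preserve the $h$-projective/right-perfect bimodule class required for $L$ to be defined. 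The verification is routine but must be carried out with care, as this is the only place where the passage between enhanced and underlying exact functors could introduce unexpected corrections.
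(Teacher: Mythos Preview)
Your proposal is correct and follows essentially the same route as the paper: factor $\phi_{a^n}$ through $\symbc{n}\otimes\symbc{\ho}$ via $a^n\otimes(-)$ followed by the inclusion $\iota$, identify $\iota^*$ with induction and $(a^n\otimes(-))^*$ with $h^r(a^n)\otimes(-)$, and deduce part~(2) by adjunction. Two minor remarks: the paper packages the first step as a functor $\phi\colon\symbc{n}\to\DGFun(\hperf\symbc{\ho},\hperf\symbc{\ho+n})$ factored through the Yoneda embedding, because this parametric form is reused verbatim in later lemmas; and your worry about $L\circ\bimodapx$ is slightly overstated, since $\phi_{a^n}^*$ is already a tensor functor, so bimodule approximation returns the same bimodule on the nose and the underlying exact functor is simply the $H^0$-truncation of its restriction to $\hperf$.
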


\begin{proof}
	Since $\QQ^n_a$ is the $2$-categorical right adjoint of $\PP^n_a$ and
	$\Phi_\basecat$ is homotopy monoidal, $Q^n_a$ is a homotopy right
	adjoint of $\PP^n_a$. Therefore $q^n_a$ is the right adjoint of
	$p^n_a$. We thus only prove 
	\ref{item-explicit-description-of-the-exact-functor-p^n_a}, 
	as \ref{item-explicit-description-of-the-exact-functor-q^n_a}
	follows by adjunction. 
	
	By definition, $P^n_a$ is the image of $\phi_{a^n}^*$ under the
	functor $L \circ \smash[b]{\bimodapx}$ of taking bimodule
	approximation, and then projecting to the derived category of
	bimodules. Since $\phi_{a^n}^*$ is already a tensor functor,
	it restricts to
	$$ \phi_{a^n}^*\colon \hperf (\symbc{\ho}) \rightarrow 
	\hperf (\symbc{\ho+n}), $$
	and the corresponding exact functor $p^n_a$ is the $\Hzero$-truncation 
	of this restriction. 
	
	We can view $\phi_{a^n}$ as the image of $a^n \in \symbc{n}$
	under the \dg functor
	\begin{align*}
		\phi\colon \quad & \symbc{n} \xrightarrow{\text{unit}} \DGFun(\symbc{\ho},\, 
		\symbc{n} \otimes \symbc{\ho}) \xrightarrow{i_{n,\ho}^{n+\ho} \circ
			(-)} \\ 
		\rightarrow  &\DGFun(\symbc{\ho},\, \symbc{\ho+n})
		\xrightarrow{(-)^*} 
		\DGFun\left(\hperf(\symbc{\ho}),\, \hperf(\symbc{\ho+n})\right), 
	\end{align*}
	where $i_{n,\ho}^{n+\ho}\colon \symbc{n} \otimes \symbc{\ho} \rightarrow 
	\symbc{\ho+n}$ is the natural inclusion. 
	It can now be readily verified that $\phi$ is isomorphic to the composition
	of the Yoneda embedding $\symbc{n} \hookrightarrow \hperf(\symbc{n})$
	with 
	\begin{small}
		\begin{align}
			\label{eqn-filtering-phi-through-yoneda-embedding}
			\begin{tikzcd}[row sep=0.5cm]
				\hperf(\symbc{n})
				\ar{d}{\text{unit}}
				\\
				\DGFun\left(\hperf(\symbc{\ho}),\, \hperf(\symbc{n}) \otimes
				\hperf(\symbc{\ho})\right)
				\ar{d}{\eqref{eqn-modA-otimes-modB-to-modAotimesB} \circ (-)}
				\\
				\DGFun\left(\hperf(\symbc{\ho}),\, \hperf(\symbc{n} \otimes \symbc{\ho})\right)
				\ar{d}{(i_{n,\ho}^{n+\ho})^* \circ (-)}
				\\
				\DGFun\left(\hperf(\symbc{\ho}),\, \hperf(\symbc{\ho+n})\right). 
			\end{tikzcd}
		\end{align}
	\end{small}
	Since the \dg category isomorphism 
	$$ \symbc{n} \otimes \symbc{\ho} \simeq (\SymGrp n \times \SymGrp{\ho})
	\rtimes \basecat^{\ho+n}, $$
	and the equivalence 
	\eqref{eqn-equivalence-modules-over-twalg-with-equiv-modules}
	identify 
	$$ (i_{n,\ho}^{n+\ho})^* \colon 
	\hperf(\symbc{n} \otimes \symbc{\ho}) 
	\rightarrow \hperf(\symbc{\ho + n}) $$
	with the induction functor
	$$
	\Ind_{S_n \times S_{\ho}}^{{S_{\ho+n}}}\colon 
	\hperf^{S_n \times S_{\ho}}(\basecat^{\ho+n})
	\rightarrow 
	\hperf^{S_{\ho + n}}(\basecat^{\ho+n}), $$
	the desired claim follows. 
\end{proof}

Let $\phi^*_{e_\triv}$, 
$\phi_{e_\triv,*}$, and $\phi^!_{e_\triv}$ be the images of the idempotent 
$\phi_{e_\triv}\colon \phi_{a^n} \rightarrow \phi_{a^n}$ under the functors 
$(-)^*$, $(-)_*$, and $(-)^!$, respectively. While the
idempotents $e_\triv$ and $\phi_{e_\triv}$ are not apriori split, 
the idempotents $\phi^*_{e_\triv}$, $\phi_{e_\triv,*}$, and $\phi^!_{e_\triv}$  
always are. The splitting is obtained by taking all  elements invariant
under the action of $S_n$ on $a^n$. For example, given any module
$E \in \modd\text{-}\symbc{\ho}$ we consider the  elements of 
$\phi^*_{a^n}(E)$ which are
invariant under the endomorphisms induced by $\sigma\colon a^n
\rightarrow a^n$ for all $\sigma \in S_n$. These form a submodule
which splits the idempotent $\phi^*_{e_\triv}$ on $E$. 
In fact, $\phi^*_{a^n}$ is the functor of tensoring with the bimodule
$$
\leftidx{_{\phi_{a^n}}}{\symbc{\ho}} :=
\homm_{\symbc{\ho}}(-, a^n \otimes -), $$
and $\phi^*_{e_\triv}$ is split by 
its submodule of  elements invariant under the action of $S_n$ on $a^n$. 

By construction, the $1$-morphisms $P_{a}^{(n)}$, $Q_{a}^{(n)}$ and
$R_{a}^{(n)}$ are the images under projection 
$L \circ \smash[b]{\bimodapx}$ to $\fcat\basecat$ of the homotopical
splittings of idempotents 
$\phi^*_{e_\triv}$, $\phi_{e_\triv,*}$, and $\phi^!_{e_\triv}$
given by the construction in Section~\ref{subsec:cat_heisenberg_relations}. Thus 
$P_{a}^{(n)}$, $Q_{a}^{(n)}$ and $R_{a}^{(n)}$ are homotopy equivalent
to the images under $L \circ \smash[b]{\bimodapx}$ of the genuine
splittings of these idempotents. We thus have:

\begin{Corollary}
	Let $a$ be an object of $\basecat$. Let $h^r(a^n)^{S_n} \in \hperf
	\symbc{n}$ be the submodule of $h^r(a^n)$ consisting of
	$S_n$-invariant  elements. 
	Then:
	\begin{enumerate}
		\item 
		\label{item-explicit-description-of-the-exact-functor-p^(n)_a}
		The exact functor 
		$$ p_{a}^{(n)}\colon \catDc(\symbc{\ho}) \rightarrow \catDc(\symbc{\ho+n}) $$ 
		underlying the enhanced functor $P_{a}^{(n)}$ is isomorphic to the composition 
		$$ \catDc(\symbc{\ho}) \xrightarrow{h^r(a^n)^{S_n} \otimes (-)}
		\catDc(\symbc{n} \otimes \symbc{\ho}) \xrightarrow{\Ind_{S_n
				\times S_{\ho}}^{{S_{\ho+n}}}} \catDc(\symbc{\ho+n}). $$
		
		\item 
		\label{item-explicit-description-of-the-exact-functor-q^(n)_a}
		The exact functor 
		$$ q_{a}^{(n)}\colon \catDc(\symbc{\ho}) \rightarrow \catDc(\symbc{\ho+n}) $$
		underlying enhanced functor $Q_{a}^{(n)}$ is isomorphic to the composition 
		$$ 
		\catDc(\symbc{\ho+n})
		\xrightarrow{\Res_{{S_{\ho+n}}}^{S_n \times S_{{\ho}}}}
		\catDc(\symbc{\ho} \otimes \symbc{n})  
		\xrightarrow{\homm_{\symbc{n}}({h^r(a^n)^{S_n},-})}
		\catDc(\symbc{\ho}).
		$$
	\end{enumerate}
	\begin{proof}
		As before, \ref{item-explicit-description-of-the-exact-functor-q^(n)_a}
		follows by adjunction from 
		\ref{item-explicit-description-of-the-exact-functor-p^(n)_a}. 
		
		To prove the latter, recall that in the proof of Lemma ~\ref{lemma-explicit-descriptions-of-p^n_a-q^n_a} we have established 
		that $p^n_a$ is isomorphic to the
		$\Hzero$-truncation of $\phi(a^n)$ where $\phi$
		is isomorphic to the composition of the Yoneda embedding
		and \eqref{eqn-filtering-phi-through-yoneda-embedding}. 
		The idempotent $e_\triv: a^n \rightarrow a^n$ becomes split once
		we apply the Yoneda embedding $h^r(-)$ and $h^r(a^n)^{S_n}$
		is the corresponding direct summand. Therefore 
		the idempotent $\phi(e_\triv)$ is split and 
		the corresponding direct summand of $\phi(a_n)$ is given 
		by the image of $h^r(a^n)^{S_n}$ under 
		\eqref{eqn-filtering-phi-through-yoneda-embedding}. Since $p^{(n)}_a$ 
		is isomorphic to this direct summand of $p^n_a$, the claim folllows. 
	\end{proof}
	
\end{Corollary}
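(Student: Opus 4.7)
The plan is to reduce the claim to the preceding lemma describing $p_a^n$ and $q_a^n$, then track how the symmetriser idempotent behaves under the various functors. As in the proof of Lemma~\ref{lemma-explicit-descriptions-of-p^n_a-q^n_a}, part \ref{item-explicit-description-of-the-exact-functor-q^(n)_a} will follow from part \ref{item-explicit-description-of-the-exact-functor-p^(n)_a} by $2$-categorical adjunction, since $\QQ_a^{(n)}$ is the right adjoint of $\PP_a^{(n)}$ in $\hcat\basecat$ and $\Phi_\basecat$ is homotopy lax monoidal. So I will concentrate on \ref{item-explicit-description-of-the-exact-functor-p^(n)_a}.

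First, I would recall the factorisation of $\phi_{a^n}^*$ through the Yoneda embedding established in \eqref{eqn-filtering-phi-through-yoneda-embedding}: $\phi_{a^n}^*$ is obtained by evaluating the composition \eqref{eqn-filtering-phi-through-yoneda-embedding} at $h^r(a^n)$. Under this factorisation, the symmetriser $e_\triv \in \kk[\SymGrp n] \subseteq \End_{\symbc n}(a^n)$ is mapped first, via Yoneda, to an endomorphism of $h^r(a^n)$ in $\hperf\symbc n$; since $\hperf\symbc n$ is homotopy Karoubi-complete, this idempotent splits, and by definition the $S_n$-invariant submodule $h^r(a^n)^{\SymGrp n}$ is the corresponding direct summand.

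Next I would observe that both remaining functors in \eqref{eqn-filtering-phi-through-yoneda-embedding} — namely the tensor product $\varpi$ and the extension of scalars along $i_{n,\ho}^{n+\ho}$ — are additive and so commute with splitting of idempotents. Consequently, applying \eqref{eqn-filtering-phi-through-yoneda-embedding} to $h^r(a^n)^{\SymGrp n}$ yields precisely the direct summand of $\phi_{a^n}^*$ cut out by $\phi^*_{e_\triv}$. After identifying $(i_{n,\ho}^{n+\ho})^*$ with $\Ind_{\SymGrp n \times \SymGrp \ho}^{\SymGrp{n+\ho}}$ through the equivalence \eqref{eqn-equivalence-modules-over-twalg-with-equiv-modules} (as already used in the preceding lemma), this gives the composition in the statement, at the level of derived functors $D_c(\symbc\ho) \to D_c(\symbc{\ho+n})$.

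The remaining (and main) point is to match this splitting with the one chosen in Section~\ref{subsec:cat_heisenberg_relations} to define $\PP_a^{(n)}$, which is given by a concrete twisted complex rather than by a strict splitting. Here I would invoke that $L \circ \bimodapx$ sends quasi-isomorphisms to homotopy equivalences, and that in $\fcat\basecat$ any two splittings of a homotopy idempotent are canonically homotopy equivalent. Since $\phi^*_{e_\triv}$ actually splits strictly — with summand $h^r(a^n)^{\SymGrp n} \otimes (-)$ — the image of $\PP_a^{(n)}$ under $\Phi_\basecat$ must be homotopy equivalent to this strict summand. I expect this identification of the two splittings to be the only delicate step; once it is in hand, the description of $p_a^{(n)}$ follows, and $q_a^{(n)}$ is then the right adjoint composition stated in part \ref{item-explicit-description-of-the-exact-functor-q^(n)_a}.
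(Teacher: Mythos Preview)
Your proposal is correct and follows essentially the same approach as the paper: reduce to the description of $p_a^n$ via the factorisation \eqref{eqn-filtering-phi-through-yoneda-embedding}, split the idempotent $e_\triv$ after Yoneda to obtain $h^r(a^n)^{\SymGrp n}$, and push this summand through the remaining additive functors. The one point you flag as delicate --- identifying the twisted-complex splitting of $\PP_a^{(n)}$ with the strict splitting by $\SymGrp n$-invariants --- is handled by the paper in the paragraph immediately preceding the Corollary rather than inside the proof, but the argument is the same as yours.
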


The $1$-morphisms $\PP_{a}^{(n)}$, $\QQ_{a}^{(n)}$ and $\RR_{a}^{(n)}$
satisfy a number of relations arising from the relations between  in $\hcat{\basecat}$. For example:
\begin{enumerate}
	\item There are adjunctions  $P_{a}^{(n)} \dashv Q_{a}^{(n)}$ and $Q_{a}^{(n)} \dashv R_{a}^{(n)}$.
	\item By Remark~\ref{rem:why_sym}, for every $\alpha \in \mathrm{Sym}^n(\mathrm{Hom}(a,b))$ there are natural transformations
	$P_{a}^{(n)} \xRightarrow{\alpha} P_{b}^{(n)}$ and $Q_{b}^{(n)} \xRightarrow{\alpha} Q_{a}^{(n)}$.
	\item By Theorem~\ref{thm:cat_heisenberg_relations-dg}, for any $a,\, b \in \basecat$ and $n,\, m \in \NN$ we have natural isomorphisms
	\begin{equation}\label{eq:fock_cat_heisenberg_relations_1} 
		P_a^{(m)}P_{b}^{(n)} \cong P_{b}^{(n)} P_a^{(m)}, \quad
		Q_a^{(m)}Q_{b}^{(n)} \cong Q_{b}^{(n)} Q_a^{(m)},
	\end{equation}
	and a homotopy isomorphism
	\begin{equation}\label{eq:fock_cat_heisenberg_relations_2} 
		\bigoplus_{i=0}^{\mathclap{\min(m,n)}}\ \Sym^i(\Hom_\basecat(a,b)) \otimes_\kk P_{b}^{(n-i)}Q_a^{(m-i)} \to Q_a^{(m)}P_{b}^{(n)}.
	\end{equation}
\end{enumerate}

\begin{Example}\label{ex:Krug_part2}
	Let $X$ be a smooth and projective variety.
	Continuing Example~\ref{ex:Krug_part1}, we obtain the symmetrised operators $P_{a}^{(n)}$ and $Q_{a}^{(n)}$.
	These reproduce the remaining functors from \cite[Section~2.4]{krug2018symmetric} for the \dg derived categories.
	Thus Corollary~\ref{cor:homotopy_Fock_representation} enhances the representation defined by Krug to a $2$-categorical action of $H^*(\hcat{\catDGCoh{X}})$.
	On the homotopy categories, \eqref{eq:fock_cat_heisenberg_relations_1} and \eqref{eq:fock_cat_heisenberg_relations_2} become
	\begin{equation*}
		\begin{gathered} 
			P_a^{(m)}P_{b}^{(n)} \cong P_{b}^{(n)} P_a^{(m)}, \quad
			Q_a^{(m)}Q_{b}^{(n)} \cong Q_{b}^{(n)} Q_a^{(m)}, \\
			Q_a^{(m)}P_{b}^{(n)} \cong \ \bigoplus_{i=0}^{\mathclap{\min(m,n)}}\ \Sym^i \Hom^*(a,b) \otimes_\kk P_{b}^{(n-i)}Q_a^{(m-i)}.
		\end{gathered}
	\end{equation*}
	This provides a new proof of \cite[Theorem 1.4]{krug2018symmetric}.
\end{Example}

\section{Grothendieck groups and the classical Fock space}\label{subsec:grothfock}

\subsection{Constructing a representation of the Heisenberg algebra}

In Section~\ref{subsec:prelim_grothendieck_group} we defined  
the numerical Grothendieck group\index{numerical Grothendieck group} $\numGgp{\basecat,\, \kk}$ of
a smooth and proper \dg category $\basecat$. 
As finite tensor products of \dg categories preserve both 
of these properties, $\basecat^{{ \otimes} N}$ is smooth and proper. It is then 
evident from the decomposition
\eqref{equation-decomposition-of-the-equivariant-diagonal-bimodule} of
the diagonal bimodule, that $\symbc\ho$ is smooth and proper as well. 
Thus its numerical Grothendieck group is well-defined.

With this in mind, define the $\kk$-linear $1$-category 
\[
\End\biggl(\bigoplus_{\ho}  \numGgp{\symbc\ho,\, \kk}\biggr)
\]
to have as objects $\numGgp{\symbc{\ho},\, \kk} \coloneqq \kk
\otimes_\ZZ \numGgp{\symbc{\ho}}$, and as morphisms the $\kk$-linear 
maps between these vector spaces.
Thus a $\kk$-linear functor into this category is an idempotent-modified version of a representation on the vector space $\bigoplus_{\ho}  \numGgp{\symbc\ho,\, \kk}$.

We next use the $2$-functor
$\Phi_\basecat\colon \hcat\basecat \to \fcat\basecat$ 
to define a $1$-functor 
from $\numGgp{\hcat\basecat,\,\kk}$ to 
$\bigoplus_{\ho}  \numGgp{\symbc\ho,\, \kk}$. 
Recall our definition of 
$\numGgp{\hcat\basecat,\,\kk}$: it is the quotient of 
$\mathrm{K}_0(\hcat\basecat,\,\kk)$ by the two-sided ideal generated by the
images under $\Xi^P,\, \Xi^Q: \bihperf(\bicat{Sym}_\basecat) \rightarrow 
\hcat\basecat$ of the kernel of the Euler pairing, see Section~\ref{subsec:grothendieck_groups}.
To show that this two-sided ideal gets sent by $\Phi_\basecat$ 
to the kernel of the Euler pairing on $\bigoplus_{\ho}  \mathrm{K}_0(\symbc\ho,\, \kk)$, 
we need the following lemma:

\begin{Lemma}
	\label{lemma-the-description-of-composition-Phi-Xi^P}
	The composition   
	$$ \Phi_\basecat \circ \Xi^P: 
	\bihperf(\bicat{Sym}_\basecat) \rightarrow \fcat\basecat, $$
	is the following $2$-functor. On the object sets, it is 
	$\id \colon \mathbb{Z} \rightarrow \mathbb{Z}$. 
	On the $1$-morphism categories $\ho \rightarrow \ho + n$, it is
	homotopy equivalent to the composition of $L \circ \smash[b]{\bimodapx}$ 
	with the \dg functor
	$$ \hperf\left(\symbc{n}\right) 
	\xrightarrow{\eqref{eqn-filtering-phi-through-yoneda-embedding}}
	\DGFun\left(\hperf(\symbc{\ho}),\, \hperf\left(\symbc{\ho + n}\right)\right). $$
\end{Lemma}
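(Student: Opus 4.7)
The plan is to decompose the composition $\Phi_\basecat \circ \Xi^P$ into its constituent 2-functors and trace an object through each step, identifying the result with the target expression.

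First, I would reduce from $\bihperf(\bicat{Sym}_\basecat)$ to $\bicat{Sym}_\basecat$. The 2-functor $\Xi^P$ is by definition $\bihperf(\Xi^{\prime P})$, where $\Xi^{\prime P}: \bicat{Sym}_\basecat \to \hcat*\basecat$ is the 2-functor of Remark~\ref{rem:Xiprime}. Moreover, the image of $\Xi^{\prime P}$ involves only $\PP$-type $1$-morphisms and symmetric group crossings; in particular, none of the generating $1$-morphisms of the ideal $\mathcal I_\basecat$ appear in this image. Thus the passage to the Drinfeld quotient $\hcat\basecat$ has no effect, and by Proposition~\ref{prps-the-perfect-hull-of-a-dg-functor-of-dg-bicategories} it suffices to identify $L \circ \bimodapx \circ \Phi'_\basecat \circ \Xi^{\prime P}$ with the (unhulled) composition of $L \circ \bimodapx$ with \eqref{eqn-filtering-phi-through-yoneda-embedding} restricted to representables, then take perfect hulls on both sides.

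Second, I would compute on generators. On objects both sides act as the identity on $\mathbb Z$. On a generating $1$-morphism $a_1 \otimes \dots \otimes a_n \in \basecat^{\otimes n} \subset \basecat^{\otimes n} \rtimes \SymGrp n$, the composition $\Phi'_\basecat \circ \Xi^{\prime P}$ produces the \dg functor $\phi_{a_1}^* \circ \dots \circ \phi_{a_n}^*$. By Definition~\ref{def:phia} and the compatibility of extension of scalars with composition of \dg functors, this iterated composition is naturally isomorphic to $\phi_{a_1 \otimes \dots \otimes a_n}^*$, that is, extension of scalars along $\phi_{a_1 \otimes \dots \otimes a_n}: \symbc{\ho} \to \symbc{\ho+n}$. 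Applying $\bimodapx$ recovers the bimodule $\leftidx{_{\phi_{a_1 \otimes \dots \otimes a_n}}}{\symbc{\ho+n}}$. On the other side, evaluating \eqref{eqn-filtering-phi-through-yoneda-embedding} at $h^r(a_1 \otimes \dots \otimes a_n)$ and unwinding the definitions of $\varpi$ and the induction $(i_{n,\ho}^{n+\ho})^*$ yields the same bimodule, up to the natural isomorphism coming from the unit of the tensor-approximation adjunction. On the crossing generators, the definition of $\Phi'_\basecat$ on the downward crossing (as the symmetric group action $(12)_*$) together with the presentation of upward crossings in terms of downward crossings, cups and caps from Lemma~\ref{lem:basic-relations-dg} matches precisely the symmetric group action transported through the Yoneda embedding in \eqref{eqn-filtering-phi-through-yoneda-embedding}.

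Third, I would verify compatibility of the coherence data. The strict 2-functors $\Phi'_\basecat$ and $L$ contribute only trivial coherence, so the only nontrivial piece is the composition coherence of the lax 2-functor $\bimodapx$. By Proposition~\ref{prop-bimod-approximation-is-quasi-iso-lax-on-tensor-and-hom}\ref{item-bimod-approx-cmps-coh-if-G-tensor-iso}, since every $\phi_a^*$ in the image of $\Phi'_\basecat \circ \Xi^{\prime P}$ is a tensor functor, the composition coherence morphisms of $\bimodapx$ on such $1$-morphisms are genuine isomorphisms. The unit coherence of $\bimodapx$ is already the identity. These observations make the comparison between the two compositions of bimodules natural and strict up to the canonical isomorphism of iterated tensor products. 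After postcomposing with $L$ these isomorphisms become homotopy equivalences in $\fcat\basecat$, giving the desired homotopy equivalence of 2-functors.

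The main obstacle will be the careful bookkeeping of coherence data when passing to perfect hulls via Proposition~\ref{prps-the-perfect-hull-of-a-dg-functor-of-dg-bicategories}, since $\bihperf$ introduces its own associators and unitors conjugated by the isomorphism of Lemma~\ref{lem:properties-of-upsilon}\ref{item-upsilon-commutes-with-composition}; however, these are all \emph{canonical} and thus unambiguous once the underlying identification on representables is established, so this is a matter of diligence rather than a conceptual difficulty.
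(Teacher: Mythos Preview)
Your overall strategy matches the paper's: reduce to the unhulled composite $\Phi'_\basecat \circ \Xi^{\prime P}$, identify this with the functor $\phi$ that factors as Yoneda followed by \eqref{eqn-filtering-phi-through-yoneda-embedding}, and then pass to perfect hulls. Your second and third steps are more detailed than the paper's treatment (the paper simply invokes the identification $\phi \cong \eqref{eqn-filtering-phi-through-yoneda-embedding} \circ \text{Yoneda}$ already established in the proof of Lemma~\ref{lemma-explicit-descriptions-of-p^n_a-q^n_a}), but they are correct.

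There is, however, one genuine gap. Your phrase ``then take perfect hulls on both sides'' via Proposition~\ref{prps-the-perfect-hull-of-a-dg-functor-of-dg-bicategories} gives you only that $(L \circ \bimodapx \circ \eqref{eqn-filtering-phi-through-yoneda-embedding} \circ \text{Yoneda})^*$ agrees with $(\Phi_\basecat \circ \Xi^P)$ on $1$-morphism categories. But the lemma asserts that the latter agrees with $L \circ \bimodapx \circ \eqref{eqn-filtering-phi-through-yoneda-embedding}$ itself, not with the extension of scalars of its restriction to representables. You still need to know that for a \dg functor $G\colon \hperf\A \to \B$, the extension of scalars $(G \circ \text{Yoneda})^*$ is homotopy equivalent to $\text{Yoneda}_\B \circ G$. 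This is precisely the ``fundamental fact'' the paper isolates and proves at the end of its argument, via the commutative square expressing functoriality of $G^*$ together with the observation that $\text{Yoneda}\colon \hperf\A \to \hperf(\hperf\A)$ is homotopy equivalent to $\text{Yoneda}^*$. Proposition~\ref{prps-the-perfect-hull-of-a-dg-functor-of-dg-bicategories} alone does not supply this; it only tells you how $\bihperf$ acts on a $2$-functor, not that $\bihperf$ applied to a restriction recovers the original functor. You should make this step explicit.
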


\begin{proof}
	By definition, $\Phi_\basecat \circ \Xi^P$ is the perfect hull of
	the $2$-functor
	$$ \bicat{Sym}_\basecat \xrightarrow{L \circ \smash[b]{\bimodapx}
		\circ \Phi'_\basecat \circ \Xi^{P'}_\basecat} \EnhCatKCdg. $$
	On the $1$-morphism categories $\ho \rightarrow \ho + n$, 
	the composition $\Phi'_\basecat \circ \Xi^{P'}_\basecat$
	is the functor 
	$$ \phi\colon \symbc{n} \rightarrow  
	\DGFun\left(\hperf(\symbc{\ho}),\, \hperf(\symbc{\ho+n})\right), $$
	defined in the proof of Lemma
	\ref{lemma-explicit-descriptions-of-p^n_a-q^n_a}. 
	Therefore, on the $1$-morphism categories $\ho \rightarrow \ho + n$, 
	the composition $\Phi_\basecat \circ \Xi^P$ is the functor
	$(L \circ \smash[b]{\bimodapx} \circ \phi)^*$. 
	
	Since $\phi$ is isomorphic to 
	$$ \symbc{n} \xrightarrow{\text{Yoneda}} \hperf\left(\symbc{n}\right) 
	\xrightarrow{\eqref{eqn-filtering-phi-through-yoneda-embedding}}
	\DGFun\left(\hperf(\symbc{\ho}),\, \hperf\left(\symbc{\ho +
		n}\right)\right) $$
	the desired assertion now follows from the following
	fundamental fact. Let $\A$ and $\B$ be any \dg categories and 
	$F\colon \A \rightarrow \B$ any \dg functor. If $F$ decomposes as 
	$$ \A \xrightarrow{\text{Yoneda}} \hperfA \xrightarrow{G} \B, $$
	for some \dg functor $G$, then $F^*$ is homotopy equivalent to
	$$ \hperfA \xrightarrow{G} \B \xrightarrow{\text{Yoneda}} \hperfB. $$
	To see this, consider the commutative square
	\begin{equation*}
		\begin{tikzcd}
			\hperfA 
			\ar{r}{G} 
			\ar{d}{\text{Yoneda}} 
			&
			\B
			\ar{d}{\text{Yoneda}}
			\\
			\hperf(\hperfA)
			\ar{r}{G^*}
			&
			\hperfB,
		\end{tikzcd}
	\end{equation*}
	and observe that the \dg functor
	$$\text{Yoneda}\colon \hperfA \rightarrow \hperf(\hperfA)$$
	is homotopy equivalent to the \dg functor
	\[
	\text{Yoneda}^* \colon \hperfA \rightarrow \hperf(\hperfA).
	\qedhere
	\]
\end{proof}

We can now construct the desired $1$-functor:

\begin{Corollary}\label{cor:KgpFock}
	The $2$-functor $\Phi_\basecat\colon \hcat\basecat \to \fcat\basecat$ from Theorem~\ref{thm:cat_Fock_representation} induces a $1$-functor
	\[
	\numGgp{\hcat\basecat,\,\kk} \to \End\biggl(\bigoplus_{\ho}  \numGgp{\symbc\ho,\, \kk}\biggr).
	\]
	In other words one obtains a representation of $\numGgp{\hcat\basecat,\,\kk}$ on $\bigoplus \numGgp{\symbc\ho,\, \kk}$.
\end{Corollary}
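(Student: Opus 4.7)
My plan is to descend the homotopy lax $2$-functor $\Phi_\basecat$ of Theorem~\ref{thm:cat_Fock_representation} to a $1$-functor at the level of numerical Grothendieck groups. First I apply $H^0$ to the $1$-morphism \dg categories of $\Phi_\basecat$; because $\Phi_\basecat$ is only homotopy lax, the coherence $2$-isomorphisms become identities after passing to $H^0$, so this still yields a strict $2$-functor between homotopy $2$-categories. Taking Grothendieck groups and using that any enhanced exact functor $\hperf(\symbc\ho)\to\hperf(\symbc{\ho'})$ preserves $\ker(\chi)$ by Lemma~\ref{lem:indmapnum} and therefore descends to a $\kk$-linear map $\numGgp{\symbc\ho,\,\kk}\to\numGgp{\symbc{\ho'},\,\kk}$, I obtain a natural functor
\[
  K_0(\hcat\basecat,\,\kk)\longrightarrow\End\Bigl(\bigoplus_\ho\numGgp{\symbc\ho,\,\kk}\Bigr).
\]

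The crux is to verify that this functor descends further to the quotient $\numGgp{\hcat\basecat,\,\kk}=K_0(\hcat\basecat,\,\kk)/I$ of Definition~\ref{def:knumheis}. By construction, $I$ is generated by the images under $\Xi^{\PP}$ and $\Xi^{\QQ}$ of classes in $\ker(\chi)\subseteq K_0(\bihperf(\bicat{Sym}_\basecat),\,\kk)$, and by adjunction between the $\PP$- and $\QQ$-generators it suffices to handle $\Xi^{\PP}$. Fixing $[E]\in\ker(\chi_{\symbc n})$, Lemma~\ref{lemma-the-description-of-composition-Phi-Xi^P} identifies the operator represented by $\Phi_\basecat\circ\Xi^{\PP}(E)$ on $\numGgp{\symbc\ho,\,\kk}$ with the induction operator $[F]\mapsto[\Ind_{\SymGrp n\times\SymGrp\ho}^{\SymGrp{n+\ho}}(E\otimes F)]$, so the problem reduces to showing that this class vanishes in $\numGgp{\symbc{n+\ho},\,\kk}$ for every $[F]$.

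To see this I test against an arbitrary $[H]\in\numGgp{\symbc{n+\ho},\,\kk}$ and apply the $(\Ind,\Res)$-adjunction together with the partial-$\homm$ adjunction on $\symbc n\otimes\symbc\ho$ to obtain
\[
  \chi\bigl(\Ind(E\otimes F),\,H\bigr)
  =\chi_{\symbc n\otimes\symbc\ho}(E\otimes F,\,\Res H)
  =\chi_{\symbc n}\bigl(E,\,\rder\homm_{\symbc\ho}(F,\,\Res H)\bigr).
\]
Smoothness and properness of $\basecat$ (and hence of $\symbc\ho$) guarantee that $\rder\homm_{\symbc\ho}(F,\,\Res H)\in D_c(\symbc n)$, so the last expression vanishes by the hypothesis $[E]\in\ker(\chi_{\symbc n})$. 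Thus $[E\otimes F]$ is numerically trivial, the ideal $I$ is killed, and the desired factorisation through $\numGgp{\hcat\basecat,\,\kk}$ is obtained.

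The main obstacle will be the careful verification of the second equality in the display above: that the Euler pairing on a tensor product of smooth and proper \dg categories admits the $\Ind/\Res$ and partial-$\homm$ form just claimed, and that the partial homs really do land in $D_c(\symbc n)$. This is a Morita-theoretic fact flowing from smoothness and properness, but it occupies the technical heart of the argument; everything else is a formal consequence of the $2$-functorial structure of $\Phi_\basecat$ and of the definitions of $\numGgp{\hcat\basecat,\,\kk}$ and $\numGgp{\symbc\ho,\,\kk}$.
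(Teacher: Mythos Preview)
Your proposal is correct, and your overall architecture matches the paper's: first descend $\Phi_\basecat$ to a functor $K_0(\hcat\basecat,\,\kk)\to\End\bigl(\bigoplus_\ho\numGgp{\symbc\ho,\,\kk}\bigr)$, then check that the ideal $I$ of Definition~\ref{def:knumheis} is killed. There are two points worth noting.

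First, a minor imprecision: your appeal to Lemma~\ref{lem:indmapnum} for the claim that \emph{any} enhanced exact functor between the $\hperf(\symbc\ho)$ preserves $\ker\chi$ is not literally what that lemma says---it only treats $F^*$ and $F_*$ for a genuine \dg functor $F$. The paper instead argues that every $1$-morphism in the image of $\Phi_\basecat$ is a left- and right-perfect bimodule (because the generating bimodules for $\PP_a,\QQ_a,\RR_a$ are, and this persists under the $\hperf$-hull), hence has a left adjoint, and then the computation from Lemma~\ref{lem:indmapnum} applies. Your stronger claim happens to be true here since all the $\symbc\ho$ are smooth and proper, so every right-perfect bimodule is automatically left-perfect; but you should say this rather than cite the lemma directly.

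Second, and more interestingly, your treatment of the key step---showing $[\Ind(E\otimes F)]\in\ker\chi$ when $[E]\in\ker\chi$---differs from the paper's. You compute directly via the $(\Ind,\Res)$ and partial-$\homm$ adjunctions, reducing to $\chi_{\symbc n}(E,\,\rder\homm_{\symbc\ho}(F,\Res H))=0$, and you correctly flag that the technical content lies in checking that the partial $\homm$ lands in $D_c(\symbc n)$. The paper instead uses a symmetry trick: it observes
\[
  \Ind^{\SymGrp{\ho+n}}_{\SymGrp n\times\SymGrp\ho}(E\otimes F)\;\simeq\;\Ind^{\SymGrp{\ho+n}}_{\SymGrp\ho\times\SymGrp n}(F\otimes E)\;\simeq\;\Phi_\basecat\circ\Xi^{\PP}(F)(E),
\]
and then invokes the already-established first step (that every functor in the image of $\Phi_\basecat$ preserves $\ker\chi$) applied to $\Phi_\basecat\circ\Xi^{\PP}(F)$. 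This is slicker: it avoids the partial-$\homm$ computation entirely and reuses the first half of the proof. Your route is perfectly valid and more self-contained, but the paper's swap-and-reuse manoeuvre is worth knowing.
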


\begin{proof}
	Functoriality of Grothendieck groups gives a $1$-functor 
	$$ \Phi_{\basecat}\colon \mathrm{K}_0(\hcat\basecat,\,\kk) \to \End \left(
	\bigoplus_{\ho} \mathrm{K}_0(\symbc\ho,\, \kk)\right).$$
	We claim that any morphism $\mathrm{K}_0(\symbc\ho,\, \kk) \rightarrow 
	K^0(\symbc{M}, \, \kk)$ in its image takes 
	the kernel of the Euler pairing $\chi$ on $\mathrm{K}_0(\symbc\ho,\, \kk)$ 
	to its kernel on $K^0(\symbc{M}, \, \kk)$. 
	As per Section~\ref{sec:symmetric-powers-of-dg-categories}, 
	the $1$-morphisms to which $\Phi_\basecat$ maps generating $1$-morphisms
	$\PP$s, $\QQ$s, and $\RR$s of $\hcat*\basecat$ are left- and right-perfect
	bimodules. Hence the same is true of all $1$-morphisms
	in $\Phi_\basecat(\hcat*\basecat)$.
	By construction, $\Phi_\basecat(\hcat\basecat)$ lies in the
	$\hperf$-hull of $\Phi_\basecat(\hcat*\basecat)$, and thus the
	$1$-morphisms in $\Phi_\basecat(\hcat\basecat)$ are also left- and
	right-perfect bimodules. By \cite[Theorem
	4.1]{AnnoLogvinenko-BarCategoryOfModulesAndHomotopyAdjunctionForTensorFunctors}
	the corresponding exact functors $\catDc(\symbc\ho) \rightarrow
	\catDc(\symbc{M})$ have left adjoints. 
	Arguing as in Lemma \ref{lem:indmapnum}, we see that the induced maps
	$\mathrm{K}_0(\symbc\ho,\, \kk) \rightarrow K^0(\symbc{M}, \, \kk)$  
	take $\ker \chi$ to $\ker \chi$. 
	
	We thus have a $1$-functor 
	$$ \Phi_{\basecat}\colon \mathrm{K}_0(\hcat\basecat,\,\kk) \to 
	\End \left( \bigoplus_{\ho} \numGgp{\symbc\ho,\, \kk} \right).$$
	It remains to show that this functor descends to
	$\numGgp{\hcat\basecat,\,\kk}$. For the definition of the latter, 
	see Section~\ref{subsec:grothendieck_groups}. 
	
	Let $E \in \hperf(\symbc n)$ be in the kernel of the Euler pairing. 
	Let us view $E$ as a $1$-morphism $\ho \rightarrow \ho + n$ in 
	$\bihperf(\bicat{Sym}_\basecat)$. 
	By Lemma \ref{lemma-the-description-of-composition-Phi-Xi^P}, 
	$\Phi_{\basecat} \circ \Xi^P(E)$ is an enhanced
	functor whose underlying exact functor is
	$$ \catDc(\symbc{\ho}) \xrightarrow{E \otimes (-)}
	\catDc(\symbc{n} \otimes \symbc{\ho}) 
	\xrightarrow{\Ind^{S_{\ho+n}}_{S_n \times S_{\ho}}} 
	\catDc(\symbc{\ho+n}). $$ 
	We have to show that its image lies in $\ker \chi$, 
	and thus the induced map of $K^{\text{num}}_0$ is zero. 
	Let $F \in \hperf(\symbc{\ho})$ and observe that 
	$$ \Phi_{\basecat} \circ \Xi^P(E)(F) 
	\simeq \Ind^{S_{\ho+n}}_{S_n \times S_{\ho}}(E \otimes F) \simeq 
	\Ind^{S_{\ho+n}}_{S_{\ho} \times S_{n}}(F \otimes E) 
	\simeq \Phi_{\basecat} \circ \Xi^P(F)(E). $$
	Above we already established that the underlying exact functor
	of any $1$-morphism in the image of $\Phi_\basecat$ takes 
	$\ker \chi$ to $\ker \chi$. Thus $\Phi_{\basecat} \circ \Xi^P(F)(E)$
	lies in $\ker \chi$, and hence so does 
	$\Phi_{\basecat} \circ \Xi^P(E)(F)$. 
	By adjunction, $\Phi_{\basecat} \circ \Xi^Q(E)(F)$ lies in $\ker \chi$
	as well. 
	
	We have now established that on the level of Grothendieck groups
	$\Phi_{\basecat}$ kills the image under $\Xi^P$ and $\Xi^Q$ of
	the kernel of the Euler pairing on $\mathrm{K}_0(\bihperf(\bicat{Sym}_\basecat), \kk)$. 
	Since $\numGgp{\hcat\basecat,\,\kk}$ is the quotient of 
	$\mathrm{K}_0(\hcat\basecat,\,\kk)$ by the two-sided ideal generated by this
	image, we conclude that $\Phi_{\basecat}$ descends to a functor
	$\numGgp{\hcat\basecat,\,\kk} \rightarrow \End \left( \bigoplus_{\ho}
	\numGgp{\symbc\ho,\, \kk} \right)$, as desired. 
\end{proof}

\subsection{Genuine categorification}
\label{subsec:genuine_categorification}

Consider $\Phi_{\basecat}$ as homomorphism of algebras and compose it with the algebra homomorphism $\pi\colon \halg\basecat
\to \numGgp{\hcat\basecat,\, \kk}$ of Section~\ref{subsec:grothendieck_groups} to obtain a homomorphism
\[
\halg\basecat \to \End\biggl(\bigoplus_{\ho}  \numGgp{\symbc\ho,\, \kk}\biggr).
\]
The vector $1 \in \numGgp{\symbc 0,\, \kk} \cong \kk$ is annihilated by all elements of $\halg\basecat^- \setminus \{1_0\}$ and is kept invariant by $1_0$.
Lemma~\ref{lem:fock_embeds} then implies that there is  a graded
$\halg\basecat$-module embedding
\begin{equation}\label{eq:fock_embedding}
	\phi\colon \falg{\basecat} = 
	\bigoplus_{\ho} \falg{\basecat}^{\ho} \hookrightarrow
	\bigoplus_{\ho} \numGgp{\symbc\ho,\, \kk}
\end{equation}
of the appropriate classical Fock space.

The following is a generalisation of \cite[Section~3.1]{krug2018symmetric}. 
For a partition $\lambda$, write $r(\lambda)_i$ for the number of parts of $\lambda$ of size $i$.
\begin{Corollary}\label{cor:fockcateg} 
	Suppose that the following dimension formula holds:
	\[
	\dim \numGgp{\symbc\ho,\, \kk} =
	\sum_{\lambda \dashv \ho } \prod_{i} \dim \Sym^{r(\lambda)_i} \numGgp{\basecat,\, \kk}   \]
	where the sum runs over all partitions $\lambda$
	of $\ho$ and the product over all sizes $i$ of parts of $\lambda$.
	Then \eqref{eq:fock_embedding} is an $\halg{\basecat}$-module
	isomorphism. That is, $\fcat\basecat$ categorifies $\falg\basecat$.
\end{Corollary}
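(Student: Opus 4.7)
The plan is to upgrade the injection $\phi$ from~\eqref{eq:fock_embedding} to an isomorphism by a dimension count in each weight. By Corollary~\ref{cor:KgpFock} the categorical Fock space gives an $\halg\basecat$-action on $\bigoplus_{\ho \ge 0} \numGgp{\symbc\ho,\,\kk}$, and Lemma~\ref{lem:fock_embeds} applied to the vacuum vector $1 \in \numGgp{\sym^0\basecat,\,\kk} \cong \kk$ produces $\phi$ as an injection of $\halg\basecat$-modules. Both source and target decompose compatibly under the idempotents $1_\ho$, with the weight-$\ho$ part of the target being the finite-dimensional space $\numGgp{\symbc\ho,\,\kk}$. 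Once the weight-$\ho$ components are shown to have equal finite dimension, $\phi_\ho$ is forced to be a linear isomorphism, and summing over $\ho$ gives the required isomorphism of $\halg\basecat$-modules.

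The main step is to compute $\dim (\falg{\basecat})_\ho$ in terms of $d := \dim \numGgp{\basecat,\,\kk}$. Picking a basis $\{e_1, \dots, e_d\}$ of $\numGgp\basecat$ and applying relation~\eqref{eq:heisrel1} inductively, every $p_a^{(n)}$ rewrites as a polynomial in $\{p_{e_i}^{(k)} : 1 \le i \le d,\, k \ge 1\}$. These generators are algebraically independent by faithfulness of the Fock space representation, classical in the rank-one case (see \cite[Section~2]{frenkel1981two}) and extended to higher rank by the diagonalisation of Corollary~\ref{cor:halg_iso_to_symmetric} together with the tensor product decomposition of Fock spaces of direct sums of lattices noted in the discussion after that corollary. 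Thus $\falg\basecat$ is the polynomial algebra $\kk[p_{e_i}^{(k)}]$ with $p_{e_i}^{(k)}$ in weight $k$, and its weight-$\ho$ dimension is the coefficient of $q^\ho$ in the Hilbert series $\prod_{k \ge 1}(1 - q^k)^{-d}$. Expanding this product over partitions of $\ho$ yields a sum over $\lambda \vdash \ho$ of products of symmetric power dimensions of $\numGgp{\basecat,\,\kk}$, which under the hypothesis equals $\dim \numGgp{\symbc\ho,\,\kk}$.

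With the weight-$\ho$ dimensions equal, the injective linear map $\phi_\ho$ between two finite-dimensional vector spaces of the same dimension is an isomorphism, giving the module isomorphism $\phi \colon \falg\basecat \isoto \bigoplus_\ho \numGgp{\symbc\ho,\,\kk}$. This is exactly the claim that $\fcat\basecat$ categorifies $\falg\basecat$. The main obstacle is really only the bookkeeping to match the partition-indexed sum arising from the Hilbert series expansion with the exact combinatorial form stated in the hypothesis; once this identification is pinned down, the rest of the argument is a formal consequence of the injectivity of $\phi$ already obtained in the proof of Theorem~\ref{thm:injective}.
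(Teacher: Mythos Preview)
Your proof is correct and follows essentially the same approach as the paper: both compute the weight-$\ho$ dimension of $\falg\basecat$ as the coefficient of $q^\ho$ in $\prod_{m \ge 1}(1-q^m)^{-r}$ with $r = \dim \numGgp{\basecat,\,\kk}$, expand this over partitions to match the hypothesised dimension of $\numGgp{\symbc\ho,\,\kk}$, and conclude from the already-established injectivity of $\phi$. Your version is slightly more explicit about why $\falg\basecat$ is a polynomial algebra on the $p_{e_i}^{(k)}$, but the structure of the argument is identical.
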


\begin{proof}
	The assumption and \eqref{eq:fockdegk} implies that the dimensions of 
	the graded vector spaces $\falg{\basecat}$ and $\bigoplus_{\ho} \numGgp{\symbc\ho, \kk}$ agree in each degree. Hence, these graded spaces must be isomorphic.
\end{proof}

\begin{Example}
	\label{ex:fockcatdim}
	The assumption of Corollary~\ref{cor:fockcateg}  is satisfied in the following cases:
	\begin{enumerate}
		\item 
		\label{it:fockcatdim1}
		Let $X$ be a smooth projective variety and $\basecat = \catDGCoh{X}$ as in Examples~\ref{ex:Krug_part1} and \ref{ex:Krug_part2}.
		Assume moreover that the numerical Grothendieck group\index{numerical Grothendieck group} satisfies a Künneth formula:
		\[
		\numGgp{ \basecat^{\otimes \ho}} \cong (\numGgp{\basecat})^{\otimes \ho}.
		\]
		This is the case, by Example~\ref{ex:mukai}, if the Chow
		groups of $X$ tensored with $\mathbb{Q}$ satisfy the
K{\"u}nneth formula\index{K{\"u}nneth formula}.
		A sufficient condition for this is that the Chow motive of $X$ is a
		summand of a direct sum of Tate motives \cite{totaro2016motive}. 
		Note that this is a very strong assumption which is closely related to
		$\catDbCoh{X}$ having a full exceptional collection\index{full exceptional collection}. It is already
		false for elliptic curves as we see in the counterexample in 
		\S\ref{section:counterexample}. 
		
		As the Chern character is additive on disjoint varieties, 
		by Hirzebruch–-Riemann-–Roch\index{Hirzebruch--Riemann--Roch theorem} we can replace $\mathrm{K}_0$ with
		$\mathrm{K}^{\text{num}}_0$ in  \cite[Theorem 1]{vistoli1991higher}
		to get a direct sum decomposition:
		\[
		\numGgp{\symbc\ho,\, \kk} \cong \bigoplus_{\lambda \dashv \ho } \bigotimes_i \Sym^{ r(\lambda)_i} \numGgp{\basecat,\, \kk} 
		\]
		where the sum runs over all partitions $\lambda$ of $\ho$ and the product 
		over all sizes $i$ of parts of $\lambda$.
		
		Hence, $\basecat$ satisfies the assumption of Corollary~\ref{cor:fockcateg}.
		%
		\item 
		\label{it:fockcatdim2}
		Let $\Gamma \subset \mathrm{SL}(2,\mathbb{C})$ be a finite subgroup and $\basecat$ as in Examples~\ref{ex:CautisLicata_part1}, \ref{ex:CautisLicata_part3} and \ref{ex:CautisLicata_part2}. Then the dimension assumption for the usual K-groups follows from the combination of \cite[Proposition 5]{wang2000equivariant},  Göttsche's formula for the Betti numbers of Hilbert schemes\index{Hilbert schemes} and the fact the topological and algebraic K-theories agree on the minimal resolution of the quotient variety $\mathbb{C}^2/\Gamma$ (as both are described by the representation theory of $G$  \cite[Chapter 4]{nakajima1999lectures}). The Euler form equals the intersection form on the resolution, which is given by the appropriate finite type Cartan matrix. This is known to be non-degenerate. Hence, the kernel of $\chi$ is trivial in each case, and the dimension assumption descends to the numerical K-groups.
	\end{enumerate}
\end{Example}

\begin{Remark}
	An alternative way to obtain Example~\ref{ex:fockcatdim}~\ref{it:fockcatdim1} in many cases is to combine the main result of \cite{brundan2018degenerate} proving that the map
	\[ \pi: \halg\basecat \to \numGgp{\hcat\basecat,\kk} \] 
	is an isomorphism when $X=\Spec(\kk)$ is a point (and hence also when $\catDbCoh{X}$ has a full exceptional collection) with our Theorem~\ref{thm:piisothenphi} below.
\end{Remark}



\subsection{A counterexample}
\label{section:counterexample}
We now give an example of $\pi$ not being an isomorphism.
Let $X$ be a smooth projective curve and $n \in \ZZ_{>0}$. Denote by 
\[X^{(\ho)}=X^{\ho}/S_{\ho}\]
the $\ho$-th symmetric power of $X$. This is a smooth projective variety of dimension $\ho$. 

{ Let $\lambda$ be any partition of $\ho$. Write $r_i$ for the number 
	of parts of size $i$ in $\lambda$. Define the closed subvariety 
	\[X[\lambda] \subset X^{\ho}\] 
	to be the fixed point locus of some $\sigma \in S_{\ho}$ of cycle type
	$\lambda$. Different choices of $\sigma$ produce canonically
	isomorphic $X[\lambda]$. Explicitly, $X[\lambda]$ consists of $(x_1,
	\dots, x_{\ho})$ where $x_i = x_{\sigma(i)}$ for all $i \in 1, \dots ,
	\ho$. Thus $X[\lambda] \simeq X^k$ where $k$ is the total number of parts 
	in $\lambda$. The action of the centraliser $C(\sigma) \subset
	S_{\ho}$ on $X^{\ho}$ restricts to $X[\lambda]$ as 
	the action of $S_{\lambda}=\prod_i S_{r_i}$ which permutes 
	the factors of $X^k$ which correspond to the parts of the same size in
	$\lambda$.} The quotient variety is 
\[ X[\lambda]/S_{\lambda}= \prod_i X^{(r_i)}.\] 
For any ordering $\lambda^1,\dots,\lambda^p$ of partitions of $\ho$ 
refining the dominance order, there is a semiorthogonal decomposition
\[
\begin{aligned}
	\catDbCoh{[X^{\ho}/S_{\ho}]} & =\left\langle \catDbCoh{X[\lambda^1]/S_{\lambda^1}},\dots, \catDbCoh{X[\lambda^p]/S_{\lambda^p}}  \right\rangle \\
	& =\left\langle \mathrm{D}^{\mathrm{b}}_{\mathrm{coh}}\left(\prod_i X^{(r(\lambda^1)_i)}\right),\dots, \mathrm{D}^{\mathrm{b}}_{\mathrm{coh}}\left(\prod_i X^{(r(\lambda^p)_i)}\right)  \right\rangle
\end{aligned}
\]
by \cite{polishchuk2019semiorthogonal}. As $\ho=2$ has two partitions ($\lambda^1=(2)$ and $\lambda^2=(1,1)$), we have for the second symmetric quotient stack the semiorthogonal decomposition
\
\begin{equation}
	\label{eq:dbcohcurvesym2}
	\catDbCoh{[X^2/S_2]}= \left\langle \catDbCoh{X},\catDbCoh{X^{(2)}} \right\rangle.\end{equation}

Let now $X$ be an elliptic curve. It is known that for each $\ho>1$ the Abel-Jacobi map realizes $X^{(\ho)}$ as a $\ps{\ho-1}$-bundle over $X$, see \cite[Section~1.1]{catanese1993symmetric}. Hence, by \cite{orlov1992projective} there is a semiorthogonal decomposition
\[ \catDbCoh{X^{(\ho)}}=\langle \underbrace{ \catDbCoh{X}, \dots,\catDbCoh{X} }_{\ho \textrm{ times } } \rangle.\]
Combining this for $\ho=2$ with \eqref{eq:dbcohcurvesym2}, we obtain a semiorthogonal decomposition
\begin{equation}
	\label{eq:dbcohx2dec}	
	\catDbCoh{[X^2/S_2]} = \left\langle \catDbCoh{X}, \catDbCoh{X}, \catDbCoh{X} \right\rangle.
\end{equation}


Recall that 
\[
\begin{aligned}
	\mathrm{K}_0(X) & \xrightarrow{\sim} & \ZZ\oplus  \mathrm{Pic}(X)   \\
	[F] & \mapsto & ( \mathrm{rk} F, \det F)
\end{aligned}
\]
is an isomorphism \cite[{ Exercise~II.6.11}]{hartshorne1977algebraic}. From this we get that 
\[
\begin{aligned}
	\numGgp{X} & \xrightarrow{\sim} &  \ZZ\oplus  \mathrm{Pic}(X)/\mathrm{Pic}^0(X) \cong \ZZ  \oplus \ZZ  \\
	[F] & \mapsto & (\mathrm{rk} F, \deg \det F)
\end{aligned}
\]
is also an isomorphism. Here $\mathrm{Pic}(X)/\mathrm{Pic}^0(X)=NS(X)$ is the Neron-Severi group\index{Neron-Severi group} of X. Hence, by \eqref{eq:dbcohx2dec}
\[ \dim \numGgp{[X^2/S_2],\kk}=6. \]
On the other hand, the dimension of the degree 2 part of the classical Fock space of $X$ by \eqref{eq:fockdegk} is
\[ \dim \Sym^{1}\numGgp{X,\kk} + \dim \Sym^{2}\numGgp{X,\kk}= \binom{2+1-1}{1}+\binom{2+2-1}{2}=5.\]
Therefore, $\phi$ cannot be an isomorphism when $X$ is an elliptic curve. By Theorem~\ref{thm:piisothenphi} below the same holds for $\pi$. 

\section{The Fock space as a quotient}\label{subsec:fock_cat_2}

The following constructions are easier to express in a monoidal
setting, rather than in the $2$-categorical setting we worked in so far.
Thus, let $\monhcat\basecat$ be the $\HoDGCatone$-monoidal \dg 
$1$-category obtained from $\hcat\basecat$ 
by identifying all objects and all $1$-morphism categories 
$\Hom(\ho,\, \ho+n)$ for fixed $n \in \ZZ$. Concretely, set
\[
\monhcat\basecat = \bigoplus_{n\in \ZZ} \Hom_{\hcat\basecat}(0,n)
\]
with the monoidal structure given by the horizontal composition in $\hcat\basecat$ via the identification
\[
\begin{multlined}
\Hom_{\hcat\basecat}(0,n_1) \otimes \Hom_{\hcat\basecat}(0,n_2) \cong
\Hom_{\hcat\basecat}(0,n_1) \otimes \Hom_{\hcat\basecat}(n_1,n_1+n_2) \\ \to
\Hom_{\hcat\basecat}(0,n_1+n_2).
\end{multlined}
\]

Applying the same flattening\index{flattening} procedure to $\fcat\basecat$, we obtain 
a \dg category 
\begin{equation}
	\label{eqn-monfcat}
	\monfcat\basecat = \bigoplus_{n \geq 0} \hperf \perfbar (\symbc n).
\end{equation}
In $\fcat\basecat$ we do not have a
$\homm$-category isomorphism \[\Hom_{\fcat\basecat}(0,n_2) \cong
\Hom_{\fcat\basecat}(n_1,n_1+n_2).\] However, there is a natural
functor between the two:
\begin{small}
	\begin{multline*}
		\Hom_{\fcat\basecat}(0,n_2) = \hperf \perfbar (\symbc{n_2}) \\
		 \xrightarrow{(\symbc{n_1} \otimes_k (-))^{*}}
		\hperf \bigl(\bimodbar{\symbc{n_1}}{(\symbc{n_1}\otimes_k \symbc{n_2})}\bigr)_{\lfrp} \\
		 \xrightarrow{\Ind^{S_{n_1 + n_2}}_{S_{n_1},S_{n_2}}}
		\hperf \bigl(\bimodbar{\symbc{n_1}}{\symbc{n_1+n_2}}\bigr)_{\lfrp}
		= \Hom_{\fcat\basecat}(n_1,n_1 + n_2). 
	\end{multline*}
\end{small}
Using it, we obtain from the $1$-composition of $\fcat\basecat$ 
a monoidal structure on $\monfcat\basecat$ given by 
$$ \hperf \perfbar (\symbc{n_1}) \otimes \hperf \perfbar (\symbc{n_2})
\rightarrow \hperf \perfbar (\symbc{n_1 + n_2}), $$ 
induced by the natural inclusion $\symbc{n_1} \otimes \symbc{n_2}
\hookrightarrow \symbc{n_1 + n_2}$.

Applying the flattening to the $2$-functor 
$\Phi_\basecat\colon \hcat\basecat \rightarrow \fcat\basecat$
constructed in Chapter~\ref{sec:cat_fock} we obtain a \dg functor
$\Phi_\basecat\colon \monhcat\basecat \rightarrow \monfcat\basecat$.
One can readily check that it is homotopy monoidal with 
respect to the monoidal structures on $\monhcat\basecat$ and 
$\monfcat\basecat$ described above. 

Next, take the functor $L\colon \DGBiMod_{\lfrp} \rightarrow \EnhCatKCdg$
defined in Section~\ref{sec:magic-wand}, restrict it to the
categories $\symbc{n}$ and apply the flattening. 
We obtain a \dg functor 
$$ \bigoplus_{n} \perf \symbc{n} \xrightarrow{L}
\bigoplus_{n} \perfbar  \symbc{n}. $$
Precomposing it with the inclusions $\hperf \symbc{n} \hookrightarrow
\perf \symbc{n}$ we obtain the quasi-equivalence  
$$ \bigoplus_{n} \hperf \symbc{n} \xrightarrow{L}
\bigoplus_{n} \perfbar  \symbc{n}. $$
Since for any \dg category $\A$ the Yoneda embedding $\perfbar \A
\rightarrow \hperf \perfbar \A$ is a quasi-equivalence, we further
obtain a quasi-equivalence
\begin{equation}
	\label{eqn-quasi-equiv-oplus-hperf-S^nV-to-monfcat}
	\bigoplus_{n} \hperf \symbc{n}
	\xrightarrow{L}
	\bigoplus_{n} \perfbar \symbc{n}
	\xrightarrow{\text{Yoneda}} 
	\bigoplus_{n} \hperf \perfbar \symbc{n}
	= \monfcat\basecat. 
\end{equation}

\begin{Corollary}
	\label{cor-oplus-hperf-S^nV-is-a-retract-of-H_V}
	The \dg functor  
	$$ \bigoplus_{n} \hperf \symbc{n}
	\xrightarrow{\Xi^P}
	\monhcat\basecat
	\xrightarrow{\Phi_{\basecat}}
	\monfcat\basecat, $$
	filters through the quasi-quivalence 
	$\eqref{eqn-quasi-equiv-oplus-hperf-S^nV-to-monfcat}$ 
	as a functor homotopic to 
	\[\Id_{\bigoplus_{n} \hperf \symbc{n}}.\] 
\end{Corollary}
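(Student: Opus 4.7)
The plan is to apply Lemma \ref{lemma-the-description-of-composition-Phi-Xi^P} with $\ho = 0$ and trace the resulting identification through the flattening construction. In the flattened setting, $\Hom_{\hcat\basecat}(0, n)$ corresponds to the summand indexed by $n$ in $\monhcat\basecat$, and the lemma identifies the restriction of $\Phi_{\basecat} \circ \Xi^P$ to $\hperf(\symbc{n})$, up to homotopy, with the composition of the DG functor \eqref{eqn-filtering-phi-through-yoneda-embedding} at $\ho = 0$ followed by $L \circ \bimodapx$. Since \eqref{eqn-quasi-equiv-oplus-hperf-S^nV-to-monfcat} is a quasi-equivalence, any DG functor into $\monfcat\basecat$ filters through it uniquely up to homotopy; the content of the corollary is therefore equivalent to showing that $\Phi_{\basecat} \circ \Xi^P$ is homotopic to \eqref{eqn-quasi-equiv-oplus-hperf-S^nV-to-monfcat} itself.

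Next, the plan is to trace an object $E \in \hperf(\symbc{n})$ through the identification explicitly. Using $\symbc{0} = \kk$ and $\hperf(\kk) = \perf(\kk)$, and observing that the inclusion $i_{n,0}^{n}\colon \symbc{n} \otimes \kk \to \symbc{n}$ is the identity, the DG functor \eqref{eqn-filtering-phi-through-yoneda-embedding} at $\ho = 0$ sends $E$ to the functor $F \mapsto E \otimes_\kk F$ from $\hperf(\kk)$ to $\hperf(\symbc{n})$. Applying $\bimodapx$ evaluates this functor at the diagonal bimodule of $\kk$, which is $\kk$ itself, yielding $E \otimes_\kk \kk \simeq E$ as an object of $\perf(\symbc{n})$, naturally in $E$. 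Consequently $\bimodapx \circ \eqref{eqn-filtering-phi-through-yoneda-embedding}$ is homotopic to the natural inclusion $\hperf(\symbc{n}) \hookrightarrow \perf(\symbc{n})$, and post-composing with $L$ and the Yoneda embedding recovers exactly the definition of the quasi-equivalence \eqref{eqn-quasi-equiv-oplus-hperf-S^nV-to-monfcat}.

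The main obstacle is bookkeeping rather than conceptual: one must check that the homotopies produced by Lemma \ref{lemma-the-description-of-composition-Phi-Xi^P} and by the composition coherence of $\bimodapx$ (controlled by Proposition \ref{prop-bimod-approximation-is-quasi-iso-lax-on-tensor-and-hom}) assemble coherently after the flattening to give an honest homotopy of DG functors in $\monfcat\basecat$, not merely piecewise identifications on each summand. This is a straightforward if tedious verification, since the compositions we need to control factor through tensor functors, on which $\bimodapx$ is strictly monoidal by Proposition \ref{prop-bimod-approximation-is-quasi-iso-lax-on-tensor-and-hom}\ref{item-bimod-approx-cmps-coh-if-G-tensor-iso}, and no new inputs beyond those already developed in Sections \ref{section-bimodule-approximation} and \ref{subsec:catfockconst} are required.
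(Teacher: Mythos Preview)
Your proposal is correct and follows essentially the same approach as the paper: invoke Lemma~\ref{lemma-the-description-of-composition-Phi-Xi^P} at $\ho = 0$ and observe that the composition $\bimodapx \circ \eqref{eqn-filtering-phi-through-yoneda-embedding}$ on $\hperf(\symbc{n})$ is the identity (your computation $E \mapsto (F \mapsto E \otimes_\kk F) \mapsto E \otimes_\kk \kk \simeq E$ is exactly this). The paper's proof is terser---it simply asserts that composition is the identity functor---whereas you spell out the trace and add a coherence caveat, but the substance is the same.
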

\begin{proof}
	This follows from the proof of Lemma
	\ref{lemma-the-description-of-composition-Phi-Xi^P} and the fact
	that the composition 
	$$ 
	\hperf\left(\symbc{n}\right) 
	\xrightarrow{\eqref{eqn-filtering-phi-through-yoneda-embedding}}
	\DGFun\left(\hperf(\kk),\, \hperf\left(\symbc{n}\right)\right)
	\xrightarrow{\bimodapx}
	\hperf\left(\symbc{n}\right) 
	$$
	is the identity functor. 
\end{proof}
\begin{Remark}
	Corollary \ref{cor-oplus-hperf-S^nV-is-a-retract-of-H_V} implies, 
	in particular, that the DG-category $\bigoplus \hperf \symbc{n}$ is 
	a homotopy retract of $\monhcat\basecat$, that is --- a retract in the category 
	$\HoDGCatone$. 
	
	In particular, $\hperf
	\basecat$ itself is a homotopy retract of $\monhcat\basecat$. 
	Thus, on the level of underlying triangulated categories, we have
	a faithful embedding 
	$\catDc(\basecat) \hookrightarrow \catDc(\monhcat\basecat)$. 
\end{Remark}

As explained in Section~\ref{subsec:heisenberg_algebra},
the classical Fock space $\cfalg\basecat$ is isomorphic to $\chalg\basecat/I$ 
where $I$ is the left ideal generated by the $q_{[a]}^{(n)}$ for $[a] \in \numGgp{\basecat,\,\kk}$ and $n > 0$.
Moreover, as seen in Section~\ref{subsec:grothfock}, we have
an embedding $\phi\colon \cfalg\basecat \hookrightarrow \bigoplus_{\ho} \numGgp{\symbc\ho,\, \kk}$.

Motivated by this, we define 
\[
\widetilde{\fcat\basecat} \coloneqq \monhcat\basecat/\mathcal I,
\]
where $\mathcal I$ is the left ideal generated by objects $\QQ_a$ for $a
\in \basecat$. 

\begin{Lemma}
	\label{lemma-oplus-hperf-S^n-V-homotopy-retract-of-tilde-F_V}
	The \dg category $\bigoplus \hperf \symbc{n}$ is a homotopy retract 
	of $\widetilde{\fcat\basecat}$. Specifically, the following
	composition is a homotopy retraction:
	\begin{equation}
		\label{eqn-homotopy-retraction-of-tildeF_V-onto-oplus-hperfS^nV}
		\bigoplus \hperf \symbc{n}
		\xrightarrow{\Xi^P}
		\monhcat\basecat
		\xrightarrow{\text{\rm Drinfeld}}  
		\widetilde{\fcat\basecat}. 
	\end{equation}
	Moreover, this composition is quasi-essentially surjective on
	objects.
\end{Lemma}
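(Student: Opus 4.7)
The plan is to construct the retraction $r$ using a ``vacuum expectation'' map and then descend it through the Drinfeld quotient via its universal property. Consider the composition
\[
  \Psi \coloneqq \Bigl( \monhcat\basecat \xrightarrow{\Phi_\basecat} \monfcat\basecat \xrightarrow{\simeq} \textstyle\bigoplus_n \hperf \symbc{n} \Bigr),
\]
where $\Phi_\basecat$ is the (flattened) image of the homotopy lax $2$-functor of Theorem~\ref{thm:cat_Fock_representation} and the second arrow is a homotopy inverse of the quasi-equivalence \eqref{eqn-quasi-equiv-oplus-hperf-S^nV-to-monfcat}. By Corollary~\ref{cor-oplus-hperf-S^nV-is-a-retract-of-H_V}, $\Psi \circ \Xi^P$ is homotopic to the identity of $\bigoplus_n \hperf \symbc{n}$. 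It therefore suffices to produce a \dg functor $r\colon \widetilde{\fcat\basecat} \to \bigoplus_n \hperf \symbc{n}$ whose precomposition with the Drinfeld quotient map $\monhcat\basecat \to \widetilde{\fcat\basecat}$ is homotopic to $\Psi$, for then $r$ provides the desired retraction of \eqref{eqn-homotopy-retraction-of-tildeF_V-onto-oplus-hperfS^nV}.

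By the universal property of the Drinfeld quotient (Theorem~\ref{theorem-main-properties-of-drinfeld-quotients}\ref{item-universal-property-with-respect-to-morphisms-killing-A}, compatibly extended in Theorem~\ref{theorem-the-universal-properties-of-monoidal-drinfeld-quotient}\ref{item-bicat-unique-lifting-property-for-functors-killing-the-ideal}), such an $r$ exists precisely when $\Psi$ sends every object of $\mathcal I$ to a null-homotopic object. For the generator $\QQ_a$, viewed as an object of $\Hom_{\hcat\basecat}(0,-1) \subset \monhcat\basecat$, the image $\Phi_\basecat(\QQ_a)$ would land in the $n=-1$ summand of $\monfcat\basecat$; but $\sym^{-1}\basecat = 0$ by convention, and this summand is absent from $\monfcat\basecat = \bigoplus_{n \geq 0} \hperf \perfbar \symbc{n}$. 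Hence $\Psi(\QQ_a) = 0$ on the nose.

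The main obstacle is to propagate this vanishing across the closure of $\mathcal I$ under tensoring, which is where one must carefully exploit the fact that $\Phi_\basecat$ is homotopy lax monoidal. Unpacking the convention $U \otimes V = \tilde V \circ U$ for $\monhcat\basecat$, a typical object of the left ideal generated by $\QQ_a$ has the shape $\QQ_a \otimes X_1 \otimes \dotsb \otimes X_k$, which expands as a $1$-composition in $\hcat\basecat$ with $\QQ_a$ occurring at the bottom. By the homotopy lax monoidality of $\Phi_\basecat$, the value of $\Psi$ on such an object factors coherently through $\Psi(\QQ_a) = 0$ and is therefore null-homotopic. Feeding this vanishing into the universal property of the monoidal Drinfeld quotient produces the descent $r$ and thereby establishes that the composition \eqref{eqn-homotopy-retraction-of-tildeF_V-onto-oplus-hperfS^nV} is a homotopy retraction.
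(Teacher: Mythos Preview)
Your argument is correct and follows essentially the same route as the paper: factor $\Phi_\basecat$ through the Drinfeld quotient by showing it kills $\mathcal I$, reduce to the generators $\QQ_a$, and observe that $\Phi_\basecat(\QQ_a)$ lands in the (zero) summand indexed by $n=-1$. The paper compresses your final paragraph into the single clause ``by the definition of the monoidal structure on $\monhcat\basecat$ it suffices to check this on $\QQ_a$'', whereas you spell out that the homotopy lax monoidality of $\Phi_\basecat$ is what propagates the vanishing from the generator to the whole ideal; but this is the same mechanism. One small cleanup: since $\widetilde{\fcat\basecat}$ is an ordinary Drinfeld quotient of the \dg category $\monhcat\basecat$ (not a monoidal quotient), only Theorem~\ref{theorem-main-properties-of-drinfeld-quotients} is needed, and the lift $r$ it produces is a quasi-functor in $\HoDGCatone$ rather than a genuine \dg functor.
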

\begin{proof}
	In view of Corollary \ref{cor-oplus-hperf-S^nV-is-a-retract-of-H_V}
	it suffices to prove that the homotopy retraction 
	$$ \Phi_\basecat\colon \monhcat\basecat \rightarrow
	\monfcat\basecat $$
	filters in $\HoDGCatone$ through the Drinfeld quotient functor 
	$$ \monhcat\basecat \rightarrow \monhcat\basecat/\I =
	\widetilde{\fcat\basecat}. $$
	By the universal property of Drinfeld quotient (Theorem
	\ref{theorem-main-properties-of-drinfeld-quotients}), it suffices to
	prove that $\Phi_\basecat$ sends all objects of $\I$ to null-homotopic
	ones. By the definition of the monoidal structure on
	$\monhcat\basecat$, it suffices to check this on objects 
	$\QQ_a$ for $a \in \basecat$ which generate $\I$ as a left ideal. 
	
	In fact, $\Phi_\basecat$ sends all of these to zero.  Indeed, 
	we compute $\Phi_\basecat(\QQ_a)$ by
	evaluating the corresponding $2$-functor on $1$-morphisms
	$\QQ_a \in \homm_{\hcat\basecat}(0,-1)$. By construction, the $2$-functor 
	$\Phi_\basecat$ sends all objects $n \in \mathbb{Z}_{< 0}$ to zero, and 
	hence for any $n < 0$ it sends the whole $1$-morphism category 
	$\homm_{\hcat\basecat}(0,n)$ to zero. 
	
	For the final claim, recall that $1$-morphism categories of
	$\hcat\basecat$ are Drinfeld quotients of the perfect hulls of those
	of $\hcat*\basecat$. We then take a further Drinfeld quotient to
	obtain $\widetilde{\fcat\basecat}$. As taking Drinfeld quotient 
	doesn't change the objects of a category, 
	the objects of $\widetilde{\fcat\basecat}$ are perfect modules over 
	the $1$-morphism categories $\homm_{\hcat*\basecat}(0,n)$. We 
	can therefore view them as homotopy idempotents of twisted complexes\index{twisted complex} 
	over $\homm_{\hcat*\basecat}(0,n)$. 
	
	The objects of $\homm_{\hcat*\basecat}(0,n)$ are words on $\PP$, 
	$\QQ$, and $\RR$s. In $\hcat\basecat$, $\PP$ and $\RR$ become
	homotopy equivalent. Furthermore, the homotopy equivalence 
	\eqref{eq:Heisenberg_map} in $\hcat\basecat$ allows 
	us to turn any subword $\QQ\PP$ into a direct sum of $\PP\QQ$s and
	$\hunit$s. Since any word ending in $\QQ$ is null-homotopic 
	in $\widetilde{\fcat\basecat}$, we conclude that all objects of
	$\homm_{\hcat*\basecat}(0,n)$ are homotopy equivalent 
	in $\widetilde{\fcat\basecat}$ to direct sums of words on just $\PP$s. 
	
	It remains to show that any morphism between words on $\PP$s in 
	$\homm_{\hcat*\basecat}(0,n)$ becomes homotopic in 
	$\widetilde{\fcat\basecat}$ to something that lies in the image of $\Xi^P$. 
	In other words, homotopic to a diagram containing just the
	crossings. Since there are no $\QQ$s involved, we only need to show 
	that we can get rid of curls and of bubbles. The relations
	\eqref{eq:circle_and_curl-dg} imply that counterlclockwise curls
	are homotopic to zero, while counterclockwise bubbles are homotopic to 
	scalar multiples of identity maps. 
	
	Suppose we have a clockwise bubble. 
	If there is no vertical string to the right of it, the diagram can be
	written as a $2$-composition filtering through a word ending
	in $\QQ$ and hence vanishes. If there is a vertical string to the
	right of the bubble, we use the homotopy relations in Lemma 
	\ref{lem:up_down_braids-homotopy} to replace a downward string in the
	bubble and the (upward) vertical string by a cup and a cap plus a
	double crossing. The replacement by a cup and a cap absorbs the bubble into 
	the vertical string and gets rid of it. The replacement by a crossing 
	makes the vertical string cross the bubble. We then use the
	symmetric group relations on upward strand\index{upward strand}s to move the bubble
	completely to right of the vertical string. If there are any more
	vertical strings to the right of the bubble, we repeat this
	procedure. 
	
	A similar argument works for clockwise curls. If there are 
	no vertical strings to the right of it, the diagram passes through 
	a word ending in $\QQ$ and hence vanishes. If there are, we can
	similarly move the curl to the right of string: the replacement by
	a cup and a cap turns the curl into a crossing and gets rid of it, 
	while the replacement by a double crossing makes the vertical string cross
	the curl, and we can then use a triple move to finish moving the 
	curl completely to the right of the vertical string.
\end{proof}

We have shown above that $\Phi_\basecat$ filters through the
Drinfeld quotient $\monhcat\basecat \rightarrow
\widetilde{\fcat\basecat}$. Let 
$$ \tilde{\Phi}_\basecat\colon 
\widetilde{\fcat\basecat} \rightarrow \monfcat\basecat $$
be the corresponding quasi-functor. On the other hand, let
the quasi-functor
$$ \tilde{\Xi}^P\colon 
\monfcat\basecat \rightarrow \widetilde{\fcat\basecat} $$ 
be the composition of 
$\eqref{eqn-homotopy-retraction-of-tildeF_V-onto-oplus-hperfS^nV}$
with the formal inverse of the quasi-equivalence
$\eqref{eqn-quasi-equiv-oplus-hperf-S^nV-to-monfcat}$. 
By Corollary \ref{cor-oplus-hperf-S^nV-is-a-retract-of-H_V}, 
$\tilde{\Phi}_\basecat$ is a homotopy left inverse of $\tilde{\Xi}^P$. 

Define the numerical Grothendieck groups\index{numerical Grothendieck group} $\numGgp{\monhcat\basecat,\, \kk}$ 
and $\numGgp{\widetilde{\fcat\basecat},\, \kk}$ similarly to the definition of 
$\numGgp{\hcat\basecat,\,\kk}$ in Section~\ref{subsec:grothendieck_groups}.
Namely, they are the quotients of $\mathrm{K}_0(\monhcat\basecat,\,\kk)$
and $\mathrm{K}_0(\widetilde{\fcat\basecat}, \kk)$ under the images of the
kernel of the Euler form on $\bigoplus_{n} \mathrm{K}_0(\symbc n, \kk)$
under $\Xi_P$ and 
$\eqref{eqn-homotopy-retraction-of-tildeF_V-onto-oplus-hperfS^nV}$, 
respectively. Then $\numGgp{\hcat\basecat,\,\kk}$ is the idempotent 
modification of $\numGgp{\monhcat\basecat,\,\kk}$.

By Lemma \ref{lemma-oplus-hperf-S^n-V-homotopy-retract-of-tilde-F_V}, 
the map of $K$-groups induced by $\tilde{\Xi}^P$ 
is injective. By our definitions, it descends 
to an injective map of numerical $K$-groups, and so does 
any left inverse of it. We thus obtain:
\begin{Corollary}
	\label{cor-numgp-oplus-S^nV-into-tildeF_V}
	The following composition is the identity map:
	$$ \numGgp{\monfcat\basecat,\, \kk}
	\xrightarrow{\tilde{\Xi}^P}
	\numGgp{\widetilde{\fcat\basecat},\, \kk}
	\xrightarrow{\tilde{\Phi}_\basecat}
	\numGgp{\monfcat\basecat,\, \kk}.$$ 
\end{Corollary}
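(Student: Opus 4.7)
The plan is to obtain the statement directly from the homotopy-left-inverse property established in the paragraph preceding the corollary, combined with functoriality of the numerical Grothendieck group.

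The key input, which is recorded immediately before the statement, is that $\tilde{\Phi}_\basecat$ is a homotopy left inverse of $\tilde{\Xi}^P$; this in turn comes from Corollary~\ref{cor-oplus-hperf-S^nV-is-a-retract-of-H_V}, which asserts that the composition $\Phi_\basecat \circ \Xi^P$ filters through the quasi-equivalence \eqref{eqn-quasi-equiv-oplus-hperf-S^nV-to-monfcat} as a quasi-functor homotopic to the identity of $\bigoplus_n \hperf\symbc n$, together with Lemma~\ref{lemma-oplus-hperf-S^n-V-homotopy-retract-of-tilde-F_V}, which ensures that $\Phi_\basecat$ itself factors through the Drinfeld quotient $\monhcat\basecat \to \widetilde{\fcat\basecat}$. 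Unwinding the definitions of $\tilde\Phi_\basecat$ and $\tilde\Xi^P$, one obtains that $\tilde{\Phi}_\basecat \circ \tilde{\Xi}^P$ is isomorphic to the identity quasi-functor of $\monfcat\basecat$ in $\HoDGCatone$.

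Second, I would apply the ordinary Grothendieck group functor $K_0(-,\kk)$, which is well-defined on $\HoDGCatone$ (since quasi-equivalences induce isomorphisms on $K_0$). Functoriality then gives that the induced composition
\[
    K_0(\monfcat\basecat, \kk) \xrightarrow{\tilde{\Xi}^P} K_0(\widetilde{\fcat\basecat}, \kk) \xrightarrow{\tilde\Phi_\basecat} K_0(\monfcat\basecat, \kk)
\]
equals the identity. Via the quasi-equivalence \eqref{eqn-quasi-equiv-oplus-hperf-S^nV-to-monfcat}, we identify $K_0(\monfcat\basecat,\kk) \cong \bigoplus_n K_0(\symbc n, \kk)$, so this identity statement lives in a well-understood group.

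Finally, I would check that both maps descend to the numerical quotients and that the identity of $K_0(\monfcat\basecat,\kk)$ descends to the identity of $\numGgp{\monfcat\basecat,\kk} = \bigoplus_n \numGgp{\symbc n,\kk}$. The latter is automatic. For $\tilde\Xi^P$, descent is built into the very definition of $\numGgp{\widetilde{\fcat\basecat},\kk}$ as the quotient of $K_0(\widetilde{\fcat\basecat},\kk)$ by the image of the Euler kernel under $\eqref{eqn-homotopy-retraction-of-tildeF_V-onto-oplus-hperfS^nV}$, which is precisely $\tilde{\Xi}^P$ modulo the quasi-equivalence \eqref{eqn-quasi-equiv-oplus-hperf-S^nV-to-monfcat}. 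For $\tilde\Phi_\basecat$, one argues exactly as in Corollary~\ref{cor:KgpFock}: the $1$-morphisms in its image are left- and right-perfect bimodules, so the induced maps on $K_0$ take $\ker\chi$ to $\ker\chi$. The only step with any content is the latter descent, but it is a verbatim repetition of the argument already given for $\Phi_\basecat$, so I do not foresee genuine obstacles here.
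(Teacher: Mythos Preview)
Your overall strategy matches the paper's: establish $\tilde{\Phi}_\basecat \circ \tilde{\Xi}^P \simeq \id$ on the level of quasi-functors, pass to $K_0$, and then check that both maps descend to the numerical quotients. The paper's argument is the short paragraph immediately preceding the corollary, and your first two steps are exactly that.

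The one place where you diverge is the descent of $\tilde{\Phi}_\basecat$. Your appeal to Corollary~\ref{cor:KgpFock} is misplaced: $\tilde{\Phi}_\basecat$ is a quasi-functor between \dg $1$-categories, not a $2$-functor, so there are no ``$1$-morphisms in its image'' to which the bimodule-perfectness argument of Corollary~\ref{cor:KgpFock} would apply. Fortunately the correct argument is simpler and already implicit in what you have done. By definition, $\numGgp{\widetilde{\fcat\basecat},\,\kk}$ is the quotient of $K_0(\widetilde{\fcat\basecat},\,\kk)$ by $\tilde{\Xi}^P(\ker\chi)$. Since you have shown $\tilde{\Phi}_\basecat \circ \tilde{\Xi}^P = \id$ on $K_0$, it follows at once that $\tilde{\Phi}_\basecat\bigl(\tilde{\Xi}^P(\ker\chi)\bigr) = \ker\chi$, so $\tilde{\Phi}_\basecat$ descends. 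This is precisely what the paper means by ``so does any left inverse of it''.
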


Corollary~\ref{cor-numgp-oplus-S^nV-into-tildeF_V} together with 
the morphism $\phi$ of~\eqref{eq:fock_embedding} gives an embedding 
of the classical Fock space into the numerical Grothendieck group 
of the category $\widetilde{\fcat\basecat}$:
\[ 
\cfalg{\basecat} =
\bigoplus_{n}\cfalg{\basecat}^n
\xrightarrow{\phi}
\bigoplus_{n} \numGgp{\symbc n,\, \kk} 
\simeq 
\numGgp{\monfcat\basecat,\, \kk} 
\xrightarrow{\tilde{\Xi}^P}
\numGgp{\widetilde{\fcat\basecat},\, \kk}
\]
where
\[ \bigoplus_{n}\cfalg{\basecat}^n \cong \bigoplus_{n \geq 0} \bigoplus_{\lambda \dashv n} \bigotimes_i\Sym^{r(\lambda)_i} \numGgp{ \basecat,\, \kk} \]
and $r(\lambda)_i$ is the number of parts of size $i$ in $\lambda$.

We now prove a converse to Conjecture~\ref{conj:pi_iso}.

\begin{Theorem} 
	\label{thm:piisothenphi}
	If $\pi\colon \halg\basecat \to \numGgp{\hcat\basecat,\,\kk}$ is an
	isomorphism, then so are $\phi$ and $\tilde{\Xi}^P$:
	\[
	\bigoplus_{n \geq 0} \bigoplus_{\lambda \dashv n} \bigotimes_i\Sym^{r(\lambda)_i} \numGgp{ \basecat,\, \kk} \simeq 
	\bigoplus_{n \geq 0} \numGgp{\symbc n,\, \kk}
	\simeq  \numGgp{\widetilde{\fcat\basecat},\, \kk}.
	\]
\end{Theorem}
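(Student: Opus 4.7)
The strategy is to show that the composition $\tilde{\Xi}^P \circ \phi\colon \cfalg\basecat \to \numGgp{\widetilde{\fcat\basecat},\,\kk}$ is an isomorphism, and to deduce from this that each factor is an isomorphism separately. Injectivity of this composition is already in hand: $\phi$ is injective as noted in Section~\ref{subsec:grothfock}, and $\tilde{\Xi}^P$ is injective by Corollary~\ref{cor-numgp-oplus-S^nV-into-tildeF_V}, which provides its left inverse $\tilde{\Phi}_\basecat$. Hence one only needs surjectivity of $\tilde{\Xi}^P \circ \phi$.

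Assuming $\pi$ is an isomorphism, the flattening procedure identifies $\numGgp{\monhcat\basecat,\,\kk}$ with $\chalg\basecat$ as $\kk$-algebras. The natural projection $\monhcat\basecat \to \widetilde{\fcat\basecat}$ induces on numerical $K$-groups a surjection $\chalg\basecat \twoheadrightarrow \numGgp{\widetilde{\fcat\basecat},\,\kk}$. The key point is that for every $a \in \basecat$ and every $n \geq 1$ the object $\QQ_a^{(n)}$, constructed in Section~\ref{subsec:cat_heisenberg_relations} as a homotopy direct summand of $\QQ_a^n$, lies in the thick closure of the left ideal $\mathcal I$; hence $[\QQ_a^{(n)}] = q_a^{(n)}$ lies in the kernel of the above surjection. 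Since the left ideal $I \subset \chalg\basecat$ defining the Fock space quotient $\cfalg\basecat = \chalg\basecat/I$ is generated precisely by these classes $q_a^{(n)}$ for $n \geq 1$, the surjection descends to a surjective map $\tau\colon \cfalg\basecat \twoheadrightarrow \numGgp{\widetilde{\fcat\basecat},\,\kk}$.

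It remains to identify $\tau$ with $\tilde{\Xi}^P \circ \phi$, which I would verify directly on $\kk$-basis elements of $\cfalg\basecat$: using the equivariance of $\phi$ (Lemma~\ref{lem:fock_embeds}) and the explicit description of $\Xi^P$ on representables from Remark~\ref{rem:Xiprime}, both maps send each monomial $p_{a_1}^{(n_1)} \cdots p_{a_k}^{(n_k)} \cdot 1$ to the class $[\PP_{a_1}^{(n_1)} \cdots \PP_{a_k}^{(n_k)}] \in \numGgp{\widetilde{\fcat\basecat},\,\kk}$. Therefore $\tilde{\Xi}^P \circ \phi = \tau$ is surjective, hence bijective. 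Then $\phi$ must be surjective, and so an isomorphism, because $\tilde{\Xi}^P$ is injective; finally $\tilde{\Xi}^P = (\tilde{\Xi}^P \circ \phi) \circ \phi^{-1}$ is also an isomorphism. The delicate step, I expect, is the first: showing that the Drinfeld quotient genuinely kills every class $[\QQ_a^{(n)}]$. This relies on the compatibility of the monoidal Drinfeld quotient with taking direct summands in the homotopy category, which is part of the content of Theorem~\ref{theorem-the-universal-properties-of-monoidal-drinfeld-quotient}\ref{item-as-enhanced-triag-cats-1-morphism-categories-are-verdier-quotients}.
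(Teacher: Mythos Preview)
Your proposal is correct and follows essentially the same strategy as the paper: identify $\numGgp{\monhcat\basecat,\,\kk}$ with $\chalg\basecat$ via $\pi$, use that the Drinfeld quotient is surjective on $K$-groups and kills the ideal $I$, and deduce that $\phi$ and $\tilde{\Xi}^P$ are isomorphisms. The paper organises this via a commutative diagram and works with the left inverse $\tilde{\Phi}_\basecat$ of $\tilde{\Xi}^P$: since $\tilde{\Phi}_\basecat$ is \emph{by construction} the factoring of $\Phi_\basecat$ through the Drinfeld quotient, the identity $\tilde{\Phi}_\basecat \circ \tau = \phi$ falls out of the diagram for free, and one then argues that $\tilde{\Phi}_\basecat$ is injective. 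Your route is the mirror image: you instead verify $\tau = \tilde{\Xi}^P \circ \phi$ directly on monomials, which requires invoking the $2$-functoriality of $\Xi^P$ (Remark~\ref{rem:Xiprime}) and the explicit description of $p_a^{(n)}$ from Section~\ref{subsec:Fock_symmetrisers}. Both work; the paper's is slightly more economical.

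Two small points. First, the surjectivity of the Drinfeld quotient on $K$-groups is the one genuinely non-formal input; the paper cites \cite[Proposition~VIII.3.1]{grothendieck1977cohomologie} for this, and you should too rather than asserting it. Second, your closing remark misidentifies the delicate step: that the quotient kills $[\QQ_a^{(n)}]$ is immediate, since $\QQ_a^{(n)}$ is a homotopy summand of $\QQ_a^n \in \mathcal I$ and the $1$-morphism categories of $\hcat\basecat$ are Karoubi-complete by construction, so Theorem~\ref{theorem-the-universal-properties-of-monoidal-drinfeld-quotient}\ref{item-as-enhanced-triag-cats-1-morphism-categories-are-verdier-quotients} applies directly.
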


\begin{proof}
	Let $I$ be the left ideal of $\chalg\basecat$ generated by $q_{[a]}^{(n)}$ with $n > 0$ and $a \in \basecat$. We have
	\begin{equation}
		\label{eqn-diagram-of-various-Knum-groups}
		\begin{tikzcd}
			\chalg\basecat
			\ar[two heads]{r}{\text{quotient by } I}
			\ar[hookrightarrow]{d}[']{\underline{\pi}}
			&
			\cfalg{\basecat}
			\ar[hookrightarrow]{d}{\phi}
			\\
			\numGgp{\monhcat\basecat,\, \kk} 
			\ar{r}{\Phi_\basecat}
			\ar[two heads]{dr}[']{\text{Drinfeld}}
			&
			\numGgp{\monfcat\basecat,\, \kk} 
			\\
			&
			\numGgp{\widetilde{\fcat\basecat},\, \kk}. 
			\ar[two heads]{u}[']{\tilde{\Phi}_\basecat}
		\end{tikzcd}
	\end{equation}
	The Drinfeld quotient induces a surjective map of the $K$-groups by
	\cite[Proposition~VIII.3.1]{grothendieck1977cohomologie}. By our
	definitions, this descends to the surjective map 
	of the numerical $K$-groups in \eqref{eqn-diagram-of-various-Knum-groups}. 
	
	By assumption of the Theorem, the map $\underline{\pi}$ is an
	isomorphism. By \eqref{eqn-diagram-of-various-Knum-groups}, 
	the injective map $\phi$ is then surjective, 
	and thus an isomorphism. Now observe that the Drinfeld quotient 
	map kills $\underline{\pi}(I)$. The surjective map $\tilde{\Phi}$ is
	therefore injective and thus an isomorphism. Its right inverse 
	$\tilde{\Xi}^P$ is then also an isomorphism. 
\end{proof}

\section{Reconstruction of the base category}

It is natural to ask to what extent we can recover 
the base category $\basecat$ from its Heisenberg category $\hcat\basecat$. 
Given the nature of our construction, 
the best we can hope for is to recover $\basecat$ up to
Morita equivalence. This recovers $\hperf\basecat$, that is --
the compact derived category $\catDc(\basecat)$. 

We are not allowed to use our categorical Fock space $\fcat\basecat$
in this reconstruction as it is not built from $\hcat\basecat$,
but directly from $\basecat$. In particular, $\fcat\basecat$
contains $\hperf\basecat$ as 
the $1$-morphism category $\homm_{\fcat\basecat}(0,1)$. 
However, this gives us our strategy: we obtain our categorical
Fock space quotient $\widetilde{\fcat\basecat}$ intrinsically from  
$\hcat\basecat$ together with $\mathbb{Z}$-grading which remembers
the flattening 
\[
\monhcat\basecat = \bigoplus_{n\in \ZZ} \Hom_{\hcat\basecat}(0,n). 
\]
If we could show that the natural functor of 
Lemma \ref{lemma-oplus-hperf-S^n-V-homotopy-retract-of-tilde-F_V}
\begin{equation*}
	\bigoplus \hperf \symbc{n}
	\xrightarrow{\eqref{eqn-homotopy-retraction-of-tildeF_V-onto-oplus-hperfS^nV}}
	\widetilde{\fcat\basecat}. 
\end{equation*}
is a quasi-equivalence, we could recover $\hperf \basecat$ as $1$-graded
part $\widetilde{\fcat\basecat^1}$ of $\widetilde{\fcat\basecat}$. 
In Lemma \ref{lemma-oplus-hperf-S^n-V-homotopy-retract-of-tilde-F_V} we come
tantalisingly close: we show
\eqref{eqn-homotopy-retraction-of-tildeF_V-onto-oplus-hperfS^nV} to be 
quasi-faithful and quasi-essentially surjective on objects. In fact, 
in the proof of Lemma
\ref{lemma-oplus-hperf-S^n-V-homotopy-retract-of-tilde-F_V} we show
that it is also quasi-full on those morphisms in $\widetilde{\fcat\basecat}$
which come from the perfect hull of $\hcat*\basecat$. The only
morphisms we can't get so far are those added by taking the two
Drinfeld quotients -- the first one to get $\hcat\basecat$ and the second
one to get $\widetilde{\fcat\basecat}$. 

We conjecture that one can get even these and thus
\eqref{eqn-homotopy-retraction-of-tildeF_V-onto-oplus-hperfS^nV} 
is a quasi-equivalence. This would allow one to recover $\hperf \basecat$ as 
$\widetilde{\fcat\basecat^1}$. For the moment, however, we only have:

\begin{Lemma}
	\label{lem:HoDGCatequivinducessubcatequiv}
	Let $\basecat$ and $\mathcal{W}$ be smooth and proper \dg categories
	and assume that there is a quasi-equivalence of $\HoDGCat$-enriched 
	bicategories which is the identity on objects:
	\[ \hcat\basecat \simeq \hcat{\mathcal{W}}. \] 
	Then:
	\begin{enumerate}
		\item There is a $\mathbb{Z}$-graded quasi-equivalence 
		\[\widetilde{\fcat\basecat} \simeq \widetilde{\fcat{\mathcal{W}}}.\]
		\item\label{it:HoDGCatequivinducessubcatequiv1} There are
		quasi-faithful quasi-essentially surjective functors 
		\[ \hperf\basecat \rightarrow \widetilde{\fcat\basecat^1} 
		\simeq \widetilde{\fcat{\mathcal{W}}^1} \leftarrow \hperf\mathcal{W}. \] 
	\end{enumerate}
\end{Lemma}
\begin{proof}
	For the first claim, recall that we construct the categorical Fock 
	space quotient $\widetilde{\fcat\basecat}$ as the Drinfeld quotient 
	of $\hcat\basecat$ by the left ideal $I$ generated by objects $\QQ_a$
	for $a \in \basecat$. We can equivalently take $I$ to be the left
	ideal generated by all $1$-morphisms in $\homm_{\hcat\basecat}(n,n-k)$ 
	for $k > 0$. Since the quasi-equivalence 
	$\hcat\basecat \simeq \hcat{\mathcal{W}}$ is identity on the objects $n
	\in \mathbb{Z}$ it preserves this ideal and hence descends to a
	quasi-equivalence 
	$\widetilde{\fcat\basecat} \simeq \widetilde{\fcat{\mathcal{W}}}$.
	
	The second claim follows directly from Lemma
	\ref{lemma-oplus-hperf-S^n-V-homotopy-retract-of-tilde-F_V}. 
\end{proof}

This is enough to show that the Heisenberg categories
of $\catDGCoh{\ps 1}$ and $\catDGCoh{\mathrm{pt} \sqcup \mathrm{pt}}$
are distinct:  

\begin{Example} For the categories $\catDGCoh{\ps 1}$ and $\catDGCoh{\mathrm{pt} \sqcup \mathrm{pt}}$ of Example~\ref{Ex:ps1pt2} our Lemma~\ref{lem:HoDGCatequivinducessubcatequiv} is still enough to see that
	\[ \hcat{\catDGCoh{\ps 1}}  \not\simeq \hcat{\catDGCoh {\mathrm{pt} \sqcup \mathrm{pt}}}.\]
	The decomposition $\catDGCoh{\mathrm{pt} \sqcup \mathrm{pt}} = \catDGCoh{\mathrm{pt}} \oplus \catDGCoh{\mathrm{pt}}$ induces a decomposition
	\[ \Hom_{\hcat{\catDGCoh{\mathrm{pt} \sqcup \mathrm{pt}}}'}(0,1) =  \Hom_{\hcat{\catDGCoh{\mathrm{pt}}}'}(0,1) \oplus \Hom_{\hcat{\catDGCoh{\mathrm{pt}}}'}(0,1)  \]
	as follows. The objects of the Hom-space on the LHS consists of words with one more P than Q. There is no morphism between two such words if the difference of P's and Q's indexed by \emph{one} of the two generating objects is positive in one of the words but nonpositive in the other word. This decomposition then induces an orthogonal decomposition
	\[ \widetilde{\fcat{\catDGCoh{\mathrm{pt}}}^1} \oplus \widetilde{\fcat{\catDGCoh{\mathrm{pt}}}^1}.\]
	{ By Lemma~\ref{lem:HoDGCatequivinducessubcatequiv}~\ref{it:HoDGCatequivinducessubcatequiv1}, 
		there exists a faithful and essentially surjective functor \[\catDbCoh{\ps 1}= \Hzero(\catDGCoh{\ps 1}) \to \Hzero(\widetilde{\fcat{\catDGCoh{\ps 1}}}).\] 
		Therefore, if $\Hzero(\widetilde{\fcat{\catDGCoh{\ps 1}}})$ had an orthogonal decomposition, so would have $\catDbCoh{\ps 1}$. But this would imply that $\ps 1$ is disconnected.}
\end{Example}


\backmatter
\bibliographystyle{amsplain}
\bibliography{references}
\printindex

\end{document}
